\newenvironment{@abssec}[1]{%
\if@twocolumn

\section*{#1}%
\else

\vspace{.05in}\footnotesize
\parindent .2in
{\upshape\bfseries #1. }\ignorespaces
\fi}
\par\vspace{.1in}\fi}
\newcommand\keywordsname{Key words}
\newcommand\AMSname{AMS subject classifications}
\newcommand\AMname{AMS subject classification}
\newcommand\restr[2]{{
\left.\kern-\nulldelimiterspace 
#1 
\vphantom{|} 
\right|_{#2} 
}}
\newtheorem{theorem}{Theorem}[chapter]
\newtheorem{lemma}[theorem]{Lemma}
\newtheorem{proposition}[theorem]{Proposition}
\newtheorem{remark}[theorem]{Remark}
\newtheorem{definition}[theorem]{Definition}
\newtheorem{problem}{Problem}[section]
\newtheorem{example}[theorem]{Example}
\newcommand{\NN}{\mathbb{N}}
\newcommand{\ZZ}{\mathbb{Z}}
\newcommand{\RR}{\mathbb{R}}
\newcommand{\CC}{\mathbb{C}}
\def\XXint#1#2#3{{\setbox0=\hbox{$#1{#2#3}{\int}$}
\vcenter{\hbox{$#2#3$}}\kern-.5\wd0}}
\newcommand{\link}{\mathop{\circ\kern-.35em -}}
\newcommand{\ol}{\overline}
\newcommand{\pa}{\partial}
\newcommand{\tr}{\mathop{\mathrm{tr}}}
\newcommand{\dv}{{\mathop{\mathrm{div}}}}
\newcommand{\gr}{{\nabla}}
\newcommand{\ali}{\infty}
\newcommand{\al}{\alpha}
\newcommand{\ve}{\varepsilon}
\newcommand{\e}{\varepsilon}
\newcommand{\la}{\lambda}
\newcommand{\om}{\omega}
\newcommand{\Om}{\Omega}
\newcommand{\rn}{{{\mathbb{R}}^N}}
\newcommand{\vol}{{\rm Vol}}
\providecommand{\norm}[1]{\l#1\|}
\newcommand{\hi}{H^1}
\begin{document}

\begin{titlepage}
    \begin{center}
        \vspace*{1cm}
        
        \LARGE
        \textbf
        {The homogenization method for topology optimization of structures: old and new}
        \vspace{0.5cm}
       \large
        \vfill
        \large 
        
        \vspace{1.8cm}
        
        
        \vfill
   \LARGE 
   Gr\'egoire Allaire\\
    Lorenzo Cavallina\\
	Nobuhito Miyake\\
    Tomoyuki Oka\\
	Toshiaki Yachimura
        \vspace{0.8cm}
        
        
    \end{center}
\end{titlepage}

\newpage
\pagestyle{plain}
\noindent
 \begin{minipage}[t]{.55\textwidth}
{Gr\'egoire Allaire\\
CMAP \\Ecole Polytechnique,\\
$91128$ Palaiseau, France\\
\texttt{gregoire.allaire@polytechnique.fr}
}
\end{minipage}
 \begin{minipage}[t]{.45\textwidth}
{Lorenzo Cavallina\\
RCPAM, Graduate School of Information Sciences, Tohoku University,\\
980-8579 Sendai, Japan\\
\texttt{cava@ims.is.tohoku.ac.jp}
\hfill
}
\end{minipage}
\vskip0.2\baselineskip
\vspace*{1cm}
\noindent
\begin{minipage}[t]{.55\textwidth}
{Nobuhito Miyake\\
Mathematical Institute,\\ Tohoku University,\\
980-8578 Sendai, Japan\\
\texttt{nobuhito.miyake.t2@dc.tohoku.ac.jp}
}
\end{minipage}
 \begin{minipage}[t]{.45\textwidth}
{Tomoyuki Oka\\
Mathematical Institute, \\Tohoku University,\\
980-8578 Sendai, Japan\\
\texttt{tomoyuki.oka.q3@dc.tohoku.ac.jp}
\hfill
}
\end{minipage}
\vskip0.2\baselineskip
\vspace*{.4cm}
\noindent
 \begin{minipage}[t]{.55\textwidth}
{Toshiaki Yachimura\\
RCPAM, Graduate School of Information\\ Sciences, Tohoku University,\\
980-8579 Sendai, Japan\\
\texttt{yachimura@ims.is.tohoku.ac.jp}
}
\end{minipage}

\tableofcontents
\vspace*{-0.5cm}

\pagestyle{plain}
\thispagestyle{plain}
\chapter*{Preface and acknowledgments}
\addcontentsline{toc}{chapter}{Preface}
These are the lecture notes of a short course on the homogenization method for topology optimization of structures, given by one of us, Gr\'egoire Allaire, during the ``GSIS International Summer School 2018'' at Tohoku University (Sendai, Japan). 
Based on the slides of this course, the four other authors, Lorenzo Cavallina, Nobuhito Miyake, Tomoyuki Oka, Toshiaki Yachimura, have written the present lecture notes, which have been proofread by Gr\'egoire Allaire. 
Each chapter of these lecture notes corresponds to one class, except the two first ones which were taught together. 

Topology optimization of structures is nowadays a well developed field with many different approaches and a wealth of applications. 
One of the earliest method of topology optimization was the homogenization method, introduced in the early eighties. 
It became extremely popular in its over-simplified version, called SIMP (Solid Isotropic Material with Penalization), which retains only the notion of material density and forgets about true composite materials with optimal (possibly non isotropic) microstructures. 
However, the appearance of mature additive manufacturing technologies which are able to build finely graded microstructures (sometimes called lattice materials) drastically changed the picture and one can see a resurrection of the homogenization method for such applications. 
Indeed, homogenization is the right technique to deal with microstructured materials where anisotropy plays a key role, a feature which is absent from SIMP. 
Homogenization theory allows to replace the microscopic details of the structure (typically a complex networks of bars, trusses and plates) by a simpler effective elasticity tensor describing the mesoscopic properties of the structure. 

The goal of this course is to review the necessary mathematical tools of homogenization theory and apply them to topology optimization of mechanical structures. 
The ultimate application, targeted in this course, is the topology optimization of structures built with lattice materials. 
Practical and numerical exercises are given, based on the finite element free software FreeFem++.\\

Finally, the authors would like to express their gratitude to the organizers of the Summer School: Reika Fukuizumi, Kei Funano, Jun Masamune, Jinhae Park, Ruo Li, Shigeru Sakaguchi, Kenjiro Terada, Takayuki Yamada and Lei Zhang. The meeting was partially supported by a grant from the JSPS A3 Foresight Program, JSPS KAKENHI Grant Numbers $26287020$ and $26400062$, GSIS and RCPAM.

\chapter{Introduction}\label{introduction}
\section{Optimal design of structures}
A problem of optimal design (material, shape and topology optimization) of structures is defined by three ingredients (see \cite{Allaire2,BS,HM,HP,KPTZ,SK}):
\begin{enumerate}[(a)]
\item a {\bf model} (typically a partial differential equation) to evaluate (or analyze) the mechanical behavior of a structure, 

\item an {\bf objective function} which has to be minimized or maximized, or sometimes several objectives (also called cost functions or criteria), 

\item a {\bf set of admissible designs} which precisely defines the optimization variables, including possible constraints.
\end{enumerate}
The kind of optimal design problems which we focus on in these lecture notes can be roughly divided into three categories, from the “easiest” to the “most difficult” one: 
\begin{enumerate}
\item {\bf Parametric or sizing optimization}, for which designs are parametrized by a few variables (for example, thickness or member sizes), implying that the set of admissible designs is considerably simplified (see Figure~\ref{three categories}-1, where the variable parameters, the thickness of the two boxes in this case, are symbolized by arrows),

\item {\bf Shape (geometric) optimization}, for which all designs are obtained from an initial guess by moving its boundary without change of its topology due to the generation of new boundaries (see Figure~\ref{three categories}-2, where an admissible shape is drawn with a broken line), 

\item {\bf Topology optimization} where both the shape and the topology of the admissible designs can vary without any explicit or implicit restrictions (see Figure~\ref{three categories}-3, where the broken lines show removable holes).
\end{enumerate}

\begin{figure}[h]
\centering
\includegraphics[width=1\linewidth,center]{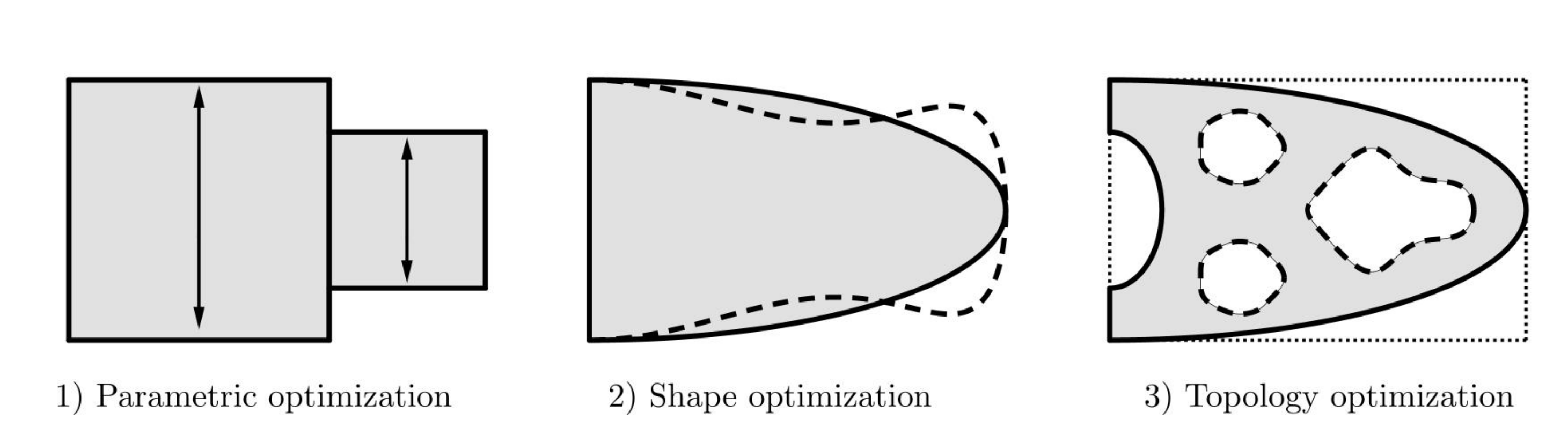}
\caption{Three categories of optimal design problems.} 
\label{three categories}
\end{figure}
The last category in the above is, of course, the most general but also the most difficult. 
We recall that two shapes share the same topology if there exists a continuous deformation from one to the other. In dimension $2$, topology is completely characterized by the number of holes (or, equivalently, of connected components of the boundary). In dimension $3$ it is quite more complicated. Indeed, the topology of a set in dimension $3$ is not only determined by the number of holes, but it also depends on the number and intricacy of “handles” or “loops”. 

First of all, one could ask 
theoretical questions concerning existence, uniqueness, and qualitative properties of the solutions of these shape optimization problems. One could also study the necessary and/or sufficient conditions satisfied by the optimal shapes. Such ``optimality conditions'' are very important both from a theoretical and a numerical point of view. They are often the basis for numerical algorithms of gradient method type. Furthermore one can investigate the numerical computation of approximate optimal shapes. All these questions will be addressed in the following chapters.

\section{Example of sizing or parametric optimization}
First of all, we show some examples of sizing or parametric optimization. Let us consider the thickness optimization of a membrane, where $\Omega$ is a mean surface of a (plane) membrane and $h$ is the thickness in the normal direction to the mean surface $\Omega$ (see Figure~\ref{plaque}). 

\begin{figure}[h]
\centering
\includegraphics[width=0.7\linewidth,center]{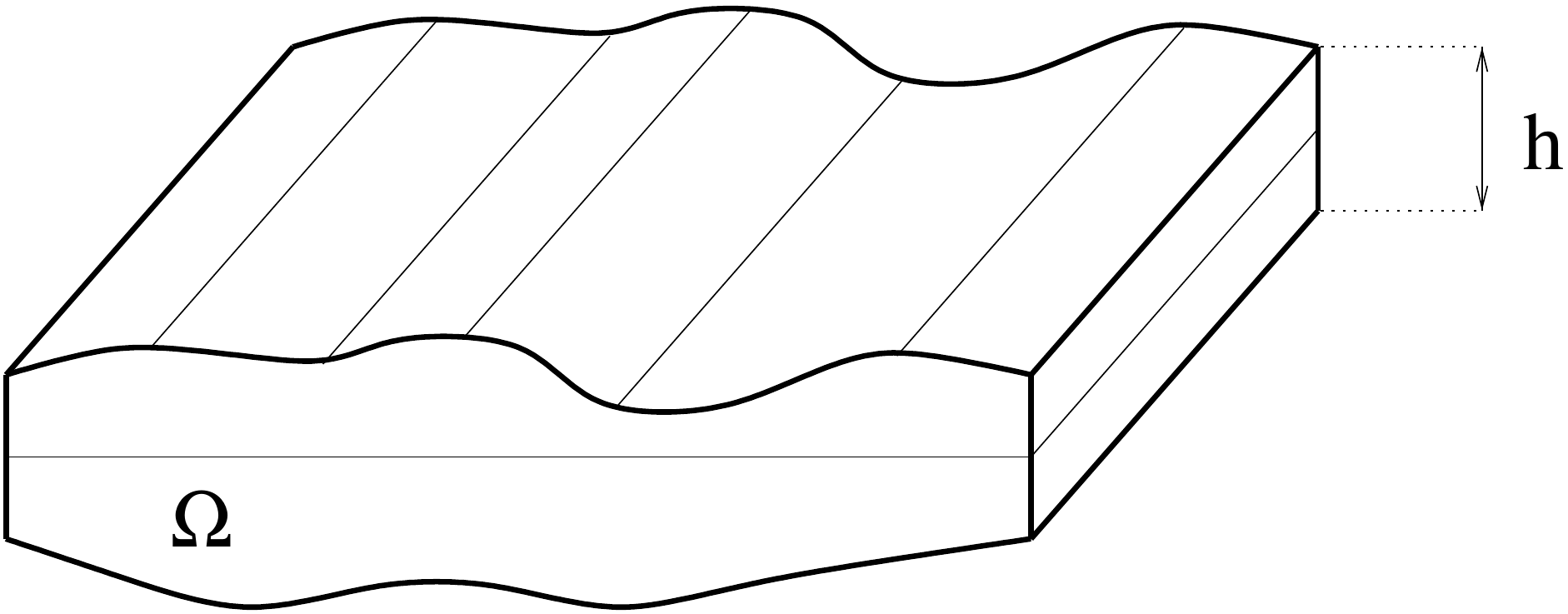}
\caption{Membrane with variable thickness $h$.} 
\label{plaque}
\end{figure}

In what follows, we consider our membrane to be pre-stressed at its boundary and subject to some vertical force $f$. Moreover, for small displacements, small deformations and negligible bending effects in the elasticity, the membrane deformation can be modeled by its vertical displacement $u: \Omega \to \RR$, solution of the following partial differential equation, the so-called membrane model (see also \cite{Allaire2, kats}),
\begin{equation*}
\left\{
\begin{aligned}
-\dv(h\gr u)&=f &&\text{ in } \Omega, \\
u&=0 &&\text{ on } \partial\Omega, 
\end{aligned}
\right.
\end{equation*}
where the thickness $h$ is bounded by some given minimum and maximum values: 
\begin{equation*}
0 < h_{\min} \le h(x) \le h_{\max} < \infty. 
\end{equation*}
The thickness $h$ is the optimization variable. Notice that we are dealing with a sizing or parametric optimal design problem here, because the computational domain $\Omega$ does not change.

Let us define the set of admissible thickness as follows: 
\begin{equation*}
\mathcal{U}_{\rm ad}=\left\lbrace h\in L^\infty(\Omega)\;:\; 0<h_{\rm min}\le h(x)\le h_{\rm max} \text{ a.e. in } \Omega, \int_\Omega h(x) dx= h_0|\Omega|  \right\rbrace, 
\end{equation*}
where $h_0$ is an imposed average thickness. 
\begin{remark}[Possible additional ``feasibility" constraints]
According to the production process of membranes, the thickness $h(x)$ can be discontinuous, or on the contrary continuous. A uniform bound can be imposed on its first derivative $h'(x)$ (molding-type constraint) or on its second order derivative $h''(x)$, linked to the curvature radius (milling-type constraint).
\end{remark}
The optimization criterion is linked to some mechanical property of the membrane, evaluated through its displacement $u$, solution of the PDE, 
\begin{equation*}
J(h) = \int_{\Omega} j(u) \, dx, 
\end{equation*}
where, of course, $u$ depends on $h$. For example, the global rigidity of a structure is often measured by its compliance, or work done by the load $f$: the smaller the work, the larger the rigidity (compliance = $-$ rigidity). In such a case, we set 
\begin{equation*}
j(u) = fu. 
\end{equation*}
Another example amounts to achieve (at least approximately) a target
displacement $u_{0}(x)$, which is modeled by taking
\begin{equation*}
j(u) = \abs{u - u_0}^2. 
\end{equation*}
Those two criteria are the typical examples studied in this course. 
Then, a parametric optimization problem is
\begin{equation*}
\inf_{h\in\mathcal{U}_{\rm ad}} J(h)  \, . 
\end{equation*}

Other examples of objective functions are the following: 
\begin{itemize}
\item Introducing the stress vector $\sigma(x) = h(x) \nabla u(x)$, we can minimize the maximum stress norm 
\begin{equation*}
J(h) = \sup_{x \in \Omega} \abs{\sigma(x)}
\end{equation*}
or more generally, for any $p \ge 1$, the following $p$-norm
\begin{equation*}
J(h) = \left( \int_{\Omega} \abs{\sigma(x)}^p \, dx \right)^{1/p}. 
\end{equation*}
\item For a vibrating structure, introducing the first eigenfrequency $\omega$, defined by 
\begin{equation*}
\left\{
\begin{aligned}
-\dv(h\gr u) &= \omega^2 u &&\text{ in } \Omega, \\
u &= 0 &&\text{ on } \partial\Omega.  
\end{aligned}
\right.
\end{equation*}
We consider $J(h) = - \omega$ to maximize it. 

\item Multiple loads optimization: for $n$ given loads $(f_i)_{1\le i \le n}$ the independent displacements $u_{i}$ are solutions of 
\begin{equation*}
\left\{
\begin{aligned}
-\dv(h\gr u_{i}) &= f_{i} &&\text{ in } \Omega, \\
u_{i} &= 0 &&\text{ on } \partial\Omega.  
\end{aligned}
\right.
\end{equation*}
We then introduce an aggregated criterion 
\begin{equation*}
J(h) = \sum^n_{i=1} c_i \int_{\Omega} j(u_i) \, dx, 
\end{equation*}
with given coefficients $c_i$, or 
\begin{equation*}
J(h) = \max_{1 \le i \le n} \left\{ \int_{\Omega} j(u_i) \, dx \right\}. 
\end{equation*}
\end{itemize}

\section{Example of shape optimization}
In this section, we show two examples of shape optimization. At first let us consider a shape optimization of a membrane's shape. A reference domain for the membrane is denoted by $\Omega$, with a boundary made of three disjoint parts 
\begin{equation*}
\pa \Omega = \Gamma \cup \Gamma_D \cup \Gamma_N, 
\end{equation*}
where $\Gamma$ is the variable part, $\Gamma_D$ is the Dirichlet (clamped) part and $\Gamma_N$ is the
Neumann part (loaded by $g$). 

\begin{figure}[h]
\centering
\includegraphics[width=0.7\linewidth,center]{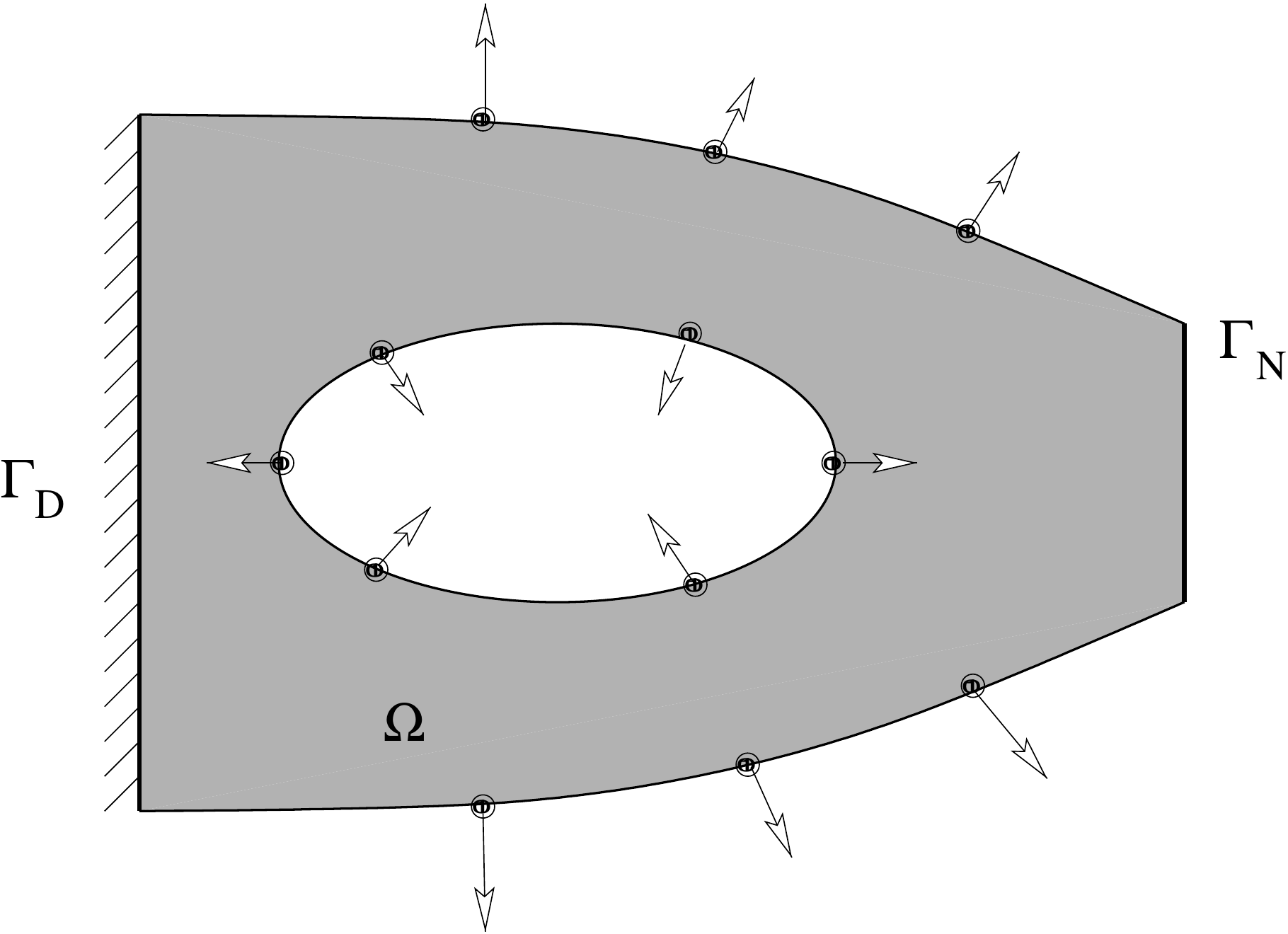}
\caption{Shape optimization of a membrane's shape.} 
\label{hadamard}
\end{figure}

The vertical displacement $u$ is the solution of the following membrane model 
\begin{equation*}
\left\{
\begin{aligned}
-\Delta u &= 0 &&\text{ in } \Omega, \\
u &= 0 &&\text{ on } \Gamma_D, \\
\dfrac{\pa u}{\pa n} &= g &&\text{ on } \Gamma_N, \\
\dfrac{\pa u}{\pa n} &= 0 &&\text{ on } \Gamma. \\
\end{aligned}
\right.
\end{equation*}
From now on the membrane thickness is fixed, equal to $1$. Moreover, we consider the parts $\Gamma_D$ and $\Gamma_N$ to be given. Thus the set of admissible shapes is 
\begin{equation*}
\mathcal{U}_{\rm ad}=\left\lbrace \Omega \subset \RR^N  \;:\; \Gamma_D \cup \Gamma_N \subset \pa \Omega \,\, \text{and} \,\, \abs{\Omega} = V_0  \right\rbrace, 
\end{equation*}
where $V_0>0$ is a given volume. The shape optimization problem reads 
\begin{equation*}
\inf_{\Omega \in \mathcal{U}_{\rm ad}} J(\Omega), 
\end{equation*}
with, as a criterion, the compliance 
\begin{equation*}
J(\Omega) = \int_{\Gamma_N} gu \, ds, 
\end{equation*}
or a least-square functional to achieve a target displacement $u_0(x)$ 
\begin{equation*}
J(\Omega) = \int_{\Omega} \abs{u - u_0}^2 \, dx. 
\end{equation*}
Notice that the true optimization variable is only the free boundary $\Gamma$, and therefore the topology of the shape does not change. 

Another example is a shape optimization in the elasticity setting. The model of linearized elasticity gives the displacement vector field $u: \Omega \to \RR^N$ as the solution of the system of equations
\begin{equation*}
\left\{
\begin{aligned}
-\dv(Ae(u)) &= 0 &&\text{ in } \Omega, \\
u &= 0 &&\text{ on } \Gamma_D, \\
(Ae(u))\cdot n &= g &&\text{ on } \Gamma_N, \\
(Ae(u))\cdot n &= 0 &&\text{ on } \Gamma, \\
\end{aligned}
\right.
\end{equation*}
with $e(u) = (\nabla u + (\nabla u)^t)/2$ and $A \xi = 2\mu \xi + \lambda(\tr \xi)\mathrm{Id}$, where $\mu$ and $\lambda$ are the Lam\'{e} coefficients, and $n$ is the outer unit normal to $\Omega$. 
The boundary $\partial\Omega$ is again divided into three disjoint parts
\begin{equation*}
\pa \Omega = \Gamma \cup \Gamma_D \cup \Gamma_N, 
\end{equation*}
where $\Gamma$ is the free boundary, the true optimization variable. The set of admissible shapes is again
\begin{equation*}
\mathcal{U}_{\rm ad}=\left\lbrace \Omega \subset \RR^N  \;:\; \Gamma_D \cup \Gamma_N \subset \pa \Omega \,\, \text{and} \,\, \abs{\Omega} = V_0  \right\rbrace, 
\end{equation*}
where $V_0$ is a given imposed volume. The objective function chosen is either the compliance 
\begin{equation*}
J(\Omega) = \int_{\Gamma_N} g \cdot u \, ds, 
\end{equation*}
or a least-square criterion for the target displacement $u_0(x)$ 
\begin{equation*}
J(\Omega) = \int_{\Omega} \abs{u - u_0}^2 \, dx. 
\end{equation*}
As before, the shape optimization problem reads 
\begin{equation*}
\inf_{\Omega \in \mathcal{U}_{\rm ad}} J(\Omega). 
\end{equation*}

\section{Topology optimization and the homogenization method}

\begin{figure}[h]
\centering
\includegraphics[width=0.7\linewidth,center]{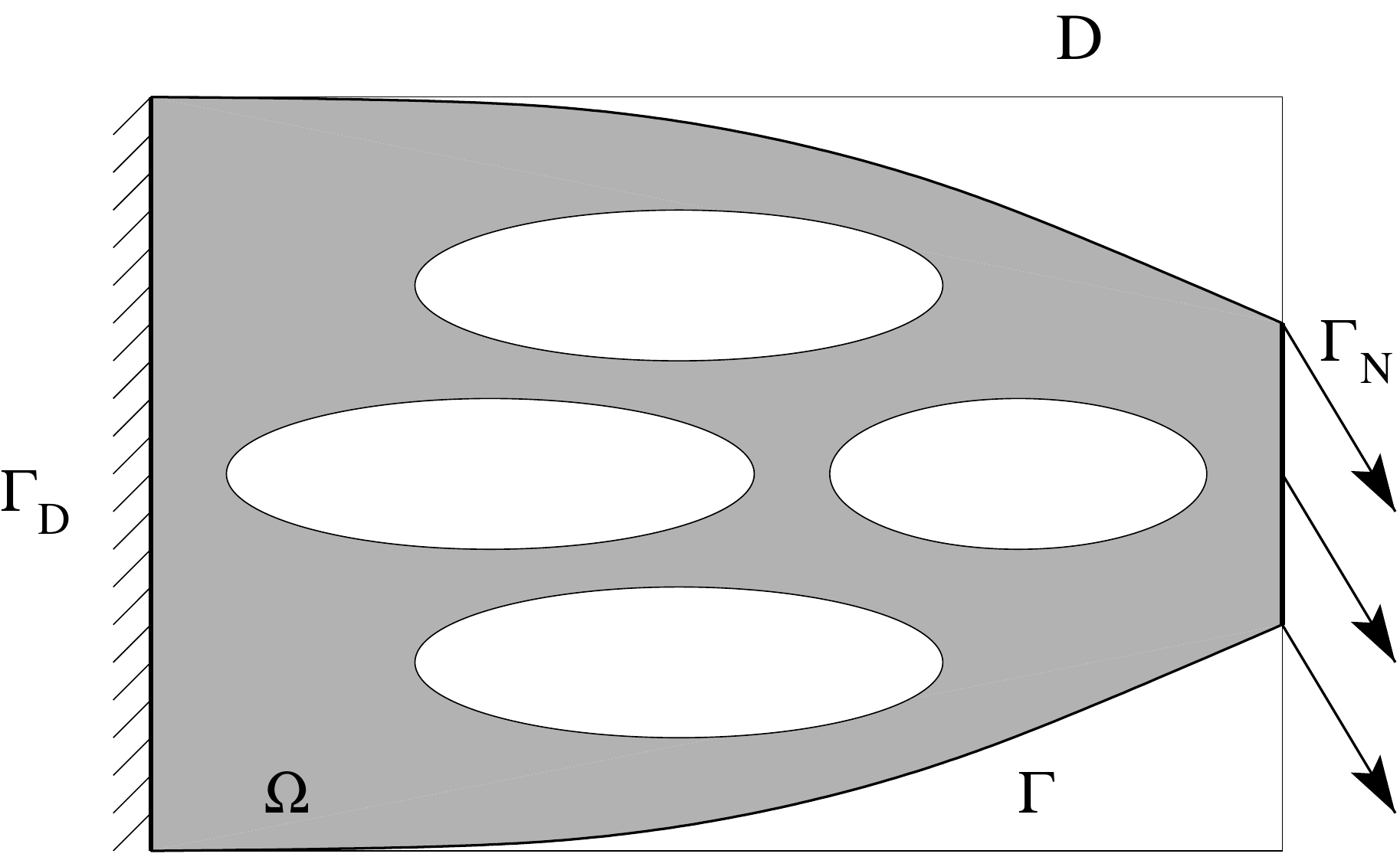}
\caption{Topology optimization of a membrane's shape.} 
\label{omega}
\end{figure}

In topology optimization, not only the connected components of the boundary $\Gamma$ are allowed to move but also new connected components (holes in $2$-d) of $\Gamma$ can appear or disappear. Topology is now optimized too. In order to solve this task, we introduce the homogenization method. The homogenization method is a kind of averaging methods for partial differential equations, and is commonly used to determine the averaged (or effective, or homogenized, or equivalent, or macroscopic) parameters of a heterogeneous medium \cite{Allaire1,BLP,cherkaev,CD,JKO,MT,TA}. 

\begin{figure}[h]
\centering
\includegraphics[width=0.8\linewidth,center]{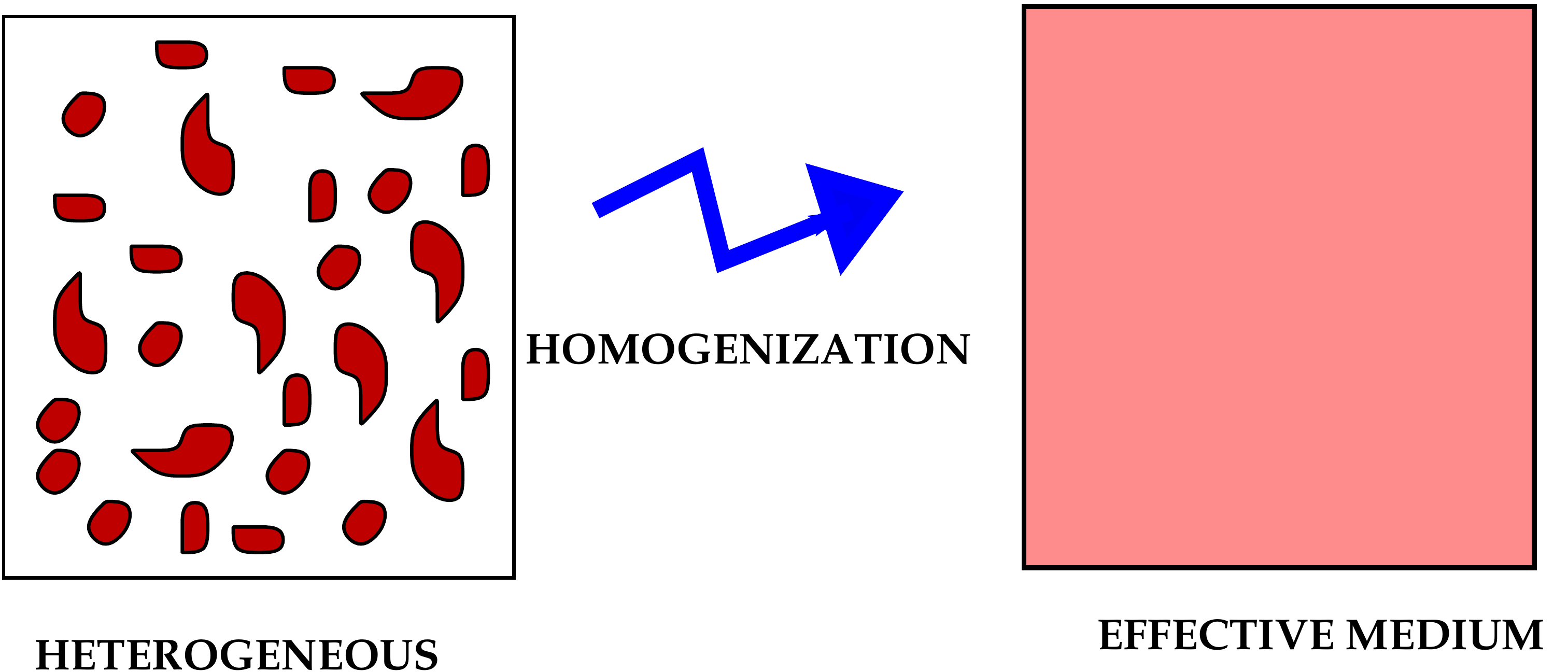}
\caption{Homogenization in a nutshell.} 
\label{homog2}
\end{figure}

How does homogenization apply to optimal design ?
The homogenization method is based on the concept of “relaxation”: it makes ill-posed problems well-posed by enlarging the space of admissible ``shapes''.
It is crucial to introduce “generalized” shapes, that are ``not too generalized''. In the homogenization method, we think of generalized shapes as “limits” of minimizing sequences of classical shapes.
We can then say that homogenization allows, as admissible shapes, composite materials obtained by micro-perforation of the original material (fine mixtures of material and void). 

\section{Lattice materials in additive manufacturing}
Additive manufacturing, also known as 3D printing, is a process that creates physical structures built layer by layer from a digital design by using metallic powder melted by a laser or an electron beam \cite{gibson}. One of the main advantages of additive manufacturing is that, a priori, there are no limitations on the structures that can be built (unfortunately, in practice there are some limitations of manufacturability, like overhangs or the possibility of thermal residual stresses). 
Moreover, one can even build microstructures or lattice materials. 

\begin{figure}[htbp]
    \begin{minipage}[t]{.45\textwidth}
        \centering
        \includegraphics[width=\textwidth]{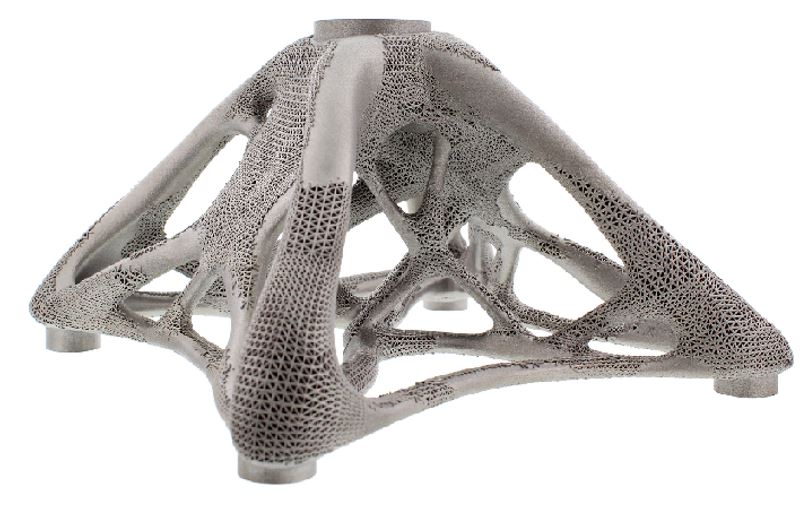}
    \end{minipage}
    \hfill
    \begin{minipage}[t]{.45\textwidth}
        \centering
        \includegraphics[width=\textwidth]{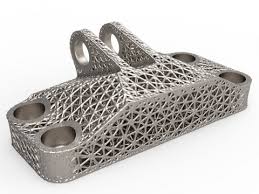}
    \end{minipage}
    \hfill
    \caption{Some examples of lattice structures.
    Left: an architectural spider bracket
    (\protect\url{https://altairenlighten.com/wp-content/uploads/2017/03/architectural-spider-bracket.jpg}).
    Right: crystallon, lattice structures in Rhino and Grasshopper (\protect\url{https://noizear.com/crystallon-lattice-structures-in-rhino-and-grasshopper})
}
\label{lattice struc}
\end{figure}

However, it is impossible to describe all the fine details of a lattice structure in a finite element model for optimization purposes. Therefore, homogenization theory is the right tool for dealing with lattice materials and related optimal design problems. In chapter \ref{chap of additive manufacturing}, we will tackle the problem of optimizing lattice structures by using the homogenization method. 

\section{Goals of these lecture notes} 
The main goal of these lecture notes is to introduce the homogenization method for topology optimization of structures. The rest of these lecture notes are organized as follows. In chapter \ref{chap1}, we show some tools in optimization and describe numerical algorithms for computing optimal designs. In chapter \ref{Parametric optimal design}, we consider parametric optimization problem and compute gradients of objective functions (by an optimal control approach) for further use in gradient-type algorithms. A representative example of parametric optimization is that of a membrane's thickness. In chapter \ref{Homogenization theory}, we provide a brief survey on homogenization theory. In chapter \ref{Topo opti homo method}, we apply the homogenization method to topology optimization. In chapter \ref{chap of additive manufacturing}, we present a resurrection of the homogenization method for the design of lattice materials in additive manufacturing. 

Through these lecture notes, numerical exercizes are proposed with the FreeFem++ code (\url{http://www.freefem.org}). FreeFem++ is a free software for solving partial differential equations by the finite element method \cite{Hecht}. Moreover, you can find some scripts of FreeFem++ for shape optimization in the web site (\url{http://www.cmap.polytechnique.fr/~allaire/freefem_en.html}) and in the corresponding educational paper  \cite{Allaire3}. 

\section{Exercises}
\begin{problem}\label{ex 1}
Solve (with FreeFem++) the elasticity equations for the following test cases: cantilever, bridge, MBB beam and L-beam (see Figure~\ref{four shapes}).
\end{problem}
\begin{figure*}[h]
        \centering
        \begin{subfigure}[b]{0.475\textwidth}
            \centering
            \includegraphics[width=\textwidth]{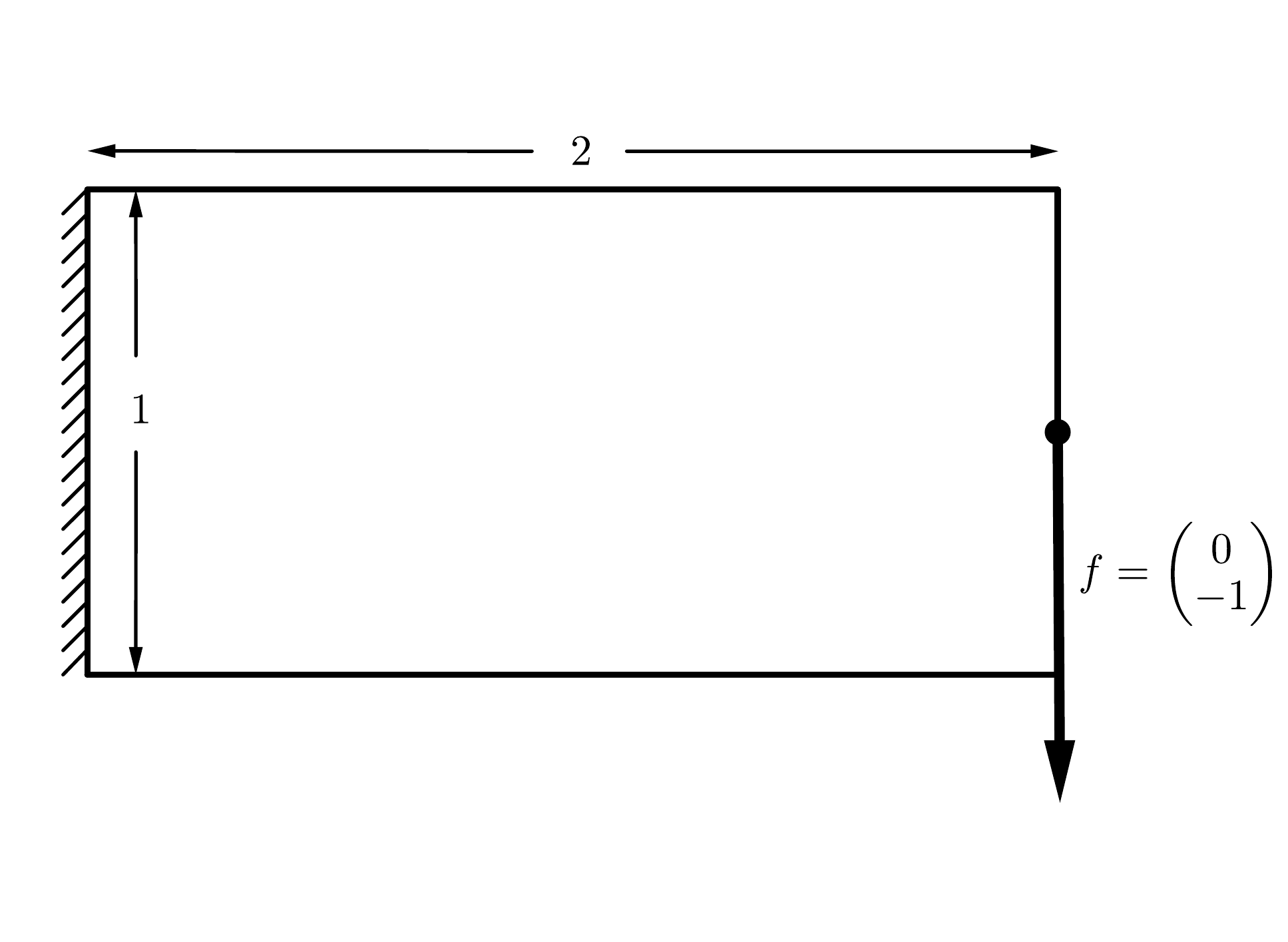}
            \caption[]%
            {{\small Cantilever}}    
        \end{subfigure}
        \hfill
        \begin{subfigure}[b]{0.475\textwidth}  
            \centering 
            \includegraphics[width=\textwidth]{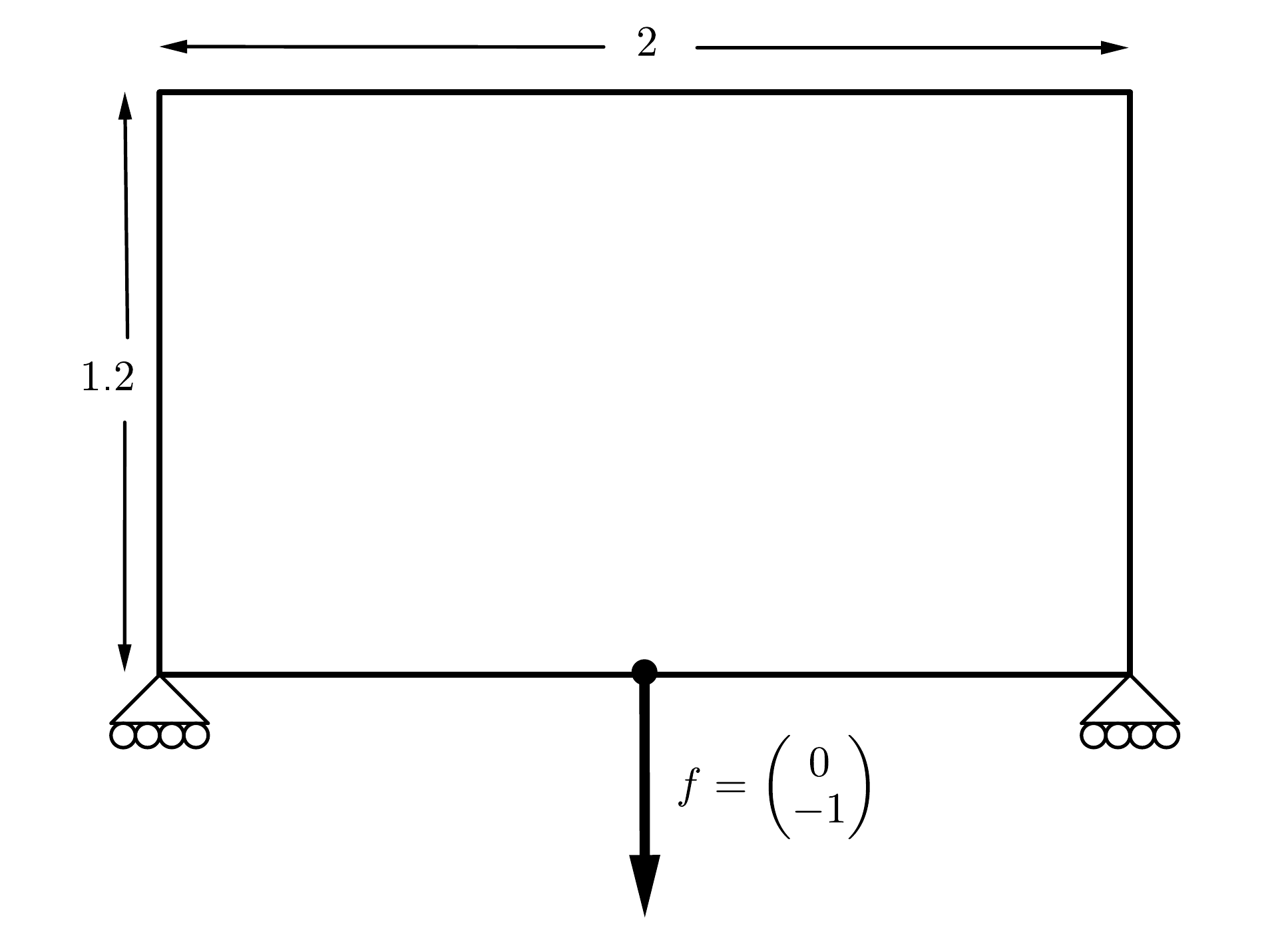}
            \caption[]%
            {{\small Bridge}}    
        \end{subfigure}
        \vskip\baselineskip
        \begin{subfigure}[b]{0.6\textwidth}   
            \centering 
            \includegraphics[width=\textwidth]{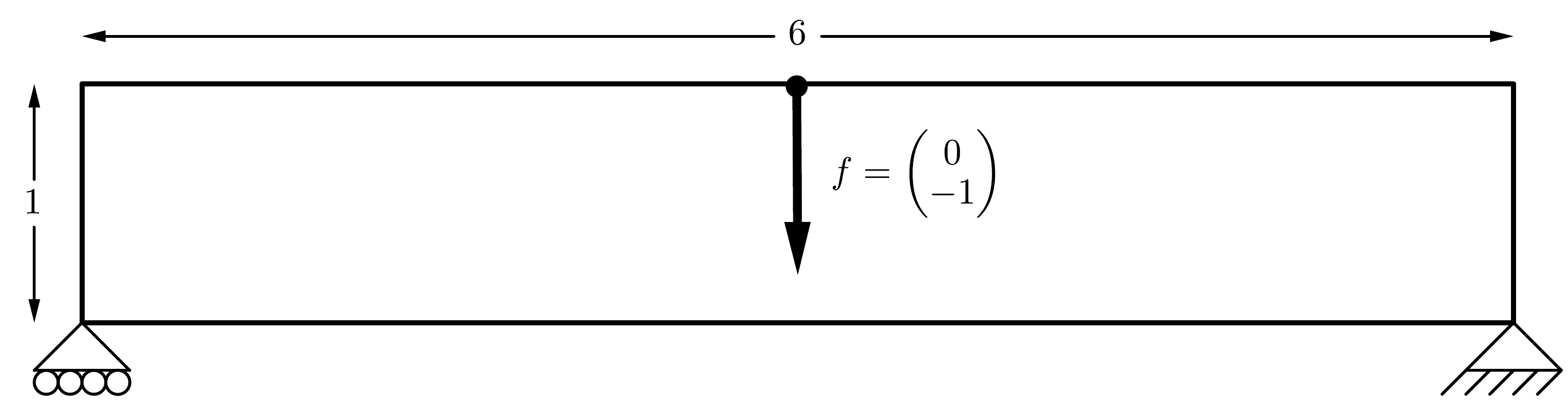}
            \caption[]%
            {{\small MBB beam}}    
        \end{subfigure}
        \quad
        \begin{subfigure}[b]{0.35\textwidth}   
            \centering 
            \includegraphics[width=\textwidth]{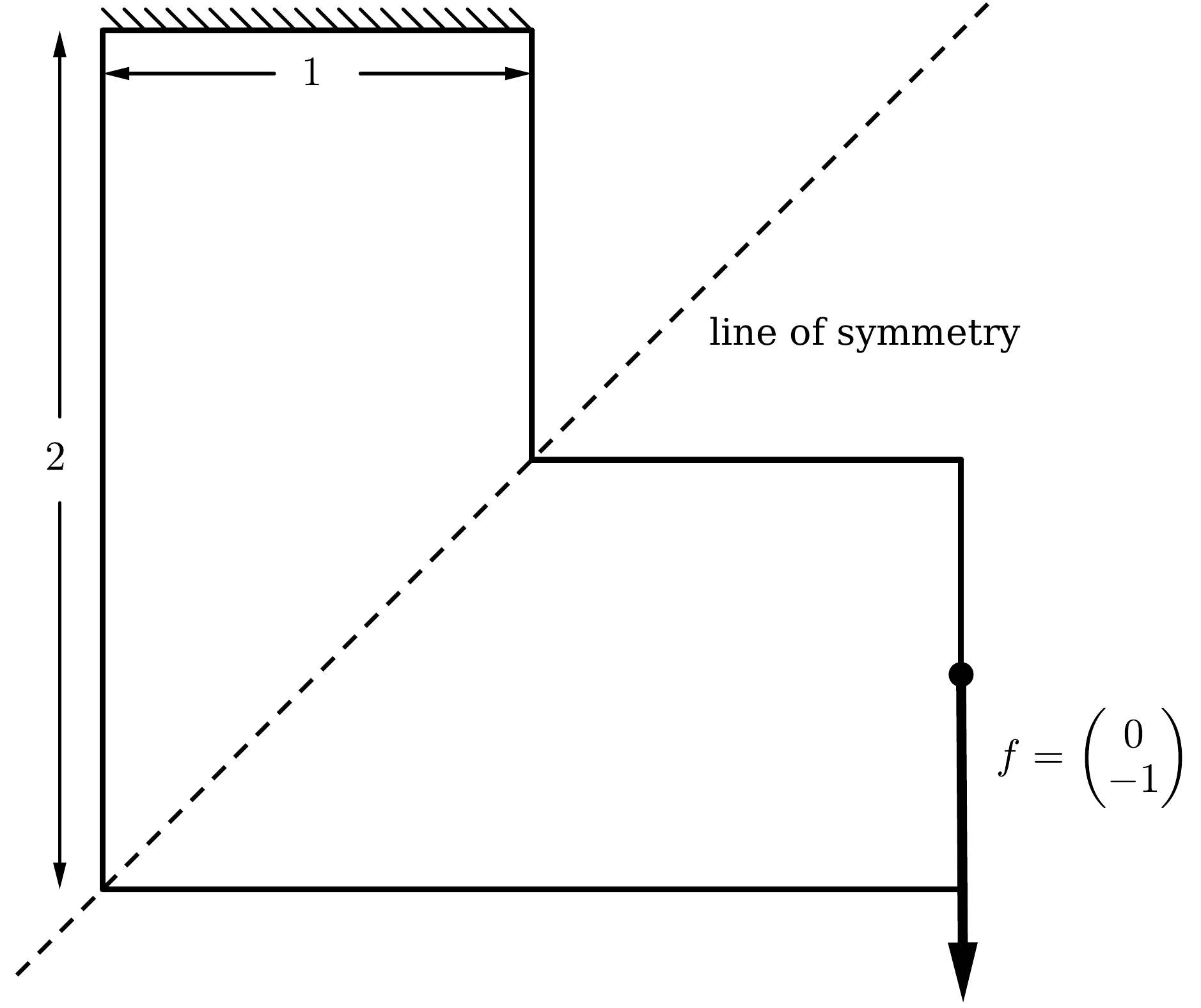}
            \caption[]%
         {{\small L-beam}}    
        \end{subfigure}
        \caption[]
        {{\small The various boundary conditions of Problem~\ref{ex 1}. Here, the loads are to be intended as acting on a very small region of the boundary, around the points represented by the full black dots.}}
      
       \label{four shapes}
    \end{figure*}

\chapter{Some tools in optimization}\label{chap1}

We review some classical result in optimization theory. 
More details can be found in textbooks like \cite{Allaire4,bonnans,ET,nocedal}.

\section{Generalities}
Let $V$ be a Banach space and $K \subset V$ be a non-empty subset. Let $J:V \to \mathbb{R}$. We consider the following minimization problem
\begin{equation*}
\inf_{v \in K 
} J(v).
\end{equation*}
Let us specify some basic definitions.
\begin{definition}
An element $u$ is called a local minimizer of $J$ on $K$ if  
\begin{equation*}
u \in K \,\, \text{and} \,\, \exists \delta > 0, \,\, \forall v \in K, \,\, \norm{v-u} < \delta \Longrightarrow J(v) \geq J(u).
\end{equation*}
Moreover, an element $u$ is called a global minimizer of $J$ on $K$ if 
\begin{equation*}
u \in K \,\, \text{and} \,\, J(v) \geq J(u) \,\, \forall v \in K. 
\end{equation*}
\end{definition}

\begin{definition}
A minimizing sequence of a function $J$ on the set $K$ is a sequence $(u^{n})_{n \in \mathbb{N}}\subset K$ such that 
\begin{equation*}
\lim_{n \to +\infty} J(u^{n}) = \inf_{v \in K} J(v).
\end{equation*}
\end{definition}
By definition of the infimum value of $J$ on $K$ there always exists at least one minimizing sequence for $J$ on $K$. 

Let us consider the existence of minima for optimization problems in finite dimension. The following result guarantees the existence of a minimum. 
\begin{theorem}\label{ex minimizer for cont J over closed K}
Let $K$ be a non-empty closed subset of $\mathbb{R}^N$ and $J$ a continuous function from $K$ to $\mathbb{R}$ satisfying the so-called ``infinite at infinity'' property, i.e., 
\begin{equation*}
\forall (u^{n})_{n\in\NN} \,\, \text{sequence in} \,\, K, \,\, \lim_{n \to +\infty} \norm{u^{n}} = +\infty \Longrightarrow \lim_{n \to +\infty} J(u^{n}) = +\infty. 
\end{equation*}
Then there exists at least one minimizer of $J$ on $K$. 
Furthermore, from each minimizing sequence of $J$ over $K$ one can extract a subsequence which converges to a minimum of $J$ on $K$. 
\end{theorem}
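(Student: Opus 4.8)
The plan is a direct compactness argument applied to a minimizing sequence. Since $K$ is non-empty, I first pick any $v_0 \in K$; then $m := \inf_{v \in K} J(v) \le J(v_0) < +\infty$, so the infimum is not $+\infty$. By the very definition of an infimum there exists a minimizing sequence $(u^n)_{n \in \NN} \subset K$ with $J(u^n) \to m$ as $n \to +\infty$, and the goal is to produce a limit point of this sequence and check that it is a minimizer.

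The key step is to show that $(u^n)_{n \in \NN}$ is bounded in $\RR^N$. I would argue by contradiction: if it were unbounded, there would be a subsequence $(u^{n_k})_k$ with $\norm{u^{n_k}} \to +\infty$, and the ``infinite at infinity'' hypothesis would then give $J(u^{n_k}) \to +\infty$. But $(J(u^{n_k}))_k$ converges to $m$, which is either a finite real number or $-\infty$, and in neither case can a subsequence of $(J(u^n))$ tend to $+\infty$, a contradiction. Hence $(u^n)$ is bounded, and by the Bolzano--Weierstrass theorem some subsequence satisfies $u^{n_k} \to u$ for a point $u \in \RR^N$. Since $K$ is closed, $u \in K$; since $J$ is continuous on $K$, $J(u) = \lim_k J(u^{n_k}) = m$. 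In particular $m = J(u)$ is finite and attained, so $u$ is a global minimizer of $J$ on $K$.

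For the last assertion, I would observe that the boundedness argument above used only that $(u^n)$ is a minimizing sequence, so it applies to an \emph{arbitrary} minimizing sequence; extracting a convergent subsequence, using that $K$ is closed for the membership of the limit, and using the continuity of $J$ to pass to the limit in $J(u^{n_k}) \to m$, one concludes that the limit is a minimum of $J$ on $K$. There is no genuine obstacle in this proof; the only point requiring a little care is the contradiction in the second step, where one must make sure the reasoning is valid even if $m = -\infty$ a priori (it is, since a convergent or divergent-to-$-\infty$ real sequence cannot have a subsequence tending to $+\infty$), and of course one relies essentially on the local compactness of $\RR^N$, which is why the finite-dimensionality hypothesis cannot be dropped.
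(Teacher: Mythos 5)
Your proof is correct and follows essentially the same route as the paper: take a minimizing sequence, use the ``infinite at infinity'' property to show it is bounded, apply Bolzano--Weierstrass in $\RR^N$, and conclude via the closedness of $K$ and the continuity of $J$. Your extra care about the case $m=-\infty$ (ruling it out only a posteriori) is a small but welcome refinement over the paper's phrasing, which asserts that $(J(u^n))$ is bounded without comment.
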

\begin{proof}
Let $(u^n)_{n\in\NN}$ be a minimizing sequence for $J$ over $K$. In particular, since $J$ is infinite at infinity and the sequence $\left(J(u^n)\right)_{n\in\NN}$ is bounded, we conclude that $(u^n)_{n\in\NN}$ must be bounded as well. Therefore, since closed bounded sets are compact in finite dimension, there exists a subsequence $\left(u^{n_k}\right)_{k\in\NN}$ that converges to a point $u\in \rn$. Now, $u\in K$ because $K$ is closed, and $J(u^{n_k})$ converges to $J(u)$ by continuity. We conclude that 
$\displaystyle J(u)=\lim_{k\to\infty}J(u^{n_k})=\inf_{K} J$.

\end{proof}

\begin{remark}\label{no minimum in inf dim space}
In an infinite dimensional vector space, a continuous function on a closed bounded set does not necessarily attain its minimum. For example, let $H^{1}(0,1)$ be the usual Sobolev space with the norm $\displaystyle\norm{v} = \left(\int^{1}_{0} (v'(x)^{2} + v(x)^{2}) dx \right)^\frac{1}{2}$. Let 
\begin{equation*}
J(v) = \int^{1}_{0} \left( (|v'(x)|-1)^{2} + v(x)^{2} \right) dx. 
\end{equation*}
One can check that $J$ is continuous and ``infinite at infinity''. 
Nevertheless the minimization problem 
\begin{equation*}
\inf_{v \in H^{1}(0,1)} J(v)
\end{equation*}
does not admit a minimizer. Indeed, there exists no $v \in H^{1}(0,1)$ such that $J(v) = 0$ but, still, 
\begin{equation*}
\inf_{v \in H^{1}(0,1)} J(v) = 0. 
\end{equation*}
To obtain it, we construct a minimizing sequence $(u^{n})_{n\in\NN}$ defined for, $n \geq 1$, by 
\begin{equation*}
u^{n}(x) = 
\left\{
\begin{aligned}
  &x - \frac{k}{n} && \text{ if } \ \frac{k}{n} \leq x \leq \frac{2k+1}{2n}, \\
  &\frac{k+1}{n} - x && \text{ if } \ \frac{2k+1}{2n} \leq x \leq\frac{k+1}{n}
\end{aligned}
\right.
\ \text{ for } \  0\le k\le n-1, 
\end{equation*}
as Figure \ref{zigzag}. 
\begin{figure}[h]
\centering
\includegraphics[width=0.9\linewidth,center]{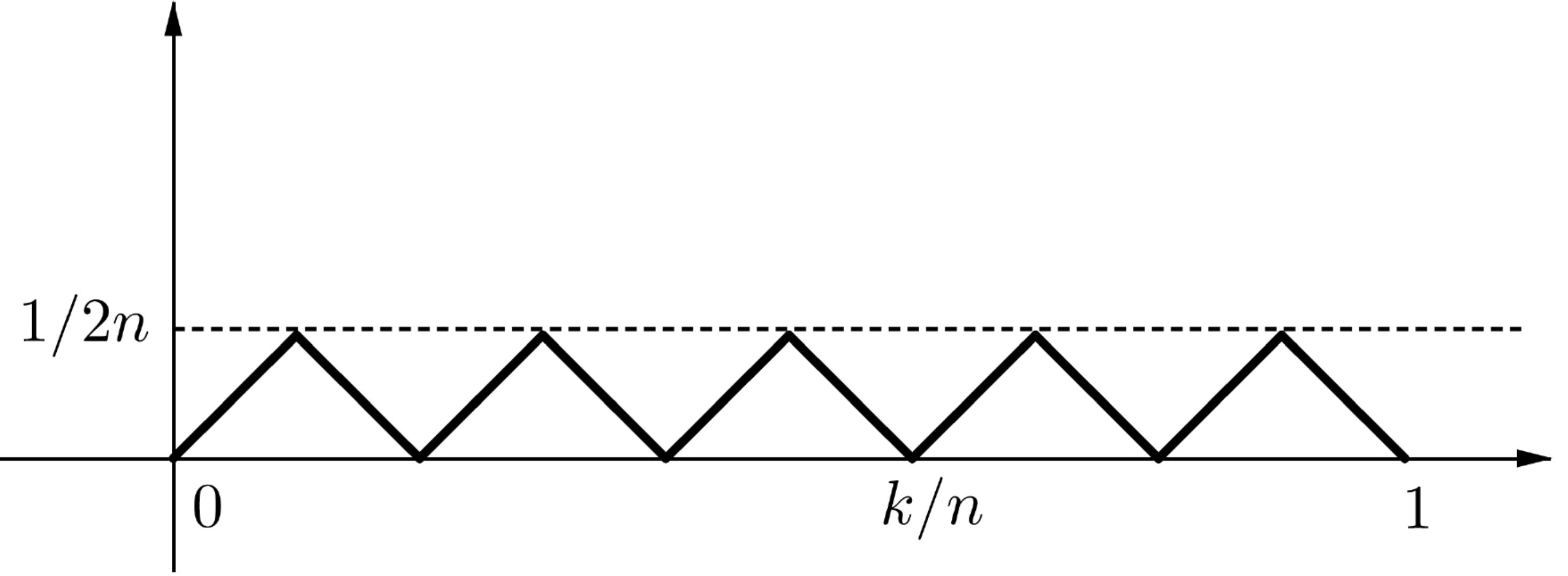}
\caption{The function $u^n$ for $n=5$.
} 
\label{zigzag}
\end{figure}

We can easily check that $u^{n} \in H^{1}(0,1)$ and $(u^{n})' = \pm 1$. Consequently, 
\begin{equation*}
J(u^{n}) = \int^{1}_{0} u^{n}(x)^{2} dx = \frac{1}{12 n^2} \to 0. 
\end{equation*}
We clearly see in this example that the minimizing sequence $(u^{n})_{n\in\NN}$ is ``oscillating'' more and more and it is not compact in $H^{1}(0,1)$ despite being bounded in the same space. 
\end{remark}

\section{Convex analysis}
As we have seen in Remark \ref{no minimum in inf dim space}, continuous functions do not necessarily attain their minimum on a bounded closed set. In order to extend the result of Theorem \ref{ex minimizer for cont J over closed K} to the case of an infinite dimensional Hilbert space, we shall work in a convex framework.

\begin{definition}
A set $K \subset V$ is said to be convex if, for any $x,y \in K$ and for any $\theta \in [0,1]$, the linear combination $\theta x + (1-\theta)y$ belongs to $K$. 
\end{definition}

\begin{definition}
A function $J$, defined from a non-empty convex set $K \subset V$ into $\mathbb{R}$ is convex on $K$ if 
\begin{equation}\label{eq strictly convex}
J(\theta u + (1-\theta) v) \leq \theta J(u) + (1-\theta) J(v) \quad \forall u,v \in K, \forall \theta \in [0,1]. 
\end{equation}
Furthermore, $J$ is said to be strictly convex if the inequality above is strict whenever $u \neq v$ and $\theta \in (0,1)$. 
\end{definition}

\begin{theorem}\label{existence of min in infinite dim convex space}
Let $K$ be a non-empty closed convex set in a reflexive Banach space $V$ (i.e. the dual of $V'$ is $V$ itself), and $J$ be a convex continuous function on $K$, which is ``infinite at infinity", i.e., 
\begin{equation*}
\forall (u^{n})_{n \in\NN} \,\, \text{sequence in} \,\, K, \lim_{n \to +\infty} \norm{u^{n}} = +\infty \Longrightarrow \lim_{n \to +\infty} J(u^{n}) = +\infty. 
\end{equation*}
Then, there exists a minimizer of $J$ in $K$. 
\end{theorem}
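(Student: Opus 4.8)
The plan is to run the direct method of the calculus of variations, trading the strong topology of $V$ — which, as Remark~\ref{no minimum in inf dim space} shows, fails to make bounded sets compact in infinite dimensions — for the weak topology, where reflexivity restores the missing compactness. First I would fix an arbitrary minimizing sequence $(u^{n})_{n\in\NN}\subset K$; such a sequence exists by the very definition of the infimum. Since the real sequence $(J(u^{n}))_{n\in\NN}$ converges, it is bounded, and the ``infinite at infinity'' hypothesis then forbids any subsequence of $(u^{n})_{n\in\NN}$ from having norm tending to $+\infty$; hence $(u^{n})_{n\in\NN}$ is bounded in $V$.

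Next I would use reflexivity of $V$: by the Banach--Alaoglu--Kakutani theorem a bounded sequence in a reflexive Banach space admits a weakly convergent subsequence, so there are $u\in V$ and a subsequence with $u^{n_{k}}\rightharpoonup u$. To see that $u\in K$, I would invoke that $K$, being convex and strongly closed, is weakly closed: by Mazur's lemma the weak limit $u$ is a strong limit of convex combinations of the $u^{n_{k}}$, all of which lie in $K$, so $u\in\overline{K}=K$.

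The heart of the argument is weak lower semicontinuity of $J$, i.e. $J(u)\le\liminf_{k\to\infty}J(u^{n_{k}})$. I would derive it from convexity together with strong continuity: by Mazur's lemma $u$ is a strong limit of convex combinations of terms $u^{n_{k}}$ with $k$ arbitrarily large, and for any such convex combination the convexity inequality \eqref{eq strictly convex} bounds its value of $J$ by the largest of the corresponding $J(u^{n_{k}})$, which is as close as we like to $\inf_{K}J$; letting these combinations converge strongly to $u$ and using continuity of $J$ yields $J(u)\le\inf_{K}J$. Since also $\inf_{K}J\le J(u)$ because $u\in K$, we conclude $J(u)=\inf_{K}J$, so $u$ is the desired minimizer.

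The main obstacle is precisely this switch to the weak topology: nothing in the statement promises good behavior of either $K$ or $J$ under weak convergence, so both weak closedness of $K$ and weak lower semicontinuity of $J$ must be squeezed out of convexity alone — this is where Mazur's lemma does all the work, and it is also why the convexity hypothesis cannot be dropped (the zigzag sequence of Remark~\ref{no minimum in inf dim space} is the prototypical failure when convexity is absent). A secondary technical point worth flagging is the use of \emph{sequential} weak compactness rather than weak compactness of nets; in a reflexive space this is legitimate via the Eberlein--\v{S}mulian theorem, but for a self-contained exposition one may simply add the harmless assumption that $V$ is separable, or pass to the closed separable subspace spanned by the minimizing sequence.
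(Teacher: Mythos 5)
Your proof is correct and complete. The paper itself does not prove this theorem: it only remarks that the argument parallels the finite-dimensional case but requires weak convergence, and refers to \cite[Theorem 9.2.7 and Remark 9.2.9]{Allaire4}. Your argument is precisely the standard direct method that this reference contains: boundedness of a minimizing sequence from the ``infinite at infinity'' hypothesis, weak sequential compactness from reflexivity (with the correct caveat about Eberlein--\v{S}mulian, or the reduction to the closed separable subspace spanned by the sequence), and then convexity to recover what the weak topology destroys. The one packaging difference worth noting is that the usual textbook route establishes weak closedness of $K$ by writing it as an intersection of closed half-spaces, and weak lower semicontinuity of $J$ by applying the same fact to the epigraph of $J$ (closed and convex since $J$ is continuous and convex); you instead extract both from Mazur's lemma. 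These are two faces of the same Hahn--Banach separation argument, so nothing is gained or lost, but your version has the small virtue of making explicit why the argument works for a \emph{minimizing} sequence without first proving full weak lower semicontinuity: you only need the tail version of Mazur's lemma (convex combinations of $u^{n_k}$ with $k\ge m$) together with Jensen's inequality, and the fact that $J(u^{n_k})$ already converges to $\inf_K J$. Just make sure, when you write this up, to state that tail version explicitly, since the ``convex hull of the whole sequence'' version of Mazur's lemma would only give $J(u)\le\sup_k J(u^{n_k})$, which is not enough.
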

\begin{remark}
Theorem \ref{existence of min in infinite dim convex space} remains true if $V$ is just the dual of some separable Banach space. In particular it holds true when $V=L^p(\Omega)$ with $1<p\le \infty$.
\end{remark}
The proof will follow along the same lines as that of Theorem \ref{ex minimizer for cont J over closed K}. However, the infinite dimensional case is much more delicate, since it relies on \emph{weak convergence} and its relations with convexity. We refer to \cite[Theorem 9.2.7 and Remark 9.2.9]{Allaire4} for a complete proof.

\begin{proposition}\label{prop uniq minimizer for str convex J}
Under the hypotheses of Theorem \ref{existence of min in infinite dim convex space}, suppose that $J$ is strictly convex. Then $J$ has at most one minimizer. 
\end{proposition}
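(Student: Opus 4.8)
The plan is to argue by contradiction, exploiting the strict convexity inequality~\eqref{eq strictly convex} directly. Theorem~\ref{existence of min in infinite dim convex space} already provides the existence of at least one minimizer, so only uniqueness is at stake.

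First I would suppose that $J$ admits two distinct minimizers $u_1, u_2 \in K$ with $u_1 \neq u_2$, and let $m = \inf_{v \in K} J(v) = J(u_1) = J(u_2)$. Next I would form the midpoint $w = \tfrac{1}{2} u_1 + \tfrac{1}{2} u_2$, which belongs to $K$ because $K$ is convex. Applying strict convexity with $\theta = \tfrac{1}{2}$ (which lies in the open interval $(0,1)$) and using $u_1 \neq u_2$, I get the strict inequality
\begin{equation*}
J(w) < \tfrac{1}{2} J(u_1) + \tfrac{1}{2} J(u_2) = \tfrac{1}{2} m + \tfrac{1}{2} m = m.
\end{equation*}
This contradicts the fact that $m$ is the infimum of $J$ over $K$, since $w \in K$ would then satisfy $J(w) < m$. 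Hence no two distinct minimizers can exist, i.e. $J$ has at most one minimizer.

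There is essentially no serious obstacle here: the argument is a two-line application of the definition of strict convexity, and the only things to be careful about are that the midpoint stays in $K$ (needs convexity of $K$, which is a hypothesis of Theorem~\ref{existence of min in infinite dim convex space}) and that $\theta = \tfrac12 \in (0,1)$ so that the strict form of the inequality applies. One small remark worth making is that this proposition does not by itself guarantee existence — that is exactly why it is phrased ``at most one'' and relies on Theorem~\ref{existence of min in infinite dim convex space} for the complementary ``at least one'' statement, so combining the two yields existence and uniqueness of the minimizer under the stated hypotheses plus strict convexity.
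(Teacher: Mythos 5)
Your proof is correct and is essentially identical to the paper's: both argue by contradiction, take the midpoint of two distinct minimizers, and apply strict convexity with $\theta = \tfrac{1}{2}$ to contradict minimality. Nothing to add.
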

\begin{proof}
Suppose by contradiction that $u_1\neq u_2$ are two distinct minimizers of $J$ over the closed strictly convex set $K$. If we take $\theta=1/2$ in \eqref{eq strictly convex}, then we get 
\begin{equation*}
J\left(\frac{u_1+u_2}{2}\right)<\frac{1}{2}J(u_1)+\frac{1}{2}J(u_2)=\min_{v\in K} J(v),
\end{equation*}
which is contradicts the definition of minimum.
\end{proof}

\begin{proposition} \label{prop:global minimizer}
If $J$ is convex on the convex set $K$, then any local minimizer of $J$ on $K$ is a global minimizer. 
\end{proposition}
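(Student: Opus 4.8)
The plan is to argue by contradiction using the convexity inequality \eqref{eq strictly convex} directly. Suppose $u$ is a local minimizer of $J$ on $K$ but not a global one. By the definition of local minimizer, there exists $\delta > 0$ such that $J(v) \geq J(u)$ for all $v \in K$ with $\norm{v-u} < \delta$. Since $u$ is not a global minimizer, there exists some $w \in K$ with $J(w) < J(u)$.

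The key idea is to pull $w$ towards $u$ along the segment joining them, which stays inside $K$ by convexity, until it enters the ball of radius $\delta$ around $u$. Concretely, for $\theta \in (0,1]$ set $v_\theta = \theta w + (1-\theta) u \in K$. Then $\norm{v_\theta - u} = \theta \norm{w-u}$, so choosing $\theta$ small enough (say $\theta < \delta / (1 + \norm{w-u})$) ensures $v_\theta$ lies in the $\delta$-ball, hence $J(v_\theta) \geq J(u)$. On the other hand, convexity of $J$ gives
\begin{equation*}
J(v_\theta) \leq \theta J(w) + (1-\theta) J(u) < \theta J(u) + (1-\theta) J(u) = J(u),
\end{equation*}
where the strict inequality uses $J(w) < J(u)$ and $\theta > 0$. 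This contradicts $J(v_\theta) \geq J(u)$, so $u$ must be a global minimizer.

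There is no real obstacle here; the only point requiring a little care is to choose $\theta$ small enough that $v_\theta$ genuinely falls within the neighborhood where the local-minimality inequality applies, and to keep track that $\theta \in (0,1]$ so that $v_\theta \in K$ and the strict convexity-type estimate (strict because of the assumption $J(w)<J(u)$, not because of strict convexity of $J$) is valid. Note that strict convexity of $J$ is not needed for this statement — plain convexity suffices.
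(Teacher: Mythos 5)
Your proof is correct and is essentially the same argument as the paper's: both move along the segment from $u$ toward the competitor $w$ until it enters the $\delta$-ball where local minimality applies, then invoke the convexity inequality; you merely phrase it as a contradiction while the paper derives $J(u)\le J(w)$ directly. Your explicit choice of $\theta$ strictly inside the ball is, if anything, slightly more careful than the paper's.
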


\begin{proof}
Let $u $ be a local minimizer of $J$ on $K$.  
Thus there exists $\delta > 0$ such that $J(v) \ge J(u)$ for any $v\in K\cap B(u,\delta)$.
 Let $w\in K\setminus B(u,\delta)$ . Our aim is to show that $J(w)\ge J(u)$. 
 Let $\theta\in (0,1]$ be such that $u+\theta (w-u)\in B(u,\delta)$.
 For example we can take $\theta = \frac{\delta}{\|w-u\|_K}$ . 
 Since $u+\theta (w-u)\in B(u,\delta)$, 
 it follows that $J(u)\le J(u+\theta(w-u))$, and  by Jensen's inequality
\[J(u)\le J(u+\theta(w-u))\le(1-\theta)J(u)+\theta J(w).\]
Thus, $J(u )\le J(w) $ follows.
\end{proof}

Convexity is not the only tool to prove existence of minimizers. Another method is, for example, compactness. 

\section{Optimality conditions}

In this section, we discuss optimality conditions for objective functions.

\begin{definition}
Let $V$ be a Banach space. A function $J$, defined from a neighborhood of $u \in V$ into $\mathbb{R}$, is said to be differentiable in the sense of Fr\'echet at $u$ if there exists $L \in V'$ such that 
\begin{equation*}
J(u + w) = J(u) + L(w) + o(w) \,\, \text{with} \,\, \lim_{w \to 0} \frac{\abs{o(w)}}{\norm{w}}=0.
\end{equation*}
We call $L$ the differential (or derivative, or gradient) of $J$ at $u$ and we denote it by 
\begin{equation}\label{L is dare?}
L = J'(u), \quad \text{ or }\quad L(w) = \langle J'(u), w\rangle_{V',V}. 
\end{equation}
\end{definition}

\begin{remark}
If $V$ is a Hilbert space, its dual $V'$ can be identified with $V$ itself thanks to the Riesz representation theorem. Thus, there exists a unique $p \in V$ such that $\langle p, w \rangle = L(w)$. We also write $p = J'(u)$. We use this identification $V = V'$ if $V = \mathbb{R}^n$ or $V = L^{2}(\Omega)$.
In practice, it is often easier to compute the directional derivative $j'(0) = \langle J'(u), w \rangle_{V',V}$ with $j(t) = J(u+tw)$. 
\end{remark}

Consider the variational formulation 
\begin{equation}\label{varaitional formulation}
\text{find} \,\, u \in V \,\, \text{such that} \,\, a(u,w) = L(w) \quad \forall w \in V, 
\end{equation}
where $a$ is a symmetric coercive continuous bilinear form and $L$ is a continuous linear form. By the Lax--Milgram theorem we know that the variational formulation \eqref{varaitional formulation} admits a unique solution. Let us now define the energy 
\begin{equation*}
J(v) = \frac{1}{2}a(v,v) - L(v). 
\end{equation*}
The following lemma tells us the relationship between the energy $J$ and the variational formulation \eqref{varaitional formulation}. 
\begin{lemma}
Let $u \in V$ be the unique solution of the variational formulation \eqref{varaitional formulation}. Then $u$ is the unique minimizer of $J$, that is, 
\begin{equation*}
J(u) = \min_{v \in V} J(v). 
\end{equation*}
Conversely, if $u \in V$ is a point giving an energy minimum of $J(v)$, then $u$ is the unique solution of the variational formulation \eqref{varaitional formulation}. 
\end{lemma}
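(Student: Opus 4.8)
The plan is to prove both directions by exploiting the fact that $a$ is symmetric, so that $J$ is (up to an additive constant) a quadratic form whose stationarity is exactly the variational formulation. First I would fix $u\in V$ solving \eqref{varaitional formulation} and, for an arbitrary $v\in V$, write $v = u + w$ with $w = v-u$. Expanding and using bilinearity and symmetry of $a$,
\begin{equation*}
J(u+w) = \frac{1}{2}a(u,u) + a(u,w) + \frac{1}{2}a(w,w) - L(u) - L(w) = J(u) + \bigl(a(u,w) - L(w)\bigr) + \frac{1}{2}a(w,w).
\end{equation*}
Since $u$ solves \eqref{varaitional formulation}, the middle term vanishes, so $J(u+w) = J(u) + \frac{1}{2}a(w,w)$. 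Coercivity of $a$ gives $a(w,w)\ge \alpha\norm{w}^2 \ge 0$, hence $J(v)\ge J(u)$ for all $v$, with equality forcing $a(w,w)=0$, i.e. $w=0$ by coercivity; thus $u$ is the unique minimizer.

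For the converse, suppose $u\in V$ minimizes $J$. For any $w\in V$ consider $j(t) = J(u+tw)$; by the expansion above, $j(t) = J(u) + t\bigl(a(u,w)-L(w)\bigr) + \frac{t^2}{2}a(w,w)$, a polynomial in $t$ with a minimum at $t=0$. Differentiating and setting $j'(0)=0$ yields $a(u,w) - L(w) = 0$. Since $w\in V$ was arbitrary, $u$ solves \eqref{varaitional formulation}, and uniqueness follows from Lax--Milgram (or, again, from strict convexity of $J$ via coercivity).

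There is no real obstacle here: the only thing to be careful about is using symmetry of $a$ in the cross-term (so that $a(u,w)+a(w,u) = 2a(u,w)$) and invoking coercivity both for the inequality $J(v)\ge J(u)$ and for the uniqueness of the minimizer. One could alternatively phrase the converse as a Fréchet-derivative computation, noting $J'(u)(w) = a(u,w)-L(w)$ and using that a local minimizer of a differentiable functional on the whole space $V$ has vanishing derivative, but the one-variable reduction via $j(t)$ is cleaner and self-contained.
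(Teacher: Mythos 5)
Your proof is correct and follows essentially the same route as the paper: expand $J(u+w)$ using bilinearity and symmetry to get $J(u)+\tfrac{1}{2}a(w,w)$ for the first direction, and reduce to the one-variable function $j(t)=J(u+tw)$ with $j'(0)=0$ for the converse. The only difference is that you explicitly use coercivity to show equality forces $w=0$, which actually justifies the word \emph{unique} in the statement more carefully than the paper's own proof does.
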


\begin{proof}
If $u$ is the solution of the variational formulation \eqref{varaitional formulation}, then thanks to the symmetry of $a$ we have 
\begin{equation*}
J(u + v) = J(u) + \frac{1}{2}a(v,v) + a(u,v) - L(v) = J(u) + \frac{1}{2} a(v,v) \geq J(u). 
\end{equation*}
As $u + v$ is arbitrary in $V$, $u$ minimizes the energy $J$ in $V$. 

Conversely, let $u \in V$ be such that 
\begin{equation*}
J(u) = \min_{v \in V} J(v). 
\end{equation*}
For $v \in V$ we define $j(t) = J(u+tv)$. Then \begin{equation*}
j(t) = \frac{t^2}{2} a(v,v) + t \left( a(u,v) - L(v) \right) + J(u). 
\end{equation*}
We differentiate $t \mapsto j(t)$, 
\begin{equation*}
j'(t) = ta(v,v) + \left( a(u,v) - L(v) \right). 
\end{equation*}
By definition, $j'(0) = \langle J'(u), v \rangle_{V',V}$, thus 
\begin{equation*}
\langle J'(u), v \rangle_{V',V} = a(u,v) - L(v). 
\end{equation*} 
Since $t = 0$ is a minimum point of $j$, we have $a(u,v) = L(v)$ for all $v \in V$.  
\end{proof}

\begin{remark}
When computing the Fr\'echet differential of a given functional $J$ at $u$ (see the definition of $L$ and $w\mapsto L(w)$ in \eqref{L is dare?}), there is not always an obvious way to deduce a formula for $J'(u)$, nevertheless most of the time it is enough to know the mapping $w\mapsto \langle J'(u),w\rangle$.
\end{remark}
\begin{example}\label{ex some derivatives}
\begin{enumerate}
\item For fixed $f\in L^2(\Om)$, define
\begin{equation*}
J(v)=\int_\Om \left( \frac{1}{2} v^2-fv\right)\,dx, \quad v\in L^2(\Omega).
\end{equation*}
 We have
\begin{equation*}
\langle J'(u),w\rangle= \int_\Omega (uw-fw) \,dx.
\end{equation*}
Thus 
\begin{equation*}
J'(u)=u-f \in L^2(\Omega).
\end{equation*}
Notice that here we identified $L^2(\Omega)$ with its dual.
\item For fixed $f\in L^2(\Omega)$ define 
\begin{equation*}
J(v)= \int_\Om \left( \frac{1}{2} |\gr v|^2-fv\right)\,dx,\quad  v\in H_0^1(\Omega).
\end{equation*}
We have
\begin{equation*}
\langle J'(u),w\rangle = \int_\Omega \left( \gr u \cdot \gr w- fw\right) dx.
\end{equation*}
Therefore, by the usual definition of the duality pairing between $H_0^1(\Omega)$ and $H^{-1}(\Omega)$ (that comes from a formal integration by parts) we get
\begin{equation*}
J'(u)=-\Delta u - f \in H^{-1}(\Omega)=\left( H_0^1(\Omega)\right)'.
\end{equation*}
Notice that here the space $H_0^1(\Omega)$ is {\bf not} identified with its dual.
\end{enumerate}
\end{example}

\begin{remark}\label{change of scalar prod}
If instead of the ``usual'' scalar product in $L^2$ we rather use the $H^1$ scalar product in the second part of Example \ref{ex some derivatives}, then we have to identify $J'(u)$ with a different function (in other words, the definition of $J'(u)$ depends on the scalar product used).
From the directional derivative
\begin{equation*}
\langle J'(u),w\rangle= \int_\Omega (\gr u\cdot \gr w - fw)\, dx, 
\end{equation*}
using the $H^1$ scalar product $\displaystyle\langle\phi,w\rangle= \int_\Omega (\gr \phi\cdot \gr w + \phi w) \,dx$, we deduce that 
\begin{equation*}
-\Delta J'(u) + J'(u)= -\Delta u - f, \quad J'(u)\in H_0^1(\Om)
\end{equation*}
in the distributional sense.
Here we identify $H_0^1(\Omega)$ with its dual.
\end{remark}


\begin{theorem}[Euler inequality]\label{thm Euler ineq}
Let $K$ be a convex Banach space. Take $u\in K$ and let $J: K\to \RR$ be differentiable at $u$. If $u$ is a local minimizer of $J$ in $K$, then 
\begin{equation}\label{Euler inequality}
\langle J'(u),v-u \rangle\ge0 \quad \forall v\in K.
\end{equation}
On the other hand, if $u\in K$ satisfies this inequality and $J$ is convex, then $u$ is a global minimizer of $J$ in $K$.
\end{theorem}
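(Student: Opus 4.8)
The plan is to prove the two implications separately, both by reducing to a one-variable argument along segments, exactly the technique already used in the proof of the preceding lemma (where $j(t) = J(u+tv)$ was differentiated).

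\emph{First implication.} Suppose $u$ is a local minimizer of $J$ in $K$. Fix an arbitrary $v \in K$. Since $K$ is convex, the segment $u + t(v-u) = (1-t)u + tv$ lies in $K$ for all $t \in [0,1]$. Define $j(t) = J(u + t(v-u))$ on $[0,1]$. Because $u$ is a local minimizer and because $\| (u+t(v-u)) - u\| = t\|v-u\| \to 0$ as $t \to 0^+$, there is some $t_0 > 0$ with $j(t) \ge j(0)$ for all $t \in [0, t_0]$. Hence the one-sided derivative at $0$ satisfies $j'(0^+) \ge 0$. Now I invoke the chain rule / definition of the Fr\'echet differential: writing $w = v - u$, the expansion $J(u + tw) = J(u) + t\langle J'(u), w\rangle + o(t)$ gives $j'(0) = \langle J'(u), v - u\rangle$. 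Combining, $\langle J'(u), v-u\rangle \ge 0$, which is \eqref{Euler inequality}. The only mildly delicate point is justifying $j'(0) = \langle J'(u), v-u\rangle$ from the Fr\'echet differentiability hypothesis, but this is immediate: set $w = t(v-u)$ in the definition, divide by $t$, and let $t \to 0$.

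\emph{Second implication.} Now suppose $J$ is convex and $u \in K$ satisfies $\langle J'(u), v-u\rangle \ge 0$ for all $v \in K$. Fix $v \in K$ and again set $j(t) = J(u + t(v-u))$ on $[0,1]$. Convexity of $J$ (together with convexity of $K$) makes $j$ a convex function of $t$; a standard consequence is that $j(1) \ge j(0) + j'(0)$, i.e. $J(v) \ge J(u) + \langle J'(u), v-u\rangle$. By hypothesis the last term is $\ge 0$, so $J(v) \ge J(u)$. Since $v \in K$ was arbitrary, $u$ is a global minimizer. Alternatively one can avoid even naming $j$ and argue directly: by convexity, for $\theta \in (0,1]$,
\[
J(u + \theta(v-u)) \le (1-\theta)J(u) + \theta J(v),
\]
so $\frac{J(u+\theta(v-u)) - J(u)}{\theta} \le J(v) - J(u)$; letting $\theta \to 0^+$ the left side tends to $\langle J'(u), v-u\rangle \ge 0$, yielding $J(v) \ge J(u)$.

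\emph{Main obstacle.} There is no serious obstacle here; the content is entirely the passage from a local/infinitesimal inequality to the gradient pairing and, in the converse direction, the use of the subgradient inequality for convex functions. The one bookkeeping subtlety worth stating carefully is that in the first part we only get a one-sided derivative condition at $t=0$ (since $t$ is restricted to $[0,1]$), but that one-sided inequality is exactly what is needed. I would also remark in passing that $K$ being a ``convex Banach space'' should be read as: $K$ is a convex subset of a Banach space $V$ and $J$ is differentiable in a neighborhood of $u$ in $V$, so that $J'(u) \in V'$ makes sense.
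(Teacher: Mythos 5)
Your proposal is correct and follows essentially the same route as the paper: the first implication via the one-sided difference quotient along the segment $u+\delta(v-u)\in K$, and the second via the tangent-plane (subgradient) inequality $J(v)\ge J(u)+\langle J'(u),v-u\rangle$. The only difference is that you actually derive that inequality from the convexity of $J$ by letting $\theta\to 0^+$ in the convexity estimate, whereas the paper simply asserts it; this is a welcome extra detail, not a different approach.
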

\begin{proof}
For $v\in K$ and $\delta\in(0,1]$, we have $u+\delta(v-u)\in K$. Thus, if $\delta$ is sufficiently small, since $u$ is a local minimizer of $J$ in $K$, we have
\begin{equation*}
\frac{J(u+\delta(v-u))-J(u)}{\delta}\ge 0.
\end{equation*}
We obtain inequality \eqref{Euler inequality} by letting $\delta\to 0$ in the above.  

We will now prove the second claim of the theorem. Since, by hypothesis $J$ is convex on $K$, then, the graph of $J$ always lies above its tangent plane at any point $w\in K$. In other words, the following inequality holds true for all $v\in K$:
\begin{equation*}
J(v)\ge J(w)+ \left\langle J'(w), v-w\right\rangle.
\end{equation*}
The conclusion follows by taking $w=u$.
\end{proof}
\begin{remark}
If $u$ belongs to the interior of $K$, then we deduce the Euler equation $J'(u)=0$.
\end{remark}
\begin{remark}
The Euler inequality is usually just a necessary condition (for instance, it is verified also if $u$ is a local maximizer). It becomes a necessary and sufficient condition under the further assumption that the functional $J$ is also convex. 
\end{remark}

\subsection{Minimization with equality constraints}

We consider the following problem

\begin{equation}\label{constrained min problem with equality}
\inf_{v\in V, F(v)=0} J(v),
\end{equation}
where $F= \left( F_1, \dots, F_M\right)$ is a differentiable function from $V$ into $\RR ^M$.

Notice that the set $K=\left\{ v\in V:\; F(v)=0\right\}$ is not necessarily convex. We will therefore need a generalized version of the Euler inequality as stated in Theorem \ref{thm Euler ineq}. To this end we introduce the set of admissible directions for our constrained optimization problem. 
\begin{definition}\label{def cone of directions}
At every point $v\in K$, the set
\begin{equation*}
K(v)=\left\{w\in V \;:\; 
\begin{aligned}
\exists (v^n)_{n\in\NN} \subset K,\; \exists (\varepsilon^n)_{n\in \NN} \subset (0,\infty),\\ \lim_{n\to\infty}v^n=v,\; \lim_{n\to\infty}\varepsilon^n=0,\; \lim_{n\to\infty}(v^n-v)/\varepsilon^n=w 
\end{aligned}
\right\}
\end{equation*}
is called the cone of admissible directions at the point $v$.
\end{definition}
In other words, $K(v)$ is the set of all vectors that are tangent at $v$ to a curve in $K$ that passes through $v$ (hence, if $K$ is a regular manifold, $K(v)$ coincides with the tangent space to $K$ at $v$). Moreover, notice that, as the name suggests, the set $K(v)$ is a \emph{cone} in the sense of convex analysis: namely, for all $\lambda\ge0$ and $w\in  K(v)$, then also $\lambda w \in K(v)$.

\begin{proposition}[Euler inequality, general case]\label{prop euler general}
Let $u$ be a local minimum of $J$ over $K$. Then, if $J$ is differentiable at $u$, we have
\begin{equation*}
\left\langle J'(u), w\right\rangle\ge 0 \quad \forall w\in K(v).
\end{equation*}
\end{proposition}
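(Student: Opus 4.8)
The plan is to mimic the proof of the Euler inequality in the convex case (Theorem \ref{thm Euler ineq}), but replacing the algebraic line segment $u + \delta(v-u)$ with an approximating sequence drawn from the definition of the cone of admissible directions $K(u)$. First I would fix an arbitrary $w \in K(u)$. By Definition \ref{def cone of directions}, there exist sequences $(v^n)_{n\in\NN} \subset K$ and $(\varepsilon^n)_{n\in\NN} \subset (0,\infty)$ with $v^n \to u$, $\varepsilon^n \to 0$, and $(v^n - u)/\varepsilon^n \to w$. The idea is to evaluate the difference quotient of $J$ along this sequence and pass to the limit, using the Fr\'echet differentiability of $J$ at $u$.

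The key computation is as follows. Since $J$ is differentiable at $u$, we write
\begin{equation*}
J(v^n) = J(u) + \langle J'(u), v^n - u \rangle + o(\nr{v^n - u}).
\end{equation*}
Because $u$ is a local minimizer of $J$ over $K$ and $v^n \to u$ with $v^n \in K$, for $n$ large enough we have $J(v^n) \ge J(u)$, hence
\begin{equation*}
\langle J'(u), v^n - u \rangle + o(\nr{v^n - u}) \ge 0.
\end{equation*}
Dividing by $\varepsilon^n > 0$ gives
\begin{equation*}
\left\langle J'(u), \frac{v^n - u}{\varepsilon^n} \right\rangle + \frac{o(\nr{v^n-u})}{\varepsilon^n} \ge 0.
\end{equation*}
Now I would let $n \to \infty$. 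The first term converges to $\langle J'(u), w \rangle$ by continuity of the linear functional $J'(u) \in V'$ together with $(v^n-u)/\varepsilon^n \to w$. For the remainder term, I would note that $\nr{v^n - u}/\varepsilon^n \to \nr{w}$ is bounded, so $\nr{v^n-u} = \varepsilon^n \cdot O(1) \to 0$, and therefore $o(\nr{v^n-u})/\nr{v^n-u} \to 0$; multiplying by the bounded factor $\nr{v^n-u}/\varepsilon^n$ shows $o(\nr{v^n-u})/\varepsilon^n \to 0$ (with the convention that this term is $0$ when $v^n = u$). Passing to the limit yields $\langle J'(u), w \rangle \ge 0$, and since $w \in K(u)$ was arbitrary, the proof is complete.

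The main subtlety — rather than a genuine obstacle — is the careful bookkeeping of the little-$o$ term: one must argue that $o(\nr{v^n-u})/\varepsilon^n \to 0$ even though we are dividing by $\varepsilon^n$ rather than by $\nr{v^n - u}$, which works precisely because the ratio $\nr{v^n-u}/\varepsilon^n$ stays bounded (it converges to $\nr{w}$). A minor point to handle cleanly is the degenerate case where $v^n = u$ for infinitely many $n$ (then $w = 0$ and the inequality is trivial), or more generally ensuring the difference quotient manipulation is valid; restricting to large $n$ so that $v^n$ lies in the neighborhood where the local minimality and the differentiability expansion both hold takes care of this. Note that, unlike Theorem \ref{thm Euler ineq}, no converse statement is claimed here, since $K$ need not be convex, so there is nothing further to prove.
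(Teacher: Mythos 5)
Your proposal is correct and follows essentially the same route as the paper: extract the sequences $(v^n)$, $(\varepsilon^n)$ from the definition of $K(u)$, use local minimality to get a nonnegative difference quotient $\bigl(J(v^n)-J(u)\bigr)/\varepsilon^n \ge 0$ for large $n$, and pass to the limit. The only difference is that you spell out the bookkeeping of the little-$o$ remainder (using that $\nr{v^n-u}/\varepsilon^n$ stays bounded), a detail the paper's proof compresses into ``passing to the limit yields.''
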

\begin{proof}
With the same notations of Definition \ref{def cone of directions}, set $w^n=(v^n-v)/\varepsilon^n$. By definition, we have that $w\in K(v)$ if and only if there exists a sequence $(w^n)_{n\in\NN}$ in $V$ and a sequence of positive real numbers $(\varepsilon^n)_{n\in\NN}$ such that 
\begin{equation*}
\lim_{n\to\infty} w^n = w, \quad \lim_{n\to \infty} \varepsilon^n=0, \quad \textrm{ and } v+\varepsilon^n w^n \in K\quad \forall n\in\NN.
\end{equation*}
Now, since $u$ is a local minimum of $J$ over $K$, we get
\begin{equation*}
\frac{J(u+\varepsilon^n w^n) -J(u)}{\varepsilon^n} \ge 0 \quad\text{ for $n$ large enough}.
\end{equation*}
Passing to the limit as $n\to\infty$ yields
\begin{equation*}
\left\langle J'(u), w \right\rangle \ge 0 \quad \forall w\in K
\end{equation*}
as claimed.
\end{proof}

\begin{definition}
We call Lagrangian of problem \eqref{constrained min problem with equality}, the function
\begin{equation*}
\mathcal{L}(v,\mu)= J(v)+\sum_{i=1}^M \mu_i F_i(v) = J(v)+\mu\cdot F(v) \quad \forall(v,\mu)\in V\times \RR^ M.
\end{equation*}
The new variable $\mu\in \RR^M$ is called Lagrange multiplier for the constraint $F(v)=0$.
\end{definition}

\begin{lemma}\label{lem min pb eq eq}
The constrained minimization problem \eqref{constrained min problem with equality} admits the following equivalent formulation using the Lagrangian:
\begin{equation*}
\inf_{v\in V, F(v)=0} J(v)= \inf_{v\in V} \sup_{\mu\in \RR^M} \mathcal{L}(v,\mu).
\end{equation*}
\end{lemma}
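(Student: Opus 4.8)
The plan is to prove the equality by analyzing the inner supremum $\sup_{\mu\in\RR^M}\mathcal{L}(v,\mu)$ as a function of $v\in V$, splitting into two cases according to whether $F(v)=0$ or not.

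First I would fix $v\in V$ and compute $\sup_{\mu\in\RR^M}\mathcal{L}(v,\mu)=J(v)+\sup_{\mu\in\RR^M}\mu\cdot F(v)$. If $F(v)=0$, then $\mu\cdot F(v)=0$ for every $\mu$, so the supremum is exactly $J(v)$. If $F(v)\neq0$, then some component $F_i(v)$ is nonzero; choosing $\mu=t\,e_i$ (with $e_i$ the $i$-th basis vector) and letting $t\to\pm\infty$ with the appropriate sign makes $\mu\cdot F(v)\to+\infty$, hence $\sup_{\mu\in\RR^M}\mathcal{L}(v,\mu)=+\infty$. Therefore
\begin{equation*}
\sup_{\mu\in\RR^M}\mathcal{L}(v,\mu)=
\begin{cases}
J(v) & \text{if }F(v)=0,\\
+\infty & \text{if }F(v)\neq0.
\end{cases}
\end{equation*}

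Then I would take the infimum over $v\in V$ of this function. Since the value is $+\infty$ whenever $F(v)\neq0$, those points never contribute to the infimum (unless $K=\{v:F(v)=0\}$ is empty, in which case both sides equal $+\infty$ by the usual convention $\inf\emptyset=+\infty$). Hence
\begin{equation*}
\inf_{v\in V}\sup_{\mu\in\RR^M}\mathcal{L}(v,\mu)=\inf_{v\in V,\,F(v)=0}J(v),
\end{equation*}
which is the claimed identity.

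There is no real obstacle here; the only point requiring a modicum of care is the degenerate case $K=\emptyset$ and the sign-checking in the unbounded case (picking the sign of $t$ so that $t\,F_i(v)\to+\infty$). I would state the convention $\inf_\emptyset=+\infty$ explicitly so the identity holds without a nonemptiness hypothesis, and otherwise the proof is a direct two-line case distinction.
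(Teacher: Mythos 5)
Your proof is correct and follows essentially the same route as the paper's: a case distinction on whether $F(v)=0$ or not, showing that the inner supremum equals $J(v)$ in the first case and $+\infty$ in the second, then taking the infimum over $v$. The extra care you take with the sign of $t$ and the convention $\inf_\emptyset=+\infty$ is a sensible refinement but does not change the argument.
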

\begin{proof}
The proof is done by cases. Notice that, if $F(v)=0$, then $J(v)=\mathcal{L}(v,\mu)$ for all $\mu\in \RR^M$. On the other hand, if $F(v)\ne 0$, then $\displaystyle\sup_{\mu\in\RR^M} \mathcal{L}(v,\mu)=+\infty$. 
Putting the two together yields
\begin{equation*}
\begin{aligned}
\inf_{v\in V}\sup_{\mu\in \RR^M} \mathcal{L}(v,\mu) &= \min\left( \inf_{v\in V, F(v)=0} \sup_{\mu\in\RR^M} \mathcal{L}(v,\mu),\, \inf_{v\in V, F(v)\ne 0} \sup_{\mu\in\RR^M} \mathcal{L}(v,\mu) \right) \\
&= \min \left( \inf_{v\in V, F(v)=0} J(v), \,+\infty   \right) = \inf_{v\in V, F(v)=0} J(v).
\end{aligned}
\end{equation*}
\end{proof}

\begin{theorem}[Stationarity of the Lagrangian]\label{stationarity of the lag 1}
With the same notation of \eqref{constrained min problem with equality}, assume that $J$ and $F$ are continuously differentiable in a neighborhood of $u\in V$ such that $F(u)=0$. If $u$ is a local minimizer and if the vectors $\left( F_i'(u)\right)_{1\le i\le M}$ are linearly independent, then there exist a Lagrange multiplier $\lambda\in\RR^M$ such that 
\begin{equation}\label{eq stat of the lag}
\frac{\pa\mathcal{L}}{\pa v}(u,\lambda)= J'(u)+ \lambda\cdot F'(u)=0 \quad \text{and}\quad \frac{\pa\mathcal{L}}{\pa\mu}(u,\la)=F(u)=0.
\end{equation}
\end{theorem}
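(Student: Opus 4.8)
The strategy is to feed a sharp description of the cone of admissible directions $K(u)$ into the general Euler inequality of Proposition~\ref{prop euler general}. First I would prove that, under the linear independence assumption, $K(u)$ is exactly the closed subspace
\[
W := \ker F'(u) = \{ w\in V \;:\; \langle F_i'(u), w\rangle = 0,\ 1\le i\le M\}.
\]
The inclusion $K(u)\subseteq W$ is the easy half: given $w\in K(u)$, take sequences $(v^n)$ in $K$ and $\varepsilon^n\downarrow 0$ as in Definition~\ref{def cone of directions}; the first-order expansion $0 = F(v^n) = F(u) + \langle F'(u), v^n-u\rangle + o(\|v^n-u\|)$, divided by $\varepsilon^n$ and passed to the limit (note $\|v^n-u\|/\varepsilon^n\to\|w\|$ stays bounded while $v^n\to u$), yields $\langle F'(u), w\rangle = 0$.

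The main work is the reverse inclusion $W\subseteq K(u)$, and this is where I expect the only real difficulty. Linear independence of the functionals $F_1'(u),\dots,F_M'(u)$ in $V'$ is equivalent to surjectivity of $v\mapsto (\langle F_i'(u),v\rangle)_{1\le i\le M}$ onto $\RR^M$, so one can choose $z_1,\dots,z_M\in V$ with $\langle F_i'(u), z_j\rangle = \delta_{ij}$. Given $w\in W$, define $\phi:\RR\times\RR^M\to\RR^M$ by $\phi(t,\mu) = F\big(u + tw + \sum_{j=1}^M \mu_j z_j\big)$. Then $\phi$ is $C^1$ near $(0,0)$, $\phi(0,0)=F(u)=0$, and $\partial_\mu\phi(0,0) = \big[\langle F_i'(u),z_j\rangle\big]_{i,j} = \mathrm{Id}$ is invertible, so the implicit function theorem provides a $C^1$ curve $t\mapsto\mu(t)$ with $\mu(0)=0$ and $\phi(t,\mu(t))\equiv 0$ for $|t|$ small. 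Differentiating this identity at $t=0$ and using $\partial_t\phi(0,0) = \langle F'(u), w\rangle = 0$ gives $\mu'(0)=0$. Hence $c(t) := u + tw + \sum_j \mu_j(t) z_j$ satisfies $F(c(t))=0$, $c(0)=u$ and $c'(0)=w$; choosing $v^n = c(1/n)$, $\varepsilon^n = 1/n$ shows $w\in K(u)$.

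With $K(u)=W$ in hand the conclusion is quick. Since $W$ is a subspace, $w\in W$ implies $-w\in W$, so the inequality $\langle J'(u), w\rangle\ge0$ of Proposition~\ref{prop euler general} forces $\langle J'(u), w\rangle = 0$ for all $w\in W$; that is, $J'(u)$ lies in the annihilator of $W=\bigcap_i\ker\langle F_i'(u),\cdot\rangle$. A standard finite-dimensional argument — factoring $J'(u)$ through the surjection $v\mapsto(\langle F_i'(u),v\rangle)_i$, whose kernel is $W$ — shows this annihilator equals $\mathrm{span}\{F_1'(u),\dots,F_M'(u)\}$. Therefore there are scalars $\lambda_1,\dots,\lambda_M$ with $J'(u) = -\sum_i\lambda_i F_i'(u)$, i.e. $\frac{\pa\mathcal{L}}{\pa v}(u,\lambda) = J'(u)+\lambda\cdot F'(u) = 0$, while $\frac{\pa\mathcal{L}}{\pa\mu}(u,\lambda) = F(u) = 0$ is simply the hypothesis. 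The delicate point throughout is the implicit function theorem construction of the curve $c$, and in particular the verification $\mu'(0)=0$, which is precisely what makes $c'(0)$ equal to $w$ rather than $w$ plus a spurious correction in $\mathrm{span}\{z_j\}$.
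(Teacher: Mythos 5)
Your proposal is correct and follows essentially the same route as the paper: identify the cone of admissible directions $K(u)$ with $\bigcap_{i=1}^M\ker\langle F_i'(u),\cdot\rangle$ via the implicit function theorem, use the fact that it is a subspace to upgrade the Euler inequality of Proposition~\ref{prop euler general} to an equality, and conclude that $J'(u)$ lies in the span of the $F_i'(u)$. The only difference is that you spell out the implicit-function-theorem construction of the curve $c(t)$ (including the verification $\mu'(0)=0$), which the paper dispatches with the phrase ``in a standard way''; your detailed version is accurate.
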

\begin{proof}
First define $K=\left\{ v\in V : \; F(v)=0\right\}$ and then the corresponding cone of admissible directions $K(u)$ by Definition \ref{def cone of directions}. Now, since the vectors $\left(F_i'(u)\right)_{1\le i\le M}$ are linearly independent by hypothesis, we can use the implicit function theorem in a standard way 
 to deduce that 
\begin{equation*}
K(u)=\left\{w\in V \;:\; \left\langle F'_i(u), w\right\rangle =0 \text{ for } i=1, \dots, M   \right\},
\end{equation*}
or equivalently 
\begin{equation*}
K(u)=\bigcap_{i=1}^M \left[F'_i(u)\right]^\perp.
\end{equation*}
In particular $K(u)$ is a vector space (it is indeed the tangent space to the variety $K$ at the point $u$). Thus we can successively take $w$ and $-w$ in Proposition \ref{prop euler general} to get 
\begin{equation*}
\left\langle J'(u),w\right\rangle =0 \quad \forall w \in \bigcap_{i=1}^M \left[ F_i'(u) \right]^\perp.
\end{equation*}
This implies that $J'(u)$ is generated by $\left( F_i'(u) \right)_{1\le i\le M}$ (moreover, since the $F_i'(u)$ are linearly independent, the Lagrange multipliers $\mu_i$ are uniquely defined).  
\end{proof}
\subsection{Minimization with inequality constraints}
We consider the following minimization problem with inequality constraints
\begin{equation}\label{minim pb ineq constr}
\inf_{v\in V, F(v)\le 0} J(v),
\end{equation}
where $F(v)\le 0$ here means that $F_i(v)\le 0$ for $1\le i \le M$, with $F=(F_1,\dots, F_M): V\to \RR^M$ differentiable.
\begin{definition}
Let $u$ be such that $F(u)\le 0$. The set 
\begin{equation*}
I(u)=\Big\lbrace i \in \{1,\dots, M\} \;:\; F_i(u)=0 \Big\rbrace
\end{equation*}
is called the set of active constraints at $u$. The inequality constraints are said to be qualified at $u\in K$ if the vectors $\left( F_i' (u)\right)_{i\in I(u)}$ are linearly independent.
\end{definition}

There are other (more general) definitions of constraints qualification \cite{bonnans}.

\begin{definition}
We call Lagrangian of the previous problem the function
\begin{equation*}
\mathcal{L}(v,\mu)=J(v)+\sum_{i=1}^M \mu_i F_i(v) = J(v)+\mu\cdot F(v)\quad \forall (v,\mu)\in V\times \left( \RR_{\ge0}\right)^M.
\end{equation*}
The new {\bf non negative} variable $\mu\in \left( \RR_{\ge0}\right)^M$ is called Lagrange multiplier for the constraint $F(v)\le 0$. 
\end{definition}
The proof of the result below is analogous to that of Lemma \ref{lem min pb eq eq} and thus will be omitted.
\begin{lemma}
The constrained minimization problem \eqref{minim pb ineq constr} is equivalent to 
\begin{equation*}
\inf_{v\in V, \; F(v)\le 0} J(v)= \inf_{v\in V} \sup_{\mu\in \left( \RR_{\ge0}\right)^M} \mathcal{L}(v,\mu).
\end{equation*}
\end{lemma}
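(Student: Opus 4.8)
The plan is to mimic the case analysis used in the proof of Lemma~\ref{lem min pb eq eq}, the only change being that the Lagrange multiplier now ranges over the nonnegative orthant $\left(\RR_{\ge0}\right)^M$ instead of all of $\RR^M$.

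First I would compute the inner supremum $\sup_{\mu\in\left(\RR_{\ge0}\right)^M}\mathcal{L}(v,\mu)$ for a fixed $v\in V$. Since $\mathcal{L}(v,\mu)=J(v)+\mu\cdot F(v)$ is affine in $\mu$, this amounts to evaluating $\sup_{\mu\ge0}\mu\cdot F(v)$, and I distinguish two cases. If $F(v)\le0$, that is $F_i(v)\le0$ for every $i$, then $\mu\cdot F(v)=\sum_{i=1}^M\mu_i F_i(v)\le0$ for all $\mu\ge0$, while the value $0$ is attained at $\mu=0$; hence $\sup_{\mu\ge0}\mathcal{L}(v,\mu)=J(v)$. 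If, on the contrary, $F_i(v)>0$ for some index $i$, then taking $\mu=t\,e_i$ with $e_i$ the $i$-th coordinate vector and letting $t\to+\infty$ gives $\mu\cdot F(v)=tF_i(v)\to+\infty$, so $\sup_{\mu\ge0}\mathcal{L}(v,\mu)=+\infty$.

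Next I would split the outer infimum over $V$ according to these two cases, exactly as in Lemma~\ref{lem min pb eq eq}:
\begin{equation*}
\inf_{v\in V}\sup_{\mu\in\left(\RR_{\ge0}\right)^M}\mathcal{L}(v,\mu)
=\min\left(\inf_{v\in V,\,F(v)\le0}J(v),\ \inf_{v\in V,\,F(v)\not\le0}(+\infty)\right)
=\inf_{v\in V,\,F(v)\le0}J(v),
\end{equation*}
which is precisely the claimed identity.

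I do not expect any serious obstacle; the argument is elementary once the inner supremum has been identified, and this is the reason the authors chose to omit it. The only point worth a word of care is the degenerate case in which the feasible set $\{v\in V:\,F(v)\le0\}$ is empty: then the left-hand side is $+\infty$ by the convention $\inf\emptyset=+\infty$, and the right-hand side is also $+\infty$, since for every $v$ at least one constraint is violated and the inner supremum is $+\infty$, so the equality still holds. One should also note that, in contrast with Theorem~\ref{stationarity of the lag 1}, neither a constraint qualification nor differentiability of $J$ and $F$ plays any role in this purely variational reformulation; what matters is solely that $\mu$ is restricted to $\left(\RR_{\ge0}\right)^M$.
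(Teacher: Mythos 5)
Your proof is correct and follows exactly the case analysis that the paper indicates (it omits the proof but states it is analogous to that of Lemma~\ref{lem min pb eq eq}): compute the inner supremum by distinguishing whether $F(v)\le 0$ holds, noting that the supremum of $\mu\cdot F(v)$ over $\mu\ge 0$ is $0$ in the feasible case and $+\infty$ otherwise, then split the outer infimum. Your additional remarks on the empty feasible set and the irrelevance of constraint qualification are accurate but not needed.
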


The existence of (non negative) Lagrange multipliers, analogous to Theorem \ref{stationarity of the lag 	1}, can be proved also for a minimization problem subject to inequality constraints. We refer to \cite[Theorem 10.2.15]{Allaire4} for a proof.

\begin{theorem}[Stationarity of the Lagrangian for the inequality constraint]
We assume that the constraints are qualified at $u$ satisfying $F(u)\le 0$. If $u$ is a local minimizer, there exist Lagrange multipliers $\lambda_1,\dots,\lambda_M\ge 0$ such that 
\begin{equation}\label{stationarity of the lag 2}
J'(u)+\sum_{i=1}^M \lambda_i F'_i(u)=0,\quad \lambda_i\le 0,\quad\lambda_i=0 \ \text{  if  }F_i(u)<0\quad \forall i\in\{1,\dots, M\}.
\end{equation}
\end{theorem}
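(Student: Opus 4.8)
The plan is to mimic the proof of Theorem~\ref{stationarity of the lag 1} (the equality case): characterize the cone of admissible directions $K(u)$ of the feasible set $K=\{v\in V:\ F(v)\le 0\}$ in terms of the gradients of the \emph{active} constraints, apply the general Euler inequality of Proposition~\ref{prop euler general}, and then extract the multipliers by an elementary linear-algebra (Farkas-type) argument in which the inequalities are kept throughout so as to control the signs. Write $I(u)=\{i:\ F_i(u)=0\}$ for the set of active constraints and set $\lambda_i=0$ for every $i\notin I(u)$; since $F_i(u)<0$ for such $i$, this already matches the required complementarity condition, so it only remains to produce $\lambda_i\ge0$ for $i\in I(u)$ with $J'(u)+\sum_{i\in I(u)}\lambda_i F_i'(u)=0$.

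The first step I would carry out is the inclusion
\[
C:=\Big\{w\in V:\ \langle F_i'(u),w\rangle\le 0\ \ \forall i\in I(u)\Big\}\subseteq K(u),
\]
in two stages. If $w$ satisfies the \emph{strict} inequalities $\langle F_i'(u),w\rangle<0$ for all $i\in I(u)$, then the segment $v^n=u+\tfrac1n w$ is feasible for $n$ large: for $i\in I(u)$, using $F_i(u)=0$, a first-order expansion gives $F_i(v^n)=\tfrac1n\langle F_i'(u),w\rangle+o(1/n)<0$, while for $i\notin I(u)$ one has $F_i(v^n)<0$ by continuity since $F_i(u)<0$; hence $w\in K(u)$ (discarding the finitely many indices where $v^n\notin K$). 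For a general $w\in C$, I would use the constraint qualification: linear independence of $(F_i'(u))_{i\in I(u)}$ makes the linear map $\Phi:w\mapsto(\langle F_i'(u),w\rangle)_{i\in I(u)}$ from $V$ onto $\RR^{I(u)}$ surjective, so there is $w_0\in V$ with $\langle F_i'(u),w_0\rangle=-1$ for all $i\in I(u)$. Then $w+\delta w_0$ satisfies the strict inequalities for every $\delta>0$, hence lies in $K(u)$ by the first stage; letting $\delta\to0^+$ and using that the tangent cone $K(u)$ is closed (a diagonal-extraction argument straight from Definition~\ref{def cone of directions}) gives $w\in K(u)$.

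Next I would invoke Proposition~\ref{prop euler general}: since $u$ is a local minimizer, $\langle J'(u),w\rangle\ge0$ for every $w\in K(u)$, hence for every $w\in C$. Applying this to both $w$ and $-w$ when $w\in\bigcap_{i\in I(u)}[F_i'(u)]^\perp$ shows that $J'(u)$ vanishes on $\ker\Phi$; since $\Phi$ is surjective, $J'(u)$ factors through $\Phi$, i.e. $J'(u)=\sum_{i\in I(u)}c_i F_i'(u)$ for some real numbers $c_i$. Finally, choosing for each $j\in I(u)$ a vector $w_j$ with $\langle F_i'(u),w_j\rangle=-\delta_{ij}$ (possible by surjectivity of $\Phi$), the inequality $0\le\langle J'(u),w_j\rangle=-c_j$ forces $c_j\le0$; setting $\lambda_j=-c_j\ge0$ for $j\in I(u)$ yields $J'(u)+\sum_{i=1}^M\lambda_i F_i'(u)=0$ with the correct signs and complementarity.

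The main obstacle is the inclusion $C\subseteq K(u)$: this is exactly where the constraint qualification is indispensable, since in general the linearized cone $C$ may be strictly larger than the true tangent cone $K(u)$ and the conclusion may fail. The delicate point is the passage from the strict to the general case, which uses both the surjectivity of $\Phi$ and the closedness of $K(u)$; once this is in place, the remaining extraction of the multipliers is the same linear-algebra bookkeeping as in the equality-constrained Theorem~\ref{stationarity of the lag 1}, now performed with inequalities to pin down the signs of the $\lambda_i$.
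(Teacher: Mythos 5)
Your proof is correct, and it is worth noting that the paper itself gives no proof of this theorem: it defers to \cite[Theorem 10.2.15]{Allaire4}, where the standard argument characterizes the tangent cone of $\{F\le 0\}$ and then invokes the Farkas lemma to produce nonnegative multipliers. Your route reaches the same conclusion but replaces the Farkas step by a direct construction: because the qualification hypothesis here is linear independence of the active gradients, the map $\Phi=(\langle F_i'(u),\cdot\rangle)_{i\in I(u)}$ is onto, so you can manufacture the test directions $w_j$ with $\langle F_i'(u),w_j\rangle=-\delta_{ij}$ and read off the signs $c_j\le 0$ by hand. This is cleaner and more elementary under the paper's hypotheses, though it is genuinely tied to linear independence (a Farkas-type argument would be needed under weaker qualifications such as Mangasarian--Fromovitz). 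The two-stage proof of the inclusion $C\subseteq K(u)$ (strict inequalities first, then perturbation by $\delta w_0$ and closedness of the contingent cone) is exactly where the qualification enters and is handled correctly; note you could even avoid the closedness of $K(u)$ by passing to the limit $\delta\to 0^+$ directly in the scalar inequality $\langle J'(u),w+\delta w_0\rangle\ge 0$. Finally, your sign convention $\lambda_i\ge 0$ is the correct one; the condition $\lambda_i\le 0$ appearing in the displayed equation of the statement is a typo, as confirmed by the hypothesis $\lambda_1,\dots,\lambda_M\ge 0$ and by the discussion following the theorem in the text.
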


The condition \eqref{stationarity of the lag 2} is indeed the stationarity of the Lagrangian since
\begin{equation*}
\frac{\pa\mathcal{L}}{\pa v}(u,\lambda)=J'(u)+ \lambda\cdot F'(u)=0,
\end{equation*}
and the condition $F(u)\le 0$ and $\lambda\cdot F(u)=0$ for $\lambda\ge 0$, is equivalent to the Euler inequality (Theorem \ref{thm Euler ineq}) associated to the maximization problem $\sup \mathcal{L}(u,\mu)$ with respect to the variable  $\mu$ in the closed convex set $\left(\RR_{\ge0}\right)^M$. Indeed
\begin{equation*}
\frac{\pa \mathcal{L}}{\pa\mu} (u,\la)\cdot (\mu-\lambda)= F(u)\cdot (\mu-\la)\le 0 \quad\forall\mu\in\left(\RR_{\ge0}\right)^M,
\end{equation*}
and thus $F(u)\cdot \mu \le F(u)\cdot \lambda=0$ for all $\mu\in (\RR_{\ge0})^M$ as claimed.
\subsection{Interpreting the Lagrange multipliers}

Define the Lagrangian for the minimization of $J(v)$ under the constraint $F(v)=c$ as follows:
\begin{equation*}
\mathcal{L}(v,\mu,c)= J(v)+\mu\cdot \left( F(v)-c\right).
\end{equation*}
We claim that the value of the Lagrange multiplier represents the sensitivity of the minimal value with respect to variations of the constraint $c$. To this end, let $u(c)$ and $\lambda(c)$ denote the minimizer and the optimal Lagrange multiplier respectively. Moreover, we assume that they are differentiable with respect to $c$. Then 
\begin{equation*}
\gr_c J(u(c))= - \lambda(c).
\end{equation*}
In other words, $\lambda$ gives the derivative of the minimal value with respect to $c$ without any further calculation. 
Indeed 
\begin{equation*}
\gr_c J(u(c))  = \gr_c \mathcal{L}\left(u(c),\lambda(c),c\right) 
= \frac{\pa \mathcal{L}}{\pa v} \gr_c u(c) + \frac{\pa \mathcal{L}}{\pa \mu} \gr_c \mu(c) 
+ \frac{\pa \mathcal{L}}{\pa c} = -\lambda(c),
\end{equation*}
where, in the last equality we used 
\begin{equation*}
\frac{\pa \mathcal{L}}{\pa v}(u(c), \lambda(c),c)=0 \quad\textrm{ and }\quad\frac{\pa\mathcal{L}}{\pa \mu}(u(c),\lambda(c),c)=0,
\end{equation*}
which are a consequence of Theorem \ref{stationarity of the lag 1} and the constraint $F(u(c))=c$ respectively.

\section{Dual energy} \label{sec:dual energy}

In this section, we shall associate to a minimizing problem with a maximizing problem, so called dual problem.
To simplify the argument, we will assume that $V$ and $Y$ are two Banach spaces.
Let $V'$ and $Y'$ be the corresponding dual spaces.
The following argument is according to \cite{ET} and see the book for the more general setting.
For $J\colon V\to\RR\cup\{\infty\}$, 
we consider the following minimizing problem:
\begin{equation} \label{eq:primal problem}
\inf_{v\in V}J(v).
\end{equation}
For given problem \eqref{eq:primal problem}, we are now able to define a dual problem.
We shall consider a 
function $\Phi\colon V\times Y\to\RR\cup\{\infty\}$ such that
\begin{equation*}
\Phi(v, 0)=J(v), \quad v\in V.
\end{equation*}
We define the conjugate function $\Phi^*\colon V'\times Y'\to\RR\cup\{\infty\}$ as
\begin{equation*}
\Phi^*(v^*, p^*):=\sup_{(v, p)\in V\times Y}\left\{\langle v^*, v\rangle+\langle p^*, p\rangle-\Phi(v, p)\right\}, \quad (v^*, p^*)\in V'\times Y'.
\end{equation*}
We call the problem
\begin{equation} \label{eq:dual problem}
\sup_{p^*\in Y'}\left\{-\Phi^*(0, p^*)\right\}
\end{equation}
the dual problem of \eqref{eq:primal problem}.

In the following, we will mention the relationship between \eqref{eq:primal problem} and \eqref{eq:dual problem} in a special case.
Let $\Lambda\colon V\to Y$ be a continuous linear operator.
Assume that $J$ can be rewritten as
\begin{equation*}
J(v)=\tilde{J}(v, \Lambda v), \quad v\in V,
\end{equation*}
where $\tilde{J}$ is a function of $V\times Y$ into $\RR\cup\{\infty\}$.
In this case, the function $\Phi$ will be
\begin{equation*}
\Phi(v, p):=\tilde{J}(v, \Lambda v-p), \quad (v, p)\in V\times Y.
\end{equation*}
Then the conjugate function $\Phi^*$ becomes
\begin{equation*}
\begin{aligned}
\Phi^*(0, p^*)
&=\sup_{(v, p)\in V\times Y}\left\{\langle p^*, p\rangle-\tilde{J}(v, \Lambda v-p)\right\}\\
&=\sup_{v\in V}\sup_{q\in Y}\left\{\langle p^*, \Lambda v\rangle-\langle p^*, q\rangle-\tilde{J}(v, q)\right\}\\
&=\sup_{(v, q)\in V\times Y}\left\{\langle \Lambda^*p^*, v\rangle-\langle p^*, q\rangle-\tilde{J}(v, q)\right\}.
\end{aligned}
\end{equation*}

For this case, we can see the following relationship.
\begin{theorem} \label{thm:dual energy}
Assume that $\tilde{J}$ is convex and \eqref{eq:primal problem} is finite.
We also assume that there exists $v_0\in V$ such that $\tilde{J}(v_0, \Lambda v_0)<\infty$ and the function $p\mapsto \tilde{J}(v_0, p)$ is continuous at $\Lambda v_0$.
Then
\begin{equation*}
\inf_{v\in V}J(v)=\sup_{p^*\in Y'}\left\{-\Phi^*(0, p^*)\right\}
\end{equation*}
and maximizing problem \eqref{eq:dual problem} has at least one solution.
\end{theorem}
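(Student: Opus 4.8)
The plan is to follow the Fenchel--Rockafellar duality argument from \cite{ET}, organized around the \emph{value function} (marginal function) of the perturbation $\Phi$. First I would dispose of the easy inequality (weak duality): directly from the definition of the conjugate, $\Phi^*(0,p^*)\ge \langle 0,v\rangle + \langle p^*,0\rangle - \Phi(v,0) = -J(v)$ for every $v\in V$ and every $p^*\in Y'$, hence
\begin{equation*}
\sup_{p^*\in Y'}\left\{-\Phi^*(0,p^*)\right\}\ \le\ \inf_{v\in V}J(v).
\end{equation*}
So the whole content of the theorem is the reverse inequality together with attainment of the supremum.

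Next I would introduce $h\colon Y\to\RR\cup\{\pm\infty\}$, $h(p):=\inf_{v\in V}\Phi(v,p)$, and record three facts. (i) $h(0)=\inf_{v\in V}J(v)$, which is finite by hypothesis. (ii) $h$ is convex: since $\Phi(v,p)=\tilde J(v,\Lambda v-p)$ is the composition of the convex function $\tilde J$ with an affine map, $\Phi$ is jointly convex, and the partial infimum of a jointly convex function is convex in the remaining variable. (iii) $h$ is bounded above on a neighbourhood of $0$: using the distinguished point $v_0$, we have $h(p)\le \Phi(v_0,p)=\tilde J(v_0,\Lambda v_0-p)$, and by assumption $p\mapsto\tilde J(v_0,p)$ is continuous at $\Lambda v_0$, so $p\mapsto\Phi(v_0,p)$ is continuous — in particular locally bounded — at $p=0$. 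Then I would invoke the standard convexity lemma: a convex function on a Banach space that is bounded above on an open neighbourhood of a point at which it is finite is in fact finite and locally Lipschitz (hence continuous) on that neighbourhood; consequently $h$ is proper near $0$ and $\partial h(0)\neq\emptyset$ by a Hahn--Banach separation of the interior of $\mathrm{Epi}\,h$ from $(0,h(0))$.

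Finally, pick $\bar p^*\in\partial h(0)$. The subgradient inequality $h(p)\ge h(0)+\langle \bar p^*,p\rangle$ for all $p\in Y$ rewrites as $\sup_{p}\{\langle \bar p^*,p\rangle - h(p)\}\le -h(0)$, i.e. $h^*(\bar p^*)\le -h(0)$; together with the Young--Fenchel inequality $h^*(\bar p^*)\ge \langle\bar p^*,0\rangle-h(0)=-h(0)$ this gives the Fenchel equality $h^*(\bar p^*)=-h(0)$. Since
\begin{equation*}
h^*(p^*)=\sup_{p\in Y}\Big\{\langle p^*,p\rangle-\inf_{v\in V}\Phi(v,p)\Big\}=\sup_{(v,p)\in V\times Y}\big\{\langle 0,v\rangle+\langle p^*,p\rangle-\Phi(v,p)\big\}=\Phi^*(0,p^*),
\end{equation*}
we conclude $-\Phi^*(0,\bar p^*)=h(0)=\inf_{v\in V}J(v)$, which simultaneously proves the reverse inequality and shows that $\bar p^*$ solves the dual problem \eqref{eq:dual problem}. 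The main obstacle is step (iii) and its consequence: one must carefully verify, from the single-point continuity hypothesis on $p\mapsto\tilde J(v_0,p)$, that $h$ is not merely bounded above but actually finite and continuous near $0$ (propagating finiteness across $0$ by a convex-combination argument), so that the subdifferential at $0$ is genuinely nonempty — this is precisely where the qualification assumption on $v_0$ is used, and it is the one place where some care with the infinite-dimensional setting is needed.
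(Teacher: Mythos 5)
Your proof is correct and is essentially the standard Fenchel--Rockafellar argument of \cite[Chapter III, Theorem~4.1]{ET}, which is precisely the reference the paper defers to (the paper itself gives no proof): weak duality from the definition of the conjugate, convexity and local boundedness above of the value function $h(p)=\inf_{v\in V}\Phi(v,p)$ via the qualification point $v_0$, nonemptiness of $\partial h(0)$ by separation, and the identity $h^*(p^*)=\Phi^*(0,p^*)$. The one delicate point — upgrading ``bounded above near $0$ and finite at $0$'' to finiteness and continuity of $h$ near $0$ so that the subdifferential at $0$ is genuinely nonempty — is correctly identified and handled by your convex-combination argument, so I see no gap.
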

To show Theorem~\ref{thm:dual energy}, we will use convex analysis.
For the details of the proof, see \cite[Chapter III, Theorem~4.1]{ET}.

\begin{example} \label{ex:dual energy}
We show an application of Theorem~\ref{thm:dual energy}.
Let $\Omega\subset\RR^N$ be a smooth domain.
We consider the Dirichlet problem
\begin{equation*}
\left\{
\begin{aligned}
-{\rm div}\,(h\nabla u)&=f &&\text{ in } \Omega,\\
u&=0 &&\text{ on } \partial\Omega,
\end{aligned}
\right.
\end{equation*}
where $f\in L^2(\Omega)$ and $h\colon \Omega\to\RR$ is a positive given function.
The solution $u$ of the problem above is the minimizer of
\begin{equation*}
\dfrac{1}{2}\int_\Omega h|\nabla v|^2\,dx-\int_\Omega fv\,dx, \quad v\in H^1_0(\Omega).
\end{equation*}
We can apply Theorem~\ref{thm:dual energy} with
\begin{equation*}
V=H^1_0(\Omega), \quad Y=L^2(\Omega)^N, \quad \Lambda=\nabla, \quad \tilde{J}(v, p)=\dfrac{1}{2}\int_\Omega h|p|^2\,dx-\int_\Omega fv\,dx.
\end{equation*}
In the case, we see that 
\begin{equation*}
\begin{aligned}
\Phi^*(0, p^*)
&=\sup_{v\in H^1_0(\Omega)}\sup_{q\in L^2(\Omega)^N}\left\{\int_\Omega \left(p^*\cdot\nabla v + fv - \dfrac{1}{2} h|q|^2- p^*\cdot q\right)\,dx\right\}\\
&=
\left\{
\begin{aligned}
&\dfrac{1}{2}\int_\Omega h^{-1}|p^*|^2\,dx &&\quad {\rm if} \ -{\rm div}\,p^*=f,\\
&\infty &&\quad {\rm otherwise}
\end{aligned}
\right.
\end{aligned}
\end{equation*}
and hence
\begin{equation*}
\sup_{p^*\in Y'}\{-\Phi^*(0, p^*)\}=-\inf_{\substack{p^*\in L^2(\Omega)^N \\ -{\rm div}\,p^*=f}}\int_\Omega h^{-1}|p^*|^2\,dx.
\end{equation*}
\end{example}

\section{Numerical algorithms}
In this section we present some numerical algorithms in order to solve the kind of minimization problems that were treated in this chapter. All these algorithms are of iterative nature: starting from a give initial value $u^0$, we construct a sequence $(u^n)_{n\in\NN}$, which can be shown to converge to the solution $u$ of the given minimization problem under some hypotheses.  
\subsection{A gradient-type algorithm (non-constrained case)}
Suppose that $V=\RR^N$ (or, more generally, a Hilbert space, that we will identify with its dual $V'$). We consider the following minimization problem without constraints:
\begin{equation}\label{inf v in V of J(v)}
\inf_{v\in V} J(v). 
\end{equation}
We initialize the algorithm by choosing some initial value $u^0\in V$ and iteratively update it as follows:
\begin{equation}\label{gradient type alg no const}
u^{n+1}=u^n-\mu J'(u^n),
\end{equation}
where $\mu$ is a positive parameter that we choose in advance (a more sophisticate algorithm involving the optimal choice of $\mu=\mu^n$ for each iteration is discussed in \cite[Theorem 3.38]{Allaire2}).

\begin{theorem}
Let $V$ be a Hilbert space and suppose that the functional $J: V\to \RR$ is strongly convex, that is, for some $\alpha>0$
\begin{equation*}
\langle J'(u)-J'(v),u-v\rangle\ge \alpha\norm{u-v}^2\quad \forall u,v\in V.
\end{equation*}
Moreover, assume that $J$ is differentiable with Lipschitz continuous derivative $J'$. Then, if $\mu$ is small enough (depending on $\al$ and on the Lipschitz constant of $J'$), the gradient-type algorithm described above converges. In other words, for all $u^0$, the sequence $(u^n)_{n\in\NN}$ defined in \eqref{gradient type alg no const} converges to the solution $u$ of \eqref{inf v in V of J(v)}.
\end{theorem}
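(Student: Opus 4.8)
The plan is to show that the iteration map $T(v) = v - \mu J'(v)$ is a contraction on $V$ for $\mu$ sufficiently small, and then invoke the Banach fixed point theorem. Since $J$ is strongly convex and $J'$ is Lipschitz, the unique minimizer $u$ of \eqref{inf v in V of J(v)} is characterized by $J'(u) = 0$, i.e.\ it is the unique fixed point of $T$; so once $T$ is a contraction, the sequence $(u^n)_{n\in\NN}$ defined by $u^{n+1} = T(u^n)$ converges to $u$ geometrically, independently of the starting point $u^0$.

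First I would establish existence and uniqueness of the minimizer. Strong convexity implies $J$ is strictly convex and ``infinite at infinity'' (a standard consequence: integrating the strong monotonicity inequality along the segment from $v$ to a fixed point gives a quadratic-growth lower bound on $J$), so Theorem \ref{existence of min in infinite dim convex space} and Proposition \ref{prop uniq minimizer for str convex J} apply in a Hilbert space. The Euler equation $J'(u) = 0$ then holds since $u$ is an interior minimizer over $V$ itself. Next I would estimate $\|T(u) - T(v)\|^2$. Expanding,
\begin{equation*}
\|T(u)-T(v)\|^2 = \|u-v\|^2 - 2\mu \langle J'(u)-J'(v), u-v\rangle + \mu^2\|J'(u)-J'(v)\|^2.
\end{equation*}
Using strong convexity for the middle term and the Lipschitz bound $\|J'(u)-J'(v)\| \le L\|u-v\|$ for the last, this is at most $\bigl(1 - 2\mu\alpha + \mu^2 L^2\bigr)\|u-v\|^2$. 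The quadratic $1 - 2\mu\alpha + \mu^2 L^2$ in $\mu$ is strictly less than $1$ precisely when $0 < \mu < 2\alpha/L^2$, and on that range we can set $k = \sqrt{1 - 2\mu\alpha + \mu^2 L^2} \in [0,1)$, so $T$ is a $k$-contraction.

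Finally I would conclude: by the Banach fixed point theorem $T$ has a unique fixed point $u_\infty$, and $\|u^n - u_\infty\| \le k^n \|u^0 - u_\infty\| \to 0$ for every $u^0$. Since $u_\infty$ is characterized by $J'(u_\infty) = 0$, and by convexity this is equivalent to $u_\infty$ being the global minimizer, we get $u_\infty = u$. I expect the only mildly delicate point to be spelling out why strong convexity forces the ``infinite at infinity'' property and hence the existence of $u$ (alternatively one can bypass Theorem \ref{existence of min in infinite dim convex space} entirely and let the fixed point of the contraction \emph{produce} the minimizer directly, which is cleaner); the contraction estimate itself is routine once the right inequalities are combined.
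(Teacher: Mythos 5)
Your proposal is correct: the contraction estimate $\|T(u)-T(v)\|^2 \le (1-2\mu\alpha+\mu^2 L^2)\|u-v\|^2$ with $0<\mu<2\alpha/L^2$ is exactly the standard argument, and the paper itself gives no proof here but defers to \cite{Allaire4}, where this same fixed-point approach is used. The one detail worth spelling out is that $1-2\mu\alpha+\mu^2L^2\ge 0$ automatically because $\alpha\le L$, so the square root defining $k$ is legitimate.
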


For a proof, see \cite{Allaire4}.

\begin{remark}
Choosing the right step length is not an easy task. 
Let us use the line search strategy as follows: 
start with a given step $\mu^0>0$. Now, at each iteration, increase the current step, $\mu_{n+1}=1.1\times \mu_n$, if $J$ decreases, and reduce it, $\mu_{n+1}=0.5\times \mu_n$ if $J$ increases. 
\end{remark}

\subsection{A gradient-type algorithm (constrained case)}
Suppose that $J$ is a real valued strictly convex differentiable functional defined on a nonempty closed convex subset $K$ of the Hilbert space $V$. The set $K$ represents the imposed constraints. We consider the following minimization problem
\begin{equation}\label{inf v in K of J(v)}
\inf_{v\in K} J(v).
\end{equation}
Theorem \ref{ex minimizer for cont J over closed K} ensures the existence of a minimizer $u$ for \eqref{inf v in K of J(v)} (which is unique by Proposition \eqref{prop uniq minimizer for str convex J}). Moreover, according to Theorem \ref{thm Euler ineq}, the minimizer $u$ is characterized by the condition
\begin{equation*}
\langle J'(u), v-u\rangle\ge 0 \quad\forall v\in K.
\end{equation*}
Notice that the condition above can be rephrased as follows. For all $\mu>0$
\begin{equation}\label{it was a projection!}
\langle u-\left( u-\mu J'(u) \right), v-u\rangle\ge0 \quad \forall v\in K.
\end{equation}
Let $P_K: V\to K$ denote the projection operator onto the convex subset $K$. Then \eqref{it was a projection!} just states that $u$ is the orthogonal projection of $u-\mu J'(u)$ onto $K$. In other words
\begin{equation*}
u=P_K\left( u-\mu J'(u) \right)\quad\forall \mu>0.
\end{equation*}
Therefore we devise a (projected) gradient-type algorithm, defined by the following iteration 
\begin{equation}\label{proj grad algorithm}
u^{n+1}=P_K\left( u^n-\mu J'(u^n) \right),
\end{equation}
where $\mu$ is a fixed positive parameter.

\begin{theorem}
Let $J$ be a differentiable strongly convex functional, with derivative $J'$ Lipschitz continuous on $V$. Then, if $\mu$ is small enough, the projected gradient algorithm with fixed step defined above converges. In other words, for all initial values $u^0\in K$, the sequence $(u^n)_{n\in\NN}$ defined by \eqref{proj grad algorithm} converges to the solution $u$ of \eqref{inf v in K of J(v)}.
\end{theorem}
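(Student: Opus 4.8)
The plan is to adapt the classical Banach fixed-point / contraction argument for the projected gradient iteration, using the fact that the projection operator $P_K$ onto a nonempty closed convex set in a Hilbert space is a contraction (it is $1$-Lipschitz). First I would observe that, by the characterization of the minimizer recalled just before the statement, the unique solution $u$ of \eqref{inf v in K of J(v)} is a fixed point of the map $T_\mu(v):=P_K\left(v-\mu J'(v)\right)$ for every $\mu>0$. Hence it suffices to show that, for $\mu$ small enough, $T_\mu$ is a strict contraction on $V$; then $(u^n)_{n\in\NN}$ defined by \eqref{proj grad algorithm} converges to its unique fixed point $u$ regardless of $u^0\in K$.

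Next I would estimate $\norm{T_\mu(v)-T_\mu(w)}$ for arbitrary $v,w\in V$. Since $P_K$ is $1$-Lipschitz, it is enough to control $\norm{(v-\mu J'(v))-(w-\mu J'(w))}^2$. Expanding,
\begin{equation*}
\norm{(v-w)-\mu\left(J'(v)-J'(w)\right)}^2=\norm{v-w}^2-2\mu\langle J'(v)-J'(w),v-w\rangle+\mu^2\norm{J'(v)-J'(w)}^2.
\end{equation*}
Now I would invoke the two hypotheses: strong convexity gives $\langle J'(v)-J'(w),v-w\rangle\ge\alpha\norm{v-w}^2$, and the Lipschitz continuity of $J'$ (say with constant $M$) gives $\norm{J'(v)-J'(w)}^2\le M^2\norm{v-w}^2$. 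Substituting yields
\begin{equation*}
\norm{T_\mu(v)-T_\mu(w)}^2\le\left(1-2\mu\alpha+\mu^2M^2\right)\norm{v-w}^2.
\end{equation*}
The polynomial $\mu\mapsto 1-2\mu\alpha+\mu^2M^2$ equals $1$ at $\mu=0$ with negative derivative $-2\alpha$, so it is strictly less than $1$ for all $\mu\in(0,2\alpha/M^2)$; fixing any such $\mu$ makes $T_\mu$ a contraction with constant $k:=\sqrt{1-2\mu\alpha+\mu^2M^2}<1$.

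Finally I would conclude by the standard contraction estimate: from $u^{n+1}=T_\mu(u^n)$ and $u=T_\mu(u)$ we get $\norm{u^{n+1}-u}\le k\norm{u^n-u}$, hence $\norm{u^n-u}\le k^n\norm{u^0-u}\to 0$. (One should note $u^n\in K$ automatically since $P_K$ maps into $K$, so the iteration is well defined and the limit lies in $K$.) I do not expect any serious obstacle here; the only point requiring a little care is the non-expansiveness of $P_K$, which follows from the variational characterization of the projection in a Hilbert space (for $z\in V$, $\langle z-P_K z,\,v-P_K z\rangle\le 0$ for all $v\in K$), together with a short computation combining this inequality at two points. The admissible range of $\mu$ and the rate $k$ depend only on $\alpha$ and $M$, as claimed in the statement.
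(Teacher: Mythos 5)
Your proof is correct and is precisely the standard contraction argument: the paper itself gives no proof here and defers to \cite[Theorem 10.5.8]{Allaire4}, where exactly this fixed-point/non-expansiveness argument is carried out, with the same admissible range $\mu\in(0,2\alpha/M^2)$ and contraction constant $\sqrt{1-2\mu\alpha+\mu^2M^2}$. The one ingredient worth making fully explicit is the $1$-Lipschitz property of $P_K$, which you correctly reduce to the variational inequality $\langle z-P_Kz,\,v-P_Kz\rangle\le0$ for all $v\in K$; everything else is in order.
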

We refer to \cite[Theorem 10.5.8]{Allaire4} for a proof.

\begin{remark}
Another possibility is to penalize the constraints, i.e., for small $\e$ we replace the problem 
\begin{equation*}
\inf_{v\in V,\,F(v)\le0} J(v) \quad \text{ by }\quad \inf_{v\in V} \left\lbrace J(v) + \frac{1}{\e}  \sum_{i=1}^M \big(\max(F_i (v),0\big)^2    \right\rbrace.
\end{equation*}
\end{remark}

\begin{example}[Some projection operators $P_K$]
Here we present some projection operators that can be computed explicitly.
\begin{itemize}
\item If $V=\RR^M$ and $\displaystyle K=\prod_{i=1}^M \,[a_i,b_i]$, then for $x=(x_1,\dots, x_M)\in \RR^M$
we have
\begin{equation*}
P_K(x)=y\quad \text{ with }\quad y_i= \min\left(\max(a_i,x_i),b_i\right) \quad \text{ for } 1\le i \le M.
\end{equation*}
\item If $V=\RR^M$ and $\displaystyle K=\Big\{x\in \RR^M:\; \sum_{i=1}^M x_i=c_0\Big\}$, then 
\begin{equation*}
P_K(x)=y\quad\text{ with } \quad y_i=x_i-\lambda,\quad \lambda=\frac{1}{M}\left(-c_0 + \sum_{i=1}^M x_i \right).
\end{equation*}
\item Similarly, if $V=L^2(\Omega)$ and $K=\{ \phi \in V :\; a(x)\leq \phi(x) \leq b(x)  \}$ or $\displaystyle K=\left\{ \phi\in V : \; \int_\Omega \phi \, dx=c_0 \right\}$ the corresponding projection operators $P_K$ can be obtained by replacing finite sums with integrals in the two examples above. 
\end{itemize}
\end{example}
For more general closed convex sets $K$, the corresponding projection operator $P_K$ can be very hard to determine. In such cases one can use the so called Uzawa algorithm \cite{Allaire4} which looks for a saddle point of the Lagrangian. 

\section{Exercises}
\begin{problem}
For a given $f\in L^2(\Omega)$, $\Omega$ being a rectangle in 2D, solve the following optimization problem numerically under the constraints $0\le u(x)\le 1$ and $\displaystyle\int_\Omega u\,dx=|\Omega|/2${\rm :} 
\begin{equation*}
\min_{u\in L^2(\Omega)} \int_\Omega |u-f|^2\, dx. 
\end{equation*}
\end{problem}

\begin{problem}
For a given $f\in L^2(\Omega)$, $\Omega$ being a rectangle in 2D, and $\e>0$, solve the following optimization problem numerically under the constraints $0\le u(x)\le 1${\rm :} 
\begin{equation*}
\min_{u\in L^2(\Omega)} \int_\Omega \left( |u-f|^2 + {\e}^2|\gr u|^2\right) \, dx. 
\end{equation*}
\end{problem}

\chapter{Parametric optimal design}\label{Parametric optimal design}
\section{Optimization of a membrane's thickness}
In this section, we consider a parametric optimal design problem of a membrane. Let $\Omega$ be a bounded domain in $\rn$ ($N\ge 2$) and $f\in L^2(\Omega)
$ be external forces. Let us consider the displacement $u \in H^1_0(\Omega)$, defined as the solution of  
\begin{equation}\label{membrane with thickness h equation}
\left\{
\begin{aligned}
-\dv(h\gr u)&=f &&\text{ in } \Omega, \\
u&=0 &&\text{ on } \partial\Omega, 
\end{aligned}
\right.
\end{equation}
where $h = h(x)$ is the thickness of membrane. Note that Lax--Milgram theorem ensures that there exists a unique solution $u \in H^1_0(\Omega)$ of \eqref{membrane with thickness h equation} if $f \in L^2(\Omega)$. 
For some given constants $0<h_{\rm min}<h_{0}<h_{\rm max}$, we seek to optimize the membrane by varying its thickness $h(x)$ in the admissible set defined by 
\begin{equation*}
\mathcal{U}_{\rm ad}=\left\lbrace h\in L^\infty(\Omega) \;:\; 0<h_{\rm min}\le h(x)\le h_{\rm max} \text{ a.e. in } \Omega,\; \int_\Omega h(x) dx= h_0|\Omega|  \right\rbrace.
\end{equation*}

We consider the following parametric shape optimization problem:
\begin{equation*}
\inf_{h\in\mathcal{U}_{\rm ad}} \left\{J(h)=\int_\Omega j(u)\, dx\right\},
\end{equation*}
where $u$ depends on $h$ through the state equation \eqref{membrane with thickness h equation}, and $j$ is a $C^1$ function from $\RR$ to $\RR$ such that $|j(u)|\le  C(u^2+1)$ and $|j'(u)|\le C(|u|+1)$. As examples of function $j$, we can take $j(u) = f u$ if we want to minimize the compliance (maximize the rigidity of the membrane), or $j(u) = |u-u_0|^2$ if we want to minimize the least-square criterion to reach a target displacement $u_0 \in L^2(\Omega)$. 

Before studying the existence of an optimal thickness, we show the continuity of the cost function. 
\begin{proposition}\label{h mapsto J(h) is cont from Uad}
The application
\begin{equation*}
h\mapsto J(h)=\int_\Omega j(u) \,dx
\end{equation*}
is a continuous mapping from $\mathcal{U}_{\rm ad}$ into $\RR$. 
\end{proposition}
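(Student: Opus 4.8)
The plan is to write $J$ as the composition of the solution map $h\mapsto u_h$ and the integral functional $v\mapsto \int_\Omega j(v)\,dx$, and to prove continuity of each factor separately, with $\mathcal{U}_{\rm ad}$ regarded as a metric space for the $L^\infty(\Omega)$ distance inherited from its ambient space.

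First I would establish that $h\mapsto u_h$ is continuous from $\mathcal{U}_{\rm ad}$ into $H^1_0(\Omega)$. Given $h_n\to h$ in $L^\infty(\Omega)$, set $u_n=u_{h_n}$ and $u=u_h$; a standard energy estimate (test the weak formulation of \eqref{membrane with thickness h equation} with $u_n$, use $h_n\ge h_{\rm min}$ and Poincar\'e's inequality) yields a uniform bound $\|u_n\|_{H^1_0}\le C\|f\|_{L^2}$. Subtracting the weak formulations for $h_n$ and $h$ and then testing the difference with $u_n-u$ leads to an inequality of the shape
\begin{equation*}
h_{\rm min}\,\|\gr(u_n-u)\|_{L^2}^2 \le \|h_n-h\|_{L^\infty}\,\|\gr u\|_{L^2}\,\|\gr(u_n-u)\|_{L^2},
\end{equation*}
hence $\|u_n-u\|_{H^1_0}\le C\|h_n-h\|_{L^\infty}\to 0$.

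Next I would show that $v\mapsto \int_\Omega j(v)\,dx$ is continuous on $L^2(\Omega)$, and therefore on $H^1_0(\Omega)$ through the continuous embedding $H^1_0(\Omega)\hookrightarrow L^2(\Omega)$. If $v_n\to v$ in $L^2(\Omega)$, the mean value theorem together with $|j'(\xi)|\le C(|\xi|+1)$ gives $|j(v_n)-j(v)|\le C(|v_n|+|v|+1)|v_n-v|$ almost everywhere; integrating over $\Omega$ (which has finite measure) and applying Cauchy--Schwarz bounds the difference $\big|\int_\Omega j(v_n)\,dx-\int_\Omega j(v)\,dx\big|$ by $C(\|v_n\|_{L^2}+\|v\|_{L^2}+|\Omega|^{1/2})\|v_n-v\|_{L^2}$, which tends to $0$ since $\|v_n\|_{L^2}$ is bounded. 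Composing the two steps finishes the argument.

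The proof is essentially routine; the only point deserving a little care is the second step, where the merely superlinear growth allowed for $j$ rules out a crude global Lipschitz bound and forces one to exploit the uniform $L^2$ bound on $u_n$ produced by the energy estimate of the first step. It is also worth emphasizing that the argument genuinely uses \emph{strong} $L^\infty$ convergence of the thicknesses: the analogous statement is false for weak-$*$ convergence, and this failure is precisely the obstruction that the relaxation/homogenization developed in the later chapters is designed to overcome.
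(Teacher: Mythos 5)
Your proposal is correct and follows essentially the same route as the paper: the paper also factors $J$ through the solution map $h\mapsto u_h$ (Lemma~\ref{h to u}, proved by the very same energy estimate obtained by subtracting the two weak formulations and testing with the difference) and the integral functional $v\mapsto\int_\Omega j(v)\,dx$ (Lemma~\ref{u to int ju}). The only cosmetic difference is that the paper justifies this second factor by the dominated convergence theorem, whereas your mean-value-theorem plus Cauchy--Schwarz argument establishes the same continuity quantitatively.
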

\begin{proof}
The result follows immediately by composition of the two continuous functions that appear in the following lemmas: Lemma \ref{u to int ju} and Lemma \ref{h to u}. 
\end{proof}

\begin{lemma}\label{u to int ju}
The map $\displaystyle v\mapsto \int_\Omega j(v) \,dx$ is continuous from $L^2(\Omega)$ into $\RR$.
\end{lemma}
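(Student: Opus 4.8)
The plan is to show that $v \mapsto \int_\Omega j(v)\,dx$ is continuous from $L^2(\Omega)$ to $\RR$ by a standard argument using the Nemytskii (superposition) operator together with the growth bounds on $j$ and $j'$ that are assumed in this section. The key point is that continuity does not follow pointwise; we need to exploit the convergence in $L^2$ more carefully, and the cleanest route is through a subsequence argument combined with dominated convergence.

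First I would take a sequence $v_n \to v$ in $L^2(\Omega)$ and aim to prove $\int_\Omega j(v_n)\,dx \to \int_\Omega j(v)\,dx$. It suffices to show that \emph{every} subsequence of $(v_n)$ has a further subsequence along which the integrals converge to $\int_\Omega j(v)\,dx$. So, passing to an arbitrary subsequence (not relabeled), I would invoke the classical fact that $L^2$-convergence implies the existence of a further subsequence $(v_{n_k})$ converging to $v$ almost everywhere in $\Omega$ and dominated by a fixed function $g \in L^2(\Omega)$, i.e. $|v_{n_k}(x)| \le g(x)$ a.e. Then by continuity of $j$ we get $j(v_{n_k}(x)) \to j(v(x))$ a.e., and by the growth bound $|j(u)| \le C(u^2+1)$ we have $|j(v_{n_k})| \le C(g^2 + 1) \in L^1(\Omega)$ (here $\Omega$ bounded ensures the constant is integrable). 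The dominated convergence theorem then yields $\int_\Omega j(v_{n_k})\,dx \to \int_\Omega j(v)\,dx$, which closes the subsequence argument and proves continuity.

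Alternatively, one could give a more quantitative estimate directly: writing
\[
\left| \int_\Omega \big(j(v) - j(w)\big)\,dx \right| \le \int_\Omega |j'(\xi)|\,|v-w|\,dx
\]
for some $\xi$ between $v$ and $w$ (mean value theorem), then using $|j'(\xi)| \le C(|\xi|+1) \le C(|v|+|w|+1)$ and the Cauchy--Schwarz inequality to bound this by $C(\|v\|_{L^2} + \|w\|_{L^2} + |\Omega|^{1/2})\|v-w\|_{L^2}$. This shows the map is Lipschitz on bounded subsets of $L^2(\Omega)$, which is more than enough for continuity. I would probably present the subsequence/dominated-convergence version as the main proof since it is the most robust and matches the style of the surrounding text, and perhaps remark on the local Lipschitz estimate.

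The only mild obstacle is bookkeeping: making sure the domination is genuinely uniform in $k$ (which is exactly what the standard $L^p$-convergence-implies-a.e.-convergence-along-a-dominated-subsequence lemma provides) and that the growth hypothesis on $j$ is used with the correct exponent so that the dominating function lands in $L^1$. Since $\Omega$ is bounded and $g \in L^2(\Omega)$ gives $g^2 \in L^1(\Omega)$, everything fits. No deep ideas are needed here — this lemma is genuinely routine, and its role is simply to be combined with Lemma \ref{h to u} (continuity of $h \mapsto u$) to conclude Proposition \ref{h mapsto J(h) is cont from Uad}.
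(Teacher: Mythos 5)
Your main argument is exactly the paper's proof spelled out in full: the paper simply says the result follows from the Lebesgue dominated convergence theorem, and your subsequence-plus-domination argument (using the growth bound $|j(u)|\le C(u^2+1)$ to produce an $L^1$ dominating function) is the correct and standard way to make that one-line proof rigorous. Both your main route and your alternative local-Lipschitz estimate are correct, so this is essentially the same approach as the paper, just with the details filled in.
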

\begin{proof}
The result follows by the Lebesgue dominated convergence theorem.
\end{proof}

\begin{lemma}\label{h to u}
The map $h\mapsto u$, where $u\in H_0^1(\Omega)$ is the solution of \eqref{membrane with thickness h equation}, is a continuous function from $\mathcal{U}_{\rm ab}$ into $H_0^1(\Omega)$. 
\end{lemma}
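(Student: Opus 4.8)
The plan is to use the topology that $\mathcal{U}_{\rm ad}$ inherits from $L^\infty(\Omega)$ and in fact to prove the stronger statement that $h\mapsto u$ is Lipschitz continuous, which of course implies continuity. So let $h,h_n\in\mathcal{U}_{\rm ad}$ with $\|h_n-h\|_{L^\infty(\Omega)}\to 0$, and let $u_n,u\in H_0^1(\Omega)$ be the corresponding solutions of \eqref{membrane with thickness h equation}. First I would record the uniform a priori bound: testing the weak formulation of \eqref{membrane with thickness h equation} for $u$ against $u$ itself, and using the uniform coercivity $h\ge h_{\rm min}$ together with the Poincar\'e inequality, gives
\begin{equation*}
\|\gr u\|_{L^2(\Omega)}\le \frac{C_P}{h_{\rm min}}\,\|f\|_{L^2(\Omega)},
\end{equation*}
where $C_P$ is the Poincar\'e constant of $\Omega$; the same bound holds for every $u_n$, uniformly in $n$.

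Next I would subtract the two weak formulations: for every $v\in H_0^1(\Omega)$,
\begin{equation*}
\int_\Omega \big(h_n\gr u_n - h\gr u\big)\cdot\gr v\,dx = 0,
\end{equation*}
and rewrite $h_n\gr u_n - h\gr u = h_n\gr(u_n-u) + (h_n-h)\gr u$ to obtain
\begin{equation*}
\int_\Omega h_n\,\gr(u_n-u)\cdot\gr v\,dx = \int_\Omega (h-h_n)\,\gr u\cdot\gr v\,dx \quad\forall v\in H_0^1(\Omega).
\end{equation*}
The natural choice $v=u_n-u$ then yields
\begin{equation*}
\int_\Omega h_n\,|\gr(u_n-u)|^2\,dx = \int_\Omega (h-h_n)\,\gr u\cdot\gr(u_n-u)\,dx.
\end{equation*}

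Finally I would estimate both sides: the left-hand side is bounded below by $h_{\rm min}\|\gr(u_n-u)\|_{L^2(\Omega)}^2$, while Cauchy--Schwarz bounds the right-hand side by $\|h_n-h\|_{L^\infty(\Omega)}\|\gr u\|_{L^2(\Omega)}\|\gr(u_n-u)\|_{L^2(\Omega)}$. Dividing through by $\|\gr(u_n-u)\|_{L^2(\Omega)}$ (the case where it vanishes being trivial) and inserting the a priori bound for $\|\gr u\|_{L^2(\Omega)}$ gives
\begin{equation*}
\|\gr(u_n-u)\|_{L^2(\Omega)}\le \frac{C_P\,\|f\|_{L^2(\Omega)}}{h_{\rm min}^2}\,\|h_n-h\|_{L^\infty(\Omega)},
\end{equation*}
and one further application of the Poincar\'e inequality upgrades this to a bound on $\|u_n-u\|_{H_0^1(\Omega)}$, which tends to $0$. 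I do not expect a genuine obstacle here; the one point that must not be overlooked is that the lower bound $h_{\rm min}>0$ built into $\mathcal{U}_{\rm ad}$ is precisely what provides the \emph{uniform} coercivity needed both for the a priori estimate and for absorbing the left-hand side --- without it the argument, and indeed the conclusion, would fail.
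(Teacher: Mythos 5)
Your proof is correct and follows essentially the same route as the paper: subtract the two weak formulations, test with the difference of solutions, and use the uniform lower bound $h\ge h_{\rm min}$ together with the a priori estimate on $\|\gr u\|_{L^2(\Omega)}$ to obtain the Lipschitz bound $\|\gr(u_n-u)\|_{L^2(\Omega)}\le C h_{\rm min}^{-2}\|f\|_{L^2(\Omega)}\|h_n-h\|_{L^\infty(\Omega)}$. The only cosmetic difference is that the paper compares two generic terms $u_n,u_m$ and concludes via a Cauchy-sequence argument, whereas you compare $u_n$ directly to the solution for the limiting thickness; the key identity and estimates are identical.
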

\begin{proof}
Let $(h_n)_{n\in\NN}\subset\mathcal{U}_{\rm ab}$ be a sequence converging in the $L^\infty$-norm to some $h_\infty\in L^\infty(\Omega)$. Let $u_n\in H_0^1(\Omega)$ denote the unique solution of the membrane equation with associated thickness $h_n$:
\begin{equation*}
\left\lbrace
\begin{aligned}
-\dv (h_n \gr u_n)&= f &&\text{ in }\Omega,\\
u_n&=0 && \text{ on }\partial\Omega,
\end{aligned}
\right.
\end{equation*}
or the equivalent weak formulation
\begin{equation}\label{membrane equation for u_n weak}
\int_\Omega h_n \gr u_n \cdot \gr \phi\, dx= \int_\Omega f \phi\, dx \quad \forall\phi\in H_0^1(\Omega). 
\end{equation}
We will prove that $u_n$ is a Cauchy sequence in $H_0^1(\Omega)$ and thus  it converges. Take $n,m\in \NN$ and subtract the variational formulation for $u_n$ \eqref{membrane equation for u_n weak} from that of $u_m$ for fixed $\phi\in H_0^1(\Omega)$ to be chosen later. 
We get
\begin{equation*}
\int_\Omega h_m \gr (u_m-u_n)\cdot \gr \phi\, dx = \int_\Omega (h_n-h_m) \gr u_n \cdot \gr \phi\, dx \quad \forall \phi\in H_0^1(\Omega). 
\end{equation*}
Choosing $\phi=u_m-u_n$ we deduce
\begin{equation*}
\norm{\gr (u_m-u_n)}_{L^2(\Omega)}\le \frac{C}{h_{\rm min}^2} \norm{f}_{L^2(\Omega)} \norm{h_m-h_n}_{L^\ali(\Omega)},
\end{equation*}
which proves the claim.
\end{proof}

\section{Existence theories} \label{exist thm}
The question of the existence of optimal shapes is far from simple. We cannot apply the results of Chapter \ref{chap1} directly since $J(h)$ is not generally convex function. In fact, there exists no optimal shape in general. General counter-examples have been found by Murat \cite{Murat}. It is an important issue because this non-existence phenomenon has dramatic consequences for the numerical computations. Thus the definition of the set $\mathcal{U}_{\rm ab}$ of admissible designs has to be modified in order to obtain existence of optimal shapes. The main strategies employed to gain the existence of optimal shapes are discretization (when the admissible set is made finite dimensional), regularization (when the admissible set is made compact), and sometimes a miracle (when the given optimization problem happens to be convex).   

\subsection{Definition of a counter-example}
First, let us show a counter-example to the existence of optimal design for the membrane problem. For simplicity, let $N = 2$ and $\Omega = (0,1) \times (0,1)$. 
\begin{figure}[h]
\centering
\includegraphics[width=.8\linewidth,center]{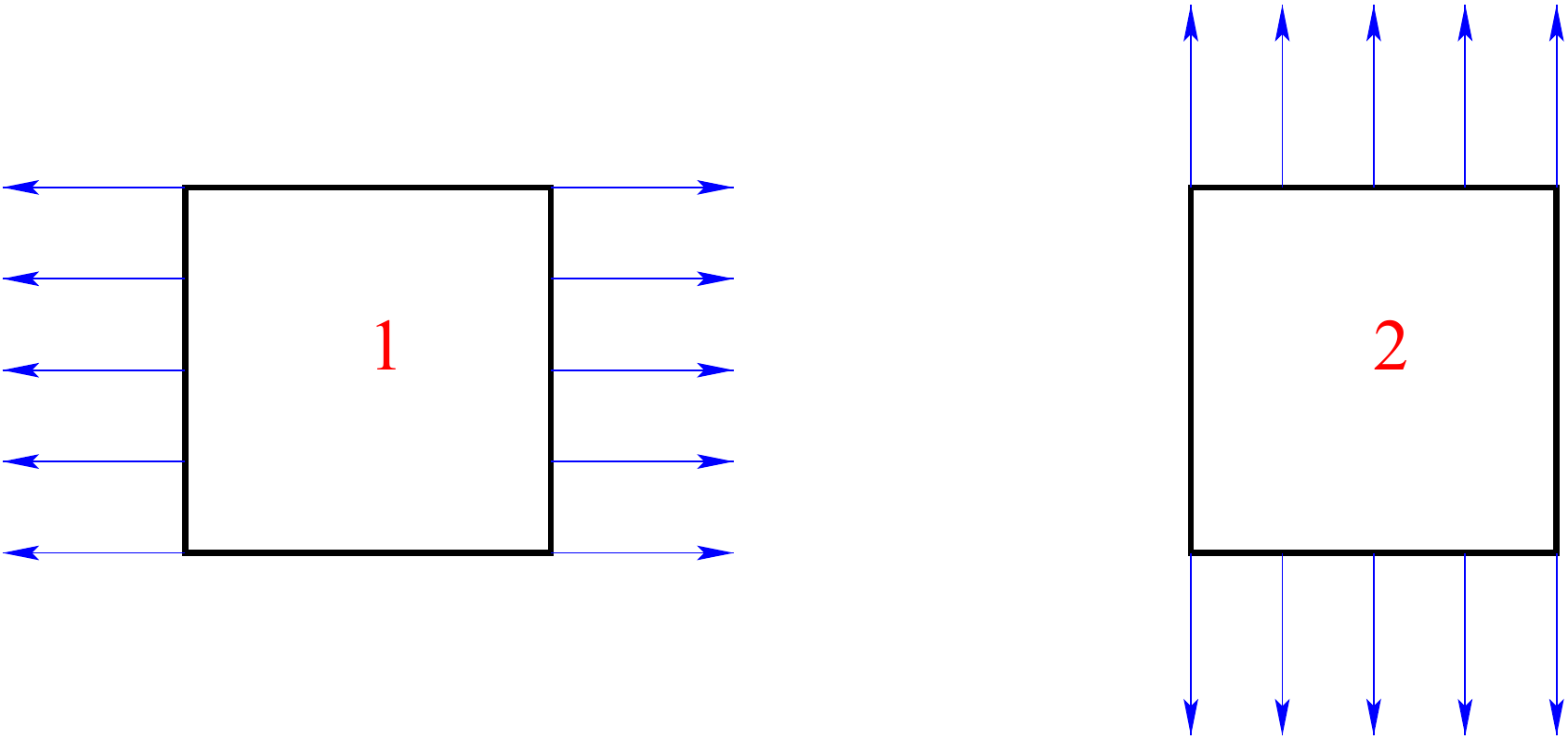}
\caption{The setting of the counter-example: we seek a membrane that is strong for horizontal loading $(1)$ and weak for vertical loading $(2)$.} 
\label{cexepais}
\end{figure}

We want to minimize the following objective function for $h\in\mathcal{U}_{\rm ad}$:  
\begin{equation}\label{existence counter-example definition}
J(h)= \int_{\pa\Omega} e_1\cdot n u_1 \,ds - \int_{\pa\Omega} e_2 \cdot n u_2\, ds, 
\end{equation}
where $e_1$, $e_2$ are the horizontal and vertical directions $(1,0)$, $(0,1)$ respectively and $u_1$, $u_2$ are the solutions of the following membrane problems: 
\begin{equation*}
    \left\{
    \begin{aligned}
        -\dv (h \gr u_1)&=0 &&\text{ in }\Omega,\\
        h\gr u_1\cdot n&=e_1\cdot n &&\text{ on }\pa\Omega,
    \end{aligned}
    \right.
    \quad\quad
    \left\{
    \begin{aligned}
        -\dv (h \gr u_2)&=0 &&\text{ in }\Omega,\\
        h\gr u_2\cdot n&=e_2\cdot n &&\text{ on }\pa\Omega.
    \end{aligned}
    \right.
\end{equation*}
When we minimize \eqref{existence counter-example definition}, we want the membrane to be strong for horizontal loading (we minimize compliance in the $e_1$ direction), and at the same time weak for vertical loading (we maximize the compliance in the direction $e_2$). This property of the objective function makes the problem ill-posed in the following sense.

\begin{theorem}\label{the counter-example is really a counter-example}
The infimum of \eqref{existence counter-example definition} is not attained by any $h\in \mathcal{U}_{\rm ad}$. 
\end{theorem}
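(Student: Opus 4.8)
The plan is to exhibit an explicit minimizing sequence of thicknesses $(h_n)_{n\in\NN}\subset\mathcal{U}_{\rm ad}$ that drives $J$ to its infimum, and then to show that no admissible $h$ can attain that infimum because such an $h$ would have to be simultaneously ``rigid'' in the horizontal direction and ``soft'' in the vertical direction at (almost) every point, which is geometrically impossible for a scalar density. More precisely, I would first compute, for each of the two loading problems, a lower/upper bound on the compliance in terms of $h$ using the dual (complementary energy) characterization from Section~\ref{sec:dual energy}: the compliance for the load $e_i\cdot n$ equals $\min\{\int_\Omega h^{-1}|\sigma|^2\,dx : \dv\sigma=0,\ \sigma\cdot n = e_i\cdot n\}$, while it also equals $\max$ of a primal energy. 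The competition between the two terms of $J$ in \eqref{existence counter-example definition} is then transparent: to make the first term small one wants $h$ large where the horizontal flux concentrates, and to make the second term very negative (maximize vertical compliance) one wants $h$ small where the vertical flux passes.

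Next I would construct the oscillating sequence. The natural choice is a rank-one laminate: take $h_n$ to depend only on $x_1$, alternating rapidly (period $1/n$) between values close to $h_{\min}$ and close to $h_{\max}$, with the proportions chosen so that the average constraint $\int_\Omega h_n = h_0|\Omega|$ holds. Such a layered medium is ``stiff'' for fluxes parallel to the layers (arithmetic mean of $h$ governs the response) and ``compliant'' for fluxes across the layers (harmonic mean of $h^{-1}$ governs it). Aligning the layers with $e_1$ makes $u_1$ see the arithmetic mean (large effective coefficient, small compliance) while $u_2$ sees the harmonic mean (small effective coefficient, large compliance). Passing to the homogenized limit — or just estimating the two energies directly by test functions — shows $J(h_n)$ decreases to a value strictly below $J(h)$ for any fixed admissible $h$. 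This already shows the infimum is not attained by exhibiting a competitor doing strictly better than any classical shape, once we also show the infimum itself is not achieved.

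For the non-attainment itself, I would argue by contradiction: suppose $h^*\in\mathcal{U}_{\rm ad}$ is a minimizer. Write the optimality condition (Euler inequality, Theorem~\ref{thm Euler ineq}) for $J$ at $h^*$; since $J(h)=\int_{\pa\Omega}e_1\cdot n\,u_1 - \int_{\pa\Omega}e_2\cdot n\,u_2$ and the compliances are, respectively, $\int_\Omega h|\gr u_1|^2$ and $\int_\Omega h|\gr u_2|^2$, a standard computation (differentiating the state equation, using the self-adjointness of the compliance functional so that no adjoint state is needed) gives $\langle J'(h^*), k\rangle = \int_\Omega k\,(|\gr u_1|^2 - |\gr u_2|^2)\,dx$ for admissible variations $k$. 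The optimality inequality then forces $h^*(x)=h_{\max}$ where $|\gr u_1|^2-|\gr u_2|^2$ is most negative and $h^*(x)=h_{\min}$ where it is most positive, up to the Lagrange multiplier for the volume constraint — i.e. $h^*$ is ``bang-bang''. But then one compares $J(h^*)$ with $J$ evaluated on a fine laminate built around $h^*$ (replacing $h^*$ locally by oscillations between $h_{\min}$ and $h_{\max}$ with the same local average): the laminate strictly lowers the vertical compliance term while not increasing the horizontal one beyond a controlled amount, contradicting minimality. The main obstacle I anticipate is making this last comparison quantitative — one needs the strict inequality $\text{(harmonic mean)} < \text{(arithmetic mean)}$ for the relevant coefficient and must verify that the gain in the $-u_2$ term genuinely dominates any loss in the $u_1$ term; the cleanest route is to invoke the explicit formulas for the homogenized tensor of a rank-one laminate and the two dual energy bounds from Example~\ref{ex:dual energy}, rather than direct PDE estimates.
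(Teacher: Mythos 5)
The paper itself does not prove this theorem: it offers only a ``hand-waving'' argument based on fine horizontal lamination and refers to Section 5.2 of \cite{Allaire2} for the rigorous details. Your proposal follows the same underlying idea (oscillating laminates, arithmetic versus harmonic means, complementary energy), so in spirit it matches the paper; the problems lie in the details you add. First, your laminate is oriented the wrong way: taking $h_n$ to depend only on $x_1$ produces \emph{vertical} layers, and by Lemma~\ref{lem:1-laminate} the direction $e_1$ (across the layers) then sees the harmonic mean while $e_2$ sees the arithmetic mean, i.e.\ the structure is horizontally weak and vertically strong, which \emph{increases} $J$. The construction the paper intends (and which your own phrase ``aligning the layers with $e_1$'' describes) is $h_n=h_n(x_2)$, horizontal layers, so that $u_1$ sees $\lambda_\theta^+$ and $u_2$ sees $\lambda_\theta^-$. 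Second, the derivative of a compliance with respect to $h$ carries a minus sign (the problem is self-adjoint with $p=-u$, as in Theorem~\ref{Theorem:adjoint}), so $\langle J'(h),k\rangle=\int_\Omega k\left(|\gr u_2|^2-|\gr u_1|^2\right)dx$; your formula has the opposite sign, which exchanges the roles of $h_{\min}$ and $h_{\max}$ in your bang-bang discussion.

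More importantly, the decisive step is absent. Exhibiting one minimizing sequence (your second paragraph) proves nothing about attainment, and your sentence ``once we also show the infimum itself is not achieved'' is circular. The whole content of the theorem is that for \emph{every} $h\in\mathcal{U}_{\rm ad}$ there is a strictly better competitor, and your contradiction argument reduces precisely to the claim you flag as the ``main obstacle'': that replacing $h^*$ locally by a fine horizontal laminate with the same average strictly decreases $J$. Making this precise requires (i) the convergence $J(h_n)\to J^*(\theta,A^*_{\rm lam})$ in the sense of homogenization (Theorem~\ref{thm homogenization}; note that $u_1,u_2$ change globally, not only where you laminate, so a purely local energy comparison does not suffice), and (ii) the strict inequality between the homogenized cost of the laminate and that of the isotropic tensor $h^*\,{\rm Id}$, which needs $\gr u_2\neq 0$ on a set of positive measure together with the strict gap $\lambda_\theta^-<\lambda_\theta^+$. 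None of this is carried out; it is exactly the ``technical'' part the paper also declines to write. Your proposal is therefore a reasonable road map consistent with the paper's heuristic, but not a proof, and as literally written the laminate construction and the gradient formula would both have to be corrected before the road map could be followed.
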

Since the rigorous proof of Theorem \ref{the counter-example is really a counter-example} is a little bit technical, here we will only explain the main ideas by means of a ``hand-waving argument". First of all, notice that if $h$ is uniform (i.e. $h$ is a constant function), then by definition the membrane is isotropic. Therefore, also the domain $\Omega$ is isotropic, that is to say that it shows the same mechanical behavior in all direction. However, it is better to build horizontal layers of alternating small and large thicknesses in order to minimize the objective function \eqref{existence counter-example definition} (see Figure \ref{cexepais2}). In other words, we are building a laminated structure that is horizontally strong but vertically weak. In order to intuitively justify this statement consider the following. 
Vertically, the lines of forces must cross the layers of minimal thickness: this means that the structure is thus weak with respect to vertical stress. On the other hand, horizontally, the lines of forces follow the layers of maximal thickness: this means that the structure is thus strong with respect to horizontal stress. 
However, since the boundary conditions are uniform, the membrane is
horizontally stronger if the layers are finer, as the lines of forces are deviating from the horizontal to a lesser extent.
If $h$ oscillates at a small scale, we obtain an anisotropic composite material. To reach the minimum, the oscillation scale must go to $0$. Therefore, there does not exist any real optimal design that does not involve a microstructure at an infinitely small scale. We refer the interested reader to Section 5.2 in \cite{Allaire2} for the details. 
\begin{figure}[h]
\centering
\includegraphics[width=0.5\linewidth,center]{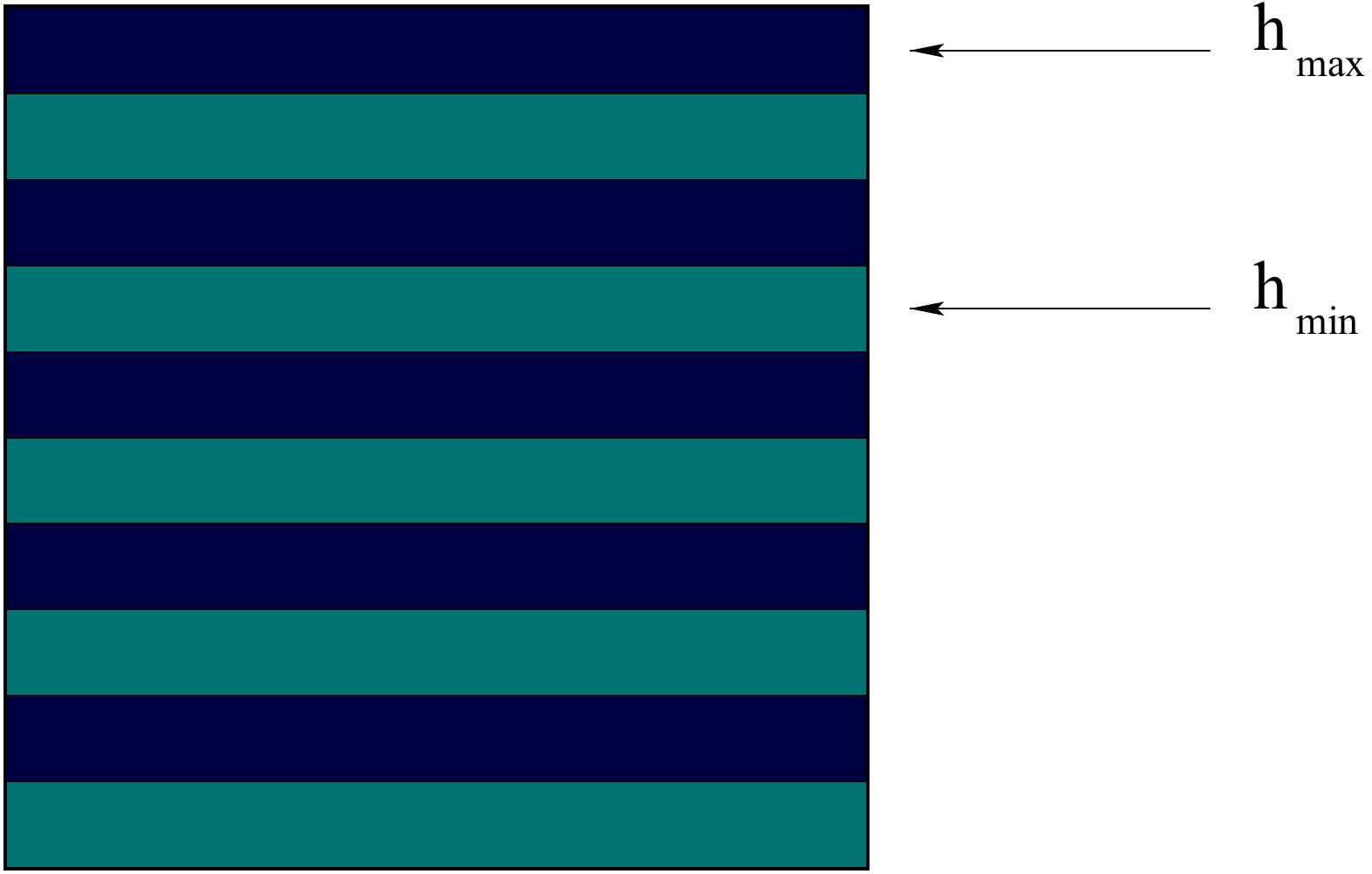}
\caption{Horizontal layers of alternating small and large thicknesses.} 
\label{cexepais2}
\end{figure}






\subsection{Existence for a discretized model}
One way to avoid non-existence due to a loss of compactness consists in working with a discretized (and hence finite-dimensional) model. 
Let $(\omega_i)_{1\le i\le n}$ be a partition of $\Omega$ such that
\begin{equation*}
\ol{\Omega}=\bigcup_{i=1}^n \ol{\om}_i, \quad \omega_i\cap\omega_j=\emptyset \quad\text{ for } i\ne j.
\end{equation*}
We introduce the subset $\mathcal{U}_{\rm ad}^n$ of $\mathcal{U}_{\rm ab}$ defined by
\begin{equation*}
\mathcal{U}_{\rm ab}^n=\left\lbrace 
h\in\mathcal{U}_{\rm ab} \;:\; h(x)\equiv h_i\in\RR \text{ in }\om_i,\quad 1\le i\le n
\right\rbrace.
\end{equation*}
In other words, any function $h\in \mathcal{U}_{\rm ab}^n$ is uniquely determined by the choice of the vector $(h_i)_{1\le i\le n}\in \RR^n$ and thus $\mathcal{U}_{\rm ad}^n$ is identified with a closed subset of $\RR^n$.

\begin{theorem}[Existence in finite dimension]
The discretized optimization problem 
\begin{equation*}
\inf_{h\in \mathcal{U}_{\rm ad}^n} J(h)
\end{equation*}
admits at least one minimizer.
\end{theorem}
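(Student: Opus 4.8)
The plan is to reduce the discretized optimization problem to a direct application of Theorem \ref{ex minimizer for cont J over closed K}, the existence result for continuous functions on closed subsets of $\RR^n$ satisfying the ``infinite at infinity'' property. Since $\mathcal{U}_{\rm ad}^n$ is identified with a subset of $\RR^n$ via the correspondence $h\leftrightarrow (h_i)_{1\le i\le n}$, everything boils down to checking three things: that this subset of $\RR^n$ is closed, that $(h_i)\mapsto J(h)$ is continuous on it, and that the ``infinite at infinity'' hypothesis is (vacuously or otherwise) satisfied.

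First I would verify closedness. The set $\mathcal{U}_{\rm ad}^n$ corresponds to the set of vectors $(h_1,\dots,h_n)\in\RR^n$ satisfying $h_{\rm min}\le h_i\le h_{\rm max}$ for each $i$ together with the single linear constraint $\sum_{i=1}^n h_i |\omega_i| = h_0|\Omega|$. This is the intersection of a closed box $\prod_{i=1}^n [h_{\rm min},h_{\rm max}]$ with a hyperplane (the preimage of a point under a continuous linear map), hence closed. In fact it is compact, being closed and bounded, which already makes the ``infinite at infinity'' property trivially satisfied: there simply are no sequences in $\mathcal{U}_{\rm ad}^n$ with $\norm{h^n}\to+\infty$, so the implication holds vacuously. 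I would also note that the set is nonempty since the constant $h\equiv h_0$ belongs to it (as $h_{\rm min}<h_0<h_{\rm max}$).

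Next I would address continuity of $h\mapsto J(h)$ restricted to $\mathcal{U}_{\rm ad}^n$. Here I can invoke Proposition \ref{h mapsto J(h) is cont from Uad}, which gives continuity of $h\mapsto J(h)$ as a map from $\mathcal{U}_{\rm ad}$ into $\RR$ with respect to the $L^\infty(\Omega)$ norm. It then suffices to observe that the identification of $\mathcal{U}_{\rm ad}^n$ with a subset of $\RR^n$ is bicontinuous: if $(h^k)\to h$ in $\RR^n$ (i.e. $h_i^k\to h_i$ for each $i$), then the corresponding piecewise-constant functions converge in $L^\infty(\Omega)$, since $\norm{h^k - h}_{L^\infty(\Omega)} = \max_{1\le i\le n}|h_i^k - h_i|\to 0$. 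Composing with Proposition \ref{h mapsto J(h) is cont from Uad} yields continuity of the objective as a function on the finite-dimensional set $\mathcal{U}_{\rm ad}^n\subset\RR^n$.

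With these ingredients in hand, the proof concludes by applying Theorem \ref{ex minimizer for cont J over closed K} to the continuous function $J$ on the nonempty closed (indeed compact) set $\mathcal{U}_{\rm ad}^n\subset\RR^n$, which produces a minimizer. There is essentially no hard part here — the whole point of discretization is precisely to restore the compactness that fails in the infinite-dimensional setting (cf. Remark \ref{no minimum in inf dim space} and the counter-example of Theorem \ref{the counter-example is really a counter-example}); the only mild care needed is the routine but necessary check that the parametrization $(h_i)\leftrightarrow h$ is compatible with the topologies involved, so that Proposition \ref{h mapsto J(h) is cont from Uad} can be transferred to the finite-dimensional picture.
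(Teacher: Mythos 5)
Your proposal is correct and follows essentially the same route as the paper, which likewise observes that $\mathcal{U}_{\rm ad}^n$ is a compact subset of $\RR^n$, that $J$ is continuous on it (via Proposition \ref{h mapsto J(h) is cont from Uad}), and then applies Theorem \ref{ex minimizer for cont J over closed K}. You merely spell out the routine checks (closedness as box $\cap$ hyperplane, nonemptiness, vacuity of the ``infinite at infinity'' hypothesis, and compatibility of the $\RR^n$ and $L^\infty$ topologies under the identification) that the paper leaves implicit.
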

\begin{proof}
Since $\mathcal{U}_{\rm ab}^n$ is a compact subset of $\rn$ and $J(h)$ is a continuous function on $\mathcal{U}_{\rm ab}^n$, the existence of a minimizer of $J$ in $\mathcal{U}_{\rm ab}^n$ follows from Theorem \ref{ex minimizer for cont J over closed K}. 
\end{proof}


\subsection{Existence with a regularity constraint}
Another classical way of ensuring the existence of minimizers relies in imposing additional regularity. For example, consider the space $C^1(\ol\Omega)$ which is a Banach space with the norm
\begin{equation*}
\norm{\phi}_{C^1(\ol{\Omega})}= \max_{x\in\ol{\Omega}}\left(|\phi(x)|+ |\gr \phi(x)| \right).
\end{equation*}
Take a given constant $R>0$ and introduce the subspace $\mathcal{U}_{\rm ad}^{\rm reg}$: 
\begin{equation*}
\mathcal{U}_{\rm ad}^{\rm reg}=\left\lbrace 
h\in \mathcal{U}_{\rm ad}\cap C^1(\ol\Omega) \;:\;  
\norm{h}_{C^1(\ol\Omega)}\le R
\right\rbrace.
\end{equation*}
The upper bound on the $C^1$-norm of $h$ in the definition above can be interpreted as a ``feasibility" (or ``manufacturability") constraint, as, in practice, the thickness cannot vary too rapidly. Then the following theorem holds: 
\begin{theorem}\label{thm regularized opt pb}
The regularized optimization problem 
\begin{equation*}
\inf_{h\in\mathcal{U}_{\rm ad}^{\rm reg}} J(h)
\end{equation*}
admits at least one minimizer.
\end{theorem}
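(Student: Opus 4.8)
The plan is to follow the pattern of Theorem~\ref{ex minimizer for cont J over closed K} — extract a convergent subsequence from a minimizing sequence and pass to the limit — the whole point being that the constraint $\norm{h}_{C^1(\ol\Omega)}\le R$ restores exactly the compactness whose absence was responsible for the non-existence phenomenon of Theorem~\ref{the counter-example is really a counter-example}.

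First I would take a minimizing sequence $(h_n)_{n\in\NN}\subset\mathcal{U}_{\rm ad}^{\rm reg}$, so that $J(h_n)\to\inf_{\mathcal{U}_{\rm ad}^{\rm reg}}J$. Since $\norm{h_n}_{C^1(\ol\Omega)}\le R$ for every $n$, the family $(h_n)$ is uniformly bounded and, because $\abs{\gr h_n}\le R$ on $\ol\Omega$, equi-Lipschitz, hence equicontinuous. By the Arzel\`a--Ascoli theorem there is a subsequence (still denoted $h_n$) and a function $h_\infty\in C(\ol\Omega)$ with $h_n\to h_\infty$ uniformly on $\ol\Omega$.

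Next I would check that $h_\infty\in\mathcal{U}_{\rm ad}^{\rm reg}$. The pointwise bounds $h_{\rm min}\le h_n\le h_{\rm max}$ and the sup-bound $\abs{h_n}\le R$ pass to the limit because the convergence is uniform; likewise $\int_\Omega h_\infty\,dx=\lim_n\int_\Omega h_n\,dx=h_0\abs{\Omega}$, since uniform convergence on the bounded set $\Omega$ implies $L^1(\Omega)$ convergence. It remains to control the gradient: the sequence $(\gr h_n)$ is bounded in $L^\infty(\Omega)^N$, so up to a further subsequence $\gr h_n\rightharpoonup g$ weakly-$*$ in $L^\infty(\Omega)^N$, and testing against smooth compactly supported vector fields identifies $g=\gr h_\infty$ in the distributional sense, with $\norm{\gr h_\infty}_{L^\infty}\le R$. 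This is the one delicate point: it only shows $h_\infty$ is Lipschitz with the right constant, whereas membership in $\mathcal{U}_{\rm ad}^{\rm reg}$ asks for $h_\infty\in C^1(\ol\Omega)$ with $\norm{h_\infty}_{C^1(\ol\Omega)}\le R$. I would handle this either by reading the constraint $\norm{\cdot}_{C^1(\ol\Omega)}\le R$ in the equivalent $W^{1,\infty}$ sense (so the admissible class is the set of Lipschitz functions with constant $\le R$, which is closed — and, with the pointwise and integral constraints, compact — for uniform convergence), or by replacing it with a uniform bound on a H\"older norm $\norm{\cdot}_{C^{1,\alpha}(\ol\Omega)}$, $\alpha\in(0,1)$, which makes $(\gr h_n)$ itself equicontinuous so that Arzel\`a--Ascoli gives $\gr h_n\to\gr h_\infty$ uniformly and hence $h_\infty\in C^1$. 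In either case $h_\infty$ belongs to the admissible set.

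Finally, since $h_n\to h_\infty$ in $L^\infty(\Omega)$ and $h_n,h_\infty\in\mathcal{U}_{\rm ad}$, Proposition~\ref{h mapsto J(h) is cont from Uad} (through Lemmas~\ref{h to u} and~\ref{u to int ju}) yields $J(h_n)\to J(h_\infty)$; combined with $J(h_n)\to\inf_{\mathcal{U}_{\rm ad}^{\rm reg}}J$ this gives $J(h_\infty)=\inf_{\mathcal{U}_{\rm ad}^{\rm reg}}J$, so $h_\infty$ is the sought minimizer. I expect the only genuine obstacle to be the closedness of $\mathcal{U}_{\rm ad}^{\rm reg}$ under the sole compactness at hand (uniform convergence), i.e.\ the $C^1$-regularity of the limit; the rest is a routine passage to the limit together with the already established continuity of $J$.
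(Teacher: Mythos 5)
Your proof follows essentially the same route as the paper's: extract a $C^0(\ol\Omega)$-convergent subsequence from a minimizing sequence using the uniform $C^1$ bound (the paper invokes ``a variant of Rellich theorem'', i.e.\ Arzel\`a--Ascoli), check that the constraints pass to the limit, and conclude with the continuity of $J$ from Proposition~\ref{h mapsto J(h) is cont from Uad}. The one delicate point you flag is real and is in fact glossed over in the paper, which simply asserts $h_\infty\in C^1(\ol\Omega)$: a uniform $C^1$ bound only forces the uniform limit to be Lipschitz with constant at most $R$, so one must either read the constraint in the $W^{1,\infty}$ sense or strengthen it to a $C^{1,\alpha}$ bound, exactly as you propose.
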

\begin{proof}
Consider a minimizing sequence $(h_n)_{n\in\NN}\subset \mathcal{U}_{\rm ad}^{\rm reg}$ such that
\begin{equation*}
\lim_{n\to\infty} J(h_n)= \inf_{h\in\mathcal{U}_{\rm ad}^{\rm reg}} J(h).
\end{equation*}
By definition, the sequence $(h_n)_{n\in\NN}$ is bounded uniformly in $n$ in the space $C^1(\ol\Omega)$. We then apply a variant of Rellich theorem which states that one can extract a subsequence (still denoted by $h_n$ for simplicity) that converges in $C^0(\ol\Omega)$ to a limit function $h_\infty$ (furthermore, we know that $h_\ali\in C^1(\ol\Omega)$). We already know that $h\mapsto J(h)$ is a continuous mapping from $\mathcal{U}_{\rm ad}$ into $\RR$ by Proposition \ref{h mapsto J(h) is cont from Uad}, therefore
\begin{equation*}
\lim_{n\to\infty} J(h_n)=J(h_\ali),
\end{equation*}
which proves that $h_\ali$ is a global minimizer of $J$ in $\mathcal{U}_{\rm ad}^{\rm reg}$ as claimed.
\end{proof}
\begin{remark}
Theorem \ref{thm regularized opt pb} is actually a theorem of limited practical interest for the following reasons.
\begin{itemize}
\item In the practical cases, it is not clear how to choose the upper bound $R$ in the definition of $\mathcal{U}_{\rm ad}^{\rm reg}$.
\item Usually we do not have convergence as $R$ goes to infinity.
\item It is not clear whether, numerically, we have global or local minimizers.
\item Numerically, an upper bound on the $H^1$-norm is preferred instead:
\begin{equation*}
\norm{h}_{H^1(\Omega)}\le R.
\end{equation*}
\end{itemize}
\end{remark}

\section{Computation of a continuous gradient} \label{sec:Computation of a continuous gradient}
In this section, we will calculate the gradient of the objective function $J(h)$. This tells us the necessary conditions for optimality of the optimal shape and allows us to establish a numerical algorithm for calculating the optimal shape. 

First, we consider the boundary value problem
\begin{equation}\label{eq of h to u(h)}
	\left\{
	\begin{aligned}
		-\dv (h \gr u)&= f && \text{ in } \Omega,\\
			u&=0 && \text{ on } \partial\Omega,
	\end{aligned}
	\right.
\end{equation}
where $h$ belong to the following convex set which is larger than $\mathcal{U}_{\rm ad}$$\colon$
\begin{equation*}
	\mathcal{U}=
	\left\{ 
			h\in L^\infty(\Omega) \;:\; \exists h_0>0 \ {\rm  such \ that} \ h(x)\ge h_0 \,\, {\rm a.e.\;in } \ \Omega
	\right\}.
\end{equation*}
\begin{lemma} \label{Lemma:der u}
The application  $h\mapsto u(h)$, which gives the solution $u(h)\in H^1_0(\Omega)$ of \eqref{eq of h to u(h)} for $h\in \mathcal{U}$, is differentiable and its directional derivative at $h$ in the direction $k\in L^\infty(\Omega)$ is given by 
\begin{equation*}
	\langle u'(h), k\rangle =v,
\end{equation*}
where $v$ is the unique solution in $H^1_0(\Omega)$ of
\begin{equation}\label{eq of v}
	\left\{
	\begin{aligned}
		-\dv (h \gr v) &= \dv (k\gr u) && \text{ in } \Omega,\\
		u&=0 && \text{ on } \partial\Omega.
	\end{aligned}
	\right.
\end{equation}
\end{lemma}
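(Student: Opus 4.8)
The plan is to pass through the weak formulations and estimate the remainder term directly, which will in fact yield a quadratic rate of convergence. First I would record the preliminaries: since $h\in\mathcal{U}$ there is $h_0>0$ with $h\ge h_0$ a.e.\ in $\Omega$, and for $|t|$ small enough $h+tk\ge h_0/2$ a.e., so that $h+tk\in\mathcal{U}$ and the Lax--Milgram theorem produces a unique $u_t:=u(h+tk)\in H^1_0(\Omega)$. Since $k\in L^\infty(\Omega)$ and $\gr u(h)\in L^2(\Omega)^N$, the right-hand side $\dv(k\gr u)$ lies in $H^{-1}(\Omega)$, so \eqref{eq of v} likewise has, by Lax--Milgram, a unique solution $v\in H^1_0(\Omega)$, characterized by
\begin{equation*}
\int_\Omega h\,\gr v\cdot\gr\phi\,dx=-\int_\Omega k\,\gr u\cdot\gr\phi\,dx\qquad\forall\phi\in H^1_0(\Omega).
\end{equation*}

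The key step is to introduce the remainder $r_t:=u_t-u(h)-t\,v\in H^1_0(\Omega)$ and find the equation it satisfies. Expanding $\int_\Omega(h+tk)\,\gr r_t\cdot\gr\phi\,dx$ and inserting, in turn, the weak formulation of \eqref{eq of h to u(h)} for the coefficients $h+tk$ and $h$, and the weak formulation of \eqref{eq of v}, one finds that the terms of order $1$ and of order $t$ cancel --- the latter precisely because $v$ solves \eqref{eq of v} --- leaving
\begin{equation*}
\int_\Omega(h+tk)\,\gr r_t\cdot\gr\phi\,dx=-t^2\int_\Omega k\,\gr v\cdot\gr\phi\,dx\qquad\forall\phi\in H^1_0(\Omega).
\end{equation*}
Testing with $\phi=r_t$ and using the uniform lower bound $h+tk\ge h_0/2$ together with $|k|\le\|k\|_{L^\infty(\Omega)}$ gives
\begin{equation*}
\frac{h_0}{2}\,\|\gr r_t\|_{L^2(\Omega)}^2\le t^2\,\|k\|_{L^\infty(\Omega)}\,\|\gr v\|_{L^2(\Omega)}\,\|\gr r_t\|_{L^2(\Omega)},
\end{equation*}
hence $\|r_t\|_{H^1_0(\Omega)}=O(t^2)$ as $t\to0$. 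This is exactly the assertion that $\big(u(h+tk)-u(h)\big)/t\to v$ in $H^1_0(\Omega)$, i.e.\ that the directional derivative of $h\mapsto u(h)$ at $h$ in the direction $k$ equals $v$; since the map $k\mapsto v$ is linear and bounded from $L^\infty(\Omega)$ into $H^1_0(\Omega)$, $u'(h)$ is well defined.

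I do not expect a serious obstacle here; the argument is elementary once the bookkeeping is set up. The two points needing care are (i) preserving the uniform ellipticity of $h+tk$ for small $t$, which is where the lower bound $h\ge h_0$ built into $\mathcal{U}$ is essential, and (ii) arranging the cancellations in the remainder equation so that its right-hand side is $O(t^2)$. A less sharp alternative, in case the cancellation computation looks delicate, is to bound $w_t:=\big(u(h+tk)-u(h)\big)/t$ uniformly in $H^1_0(\Omega)$, extract a weakly convergent subsequence, pass to the limit in its weak formulation using the strong $L^\infty$-convergence $h+tk\to h$, and identify the limit with $v$ by uniqueness; but the direct estimate above is cleaner and gives the better rate.
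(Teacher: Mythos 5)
Your proof is correct and follows essentially the same route as the paper's: both work at the level of weak formulations, introduce the remainder $u(h+tk)-u(h)-tv$, and bound it by an energy estimate using the uniform lower bound on the coefficient. The only difference is bookkeeping: by keeping the coefficient $h+tk$ on the left of the remainder equation, your cancellation leaves the explicit right-hand side $-t^2\int_\Omega k\,\gr v\cdot\gr\phi\,dx$, so a single test with $\phi=r_t$ gives the $O(t^2)$ bound directly, whereas the paper keeps $h$ on the left, obtains a right-hand side involving $\gr\bigl(u(h+k)-u(h)\bigr)$, and therefore needs the extra a priori estimate $\norm{\gr(u(h+k)-u(h))}_{L^2(\Omega)}\le C\norm{k}_{L^\infty(\Omega)}$ before concluding; your version is marginally cleaner but mathematically equivalent.
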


\begin{proof}
Formally, one simply differentiates equation \eqref{eq of h to u(h)} with respect to $h$.
However, to be mathematically rigorous one should rather work at the level of the variational formulation.
To compute the directional derivative with respect to $k\in L^\infty(\Omega)$, we define $h(t)=h+tk$ for $t>0$. 
For $t>0$, let $u(t)$ be the solution for the thickness $h(t)$.
Differentiating with respect to $t$ leads to
\begin{equation*}
	\left\{
	\begin{aligned}
		-\dv (h(t) \gr u'(t)) &= \dv (h'(t)\gr u(t)) && \text{ in } \Omega,\\
u'(t)&=0 && \text{ on } \partial\Omega,
	\end{aligned}
	\right.
\end{equation*}
and, since $h'(0)=k$, we deduce $u'(0)=v$.

Let us justify the above calculation by showing that the map $h \mapsto u(h)$ is differentiable in the sense of Fr\'echet. First, there exists a unique solution $v$ of \eqref{eq of v} in $H^1_0(\Omega)$ thanks to the Lax--Milgram Theorem applied to the variational formulation 
\begin{equation}\label{variform of v}
\int_{\Omega} h \nabla v \cdot \nabla \phi \, dx = -  \int_{\Omega} k \nabla u \cdot \nabla \phi \, dx \quad \forall \phi \in H^1_0(\Omega). 
\end{equation}
We combine \eqref{variform of v} with the following variational formulation for $u(t)$
\begin{equation}\label{variform of u}
\int_{\Omega} h(t) \nabla u(t) \cdot \nabla \phi \, dx = \int_{\Omega} f \phi \, dx \quad \forall \phi \in H^1_0(\Omega). 
\end{equation}
Since $u(1) = u(h+k)$ and $u(0) = u(h)$, we obtain by difference 
\begin{equation*}
\int_{\Omega} h \nabla \left( u(h+k) - u(h) - v \right) \cdot \nabla \phi \, dx = - \int_{\Omega} k \nabla \left( u(h+k) - u(h) \right) \cdot \nabla \phi \, dx. 
\end{equation*}
Taking $\phi = u(h+k) - u(h) - v$ as a test function in the above yields 
\begin{equation}\label{estimate for diff1}
\begin{aligned} 
&\norm{\nabla \left( u(h+k) - u(h) - v \right)}_{L^2(\Omega)}^2 \\
&\quad= - \int_{\Omega} k \nabla \left( u(h+k) - u(h) \right) \cdot \nabla \left( u(h+k) - u(h) - v \right) \, dx 
\end{aligned}
\end{equation}
which implies
\begin{equation}\label{estimate for diff1bis}
\norm{\nabla \left( u(h+k) - u(h) - v \right)}_{L^2(\Omega)} \le C \norm{k}_{L^{\infty}(\Omega)} \norm{\nabla \left( u(h+k) - u(h) \right)}_{L^2(\Omega)}, 
\end{equation}
where we used Cauchy--Schwarz's inequality and the $H^1_0$ boundedness of $v$. Furthermore, by \eqref{variform of u} we have  
\begin{equation}\label{simple calculation1}
\int_{\Omega} (h + k) \nabla \left( u(h+k) - u(h) \right) \cdot \nabla \phi \, dx = - \int_{\Omega} k \nabla u(h) \cdot \nabla \phi \, dx.  
\end{equation}
Taking the test function as $\phi = u(h+k) - u(h)$ in \eqref{simple calculation1}, we obtain the following estimate: 
\begin{equation}\label{estimate for diff2}
\norm{\nabla \left( u(h+k) - u(h) \right)}_{L^2(\Omega)} \le C \norm{k}_{L^{\infty}(\Omega)}. 
\end{equation}
Combining \eqref{estimate for diff1} with \eqref{estimate for diff2}, we have 
\begin{equation*}
\norm{\nabla \left( u(h+k) - u(h) - v \right)}_{L^2(\Omega)} \le C \norm{k}^2_{L^{\infty}(\Omega)}. 
\end{equation*}
Therefore we obtain $u(h+k) = u(h) + v + o(k)$ as $\norm{k}_{L^{\infty}(\Omega)} \to 0$, which proves the claim.  
\end{proof}

\begin{lemma} \label{Lemma:J'}
For $h\in \mathcal{U}$, let $u(h)\in H^1_0(\Omega)$ be the solution to \eqref{eq of h to u(h)} and
\begin{equation*}
	J(h)=\int_\Omega j(u(h))\,dx,
\end{equation*}
where $j$ is a $C^1$ function from $\RR$ into $\RR$ such that $|j(u)|\le C(u^2+1)$ and $|j'(u)|\le C(|u|+1)$ for any $u\in\RR$.
The application $J(h)$, from $\mathcal{U}$ into $\RR$, is differentiable and its directional derivative at $h$ in the direction $k\in L^\infty(\Omega)$ is given by
\begin{equation*}
	\langle J'(h), k\rangle =\int_\Omega j'(u(h))v\,dx,
\end{equation*}
where $v=\langle u'(h), k\rangle$ is the unique solution in $H^1_0(\Omega)$ of
\begin{equation*}
	\left\{
	\begin{aligned}
		-\dv (h \gr v) &= \dv (k\gr u) && \text{ in } 	\Omega,\\
		u&=0 && \text{ on } \partial\Omega.
	\end{aligned}
	\right.
\end{equation*}
\end{lemma}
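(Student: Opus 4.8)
The plan is to recognize $J$ as a composition $J=G\circ u$, where $u\colon\mathcal{U}\to H^1_0(\Omega)$ is the solution map analyzed in Lemma \ref{Lemma:der u} and $G\colon L^2(\Omega)\to\RR$ is defined by $G(w)=\int_\Omega j(w)\,dx$. Since $H^1_0(\Omega)$ embeds continuously into $L^2(\Omega)$ by Poincar\'e's inequality, once we know that $G$ is Fr\'echet differentiable on $L^2(\Omega)$ with $\langle G'(w),\phi\rangle=\int_\Omega j'(w)\phi\,dx$, the chain rule (composition of the Fr\'echet-differentiable $G$ with the directionally differentiable $u$) immediately yields, for $h\in\mathcal{U}$ and $k\in L^\infty(\Omega)$,
\[
\langle J'(h),k\rangle=\langle G'(u(h)),v\rangle=\int_\Omega j'(u(h))\,v\,dx,
\]
with $v=\langle u'(h),k\rangle$ the solution of \eqref{eq of v}, which is exactly the claim.

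So the real work is the differentiability of $G$ on $L^2(\Omega)$. First I would check that everything is well defined: the bound $|j(t)|\le C(t^2+1)$ gives $j(w)\in L^1(\Omega)$ for $w\in L^2(\Omega)$, and $|j'(t)|\le C(|t|+1)$ gives $j'(w)\in L^2(\Omega)$, so that $\phi\mapsto\int_\Omega j'(w)\phi\,dx$ is a bounded linear functional on $L^2(\Omega)$. Next, for $w,\phi\in L^2(\Omega)$ the mean value theorem produces a measurable $\theta(x)\in(0,1)$ with $j(w+\phi)-j(w)=j'(w+\theta\phi)\phi$ a.e., hence
\[
G(w+\phi)-G(w)-\int_\Omega j'(w)\phi\,dx=\int_\Omega\bigl(j'(w+\theta\phi)-j'(w)\bigr)\phi\,dx,
\]
and by the Cauchy--Schwarz inequality this remainder is bounded by $\|j'(w+\theta\phi)-j'(w)\|_{L^2(\Omega)}\,\|\phi\|_{L^2(\Omega)}$. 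Since $\|(w+\theta\phi)-w\|_{L^2(\Omega)}\le\|\phi\|_{L^2(\Omega)}\to0$, it remains to show $\|j'(w+\theta\phi)-j'(w)\|_{L^2(\Omega)}\to0$ as $\|\phi\|_{L^2(\Omega)}\to0$, i.e.\ continuity of the Nemytskii (superposition) operator $z\mapsto j'(z)$ on $L^2(\Omega)$. This is precisely what the growth condition $|j'(t)|\le C(|t|+1)$ buys us; it can be established by a standard subsequence argument, extracting from any sequence $\phi_n\to0$ a further subsequence along which $w+\theta\phi_n\to w$ a.e.\ and is dominated by a fixed $L^2$ function, and then applying dominated convergence together with the continuity of $j'$. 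This shows the remainder is $o(\|\phi\|_{L^2(\Omega)})$, establishing the Fr\'echet differentiability of $G$ with the stated derivative.

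I expect the continuity of the superposition operator $z\mapsto j'(z)$ to be the only genuinely non-elementary point; the growth-bound bookkeeping, the mean value theorem and Cauchy--Schwarz, and the chain rule are routine. An equivalent way to organize the argument, closer in spirit to the proof of Lemma \ref{Lemma:der u}, is to write directly $J(h+k)-J(h)=\int_\Omega\bigl(j(u(h+k))-j(u(h))\bigr)\,dx$, insert the expansion $u(h+k)=u(h)+v+o(k)$ from Lemma \ref{Lemma:der u} (valid in $H^1_0(\Omega)$, hence in $L^2(\Omega)$), and then carry out the same core estimate on $j$; either way the crux is the $L^2$-continuity of $z\mapsto j'(z)$.
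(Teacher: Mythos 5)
Your proposal is correct and follows exactly the route the paper intends: the paper's proof is the one-line remark ``by simple composition of differentiable applications, one only has to check that all the terms are well defined,'' and you have simply supplied the omitted details (Fr\'echet differentiability of $w\mapsto\int_\Omega j(w)\,dx$ on $L^2(\Omega)$ via the growth bounds and the continuity of the Nemytskii operator $z\mapsto j'(z)$, composed with the solution map from Lemma \ref{Lemma:der u}). The argument is sound as written; if you want to avoid the measurable-selection issue for $\theta(x)$, replace the mean value theorem by the integral form $j(w+\phi)-j(w)=\phi\int_0^1 j'(w+t\phi)\,dt$.
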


\begin{proof}
By simple composition of differentiable applications. To justify it, one only has to check that all the terms are well defined. We omit the details of the proof.
\end{proof}

\subsection{Adjoint state}
In order to treat the derivative of the objective function $J(h)$, we introduce the adjoint state $p$, defined as the unique solution in $H^1_0(\Omega)$ of 
\begin{equation}\label{eq adjoint equation}
	\left\{
	\begin{aligned}
		-\dv (h \gr p) &= -j'(u) && \text{ in } 	\Omega,\\
		p&=0 && \text{ on }  \partial\Omega.
	\end{aligned}
	\right.
\end{equation}

\begin{theorem} \label{Theorem:adjoint}
The cost function $J(h)$ is differentiable on $\mathcal{U}$ and
\begin{equation*}
J'(h)=\gr u\cdot\gr p.
\end{equation*}
If $h\in\mathcal{U}_{\rm ad}$ is a local minimizer of $J$ in $\mathcal{U}_{\rm ad}$, then it satisfies the necessary optimality condition
\begin{equation*}
\int_\Omega \gr u\cdot\gr p(k-h)\,dx\ge0
\end{equation*}
for any $k\in\mathcal{U}_{\rm ad}$.
\end{theorem}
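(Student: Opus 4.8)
The plan is to exploit the adjoint state $p$ defined in \eqref{eq adjoint equation} to eliminate the linearized state $v$ from the expression for $\langle J'(h),k\rangle$ furnished by Lemma~\ref{Lemma:J'}, and then to invoke the Euler inequality on the convex admissible set $\mathcal{U}_{\rm ad}$.

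First I would recall that, by Lemma~\ref{Lemma:J'}, $J$ is differentiable on $\mathcal{U}$ with $\langle J'(h),k\rangle=\int_\Omega j'(u)\,v\,dx$, where $v\in H^1_0(\Omega)$ solves the linearized equation \eqref{eq of v}; in weak form, $\int_\Omega h\,\gr v\cdot\gr\phi\,dx=-\int_\Omega k\,\gr u\cdot\gr\phi\,dx$ for all $\phi\in H^1_0(\Omega)$. Likewise, the adjoint state $p$ satisfies $\int_\Omega h\,\gr p\cdot\gr\phi\,dx=-\int_\Omega j'(u)\,\phi\,dx$ for all $\phi\in H^1_0(\Omega)$.

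The key step is the classical cross-testing of these two identities. Taking $\phi=v$ in the adjoint formulation gives $\int_\Omega h\,\gr p\cdot\gr v\,dx=-\int_\Omega j'(u)\,v\,dx$, while taking $\phi=p$ in the formulation for $v$ gives $\int_\Omega h\,\gr v\cdot\gr p\,dx=-\int_\Omega k\,\gr u\cdot\gr p\,dx$. Since the two left-hand sides coincide, I obtain $\int_\Omega j'(u)\,v\,dx=\int_\Omega(\gr u\cdot\gr p)\,k\,dx$, hence $\langle J'(h),k\rangle=\int_\Omega(\gr u\cdot\gr p)\,k\,dx$ for every $k\in L^\infty(\Omega)$. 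Identifying the derivative with the function one integrates against, this is exactly $J'(h)=\gr u\cdot\gr p$; observe that this is a priori only an $L^1(\Omega)$ function (a product of two $L^2$ gradients), but the duality pairing above is well defined for every admissible direction $k\in L^\infty(\Omega)$.

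Finally, $\mathcal{U}_{\rm ad}$ is a convex subset of $L^\infty(\Omega)$, being cut out by the pointwise bounds $h_{\rm min}\le h\le h_{\rm max}$ and the single linear constraint $\int_\Omega h\,dx=h_0|\Omega|$, and $J$ is differentiable at any $h\in\mathcal{U}_{\rm ad}\subset\mathcal{U}$. A direct application of the Euler inequality (Theorem~\ref{thm Euler ineq}) with $K=\mathcal{U}_{\rm ad}$ then gives $\langle J'(h),k-h\rangle\ge0$ for all $k\in\mathcal{U}_{\rm ad}$, that is, $\int_\Omega\gr u\cdot\gr p\,(k-h)\,dx\ge0$. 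There is no substantial obstacle here: the argument is the standard adjoint-state computation, and the only point deserving mild care is the functional-analytic bookkeeping, namely that the gradient $J'(h)$ lies in $L^1$ while the design variable ranges over $L^\infty$.
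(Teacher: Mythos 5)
Your proposal is correct and follows essentially the same route as the paper: you eliminate the linearized state $v$ by cross-testing the weak formulations of the linearized equation and the adjoint equation, deduce $\langle J'(h),k\rangle=\int_\Omega(\gr u\cdot\gr p)\,k\,dx$, and conclude via the Euler inequality on the closed convex set $\mathcal{U}_{\rm ad}$. The signs in both weak formulations are handled correctly, and your remark that $\gr u\cdot\gr p$ lies only in $L^1(\Omega)$ while $k$ ranges over $L^\infty(\Omega)$ matches the paper's own observation.
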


\begin{proof}
To make explicit $J'(h)$ from Lemma~\ref{Lemma:J'}, we must eliminate $v=\langle u'(h), k\rangle$.
To this end, we employ the use of the adjoint state, solution of \eqref{eq adjoint equation}. multiplying the equation for $v$ by $p$ and that for $p$ by $v$, we integrate by parts
\begin{equation*}
\int_\Omega h\gr p\cdot\gr v\,dx=-\int_\Omega j'(u)v\,dx,
\end{equation*}
\begin{equation*}
\int_\Omega h\gr v\cdot\gr p\,dx=-\int_\Omega k\gr u\cdot\gr p\,dx,
\end{equation*}
Comparing these two equalities we deduce
\begin{equation*}
\langle J'(h), k\rangle=\int_\Omega j'(u)v\,dx=\int_\Omega k\gr u\cdot\gr p\,dx
\end{equation*}
for any $k\in L^\infty(\Omega)$.
Since $\gr u\cdot\gr p$ belongs to $L^1(\Omega)$, we check that $J'(h)$ is continuous on $L^\infty(\Omega)$. To obtain the condition of optimality, it suffices to apply Theorem \ref{thm Euler ineq} since $\mathcal{U}_{\rm ad}$ is a closed non-empty convex subset of $L^{\infty}(\Omega)$.
\end{proof}

\begin{remark}[How to find the adjoint state]
For independent variable $(\hat{h}, \hat{u}, \hat{p})\in L^\infty(\Omega)\times H^1_0(\Omega)\times H^1_0(\Omega)$, we introduce the Lagrangian
\begin{equation*}
	\mathcal{L}(\hat{h}, \hat{u}, \hat{p})=\int_\Omega j(\hat{u})\,dx+\int_\Omega\hat{p}\left(-\dv \left(\hat{h}\gr\hat{u}\right)-f\right)\,dx,
\end{equation*}
where $\hat{p}$ is a Lagrange multiplier (a function) for the constraint which connects $u$ to $h$.
By integration by parts we get
\begin{equation*}
\mathcal{L}(\hat{h}, \hat{u}, \hat{p})=\int_\Omega j(\hat{u})\,dx+\int_\Omega\left( \hat{h}\gr\hat{p}\cdot\gr\hat{u}-f\hat{p}\right)\,dx.
\end{equation*}
The partial derivative of $\mathcal{L}$ at $\hat u = u$ in the direction $\phi\in H^1_0(\Omega)$ is given by
\begin{equation*}
\left\langle\dfrac{\partial\mathcal{L}}{\partial \hat u}(\hat{h}, {u}, \hat{p}), \phi\right\rangle=\int_\Omega j'(u)\phi\,dx+\int_\Omega\left(\hat{h}\gr{p}\cdot\gr\phi\right)\,dx.
\end{equation*}
Notice that, requiring that $\left\langle\dfrac{\partial\mathcal{L}}{\partial \hat u}({h}, {u}, {p}), \phi\right\rangle=0$ for all directions $\phi$ is nothing else than the variational formulation of the adjoint equation \eqref{eq adjoint equation}.
\end{remark}

\subsection{A simple formula for the derivative}

It is possible to compute the derivative of $J$ by means of the Lagrangian in the following way:
\begin{equation*}
J'(h)=\dfrac{\partial\mathcal{L}}{\partial h}(h, u, p),
\end{equation*}
where $u$ is the state function (solution to \eqref{eq of h to u(h)}) and $p$ is the adjoint state (solution to problem \eqref{eq adjoint equation}).
Indeed, we have
\begin{equation*}
J(h)=\mathcal{L}(h, u, \hat{p}) \quad \forall\hat{p}\in H^1_0(\Omega)
\end{equation*}
by definition of the state function $u$.
Thus, if the map $h\mapsto u(h)$ is differentiable, we get for $k\in L^\infty(\Omega)$
\begin{equation*}
\left\langle J'(h), k\right\rangle=\left\langle \dfrac{\partial\mathcal{L}}{\partial h}(h, u, \hat{p}), k\right\rangle+\left\langle \dfrac{\partial\mathcal{L}}{\partial u}(h, u, \hat{p}), 
\dfrac{\partial u}{\partial h}(k)\right\rangle.
\end{equation*}
Then, taking $\hat{p}=p$, the adjoint we obtain
\begin{equation*}
\left\langle J'(h), k\right\rangle=\left\langle \dfrac{\partial\mathcal{L}}{\partial h}(h, u, p), k\right\rangle.
\end{equation*}

By the above discussion, we obtain the following theorem.

\begin{theorem}
Let $\mathcal{L}(\hat{h}, \hat{u}, \hat{p})$ be the Lagrangian defined as the sum of the objective function and the variational formulation of the state equation, i.e.,
\begin{equation*}
\mathcal{L}(\hat{h}, \hat{u}, \hat{p})=\int_\Omega j(\hat{u})\,dx+\int_\Omega\left( \hat{h}\gr\hat{p}\cdot\gr\hat{u}-f\hat{p}\right)\,dx.
\end{equation*}
Let $p$ be the solution of the adjoint equation
\begin{equation*}
\left\langle \dfrac{\partial\mathcal{L}}{\partial u}(h, u, p), \phi\right\rangle=0 \quad \forall\phi\in H^1_0(\Omega).
\end{equation*}
Assume that the solution $u=u(h)$ of the state equation \eqref{eq of h to u(h)} is differentiable with respect to $h$. Then the objective function $J$ is differentiable and
\begin{equation*}
J'(h)=\dfrac{\partial\mathcal{L}}{\partial h}(h, u, p).
\end{equation*}
\end{theorem}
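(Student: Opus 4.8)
The plan is to exploit the defining feature of the Lagrangian: when its middle slot is filled by a solution of the state equation, the constraint term vanishes \emph{for every choice of multiplier}, so that $\mathcal{L}$ collapses onto $J$. Concretely, I would first record the identity
\[
\mathcal{L}(h, u(h), \hat{p}) = J(h) \qquad \forall\, \hat{p} \in H^1_0(\Omega),
\]
which is immediate once one recognizes that $\int_\Om (\hat{h}\gr\hat{p}\cdot\gr\hat{u} - f\hat{p})\,dx$ is exactly the weak formulation of \eqref{eq of h to u(h)} tested against $\hat{p}$; hence it is zero at $\hat{u} = u(h)$, whatever $\hat{p}$ is.

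Next I would differentiate this identity with respect to $h$ with $\hat{p}$ held fixed. By the assumed Fr\'echet differentiability of $h\mapsto u(h)$ (which is Lemma~\ref{Lemma:der u}, valid on the larger set $\mathcal{U}$) together with the joint differentiability of $(\hat{h},\hat{u})\mapsto \mathcal{L}(\hat{h},\hat{u},\hat{p})$, the chain rule gives, for every direction $k\in L^\infty(\Om)$,
\[
\langle J'(h), k\rangle = \left\langle \frac{\pa\mathcal{L}}{\pa h}(h,u,\hat{p}),\, k\right\rangle + \left\langle \frac{\pa\mathcal{L}}{\pa u}(h,u,\hat{p}),\, \langle u'(h),k\rangle\right\rangle .
\]

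The third step is the standard adjoint trick: choose $\hat{p} = p$, the solution of the adjoint equation \eqref{eq adjoint equation}, which by construction satisfies $\langle \pa_u\mathcal{L}(h,u,p),\phi\rangle = 0$ for all $\phi\in H^1_0(\Om)$; applying this with $\phi = \langle u'(h),k\rangle \in H^1_0(\Om)$ kills the second term, leaving $\langle J'(h),k\rangle = \langle \pa_h\mathcal{L}(h,u,p),k\rangle$ for all $k\in L^\infty(\Om)$. Finally I would observe that $\pa_h\mathcal{L}(h,u,p) = \gr u\cdot\gr p \in L^1(\Om)$ defines a bounded linear functional on $L^\infty(\Om)$, so $J$ is genuinely Fr\'echet differentiable with $J'(h) = \pa_h\mathcal{L}(h,u,p)$, as claimed.

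\textbf{Main obstacle.} The only non-formal point is justifying the chain-rule step, i.e. that $h\mapsto \mathcal{L}(h,u(h),\hat{p})$ is differentiable with the stated derivative. This requires the differentiability of $\hat{u}\mapsto \int_\Om j(\hat{u})\,dx$ on $H^1_0(\Om)$ (using the growth bounds $|j(u)|\le C(u^2+1)$, $|j'(u)|\le C(|u|+1)$ and Sobolev embedding) and of the bilinear term $(\hat{h},\hat{u})\mapsto \int_\Om \hat{h}\gr\hat{p}\cdot\gr\hat{u}\,dx$, plus the legitimacy of composing these with the differentiable map $h\mapsto u(h)$. All of this is precisely the content of Lemmas~\ref{Lemma:der u} and~\ref{Lemma:J'}, so rather than redo the estimates I would simply invoke them.
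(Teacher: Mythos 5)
Your proposal is correct and follows essentially the same route as the paper: the identity $\mathcal{L}(h,u(h),\hat p)=J(h)$ for all $\hat p$, the chain rule using the differentiability of $h\mapsto u(h)$, and the choice $\hat p=p$ to annihilate the $\partial_u\mathcal{L}$ term. Your closing remark that $\nabla u\cdot\nabla p\in L^1(\Omega)$ gives a bounded functional on $L^\infty(\Omega)$, and your appeal to Lemmas~\ref{Lemma:der u} and~\ref{Lemma:J'} for the chain-rule justification, match the paper's treatment (which carries out those estimates in Theorem~\ref{Theorem:adjoint}).
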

This theorem is the practical method for computing $J'(h)$. Once the gradient of the cost function has been obtained, it is natural and quite easy to implement a gradient method to minimize $J(h)$ numerically. In Section \ref{Numerical algorithm for optimal thickness}, we provide numerical algorithms to compute the optimal thickness.

\section{A discrete approach}

One can wonder whether the such optimal design problems get simpler after discretization.
Unfortunately, the answer is ``no''.
In this section, we consider a discrete approach to the problems.
Applying a finite element method, the equation becomes a linear system of order $n$
\begin{equation*}
	K(h)y(h)=b,
\end{equation*}
where $K(h)$ is the rigidity matrix of the membrane (which depends on $h$), $b$ is a vector representing the forces $f$, and $y(h)$ the vector of the coordinates of the solution $u$ in the finite element basis (of dimension $n$).
We also discretize the admissible set as follows:
\begin{equation*}
	\mathcal{U}^{\rm disc}_{\rm ad}=
   \left\{
   	h\in\RR^N \;:\; h_{\rm max}\ge h_i\ge h_{\rm min}>0,\; \sum^n_{i=1}c_ih_i=h_0|\Omega|
   \right\},
\end{equation*}
where the finite sum
\begin{equation*}
	\sum^n_{i=1}c_ih_i
\end{equation*}
is an approximation of 
\begin{equation*}
	\int_\Omega h(x)\,dx.
\end{equation*}
Approximating the cost function, the discrete problem becomes
\begin{equation*}
	\inf_{h\in\mathcal{U}^{\rm disc}_{\rm ad}}
    \left\{
    	J^{\rm disc}(h)=j^{\rm disc}(y(h))
    \right\},
\end{equation*}
where $j^{\rm disc}$ is a smooth approximation of $j$ from $\RR^N$ into $\RR$.
In the case of the compliance we have:
\begin{equation*}
	j^{\rm disc}(y(h))=b\cdot y(h)=K(h)^{-1}b\cdot b.
\end{equation*}
In the case of a least-square criterion for a target displacement we have:
\begin{equation*}
	j^{\rm disc}(y(h))=B(y(h)-y_0)\cdot(y(h)-y_0) ,
\end{equation*}
where $B$ is a mass matrix. 
In practice, we need a way to compute the gradient of $J^{\rm disc}(h)$.
This can be applied to both finding the optimality condition and the implementation of a numerical method of minimization.

First, we consider the following ``naive idea".
Since $y(h)=K(h)^{-1}b$, we have 
\begin{equation}\label{j disc prime}
	(J^{\rm disc})'(h)=y'(h)(j^{\rm disc})'(y(h))  \quad\text{with} \quad y'(h)=-K(h)^{-1}K(h)'K(h)^{-1}b,
\end{equation}
where we used the notation $f'(h)=\left(\partial f(h)/ \partial h_i\right)_{1\le i \le n}$ and the second identity in \eqref{j disc prime} is a direct application of the formula for the derivative of a matrix.
We remark that this method is not practically useful because one must solve $n+1$ linear systems with respect to the matrix $K(h)$ in order to obtain all components of $y'(h)$. 
Recall that $K(h)$ is a very large matrix (of size $n\times n$) and its inverse is never explicitly computed as it would take too long.
As a consequence, we do not use the explicit formula $y(h)=K(h)^{-1}b$.
We rather use an adjoint method.

\subsection{Adjoint state}
\begin{definition}
	We define the adjoint state $p\in\RR^N$ as the solution of
    \begin{equation}\label{Kp=-j'}
    	K(h)p(h)=-(j^{\rm disc})'(y(h)).
    \end{equation}
\end{definition}
By rearranging the second equality of \eqref{j disc prime} we get
\begin{equation}\label{Ky'=-K'y}
K(h)y'(h)=-K'(h)y(h).
\end{equation}
Now, taking the scalar product of \eqref{Ky'=-K'y} with $p(h)$ and that of \eqref{Kp=-j'} with $y'(h)$, we obtain, for each component $i=1,\dots, n$:
\begin{equation*}
	K(h)p(h)\cdot \dfrac{\partial y}{\partial h_i}(h)=-\dfrac{\partial K}{\partial h_i}(h)y(h)\cdot p(h)=-(j^{\rm disc})'(y(h))\cdot \dfrac{\partial y}{\partial h_i}(h),
\end{equation*}
from which we deduce 
\begin{equation*}
	(J^{\rm disc})'(h)=K'(h)y(h)\cdot p(h)=\left(\dfrac{\partial K}{\partial h_i}(h)y(h)\cdot p(h)\right)_{1\le i\le n}.
\end{equation*}
In practice, this is the very formula that we use for evaluating the gradient $(J^{\rm disc})'(h)$ since it requires only to solve two linear systems.

There is no simplification in using a discrete approach rather than a continuous one.
Some authors prefer to discretize first and optimize afterwards.
This approach guarantees a perfect compatibility between the gradient and the cost function, but it requires a deep knowledge of the numerical solver. Here, we follow another philosophy, ``first optimize in a continuous framework, then discretize". It is much simpler, and no precision is lost if the finite element spaces are adequately chosen.

\section{Numerical algorithms}\label{Numerical algorithm for optimal thickness}
In this section, we show numerical algorithms to seek the optimal thickness of $h$. First, we consider the following projected gradient algorithm. 
\begin{algorithm}[H]
\caption{Projected gradient algorithm} 
\begin{enumerate}
	\item Initialization of the thickness $h_0\in\mathcal{U}_{\rm ad}$ (for example, a constant function which satisfies the constraints);
	\item Iterations until convergence, for $n\ge0$ set
	\begin{equation*}
		h_{n+1}=P_{\mathcal{U}_{\rm ad}}\left(h_n-\mu J'(h_n)\right),
	\end{equation*}
    where $\mu>0$ is a small descent step, $P_{\mathcal{U}_{\rm ad}}$ is the projection operator on the closed convex set $\mathcal{U}_{\rm ad}$ and the derivative of $J$ is given by
    \begin{equation*}
    	J'(h_n)=\gr u_n\cdot\gr p_n 
    \end{equation*}
    with state $u_n$ and adjoint $p_n$ (both defined with respect to the thickness $h_n$).
\end{enumerate}
\end{algorithm}

To make the algorithm fully explicit, we have to specify how to compute the projection operator $P_{\mathcal{U}_{\rm ad}}$.

We define the projection operator $P_{\mathcal{U}_{\rm ad}}$ as follows: 
\begin{equation*}
	\left(P_{\mathcal{U}_{\rm ad}}(h)\right)(x)=\max \left(h_{\min}, \min(h_{\max}, h(x)+\ell)\right),\quad x\in\Omega.
\end{equation*}
where $\ell$ is the unique Lagrange multiplier such that
\begin{equation*}
	\int_\Omega P_{\mathcal{U}_{\rm ad}}(h)\,dx=h_0|\Omega|.
\end{equation*}
The determination of the constant $\ell$ is not explicit but based on an iterative algorithm. 
First, notice that the function
\begin{equation*}
	h\longmapsto F(\ell)=\int_\Omega\max\left(h_{\min}, \min(h_{\max}, h(x)+\ell)\right)\,dx
\end{equation*}
is strictly increasing on the interval $[\ell^-, \ell^+]$, the inverse image of the closed interval $[h_{\min}|\Omega|, h_{\max}|\Omega|]$.
Thanks to this monotonicity property, we propose a simple iterative algorithm$\colon$ we first bracket the root by an interval $[\ell^1, \ell^2]$ such that
\begin{equation*}
	F(\ell^1)\le h_0|\Omega|\le F(\ell^2),
\end{equation*}
then we proceed by dichotomy to find the root $\ell$.

\begin{remark}
	\mbox{} 
	\begin{itemize}
		\item[{\rm 1}.]
        In practice, we rather use a projected gradient algorithm with a variable step (not optimal) which guarantees the decrease of the functional $J(h_{n+1})<J(h_n)$.
       \item[{\rm 2}.]
       The algorithm is rather slow.
       A possible acceleration is based on the quasi-Newton algorithm.
       \item[{\rm 3}.]
       The overhead generated by the adjoint computation is very modest$\colon$one has to build a new right-hand-side (using the state) and solve the corresponding linear system (with the same rigidity matrix).
       \item[{\rm 4}.]
       Convergence is detected when the optimality condition is satisfied with a threshold $\e>0$
       \begin{equation*}
       	|h_n-\max\left(h_{\min}, \min(h_{\max}, h_n-\mu_n J'(h_n)+\ell_n)\right)|\le\e\mu_n h_{\max}.
       \end{equation*}
	\end{itemize}
\end{remark}

\subsection{Another numerical algorithm for the compliance}

When $j(u)=fu$, we find $p=-u$ since $j'(u)=f$.
This particular case is said to be self-adjoint.
We use the dual or complementary energy (see Section~\ref{sec:dual energy}) 
\begin{equation*}
	\int_\Omega fu\,dx=\min_{\substack{\tau\in L^2(\Omega)^N, \\ -\dv\, \tau=f \ {\rm in} \ \Omega}}
   \int_\Omega h^{-1}|\tau|^2\,dx
\end{equation*}
in order to rewrite the original optimization problem as a double minimization problem:
\begin{equation*}
	\inf_{h\in\mathcal{U}_{\rm ad}}\min_{\substack{\tau\in L^2(\Omega)^N, \\ -\dv\, \tau=f \ {\rm in} \ \Omega}}
   \int_\Omega h^{-1}|\tau|^2\,dx,
\end{equation*}
and the order of minimization is irrelevant. This problem is convex and therefore it admits a minimizer. 

By elementary calculation, we can show that the following lemma holds. 
\begin{lemma}\label{simple calculation for phi}
	The function $\phi(a, \sigma)=a^{-1}|\sigma|^2$, defined from $\RR_{\ge0}\times\RR^N$ into $\RR$, satisfies 
    \begin{equation}\label{phi(a,sigma)=}
    	\phi(a, \sigma)=\phi(a_0, \sigma_0)+\phi'(a_0, \sigma_0)\cdot(a-a_0, \sigma-\sigma_0)+\phi\left(a, \sigma-\dfrac{a}{a_0}\sigma_0\right),
    \end{equation}
    where the derivative is given by 
    \begin{equation*}
    	\phi'(a_0, \sigma_0)\cdot(b, \tau)=-\dfrac{b}{a_0^2}|\sigma_0|^2+\dfrac{2}{a_0}\sigma_0\cdot\tau.
    \end{equation*}
In particular, since by \eqref{phi(a,sigma)=}, the graph of  $\phi(a,\sigma)$ lies above its linear approximation at each point $(a_0,\sigma_0)$, then $\phi$ is convex.
\end{lemma}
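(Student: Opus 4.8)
The plan is a direct algebraic verification, since $\phi$ is an explicit elementary function. First I would compute the partial derivatives of $\phi(a,\sigma)=|\sigma|^2/a$: differentiating in the scalar variable gives $\partial_a\phi=-|\sigma|^2/a^2$, while the gradient in $\sigma$ is $2\sigma/a$. Pairing against an increment $(b,\tau)$ then yields exactly the claimed expression $\phi'(a_0,\sigma_0)\cdot(b,\tau)=-\frac{b}{a_0^2}|\sigma_0|^2+\frac{2}{a_0}\sigma_0\cdot\tau$, which requires only $a_0>0$ (so that $\phi$ is differentiable at the base point).

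Next I would establish identity \eqref{phi(a,sigma)=} by expanding its right-hand side. Writing out $\phi(a_0,\sigma_0)=|\sigma_0|^2/a_0$, then $\phi'(a_0,\sigma_0)\cdot(a-a_0,\sigma-\sigma_0)$ via the formula just derived, and finally expanding the square in
\[
\phi\!\left(a,\sigma-\tfrac{a}{a_0}\sigma_0\right)=\frac{|\sigma|^2}{a}-\frac{2}{a_0}\,\sigma\cdot\sigma_0+\frac{a}{a_0^2}|\sigma_0|^2,
\]
one checks that the mixed terms $\pm\frac{2}{a_0}\sigma\cdot\sigma_0$ cancel and that the coefficient multiplying $|\sigma_0|^2$, namely $\frac{1}{a_0}-\frac{a-a_0}{a_0^2}-\frac{2}{a_0}+\frac{a}{a_0^2}$, collapses to $0$. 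What remains is precisely $|\sigma|^2/a=\phi(a,\sigma)$. This step is routine bookkeeping with no subtlety.

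Finally, for convexity I would note that for $a\ge 0$ the remainder term $\phi(a,\sigma-\frac{a}{a_0}\sigma_0)=\frac{1}{a}\,\big|\sigma-\frac{a}{a_0}\sigma_0\big|^2$ is nonnegative, so \eqref{phi(a,sigma)=} gives the tangent-plane inequality $\phi(a,\sigma)\ge\phi(a_0,\sigma_0)+\phi'(a_0,\sigma_0)\cdot(a-a_0,\sigma-\sigma_0)$ for every base point with $a_0>0$. Since the graph of the differentiable function $\phi$ lies above each of its tangent hyperplanes on $\RR_{>0}\times\RR^N$, $\phi$ is convex there, and convexity extends to the closed half-space $\RR_{\ge0}\times\RR^N$ by lower semicontinuity. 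There is essentially no obstacle in this argument; the only point to keep track of is that the derivative formula and the expansion presuppose $a_0>0$, which is harmless for the stated conclusion.
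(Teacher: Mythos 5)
Your verification is correct and is exactly the ``elementary calculation'' that the paper invokes without writing out: the derivative formula, the cancellation of the cross terms and of the $|\sigma_0|^2$ coefficient in the expansion of \eqref{phi(a,sigma)=}, and the nonnegativity of the remainder $\phi\bigl(a,\sigma-\tfrac{a}{a_0}\sigma_0\bigr)$ giving the tangent-plane inequality all check out. Your added care about restricting to $a_0>0$ and handling the boundary $a=0$ is a harmless refinement of the statement as used (where $a=h(x)\ge h_{\min}>0$).
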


As a result, we obtain the following.
\begin{lemma}[Optimality conditions]\label{opti con for compliance pr}
	For a given $\tau\in L^2(\Omega)^N$, the problem
    \begin{equation*}
    	\min_{h\in\mathcal{U}_{\rm ad}}\int_\Omega h^{-1}|\tau|^2\,dx
    \end{equation*}
    admits a minimizer $h(\tau)$ in $\mathcal{U}_{\rm ad}$ given by
    \begin{equation}\label{h(tau)(x)=}
       h(\tau)(x)=
       \left\{
        \begin{aligned}
        &h^*(x) &&\text{ if } \ h_{\min}<h^*(x)<h_{\max},\\
        &h_{\min} &&\text{ if } \ h^*(x)\le h_{\min},\\
        &h_{\max} &&\text{ if } \ h^*(x)\ge h_{\max}
        \end{aligned}
        \quad {\rm with} \ h^*(x)=\dfrac{|\tau(x)|}{\sqrt{\ell}},
        \right.
    \end{equation}
    where $\ell$ is the Lagrange multiplier such that
    \begin{equation*}
    	\int_\Omega h(\tau)(x)\,dx=h_0|\Omega|.
    \end{equation*}
\end{lemma}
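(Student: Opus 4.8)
The plan is to prove the lemma constructively: exhibit the explicit candidate $h(\tau)$ of \eqref{h(tau)(x)=} and verify directly that it minimizes the convex functional $h\mapsto\int_\Omega h^{-1}|\tau|^2\,dx$ over $\mathcal{U}_{\rm ad}$, using the tangent-plane inequality furnished by Lemma~\ref{simple calculation for phi}. The Lagrange multiplier $\ell$ arises because the only non-pointwise constraint defining $\mathcal{U}_{\rm ad}$ is the average constraint $\int_\Omega h\,dx=h_0|\Omega|$; formally one minimizes pointwise $a\mapsto a^{-1}|\tau(x)|^2+\ell a$ on $[h_{\min},h_{\max}]$, whose unconstrained minimizer is $a=|\tau(x)|/\sqrt{\ell}$, and then projects onto $[h_{\min},h_{\max}]$, which is precisely $h(\tau)(x)$.

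First I would establish the existence of a suitable $\ell>0$. Define, for $\ell>0$,
\[
G(\ell)=\int_\Omega \max\!\big(h_{\min},\min(h_{\max},|\tau(x)|/\sqrt{\ell})\big)\,dx .
\]
For a.e.\ $x$ the integrand is continuous and non-increasing in $\ell$ and lies in $[h_{\min},h_{\max}]$, so by dominated convergence $G$ is continuous and non-increasing, with $G(\ell)\to h_{\min}|\Omega|$ as $\ell\to+\infty$ and, in the non-degenerate situation, $G(\ell)\to h_{\max}|\Omega|$ as $\ell\to 0^+$. Since $h_{\min}<h_0<h_{\max}$, the intermediate value theorem produces $\ell>0$ with $G(\ell)=h_0|\Omega|$, i.e.\ $h(\tau)\in\mathcal{U}_{\rm ad}$. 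One also notes $h(\tau)$ is measurable (composition of continuous functions with $|\tau|$) and that the objective is finite on $\mathcal{U}_{\rm ad}$ since $h\ge h_{\min}>0$ and $\tau\in L^2(\Omega)^N$.

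The core of the argument is the optimality verification. Fix any $h\in\mathcal{U}_{\rm ad}$ and apply identity \eqref{phi(a,sigma)=} with $a_0=h(\tau)(x)$, $\sigma_0=\sigma=\tau(x)$ and $a=h(x)$: since the remainder $\phi\big(a,\sigma-\tfrac{a}{a_0}\sigma_0\big)=h^{-1}|\tau|^2\big(1-h/h(\tau)\big)^2$ is non-negative, we obtain pointwise
\[
h^{-1}|\tau|^2\;\ge\;h(\tau)^{-1}|\tau|^2\;-\;\frac{|\tau|^2}{h(\tau)^2}\,\big(h-h(\tau)\big).
\]
I would then prove the pointwise bound $\dfrac{|\tau|^2}{h(\tau)^2}\big(h-h(\tau)\big)\le \ell\big(h-h(\tau)\big)$ by the three-region case analysis of \eqref{h(tau)(x)=}: on $\{h_{\min}<h^*<h_{\max}\}$ one has $|\tau|^2/h(\tau)^2=\ell$ and equality holds; on $\{h^*\le h_{\min}\}$ one has $h(\tau)=h_{\min}$, hence $|\tau|^2/h_{\min}^2\le\ell$ while $h-h_{\min}\ge 0$; on $\{h^*\ge h_{\max}\}$ one has $h(\tau)=h_{\max}$, hence $|\tau|^2/h_{\max}^2\ge\ell$ while $h-h_{\max}\le 0$; in all cases the signs cooperate. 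Integrating the two displayed inequalities over $\Omega$ and using that both $h$ and $h(\tau)$ have average $h_0|\Omega|$, the $\ell$-term cancels and $\int_\Omega h^{-1}|\tau|^2\,dx\ge\int_\Omega h(\tau)^{-1}|\tau|^2\,dx$, so $h(\tau)$ is a minimizer.

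The step requiring the most care is the existence of $\ell$: one must justify the limits of $G$ and, more delicately, handle the set where $\tau$ vanishes — there $h(\tau)=h_{\min}$ and the objective is insensitive to $h$, so reaching the average $h_0|\Omega|$ imposes a mild non-degeneracy on $\tau$ (if $\tau$ vanished on too large a set the constraint could not be met and the stated formula would not apply, though existence of a minimizer would still be trivially true). The convexity inequality itself is immediate from Lemma~\ref{simple calculation for phi} and needs no new computation, and the three-region sign analysis is elementary book-keeping.
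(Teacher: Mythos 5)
Your proof is correct, and it takes a genuinely different (and more self-contained) route than the paper. The paper's sketch argues abstractly: it uses Lemma~\ref{simple calculation for phi} only to get convexity of $h\mapsto\int_\Omega h^{-1}|\tau|^2\,dx$, invokes Theorem~\ref{existence of min in infinite dim convex space} for existence of a minimizer, characterizes it by the Euler inequality of Theorem~\ref{thm Euler ineq}, and defers the derivation of the explicit formula \eqref{h(tau)(x)=} to the literature. You instead bypass the abstract existence theory entirely: you exhibit the candidate, produce the multiplier $\ell$ by the intermediate value theorem applied to $G(\ell)$, and verify global optimality directly from the exact identity \eqref{phi(a,sigma)=} --- the nonnegative remainder $h^{-1}|\tau|^2\bigl(1-h/h(\tau)\bigr)^2$ gives the tangent-plane inequality, your three-region sign analysis upgrades the linear term to $-\ell\bigl(h-h(\tau)\bigr)$, and the volume constraint kills it upon integration. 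This buys a complete, elementary proof of exactly the statement the paper leaves to a reference, at the cost of having to discuss the existence of $\ell$; your observation that the formula can fail to meet the volume constraint when $\tau$ vanishes on too large a set is a real (if standard) caveat that the lemma statement glosses over, and you handle it honestly. The only point I would tighten is the limit $G(\ell)\to h_{\max}\,|\{\tau\neq0\}|+h_{\min}\,|\{\tau=0\}|$ as $\ell\to0^+$: stating it in this form makes the non-degeneracy condition ($h_{\max}|\{\tau\neq0\}|+h_{\min}|\{\tau=0\}|\ge h_0|\Omega|$) explicit rather than implicit in the phrase ``in the non-degenerate situation.''
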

\begin{proof}[Sketch of the proof]
By Lemma \ref{simple calculation for phi} we obtain that the map $h\mapsto \int_\Omega h^{-1}|\tau|^2 dx$ is convex in $\mathcal{U}_{\rm ad}$. Therefore, Theorem \ref{existence of min in infinite dim convex space} ensures the existence of a minimum point $h$. This point is then characterized by the optimality condition given by Theorem \ref{thm Euler ineq}. We refer to \cite[Lemma 5.2.25]{Allaire2} for more details.  
\end{proof}

Lemma \ref{opti con for compliance pr} tells us the following numerical algorithm for the compliance: 

\begin{algorithm}[H]
\caption{Optimality criteria method} 
\begin{itemize}
\item[{\rm 1}.]
	Initialization of the thickness $h_0\in\mathcal{U}_{\rm ad}$.
 \item[{\rm 2}.]
 	Iterations until convergence, for $n\ge0$, 
   \begin{itemize}
   		\item[{\rm (a)}]
        	Computation of the state $\tau_n$, unique solution of
           \begin{equation}\label{min 2a}
           		\min_{\substack{\tau\in L^2(\Omega)^N, \\ -\dv \,\tau=f \ {\rm in} \ \Omega}}
   \int_\Omega h_n^{-1}|\tau|^2\,dx.
           \end{equation}
       \item[{\rm (b)}]
       		Update of the thickness$\colon$
           \begin{equation*}
           		h_{n+1}=h(\tau_n),
           \end{equation*}
        	where $h(\tau)$ is the minimizer defined by \eqref{h(tau)(x)=}. Finally, the Lagrange multiplier $\ell$ is computed by dichotomy.
\end{itemize}
\end{itemize}
\end{algorithm}

Remark that, by the dual energy approach introduced in Section \ref{sec:dual energy},  minimizing \eqref{min 2a} in $\tau$ is equivalent to solving the equation
\begin{equation*}
	\left\{
    \begin{aligned}
    	-\dv(h_n\gr u_n) & =f &&\text{ in } \Omega,\\
       u_n & =0 && \text{ on } \partial\Omega,
    \end{aligned}
	\right.
\end{equation*}
and then recovering $\tau_n$ by the formula
\begin{equation*}
	\tau_n=h_n\gr u_n.
\end{equation*}
The algorithm can be interpreted as an alternate minimization in $\tau$ and $h$ of the objective function.
In particular, we deduce that the objective function always decreases through the iterations. Indeed, for all $n\ge0$, 
\begin{equation*}
	J(h_{n+1})=\int_\Omega h_{n+1}^{-1}|\tau_{n+1}|^2\,dx\le \int_\Omega h_{n+1}^{-1}|\tau_{n}|^2\,dx\le\int_\Omega h_{n}^{-1}|\tau_{n}|^2\,dx=J(h_n),
\end{equation*}
where, for fixed $h_{n+1}$ we minimized in $\tau$ and then, for fixed $\tau_n$ we minimized in $h$.
This algorithm can also be interpreted as an optimality criteria method.

\section{Thickness optimization of an elastic plate}
We consider the following elasticity problem for an elastic plate $\Omega$
\begin{equation*}
	\left\{
    \begin{aligned}
    	-\dv \, \sigma &=f && \text{ in } \Omega,\\
      \sigma&=2\mu h e(u)+\lambda h {\rm tr}(e(u)){\rm Id} && \text{ in } \Omega,\\
      u&=0 &&\text{ on } \Gamma_D, \\
      \sigma\cdot n&=g &&\text{ on } \Gamma_N
    \end{aligned}
	\right.
\end{equation*}
with strain tensor $e(u)=(\gr u+(\gr u)^t)/2$.
The set of admissible thicknesses is
\begin{equation*}
	\mathcal{U}_{\rm ad}=
    \left\{
    	h\in L^\infty(\Omega) \; :\;h_{\max}\ge h(x)\ge h_{\min} >0 \text{ a.e. in } \Omega, \int_\Omega h(x)\,dx=h_0|\Omega|
    \right\}.
\end{equation*}

\begin{figure}[H]
    \begin{minipage}[h]{.45\textwidth}
        \centering
        \includegraphics[width=\textwidth]{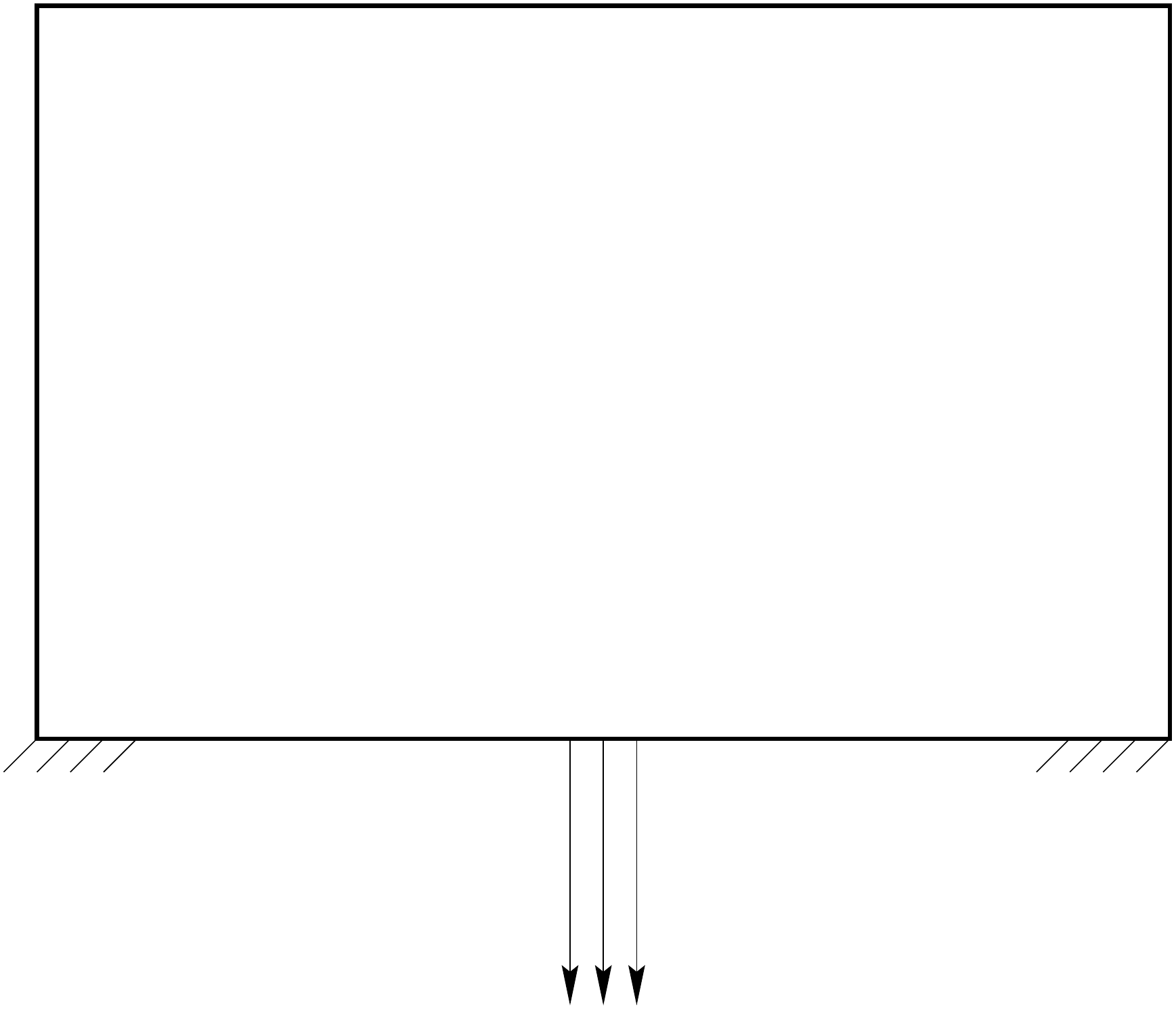}
    \end{minipage}
    \hfill
    \begin{minipage}[h]{.50\textwidth}
        \centering
        \includegraphics[width=\textwidth]{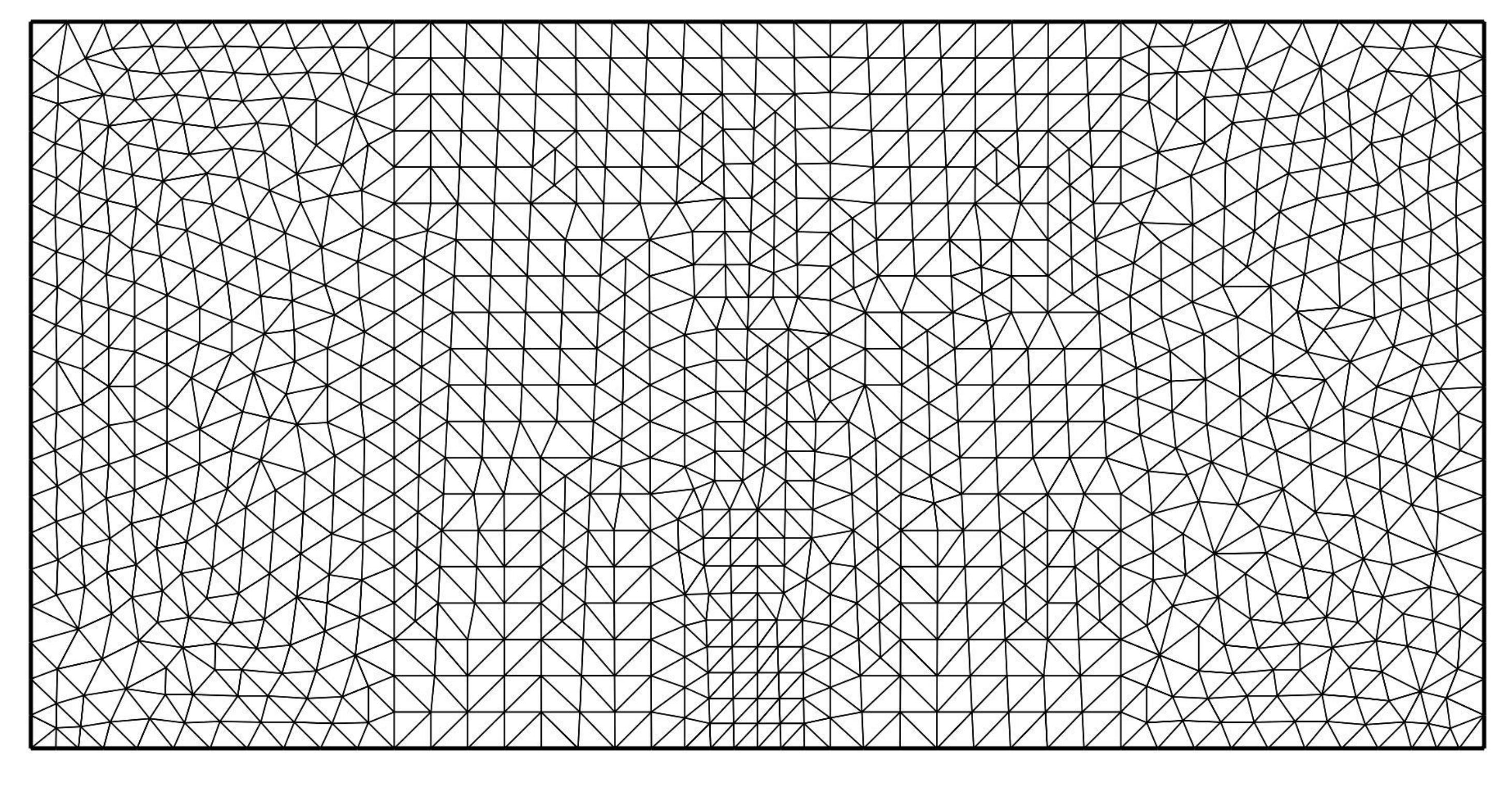}
    \end{minipage}
    \caption{Boundary conditions and mesh for an elastic plate.}
\label{Boundary conditions and mesh for an elastic plate}
\end{figure}

The compliance optimization reads
\begin{equation}\label{compliance optimizarion prob0}
	\inf_{h\in\mathcal{U}_{\rm ad}}\left\{J(h)=\int_\Omega f\cdot u\,dx+\int_{\Gamma_N} g\cdot u\,ds\right\}.
\end{equation}
The theoretical results are the same of previous sections.
We apply the optimality criteria method for the compliance optimization \eqref{compliance optimizarion prob0}. In order to compute \eqref{compliance optimizarion prob0}, we use FreeFem++. You can see its scripts on the web page \url{http://www.cmap.polytechnique.fr/~allaire/freefem_en.html}.

 \begin{figure}[H]
    \begin{minipage}[h]{.45\textwidth}
        \centering
        \includegraphics[width=\textwidth]{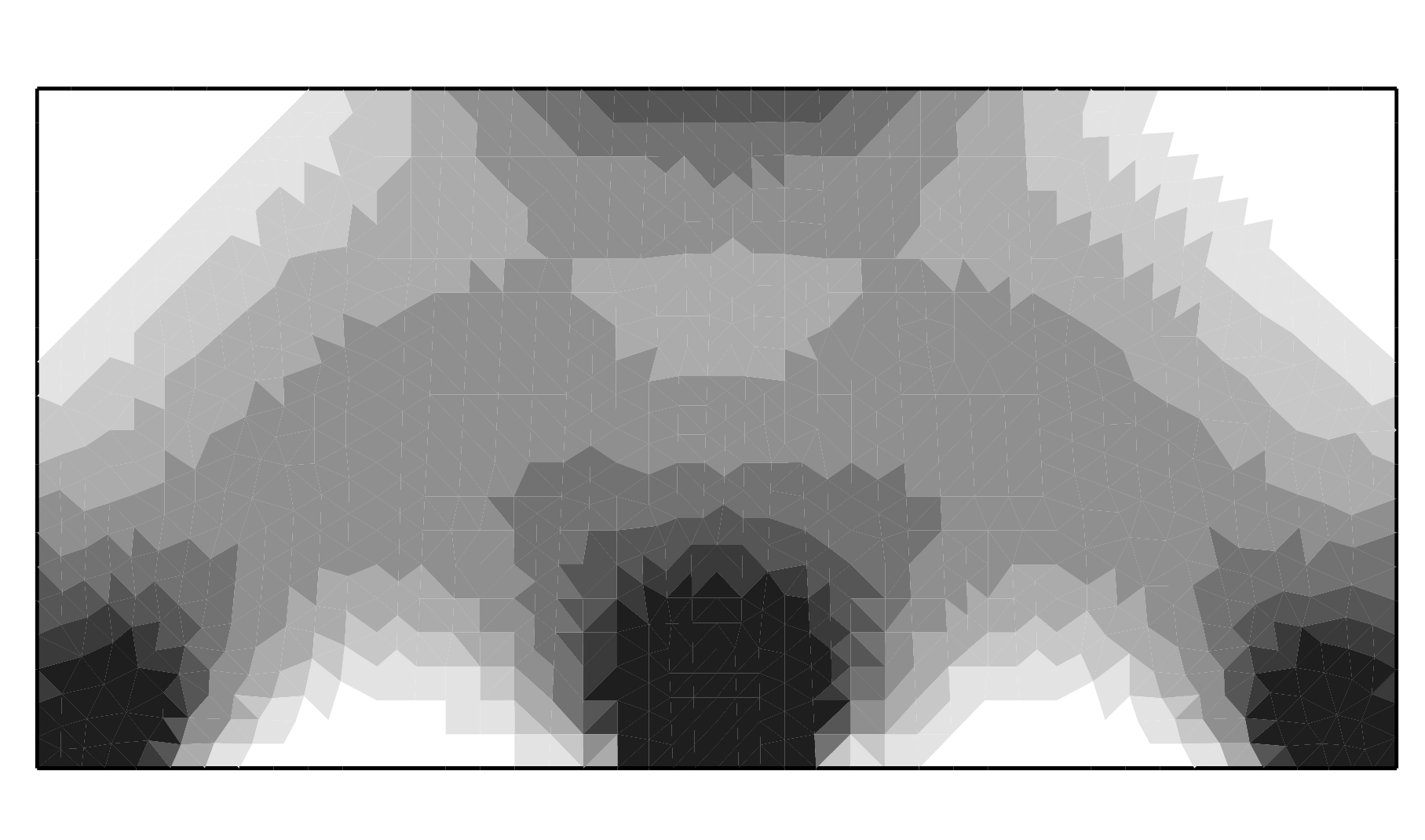}
    \end{minipage}
    \begin{minipage}[h]{.45\textwidth}
        \centering
        \includegraphics[width=\textwidth]{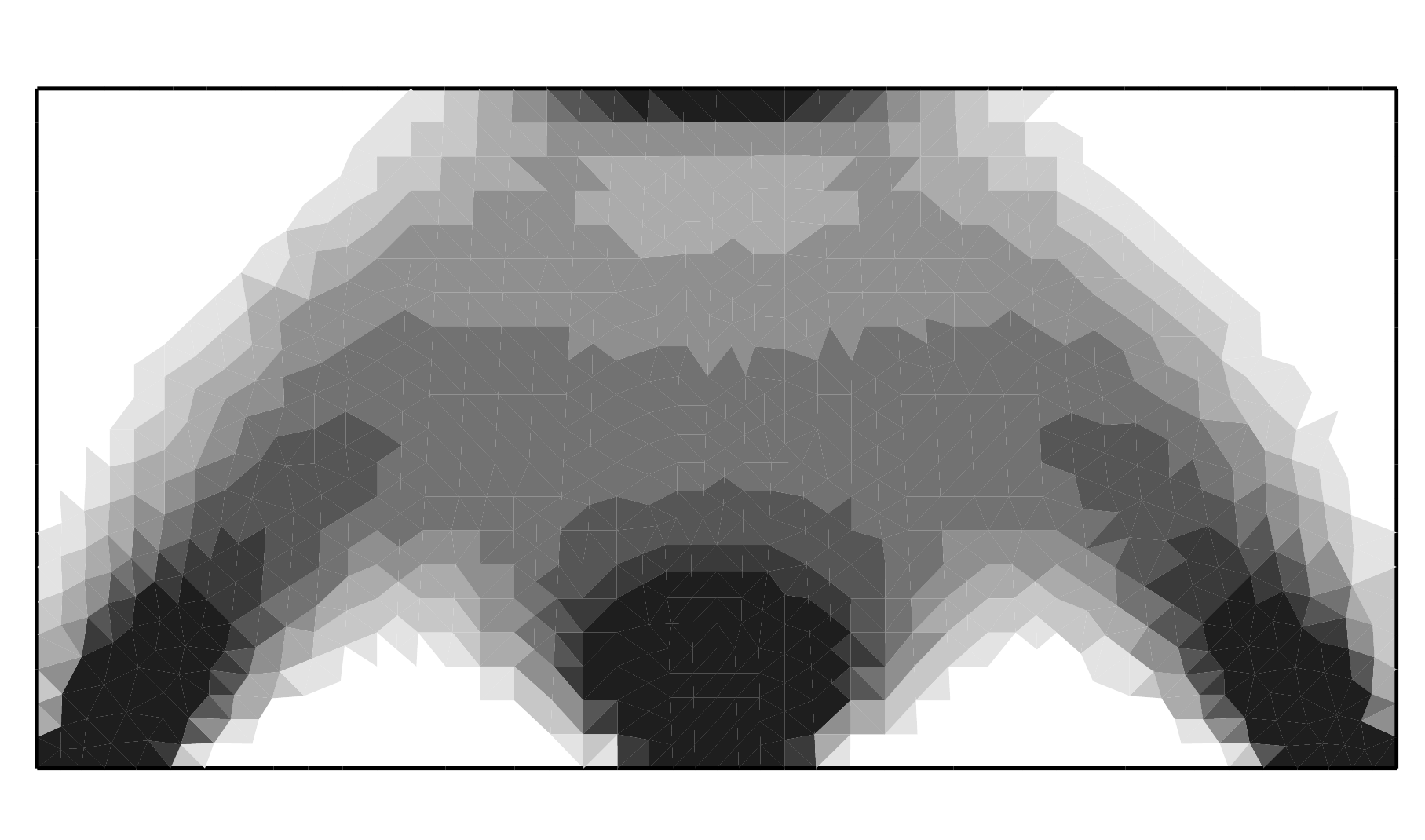}
    \end{minipage}
    
     \begin{minipage}[h]{.45\textwidth}
        \centering
        \includegraphics[width=\textwidth]{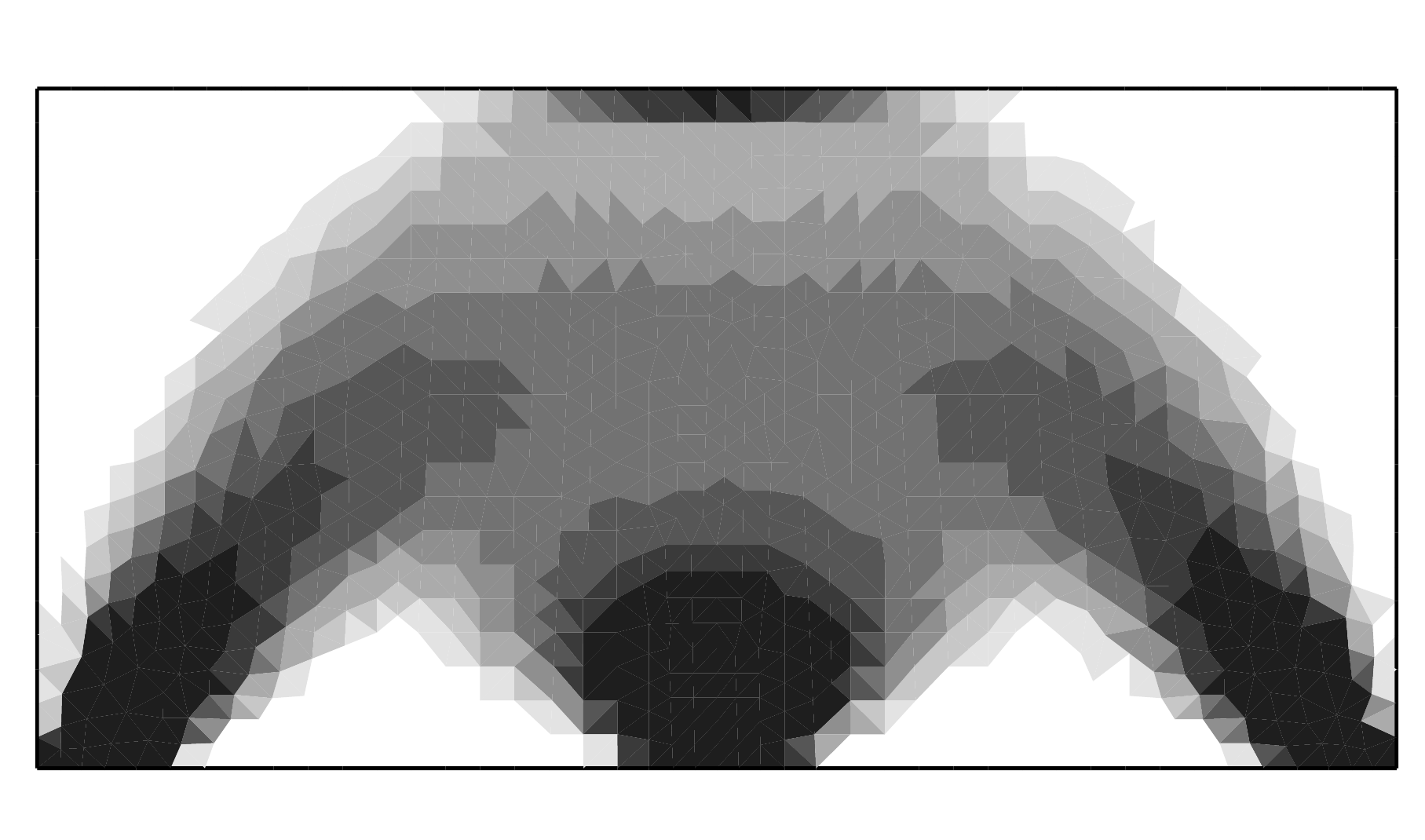}
    \end{minipage}
     \begin{minipage}[h]{.45\textwidth}
        \centering
        \includegraphics[width=\textwidth]{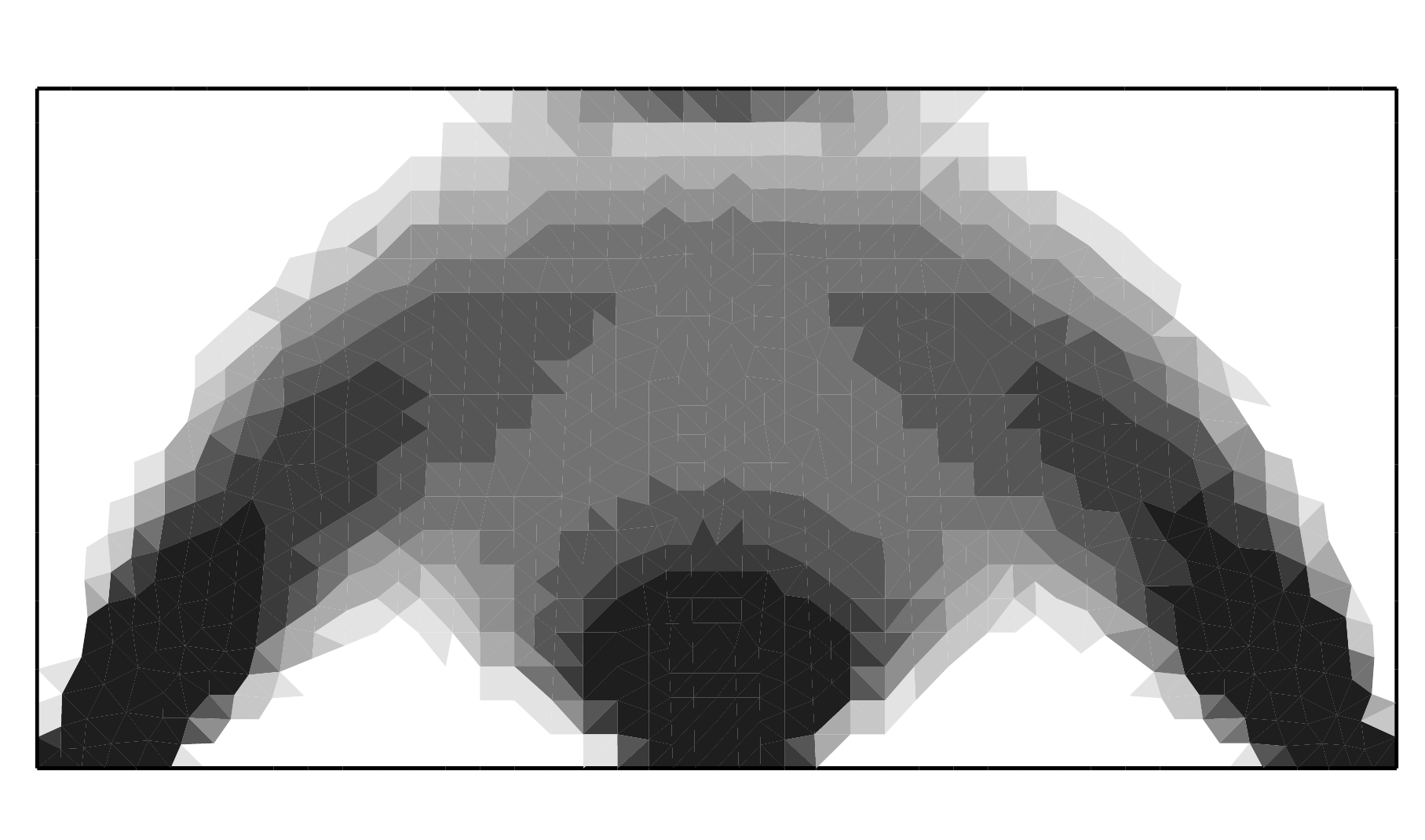}
    \end{minipage}
    \caption{Thickness at iterations 1, 5, 10, 30 (uniform initialization), where $h_{\min} = 0.1, h_{\max} = 1.0, h_0 = 0.5$ (increasing thickness from white to black). }
\label{Thickness at iterations 1, 5, 10, 30 (uniform initialization)}
\end{figure}

\begin{figure}[H]
    \begin{minipage}[h]{.45\textwidth}
        \centering
        \includegraphics[width=\textwidth]{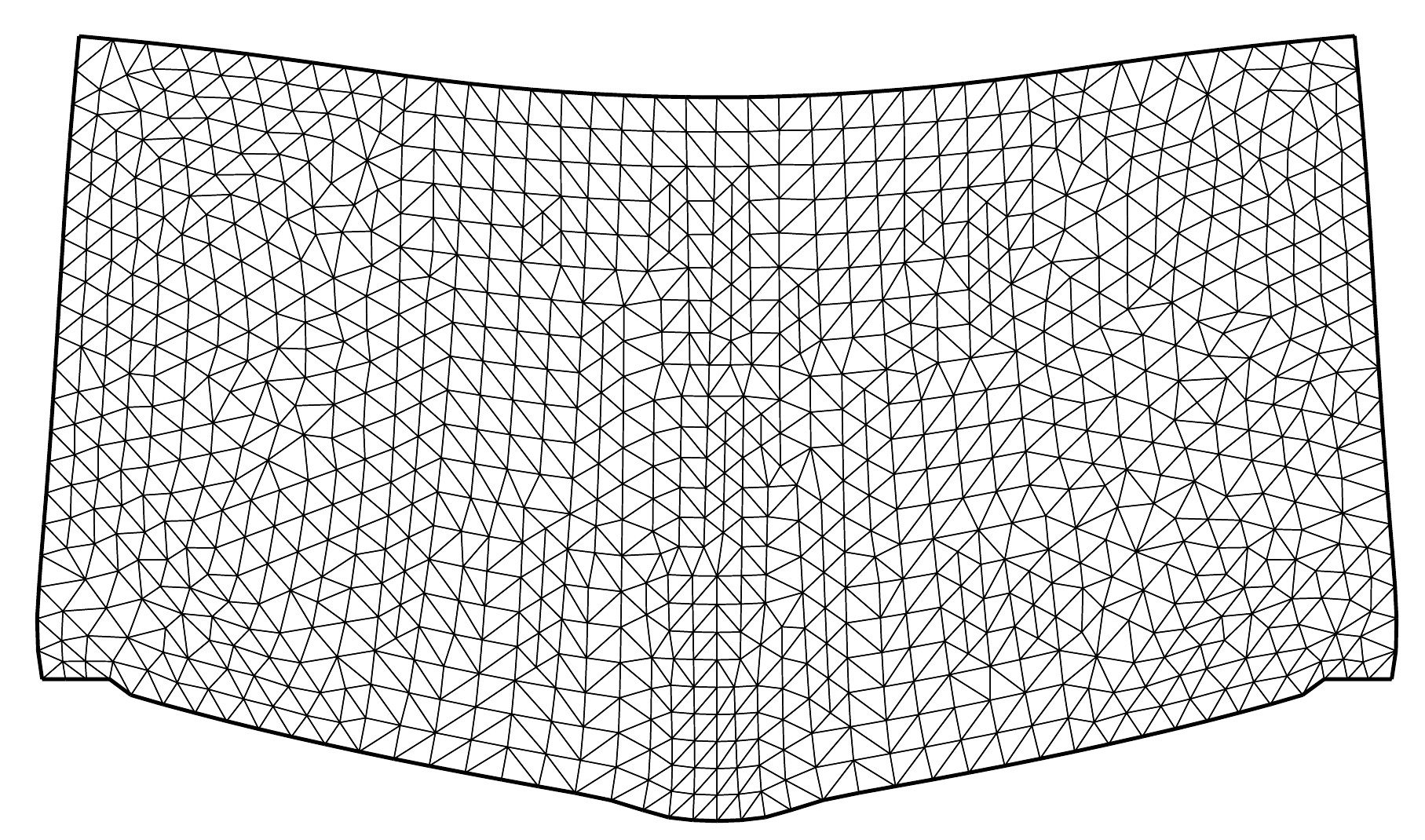}
    \end{minipage}
    \begin{minipage}[h]{.45\textwidth}
        \centering
        \includegraphics[width=\textwidth]{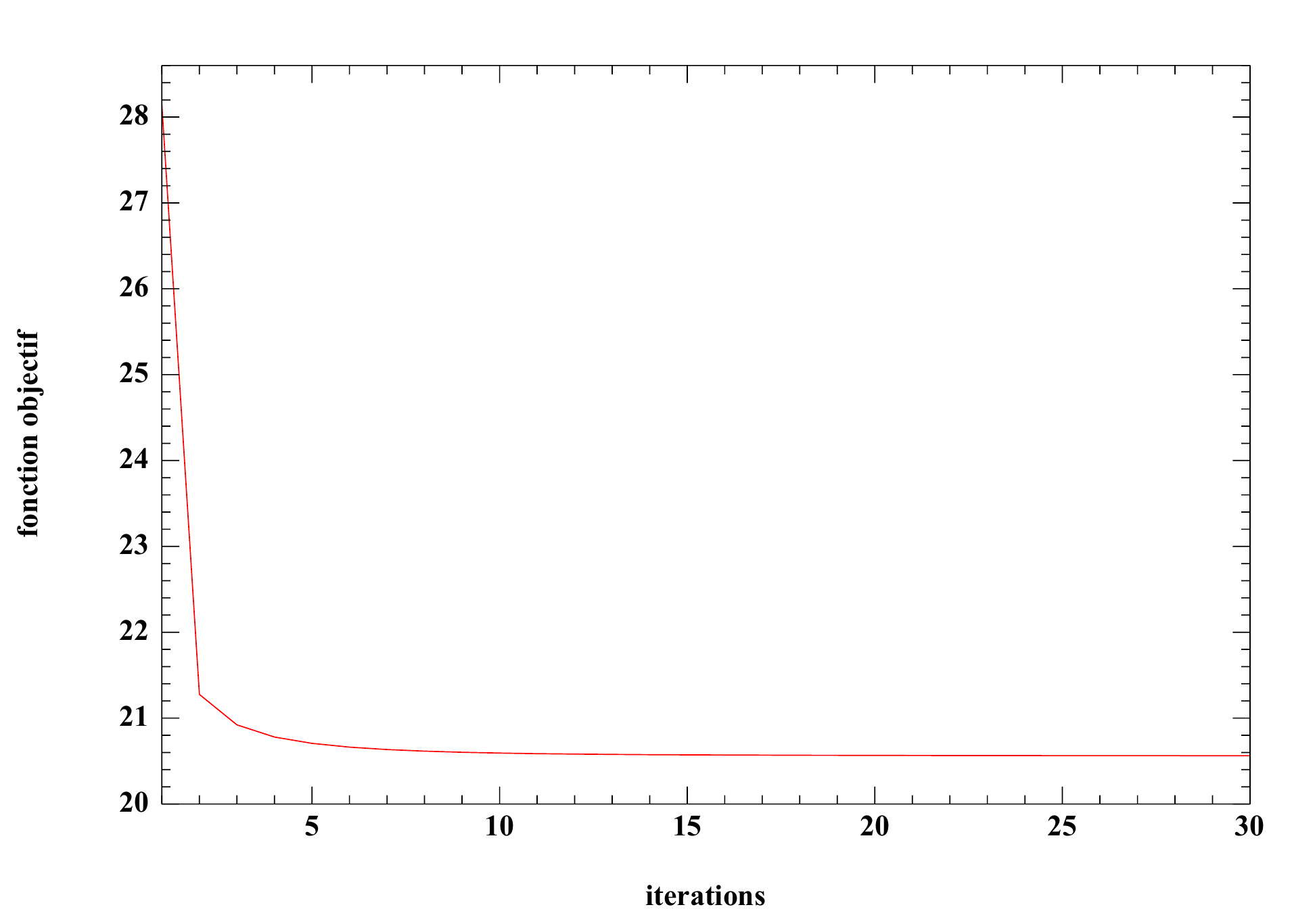}
    \end{minipage}
    \caption{Final deformed shapes and convergence history. }
\label{Final deformed shapes and convergence history}
\end{figure}
In Figure \ref{Thickness at iterations 1, 5, 10, 30 (uniform initialization)}, we used finite elements $P2$ for $u$ and $P0$ for $h$. However, numerical instabilities like checkerboards occur if we use finite elements $P1$ for $u$ and $P0$ for $h$ (see Figure \ref{Numerical instabilities (checkerboards)}). Therefore we consider a ``regularization" in order to avoid the instabilities. 

\subsection{Regularization}

In what follows, let us consider the ``regularized" framework to avoid numerical instabilities. The main idea, similar to that introduced in Remark \ref{change of scalar prod} is as follows. 
\begin{figure}[H]
    \begin{minipage}[h]{0.5\textwidth}
        \centering
        \includegraphics[width=\textwidth]{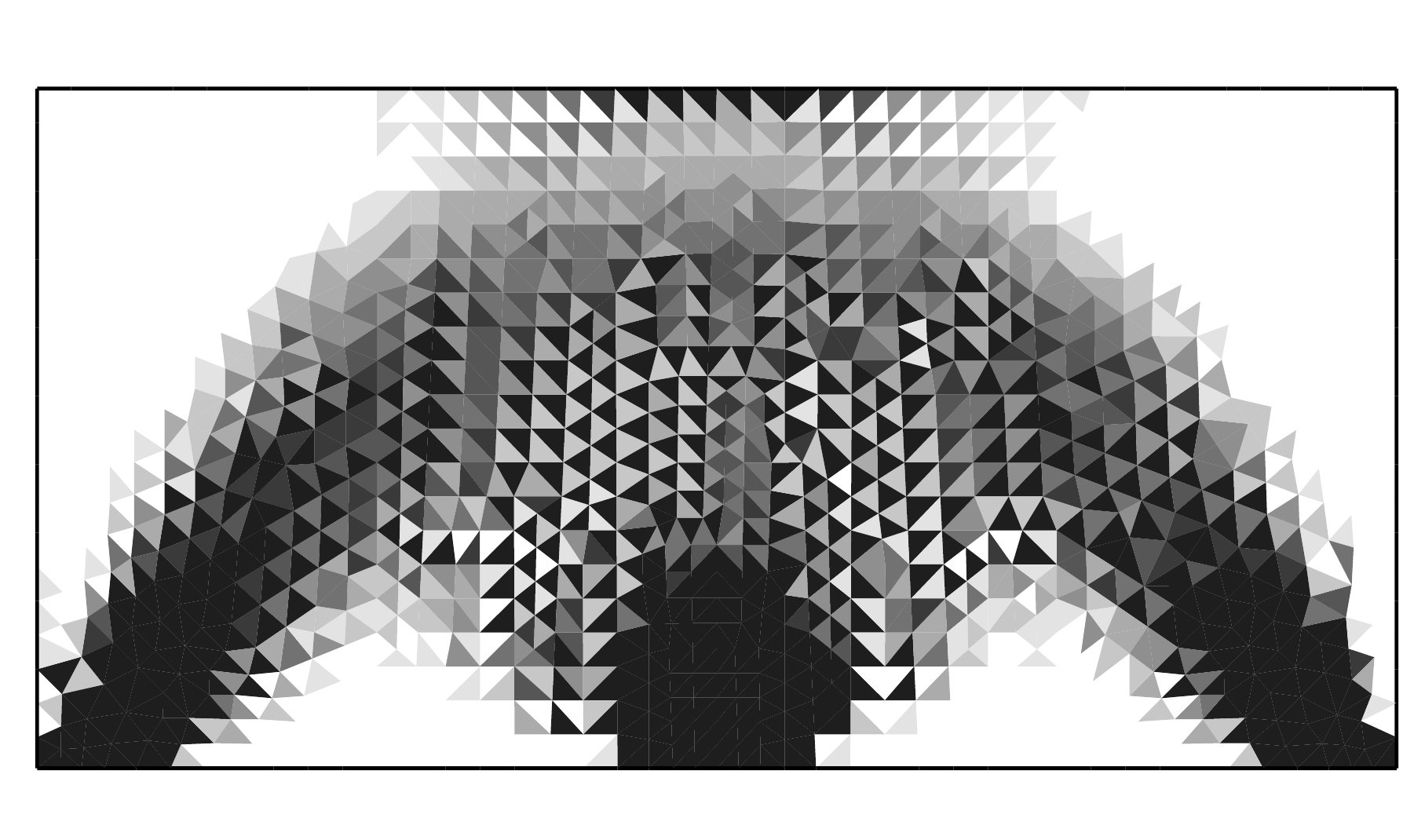}
    \end{minipage}
    \hfill
    \begin{minipage}[h]{0.5\textwidth}
        \centering
        \includegraphics[width=\textwidth]{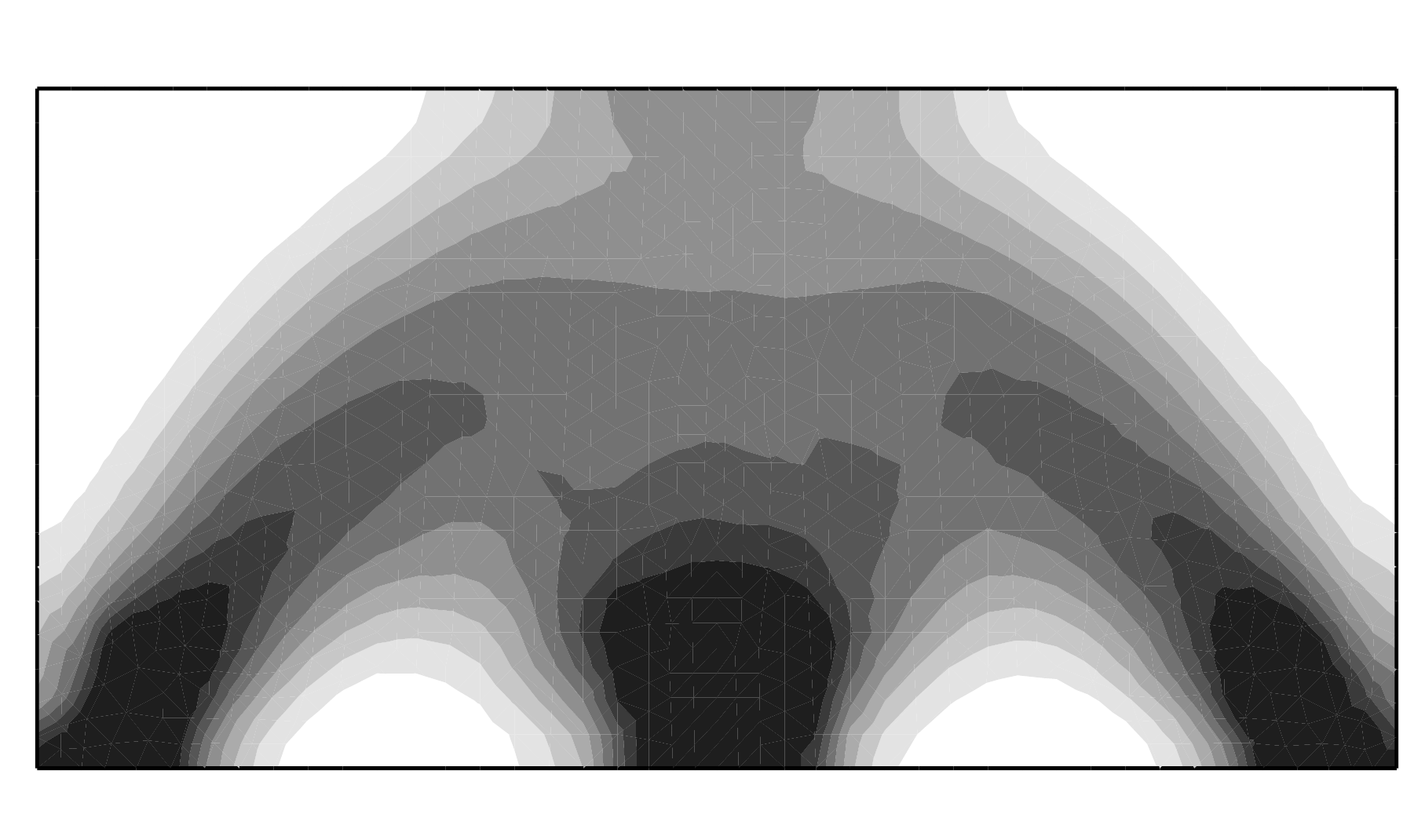}
    \end{minipage}
    \caption{Left: numerical instabilities (checkerboards), right: regularized optimal shape.}
    \label{Numerical instabilities (checkerboards)}
\end{figure}

We are going to replace the scalar product
\begin{equation*}
	\left\langle J'(h), k\right\rangle=\int_\Omega k\gr u\cdot\gr p\,dx, \quad k\in\mathcal{U}_{\rm ad}
\end{equation*}
with a different one.
Previously we identified $\mathcal{U}_{\rm ad}$ with a subspace of $L^2(\Omega)$, thus
\begin{equation*}
	\langle J'(h), k\rangle =\int_\Omega J'(h)k\,dx\Longrightarrow J'(h)=\gr u\cdot\gr p.
\end{equation*}
Now, we identify a ``regularized'' admissible set $\mathcal{U}^{\rm reg}_{\rm ad}$ to a subspace $H^1(\Omega)$, thus 
\begin{equation*}
	\langle J'(h), k\rangle=\int_\Omega \left( \e^2 \, \gr J'(h)\cdot\gr k+J'(h)k \right) \, dx,
\end{equation*}
where $\e > 0$ is a regularization parameter (which can be interpreted as a length scale). Therefore, we deduce a new formula for the gradient
\begin{equation}\label{regularized framework eq}
	\left\{
    \begin{aligned}
    	 -\e^2 \Delta J'(h)+J'(h) & = \gr u\cdot\gr p &&\text{ in } \Omega,\\
     	\dfrac{\partial J'(h)}{\partial n} & =0 &&\text{ on } \partial\Omega.
     \end{aligned}
   \right.
\end{equation}
Solving \eqref{regularized framework eq} and using a gradient algorithm such as projected gradient method, we obtain regularized optimal shape (see Figure \ref{Numerical instabilities (checkerboards)}).  

\section{Exercises}
\begin{problem}
Check the numerical instabilities (Figure \ref{Numerical instabilities (checkerboards)}, left) by using FreeFem++. 
\end{problem}
\begin{problem}
Solve \eqref{regularized framework eq} and see the regularized optimal shape (Figure \ref{Numerical instabilities (checkerboards)}, right) by using FreeFem++. 
\end{problem}


\chapter{Homogenization theory}\label{Homogenization theory}

In this section, we explain the homogenization method in order to apply shape optimization problems in Section \ref{Topo opti homo method}. 
Homogenization method is one of the averaging methods for partial differential equations. It is often concerned with the derivation of (macroscopic) equations whose solutions are defined as limits of solutions to (microscopic) equations with rapidly varying coefficients.
A particular case of homogenization is obtained when the coefficients of the partial differential equation are periodically and rapidly oscillating. 
Indeed, in many fields of science and technology one has to solve boundary value problems in periodic media.
In such a case, homogenization is simpler and can be achieved, at least formally, by using asymptotic expansions. 
This chapter is devoted to an elementary introduction of periodic
homogenization, without providing a fully rigorous justification.
Of course, homogenization methods using functional analysis method were considered for mathematical justification.
The interested reader is referred to the classical books \cite{BLP}, \cite{CD}, \cite{U}, \cite{JKO},  for further details. 

Note that, for applications in shape optimization, one should rely, in full rigor, on a more general homogenization method, called H-conver\-gence, introduced in \cite{MT} (see the textbook \cite{Allaire1} for more details). 
For simplicity, we restrict ourselves to the setting of periodic homogenization which is enough for a formal understanding. 


\begin{figure}[h]
\centering
\includegraphics[width=0.6\linewidth,center]{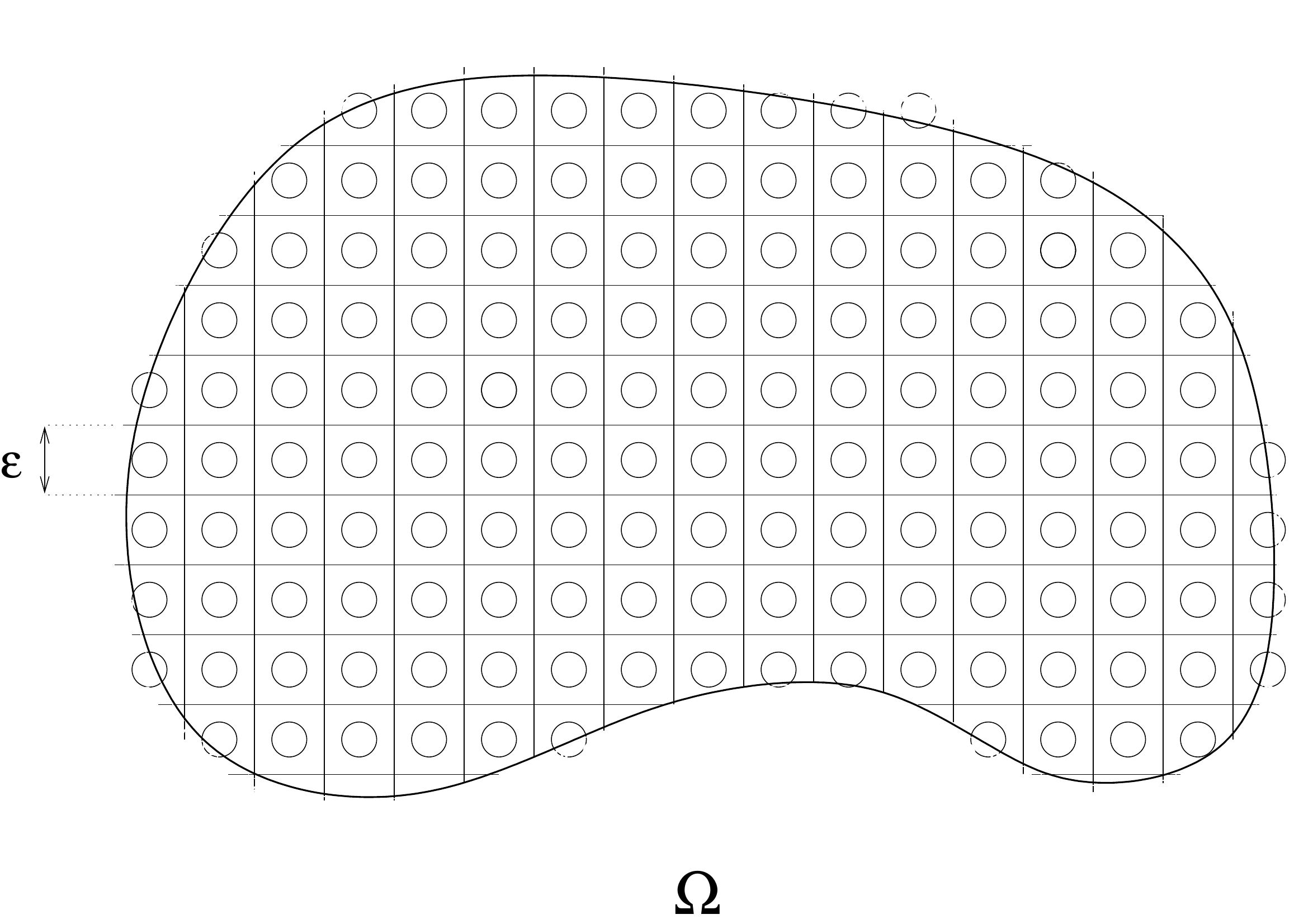}
\caption{A periodic heterogeneous medium.} 
\label{poreux}
\end{figure}

\section{Homogenization based on two-scale asymptotic expansions}


In what follows, we consider an elastic membrane made of a composite material with a fine periodic structure and apply the periodic homogenization method. We assume that ratio between the period and the characteristic size of the structure equals to $\e\ll 1$. We will find the ``true" problem by the limit problem obtained as $\e \to 0$. 

Let $\Omega$ be a bounded domain in $\rn$ ($N\ge 1$) and $f=f(x)$ be a load. Then we consider the displacement $u_{\e}$, which is defined as the solution of the following boundary value problem: 
\begin{equation}\label{micro eq}
\left\{
\begin{aligned}
-\dv\left(A\left(\frac{x}{\e}\right)\nabla u_{\e}\right)&=f &&\text{ in } \Omega, \\
u_{\e}&=0 &&\text{ on } \partial\Omega ,
\end{aligned}
\right.
\end{equation}
where the coefficient $A(y)$ satisfies the variable Hooke’s law, that is,  $A(y)$ is a Y-periodic function with $Y=(0,1)^N$. Thus for any i-th vector of the canonical basis $e_i$, the coefficient $A(y)$ satisfies 
\[
A(y+e_i)=A(y).
\]
If we replace $y$ by $x/\e$, then  we obtain that the map $x\mapsto A(x/\e)$
is a periodic of period $\e$ in all the coordinate directions $e_1,\cdots, e_N$. 
A direct computation of $u_{\e}$ can be very expensive (since the mesh size $h$ should satisfy $h\ll\e$), thus we seek only the averaged values of $u_{\e}$.
We assume that the solution $u_\e$ can be expanded as follows: 
\begin{equation}\label{expansion for sol}
\displaystyle
u_{\e}(x)=
\sum_{i=0}^{+\infty}
\e^i u_i \left(x,\frac{x}{\e}\right),
\end{equation}
with $u_i(x,y)$ function of the two variables $x$ and $y$, periodic in $y$, with periodicity cell given by $Y = (0, 1)^N$. 
Plugging the series \eqref{expansion for sol} in the equation \eqref{micro eq}, we use the derivation rule
\begin{equation}\label{derivation rule}
\nabla \left(u_i\left(x,\frac{x}{\e}\right)\right)=(\e^{-1}\nabla_y u_i+\nabla_x u_i)\left(x,\frac{x}{\e}\right). 
\end{equation}
Then we get
\begin{equation}\label{exp of du}
\nabla u_{\e}(x)=\e^{-1}\nabla_y u_{0}
\left(x,\frac{x}{\e}\right)+\sum_{i=0}^{\infty}
\e^i\left(\nabla_y u_{i+1}+\nabla_x u_i\right)\left(x,\frac{x}{\e}\right).
\end{equation}

\begin{figure}[h]
\centering
\includegraphics[width=0.7\linewidth,center]{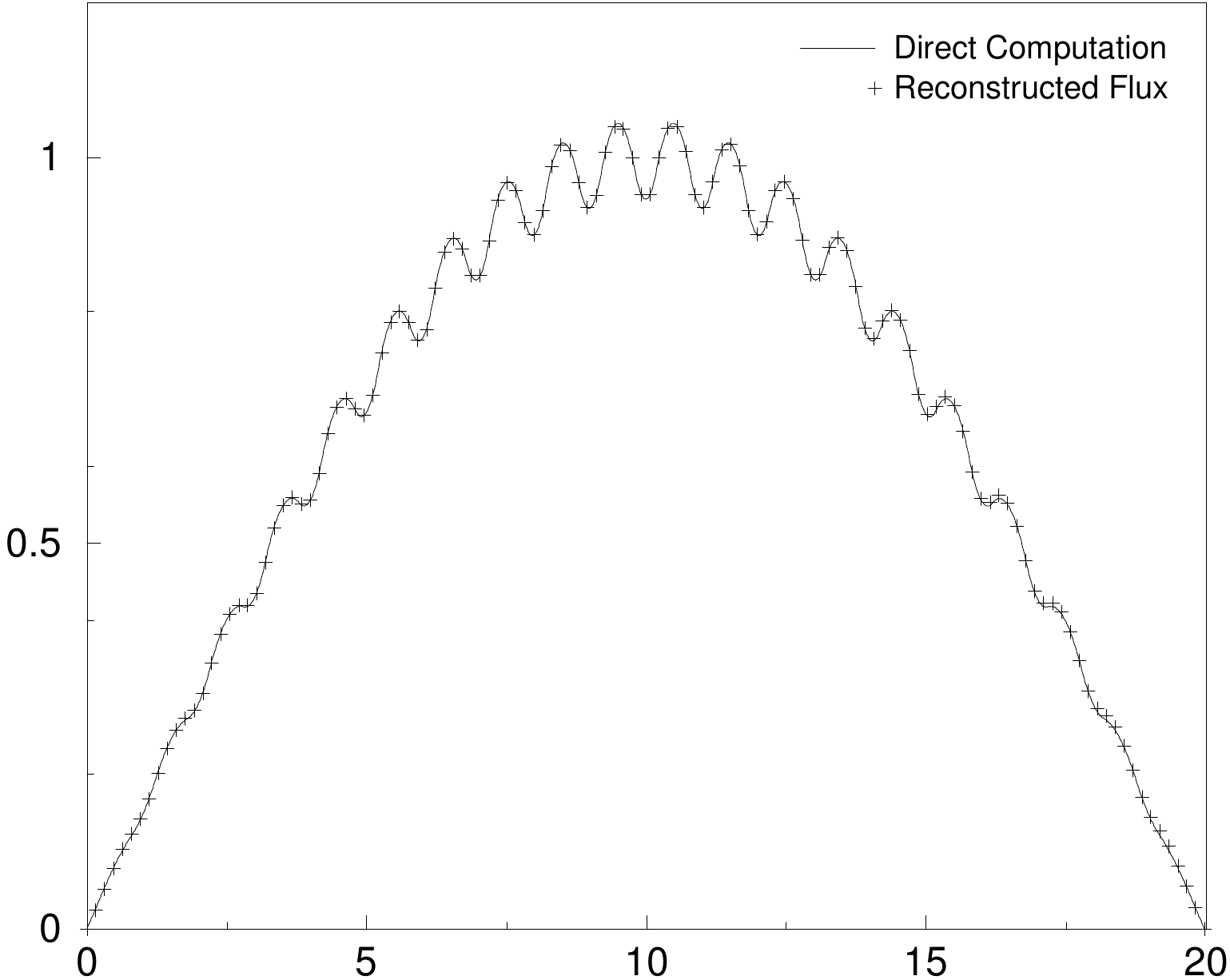}
\caption{Typical oscillating behavior of $x \mapsto u_i\left(x,\dfrac{x}{\e}\right)$.}
\label{wiggle}
\end{figure}

Substituting \eqref{exp of du} into \eqref{expansion for sol}, the equation becomes a series in $\e$
\begin{equation}\label{a series expansion}
\begin{aligned}
-\e^{-2}[\mathrm{div}_y(A(\nabla_yu_0))\left(x,\frac{x}{\e}\right) 
-\e^{-1}[\mathrm{div}_y(A(\nabla_x u_0+\nabla_yu_1))+\mathrm{div}_x(A\nabla_yu_0)]\left(x,\frac{x}{\e}\right) \\
-\displaystyle\sum_{i=0}^{+\infty}
\e^i[\dv_x(A(\nabla_x u_i+\nabla_yu_{i+1}))+\dv_y(A(\nabla_xu_{i+1}+\nabla_yu_{i+2}))]\left(x,\frac{x}{\e}\right)=f(x).
\end{aligned}
\end{equation}

In order to find the solution of the limit equation as $\e\to 0$, 
we identify each power of $\e$.
The most important terms are only the first three terms of the series.
We start by a technical lemma:

\begin{lemma}\label{fred}
Let $g\in L^2(Y)$ and suppose that $A(y)$ is a $Y$-periodic $N\times N$ matrices satisfying
\[
A(y)\xi\cdot\xi\ge \lambda|\xi|^2 \quad \forall\xi\in\RR^N, 
\]
for some $\lambda>0$.
Moreover, let 
$H^1_{\#}(Y)/\RR$ denote the quotient space, $H^1_{\#}(Y)$ up to an additive constant, equipped with the norm $\|\nabla\cdot\|_{L^2(Y)}$.
Then the problem 
\begin{equation*}
\left\{
\begin{aligned}
&-\dv_y (A(y)\nabla_y v(y))=g(y) &&\text{ in } Y, \\
&y\mapsto v(y)  && \ Y\text{-periodic}
\end{aligned}
\right.
\end{equation*}
admits a unique solution 
$v\in H^1_{\#}(Y)/\RR $
if and only if
\begin{equation*}
\displaystyle
\int_{Y}g(y) \, dy =0.
\end{equation*}
\end{lemma}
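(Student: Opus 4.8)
**

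This is the classic Fredholm alternative for a periodic elliptic problem. The plan is to work with the weak formulation and reduce the existence-uniqueness question to the Lax--Milgram theorem on the quotient space.

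First I would note the necessity of the condition. If $v\in H^1_{\#}(Y)/\RR$ solves the equation in the weak sense, then testing the equation against the constant function $\phi\equiv 1$ (which is an admissible periodic test function) and integrating by parts over the periodicity cell $Y$, the left-hand side vanishes because $\int_Y \dv_y(A\nabla_y v)\,dy = \int_{\partial Y} (A\nabla_y v)\cdot n\,ds = 0$ by periodicity (opposite faces of the cube $Y$ contribute with opposite signs of the normal). Hence $\int_Y g(y)\,dy = 0$ is forced.

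For the sufficiency, I would set up the variational formulation on the Hilbert space $H:=H^1_{\#}(Y)/\RR$: find $v\in H$ such that
\begin{equation*}
a(v,\phi):=\int_Y A(y)\nabla_y v\cdot \nabla_y \phi\,dy = \int_Y g(y)\phi(y)\,dy =: L(\phi)\qquad \forall \phi\in H.
\end{equation*}
One needs to check that both sides are well defined on the quotient: the bilinear form $a$ only sees gradients, so it descends to $H$; and $L$ is well defined on $H$ precisely because $\int_Y g\,dy=0$, so adding a constant to $\phi$ does not change $L(\phi)$. Continuity of $a$ follows from boundedness of $A$ (which I would assume, as is implicit), continuity of $L$ from Cauchy--Schwarz and $g\in L^2(Y)$. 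The key point is coercivity: by the given ellipticity bound, $a(v,v)\ge \lambda\|\nabla_y v\|_{L^2(Y)}^2$, and since $H$ is equipped precisely with the norm $\|\nabla_y\cdot\|_{L^2(Y)}$, this is exactly coercivity on $H$. Lax--Milgram then yields a unique $v\in H$, which by the usual integration-by-parts argument is the unique weak (hence, by periodicity, distributional-periodic) solution.

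The main obstacle — really the only subtle point — is justifying that $H^1_{\#}(Y)/\RR$ with norm $\|\nabla_y\cdot\|_{L^2(Y)}$ is genuinely a Hilbert space, i.e.\ that this seminorm is a norm on the quotient and that the quotient is complete. This rests on the Poincar\'e--Wirtinger inequality on the cell: there is a constant $C_Y$ with $\|w - \fint_Y w\|_{L^2(Y)}\le C_Y\|\nabla_y w\|_{L^2(Y)}$ for all $w\in H^1_{\#}(Y)$, which shows $\|\nabla_y\cdot\|_{L^2(Y)}$ controls the full $H^1$ norm on representatives of zero mean, so it is a norm and the space is complete. I would invoke this inequality (standard on a bounded Lipschitz, or here periodic, domain) rather than prove it. Everything else is routine verification of the Lax--Milgram hypotheses.
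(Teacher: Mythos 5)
Your proof is correct and follows essentially the same route as the paper's: necessity by integrating the equation over the cell (the boundary term vanishing by periodicity), and sufficiency via Lax--Milgram on $H^1_{\#}(Y)/\RR$, with coercivity from the ellipticity bound and the well-definedness and boundedness of the right-hand side functional resting on the zero-mean condition together with the Poincar\'e--Wirtinger inequality. The only point you flag as an assumption, the boundedness of $A$, is indeed left implicit in the paper as well.
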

\begin{proof}
Let us check that $g$ being of zero mean over $Y$ is a necessary condition for existence. As a matter of fact, integrating the equation over $Y$, we get 
\begin{equation*}
\displaystyle
\int_{Y}\dv_y(A(y)\nabla_yv(y)) \, dy=\int_{\partial Y}A(y)\nabla_yv(y)\cdot n \, ds =0
\end{equation*}
because of the periodic boundary condition. Indeed 
$A(y)\nabla_y v(y)$ is periodic, but the normal
$n$ changes its sign on opposite faces of $Y$. 

The sufficient condition is obtained by applying Lax--Milgram theorem with respect to $H^1_{\#}(Y)/\RR$. 
Indeed, $\displaystyle a(u,v)=\int_{Y}A(y)\nabla u(y)\cdot\nabla v(y)\, dy$ is a coercive continuous bilinear form on $H^1_{\#}(Y)/\RR$ by uniform ellipticity.
 Furthermore, the map $F:H^1_{\#}(Y)/\RR\to \RR$, defined by
 $\displaystyle F(\phi)
= \int_{Y}g(y)\phi(y) \, dy, 
$
is a well defined bounded linear functional on $H^1_{\#}(Y)/\RR$ because  $g$ is a function of zero mean over $Y$.
Indeed, for all $\phi\in H^1_\#(Y)$, if we let $\displaystyle\bar{\phi}=\fint_Y \phi(y)\, dy$ denote the mean value of $\phi$ over $Y$, then, we get
\begin{equation*}
\begin{aligned}
\int_Y g(y)\phi(y) \, dy 
&= \int_Y g(y) \left( \phi(y)- \bar{\phi}\right) \, dy \\
&\le \norm{g}_{L^2(Y)} \norm{\phi-\bar{\phi}}_{L^2(Y)}\le \norm{g}_{L^2(Y)} \norm{\nabla \phi}_{L^2(Y)}, 
\end{aligned}
\end{equation*}
where we used the Poincar\'e--Wirtinger inequality in the last inequality.
This implies that the map $F:H^1_\#(Y)/\RR\to \RR $ defined above is bounded in the norm $\norm{\nabla \cdot}_{L^2(Y)}$ as claimed.
Hence, by Lax--Milgram's theorem, there exists a unique solution $v\in H_{\#}^1(Y)/\RR$ such that 
\[
\int_{Y}A(y)\nabla v(y)\cdot\nabla \phi(y) \, dy  
=
\int_{Y} g(y)\phi(y) \, dy \quad \forall \phi\in H_{\#}^1(Y)/\RR.
\]
\end{proof}

\begin{figure}[h]
\centering
\includegraphics[width=0.4\linewidth,center]{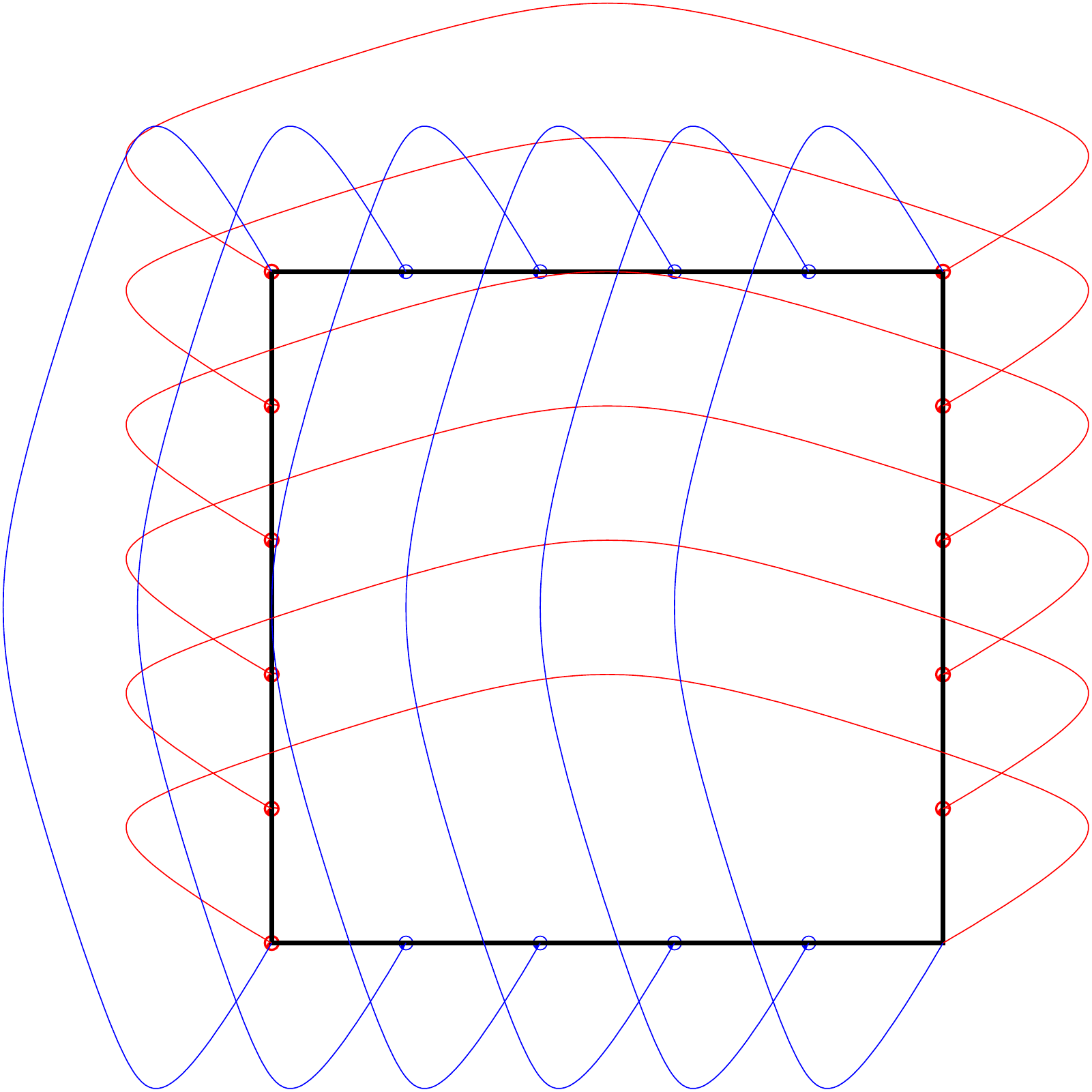}
\caption{Periodic boundary conditions in $H_{\#}^1(Y)$.} 
\label{Periodic boundary conditions}
\end{figure}
By using Lemma \ref{fred}, we can find the solution of the limit equation. 
Let us consider the equations that arise when we consider the first three terms of the series in \eqref{a series expansion}.
\begin{itemize}
\item[$\e^{-2}$:]
\begin{equation}\label{e-2}
\left\{
\begin{aligned}
&-\dv_y(A(y)\nabla_yu_0(x,y))=0 &&\text{ in } Y,\\
&y\mapsto u_0(x,y)&& \ Y\text{-periodic}.
\end{aligned}
\right. 
\end{equation}
It is a partial differential equation with respect to $y$ in $Y$ (here $x$ is just a parameter). By the uniqueness of the solution up to an additive constant, we deduce that 
\begin{equation}\label{u0==u}
u_0(x,y)\equiv u(x).
\end{equation}

\item[$\e^{-1}$:]
\begin{equation}\label{e-1}
\left\{
\begin{aligned}
&-\dv_y(a(y)\nabla_y u_1(x,y))=\dv_y(a(y)\nabla_xu_0(x,y))&&\text{ in } Y,\\
&y\mapsto u_1(x,y)&&\ Y\text{-periodic}.
\end{aligned}
\right.
\end{equation}
The necessary and sufficient condition of existence is satisfied. Thus, by \eqref{u0==u}, $u_1$ (seen as an element of $H^1_\#(Y)/\RR$) depends linearly on $\nabla_xu(x)$.
In particular, if we let $(e_i)_{1\le i\le N}$ denote the canonical basis of $\rn$, then it is easy to check that 
\begin{equation}\label{u1=sum}
u_1(x,y)=\displaystyle\sum_{i=1}^{N}\frac{\partial u}{\partial x_i}(x)w_i(y),
\end{equation}
where $w_i$ is the solutions of the following auxiliary problems (cell problems) for $i=1, \cdots, N$:
\begin{equation}\label{cell pb}
\left\{
\begin{aligned}
&-\dv_y(A(y)(\nabla_y w_i(y)+e_i))=0 &&\text{ in }Y,\\
&y\mapsto w_i(y)&& \ Y\text{-periodic}. 
\end{aligned}
\right.
\end{equation}
The functions $w_i$ are usually called the correctors. 

\item[$\e^{ 0}\ $:]
\begin{equation}\label{e-0}
\left\{
\begin{aligned}
&-\dv_y(A(y)\nabla_yu_2(x,y)) &&\!\!\!\!\!=f(x)+\dv_y(a(y)\nabla_xv_1) &&\\
& &&\!\!\!\!\!\quad +\dv_{x}(a(y)(\nabla_y v_1+\nabla_x u)) &&\text{ in }Y ,\\
&y\mapsto u_2(x,y) && && \ Y\text{-periodic}.
\end{aligned}
\right.
\end{equation}
By using Lemma \ref{fred}, the necessary and sufficient condition of existence of the solution $u_2$ is 
\begin{equation*}
\displaystyle\int_{Y}\left(\dv_y(A(y)\nabla_xu_1)+\dv_x(A(y)(\nabla_yu_1+\nabla_xu))+f(x) \right) dy=0. 
\end{equation*}
By employing the use of the representation formula \eqref{u1=sum}, we can rewrite $u_1$ in terms of $\nabla_xu(x)$:
\begin{equation*}
\displaystyle
\dv_x\int_{Y}A(y)\left(\sum_{i=1}^N\frac{\partial u}{\partial x_i}(x)\nabla_y w_i(y)+\nabla_x u(x)\right) dy+f(x)=0.
\end{equation*}
In other words, we have succeeded in identifying the the homogenized problem
\begin{equation}\label{hom eq}
\left\{
\begin{aligned}
-\dv_x(A^{\ast}\nabla_x u(x))&=f &&\text{ in }\Omega,\\
u&=0  &&\text{ on }\partial\Omega,
\end{aligned}
\right.
\end{equation}
where the homogenized tensor $A^*$ is defined by
\begin{equation}\label{A*=?}
\displaystyle A^{\ast}_{ji}=\int_{Y}A(y)(e_i+\nabla_y w_i)\cdot e_j \,dy, 
\end{equation}
or, integrating by parts
\begin{equation*}
\displaystyle A^{\ast}_{ji}=\int_{Y}A(y)(e_i+\nabla_yw_i(y))\cdot(e_j+\nabla_y w_j(y)) \,dy. 
\end{equation*}
Indeed, the cell problems \eqref{cell pb} yield
\begin{equation*}
\displaystyle\int_{Y}A(y)(e_i+\nabla_yw_i(y))\cdot\nabla_yw_j(y)\,dy=0.
\end{equation*}
\end{itemize}

\begin{remark}
The formula for $A^{\ast}$ is not fully explicit because cell problems \eqref{cell pb} must be solved. However $A^{\ast}$ does not depend on $\Omega$, nor $f$, nor the boundary conditions.
It only characterizes the microstructure. 
Later, we shall compute explicitly some examples of $A^{\ast}$.
\end{remark}

Under mild smoothness assumptions on the data, one can justify the expansion in $H^1(\Omega)$ \cite{BLP,JKO}. 
\begin{theorem}
Assume that the homogenized solution $u$ is smooth. Then the following expansion holds in $H^1(\Omega)$: 
\begin{equation*}
u_{\epsilon}(x)=u(x)+\e u_1\left(x,\frac{x}{\e}\right)+r_{\e}\ \text{with}\ \|r_{\e}\|_{\hi}\le C\e^{1/2}.
\end{equation*}
In particular
\begin{equation*}
\|u_{\e}-u\|_{L^2(\Omega)}\le C\e^{1/2}.
\end{equation*}
\end{theorem}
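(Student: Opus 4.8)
I would run the classical \emph{energy method}: build an explicit approximation of $u_\e$ out of the first two correctors, check that it solves \eqref{micro eq} up to a residual of size $\e$, repair the fact that it does not vanish on $\partial\Omega$ by a boundary-layer term, and then close the estimate with the uniform coercivity of $A(\cdot/\e)$, via the same Lax--Milgram argument already used throughout this chapter.

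Concretely, introduce the $Y$-periodic zero-mean second corrector $u_2(x,y)$, the solution of \eqref{e-0} (which exists by Lemma~\ref{fred} precisely because $u$ solves the homogenized equation \eqref{hom eq}), and set
\begin{equation*}
w_\e(x):=u(x)+\e\,u_1\!\left(x,\frac{x}{\e}\right)+\e^2 u_2\!\left(x,\frac{x}{\e}\right),
\end{equation*}
with $u_1$ given by \eqref{u1=sum}. Expanding $-\dv(A(x/\e)\nabla w_\e)$ in powers of $\e$ by means of the derivation rule \eqref{derivation rule}, the $\e^{-2}$ term is absent because $u_0=u(x)$ does not depend on $y$ (cf. \eqref{u0==u}), the $\e^{-1}$ contributions cancel by \eqref{e-1}, and the $\e^{0}$ contributions sum exactly to $f$ by \eqref{e-0}, so that
\begin{equation*}
-\dv\!\left(A\!\left(\frac{x}{\e}\right)\nabla w_\e\right)=f+\e\,g_\e\qquad\text{in }\Omega,
\end{equation*}
where $g_\e$ collects the remaining $\e^{1}$- and $\e^{2}$-order terms, essentially $\dv_x(A(\nabla_x u_1+\nabla_y u_2))$ and $\e\,\dv_x(A\nabla_x u_2)$ evaluated at $(x,x/\e)$. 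Under the smoothness assumptions on the data — which make $u$, hence $u_1$ and $u_2$, smooth in $x$ and allow the $y$-differentiations of $A$ that occur — the family $(g_\e)$ is bounded in $L^2(\Omega)$ uniformly in $\e$.

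The real obstruction, and the source of the $\e^{1/2}$ rate rather than $\e$, is that $w_\e$ does not satisfy the Dirichlet condition: on $\partial\Omega$ it equals $\e u_1(x,x/\e)+\e^2 u_2(x,x/\e)$, which is $O(\e)$ pointwise but whose $H^1$-energy cannot be controlled better than $O(\e^{1/2})$. I would repair this with a boundary-layer corrector $\theta_\e\in H^1(\Omega)$ solving $-\dv(A(x/\e)\nabla\theta_\e)=0$ in $\Omega$, $\theta_\e=\e u_1(\cdot,\cdot/\e)+\e^2 u_2(\cdot,\cdot/\e)$ on $\partial\Omega$; an energy estimate localized in an $\e$-tube around $\partial\Omega$ (equivalently, multiplying the correctors by a cut-off that vanishes on $\partial\Omega$ and has gradient of order $1/\e$ there) gives $\norm{\theta_\e}_{\hi}\le C\e^{1/2}$. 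Then $\xi_\e:=u_\e-w_\e+\theta_\e$ belongs to $H^1_0(\Omega)$ and satisfies $-\dv(A(x/\e)\nabla\xi_\e)=-\e g_\e$; testing against $\xi_\e$, using $A(y)\zeta\cdot\zeta\ge\lambda|\zeta|^2$ and the Poincar\'e inequality, yields $\norm{\xi_\e}_{\hi}\le C\e\norm{g_\e}_{L^2(\Omega)}\le C\e$. Since also $\norm{\e^2 u_2(\cdot,\cdot/\e)}_{\hi}\le C\e$ (the worst term being $\e\cdot\e^{-1}\nabla_y u_2$, of order $\e$ in $L^2$), the triangle inequality gives $\norm{r_\e}_{\hi}=\norm{u_\e-u-\e u_1(\cdot,\cdot/\e)}_{\hi}\le\norm{\xi_\e}_{\hi}+\norm{\theta_\e}_{\hi}+\norm{\e^2 u_2(\cdot,\cdot/\e)}_{\hi}\le C\e^{1/2}$, and then $\norm{u_\e-u}_{L^2(\Omega)}\le\norm{r_\e}_{L^2(\Omega)}+\e\norm{u_1(\cdot,\cdot/\e)}_{L^2(\Omega)}\le\norm{r_\e}_{\hi}+C\e\le C\e^{1/2}$.

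The step I expect to be the main obstacle is the boundary-layer estimate $\norm{\theta_\e}_{\hi}\le C\e^{1/2}$: away from $\partial\Omega$ the two-scale ansatz is accurate to order $\e$, and the entire loss down to $\e^{1/2}$ is concentrated in the thin layer where the Dirichlet condition has to be restored — obtaining the sharp exponent there is the one genuinely nontrivial estimate. A secondary technical point is making $g_\e$ truly bounded in $L^2(\Omega)$, which needs some regularity of $A$ in order to differentiate $A\nabla_y u_2$ in $y$; with merely $A\in L^\infty$ one would instead keep that residual as an $\e$-small oscillating term bounded in $H^{-1}$ and pair it directly with $\xi_\e$ in the energy estimate.
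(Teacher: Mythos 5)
Your proposal is correct: it is precisely the classical energy-method justification (two-corrector ansatz $w_\e=u+\e u_1+\e^2u_2$, cancellation of the $\e^{-2}$, $\e^{-1}$, $\e^0$ terms via the cell problems and the homogenized equation, a boundary-layer cut-off responsible for the $\e^{1/2}$ loss, and coercivity plus Poincar\'e to close) that the paper itself omits and delegates to the cited references \cite{BLP,JKO}. Your identification of the boundary layer as the sole source of the $\e^{1/2}$ rate, and your closing remark that with $A\in L^\infty$ only the term $\dv_y(A\nabla_x u_2)$ must be kept as an oscillating $H^{-1}$ residual (while the cut-off estimate needs $\nabla_y w_i\in L^\infty(Y)$, hence the ``mild smoothness'' hypothesis), are both accurate.
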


\begin{remark}[Rigorous justification]
 Employing a formal asymptotic expansion is a very useful method. 
 However we don't know a priori whether the solution of the microscopic equation can be expanded as \eqref{expansion for sol}. 
 We refer the interested reader to Tartar's method \cite{MT} and the two-scale convergence method\cite{Ng, Allaire6} for a rigorous mathematical justification. 
\end{remark}

\begin{remark}[Homogenized coefficients $A^{\ast}$]\label{a_hom}
In dimension $N=1$, the explicit formula for $A^{\ast}$ is the so-called harmonic mean.
In dimension $N\ge 2$, there is no explicit formula for $A^\ast$, which has to be computed
numerically.
Nevertheless, one can obtain explicit bounds on $A^{\ast}$.
\end{remark}

\begin{remark}
Homogenization works for non-periodic media too (H-convergence or G-convergence).
\end{remark}


\begin{remark}[Asymptotic expansions for the stress]
We assume that
\begin{equation*}
\displaystyle
u_{\e}(x)=\sum_{i=0}^{+\infty}\e^iu_i\left(x,\frac{x}{\e}\right),\ \sigma_{\e}(x)=A\left(\frac{x}{\e}\right)\nabla u_{\e}(x)=\sum_{i=0}^{+\infty}\e^{i}\sigma_i\left(x,\frac{x}{\e}\right), 
\end{equation*}
where $\sigma_i(x,y)$ is a function of the two variables $x$ and $y$, periodic in $y$ with period $Y=(0, 1)^N$. Plugging this series in the equation \eqref{micro eq}, we find
\begin{equation*}
-\dv_y\sigma_0 =0\ ,\ \dv_x\sigma_0-\dv_y\sigma_1=f.
\end{equation*}
On the other hand, 
\begin{equation*}
\sigma_0(x,y)=A(y)(\nabla_xu(x)+\nabla_yu_1(x,y))
\end{equation*}
and  
\begin{equation*}
\sigma_0(x,y)=A^{\ast}\nabla_xu(x)+\tau(x,y)\ \text{with}\ \int_{Y}\tau \,dy=0.
\end{equation*}
One can prove that $\tau$ is the solution of the dual cell problem. 
\end{remark}


\section{Composite materials}

Composite materials are ubiquitous in engineering, mechanics and physics and 
their effective properties can be understood through homogenization theory \cite{Allaire1,cherkaev,milton}.
In what follows, we identify a composite material by its homogenized tensor $A^{\ast}$. We restrict ourselves to two-phase composites.
We mix two isotropic constituents $A(y)=\alpha\chi(y)+\beta(1-\chi(y))$, where $\chi:Y\to \{0,1\}$ is a characteristic function. 
Let $\displaystyle\theta=\int_Y \chi(y)dy$ be the volume fraction of phase $\alpha$ and $(1-\theta)$ be that of phase $\beta$. 

We focus on the characterization of $G_{\theta}$ defined as follows: 
\begin{definition}[The set of all homogenized tensors $G_{\theta}$] \label{def Gtheta}
Let $G_{\theta}$ be the set of all homogenized tensors $A^{\ast}$ obtained
by homogenization of the two phases $\alpha$ and $\beta$ in proportions $\theta$ and $(1-\theta)$.
\end{definition}

\begin{remark}
Of course, we have $G_0=\{\beta {\rm Id}\}$ and $G_1=\{\alpha {\rm Id}\}$.
However, $G_{\theta}$ is usually a (very) large set of tensors (corresponding to different choices of $\chi(y)$).
\end{remark}

\subsection{Lamination for two phase composites}
For two phase composites, the density $\theta(x)$, as well as the homogenized tensor $A^{\ast}(x)$, depends on the position $x$. 
For two-phase mixtures, an explicit characterization of $G_{\theta}$ is possible by the variational principle of Hashin and Shtrikman \cite{HS}.
We make the following assumptions:
\begin{itemize}
\item[(i)]
Linear model of conduction or membrane stiffness (it is more delicate for linearized elasticity and very few results are known in the non-linear case).
\item[(ii)]
Perfect interfaces between the phases (continuity of both displacement and normal stress), no possible effects of delamination or debonding.
\end{itemize}
In dimension one, the cell problem \eqref{cell pb} reads:
\begin{equation*}
\left\{
\begin{aligned}
&-(A(y)(1+w'(y)))'=0\ &&\text{ in } [0,1),\\
&y\mapsto w(y)\ && \ 1\text{-periodic}.
\end{aligned}
\right.
\end{equation*}
The solution computed explicitly as follows:
\begin{equation*}
\displaystyle w(y)=-y+\int_{0}^{y}\frac{C_1}{A(t)}dt+ C_2\ \text{with}\ 
C_1=\left(\int_0^1\frac{1}{A(y)}dy\right)^{-1}.
\end{equation*}
By \eqref{A*=?}, we know that $\displaystyle A^{\ast}=\int_{0}^{1}A(y)(1+w'(y))^2\,dy$, 
which yields the harmonic mean of $A(y)$:
\begin{equation*}
A^{\ast}=\left(\int_0^1 \frac{1}{A(y)}dy\right)^{-1}.
\end{equation*}
Therefore, if we choose $A(y)=\alpha\chi(y)+\beta(1-\chi(y))$,
then homogenized tensor of any two-phase material is just
\begin{equation*}
A^{\ast}=\left(\frac{\theta}{\alpha}+\frac{1-\theta}{\beta}\right)^{-1}.
\end{equation*}
This formula tells us that, in one dimension, the homogenized tensor depends on the characteristic function $\chi$ by means of its volume fraction $\theta$ only. 
\begin{figure}[h]
\centering
\includegraphics[width=0.5\linewidth,center]{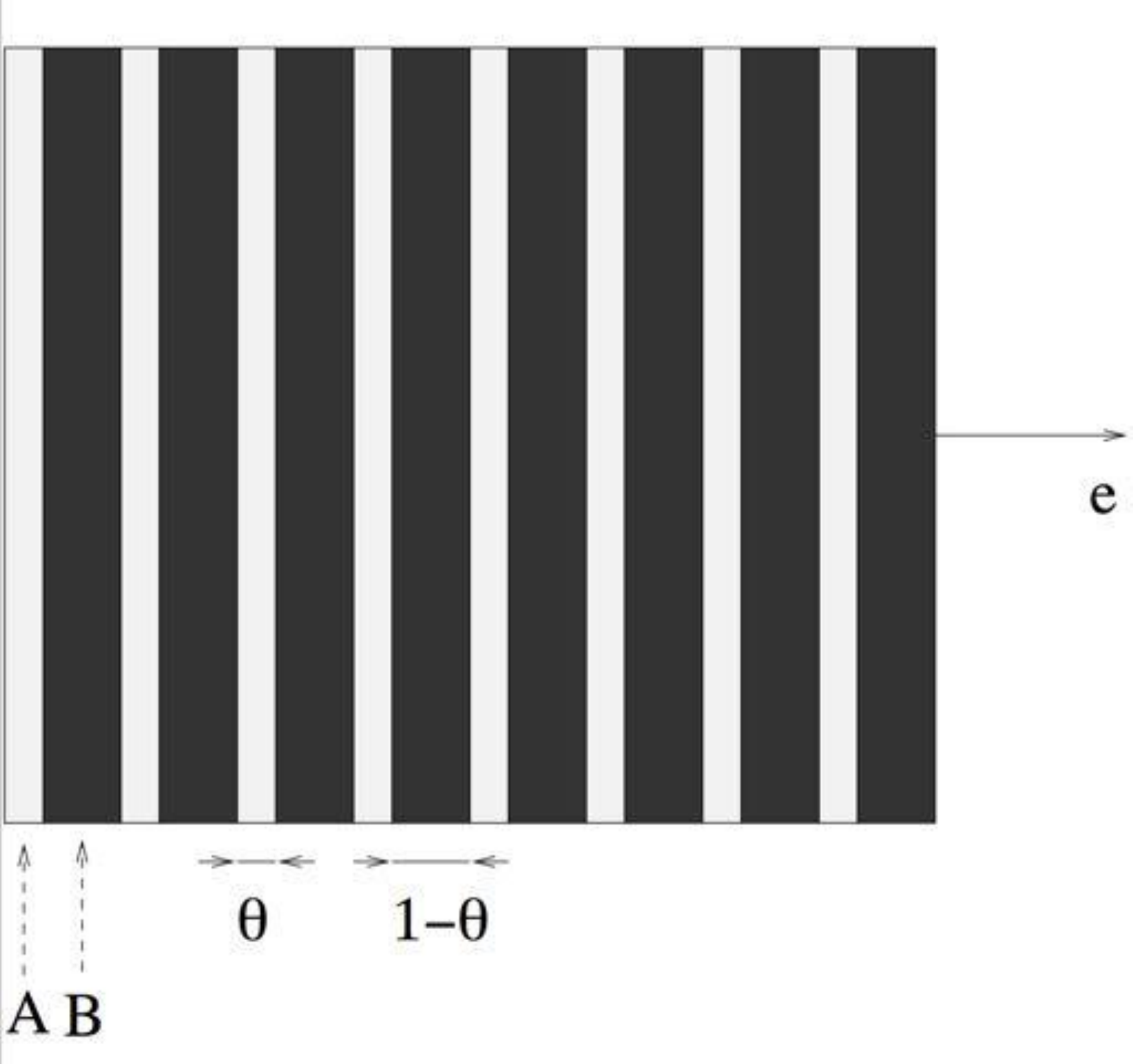}
\caption{Simple laminated composites.} 
\label{Simple laminated composites}
\end{figure}

In dimension $N \ge 2$, we cannot express $A^{\ast}$ explicitly in general
as mentioned in Remark \ref{a_hom}.
However 
it is possible under the following special case. 
We consider parallel layers of two isotropic phases $\alpha$ and $\beta$, orthogonal to the direction $e_1$.
Assume that $A^{\e}$ depends only on $y_1$. Let
\begin{equation*}
\chi(y_1)=
\left\{
\begin{aligned}
&1 &&\text{ if } 0<y_1<\theta \\
&0 &&\text{ if } \theta<y_1<1 \\
\end{aligned}
\right.
\ 
\text{ with }
\ 
\theta=\displaystyle\int_{Y}\chi\, dy.
\end{equation*}
We denote by $A^{\ast}$ the homogenized tensor of 
$A(y) = \left(\alpha\chi(y_1) + \beta(1-\chi(y_1))\right)I$.
Then we obtain the following lemma. This lemma is a simple case of the more general Lemma \ref{lem simple laminate}. 
\begin{lemma} \label{lem:1-laminate}
Define $\displaystyle\lambda_{\theta}^{-}=\left(\frac{\theta}{\alpha}+\frac{1-\theta}{\beta}\right)^{-1}$ and $\lambda_{\theta}^{+}=\theta\alpha+(1-\theta)\beta$. Then we have
\begin{equation}\label{layerling}
A^{*}=\displaystyle
\begin{pmatrix}
\lambda_{\theta}^{-}&&&&0 \\
&\lambda_{\theta}^{+}\\
&&\ddots&\\
0&&&&\lambda_{\theta}^{+}
\end{pmatrix}.
\end{equation}
\end{lemma}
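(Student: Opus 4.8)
The plan is to solve the cell problems \eqref{cell pb} explicitly, exploiting that $A(y)=a(y_1)I$ with $a(y_1):=\alpha\chi(y_1)+\beta(1-\chi(y_1))$ depending on $y_1$ only, and then to substitute the resulting correctors into the formula \eqref{A*=?} for $A^*$. The starting point is that each cell problem \eqref{cell pb} is well posed in $H^1_\#(Y)/\RR$, i.e. it has a solution $w_i$ that is unique up to an additive constant; this follows from the Lax--Milgram argument used in the proof of Lemma \ref{fred}. It is precisely this uniqueness that entitles me to determine the correctors by an ansatz.

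First I would treat the case $i\ne 1$ and claim that $w_i\equiv 0$. Indeed, with $w_i=0$ the quantity inside the divergence in \eqref{cell pb} is $A(y)e_i=a(y_1)e_i$, and $\dv_y\big(a(y_1)e_i\big)=\partial_{y_i}a(y_1)=0$ because $a$ does not depend on $y_i$ when $i\ne 1$; hence $w_i=0$ solves \eqref{cell pb}, and by uniqueness it is the corrector. For $i=1$ I would look for a solution depending only on $y_1$. Then \eqref{cell pb} reduces to the one-dimensional equation $\big(a(y_1)(1+w_1'(y_1))\big)'=0$, so $a(y_1)(1+w_1'(y_1))=C_1$ for some constant $C_1$. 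The $1$-periodicity of $w_1$ forces $\int_0^1 w_1'(y_1)\,dy_1=0$, i.e. $\int_0^1\big(C_1/a(y_1)-1\big)\,dy_1=0$, which determines $C_1=\big(\int_0^1 a(y_1)^{-1}\,dy_1\big)^{-1}=\lambda_\theta^-$ (using $\int_0^1 a(y_1)^{-1}\,dy_1=\theta/\alpha+(1-\theta)/\beta$). By uniqueness this is the corrector $w_1$ (up to a constant, irrelevant for $A^*$).

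It then remains to evaluate $A^*_{ji}=\int_Y A(y)(e_i+\nabla_y w_i)\cdot e_j\,dy$. Since $\nabla_y w_i=w_i'(y_1)e_1$, which vanishes when $i\ne 1$, for $i\ne 1$ we get $A^*_{ji}=\big(\int_0^1 a(y_1)\,dy_1\big)(e_i\cdot e_j)=\lambda_\theta^+\,\delta_{ij}$, because $\int_0^1 a(y_1)\,dy_1=\theta\alpha+(1-\theta)\beta=\lambda_\theta^+$; and for $i=1$ we get $A^*_{j1}=\int_0^1 a(y_1)(1+w_1'(y_1))(e_1\cdot e_j)\,dy_1=\delta_{1j}\int_0^1 C_1\,dy_1=\lambda_\theta^-\,\delta_{1j}$. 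Collecting these entries produces exactly the diagonal matrix \eqref{layerling}.

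The computation itself is routine; the point that needs care is the logical order. One must invoke the uniqueness of the corrector (up to an additive constant) from Lemma \ref{fred} \emph{before} exhibiting the $y_1$-dependent (resp.\ vanishing) solutions, since it is uniqueness that guarantees these are the actual correctors entering \eqref{A*=?}. The only other thing worth checking explicitly is that the off-diagonal entries vanish: $A^*_{j1}=0$ for $j\ne 1$ because $\nabla_y w_1$ is parallel to $e_1$, and $A^*_{ji}=0$ for $i\ne 1$, $j\ne i$ because $w_i=0$ and $e_i\cdot e_j=0$ — this is what makes $A^*$ genuinely diagonal rather than merely block diagonal.
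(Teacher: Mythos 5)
Your proof is correct. The paper itself does not give a standalone proof of this lemma: it presents it as a special case of Lemma \ref{lem simple laminate}, whose proof works for general (possibly anisotropic) phases $A$, $B$ by postulating a corrector whose gradient is piecewise constant and enforcing continuity of the displacement and of the normal flux across the interfaces; specializing formula \eqref{slform} to $A=\alpha\,{\rm Id}$, $B=\beta\,{\rm Id}$ then yields \eqref{layerling} after a short algebraic simplification. Your route is more elementary and more direct for this isotropic special case: you observe that $A(y)=a(y_1){\rm Id}$ makes $\dv_y\big(a(y_1)e_i\big)=0$ for $i\ne1$, so those correctors vanish outright, and the $i=1$ cell problem collapses to the one-dimensional computation already carried out earlier in the section (yielding the harmonic mean). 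What your argument buys is transparency — the harmonic/arithmetic dichotomy between the lamination direction and the tangential directions is visible immediately from the correctors — and your emphasis on invoking uniqueness (from the Lax--Milgram argument of Lemma \ref{fred}) \emph{before} exhibiting the ansatz solutions is exactly the right logical order. What it gives up is generality: the vanishing of $w_i$ for $i\ne1$ relies on the phases being multiples of the identity (more precisely on the entries $A_{1i}$ being the same in both phases), so this shortcut does not extend to Lemma \ref{lem simple laminate}, which is why the paper proves the general statement instead. The only cosmetic remark is that, since $a$ is discontinuous, the identities $\dv_y\big(a(y_1)e_i\big)=0$ and $\big(a(y_1)(1+w_1')\big)'=0$ should be read in the weak (distributional) sense — your candidates do satisfy the variational formulation, the second one precisely because the flux $a(1+w_1')=C_1$ is constant across the interfaces — but this does not affect the validity of the argument.
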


\begin{remark}[Interpretation (resistance $=$ inverse of conductivity)]
In the context of electrical conductivity, the harmonic mean is the effective conductivity of a mixture of conductors placed in series (in the direction $e_1$), while the arithmetic mean is the effective conductivity of a mixture of conductors placed in parallel 
( in any direction orthogonal to $e_1$). 
\end{remark}

\begin{lemma}[Simple laminate of two non-isotropic phases]\label{lem simple laminate}
The homogenized tensor $A^{\ast}$ of a simple laminate made of $A$
and $B$ in proportions $\theta$ and $(1-\theta)$ in the direction $e_1$ is
\begin{equation}\label{slform}
A^{\ast}=\theta A+(1-\theta)B-\displaystyle\frac{\theta(1-\theta)(A-B)e_1\otimes(A-B)^{t}e_1}{(1-\theta)Ae_1\cdot e_1+\theta Be_1\cdot e_1}.
\end{equation}
Moreover, if we assume that $(A-B)$ is invertible, then this formula is equivalent to
\begin{equation}\label{seq laminate form}
\theta(A^{\ast}-B)^{-1}=(A-B)^{-1}+\frac{(1-\theta)}{Be_1\cdot e_1}e_1\otimes e_1.
\end{equation}
\end{lemma}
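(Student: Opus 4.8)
The plan is to solve the cell problems \eqref{cell pb} explicitly for the laminated coefficient $A(y)=\chi(y_1)A+(1-\chi(y_1))B$ and then substitute into formula \eqref{A*=?}. Since this coefficient depends on the single variable $y_1$, I would look for correctors $w_i=w_i(y_1)$ depending on $y_1$ only; by uniqueness of the solution of the cell problem in $H^1_{\#}(Y)/\RR$ (cf.\ Lemma \ref{fred}, applied under the ellipticity assumption that makes the cell problems well posed), such a one-dimensional function must \emph{be} the corrector. Writing $\nabla_y w_i=w_i'(y_1)e_1$, the equation $-\dv_y\big(A(y_1)(e_i+\nabla_y w_i)\big)=0$ asserts precisely that the first component of the flux $A(y_1)(e_i+w_i'(y_1)e_1)$ is independent of $y_1$, say equal to a constant $\sigma_i$. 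Solving, $w_i'(y_1)=\big(\sigma_i-A(y_1)e_i\cdot e_1\big)/\big(A(y_1)e_1\cdot e_1\big)$, which is piecewise constant: $w_i'\equiv\gamma^i_A$ on $(0,\theta)$ and $w_i'\equiv\gamma^i_B$ on $(\theta,1)$.

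Next I would pin down the constants. Subtracting the two defining relations $\gamma^i_A\,(Ae_1\cdot e_1)=\sigma_i-Ae_i\cdot e_1$ and $\gamma^i_B\,(Be_1\cdot e_1)=\sigma_i-Be_i\cdot e_1$, and using the zero-mean (periodicity) condition $\theta\gamma^i_A+(1-\theta)\gamma^i_B=0$ to eliminate $\sigma_i$ and $\gamma^i_B$, one gets
\begin{equation*}
\gamma^i_A=\frac{(1-\theta)\,(B-A)e_i\cdot e_1}{(1-\theta)\,Ae_1\cdot e_1+\theta\,Be_1\cdot e_1}.
\end{equation*}
Since $e_i+\nabla_y w_i$ equals the constant vector $e_i+\gamma^i_Ae_1$ on phase $A$ and $e_i+\gamma^i_Be_1$ on phase $B$, formula \eqref{A*=?} becomes
\begin{equation*}
A^*_{ji}=\big(\theta A+(1-\theta)B\big)e_i\cdot e_j+\big(\theta\gamma^i_A Ae_1+(1-\theta)\gamma^i_B Be_1\big)\cdot e_j,
\end{equation*}
and using $(1-\theta)\gamma^i_B=-\theta\gamma^i_A$ the second term collapses to $\theta\gamma^i_A\,(A-B)e_1\cdot e_j$. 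Substituting the value of $\gamma^i_A$ and rewriting $(B-A)e_i\cdot e_1=-\big((A-B)^t e_1\big)\cdot e_i$ identifies this as the $(j,i)$ entry of $-\theta(1-\theta)(A-B)e_1\otimes(A-B)^t e_1$ divided by the scalar $(1-\theta)Ae_1\cdot e_1+\theta Be_1\cdot e_1$; this is exactly \eqref{slform}.

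For the equivalent identity \eqref{seq laminate form} (assuming $A-B$ invertible), I would start from \eqref{slform}, factor the correction on the left via $(A-B)e_1\otimes(A-B)^t e_1=(A-B)\big(e_1\otimes(A-B)^t e_1\big)$, so that
\begin{equation*}
A^*-B=\theta(A-B)\left(\id-\frac{1-\theta}{(1-\theta)Ae_1\cdot e_1+\theta Be_1\cdot e_1}\,e_1\otimes(A-B)^t e_1\right),
\end{equation*}
and then apply the Sherman--Morrison formula $(\id-u\otimes v)^{-1}=\id+(u\otimes v)/(1-v\cdot u)$. A short computation gives $1-v\cdot u=(Be_1\cdot e_1)/\big((1-\theta)Ae_1\cdot e_1+\theta Be_1\cdot e_1\big)$, and after right-multiplying the inverse by $(A-B)^{-1}$ and using $(e_1\otimes w)M=e_1\otimes M^t w$ the dyadic term telescopes, $\big(e_1\otimes(A-B)^t e_1\big)(A-B)^{-1}=e_1\otimes e_1$, leaving $\theta(A^*-B)^{-1}=(A-B)^{-1}+\frac{1-\theta}{Be_1\cdot e_1}\,e_1\otimes e_1$.

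I expect the only genuine difficulty to be the linear-algebra bookkeeping: keeping the transposes and the order of factors in the rank-one (dyadic) terms correct when $A$ and $B$ are not symmetric, and carrying out the Sherman--Morrison inversion without slips. The analytic content --- well-posedness of the cell problems and their reduction to a scalar ODE in $y_1$ --- is immediate from Lemma \ref{fred} and the one-dimensional structure of $A(y_1)$.
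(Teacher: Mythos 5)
Your proposal is correct and follows essentially the same route as the paper's proof: both exploit the one-dimensional laminate structure to show that the corrector gradients are piecewise constant and parallel to $e_1$ (you via constancy of the first flux component in the reduced ODE, the paper via an ansatz validated by continuity of the potential and of the normal flux across the interface), both use the periodicity/zero-mean condition to close the system, and both obtain \eqref{seq laminate form} from \eqref{slform} by a Sherman--Morrison rank-one inversion. Your bookkeeping of the constants $\gamma_A^i$, of the dyadic term, and of the factor $1-v\cdot u = (Be_1\cdot e_1)/\bigl((1-\theta)Ae_1\cdot e_1+\theta Be_1\cdot e_1\bigr)$ all check out.
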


\begin{proof}
Recall that by definition \eqref{A*=?}
\begin{equation*}
\displaystyle A_{ji}^{\ast}=\int_Y A(y)(e_i+\nabla_y w_i)\cdot e_j \, dy
=
\int_{Y}A(y)(e_i+\nabla_yw_i(y))\cdot (e_j+\nabla_y w_j(y)) \, dy,
\end{equation*}
namely
\begin{equation*}
\displaystyle A^{\ast}e_i=\int_{Y}A(y)(e_i+\nabla_y w_i)\,dy.
\end{equation*}
Consequently, for any $\xi\in\RR^N$, we have
\begin{equation}\label{homo tensor with xi}
A^{\ast}\xi=\displaystyle\int_{Y}A(y)(\xi+\nabla_{y}w_{\xi})\,dy,
\end{equation}
where $\displaystyle w_{\xi}(y)=\sum_{i=1}^N\xi_iw_i(y)$ is the solution of 
\begin{equation*}
\left\{
\begin{aligned}
&-\dv_y \left( A(y)(\xi+\nabla w_{\xi}(y)) \right)=0\ &&\text{ in } Y,\\
&y\mapsto w_{\xi}(y)\ && \ Y\text{-periodic}.
\end{aligned}
\right.
\end{equation*}

Defining $u(y) = \xi\cdot y + w_{\xi}(y)$, we seek a solution $u$ such that the gradient of $u$ is constant in each phase, 
\begin{equation*}
\nabla u(y) = a\chi(y_1)+b\left( 1-\chi(y_1) \right). 
\end{equation*}
Thus, we have 
\begin{equation}\label{eq of u(y)}
u(y)=\chi(y_1) (c_a + a \cdot y) + (1-\chi(y_1))( c_b + b\cdot y), 
\end{equation}
where $c_a$ and $c_b$ are constant vectors. 

Let $\Gamma$ be the interface between the two phases. 
By continuity of \eqref{eq of u(y)} through the interface $\Gamma$, we have
\begin{equation}\label{const through gamma1}
c_a + a \cdot y = c_b + b\cdot y. 
\end{equation}
Since $c_a$ and $c_b$ are constant vectors, by \eqref{const through gamma1} we have 
\begin{equation*}
(a-b)\cdot x=(a-b)\cdot y\quad \forall x,y\in\Gamma.
\end{equation*}
Since $(x-y)$ is orthogonal to $ e_1$, there exists a real number $t\in\RR$ such that $b-a=te_1$. 

Moreover, by continuity of the flux $A(y) \nabla u \cdot n$ through the interface $\Gamma$, we have 
\begin{equation}\label{const through gamma2}
Aa\cdot e_1=Bb\cdot e_1. 
\end{equation}
In particular, it implies -$\dv (A(y)\nabla u)=0$ in the weak sense. 

Since $b-a=te_1$, \eqref{const through gamma2} yields the following value for $t$:
\[
t=\frac{(A-B)a\cdot e_1}{Be_1\cdot e_1}.
\]
Since $w_{\xi}$ is periodic, it satisfies $\displaystyle\int_{Y}\nabla w_{\xi}\ dy=0$, thus by the definition of $u$ we have 
\begin{equation*}
\displaystyle
\int_{Y}\nabla u\ dy=\theta a+(1-\theta)b=\xi.
\end{equation*}
On the other hand, by \eqref{homo tensor with xi} and the definition of $u$ we have
\begin{equation*}
\displaystyle
A^{\ast}\xi=\int_{Y}A(y)(\xi+\nabla w_{\xi}) \, dy =\int_{Y}A(y)\nabla u \, dy =\theta Aa+(1-\theta)Bb.
\end{equation*}
Thus we obtain 
\begin{equation*}
A^{\ast}(\theta a+(1-\theta)b)=\theta Aa+(1-\theta)Bb.
\end{equation*}
Since $b=a+te_1$ with $\displaystyle t = \frac{(A-B)a\cdot e_1}{Be_1\cdot e_1}$, we find
\begin{equation*}
\displaystyle
a=\xi-(1-\theta)\frac{(A-B)\xi\cdot e_1}{(1-\theta)Ae_1\cdot e_1+\theta Be_1\cdot e_1}e_1.
\end{equation*}
Then, a simple computation gives
\begin{equation*}
\displaystyle
A^{\ast}\xi=\theta A\xi+(1-\theta)B\xi-\frac{\theta(1-\theta)(A-B)\xi\cdot e_1}{(1-\theta)Ae_1\cdot e_1+\theta Be_1\cdot e_1}(A-B)e_1.
\end{equation*}
The other formula is a consequence of the following fact: if $M$ is invertible, then
\begin{equation*}
(M+c(Me)\otimes(M^te))^{-1}=M^{-1}-\displaystyle\frac{c}{1+c(Me\cdot e)}e\otimes e, 
\end{equation*}
where $c \in \RR$ and $e$ is a unit vector in $\RR^N$ which determines the direction of the lamination. 
\end{proof}

The composite $A^{\ast}$ is said to be a single lamination in the direction $e_1$ of the two phases $A$ and $B$ in proportions $\theta$ and $(1-\theta)$ (see Figure \ref{Simple laminated composites}). By varying the proportion $\theta$ and the direction $e_1$, we obtain a whole family of composite materials. This family can still be enlarged by laminating again these simple laminates.
Then we laminate again the preceding composite with always the same phase $B$. 

\begin{figure}[h]
\centering
\includegraphics[width=0.6\linewidth,center]{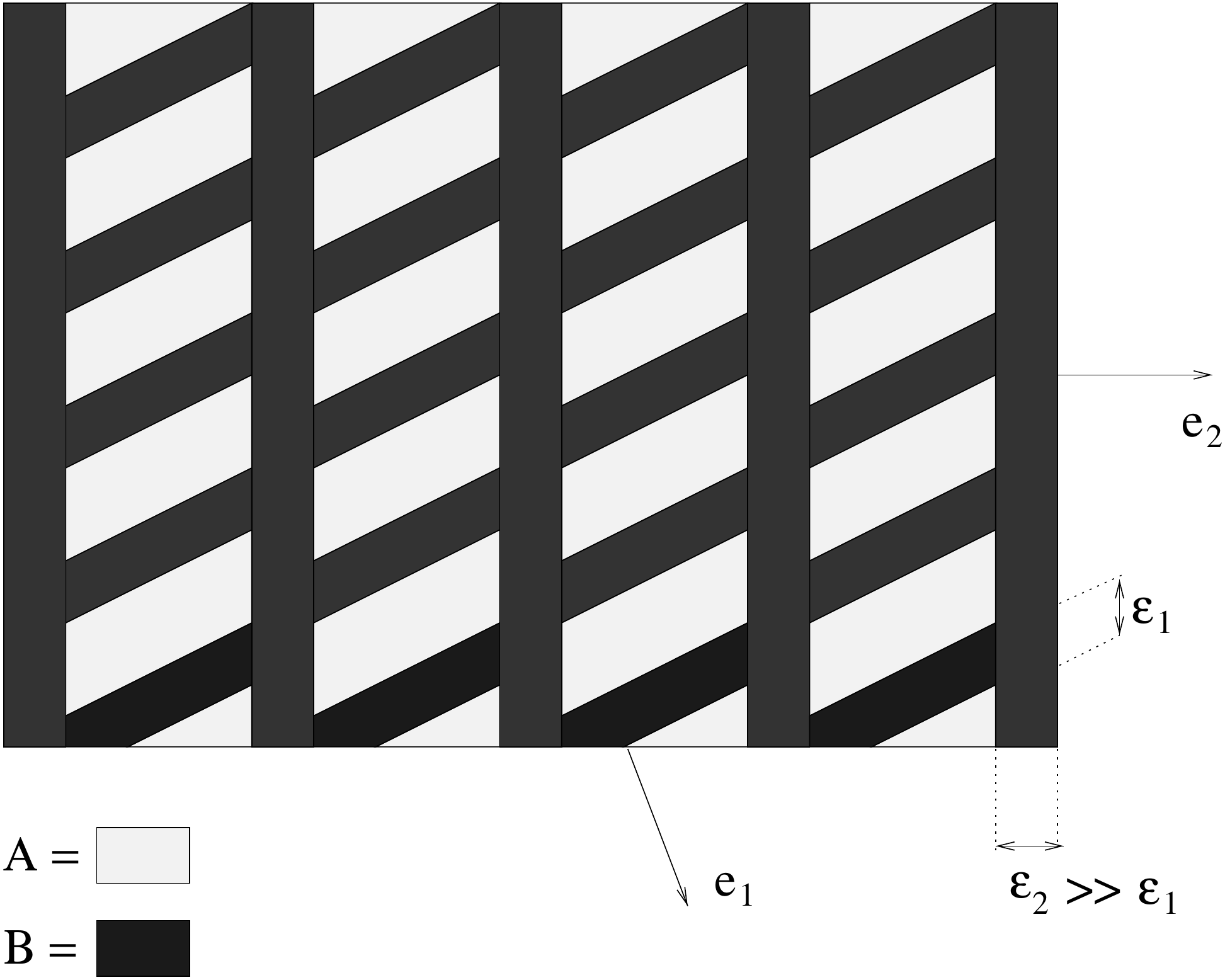}
\caption{A sequential laminate composite.} 
\label{lam2}
\end{figure}

A sequential laminate is obtained by an iterative process of lamination where the previous laminate is laminated again with a single pure phase (always the same one). By using the special form of \eqref{seq laminate form} (which does not deliver directly the value of $A^{\ast}$, contrary to \eqref{slform}), the iterative or sequential laminate can be explicitly characterized. 
Let $(e_{i})_{1\le i\le p}$ be a collection of unit vectors and $(\theta_{i})_{1\le i\le p}$ be proportions in $[0,1]$. By \eqref{seq laminate form} a simple laminate $A_{1}^{\ast}$ of $A$ and $B$ in proportions $\theta$, $(1-\theta)$ is 
\begin{equation*}
\theta_1(A^{\ast}_1-B)^{-1}=(A-B)^{-1}+\frac{(1-\theta_1)}{Be_1\cdot e_1}e_1\otimes e_1.
\end{equation*}

This simple laminate $A_{1}^{\ast}$ can again be laminated with phase $B$, in direction $e_2$ and in proportions $\theta_2$, $(1-\theta_2)$ respectively, to obtain a new laminate denoted by $A_2^{\ast}$. By induction, we obtain $A_p^{\ast}$ by lamination of $A_{p-1}^{\ast}$ and $B$, in direction $e_p$ and in proportions $\theta_p$, $(1-\theta_p)$, respectively. Then the homogenized tensor $A_{p}^{\ast}$ is 
\begin{equation}\label{formula for Ap}
\theta_p(A^{\ast}_p-B)^{-1}=(A_{p-1}-B)^{-1}+\frac{(1-\theta_p)}{Be_p\cdot e_p}e_p\otimes e_p.
\end{equation}
Replacing $(A_{p-1}^{\ast}-B)^{-1}$ in \eqref{formula for Ap} by the similar formula defining $(A_{p-2}^{\ast}-B)^{-1}$, and so on up to $A_{0}^{\ast}\equiv A$, we obtain a formula of the same type as \eqref{seq laminate form}, namely,
\begin{equation}\label{rank-p form}
\displaystyle
\left(
\prod_{j=1}^{p}\theta_{j}\right)
(A^{\ast}_p-B)^{-1}=(A-B)^{-1}+\sum_{i=1}^p\left(
(1-\theta_i)\prod_{j=1}^{i-1}\theta_j
\right)
\frac{e_i\otimes e_i}{Be_i\cdot e_i}.
\end{equation}

We remark that we always laminate an intermediate laminate with the same phase $B$. In other words, the other phase $A$ is coated by several layers of $B$. 
One can say that $B$ plays the role of a matrix phase, and $A$ plays the role of a core phase. Globally, $A^{\ast}$ can be seen as a mixture of $A$ and $B$ in different layers having a large separation of scales (see Figure \ref{lam2}).

Let us define rank-$p$ sequential laminate with matrix $B$ and inclusion $A$. 

\begin{lemma}[rank-$p$ sequential laminate]\label{lem seq lam}
If we laminate $p$ times with $B$, we obtain a rank-$p$ sequential laminate with matrix $B$ and inclusion $A$, in proportions $(1-\theta)$ and $\theta$, is defined by 
\begin{equation*}
\theta(A^{\ast}_p-B)^{-1}=(A-B)^{-1}+(1-\theta)\displaystyle\sum_{i=1}^{p}m_i\frac{e_i\otimes e_i}{Be_i\cdot e_i}
\end{equation*}
with $\displaystyle \sum_{i=1}^{p}m_i =1$ and $m_i\ge 0$, $1\le i\le p$.
\end{lemma}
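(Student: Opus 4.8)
The plan is to read off the asserted formula directly from the already-established expression \eqref{rank-p form}, the only new ingredients being a correct identification of the total volume fractions and an elementary telescoping identity.

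First I would determine the overall proportion of the core phase $A$. At the first lamination step $A$ occupies the fraction $\theta_1$ of the cell and $B$ the fraction $1-\theta_1$; at each subsequent step the current laminate, which contains \emph{all} of the phase $A$ accumulated so far, is mixed with pure $B$ in proportions $\theta_i$ and $1-\theta_i$, so the fraction occupied by $A$ gets multiplied by $\theta_i$. Hence after $p$ steps the total proportion of $A$ is $\theta=\prod_{j=1}^p\theta_j$ and that of $B$ is $1-\theta$. If $\theta=1$ then every $\theta_i=1$, no $B$ was ever introduced, $A^{\ast}_p=A$, and the statement is trivial; so I may assume $\theta<1$.

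Next I would set $m_i=\bigl((1-\theta_i)\prod_{j=1}^{i-1}\theta_j\bigr)/(1-\theta)$ for $1\le i\le p$. Substituting $\theta=\prod_{j=1}^p\theta_j$ and this definition of $m_i$ into \eqref{rank-p form} gives at once
\begin{equation*}
\theta(A^{\ast}_p-B)^{-1}=(A-B)^{-1}+(1-\theta)\sum_{i=1}^{p}m_i\frac{e_i\otimes e_i}{Be_i\cdot e_i},
\end{equation*}
which is the claimed identity, so it only remains to check the two properties of the $m_i$. Nonnegativity $m_i\ge0$ is immediate since $\theta_i\in[0,1]$ and $1-\theta>0$. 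For the normalization I would use the telescoping observation $(1-\theta_i)\prod_{j=1}^{i-1}\theta_j=\prod_{j=1}^{i-1}\theta_j-\prod_{j=1}^{i}\theta_j$, whose sum over $i=1,\dots,p$ collapses to $1-\prod_{j=1}^p\theta_j=1-\theta$, giving $\sum_{i=1}^p m_i=1$.

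There is no real obstacle here: the entire content is the bookkeeping that identifies $\prod_j\theta_j$ with the volume fraction of the inclusion phase, plus the one-line telescoping sum. I would close with the remark that if two lamination directions $e_i$ coincide the corresponding rank-one terms can be merged, which reduces the number of summands without changing the form of the formula.
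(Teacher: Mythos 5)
Your proposal is correct and follows essentially the same route as the paper: both start from the iterated formula \eqref{rank-p form} and perform the change of variables $\theta=\prod_{j=1}^p\theta_j$, $(1-\theta)m_i=(1-\theta_i)\prod_{j=1}^{i-1}\theta_j$. The only difference is that you spell out the telescoping verification of $\sum_i m_i=1$ and the degenerate case $\theta=1$, which the paper leaves implicit in its remark that the change of variables is one-to-one with the stated constraints.
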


\begin{proof}
By \eqref{rank-p form} we already have 
\begin{equation*}
\displaystyle
\left(
\prod_{j=1}^{p}\theta_{j}\right)
(A^{\ast}_p-B)^{-1}=(A-B)^{-1}+\sum_{i=1}^p\left(
(1-\theta_i)\prod_{j=1}^{i-1}\theta_j
\right)
\frac{e_i\otimes e_i}{Be_i\cdot e_i}. 
\end{equation*}
We make the change of variables
\begin{equation*}
\theta=\prod_{i=1}^p \theta_i \quad \text{and}\quad  (1-\theta)m_i=(1-\theta_i)\prod_{j=1}^{i-1}\theta_j, \ 1\le i\le p
\end{equation*}
which is indeed one-to-one with the constraints on the $m_{i}$'s and the $\theta_i$'s. 
\end{proof}

Of course the same can be done when exchanging the roles of $A$ and $B$. 
\begin{lemma}
A rank-$p$ sequential laminate with matrix $A$ and inclusion $B$, in proportions $\theta$ and $(1-\theta)$, is defined by
\begin{equation*}
(1-\theta)(A_{p}^{\ast}-A)^{-1}=(B-A)^{-1}+\theta\displaystyle\sum_{i=1}^{p}m_i\frac{e_i\otimes e_i}{A e_i\cdot e_i}
\end{equation*}
with $\displaystyle \sum_{i=1}^{p}m_i =1$ and $m_i\ge 0$, $1\le i\le p$.
\end{lemma}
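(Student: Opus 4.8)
The plan is to transcribe the derivation of Lemma~\ref{lem seq lam} (and of formula \eqref{rank-p form}) with the two phases $A$ and $B$ interchanged throughout, as anticipated in the remark preceding the statement. The crucial point is that the single-lamination identity \eqref{seq laminate form} is symmetric in the two constituents: replacing $A\leftrightarrow B$ and the proportion $\theta\leftrightarrow(1-\theta)$ yields the same identity, now with $A$ playing the role of the matrix phase (the one appearing in the denominators) and $B$ the role of the core. Everything in the iterative construction then goes through verbatim.

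Concretely, I would start from the swapped version of \eqref{seq laminate form}: a simple laminate $C_1^\ast$ of the core $B$ and the matrix $A$, in direction $e_1$ and proportions $\eta_1$ for $B$ and $(1-\eta_1)$ for $A$, satisfies
\[
\eta_1(C_1^\ast-A)^{-1}=(B-A)^{-1}+\frac{1-\eta_1}{Ae_1\cdot e_1}\,e_1\otimes e_1 .
\]
Then I iterate: laminate $C_{k-1}^\ast$ again with the pure phase $A$, in direction $e_k$ and proportions $\eta_k$, $(1-\eta_k)$, to obtain $C_k^\ast$, and substitute successively the formula for $(C_{k-1}^\ast-A)^{-1}$ into that for $(C_k^\ast-A)^{-1}$, exactly as in the passage from \eqref{formula for Ap} to \eqref{rank-p form}. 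This produces
\[
\Bigl(\prod_{j=1}^p\eta_j\Bigr)(C_p^\ast-A)^{-1}=(B-A)^{-1}+\sum_{i=1}^p\Bigl((1-\eta_i)\prod_{j=1}^{i-1}\eta_j\Bigr)\frac{e_i\otimes e_i}{Ae_i\cdot e_i}.
\]

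Finally I perform the change of variables $\displaystyle 1-\theta=\prod_{i=1}^p\eta_i$ (the total volume fraction of the core phase $B$) and $\displaystyle \theta\,m_i=(1-\eta_i)\prod_{j=1}^{i-1}\eta_j$ for $1\le i\le p$. The telescoping identity $\sum_{i=1}^p(1-\eta_i)\prod_{j=1}^{i-1}\eta_j=1-\prod_{j=1}^p\eta_j=\theta$ gives $\sum_i m_i=1$, and $m_i\ge 0$ is immediate; conversely this correspondence is one-to-one between admissible $(\eta_i)$ and admissible $(\theta,(m_i))$, just as in Lemma~\ref{lem seq lam}. Substituting back and writing $A_p^\ast=C_p^\ast$ yields the claimed formula. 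I do not expect a genuine obstacle here since the argument is a pure transcription; the only step requiring care is the bookkeeping of which proportion, $\theta$ or $1-\theta$, is attached to the matrix and which to the inclusion once the roles of $A$ and $B$ are swapped.
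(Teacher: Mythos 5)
Your proposal is correct and is exactly the argument the paper intends: the text introduces this lemma with the single remark ``Of course the same can be done when exchanging the roles of $A$ and $B$,'' and you have simply written out that symmetry argument in full — the swapped form of \eqref{seq laminate form}, the iteration leading to the analogue of \eqref{rank-p form}, and the change of variables $1-\theta=\prod_i\eta_i$, $\theta m_i=(1-\eta_i)\prod_{j<i}\eta_j$ with the telescoping identity giving $\sum_i m_i=1$. The bookkeeping of which proportion attaches to the matrix phase $A$ and which to the core $B$ is handled correctly, so there is nothing to add.
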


\begin{remark}
Sequential laminates form a very rich and explicit class of composite materials which, as we shall see, completely describes the boundaries of the set $G_{\theta}$.
\end{remark}

\subsection{Characterization of $G_\theta$}
From now on, we assume that the microscopic tensor $A(y)$ is symmetric.
Then $A^{\ast}$ is also symmetric. Furthermore, $A^{\ast}$ is characterized by the following variational principle:
\begin{equation}\label{var pple}
\displaystyle
A^{\ast}\xi\cdot\xi=\min_{w\in H^1_{\#}(Y)/\RR}\int_{Y}A(y)(\xi+\nabla w)\cdot (\xi+\nabla w)\,dy \quad \forall\xi\in \rn.
\end{equation}
Indeed, if $w_{\xi}$ is a minimizer of \eqref{var pple}, then it satisfies the Euler optimality condition
\begin{equation*}
\left\{
\begin{aligned}
&-\dv\left( A(y)(\xi+\nabla w_{\xi}(y)) \right) = 0\ &&\text{ in } Y,\\
&y\mapsto w_{\xi}(y)\ && \ Y\text{-periodic}.
\end{aligned}
\right.
\end{equation*}
By linearity, we have $w_{\xi}=\sum_{i=1}^{N}\xi_{i}w_{i}$, where $w_i$ ($i=1,\cdots, N$) denotes the solution of \eqref{cell pb}, and thus, by \eqref{A*=?} we get
\begin{equation*}
\displaystyle
\int_{Y}A(y)(\xi+\nabla w_{\xi})\cdot (\xi+\nabla w_{\xi})\, dy =
\sum_{i,j=1}^{N}\xi_{i}\xi_{j}A_{ij}^{\ast}=A^{\ast}\xi\cdot\xi.
\end{equation*}

By using the variational principle of $A^{\ast}$ \eqref{var pple}, we can obtain arithmetric and harmonic mean bounds for $A^{\ast}$. 
\begin{lemma}[Arithmetic and harmonic mean bounds]\label{mean bounds}
Any homogenized tensor $A^{\ast}$ satisfies the arithmetic mean bound
\begin{equation*}
\displaystyle A^{\ast}\xi\cdot\xi\le\left(\int_{Y}A(y)\,dy\right)\xi\cdot\xi
\end{equation*}
and the harmonic mean bound
\begin{equation*}
\displaystyle
\left(\int_{Y}A^{-1}(y)\,dy\right)^{-1}\xi\cdot\xi\le A^{\ast}\xi\cdot\xi.
\end{equation*}
\end{lemma}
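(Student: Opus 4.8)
The plan is to derive both inequalities directly from the variational characterization \eqref{var pple} of $A^{\ast}$, using only elementary pointwise inequalities for the matrices $A(y)$; recall that under the standing assumptions $A(y)$ is symmetric and uniformly elliptic (as in Lemma \ref{fred}), so $A(y)^{-1}$ and $\left(\int_Y A^{-1}\,dy\right)^{-1}$ are well defined. Fix $\xi\in\RR^N$ and let $w_{\xi}=\sum_{i=1}^N\xi_i w_i\in H^1_{\#}(Y)/\RR$ be the minimizer appearing in \eqref{var pple}. Since $w_\xi$ is $Y$-periodic we have $\int_Y\nabla_y w_\xi\,dy=0$, so the field $z(y):=\xi+\nabla_y w_\xi(y)$ has mean value $\int_Y z\,dy=\xi$ and, by \eqref{var pple}, $A^{\ast}\xi\cdot\xi=\int_Y A(y)z(y)\cdot z(y)\,dy$.

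First I would dispatch the arithmetic mean bound, which is immediate: testing the minimum in \eqref{var pple} with the competitor $w\equiv 0$ gives
\[
A^{\ast}\xi\cdot\xi\le\int_Y A(y)\xi\cdot\xi\,dy=\left(\int_Y A(y)\,dy\right)\xi\cdot\xi.
\]
For the harmonic mean bound, the idea is to bound $\int_Y A(y)z\cdot z\,dy$ from below using only the information that $z$ has mean $\xi$. For every $y\in Y$ and every constant vector $\mu\in\RR^N$, expanding the nonnegative quantity $A(y)(z(y)-A(y)^{-1}\mu)\cdot(z(y)-A(y)^{-1}\mu)$ and using the symmetry of $A(y)$ yields the pointwise inequality
\[
A(y)z(y)\cdot z(y)\ge 2\,\mu\cdot z(y)-A(y)^{-1}\mu\cdot\mu.
\]
Integrating over $Y$ and using $\int_Y z\,dy=\xi$ gives $A^{\ast}\xi\cdot\xi\ge 2\,\mu\cdot\xi-\left(\int_Y A^{-1}\,dy\right)\mu\cdot\mu$ for every $\mu\in\RR^N$; maximizing the right-hand side over $\mu$ (the maximum being attained at $\mu=\left(\int_Y A^{-1}\,dy\right)^{-1}\xi$) produces exactly $A^{\ast}\xi\cdot\xi\ge\left(\int_Y A^{-1}\,dy\right)^{-1}\xi\cdot\xi$.

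I do not expect any real obstacle here; the only points that deserve care are that $A(y)$ be symmetric positive definite (so that the pointwise inequality and the two matrix inverses make sense) and the elementary optimization in $\mu$. For context I would also remark that the harmonic bound has the usual dual-energy reading (cf. Section \ref{sec:dual energy}): the flux $\sigma(y):=A(y)z(y)$ is $Y$-periodic, divergence-free, of mean $A^{\ast}\xi$, and realizes $A^{\ast}\xi\cdot\xi=\int_Y A^{-1}(y)\sigma\cdot\sigma\,dy$ as a complementary energy; comparing with the constant competitor $\sigma\equiv A^{\ast}\xi$ gives $(A^{\ast})^{-1}\le\int_Y A^{-1}\,dy$ in the sense of quadratic forms, which is the same statement after inverting these ordered symmetric positive definite matrices.
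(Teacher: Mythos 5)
Your proof is correct. The arithmetic bound is handled exactly as in the paper (test $w\equiv 0$ in \eqref{var pple}), but for the harmonic bound you take a genuinely different, dual route. The paper \emph{enlarges the primal minimization set}: since $\nabla w$ has zero mean, it replaces $\nabla w$ by an arbitrary mean-zero field $\zeta\in L^2_{\#}(Y)^N$, then solves the relaxed constrained problem exactly via its Euler equation $A(y)(\xi+\zeta_\xi)=\lambda$ with a Lagrange multiplier $\lambda\in\RR^N$, which produces the harmonic mean as the value of the relaxed minimum. You instead keep the minimizer $z=\xi+\nabla w_\xi$ and bound the energy from below pointwise by the Legendre-type inequality $A(y)z\cdot z\ge 2\mu\cdot z-A(y)^{-1}\mu\cdot\mu$ for a \emph{constant} dual vector $\mu$, integrate using only $\int_Y z\,dy=\xi$, and optimize over $\mu$. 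The two arguments are dual to one another and rest on the same observation (only the mean of the test field enters the lower bound), but yours is slightly more self-contained: it avoids having to justify existence of a minimizer of the relaxed problem and the passage through its Euler--Lagrange characterization, at the cost of pulling the "right" quadratic completion out of a hat. It is also worth noting that your restriction-to-constant-duals trick is precisely the mechanism the paper itself uses later, in the proof of the Hashin--Shtrikman bounds (Theorem \ref{thm:Gtheta}), where the Legendre transform \eqref{Legendre transform} is applied and the maximization is restricted to constant $\eta$; so your proof makes the harmonic bound appear as the simplest instance of that machinery. Your closing remark on the complementary-energy reading is a correct and pertinent aside, consistent with Section \ref{sec:dual energy}.
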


\begin{proof}
Taking $w=0$ in the variational principle \eqref{var pple}, we deduce the arithmetic mean bound. 
For the harmonic mean bound we enlarge the minimization space as follows. 
Indeed, since $\displaystyle\int_{Y}\nabla w \,dy = 0$, we replace $\nabla w$ with
any vector field $\zeta(y)$ with zero-average on $Y$
\begin{equation*}
\displaystyle A^{\ast}\xi\cdot\xi\ge\min_{\substack{\zeta\in L^2_{\#}(Y)^N,\\ \int_Y\zeta\, dy=0}}\int_{Y}A(y)(\xi+\zeta(y))\cdot(\xi+\zeta(y)) \, dy. 
\end{equation*}
The Euler equation for the minimizer $\zeta_{\xi}(y)$ of this convex problem is
\begin{equation*}
A(y)(\xi+\zeta_{\xi}(y))=\lambda, 
\end{equation*}
where $\lambda\in\RR$ is the Lagrange multiplier for the constraint $\displaystyle\int_{Y}\zeta \,dy=0$. Thus
\begin{equation*}
\displaystyle \xi=\left(\int_YA(y)^{-1}\,dy\right)\lambda
\end{equation*}
and
\begin{equation*}
\displaystyle\int_{Y}A(y)(\xi+\zeta_{\xi}(y))\cdot (\xi+\zeta_{\xi}(y))\, dy
=\left(\int_{Y}A(y)^{-1}\,dy\right)^{-1}\xi\cdot\xi.
\end{equation*}
\end{proof}
Lemma \ref{mean bounds} can be improved for two-phase composites.
Next, we consider two isotropic phases $A = \alpha {\rm Id}$ and 
$B=\beta {\rm Id}$ with $0 < \alpha < \beta$. 

\begin{theorem}[Hashin and Shtrikman bounds \cite{HS,TA}] \label{thm:Gtheta}
The set $G_{\theta}$ of all homogenized tensors obtained by mixing $\alpha$ and 
$\beta$ in proportions $\theta$ and $(1-\theta)$ is the set of all symmetric matrices $A^{\ast}$ with eigenvalues $\lambda_1,\cdots,\lambda_{N}$ such that
\begin{equation}\label{ineq for eigenvalues}
\displaystyle\left(
\frac{\theta}{\alpha}+\frac{1-\theta}{\beta}
\right)^{-1}=\lambda_{\theta}^{-}\le\lambda_{i}\le\lambda_{\theta}^{+}=\theta\alpha+(1-\theta)\beta,\ 1\le i\le N,
\end{equation}
\begin{equation}\label{lower bound for eigen}
\displaystyle\sum_{i=1}^N\frac{1}{\lambda_{i}-\alpha}\le\frac{1}{\lambda_{\theta}^{-}-\alpha}+\frac{N-1}{\lambda_{\theta}^{+}-\alpha},
\end{equation}
\begin{equation}\label{upper bound for eigen}
\sum_{i=1}^N\frac{1}{\beta-\lambda_{i}}\le\frac{1}{\beta-\lambda_{\theta}^{-}}+\frac{N-1}{\beta-\lambda_{\theta}^{+}}.
\end{equation}
Furthermore, these so-called Hashin and Shtrikman bounds are optimal and attained by rank-$N$ sequential laminates.
\end{theorem}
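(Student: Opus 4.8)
The plan is to establish the two inclusions separately. First I would show that every $A^{\ast}\in G_{\theta}$ has eigenvalues satisfying \eqref{ineq for eigenvalues}, \eqref{lower bound for eigen} and \eqref{upper bound for eigen}; then I would show that every symmetric matrix whose eigenvalues satisfy these inequalities is the homogenized tensor of an explicit rank-$N$ sequential laminate. The second step gives the reverse inclusion and, in particular, the optimality assertion: the bounds cannot be improved because they are attained.

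For the first inclusion, the two-sided bound \eqref{ineq for eigenvalues} is immediate from Lemma \ref{mean bounds}. Applying the arithmetic mean bound to a unit eigenvector $\xi$ of $A^{\ast}$ with eigenvalue $\lambda_i$ gives $\lambda_i=A^{\ast}\xi\cdot\xi\le\int_Y A(y)\,dy\,|\xi|^2=\lambda_\theta^+$, and the harmonic mean bound gives $\lambda_i\ge\big(\int_Y A^{-1}(y)\,dy\big)^{-1}=\lambda_\theta^-$. The refined inequalities \eqref{lower bound for eigen} and \eqref{upper bound for eigen} require the Hashin--Shtrikman variational principle. For \eqref{lower bound for eigen} I would take the comparison medium $\alpha\,\mathrm{Id}$ (the softer phase) and write, for $\xi\in\rn$,
$$A^{\ast}\xi\cdot\xi=\alpha|\xi|^2+\min_{w\in H^1_{\#}(Y)/\RR}\int_Y\big(A(y)-\alpha\,\mathrm{Id}\big)(\xi+\nabla w)\cdot(\xi+\nabla w)\,dy.$$
Introducing a polarization field $\eta(y)$, which vanishes on the phase $\alpha$ and which on the phase $\beta$ satisfies the Legendre duality $\big(A(y)-\alpha\,\mathrm{Id}\big)e\cdot e=\max_\eta\big(2\eta\cdot e-(A(y)-\alpha\,\mathrm{Id})^{-1}\eta\cdot\eta\big)$, exchanging the min in $w$ with the max in $\eta$, and computing the remaining quadratic minimization in $w$ via the Fourier symbol of the reference operator, one obtains a representation of $A^{\ast}\xi\cdot\xi$ in which the nonlocal (Fourier) term has a definite sign. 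Dropping it and restricting $\eta$ to a constant value on the phase $\beta$ reduces the problem to a finite-dimensional optimization that, after diagonalizing in the eigenbasis of $A^{\ast}$ and using $\theta$ for the volume fraction, yields exactly \eqref{lower bound for eigen}; the same argument with comparison medium $\beta\,\mathrm{Id}$ produces \eqref{upper bound for eigen}.

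For attainability I would use the explicit formula of Lemma \ref{lem seq lam}. A rank-$N$ sequential laminate with matrix $\beta\,\mathrm{Id}$ and core $\alpha\,\mathrm{Id}$, lamination directions forming an orthonormal basis $(e_i)$ and parameters $(m_i)$ with $\sum_{i=1}^N m_i=1$, $m_i\ge0$, has homogenized tensor $A^{\ast}_N$ diagonal in that basis, with
$$\theta\,(A^{\ast}_N-\beta\,\mathrm{Id})^{-1}=(\alpha-\beta)^{-1}\mathrm{Id}+\frac{1-\theta}{\beta}\,\mathrm{diag}(m_1,\dots,m_N).$$
Reading off the eigenvalues $\lambda_i$ as functions of the $m_i$ and eliminating the $m_i$ through $\sum m_i=1$ shows that, as $(m_i)$ ranges over the simplex, these laminates exactly saturate \eqref{upper bound for eigen}, while the extreme choices $m_i\in\{0,1\}$ reach the corners where \eqref{ineq for eigenvalues} is also tight; exchanging the roles of the two phases (matrix $\alpha\,\mathrm{Id}$, core $\beta\,\mathrm{Id}$) saturates \eqref{lower bound for eigen}. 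Since $G_{\theta}$ is closed — by compactness of $H$-convergence, or because finite-rank laminates are already dense in it — and its topological boundary is thus attained, a connectedness argument gives the reverse inclusion and hence equality.

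The main obstacle is the Hashin--Shtrikman variational principle itself: setting up the polarization reformulation correctly, justifying the min--max exchange, and above all establishing the sign of the nonlocal term so that it may be discarded without destroying the inequality. The subsequent finite-dimensional optimization, and the matching of the laminate formulas with the faces of the region, are then essentially bookkeeping. Because a fully rigorous treatment is lengthy, I would carry out in detail only the sequential-laminate computation proving optimality and refer to \cite{HS,TA,Allaire1} for the derivation of the bounds.
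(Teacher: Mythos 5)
Your overall architecture matches the paper's: attainability of the boundary via the explicit rank-$N$ laminate formula of Lemma \ref{lem seq lam} (with the $m_i$ solved for and $\sum_i m_i=1$ identified with equality in \eqref{upper bound for eigen}, and the roles of $\alpha$ and $\beta$ exchanged for \eqref{lower bound for eigen}), and the bounds themselves via the Hashin--Shtrikman variational principle with a constant polarization and Fourier analysis. However, there is a genuine error at the crux of the bound derivation: you propose to \emph{drop} the nonlocal term on the grounds that it ``has a definite sign.'' It enters the lower bound with a minus sign, so discarding it \emph{increases} the right-hand side and destroys the inequality; indeed, dropping it and optimizing in $\eta$ would yield $A^{\ast}\ge \lambda_{\theta}^{+}\,\mathrm{Id}$, which is false. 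The correct step, and the one the paper carries out, is to \emph{compute} the nonlocal term for constant $\eta$ via Plancherel: it equals $\alpha^{-1}\theta(1-\theta)M\eta\cdot\eta$ where $M=\theta^{-1}(1-\theta)^{-1}\sum_{k\ne0}|\hat\chi(k)|^2\frac{k}{|k|}\otimes\frac{k}{|k|}$ is an unknown nonnegative symmetric matrix whose trace is exactly $1$ (again by Plancherel applied to $\chi-\theta$). One then optimizes in $\xi$, obtains the matrix inequality $(1-\theta)(A^{\ast}-\alpha)^{-1}\le(\beta-\alpha)^{-1}\mathrm{Id}+\alpha^{-1}\theta M$, and \emph{takes the trace} to eliminate the unknown $M$. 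The trace structure of \eqref{lower bound for eigen} is precisely the signature of this step; without it you do not get the bound at all.

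A second, lesser issue is your closing ``connectedness argument'' for the reverse inclusion. Knowing that $G_{\theta}$ is closed and contains the boundary of the Hashin--Shtrikman region does not imply that it contains the interior (a closed connected set can contain a shell without its interior). The paper fills in the interior constructively: any matrix satisfying \eqref{ineq for eigenvalues}--\eqref{upper bound for eigen} is realized as a further lamination of two matrices, one saturating \eqref{upper bound for eigen} and one saturating \eqref{lower bound for eigen} (deferred to \cite{Allaire1}). Your plan should replace the topological argument by this explicit construction, or at least cite it.
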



\begin{figure}[htbp]
    \begin{minipage}[t]{.45\textwidth}
        \centering
        \includegraphics[width=\textwidth]{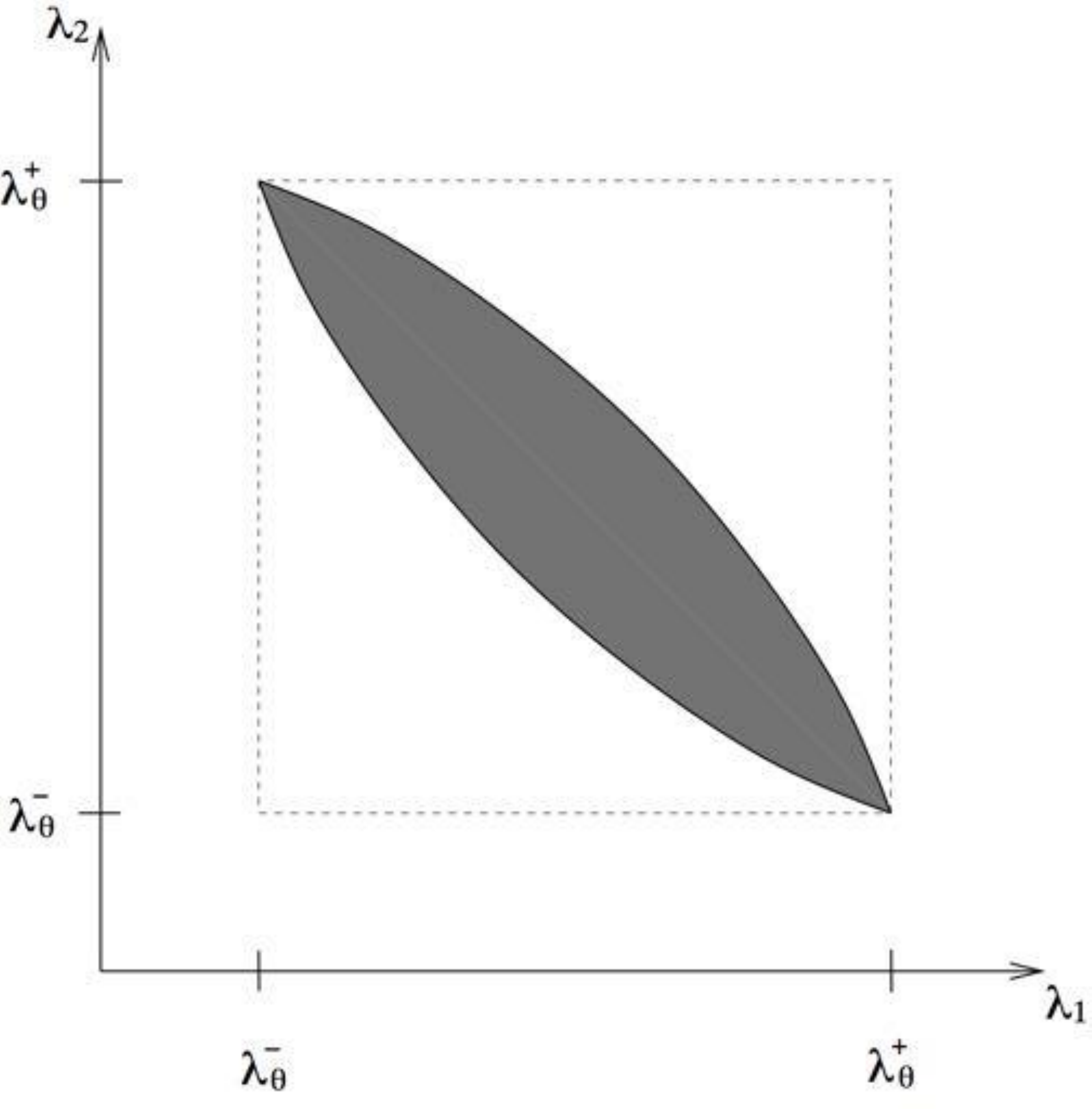}
    \end{minipage}
    \hfill
    \begin{minipage}[t]{.60\textwidth}
        \centering
        \includegraphics[width=\textwidth]{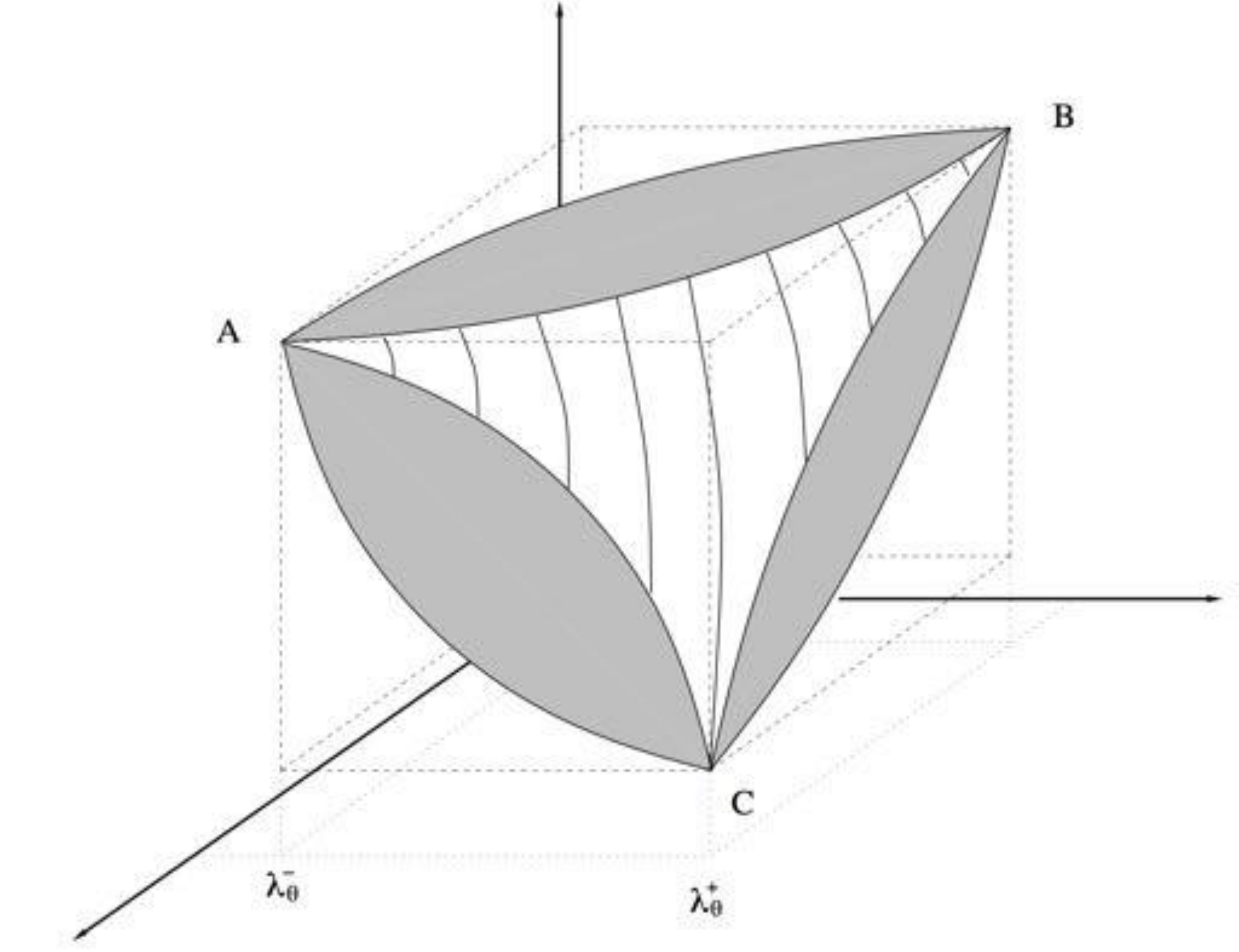}
    \end{minipage}
    \hfill
    \label{fig:1-2-3}
    \caption{The set $G_\theta$ in dimension $N=2$ and $3$ respectively.}
\end{figure}


\begin{proof}
We first show that all matrices satisfying these inequalities (Hashin-Shtrikman bounds) belong to $G_{\theta}$. 
Let us start by showing that the upper bound \eqref{upper bound for eigen} is attained by sequential laminates. Take a matrix $A^\ast$ such that
\begin{equation*}
\displaystyle\sum_{i=1}^{N}\frac{1}{\beta-\lambda_{i}}=\frac{1}{\beta-\lambda_{\theta}^{-}}+\frac{N-1}{\beta-\lambda_{\theta}^{+}}.
\end{equation*}
Define a rank-$N$ sequential laminate $A_{L}^{\ast}$ of matrix $\beta$ and inclusion $\alpha$, with lamination directions being the (orthogonal) eigenvectors of $A^{\ast}$. By Lemma \ref{lem seq lam} we have
\begin{equation*}
\displaystyle
\theta(A^{\ast}_{L}-\beta \mathrm{Id})^{-1}=\frac{1}{\alpha-\beta}\mathrm{Id} + (1-\theta)\sum_{i=1}^{N}m_i\frac{e_i\otimes e_i}{\beta} \ \text{with} \  m_i\ge 0, \,\, \sum_{i=1}^{N}m_i=1.
\end{equation*}
We obtain $A^{\ast} = A^{\ast}_L$ if we can choose the $m_i$'s such that
\begin{equation*}
\displaystyle
\frac{\theta}{\lambda_i-\beta}=\frac{1}{\alpha-\beta}+\frac{m_i(1-\theta)}{\beta}, 
\end{equation*}
that is, 
\begin{equation*}
m_i=\frac{\beta(\lambda_{\theta}^{+}-\lambda_i)}{(1-\theta)(\beta-\alpha)(\beta-\lambda_i)}. 
\end{equation*}
We check that $0<m_i<1$ is equivalent to $\lambda_{\theta}^-<\lambda <\lambda_{\theta}^+$ and that
\begin{equation*}
\displaystyle
\sum_{N=1}^N m_i=1\Longleftrightarrow \sum_{i=1}^N\frac{1}{\beta-\lambda_i}=\frac{1}{\beta-\lambda_{\theta}^-}+\frac{N-1}{\beta-\lambda_{\theta}^+}. 
\end{equation*}
Thus any matrix on the upper bound \eqref{upper bound for eigen} is a rank-$N$ sequential laminate with matrix $\beta$ and inclusion $\alpha$.
The same proof works for the lower bound \eqref{lower bound for eigen} upon exchanging the role of $\alpha$ (now the matrix) and $\beta$ (now the inclusion). 

A simple but lengthy computation shows that all the matrices satisfying the inequalities \eqref{ineq for eigenvalues}, \eqref{lower bound for eigen} and \eqref{upper bound for eigen} can be obtained as a rank-$N$ sequential laminate of two suitable matrices, one realizing the equality in the upper bound \eqref{upper bound for eigen} and the other realizing the equality in the lower bound \eqref{lower bound for eigen} (see the full proof of \cite[Theorem 2.2.13]{Allaire1} for the details). 
It remains to prove that the lower and upper Hashin--Shtrikman bounds hold true. To establish the lower bound \eqref{lower bound for eigen} we introduce the so-called Hashin and Shtrikman variational principle. 
Main idea is to use Fourier analysis and Plancherel theorem. 

By definition of $A^{\ast}$, for $\xi\in\RR^N$, we have 
\begin{equation*}
A^{\ast}\xi\cdot\xi=\displaystyle\min_{w\in H^1_{\#}(Y)}\int_{Y}(\chi(y)\alpha+(1-\chi(y))\beta)(\xi+\nabla w)\cdot (\xi+\nabla w)\, dy. 
\end{equation*}
Subtracting a reference material $\alpha$,
\begin{equation*}
\int_{Y}(\chi\alpha+(1-\chi)\beta)|\xi+\nabla w|^2\,dy =\int_{Y}(1-\chi)(\beta-\alpha)|\xi+\nabla w|^2\,dy+\int_{Y}\alpha|\xi+\nabla w|^2 \,dy.
\end{equation*}

We use convex duality (or Legendre transform): for any symmetric positive definite matrix $K$, the following holds
\begin{equation}\label{Legendre transform}
K\zeta\cdot\zeta=\displaystyle\max_{\eta\in\RR^N}(2\zeta\cdot\eta-K^{-1}\eta\cdot\eta) \quad\forall\zeta\in\RR^N.
\end{equation}
Since $0 < \alpha < \beta$, we apply the formula \eqref{Legendre transform} at each point in $Y$. Then we get 
\begin{equation*}
\begin{aligned}
&\int_{Y}(1-\chi)(\beta-\alpha)|\xi+\nabla w|^2 \, dy \\
&\quad= \displaystyle\max_{\eta\in L^2_{\#}(Y)^N} \int_{Y}(1-\chi) \left( 2(\xi+\nabla w)\cdot \eta-(\beta-\alpha)^{-1}|\eta|^2 \right) \, dy, 
\end{aligned}
\end{equation*}
which becomes an inequality if we restrict the minimization to constant $\eta$ in $Y$
\begin{equation*}
\begin{aligned}
\int_{Y}(1-\chi)(\beta-\alpha)|\xi+\nabla w|^2 \, dy
&\ge\displaystyle\max_{\eta\in\rn}\int_{Y}(1-\chi)(2(\xi+\nabla w)\cdot\eta-(\beta-\alpha)^{-1}|\eta|^2) \, dy\\
&\ge\left( 2\xi\cdot\eta-(\beta-\alpha)^{-1}|\eta|^2 \right) - 2\int_{Y}\chi\nabla w\cdot\eta \, dy.
\end{aligned}
\end{equation*}
On the other hand, because of periodicity, $\int_{Y}\nabla w\,dy=0$ which implies
\begin{equation*}
\int_{Y}\alpha|\xi+\nabla w|^2\,dy=\alpha|\xi|^2+\int_{Y}\alpha|\nabla w|^2\,dy.
\end{equation*}
Overall, we obtain that, for any $\eta\in\RR^N$,
\begin{equation}\label{shiki1}
A^{\ast}\xi\cdot\xi\ge\alpha|\xi|^2+(1-\theta)\left( 2\xi\cdot\eta-(\beta-\alpha)^{-1}|\eta|^2 \right) - g(\chi,\eta),
\end{equation}
where $g(\chi,\eta)$ is a so-called non-local term, defined by
\begin{equation*}
g(\chi,\eta)=\displaystyle\min_{w\in H^1_{\#}(Y)}\int_{Y}(\alpha|\nabla w|^2-2\chi\nabla w\cdot \eta)\,dy.
\end{equation*}

We can now use Fourier analysis to compute $g(\chi,\eta)$.
By periodicity, both $\chi$ and the test function $w$ can be written as Fourier series:
\begin{equation*}
\chi(y)=\displaystyle\sum_{k\in\ZZ^N}\hat{\chi}(k)e^{2i\pi k\cdot y}, \quad 
w(y)=\sum_{k\in\ZZ^{N}}\hat{w}(k)e^{2i\pi k\cdot y}.
\end{equation*}
Since $\chi$ and $w$ are real-valued, their Fourier coefficients satisfy
\begin{equation*}
\overline{\hat{\chi}(k)}=\hat{\chi}(-k) \,\, \text{and} \,\,  \overline{\hat{w}(k)}=\hat{w}(-k)\quad\forall k\in \ZZ^N.
\end{equation*}
The gradient of $w$ at $y\in Y$ is given by
\begin{equation*}
\nabla w(y)=\displaystyle\sum_{k\in\ZZ^N}2i\pi e^{2i\pi k\cdot y}\hat{w}(k)k.
\end{equation*}
Then, Plancherel formula yields
\begin{equation*}
\begin{aligned}
\int_{Y}(\alpha|\nabla w|^2-2\chi\nabla w\cdot\eta)\, dy
&=\sum_{k\in\ZZ^N}(4\pi^2\alpha|\hat{w}(k)k|^2-4i\pi\overline{\hat{\chi}(k)}\hat{w}(k)k\cdot\eta)\\
&=\sum_{k\in\ZZ^N}\left(4\pi^2\alpha|k|^2|\hat{w}(k)|^2+4\pi\mathcal{I}m \left( \overline{\hat{\chi}(k)}\hat{w}(k)\right)\eta\cdot k \right).
\end{aligned}
\end{equation*}
Notice that minimizing in $w(y)\in H^1_{\#}(Y)$ is equivalent to minimizing in $\hat{w}(k)\in\CC$.
For $k\neq 0$ the minimum is achieved by
\begin{equation*}
\hat{w}(k)= -\frac{i\hat{\chi}(k)}{2\pi\alpha|k|^2}\eta\cdot k,
\end{equation*}
and we deduce that 
\begin{equation}\label{shiki2}
g(\chi,\eta)=\displaystyle\left(
\alpha^{-1}\sum_{k\in\ZZ^N, k\neq 0}|\hat{\chi}(k)|^2\frac{k}{|k|}\otimes\frac{k}{|k|}
\right)\eta\cdot\eta=\alpha^{-1}\theta(1-\theta)M\eta\cdot\eta,
\end{equation}
where $M$ is a symmetric non-negative matrix defined by 
\begin{equation*}
M = \dfrac{1}{\theta(1-\theta)} \sum_{k\in\ZZ^N, k\neq 0}|\hat{\chi}(k)|^2\frac{k}{|k|}\otimes\frac{k}{|k|}. 
\end{equation*}
Since, by Plancherel theorem,
we have
\begin{equation*}
\displaystyle\sum_{k\in\ZZ^N, k\neq 0}|\hat{\chi}(k)|^2=\int_{Y}|\chi(y)-\theta|^2\,dy=\theta(1-\theta),
\end{equation*}
we deduce that the trace of $M$ is equal to $1$. 

Substituting \eqref{shiki2} to \eqref{shiki1}, for any $\xi, \eta \in \RR^N$, 
\begin{equation}\label{shiki3}
A^{\ast}\xi\cdot\xi\ge\alpha|\xi|^2+(1-\theta)(2\xi\cdot\eta-(\beta-\alpha)^{-1}|\eta|^2)-\alpha^{-1}\theta(1-\theta)M\eta\cdot\eta.
\end{equation}
The minimum (in $\xi$) of the inequality \eqref{shiki3} is obtained when
\begin{equation*}
\xi=(1-\theta)(A^{\ast}-\alpha)^{-1}\eta. 
\end{equation*}
Then we deduce
\begin{equation*}
(1-\theta)(A^{\ast}-\alpha)^{-1}\eta\cdot\eta
\le (\beta-\alpha)^{-1}|\eta|^2+\alpha^{-1}\theta M\eta\cdot\eta \quad  \forall\eta\in\RR^N. 
\end{equation*}
Thus, we have 
\begin{equation}\label{matrix inequality}
(1-\theta)(A^{\ast}-\alpha)^{-1}\le(\beta-\alpha)^{-1}I+\alpha^{-1}\theta M. 
\end{equation}
Taking the trace of this matrix inequality \eqref{matrix inequality}, and recalling that $\tr M = 1$, we obtain the lower Hashin--Shtrikman bound.
The proof of the upper bound is similar.
\end{proof}

\section{The elasticity setting} \label{sec:the elasticity setting}
In what follows, let us consider the elasticity setting. 
The homogenization method can be generalized to the elasticity setting. However, an explicit characterization of $G_{\theta}$
is still lacking in the elasticity setting. 

We set
\begin{equation} \label{eq:Hooke's law}
\begin{aligned}
&A\xi=2\mu_A\xi+\lambda_A(\mathrm{tr}\xi)I_2,\\
&B\xi=2\mu_B\xi+\lambda_B(\mathrm{tr}\xi)I_2,
\end{aligned}
\end{equation}
with the identity matrix $I_2$, and $\kappa_{A,B} = \lambda_{A,B} + 2\mu_{A,B}/N$. We assume $B$ to be
weaker than $A$:
\begin{equation*}
0 \leq \mu_B<\mu_A, \quad \quad 0\leq \kappa_B<\kappa_A.
\end{equation*}
We work with stresses rather than strains, thus we use inverse elasticity tensors. The similar results of the two-phase composites in the elasticity setting as follows (in details, see \cite[Section 2.3]{Allaire1}): 
\begin{lemma}[Sequential laminates in elasticity]
The Hooke's law of a simple laminate of $A$ and $B$, in proportions $\theta$ and $(1-\theta)$, respectively, in the direction $e$, is
\begin{equation*}
(1-\theta)(A^{\ast^{-1}}-A^{-1})^{-1}=(B^{-1}-A^{-1})^{-1}+\theta f_A^c(e),
\end{equation*}
where $f^c_A(e)$ is the tensor, defined, for any symmetric matrix $\xi$, by
\begin{equation*}
f^c_A(e_i)\xi\cdot\xi=A\xi\cdot\xi-\frac{1}{\mu_A}|A\xi e_i|^2+\frac{\mu_A+\lambda_A}{\mu_A(2\mu_A+\lambda_A)}((A\xi)e_i\cdot e_i)^2.
\end{equation*}
\end{lemma}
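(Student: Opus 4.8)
The plan is to follow the same strategy as in the proof of Lemma~\ref{lem simple laminate}, but in the \emph{dual} (complementary energy) formulation, so that inverse tensors and the tensor $f_A^c(e)$ appear naturally. First I would record the complementary-energy characterization of the homogenized inverse tensor: for every symmetric $N\times N$ matrix $\sigma$,
\[
A^{\ast^{-1}}\sigma\cdot\sigma=\min\left\{\int_Y A(y)^{-1}\tau(y)\cdot\tau(y)\,dy\ :\ \tau(y)\ \text{symmetric},\ \tau\ \text{$Y$-periodic},\ \dv_y\tau=0,\ \int_Y\tau\,dy=\sigma\right\}.
\]
This is the elasticity analogue of the duality exploited in Section~\ref{sec:dual energy}: it is the Legendre transform of the variational principle \eqref{var pple} for $A^\ast$, obtained by the same convex-duality arguments (see also \cite[Section~2.3]{Allaire1}).

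\emph{Laminate ansatz and jump conditions.} For a simple laminate orthogonal to $e$ all microscopic coefficients depend only on $y\cdot e$, and I would look for the optimal competitor in the dual problem among piecewise-constant stress fields $\tau=\sigma_A\chi+\sigma_B(1-\chi)$ with $\sigma_A,\sigma_B$ constant symmetric matrices — the stress analogue of the piecewise-constant-gradient ansatz $\nabla u=a\chi+b(1-\chi)$ used for \eqref{eq of u(y)}. Periodicity together with $\dv_y\tau=0$ forces continuity of the traction across the interface $\Gamma$ with normal $e$, i.e.\ $(\sigma_A-\sigma_B)e=0$ (the exact analogue of \eqref{const through gamma2}), while the average constraint reads $\theta\sigma_A+(1-\theta)\sigma_B=\sigma$. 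Writing $\sigma_A=\sigma+(1-\theta)\mu$ and $\sigma_B=\sigma-\theta\mu$, the free parameter $\mu$ ranges over the subspace $\{\mu\ \text{symmetric}:\mu e=0\}$, and the minimization above reduces to
\[
A^{\ast^{-1}}\sigma\cdot\sigma=\big(\theta A^{-1}+(1-\theta)B^{-1}\big)\sigma\cdot\sigma+\min_{\mu e=0}\Big[\,2\theta(1-\theta)(A^{-1}-B^{-1})\sigma\cdot\mu+\theta(1-\theta)\big((1-\theta)A^{-1}+\theta B^{-1}\big)\mu\cdot\mu\,\Big].
\]

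\emph{The constrained quadratic minimization and the role of $f_A^c(e)$.} It remains to carry out this quadratic minimization in $\mu$ over $\{\mu e=0\}$ and simplify. Here I would use the explicit isotropic forms \eqref{eq:Hooke's law} of $A$ and $B$ (so that $A^{-1}\eta$ and $B^{-1}\eta$ are explicit in terms of $\eta$, $\tr\eta$ and the Lamé moduli) together with the elementary identity
\[
f_A^c(e)\xi\cdot\xi=\min_{a\in\RR^N}A\Big(\xi-\tfrac12(a\otimes e+e\otimes a)\Big)\cdot\Big(\xi-\tfrac12(a\otimes e+e\otimes a)\Big),
\]
which one checks by a one-line optimization over $a$ using \eqref{eq:Hooke's law}, and which identifies $f_A^c(e)$ as the Schur complement of $A$ relative to the space of symmetrized rank-one tensors in the direction $e$. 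Feeding this into the minimization — equivalently, applying the Sherman--Morrison--Woodbury-type update already used at the end of the proof of Lemma~\ref{lem simple laminate}, now for tensors acting on symmetric matrices — collapses the right-hand side, after multiplication by $(1-\theta)$, to $(B^{-1}-A^{-1})^{-1}+\theta f_A^c(e)$, which is the claimed formula. (Alternatively one can first derive the primal lamination formula, the elasticity analogue of \eqref{slform}, by repeating the argument of Lemma~\ref{lem simple laminate} verbatim with $e(u)$ in place of $\nabla u$ and the Hadamard jump condition $\xi_A-\xi_B=\tfrac12(a\otimes e+e\otimes a)$ replacing $b-a=te_1$, and then invert, just as \eqref{seq laminate form} was obtained from \eqref{slform}.) The conceptual steps are routine copies of the scalar case; the only genuinely delicate point — and the main obstacle — is the linear algebra of the last step: because elasticity tensors act on symmetric matrices rather than on vectors, the constraint $\mu e=0$ and the corresponding rank-one update identities are more intricate than in the scalar proof, and one must track carefully how restricting to this subspace produces exactly the combination $-\frac1{\mu_A}|A\xi e|^2+\frac{\mu_A+\lambda_A}{\mu_A(2\mu_A+\lambda_A)}((A\xi)e\cdot e)^2$ appearing in $f_A^c(e)$. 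The full computation is carried out in \cite[Section~2.3]{Allaire1}.
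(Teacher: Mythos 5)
The paper does not actually prove this lemma: it is stated as the elasticity analogue of Lemma~\ref{lem simple laminate} and the reader is sent to \cite[Section 2.3]{Allaire1}, so there is no in-text argument to compare yours against. Your outline is nonetheless the standard derivation from that reference and is the right route: the complementary-energy characterization of $A^{\ast^{-1}}$, the piecewise-constant stress ansatz with traction continuity $(\sigma_A-\sigma_B)e=0$ and prescribed mean, and the identification of $f_A^c(e)$ as a Schur complement of $A$ on the symmetrized rank-one tensors. In particular the identity you quote is correct: writing $v=(A\xi)e$ and using \eqref{eq:Hooke's law}, the Euler equation $\mu_A a+(\mu_A+\lambda_A)(a\cdot e)e=v$ gives $a\cdot e=(v\cdot e)/(2\mu_A+\lambda_A)$ and a minimum value $A\xi\cdot\xi-v\cdot a$, which is exactly
\[
A\xi\cdot\xi-\frac{1}{\mu_A}|A\xi e|^2+\frac{\mu_A+\lambda_A}{\mu_A(2\mu_A+\lambda_A)}\bigl((A\xi)e\cdot e\bigr)^2 .
\]

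Two points would still have to be supplied to turn the sketch into a proof. First, restricting the dual problem to piecewise-constant stresses only yields an upper bound; to get equality you must check that the candidate is the true minimizer, i.e.\ that the associated strain $A(y)^{-1}\tau(y)$ is compatible (a constant plus a symmetrized gradient), which is precisely the Hadamard jump condition $\xi_A-\xi_B=\tfrac12(a\otimes e+e\otimes a)$ you mention only in the parenthetical primal variant — this compatibility/traction pair is what pins down the laminate fields, exactly as \eqref{const through gamma1}--\eqref{const through gamma2} do in the proof of Lemma~\ref{lem simple laminate}. Second, the constrained minimization over $\{\mu:\mu e=0\}$ is asserted to ``collapse'' to $(B^{-1}-A^{-1})^{-1}+\theta f_A^c(e)$ but is not carried out; as you yourself flag, that Woodbury-type algebra on the space of symmetric matrices is the only genuinely nontrivial computation, and deferring it to \cite[Section 2.3]{Allaire1} leaves your argument at the same level of completeness as the paper's own treatment, which defers everything. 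In short: correct strategy, correct key identity, but the decisive computation is cited rather than proved.
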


\begin{proposition}[Reiterated lamination formula] \label{prop:lamination formula}
A rank-$p$ sequential laminate with matrix $A$ and inclusions $B$, in proportions $\theta$ and $(1-\theta)$, respectively, in the directions
$(e_i)_{1\le i\le p}$ with parameter $(m_i)_{1\le i\le p}$ such that $0\le m_i\le 1$
and $\displaystyle\sum_{i=1}^pm_i=1$, is given by
\begin{equation*}
(1-\theta)(A^{\ast^{-1}}-A^{-1})^{-1}=(B^{-1}-A^{-1})^{-1}+\theta\displaystyle\sum_{i=1}^pm_{i}f^{c}_A(e_i). 
\end{equation*}
\end{proposition}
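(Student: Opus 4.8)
The plan is to imitate, essentially line for line, the conductivity argument used in the proof of Lemma~\ref{lem seq lam}: reiterate the simple (rank‑one) lamination formula established just above and then perform a one‑to‑one change of variables on the lamination parameters. Throughout, $A$ keeps the role of the matrix phase (it is added at every lamination step) and $B$ the role of the inclusion (it appears only in the innermost core).

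First I would set up the recursion. Put $A_0^{\ast}=B$ and, for $1\le k\le p$, let $A_k^{\ast}$ be the simple laminate in direction $e_k$ of the pure phase $A$ (as matrix, in proportion $\theta_k$) with the previously built composite $A_{k-1}^{\ast}$ (in proportion $1-\theta_k$). The simple laminate formula of the preceding Lemma uses only that $A$ is the matrix phase — the other phase enters solely through the term $(\,\cdot^{-1}-A^{-1})^{-1}$ — so it remains valid with $A_{k-1}^{\ast}$ in place of $B$, giving
\begin{equation*}
(1-\theta_k)\bigl((A_k^{\ast})^{-1}-A^{-1}\bigr)^{-1}=\bigl((A_{k-1}^{\ast})^{-1}-A^{-1}\bigr)^{-1}+\theta_k\,f^c_A(e_k),\qquad 1\le k\le p.
\end{equation*}
Writing $X_k:=\bigl((A_k^{\ast})^{-1}-A^{-1}\bigr)^{-1}$, this is $(1-\theta_k)X_k=X_{k-1}+\theta_k f^c_A(e_k)$; solving for $X_k$ and substituting down to $X_0=(B^{-1}-A^{-1})^{-1}$ telescopes to
\begin{equation*}
\Bigl(\prod_{i=1}^p(1-\theta_i)\Bigr)\bigl((A_p^{\ast})^{-1}-A^{-1}\bigr)^{-1}=(B^{-1}-A^{-1})^{-1}+\sum_{k=1}^p\theta_k\Bigl(\prod_{i=1}^{k-1}(1-\theta_i)\Bigr)f^c_A(e_k).
\end{equation*}

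Next I would introduce $\theta$ and $m_1,\dots,m_p$ by $1-\theta=\prod_{i=1}^p(1-\theta_i)$ and $\theta m_k=\theta_k\prod_{i=1}^{k-1}(1-\theta_i)$. Summing the second relation over $k$ telescopes again to $\theta\sum_k m_k=1-\prod_i(1-\theta_i)=\theta$, so $\sum_k m_k=1$ and $m_k\ge0$; conversely, given $\theta\in[0,1]$ and $m_k\ge0$ with $\sum m_k=1$ one recovers $\theta_k\in[0,1]$ inductively via $\theta_k=\theta m_k/\prod_{i<k}(1-\theta_i)$, using $\prod_{i<k}(1-\theta_i)=1-\theta\sum_{j<k}m_j\ge\theta m_k$ — exactly the bijection of Lemma~\ref{lem seq lam} with the roles of $\theta_i$ and $1-\theta_i$ swapped. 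Substituting these variables into the displayed identity yields precisely the asserted reiterated lamination formula. The analogous statement with $A$ and $B$ interchanged (inclusion $B$, etc.) follows by the same computation applied to the preceding Lemma read with $B$ as matrix.

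The only point that is not pure bookkeeping is the legitimacy of the reiteration: the simple lamination formula may be applied at step $k$ with the composite $A_{k-1}^{\ast}$ playing the part of the second phase because each new family of layers is built at a length scale much coarser than all previously introduced layers, so that the coating phase $A$ genuinely surrounds the fine composite $A_{k-1}^{\ast}$. Making this precise (separation of scales, and the corresponding $H$-convergence statement for iterated laminates) is the technical heart, and I would simply invoke it from \cite[Section~2.3]{Allaire1} rather than reprove it here; everything downstream is the same elementary manipulation as in the conductivity case.
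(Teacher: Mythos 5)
Your argument is correct and is essentially the paper's own route: the paper states this proposition without proof (deferring to \cite[Section 2.3]{Allaire1}), and your derivation --- reiterating the simple lamination formula with the previously built composite in place of $B$, telescoping, and then applying the one-to-one change of variables $1-\theta=\prod_{i}(1-\theta_i)$, $\theta m_k=\theta_k\prod_{i<k}(1-\theta_i)$ --- is precisely the elasticity analogue of the paper's proof of Lemma~\ref{lem seq lam} via \eqref{rank-p form} in the conductivity setting. The separation-of-scales justification you flag and cite rather than reprove is indeed the only non-algebraic ingredient, and the paper treats it the same way.
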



\begin{theorem}[Hashin--Shtrikman bounds in elasticity]\label{HS bounds elasticity}
Let $A^{\ast}$ be a homogenized elasticity tensor in $G_{\theta}$ which is assumed isotropic
\begin{equation*}
A^{\ast}=2\mu_{\ast}I_4+\left(\kappa_{\ast}-\frac{2\mu_{\ast}}{N}\right)I_2\otimes I_2.
\end{equation*}
Its bulk $\kappa_{\ast}$ and shear $\mu_{\ast}$ moduli satisfy
\begin{equation*}
\frac{1-\theta}{\kappa_A-\kappa_{\ast}}\le
\frac{1}{\kappa_A-\kappa_B}+\frac{\theta}{2\mu_A+\lambda_A}\ \text{ and }\ 
\frac{\theta}{\kappa_{\ast}-\kappa_{B}}\le \frac{1}{\kappa_A-\kappa_B}+\frac{1-\theta}{2\mu_{B}+\lambda_B},
\end{equation*}
\begin{equation*}
\frac{1-\theta}{2(\mu_A-\mu_{\ast})}\le\frac{1}{2(\mu_A-\mu_B)}+\frac{\theta(N-1)(\kappa_A+2\mu_A)}{(N^2+N-2)\mu_{A}(2\mu_A+\lambda_A)},
\end{equation*}
\begin{equation*}
\frac{\theta}{2(\mu_{\ast}-\mu_B)}\le\frac{1}{2(\mu_A-\mu_B)}-\frac{(1-\theta)(N-1)(\kappa_B+2\mu_B)}{(N^2+N-2)\mu_{B}(2\mu_B+\lambda_B)}.
\end{equation*}
Furthermore, the two lower bounds, as well as the two upper bounds are
simultaneously attained by a rank-$p$ sequential laminate with
$p=3$ if $N=2$, and $p=6$ if $N=3$.
\end{theorem}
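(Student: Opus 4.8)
The plan is to mirror the two-stage strategy used for the scalar Hashin--Shtrikman bounds in Theorem~\ref{thm:Gtheta}: first derive the four scalar inequalities from a Hashin--Shtrikman variational principle, then establish their optimality by exhibiting explicit sequential laminates that saturate them. As in Proposition~\ref{prop:lamination formula}, I would work throughout with stresses rather than strains, i.e.\ with the compliance tensors $A^{-1}$, $B^{-1}$, $A^{\ast^{-1}}$.

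For the upper pair of bounds I would take the stiffer phase $A$ as reference material. Starting from the stress form of the variational characterisation of $A^{\ast}$ (the elasticity analogue of \eqref{var pple}), I would subtract the reference energy of $A$ and apply the Legendre transform \eqref{Legendre transform} pointwise, introducing a polarisation \emph{tensor} field $\eta(y)$ valued in the space $\mathrm{Sym}_N$ of symmetric $N\times N$ matrices; restricting the dual maximisation to spatially constant $\eta$ turns the identity into an inequality, exactly as in the passage leading to \eqref{shiki1}. The remaining non-local term would then be computed by Fourier series: expanding $\chi$ and the test displacement in Fourier modes and minimising mode by mode, each mode $k\ne 0$ gives a quadratic problem governed by the acoustic tensor $A(k\otimes k)$ of the reference medium, and Plancherel's theorem yields a quantity of the form $\theta(1-\theta)\,\mathbb{M}\eta\cdot\eta$, where $\mathbb{M}$ is a symmetric non-negative operator on $\mathrm{Sym}_N$ obtained by averaging the elastic Green operator of $A$ over the directions $k/|k|\in\mathbb{S}^{N-1}$ (analogous to the matrix $M$ with $\tr M=1$ in Theorem~\ref{thm:Gtheta}). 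Optimising the resulting inequality in the strain variable (as in the step $\xi=(1-\theta)(A^{\ast}-\alpha)^{-1}\eta$ of Theorem~\ref{thm:Gtheta}) produces a matrix inequality for $(1-\theta)(A^{\ast^{-1}}-A^{-1})^{-1}$; inserting the isotropic ansatz for $A^{\ast}$ and contracting this matrix inequality successively against $I_2$ and against trace-free symmetric matrices would isolate the two scalar inequalities for $\kappa_{\ast}$ and $\mu_{\ast}$, the constants $1/(2\mu_A+\lambda_A)$ and $(N-1)(\kappa_A+2\mu_A)/((N^2+N-2)\mu_A(2\mu_A+\lambda_A))$ coming out of the direction-average of $\mathbb{M}$. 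Running the same argument with the weaker phase $B$ as reference would give the lower pair.

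For optimality I would use the reiterated lamination formula of Proposition~\ref{prop:lamination formula}: build a rank-$p$ sequential laminate with matrix $A$ and inclusion $B$ in proportions $\theta$ and $(1-\theta)$, and choose the directions $(e_i)_{1\le i\le p}$ and the weights $(m_i)_{1\le i\le p}$ (with $m_i\ge 0$, $\sum_i m_i=1$) so that $\sum_{i=1}^p m_i f^c_A(e_i)$ is an \emph{isotropic} fourth-order tensor, a combination of $I_4$ and $I_2\otimes I_2$. Since these tensors act on $\mathrm{Sym}_N$, whose dimension is $N(N+1)/2$, such an isotropising choice is available with $p=3$ directions when $N=2$ (for instance three directions at mutual angles $\pi/3$) and with $p=6$ directions when $N=3$; substituting back into the lamination formula then shows that the bulk and shear moduli of the resulting isotropic $A^{\ast}$ saturate \emph{both} upper inequalities at once. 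Exchanging the roles of $A$ and $B$ (matrix $B$, inclusion $A$) gives a rank-$p$ laminate attaining the two lower bounds.

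The hard part will be the Fourier computation of the non-local term in the elasticity setting: unlike the scalar case, where inverting $\alpha|k|^2$ is immediate, one must here invert the acoustic tensor $A(k\otimes k)$, project the result back onto $\mathrm{Sym}_N$, and evaluate its average over $k/|k|\in\mathbb{S}^{N-1}$ so that the bulk and shear parts decouple into precisely the constants in the statement. The second delicate point is checking that a single isotropic sequential laminate saturates \emph{simultaneously} both the $\kappa_{\ast}$-bound and the $\mu_{\ast}$-bound, which is what fixes the weights $m_i$. Both are elementary but lengthy; for the full computations I would refer to \cite[Section~2.3]{Allaire1}.
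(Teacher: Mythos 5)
The paper offers no proof of this theorem beyond the citation of \cite[Theorem 2.3.13]{Allaire1}, so there is no in-text argument to compare against; your sketch is a faithful outline of the proof given in that reference, and it correctly transposes to elasticity the two-stage strategy that the paper does carry out in the scalar case of Theorem~\ref{thm:Gtheta} (Hashin--Shtrikman variational principle with a constant polarization field and a Fourier/Plancherel computation of the non-local term, followed by attainment via sequential laminates whose directions and weights make $\sum_i m_i f^c_A(e_i)$ isotropic, the counts $p=3$ for $N=2$ and $p=6$ for $N=3$ matching $\dim \mathrm{Sym}_N=N(N+1)/2$). Since you defer the two genuinely computational steps --- the spherical average of the elastic Green (acoustic-tensor) operator and the verification that one isotropic laminate saturates the bulk and shear bounds simultaneously --- to \cite[Section~2.3]{Allaire1}, your proposal is in fact more detailed than the paper's own treatment while remaining consistent with it.
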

\begin{proof}
We refer to \cite[Theorem 2.3.13]{Allaire1}
\end{proof}
\begin{remark}
These bounds do not characterize all possible isotropic homogenized tensors $A^{\ast}$ in $G_{\theta}$. In other words, there exist isotropic elasticity tensors with moduli satisfying these bounds that are not composite materials obtained by mixing phases $A$ and $B$ in proportions $\theta$, $(1-\theta)$, respectively. 
\end{remark}

\begin{proposition}[Hashin--Shtrikman optimal energy bound] \label{prop:rank-N laminate}
Let $G_{\theta}$ be the set of all homogenized elasticity tensors obtained by
mixing the two phases $A$ and $B$ in proportions $\theta$ and $(1-\theta)$. 
Let $L_{\theta}$ be the subset of $G_{\theta}$ made of sequential laminated composites. 
For any stress $\sigma$,
\begin{equation*}
HS(\sigma)=\displaystyle\min_{A^{\ast}\in G_{\theta}}A^{\ast^{-1}}\sigma\cdot\sigma=\min_{A^{\ast}\in L_{\theta}}A^{\ast^{-1}}\sigma\cdot\sigma.
\end{equation*}
Furthermore, the minimum is attained by a rank-$N$ sequential laminate with lamination directions given by the eigendirections of $\sigma$.
\end{proposition}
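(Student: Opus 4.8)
The plan is to prove the double equality by a squeeze argument together with an explicit construction. Since $L_\theta\subseteq G_\theta$, minimizing the complementary energy $A^{\ast^{-1}}\sigma\cdot\sigma$ over the larger family $G_\theta$ can only decrease it, so
\begin{equation*}
\min_{A^\ast\in G_\theta}A^{\ast^{-1}}\sigma\cdot\sigma\ \le\ \min_{A^\ast\in L_\theta}A^{\ast^{-1}}\sigma\cdot\sigma .
\end{equation*}
It therefore suffices to (i) produce a lower bound $\ell(\sigma)$ that is valid for \emph{every} $A^\ast\in G_\theta$, and (ii) exhibit a rank-$N$ sequential laminate whose complementary energy equals that same $\ell(\sigma)$; then $\min_{A^\ast\in L_\theta}A^{\ast^{-1}}\sigma\cdot\sigma\le \ell(\sigma)\le \min_{A^\ast\in G_\theta}A^{\ast^{-1}}\sigma\cdot\sigma\le \min_{A^\ast\in L_\theta}A^{\ast^{-1}}\sigma\cdot\sigma$ forces all three quantities to coincide, equal $HS(\sigma)$, and be attained.

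For step (i) I would run the Hashin--Shtrikman variational principle in the stress (complementary-energy) formulation, in complete analogy with the conductivity computation carried out in the proof of Theorem~\ref{thm:Gtheta}. Concretely: write $A^{\ast^{-1}}\sigma\cdot\sigma$ as the minimum over $Y$-periodic, divergence-free stress fields $\tau$ with $\int_Y\tau\,dy=\sigma$ of $\int_Y\big(\chi\,A^{-1}+(1-\chi)\,B^{-1}\big)\tau\cdot\tau\,dy$; subtract the reference compliance $A^{-1}$, so that the leftover coefficient $B^{-1}-A^{-1}$ is positive definite (using that $B$ is weaker than $A$); apply the Legendre duality identity \eqref{Legendre transform} pointwise with a \emph{matrix-valued} polarization field $\eta(y)$; restrict $\eta$ to constants to obtain a lower estimate; and evaluate the resulting non-local term by Fourier series exactly as in \eqref{shiki2}, where now the non-local operator is a symmetric nonnegative fourth-order tensor assembled from $|\hat\chi(k)|^2$ and the rank-one directions $k/|k|$ acting on symmetrized tensor products, whose relevant trace Plancherel fixes to $\theta(1-\theta)$. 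Optimizing in $\tau$ and then discarding the non-local term against its trace yields a bound $A^{\ast^{-1}}\sigma\cdot\sigma\ge \ell(\sigma)$ in which $\ell(\sigma)$ depends on $\sigma$ only through its eigenvalues, and is precisely the expression that the lamination tensors $f^c_A$ of Proposition~\ref{prop:lamination formula} will reproduce.

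For step (ii) I would take the rank-$N$ laminate with matrix $A$, inclusions $B$, lamination directions $e_1,\dots,e_N$ an orthonormal eigenbasis of $\sigma$, and weights $m_i\ge 0$ with $\sum_i m_i=1$. By Proposition~\ref{prop:lamination formula},
\begin{equation*}
(1-\theta)\big(A^{\ast^{-1}}-A^{-1}\big)^{-1}=\big(B^{-1}-A^{-1}\big)^{-1}+\theta\sum_{i=1}^N m_i\,f^c_A(e_i),
\end{equation*}
so $A^{\ast^{-1}}\sigma\cdot\sigma$ becomes an explicit function of $(m_1,\dots,m_N)$; because the $e_i$ diagonalize $\sigma$, each $f^c_A(e_i)$ enters only through scalars, and a short Lagrange-multiplier minimization over the simplex returns exactly $\ell(\sigma)$. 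Aligning the $e_i$ with the eigendirections of $\sigma$ is forced by the fact that $\ell(\sigma)$ depends only on the eigenvalues of $\sigma$, combined with a rotation-invariance argument. This closes the squeeze and identifies $HS(\sigma)=\ell(\sigma)$ with the optimal rank-$N$ laminate. I expect the main obstacle to be step (i): carrying the matrix-valued polarization and the fourth-order non-local tensor through the Fourier computation is appreciably more delicate than the scalar case, and one must verify that the trace reduction is tight precisely for stress fields built on laminates aligned with $\sigma$; the simplex minimization in step (ii) is then routine bookkeeping. Complete details can be found in \cite[Section 2.3]{Allaire1}.
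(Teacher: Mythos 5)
The paper itself offers no proof of Proposition~\ref{prop:rank-N laminate}: it is stated as part of a survey of the elasticity results and the reader is referred to \cite[Section 2.3]{Allaire1} (cf.\ \cite[Theorem 2.3.13]{Allaire1} and the surrounding material). So there is nothing in the text to compare your argument against line by line; what can be said is that your outline is the standard route to this result and is the exact elasticity analogue of the conductivity proof of Theorem~\ref{thm:Gtheta} that the paper does spell out. Your squeeze logic is airtight: since $L_\theta\subseteq G_\theta$, a lower bound valid on all of $G_\theta$ that is attained by a rank-$N$ sequential laminate forces the two minima to coincide. Step (ii) is also correctly set up via Proposition~\ref{prop:lamination formula}, and the simplex minimization over the $m_i$ is indeed routine once the $e_i$ diagonalize $\sigma$.

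Two points in step (i) deserve more care than your sketch suggests. First, the phrase ``discarding the non-local term against its trace'' imports a conductivity-specific shortcut: there, $M$ is a second-order tensor of unit trace and taking traces of the matrix inequality \eqref{matrix inequality} closes the argument. In elasticity the non-local object is a fourth-order tensor of the form $\sum_{k\neq0}|\hat\chi(k)|^2 f^c_A(k/|k|)$, and the sharp estimate is $g(\chi,\eta)\le\theta(1-\theta)\sup_{|e|=1}f^c_A(e)\eta\cdot\eta$, i.e.\ a supremum over lamination directions rather than a trace identity; one must then run a minimax (Lagrangian duality) argument to show that this supremum, after optimization in the polarization $\eta$, is matched by the convex combination $\sum_i m_i f^c_A(e_i)$ realized by the rank-$N$ laminate aligned with $\sigma$ --- this is where the eigendirections of $\sigma$ enter, and it is more than bookkeeping. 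Second, the dual (stress-based) cell variational principle you start from, and the positivity of $B^{-1}-A^{-1}$, require $B$ to be nondegenerate; the degenerate case $B=0$ used later in Section~\ref{subsec:explicit formula HS} must be recovered by a limiting argument. With these caveats your proposal is a faithful, correct skeleton of the argument in the cited reference.
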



\begin{remark}
An optimal tensor $A^{\ast}$ can be interpreted as the most rigid composite material in $G_{\theta}$ able to sustain the stress $\sigma$.
$HS(\sigma)$ is called Hashin--Shtrikman optimal energy bound. 
In practical conclusion, $G_{\theta}$ can be replaced by $L_{\theta}$ for compliance minimization.
\end{remark}

\section{Numerical applications}
Let us consider the case of parametrized periodicity cells.
For example, the square cell with a rectangular hole (as used in the seminal work of Bends\o e and Kikuchi \cite{BK}), parametrized by $m_1$, $m_2$, and denoted by $Y(m)$ (see Figure \ref{rectangular hole}). 
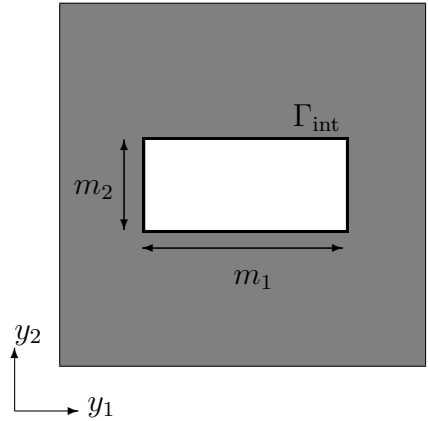
\begin{figure}[h]
\begin{center}
\setlength{\unitlength}{1.2cm}
	\begin{picture}(4.5,4.5)
	\setlength\fboxsep{0cm}
	\put(0.5,0.5){\colorbox{gray}{\framebox(4,4) {}}}
	\setlength{\fboxrule}{1.1pt}
	\put(1.4,2){\fcolorbox{black}{white}{\framebox(2.2,1) {}}}

	\put(1.2,2){\vector(0,1){1}}
	\put(1.2,3){\vector(0,-1){1}}
	\put(0.65,2.4){$m_2$}

	\put(1.4,1.8){\vector(1,0){2.2}}
	\put(3.6,1.8){\vector(-1,0){2.2}}
	\put(2.4,1.4){$m_1$}

	\put(3.05,3.15){$\Gamma_{\rm int}$}
	
	\put(0,0){\vector(1,0){0.7}}
	\put(0.8,0){$y_1$}
	\put(0,0){\vector(0,1){0.7}}
	\put(0,0.8) {$y_2$}
	\end{picture}
\end{center}
\caption{Square cell with a rectangular hole.}
\label{rectangular hole}
\end{figure}

We compute the so-called correctors or cell solutions:
\begin{equation*}
\left\{
\begin{aligned}
& \dv(A(e_{ij}+e(w_{ij})))&&\!\!\!\!\!=0 &&\text{ in } Y(m),\\
& \quad \!A(e_{ij}+e(w_{ij}))\cdot n&&\!\!\!\!\!=0 &&\text{ on } \Gamma_{\rm int},\\
&y\mapsto w_{ij}(y) && &&\ (0,1)^2\text{-periodic,}\\
\end{aligned}
\right.
\end{equation*}
where $e_{ij}=(e_{i}\otimes e_{j}+e_{j}\otimes e_{i})/2$ 
is a basis of the symmetric tensors of order $2$,
and $n$ is the normal to the hole’s boundary $\Gamma_{\rm int}$ in $Y(m)$.
Hence we find a unique solution (up to an additive translation) $w_{ij}\in H^1_{\#}(Y(m),\RR^2)$ to
the variational formulation:
\begin{equation} \label{eq:variational formulation wij}
\displaystyle
\int_{Y(m)}Ae(w_{ij})\cdot e(\phi)\,dy+\int_{Y(m)}Ae_{ij}\cdot e(\phi)\,dy=0 \quad \forall \phi\in H^1_{\#}(Y(m),\RR^2).
\end{equation}
 The tensor $A^{\ast}$ is then given by
\begin{equation} \label{eq:def tensor A*}
\displaystyle
A^{\ast}_{ijkl}=\int_{Y(m)}A(e_{ij}+e(w_{ij}))\cdot(e_{kl}+e(w_{kl})) \, dy, \quad i,j,k,l\in\{1,2\}.
\end{equation}


\section{Exercises}
\begin{problem}
Compute numerically $A^{\ast}$ for various parameters $m_1$, $m_2$.
\end{problem}

\begin{problem}
Orthotropic composite: check numerically that $A^{\ast}_{1112} = A^{\ast}_{2212} = 0$, then prove it theoretically. 
\end{problem}

\begin{problem}
Check that if $m_1\rightarrow 1$, then $A^{\ast}$ is close to the formula of a rank-$1$ laminate.
\end{problem}

\begin{problem}
What happens if $m_1\rightarrow 0$? Is $A^{\ast}$ close to $A$?
\end{problem}

\begin{problem}
If $m_1=m_2$, is $A^{\ast}$ isotropic ?
\end{problem}

\begin{problem}
Check numerically that $A^{\ast}$ is isotropic for a honeycomb structure with hexagonal holes.
\end{problem}

\chapter{Topology optimization by the homogenization method}\label{Topo opti homo method}
\section{Why topology optimization?} \label{topology optimization}

Shape optimization consists in ``shape tracking'' algorithms, hence the method cannot change the topology, such as the number of holes in the case of $2$-dimension.
On the other hand, topology optimization consists in ``shape capturing'' algorithms, that allow us to consider the optimization in a wider class which includes the different topological properties.
There are several methods of topology optimization but we focus on just one, called the homogenization method (see \cite{Allaire1, BS} and references therein). 
In the following of this chapter, we introduce a model problem for topology optimization with a constraint on the volume of holes and study it using the method of homogenization as mentioned in Chapter~\ref{Homogenization theory}.


\section{Homogenization method in the conductivity setting}
\label{sec:hom con setting}

In this section, we apply the homogenization method in the conductivity setting.
As we mentioned in Section~\ref{exist thm}, there is no minimizer for the corresponding minimizing problem (we will explain the detail below) in general.
To solve the problem, we introduce a set of generalized shapes as the limit of classical shapes.
More precisely, goals of the homogenization method for topology optimization are following:
\begin{itemize}
\item To introduce the notion of generalized shapes made of composite material,
\item To show that those generalized shapes are limits of sequences of classical shapes (in the sense of homogenization),
\item To compute the generalized objective function and its gradient,
\item To prove an existence theorem for optimal generalized shapes,
\item To deduce new numerical algorithms for topology optimization. 
\end{itemize}
In order to consider the limit of classical shapes, we recall one of the main results of homogenization theory. 
For the details of the proof, see \cite[Theorem 1.2.16 and 2.1.2]{Allaire1}.

\begin{theorem} \label{thm homogenization}
Let $\Omega$ be a bounded domain in $\RR^N$ and $\left(\chi_{\e}(x)\right)_{\e>0}$ be a sequence of characteristic functions in $\Omega$. 
Set $A_{\e}(x) = \alpha \chi_{\e}(x) + \beta (1-\chi_{\e}(x))$ for $x\in \Omega$.
Then there exists a subsequence, still denoted by $\chi_\e$, a density $0 \le \theta(x) \le 1$ and a homogenized tensor $A^{*}$ such that 
\begin{equation*}
\chi_\e\rightharpoonup\theta \quad {\rm weakly} \ \text{{\rm *}} \ {\rm in} \ L^\infty(\Omega; [0, 1])
\end{equation*}
and $A_{\e}$ converges in the sense of homogenization to $A^{*}$, i.e., for all $f \in L^2(\Omega)$, the solution $u_{\e}$ of the problem 
\begin{equation*}
\left\{
\begin{aligned}
- \dv \left( A_{\e}(x) \nabla u_{\e} \right) &= f &&\text{ in } \Omega, \\
u_{\e} &= 0 &&\text{ on } \pa \Omega
\end{aligned}
\right.
\end{equation*}
converges strongly in $L^2(\Omega)$ to the solution $u$ of the homogenized problem 
\begin{equation} \label{eq hom}
\left\{
\begin{aligned}
- \dv \left( A^{*}(x) \nabla u \right) &= f &&\text{ in } \Omega, \\
u &= 0 &&\text{ on } \pa \Omega.
\end{aligned}
\right.
\end{equation}
Furthermore, for almost all $x \in \Omega$, $A^{*}(x)$ belongs to the set $G_{\theta(x)}$ defined in Definition~ \ref{def Gtheta}.
\end{theorem}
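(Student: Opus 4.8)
The plan is to prove this theorem (Theorem~\ref{thm homogenization}) by combining the compactness of $L^\infty$-weak-$*$ convergence with the standard compactness theorem of $H$-convergence, together with the local characterization of homogenized tensors via the set $G_\theta$. First I would recall that the sequence $(\chi_\e)_{\e>0}$ is bounded in $L^\infty(\Omega;[0,1])$, which is the dual of the separable space $L^1(\Omega)$; hence by the Banach--Alaoglu theorem one can extract a subsequence (not relabeled) such that $\chi_\e \rightharpoonup \theta$ weakly-$*$ in $L^\infty(\Omega;[0,1])$ for some $\theta$ with $0\le\theta(x)\le1$ almost everywhere. Since $0<\alpha\le A_\e(x)\le\beta$ uniformly (more precisely $\alpha\le A_\e \le \beta$ as scalars, with uniform ellipticity and boundedness), the tensors $A_\e$ belong to a fixed equicoercive, equibounded class $\mathcal{M}(\alpha,\beta;\Omega)$. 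By the compactness theorem for $H$-convergence (Murat--Tartar; see \cite{MT,Allaire1}), up to a further subsequence there exists $A^*\in\mathcal{M}(\alpha,\beta;\Omega)$ such that $A_\e$ $H$-converges to $A^*$. By definition of $H$-convergence, this means precisely that for every $f\in H^{-1}(\Omega)$ (in particular every $f\in L^2(\Omega)$), the solutions $u_\e\in H^1_0(\Omega)$ of $-\dv(A_\e\nabla u_\e)=f$ satisfy $u_\e\rightharpoonup u$ weakly in $H^1_0(\Omega)$ and $A_\e\nabla u_\e \rightharpoonup A^*\nabla u$ weakly in $L^2(\Omega)^N$, where $u$ solves the homogenized problem \eqref{eq hom}. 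The strong $L^2(\Omega)$ convergence $u_\e\to u$ then follows from the compact embedding $H^1_0(\Omega)\hookrightarrow\hookrightarrow L^2(\Omega)$ (Rellich), applied to the weakly convergent sequence $u_\e\rightharpoonup u$ in $H^1_0(\Omega)$.

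It remains to establish the final and most substantive assertion: for almost every $x\in\Omega$, $A^*(x)\in G_{\theta(x)}$. The difficulty here is that $G_{\theta(x)}$ is defined (Definition~\ref{def Gtheta}) via \emph{periodic} homogenization of the two phases $\alpha,\beta$ in fixed proportions, whereas $A^*$ arises from an arbitrary (non-periodic) sequence of characteristic functions. The key tool is a \emph{locality} property of $H$-limits combined with the fact that, on any subdomain where the volume fraction is (approximately) constant, the $H$-limit can be matched by a periodic construction. Concretely, I would invoke the structure theorem for $G$-closure in the two-phase setting (see \cite[Theorem 2.1.2]{Allaire1}): one shows that $H$-limits of $\{\chi_\e\alpha+(1-\chi_\e)\beta\}$ with $\chi_\e\rightharpoonup\theta$ coincide, pointwise almost everywhere, with the $G_\theta$-set, using a localization argument (restrict to small balls, rescale), a blow-up/averaging argument to reduce to constant density, and the density of periodic microstructures (in particular sequential laminates, which by Theorem~\ref{thm:Gtheta} attain the whole boundary of $G_\theta$) together with the $H$-continuity of such constructions. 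This is exactly the content quoted from \cite[Theorem 1.2.16 and 2.1.2]{Allaire1}, so in the lecture-note context I would state it as such and refer there for the full proof.

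The main obstacle is precisely this last step: passing from the global $H$-convergence statement to the pointwise membership $A^*(x)\in G_{\theta(x)}$. The subtlety is that one must control the interaction between the spatial variation of $\theta$ and the oscillation scale of $\chi_\e$; the rigorous argument requires a Lebesgue-point / differentiation argument in $x$, a careful rescaling that freezes $\theta$ to its local value, and the (non-trivial) fact that the $G$-closure set is \emph{stable} and \emph{attained} by periodic composites — none of which is elementary. Since the excerpt explicitly grants us \cite[Theorem 1.2.16 and 2.1.2]{Allaire1}, the honest and efficient route for these notes is to deduce Theorem~\ref{thm homogenization} directly from those two cited results: Theorem 1.2.16 supplies the $H$-convergence compactness (and hence the convergence of $u_\e$), while Theorem 2.1.2 supplies the pointwise bound $A^*(x)\in G_{\theta(x)}$. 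Thus the proof reduces to assembling the extraction of subsequences (weak-$*$ for $\chi_\e$, $H$-convergence for $A_\e$) and reading off the two desired conclusions from the cited theorems, upgrading weak $L^2$ convergence of $u_\e$ to strong convergence via Rellich.
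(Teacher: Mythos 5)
Your proposal is correct and follows essentially the same route as the lecture notes, which do not prove this theorem but simply refer to \cite[Theorems 1.2.16 and 2.1.2]{Allaire1} for the $H$-convergence compactness and the localization $A^{*}(x)\in G_{\theta(x)}$ respectively. Your sketch (weak-$*$ extraction via Banach--Alaoglu, Murat--Tartar compactness, Rellich for the strong $L^2$ convergence, and deferral of the pointwise $G$-closure statement to the cited results) is a faithful and in fact more detailed account of the standard argument than the text itself provides.
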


\subsection{Relaxed problem for the conductivity model}
\label{sec:app hom method}
In this subsection, we introduce the conductivity model and derive generalized shapes as the limits of classical shapes by the homogenization method.
We impose a simplifying assumption: the ``holes" (with a Neumann, free boundary condition) are filled with a weak (``ersatz") material $\alpha$, while the other space is filled with a material $\beta$, that is, $\alpha<\beta$. 
We consider a membrane we introduced in Chapter~\ref{Parametric optimal design} with two thicknesses $\alpha$ and $\beta$, whose distribution is give by $h_{\chi} = \alpha \chi + \beta (1 - \chi)$, where $\chi$ is a characteristic function which denotes the position of the holes.
If $f \in L^{2}(\Omega)$ is the applied load, the displacement $u_\chi$ satisfies
\begin{equation*}
\left\{
\begin{aligned}
-\dv (h_{\chi} \nabla u_{\chi}) &= f &&\text{ in } \Omega, \\ 
u_{\chi} &= 0 \quad &&\text{ on } \pa \Omega. 
\end{aligned}
\right.
\end{equation*}
We will optimize the membrane's shape amounts by considering the minimizing problem
\begin{equation} \label{eq minimizer}
\inf_{\chi \in \mathcal{U}_{\rm ad}} J(\chi), 
\end{equation}
with 
\begin{equation*}
	\mathcal{U}_{\rm ad} = \left\{ \chi \in L^{\infty}(\Omega; \{0,1\})\;:\; \displaystyle  \int_{\Omega} \chi(x)\,dx = V_{\alpha} \right\} \quad {\rm and} \quad J(\chi) = \int_{\Omega} j(u_\chi)\,dx,
\end{equation*}
where $V_\alpha$ is a given positive constant, denoting the volume of the holes and 
\begin{equation} \label{eq j example}
j(u)=fu, \,\, {\rm or} \,\,  |u-u_0|^2.
\end{equation}
Since there is no minimizer of \eqref{eq minimizer} in general, we apply Theorem~\ref{thm homogenization} to introduce a set of generalized shapes.
Let $\chi_{\e}$ be a sequence (minimizing sequence of \eqref{eq minimizer} or not) of characteristic functions. 
Applying Theorem~\ref{thm homogenization}, we see that there exist $\theta$ and $A^*$ such that
\begin{equation*}
\begin{aligned}
&\chi_\e\rightharpoonup\theta & &\,\, {\rm weakly} \ \text{*} \ {\rm in} \ L^\infty(\Omega; [0, 1]),\\
&A_\e\to A^* & &\,\, {\rm in \ the \ sense \ of \ homogenization}
\end{aligned}
\end{equation*}
and
\begin{equation*}
J(\chi_{\e}) = \int_{\Omega} j(u_{\e}) \,dx \to \int_{\Omega} j(u) \,dx =:J(\theta,A^{*}),
\end{equation*}
where $u$ is the solution of \eqref{eq hom} and $j$ is defined by \eqref{eq j example}.
From this convergence result, we define the set of admissible homogenized shapes
\begin{equation*}
\mathcal{U}^{*}_{\rm ad} := \left\{ (\theta, A^{*})  \in L^{\infty}\left( \Omega; [0,1] \times \mathbb{R}^{N^{2}} \right)\;:\;A^{*}(x) \in G_{\theta(x)} \,\, \text{a.e. in} \,\, \Omega, \,\, \displaystyle  \int_{\Omega} \theta(x) \,dx = V_{\alpha} \right\}
\end{equation*}
 and consider the following relaxed or homogenized optimization problem:
\begin{equation} \label{eq hom op pro}
\inf_{(\theta, A^{*}) \in \mathcal{U}^{*}_{\rm ad}} J(\theta, A^{*}). 
\end{equation}
We easily check that $\mathcal{U}_{\rm ad} \subset \mathcal{U}^{*}_{\rm ad}$ if we identify $\chi\in\mathcal{U}_{\rm ad}$ with the pair $(\chi, \alpha\chi {\rm Id}+\beta(1-\chi){\rm Id})\in\mathcal{U}^*_{\rm ad}$.
The inclusion implies that we have enlarged the set of admissible shapes.
Moreover, one can prove that the relaxed problem \eqref{eq hom op pro} always admits an optimal solution, and the homogenized formulation is a relaxation of the original topology optimization problem as follows.

\begin{theorem} \label{thm cla hom}
The homogenized formulation is a relaxation of the original topology optimization problem in the sense that: 
\begin{itemize}
\item there exists, at least, one optimal composite shape $(\theta, A^{*})\in\mathcal{U}_{\rm ad}^*$, i.e. a minimizer of \eqref{eq hom op pro},
\item for any minimizing sequence $\left(\chi_n\right)_{n\in\NN}$ of \eqref{eq minimizer}, there exists a minimizer $(\theta, A^*)\in \mathcal{U}^*_{\rm ad}$ of \eqref{eq hom op pro} such that, up to subsequence,  $\chi_n$ converges weakly {\rm *} in $L^\infty(\Omega; [0, 1])$ to $\theta$ and $A_n:=\alpha\chi_n+\beta(1-\chi_n)$ converges to $A^*$ in the sense of homogenization,
\item any composite optimal solution $(\theta, A^{*})\in\mathcal{U}_{\rm ad}^*$ of \eqref{eq hom op pro} is the limit of a minimizing sequence of \eqref{eq minimizer}.
\end{itemize}
Moreover, the infima of the original and homogenized objective functions coincide 
\begin{equation*}
\inf_{\chi \in \mathcal{U}_{\rm ad}} J(\chi) = \min_{(\theta, A^{*}) \in \mathcal{U}^{*}_{\rm ad}} J(\theta, A^{*}). 
\end{equation*}
\end{theorem}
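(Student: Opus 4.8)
The plan is to deduce everything from the two facts furnished by Theorem~\ref{thm homogenization}: the compactness of the homogenization procedure, and the strong $L^2(\Omega)$ convergence of the state $u_\e$ to the homogenized state $u$ along any homogenization-convergent subsequence. From the latter, together with the form of $j$ (either $j(u)=fu$ or $j(u)=|u-u_0|^2$), I would first record that $J$ is continuous along such convergences: if $\chi_\e\rightharpoonup\theta$ weakly-$*$ in $L^\infty(\Omega;[0,1])$ and $A_\e\to A^*$ in the sense of homogenization, then $J(\chi_\e)=\int_\Omega j(u_\e)\,dx\to\int_\Omega j(u)\,dx=J(\theta,A^*)$. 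This continuity is the only analytic ingredient needed beyond Theorem~\ref{thm homogenization} itself.

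Next I would prove that the two infima coincide, which, once available, makes the three bullet points essentially formal. The inequality $\inf_{\mathcal{U}_{\rm ad}^*}J\le\inf_{\mathcal{U}_{\rm ad}}J$ is immediate from the inclusion $\mathcal{U}_{\rm ad}\subset\mathcal{U}_{\rm ad}^*$ (via $\chi\mapsto(\chi,\alpha\chi\,{\rm Id}+\beta(1-\chi){\rm Id})$) together with the fact that the two functionals agree on $\mathcal{U}_{\rm ad}$. The converse inequality is the crux: given an arbitrary $(\theta,A^*)\in\mathcal{U}_{\rm ad}^*$, I must exhibit a sequence of genuine characteristic functions $\chi_n$, asymptotically respecting the volume constraint, whose homogenization limit is precisely $(\theta,A^*)$. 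Since $A^*(x)\in G_{\theta(x)}$ a.e. by Definition~\ref{def Gtheta}, each pointwise value of $(\theta,A^*)$ is realized in the limit by an explicit periodic microstructure — here the sequential-laminate formulas of Section~\ref{sec:hom con setting} are the natural building blocks; one then lets $\theta$ and $A^*$ vary with $x$ by a locality argument, patches the microstructures on a mesh of vanishing size, extracts a diagonal sequence, and finally corrects the volume by a small perturbation that does not affect the limit of $J$. The continuity recorded above then gives $J(\chi_n)\to J(\theta,A^*)$, so $\inf_{\mathcal{U}_{\rm ad}}J\le J(\theta,A^*)$; taking the infimum over $(\theta,A^*)\in\mathcal{U}_{\rm ad}^*$ finishes the equality. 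I expect this construction-and-patching step to be the main obstacle, and for the full details I would refer to \cite[Theorem~2.1.2 and Chapter~3]{Allaire1}. Note that this step simultaneously establishes the third bullet point (any composite optimal solution is the limit of a minimizing sequence of \eqref{eq minimizer}).

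With the equality of infima in hand, I would finish as follows. Let $(\chi_n)_{n\in\NN}$ be any minimizing sequence of \eqref{eq minimizer}. By the compactness statement in Theorem~\ref{thm homogenization}, a subsequence satisfies $\chi_n\rightharpoonup\theta$ weakly-$*$ in $L^\infty(\Omega;[0,1])$ and $A_n:=\alpha\chi_n+\beta(1-\chi_n)\to A^*$ in the sense of homogenization, with $A^*(x)\in G_{\theta(x)}$ a.e.; testing the weak-$*$ convergence against the constant function $1$ gives $\int_\Omega\theta\,dx=\lim_n\int_\Omega\chi_n\,dx=V_\alpha$, hence $(\theta,A^*)\in\mathcal{U}_{\rm ad}^*$. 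By the continuity of $J$ and the equality of infima, $J(\theta,A^*)=\lim_n J(\chi_n)=\inf_{\mathcal{U}_{\rm ad}}J=\min_{\mathcal{U}_{\rm ad}^*}J$, so $(\theta,A^*)$ is a minimizer of \eqref{eq hom op pro}. This yields at once the existence statement (first bullet) and the convergence statement (second bullet), completing the argument.
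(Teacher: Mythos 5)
Your outline is correct and follows the standard relaxation argument; note that the paper itself gives no proof of this theorem, referring entirely to \cite[Theorem 3.2.1]{Allaire1}, so your sketch actually supplies more structure than the text does. The skeleton is sound: continuity of $J$ along homogenization convergence (from the strong $L^2$ convergence of states in Theorem \ref{thm homogenization} plus the growth assumptions on $j$), the trivial inequality from the inclusion $\mathcal{U}_{\rm ad}\subset\mathcal{U}_{\rm ad}^*$, and the extraction argument for minimizing sequences are all handled properly, including the check that the volume constraint passes to the weak-$*$ limit. The one place where the entire difficulty is concentrated --- showing that every $(\theta,A^*)\in\mathcal{U}_{\rm ad}^*$ is the homogenization limit of genuine characteristic functions satisfying the volume constraint, which requires the locality and metrizability properties of $H$-convergence, the realization of every element of $G_{\theta}$ by periodic mixtures, a piecewise-constant approximation with a diagonal extraction, and a volume correction --- is only gestured at and deferred to the reference; this is acceptable here since it matches the paper's own level of detail, but it is the step one would have to write out to call this a complete proof.
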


For the proof of Theorem~\ref{thm cla hom}, see \cite[Theorem 3.2.1]{Allaire1}.
Theorem~\ref{thm cla hom} means that the topology optimization problem is not changed by relaxation.
Moreover, close to any optimal composite shape, we are sure to find a quasi-optimal classical shape.
This theorem is at the root of new numerical algorithms.

\begin{remark}
The homogenized formulation is similar to a parametric or sizing optimization problem.
This is the main reason why the homogenization method is computationally cheap and works like a shape capturing algorithm.
Moreover, computing gradients or optimality conditions are thus very simple.
On the other hand, the design parameters $(\theta, A^{\ast})$ are quite complicated.
Another further (drastic and unjustified) simplification is to suppress the parameter $A^{\ast}$ and to keep only the material density $\theta$.
This is the main idea of the SIMP method \cite{BS}, we will mention in Section~\ref{sec:SIMP}.
\end{remark}

\subsection{Optimality conditions} \label{sec:optimality conditions}
In Section~\ref{sec:app hom method}, we introduced the relaxed formulation \eqref{eq hom op pro} of the original optimization problem \eqref{eq minimizer}.
One of the advantages of the relaxed problem is that we always have the existence of a minimizer of \eqref{eq hom op pro}.
There is also another advantage: we can get the optimality condition for the relaxed problem since it is possible to perform variations of composite designs.
In this subsection, we will consider the optimality condition for problem \eqref{eq hom op pro}.

We now compute the gradient of the following objective function
\begin{equation*}
J(\theta,A^{\ast})=\int_{\Omega}|u-u_0|^2\,dx,
\end{equation*}
where $u$ is the solution to \eqref{eq hom} and $u_{0}$ is a given function in $L^2(\Omega)$. 
We introduce the adjoint state $p$ of $u$ as the unique solution in $H^1_{0}(\Omega)$ of 
\begin{equation}\label{eq hom adj}
\left\{
\begin{aligned}
-\dv (A^{\ast}\nabla p)&=-2(u-u_0) &&\text{ in }\Omega, \\
p&=0 &&\text{ on }\partial\Omega.
\end{aligned}
\right.
\end{equation}
By the use of the adjoint state $p$, we can obtain the derivative of the functional $J$.

\begin{proposition} \label{prop deriv J(the, A)}
Let $\alpha>0$ and $\mathcal{M}_\alpha$ be the set of symmetric positive definite matrices $M$ such that $M\ge\alpha {\rm Id}$.
The functional $J$ is differentiable with respect to $A^*$ in $L^\infty(\Omega; \mathcal{M}_\alpha)$, and its derivative is
\begin{equation*}
\nabla_{A^*}J(\theta, A^*) =\nabla u \otimes\nabla p,
\end{equation*}
i.e.,
\begin{equation*}
\left\langle\nabla_{A^*}J(\theta, A^*), B^*\right\rangle =\int_\Omega B^*\nabla u\cdot\nabla p\,dx,\quad B^*\in L^\infty(\Omega; \mathcal{M}_\alpha)
\end{equation*}
where $u\in H^1_0(\Omega)$ is the unique solution of \eqref{eq hom} and $p$ is the adjoint state \eqref{eq hom adj} of $u$.
\end{proposition}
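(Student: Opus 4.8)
The plan is to repeat, in the $A^*$ variable, the three-step scheme already used for the thickness variable in Lemma~\ref{Lemma:der u}, Lemma~\ref{Lemma:J'} and Theorem~\ref{Theorem:adjoint}: first, prove that the state map $A^*\mapsto u(A^*)$ is Fréchet differentiable and identify its derivative as the solution of a linearized problem; second, compose with the smooth quadratic functional $u\mapsto\int_\Omega|u-u_0|^2\,dx$; third, use the adjoint state $p$ of \eqref{eq hom adj} to eliminate the state derivative and obtain the closed formula $\nabla_{A^*}J=\nabla u\otimes\nabla p$. I would start by noting that if $A^*\ge\alpha\,{\rm Id}$ a.e.\ then every tensor field $\tilde A$ with $\|\tilde A-A^*\|_{L^\infty(\Omega)}$ small is still uniformly coercive, so the homogenized state equation \eqref{eq hom} remains uniquely solvable throughout an $L^\infty$-neighbourhood of $A^*$; the role of $\mathcal{M}_\alpha$ is precisely to furnish a uniform ellipticity constant for all the estimates below.

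For the first step I would fix $A^*$ and a perturbation direction $B^*\in L^\infty(\Omega;\mathbb{R}^{N^2})$, let $u_t\in H^1_0(\Omega)$ solve the state equation with tensor $A^*+tB^*$ for $|t|$ small, and argue entirely at the level of weak formulations. Subtracting the weak formulation for $u=u(A^*)$ from that for $u_t$ and testing with $u_t-u$ yields the first-order bound $\left\|\nabla(u_t-u)\right\|_{L^2(\Omega)}\le C\,|t|\,\|B^*\|_{L^\infty(\Omega)}$. Introducing $v\in H^1_0(\Omega)$, the unique Lax--Milgram solution of
\begin{equation*}
\left\{
\begin{aligned}
-\dv(A^*\nabla v)&=\dv(B^*\nabla u) &&\text{ in }\Omega,\\
v&=0 &&\text{ on }\partial\Omega,
\end{aligned}
\right.
\end{equation*}
I would subtract $t$ times the weak formulation of this problem from the identity linking $u_t$ and $u$, test with $u_t-u-tv$, and feed in the first-order bound; this should give $\left\|\nabla(u_t-u-tv)\right\|_{L^2(\Omega)}\le C\,t^2\,\|B^*\|^2_{L^\infty(\Omega)}$, i.e.\ $u_t=u+tv+o(t)$ in $H^1_0(\Omega)$, hence $\langle u'(A^*),B^*\rangle=v$. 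This second-order estimate is the main obstacle; it is the exact analogue of the computation in the proof of Lemma~\ref{Lemma:der u}, the only difference being that the perturbation now multiplies $\nabla u$ as a matrix rather than a scalar.

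The second step is then immediate: the map $u\mapsto\int_\Omega|u-u_0|^2\,dx$ has derivative $w\mapsto\int_\Omega 2(u-u_0)w\,dx$ on $L^2(\Omega)$ and $H^1_0(\Omega)\hookrightarrow L^2(\Omega)$, so composition gives $\langle J'(\theta,A^*),B^*\rangle=\int_\Omega 2(u-u_0)v\,dx$. For the third step I would multiply the equation for $v$ by $p$ and the adjoint equation \eqref{eq hom adj} by $v$, integrate by parts in each, and use the symmetry of $A^*$ (valid here because $A^*\in G_{\theta}$ is homogenized from symmetric phases) to identify $\int_\Omega A^*\nabla v\cdot\nabla p\,dx=\int_\Omega A^*\nabla p\cdot\nabla v\,dx$. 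Combining the two identities gives
\begin{equation*}
\int_\Omega 2(u-u_0)v\,dx=-\int_\Omega A^*\nabla p\cdot\nabla v\,dx=\int_\Omega B^*\nabla u\cdot\nabla p\,dx,
\end{equation*}
so that $\langle J'(\theta,A^*),B^*\rangle=\int_\Omega B^*\nabla u\cdot\nabla p\,dx=\int_\Omega(\nabla u\otimes\nabla p):B^*\,dx$ for every $B^*$, which is exactly $\nabla_{A^*}J(\theta,A^*)=\nabla u\otimes\nabla p$. The only remaining (routine) point is to check that $\nabla u\cdot\nabla p\in L^1(\Omega)$, so that the pairing above indeed defines a continuous linear form on $L^\infty(\Omega;\mathcal{M}_\alpha)$.
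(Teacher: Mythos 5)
Your proposal is correct and follows exactly the route the paper indicates: the paper omits the proof, stating only that it uses "the same strategy as in Lemma~\ref{Lemma:der u}, Lemma~\ref{Lemma:J'} and Theorem~\ref{Theorem:adjoint}," and your three-step argument (Fréchet differentiability of the state map via the first- and second-order energy estimates, composition with the quadratic objective, elimination of $v$ by the adjoint state using the symmetry of $A^*$) is precisely that strategy carried out with a matrix-valued perturbation in place of the scalar thickness.
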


\begin{proof}
We can prove the proposition by the same strategy that we used in Lemma~\ref{Lemma:der u}, \ref{Lemma:J'} and Theorem~\ref{Theorem:adjoint}.
Thus we omit the proof.
\end{proof}

Remark that the partial derivative with respect to $\theta$ vanishes because $\theta$ appears only in the constraint of $A^*$.
Moreover, we can also consider the Lagrangian
\begin{equation*}
\mathcal{L}(A^{\ast}, v, q)=\int_{\Omega}|v-v_0|^2\,dx +\int_{\Omega}A^{\ast}\nabla v\cdot\nabla q\,dx-\int_{\Omega}fq \,dx,
\end{equation*}
where $(A^*, v, q)\in L^\infty(\Omega; \mathcal{M}_\alpha)\times H^1_0(\Omega)\times H^1_0(\Omega)$.
The partial derivatives of $\mathcal{L}$ with respect to $q$ and $v$ yield the state and adjoint state respectively. 
Furthermore, the functional $J$ is also differentiable with respect to $A^*$ with derivative 
\begin{equation*}
\nabla_{A^{\ast}}J(\theta,A^{\ast})=\frac{\partial \mathcal{L}}{\partial A^{\ast}}(A^{\ast},u, p)=\nabla u\otimes\nabla p.
\end{equation*}

The essential consequence of this section is the following optimality condition.

\begin{theorem} \label{thm:simplification}
Let $(\theta,A^{\ast})$ be a global minimizer of $J$ in $\mathcal{U}_{\rm ad}^{\ast}$ which admits $u$ and $p$ as state and adjoint. 
Then there exists $(\tilde{\theta},\tilde{A}^{\ast})$, another global minimizer of $J$ in $\mathcal{U}_{\rm ad}^{\ast}$, which admits the same state and adjoint $u$ and $p$, and such that $\tilde{A}^{\ast}$ is a rank-$1$ simple laminate.
\end{theorem}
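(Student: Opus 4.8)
The plan is to show that one can replace the tensor $A^{\ast}(x)$ by a rank-$1$ laminate at (almost) every point, without changing the state $u$ and the adjoint $p$, and without increasing the value of $J$; then, since $J$ depends on $(\theta, A^{\ast})$ only through $u$ (via the state equation) and the objective integrand, the modified pair will still be a global minimizer. The starting observation is that, by Proposition \ref{prop deriv J(the, A)}, the minimizer $(\theta, A^{\ast})$ satisfies a pointwise optimality condition: for a.e. $x \in \Omega$, the tensor $A^{\ast}(x)$ minimizes, over the set $G_{\theta(x)}$, a linear functional of the form $B^{\ast} \mapsto B^{\ast} \nabla u(x) \cdot \nabla p(x)$ subject to the constraint that the resulting state and adjoint are unchanged --- equivalently, one should think of minimizing the Lagrangian $\mathcal{L}(A^{\ast}, u, p)$ (which, with $u$, $p$ held fixed at the optimal state and adjoint, reduces to $\int_\Omega A^{\ast}\nabla u \cdot \nabla p\, dx$ up to terms independent of $A^{\ast}$) over $A^{\ast}(x) \in G_{\theta(x)}$ pointwise.

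First I would make the pointwise localization precise: because the constraint set $\mathcal{U}_{\rm ad}^{\ast}$ is defined by the local membership $A^{\ast}(x) \in G_{\theta(x)}$ together with the single scalar volume constraint $\int_\Omega \theta\, dx = V_\alpha$, and because replacing $A^{\ast}$ by another field with the same $\theta$ does not affect that scalar constraint, the search for the best $A^{\ast}$ decouples into independent pointwise problems
\[
\min_{M \in G_{\theta(x)}} \big( M\, \nabla u(x) \cdot \nabla p(x) \big),
\]
for a.e. $x$. Here I must handle the case $\nabla u(x) = 0$ trivially (any choice works, pick a rank-$1$ laminate arbitrarily) and otherwise treat the genuine minimization. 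The key structural input is that $G_{\theta}$ is exactly characterized by the Hashin--Shtrikman bounds (Theorem \ref{thm:Gtheta}), and that its "extreme" tensors --- in particular those attaining the energy bound in a prescribed pair of directions --- are rank-$N$, and more relevantly here rank-$1$, sequential laminates (Lemma \ref{lem simple laminate}, Lemma \ref{lem seq lam}, Proposition \ref{prop:rank-N laminate}). Since $M \mapsto M\,\xi\cdot\eta$ is linear, its minimum over the (compact) set $G_{\theta(x)}$ is attained at an extreme point, and one shows that for a linear functional depending on the two fixed vectors $\nabla u(x)$ and $\nabla p(x)$ the minimizing extreme tensor can be taken to be a rank-$1$ laminate of the two phases in proportion $\theta(x)$, with lamination direction chosen appropriately from the geometry of $\nabla u(x)$, $\nabla p(x)$ (this is the conductivity analogue of the rank-$N$ optimality in Proposition \ref{prop:rank-N laminate}, specialized to a single pair of fields).

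Having produced, for a.e. $x$, a rank-$1$ laminate $\tilde{A}^{\ast}(x) \in G_{\theta(x)}$ with
\[
\tilde{A}^{\ast}(x)\,\nabla u(x)\cdot\nabla p(x) \le A^{\ast}(x)\,\nabla u(x)\cdot\nabla p(x),
\]
I would then set $\tilde\theta = \theta$ and check measurability of $x \mapsto \tilde{A}^{\ast}(x)$ (choosing a measurable selection of the minimizer, using that the lamination parameters depend measurably on $\theta(x)$, $\nabla u(x)$, $\nabla p(x)$) so that $(\tilde\theta, \tilde{A}^{\ast}) \in \mathcal{U}_{\rm ad}^{\ast}$. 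Finally I would verify that $(\tilde\theta, \tilde{A}^{\ast})$ keeps the same state and adjoint: this is the delicate point. The cleanest route is to argue via the Lagrangian / stationarity. Since $(\theta, A^{\ast})$ is a global minimizer with state $u$ and adjoint $p$, one has $J(\theta, A^{\ast}) = \mathcal{L}(A^{\ast}, u, p)$ and, by the optimality condition, $\mathcal{L}(A^{\ast}, u, p) \le \mathcal{L}(B^{\ast}, u, p)$ for all admissible $B^{\ast}$; testing with $\tilde{A}^{\ast}$ and using the pointwise inequality above gives $\mathcal{L}(\tilde{A}^{\ast}, u, p) \le \mathcal{L}(A^{\ast}, u, p) = \min_{\mathcal{U}_{\rm ad}^{\ast}} J$. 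On the other hand, letting $\tilde u$, $\tilde p$ be the state and adjoint associated to $\tilde{A}^{\ast}$, one has $J(\tilde\theta, \tilde{A}^{\ast}) = \mathcal{L}(\tilde{A}^{\ast}, \tilde u, \tilde p) \le \mathcal{L}(\tilde{A}^{\ast}, u, p)$ because $(\tilde u, \tilde p)$ are the stationary (and, for this self-adjoint-type structure, minimizing) points of $\mathcal{L}(\tilde{A}^{\ast}, \cdot, \cdot)$. Chaining the inequalities forces $J(\tilde\theta, \tilde{A}^{\ast}) = \min_{\mathcal{U}_{\rm ad}^{\ast}} J$, so $(\tilde\theta, \tilde{A}^{\ast})$ is a global minimizer; and equality throughout forces $\mathcal{L}(\tilde{A}^{\ast}, u, p) = \mathcal{L}(\tilde{A}^{\ast}, \tilde u, \tilde p)$ with $(\tilde u, \tilde p)$ the unique stationary point, hence $\tilde u = u$, $\tilde p = p$.

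The main obstacle I anticipate is the step establishing that the minimizing extreme tensor of the linear functional $M \mapsto M\nabla u(x)\cdot\nabla p(x)$ over $G_{\theta(x)}$ can be realized by a \emph{rank-1} (rather than merely rank-$N$) sequential laminate, and doing so with a measurable choice of lamination direction; this requires a careful reading of the extreme-point structure of $G_\theta$ from the Hashin--Shtrikman description and an explicit minimization over lamination directions using formula \eqref{slform}. A secondary subtlety is justifying the decoupling into pointwise problems rigorously (a standard but non-trivial localization/measurable-selection argument), and making sure the degenerate sets where $\nabla u$ or $\nabla p$ vanish are handled without breaking admissibility.
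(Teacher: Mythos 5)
Your overall skeleton (Euler inequality $\Rightarrow$ pointwise minimization of $M\mapsto M\nabla u\cdot\nabla p$ over $G_{\theta(x)}$, then replace $A^\ast(x)$ by a rank-$1$ laminate) matches the paper, but the two steps you flag as ``delicate'' are exactly where the content lies, and your proposed handling of the second one does not work. The obstacle you name first --- showing the pointwise minimizer can be taken rank-$1$ --- is resolved in the paper not by an abstract extreme-point argument but by the elementary identity
\begin{equation*}
4A^0 e\cdot e' = A^0(e+e')\cdot(e+e') - A^0(e-e')\cdot(e-e'), \qquad e=\tfrac{\nabla u}{|\nabla u|},\ e'=\tfrac{\nabla p}{|\nabla p|},
\end{equation*}
combined with the eigenvalue bounds $\lambda_\theta^-\le\lambda_i\le\lambda_\theta^+$ of Theorem \ref{thm:Gtheta}: this gives the lower bound $\lambda_\theta^-|e+e'|^2-\lambda_\theta^+|e-e'|^2$, which is attained precisely by a rank-$1$ laminate $A^1$ having $e+e'$ and $e-e'$ as eigenvectors with eigenvalues $\lambda_\theta^-$ and $\lambda_\theta^+$. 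Without this (or an equivalent) computation your proof is incomplete, since linearity of the functional only places the minimizer on the boundary of $G_\theta$, not at a rank-$1$ laminate.

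The more serious gap is your argument that the state and adjoint are unchanged. The Lagrangian $\mathcal{L}(A^\ast,v,q)=\int_\Omega|v-u_0|^2\,dx+\int_\Omega A^\ast\nabla v\cdot\nabla q\,dx-\int_\Omega fq\,dx$ is a saddle function in $(v,q)$, not one that $(\tilde u,\tilde p)$ minimizes; the problem $j(u)=|u-u_0|^2$ is not self-adjoint, so the inequality $\mathcal{L}(\tilde A^\ast,\tilde u,\tilde p)\le\mathcal{L}(\tilde A^\ast,u,p)$ you invoke is unjustified, and even granting equality of Lagrangian values at the end, equality of values at a saddle point and at another point does not force the points to coincide. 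The correct mechanism, which your proposal misses, is pointwise and operator-theoretic: one shows that \emph{any} optimal tensor must satisfy $A^\ast(e+e')=\lambda_\theta^-(e+e')$ and $A^\ast(e-e')=\lambda_\theta^+(e-e')$ (otherwise the eigenvalue bounds give a strict inequality contradicting optimality), and the rank-$1$ laminate $A^1$ satisfies the same two identities. Hence $A^1\nabla u=A^\ast\nabla u$ and $A^1\nabla p=A^\ast\nabla p$ almost everywhere, so the fluxes are literally unchanged and $u$, $p$ solve the state and adjoint equations for $\tilde A^\ast=A^1$ verbatim. This is what makes ``same state and adjoint'' a triviality rather than something to be extracted from energy inequalities.
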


\begin{proof}
We fix $\theta\in L^\infty(\Omega; [0, 1])$ with
\begin{equation*}
\int_\Omega\theta(x)\,dx=V_\alpha.
\end{equation*}
We remark that  by Theorem~\ref{thm:Gtheta}, the set
\begin{equation*}
\mathcal{G}_\theta:=\left\{A^0\in L^\infty\left(\Omega; \RR^{N^2}\right) \;:\; A^0(x)\in G_{\theta(x)}\, {\rm a.e.} \, x\in\Omega\right\}
\end{equation*}
is a convex set.
Then, by Theorem~\ref{thm Euler ineq} and Proposition~\ref{prop deriv J(the, A)}, we have
\begin{equation*}
\int_{\Omega}(A^0-A^\ast)\nabla u\cdot\nabla p\,dx\ge 0 \quad  \forall A^0\in\mathcal{G}_\theta.
\end{equation*}
We easily check that the above  inequality is equivalent to the point-wise constraint
\begin{equation} \label{eq:pointwise constraint}
A^{\ast}(x)\nabla u(x)\cdot\nabla p(x)=\displaystyle\min_{A^{0}\in G_{\theta(x)}}(A^{0}\nabla u(x)\cdot\nabla p(x))
\end{equation}
for almost all $x\in\Omega$.
Fix $x\in\Omega$ which satisfies \eqref{eq:pointwise constraint}.
If $\nabla u(x)$ or $\nabla p(x)$ vanishes, then any $A^{\ast}\in G_{\theta(x)}$ is optimal in \eqref{eq:pointwise constraint} (the fact that $u$ and $p$ does not change at these points is more delicate to establish and we refer to \cite{Allaire1,TA} for details).
Otherwise, we define two unit vectors
\begin{equation*}
e=\frac{\nabla u}{|\nabla u|}\ \text{ and }\ e'=\frac{\nabla p}{|\nabla p|}.
\end{equation*}
Then \eqref{eq:pointwise constraint} is equivalent to finding a minimizer $A^*(x)$ of
\begin{equation*}
4A^0e\cdot e'=A^0(e+e')\cdot(e+e')-A^0(e-e')\cdot(e-e').
\end{equation*}
A lower bound is easily seen to be
\begin{equation} \label{eq:lower bound}
\begin{aligned}
\displaystyle\min_{A^{0}\in G_{\theta}}4A^0e\cdot e'
&\ge\min_{A^0\in G_{\theta}}A^0(e+e')\cdot(e+e')-\max_{A^0\in G_{\theta}}A^0(e-e')\cdot(e-e')\\
&=\lambda_{\theta}^-|e+e'|^2-\lambda_{\theta}^+|e-e'|^2,
\end{aligned}
\end{equation}
where $\lambda^\pm_\theta$ is defined in Theorem~\ref{thm:Gtheta}.
We can see that the lower bound is attained by a matrix $A^1$ corresponding to a rank-1 laminate (see \cite[Remark 2.2.14]{Allaire1}).
More precisely, $A^1$ satisfies
\begin{equation} \label{eq:con A1}
A^1(e+e')=\lambda^-_\theta(e+e'), \quad A^1(e-e')=\lambda^+_\theta(e-e').
\end{equation}
Thus we obtain
\begin{equation*}
\displaystyle\min_{A^0\in G_{\theta}}4A^0e\cdot e'=\lambda_\theta^-|e+e'|^2-\lambda_\theta^+|e-e'|^2.
\end{equation*}
Moreover, if $A^{\ast}$ is any optimal tensor, then, $A^{\ast}$ must also satisfy
\begin{equation} \label{eq:op con A*}
A^{\ast}(e+e')=\lambda_{\theta}^{-}(e+e')\ \text{ and }\ A^{\ast}(e-e')=\lambda_{\theta}^{+}(e-e').
\end{equation}
Indeed, if \eqref{eq:op con A*} does not hold true, the bounds on eigenvalues in Lemma~\ref{thm:Gtheta} imply the strict inequality 
\begin{equation*}
4A^{\ast}e\cdot e'=A^{\ast}(e+e')\cdot (e+e')-A^{\ast}(e-e')\cdot (e-e')
>\lambda_{\theta}^{-}|e+e'|^2-\lambda_{\theta}^{+}|e-e'|^2,
\end{equation*}
which is a contradiction with \eqref{eq:pointwise constraint} and \eqref{eq:lower bound}.
By \eqref{eq:con A1} and \eqref{eq:op con A*}, we see that
\begin{equation*}
\begin{aligned}
&2A^{\ast}\nabla u=(\lambda_{\theta}^++\lambda_{\theta}^-)\nabla u+(\lambda_{\theta}^+-\lambda_{\theta}^-) \frac{|\nabla u|}{|\nabla p|}\nabla p=2A^1\nabla u,\\
&2A^{\ast}\nabla p=(\lambda_{\theta}^++\lambda_{\theta}^-)\nabla p+(\lambda_{\theta}^+-\lambda_{\theta}^-) \frac{|\nabla p|}{|\nabla u|}\nabla u=2A^1\nabla p
\end{aligned}
\end{equation*}
for almost all $x\in\Omega$.
Therefore any optimal tensor $A^*$ can be replaced by the rank-1 simple laminate $A^1$ without changing $u$ and $p$.
\end{proof}

\begin{remark} \label{rem:rank-1 laminates}
Theorem~{\rm \ref{thm:simplification}} implies that in the definition of $\mathcal{U}_{\rm ad}^{\ast}$, the set $G_{\theta}$ can be replaced by its simpler subset of rank-$1$ simple laminates.
We actually use this simplification in the numerical algorithms.
We remark that this simplification holds true for other objective functions as well. However, it does not hold for multiple loads optimization in general.
For the details concerning the conditions that the objective function $J$ must satisfy in order to obtain the optimality conditions, see {\rm \cite[Chapter 3.2.2]{Allaire1}}.
\end{remark}

As stated in Remark~\ref{rem:rank-1 laminates}, we can simplify the admissible set of the minimization problem \eqref{eq hom op pro}.
We consider the parametrization of rank-1 laminates.
For simplicity, we consider the case $N=2$.
A rank-1 laminate is defined by
\begin{equation*}
A^{\ast}(\theta,\phi)=
\begin{pmatrix}
\cos\phi & \sin\phi\\
-\sin\phi&\cos\phi
\end{pmatrix}
\begin{pmatrix}
\lambda_{\theta}^+&0\\
0&\lambda_{\theta}^-
\end{pmatrix}
\begin{pmatrix}
\cos\phi & -\sin\phi\\
\sin\phi&\cos\phi
\end{pmatrix}
,
\end{equation*}
where the angle $\phi\in [0,\pi]$ determines the orientation of the unit cell.
Hence the admissible set is rewritten as
\begin{equation*}
\mathcal{U}_{\rm ad}^{L}:=\left\{
(\theta,\phi)\in L^{\infty}(\Omega;[0,1]\times[0,\pi]) \;:\; \int_{\Omega}\theta(x)\,dx=V_{\alpha}
\right\}.  
\end{equation*}
If we set $J(\theta, \phi):=J(\theta, A^*(\theta, \phi))$, then the derivative of the objective function $J(\theta, \phi)$  
follows by Proposition \ref{prop deriv J(the, A)} immediately.

\begin{proposition} \label{prop:derivative J(theta,phi)}
The objective function $J(\theta,\phi)$ is differentiable with respect to $(\theta,\phi)$ in $\mathcal{U}_{\rm ad}^{L}$, and its partial derivatives are
\begin{equation*}
\nabla_{\phi}J(\theta,\phi)=\frac{\partial A^{\ast}}{\partial \phi}\nabla u\cdot \nabla p\ \text{ and }\ \nabla_{\theta}J(\theta,\phi)=\frac{\partial A^{\ast}}{\partial\theta}\nabla u\cdot\nabla p.
\end{equation*}
\end{proposition}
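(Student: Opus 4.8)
The plan is to derive the statement as an immediate corollary of Proposition~\ref{prop deriv J(the, A)} via the chain rule, in exactly the spirit of the proof of Lemma~\ref{Lemma:J'}. First I would record that for each $(\theta,\phi)\in[0,1]\times[0,\pi]$ the matrix $A^{\ast}(\theta,\phi)=R(\phi)\,\mathrm{diag}(\lambda_\theta^+,\lambda_\theta^-)\,R(\phi)^t$, where $R(\phi)$ is the rotation by $\phi$, is symmetric and satisfies $A^{\ast}(\theta,\phi)\ge\alpha\,\mathrm{Id}$: its eigenvalues are $\lambda_\theta^\pm$, and $\lambda_\theta^-=(\theta/\alpha+(1-\theta)/\beta)^{-1}\ge\alpha$ since $\theta/\alpha+(1-\theta)/\beta\le 1/\alpha$ when $0<\alpha<\beta$, while $\lambda_\theta^+=\theta\alpha+(1-\theta)\beta\ge\alpha$ as well. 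Hence the pointwise map $(\theta,\phi)\mapsto A^{\ast}(\theta,\phi)$ takes values in $\mathcal{M}_\alpha$, so that any $(\theta,\phi)\in\mathcal{U}_{\rm ad}^{L}$ induces, via the associated superposition (Nemytskii) operator, an element $A^{\ast}(\theta,\phi)\in L^{\infty}(\Omega;\mathcal{M}_\alpha)$, and $J(\theta,\phi)=J(\theta,A^{\ast}(\theta,\phi))$ is well defined.

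Next I would observe that this superposition operator is Fr\'echet differentiable from $L^{\infty}(\Omega;[0,1]\times[0,\pi])$ into $L^{\infty}(\Omega;\RR^{N^2})$, because the generating function $(\theta,\phi)\mapsto A^{\ast}(\theta,\phi)$ is $C^1$ (the entries $\lambda_\theta^\pm$ depend rationally, resp.\ affinely, on $\theta$, and $R(\phi)$ is smooth in $\phi$) with derivatives bounded on the compact set $[0,1]\times[0,\pi]$; its derivative at $(\theta,\phi)$ sends an increment $(s,\psi)$ to pointwise multiplication by $\frac{\partial A^{\ast}}{\partial\theta}\,s+\frac{\partial A^{\ast}}{\partial\phi}\,\psi$. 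Composing with $J(\theta,\cdot)\colon L^{\infty}(\Omega;\mathcal{M}_\alpha)\to\RR$, which is differentiable with derivative $\nabla u\otimes\nabla p$ by Proposition~\ref{prop deriv J(the, A)}, and using that the partial derivative of $J$ with respect to the \emph{explicit} argument $\theta$ vanishes (as noted after Proposition~\ref{prop deriv J(the, A)}, $\theta$ enters only through the constraint $A^{\ast}\in G_\theta$), the chain rule for Fr\'echet derivatives gives that $J(\theta,\phi)$ is differentiable with
\[
\langle\nabla_\theta J(\theta,\phi),s\rangle=\int_\Omega \Bigl(\frac{\partial A^{\ast}}{\partial\theta}\,s\Bigr)\nabla u\cdot\nabla p\,dx,\qquad \langle\nabla_\phi J(\theta,\phi),\psi\rangle=\int_\Omega \Bigl(\frac{\partial A^{\ast}}{\partial\phi}\,\psi\Bigr)\nabla u\cdot\nabla p\,dx .
\]
After the usual identification of the derivative with a function this is precisely $\nabla_\theta J=\frac{\partial A^{\ast}}{\partial\theta}\nabla u\cdot\nabla p$ and $\nabla_\phi J=\frac{\partial A^{\ast}}{\partial\phi}\nabla u\cdot\nabla p$.

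The only genuinely delicate point is the differentiability of the Nemytskii operator between $L^{\infty}$ spaces together with the legitimacy of the chain rule in this setting; the rest is bookkeeping. I would handle it exactly as in the proofs of Lemma~\ref{Lemma:der u}, Lemma~\ref{Lemma:J'} and Theorem~\ref{Theorem:adjoint}: work at the level of the variational formulations of the state \eqref{eq hom} and the adjoint \eqref{eq hom adj}, use the estimate $\|A^{\ast}(\theta+s,\phi+\psi)-A^{\ast}(\theta,\phi)-\frac{\partial A^{\ast}}{\partial\theta}s-\frac{\partial A^{\ast}}{\partial\phi}\psi\|_{L^{\infty}(\Omega)}=o(\|(s,\psi)\|_{L^{\infty}})$ coming from the $C^1$ bound on the generating function, and control the resulting increments of $u$ and of $J$ by Lax--Milgram and Cauchy--Schwarz. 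Since the paper already treats such compositions summarily (cf.\ the proof of Lemma~\ref{Lemma:J'} and of Proposition~\ref{prop deriv J(the, A)}), I would in practice present only the chain-rule computation above and refer to those earlier arguments for the technical justification.
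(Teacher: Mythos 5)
Your proposal is correct and follows exactly the route the paper intends: the text states the result as an immediate consequence of Proposition~\ref{prop deriv J(the, A)} applied to the composition $J(\theta,\phi)=J(\theta,A^{\ast}(\theta,\phi))$, which is precisely the chain-rule argument you carry out. Your additional verifications (that $A^{\ast}(\theta,\phi)\ge\alpha\,\mathrm{Id}$ so the composition lands in $L^{\infty}(\Omega;\mathcal{M}_\alpha)$, and that the Nemytskii operator is differentiable because the generating function is $C^1$ on the compact parameter set) are sound and simply make explicit what the paper leaves unsaid.
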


\subsection{Numerical algorithm}
\label{subsec:numerical algorithm conductivity}
In this subsection, we show the numerical algorithm to seek the optimal shape $\theta$ of \eqref{eq hom op pro}.
As stated in Section~\ref{sec:optimality conditions}, we can treat $J(\theta, \phi)$ instead of $J(\theta, A^*)$ if $N=2$.
We explain the projected gradient algorithm for the minimization of $J(\theta, \phi)$ by the use of Proposition~\ref{prop:derivative J(theta,phi)}.

\begin{algorithm}[H]
\caption{Projected gradient algorithm for \eqref{eq hom op pro}} 
\begin{itemize}
	\item[{\rm 1}.]	
   We initialize the design parameters $\theta_0$ and $\phi_0$ (for example, equal to constants).
   \item[{\rm 2}.]
   Until convergence, for $k\ge0$ we iterate by computing the state $u_k$ and adjoint $p_k$, solutions of \eqref{eq hom} and \eqref{eq hom adj} respectively with respect to the previous design parameters $(\theta_k, \phi_k)$, then we update these parameters by
   \begin{equation*}
   		\begin{aligned}
   			 \theta_{k+1} &= \max\left(
          0, \min\left(
          1, \theta_k-t_k\left(
          \ell_k+\dfrac{\partial A^*}{\partial\theta}(\theta_k, \phi_k)\gr u_k\cdot\gr p_k
          \right)
          \right)
          \right), \\
          \phi_{k+1} &= \phi_k-t_k\dfrac{\partial A^*}{\partial\theta}(\theta_k, \phi_k)\gr u_k\cdot\gr p_k,
   		\end{aligned}
   \end{equation*}
   where $\ell_k$ a Lagrange multiplier for the volume constraint, and $t_k>0$ a descent step such that $J(\theta_{k+1}, \phi_{k+1})<J(\theta_k, \phi_k)$.
\end{itemize}
\end{algorithm}

For the details about the multiplier $\ell_k$, we refer to Section~\ref{Numerical algorithm for optimal thickness}. 
In the following, we will consider two simpler self-adjoint cases. 
Finally, we will show the algorithm to obtain the optimal shape which is close to the classical shapes.

\subsubsection{First example of a self-adjoint case} \label{subsec:first ex}
A first example is the minimization of the torsional rigidity (maximization of compliance)
\begin{equation} \label{eq:torsional rigidity}
	\min_{(\theta, A^*)\in\mathcal{U}^L_{\rm ad}}\left\{
   J(\theta, A^*)=-\int_\Omega u(x)\,dx
   \right\},
\end{equation}
where $u$ is the solution of
\begin{equation} \label{eq:ex self adjoint}
	\left\{
   \begin{aligned}
   		-\dv(A^*\gr u)&=1 && \text{ in } \Omega, \\
      u&=0 &&\text{ on } \partial \Omega.
   \end{aligned}
   \right.
\end{equation}
In this case, the adjoint state $p$ is the solution of
\begin{equation*}
	\left\{
   \begin{aligned}
   		-\dv(A^*\gr p)&=1 &&\text{ in } \Omega, \\
      p&=0 &&\text{ on } \partial \Omega,
   \end{aligned}
   \right.
\end{equation*}
i.e., the adjoint state is just $p=u$.

Before applying a numerical algorithm, we simplify the minimization problem by using the argument in Section~\ref{sec:optimality conditions}.
By the similar argument as in Proposition~\ref{prop deriv J(the, A)}, we get
\begin{equation*}
\left\langle\nabla_{A^{\ast}}J(\theta,A^{\ast}), B^*\right\rangle=\int_\Omega B^*\nabla u\cdot\nabla u\,dx\ge 0 ,\quad B^*\in L^\infty(\Omega; \mathcal{M}_\alpha).
\end{equation*}
Hence we have to decrease $A^{\ast}$ to minimize $J$.
Indeed, by \eqref{eq:op con A*} any minimizer $(\theta, A^{\ast})$ satisfies
\begin{equation*}
A^{\ast}\nabla u=\lambda_{\theta}^{-}\nabla u \quad {\rm a.e. \ in}\ \Omega,
\end{equation*}
i.e., the optimal composite is the worst possible conductor.
This condition allows us to eliminate the angle $\phi$ and it remains to optimize with respect to $\theta$ only, i.e., instead of \eqref{eq:ex self adjoint} we have to consider the state of the problem
\begin{equation}\label{lam- gr u}
	\left\{
   \begin{aligned}
   		-\dv(\lambda^-_\theta\gr u)&=1 &&\text{ in } \Omega, \\
      u&=0 && \text{ on } \partial \Omega.
   \end{aligned}
   \right.
\end{equation}
Moreover, we recall that the state $u$, which is the solution of \eqref{lam- gr u}, is characterized as the minimizer of the corresponding energy, i.e.,
\begin{equation*}
\int_\Omega \lambda^-_\theta|\nabla u|^2\,dx-2\int_\Omega u\,dx=\min_{v\in H^1_0(\Omega)}\left\{\int_\Omega \lambda^-_\theta|\nabla v|^2\,dx-2\int_\Omega v\,dx\right\}.
\end{equation*}
On the other hand, by the weak form of the state $u$, we have
\begin{equation*}
\int_\Omega \lambda^-_\theta|\nabla u|^2\,dx=\int_\Omega u\,dx
\end{equation*}
and thus
\begin{equation*}
-\int_\Omega u\,dx=\min_{v\in H^1_0(\Omega)}\left\{\int_\Omega \lambda^-_\theta|\nabla v|^2\,dx-2\int_\Omega v\,dx\right\}.
\end{equation*}
Therefore, we can rewrite the minimizing problem \eqref{eq:torsional rigidity} as
\begin{equation*}
\min_{(\theta, v)}\left\{\int_\Omega\lambda^-_\theta|\nabla v|^2\,dx-2\int_\Omega v\,dx\right\},
\end{equation*}
where the minimum in the right hand side of the above equation is taken over the set
\begin{equation*}
\left\{(\theta, v)\in L^\infty(\Omega; [0, 1])\times H^1_0(\Omega) \;:\; \int_\Omega\theta\,dx=V_\alpha\right\}.
\end{equation*}
Furthermore, since the function $(\theta,v)\mapsto\lambda_{\theta}^-|\nabla v|^2$ is convex, there are only global minima by Proposition~\ref{prop:global minimizer}.

Numerically, we use an algorithm based on alternate direction minimization (see Section~\ref{Numerical algorithm for optimal thickness}).
We solve in the domain $\Omega=(0,1)^2$ with phases $\alpha=1$ and $\beta=2$ respectively. 
We work with a volume constraint $50\%$ of phase $\alpha$.
We initialize with a constant value of $\theta=0.5$ and a constant zero lamination angle $\phi=0$. 
We perform $30$ iterations.
We show the numerical result how the objective function $J$ convergent to some value (Figure~\ref{fig:history}) and the volume fraction $\theta$ at some iteration numbers (Figure~\ref{fig:volume theta}).
Here, the horizontal axis in Figure~\ref{fig:history} means the iteration number.

\begin{figure}[H]
\centering
\includegraphics[width=.7
\linewidth 
,center]{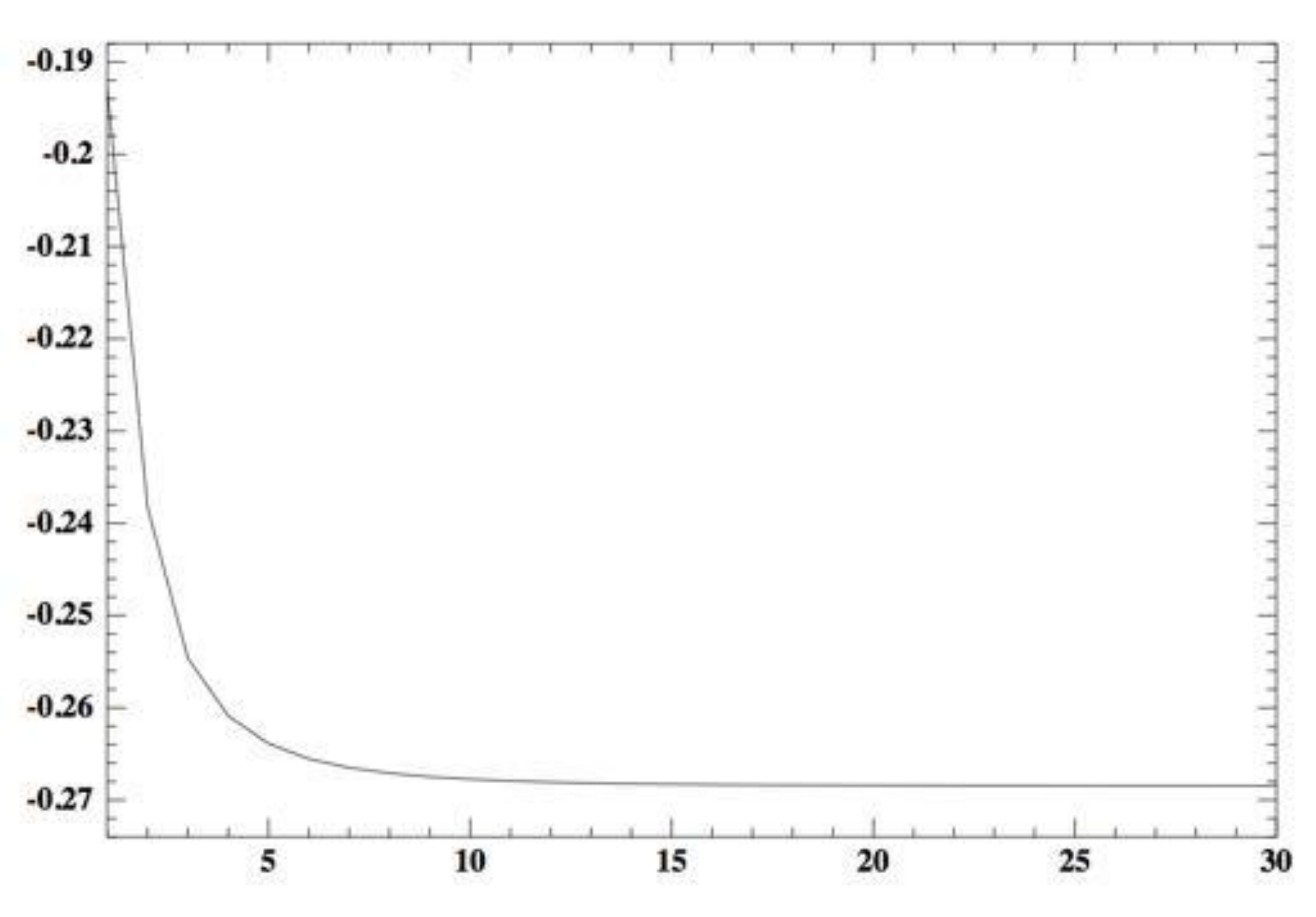}
\caption{Convergence history of $J$} 
\label{fig:history}
\end{figure}

\begin{figure}[H]
\centering
\begin{tabular}{c}
\begin{minipage}{0.3\hsize}
\begin{center}
\includegraphics[width=1.0\linewidth,center]{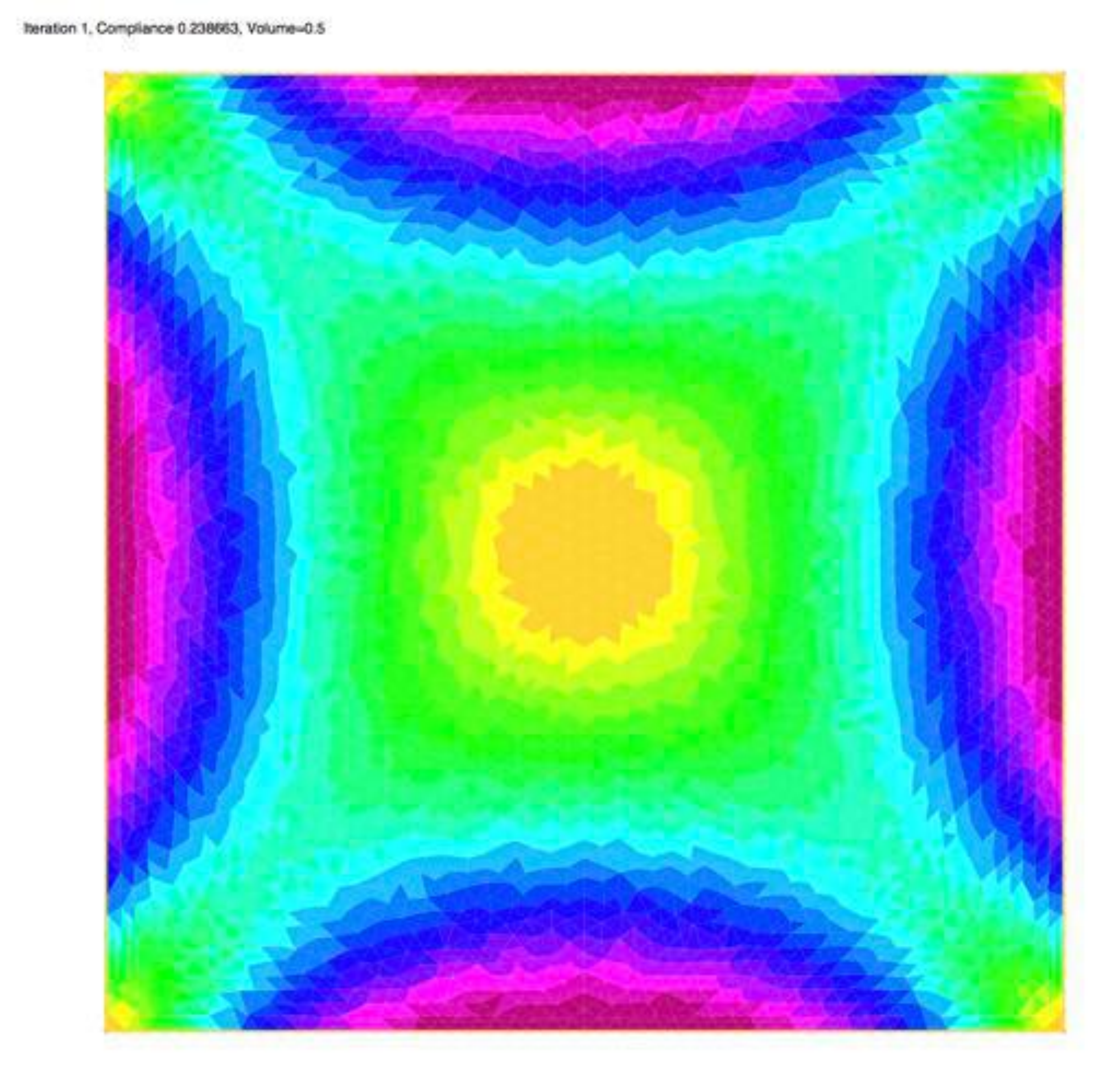}
\end{center}
\end{minipage}
\begin{minipage}{0.3\hsize}
\begin{center}
\includegraphics[width=1.0\linewidth,center]{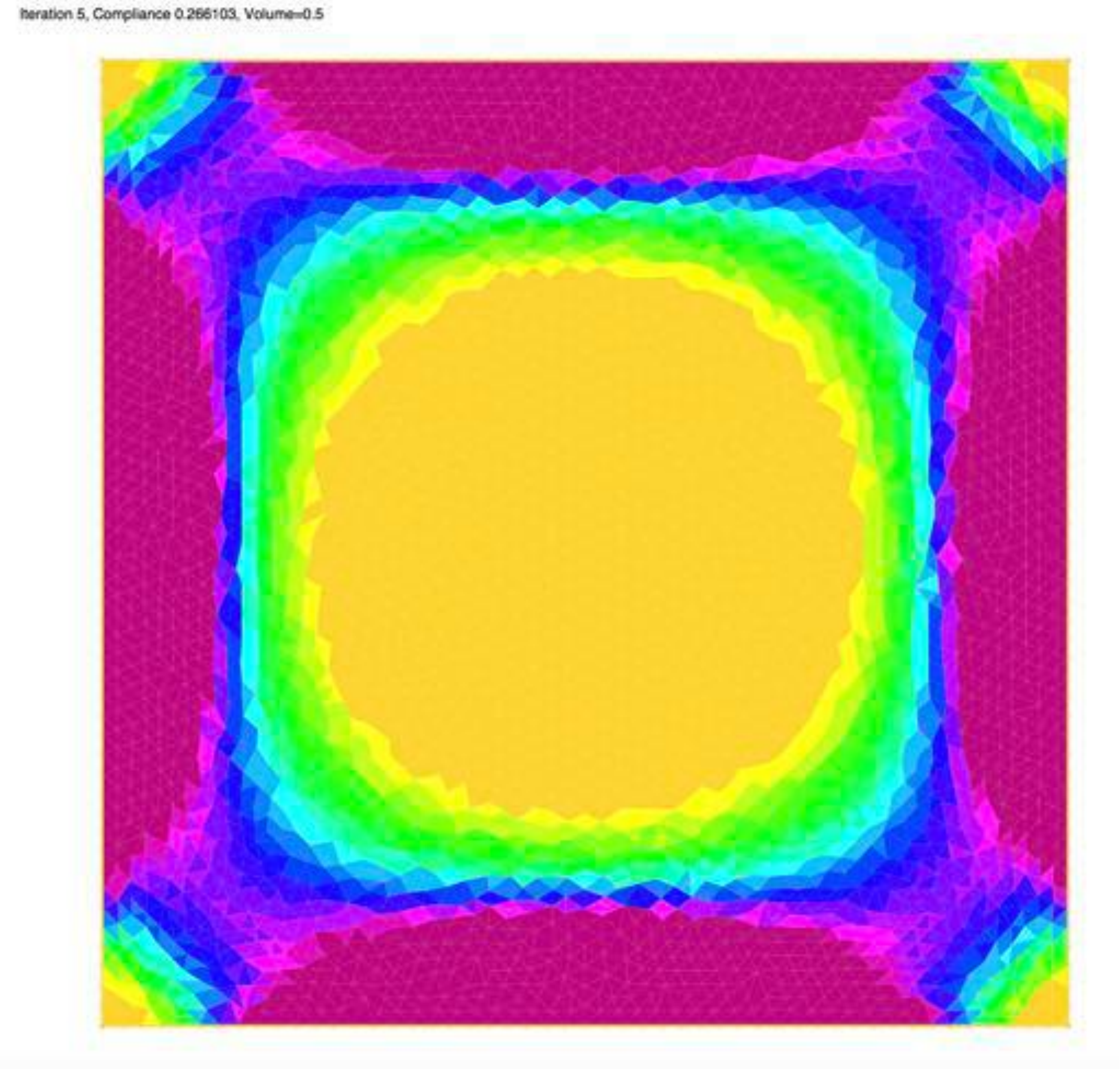}
\end{center}
\end{minipage}
\begin{minipage}{0.3\hsize}
\begin{center}
\includegraphics[width=1.0\linewidth,center]{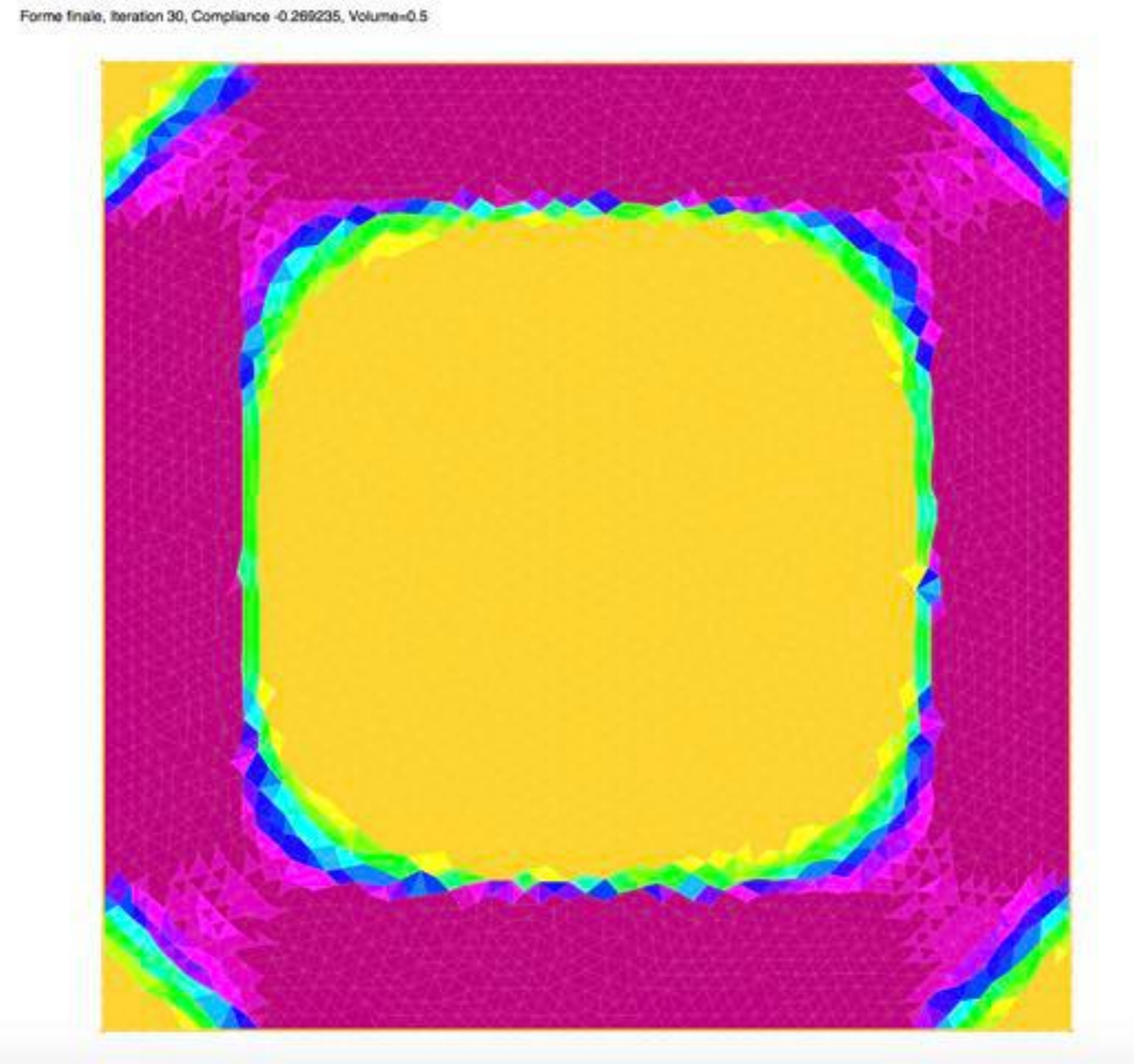}
\end{center}
\end{minipage}
\end{tabular}
\caption{Volume fraction $\theta$ (iteration number 1, 5 and 30 respectively) under the following color convention: red = 1, yellow = 0.} 
\label{fig:volume theta}
\end{figure}


\subsubsection{Second example of a self-adjoint case} \label{subsec:second ex}

A second self-adjoint example is a compliance minimization
\begin{equation}\label{eq:compliance minimization}
\displaystyle\min_{(\theta,A^{\ast})\in\mathcal{U}_{\rm ad}^L}\left\{J(\theta,A^{\ast})=\int_{\Omega}u(x)\,dx\right\},
\end{equation}
where $u$ is the solution of \eqref{eq:ex self adjoint}.
In this case, the adjoint state is $p=-u$.

In order to apply the numerical algorithm, we will simplify the minimizing problem as we did in the first example.
By the same argument as in the first example, we see that
\begin{equation*}
\left\langle\nabla_{A^{\ast}}J(\theta, A^{\ast}), B^*\right\rangle=-\int_\Omega B^*\nabla u\cdot\nabla u\,dx\le 0, \quad B^*\in L^\infty(\Omega; \mathcal{M}_\alpha)
\end{equation*}
and
\begin{equation*}
A^{\ast}\nabla u=\lambda_{\theta}^{+}\nabla u
\end{equation*}
if $(\theta, A^*)$ is a minimizer of \eqref{eq:compliance minimization}.
Hence the optimal composite is the best possible conductor and we have only to consider the following problem instead of \eqref{eq:ex self adjoint}:
\begin{equation*}
	\left\{
   \begin{aligned}
   		-\dv(\lambda^+_\theta\gr u)&=1 && \text{ in } \Omega, \\
      u&=0 && \text{ on } \partial \Omega.
   \end{aligned}
   \right.
\end{equation*}
Therefore, as in the previous section, we can eliminate the dependency on the angle $\phi$ and then optimize with respect to $\theta$ only.
We rewrite the optimization problem thanks to the dual energy
\begin{equation*}
\displaystyle\int_{\Omega}u \,dx=
\min_{\substack{\tau\in L^2(\Omega)^N, \\ -\dv\, \tau=1 \ {\rm in} \ \Omega}}
\int_{\Omega}(\lambda_{\theta}^+)^{-1}|\tau|^2dx.
\end{equation*}
We can obtain the dual energy by the similar calculation in Example~\ref{ex:dual energy}.
Thus instead of \eqref{eq:compliance minimization} we obtain a double minimization
\begin{equation*}
\min_{(\theta,\tau)}\int_{\Omega}(\lambda_{\theta}^+)^{-1}|\tau|^2 dx,
\end{equation*}
where the minimum is taken over the set
\begin{equation*}
\left\{(\theta, \tau)\in L^\infty(\Omega; [0, 1])\times (L^2(\Omega))^N \;:\; \int_\Omega\theta\,dx=V_\alpha, \ -\dv\,\tau=1 \ {\rm in} \ \Omega\right\}.
\end{equation*}
Furthermore, since the function $(\theta, \tau)\mapsto(\lambda^+_\theta)^{-1}|\tau|^2$ is convex, there are only global minima by Proposition~\ref{prop:global minimizer}.

We apply the same algorithm as in the first example.
The setting of the problem is also same as in the first example.
Figure~\ref{fig:volume theta2} shows the numerical result of the volume fraction $\theta$ at some iteration numbers.

\begin{figure}[h]
\centering
\begin{tabular}{c}
\begin{minipage}{0.3\hsize}
\begin{center}
\includegraphics[width=1.0\linewidth,center]{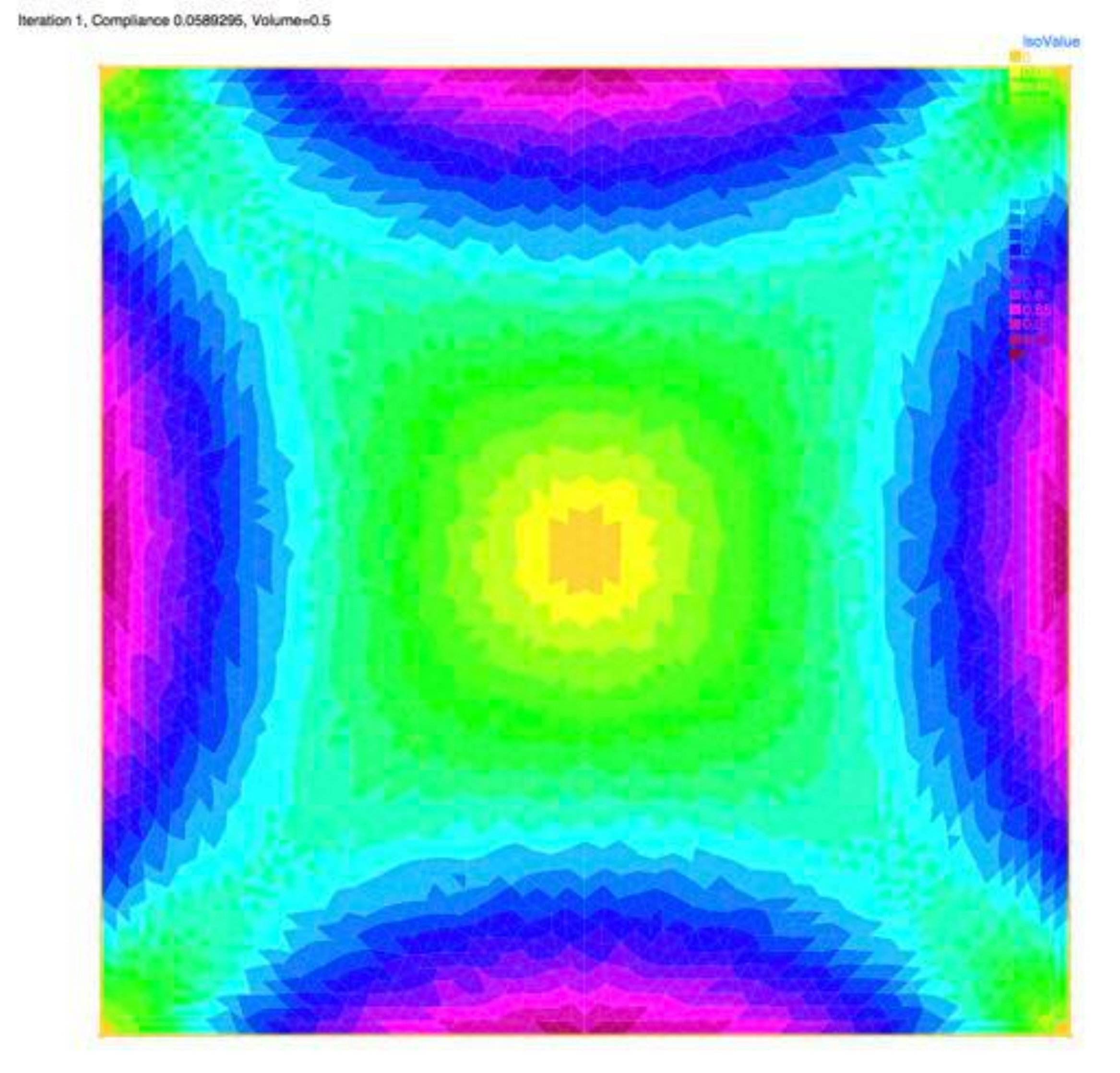}
\end{center}
\end{minipage}
\begin{minipage}{0.3\hsize}
\begin{center}
\includegraphics[width=1.0\linewidth,center]{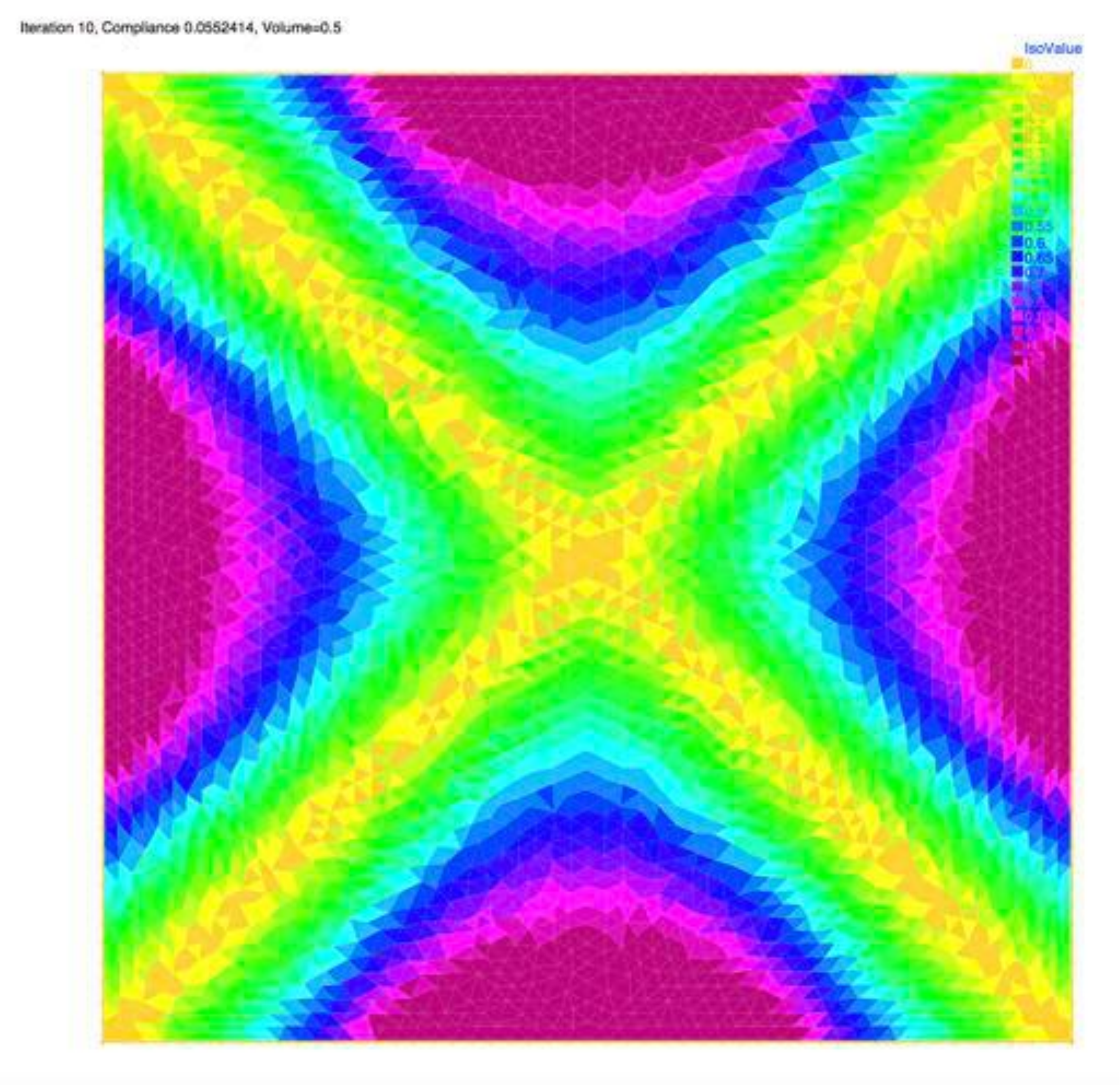}
\end{center}
\end{minipage}
\begin{minipage}{0.3\hsize}
\begin{center}
\includegraphics[width=1.0\linewidth,center]{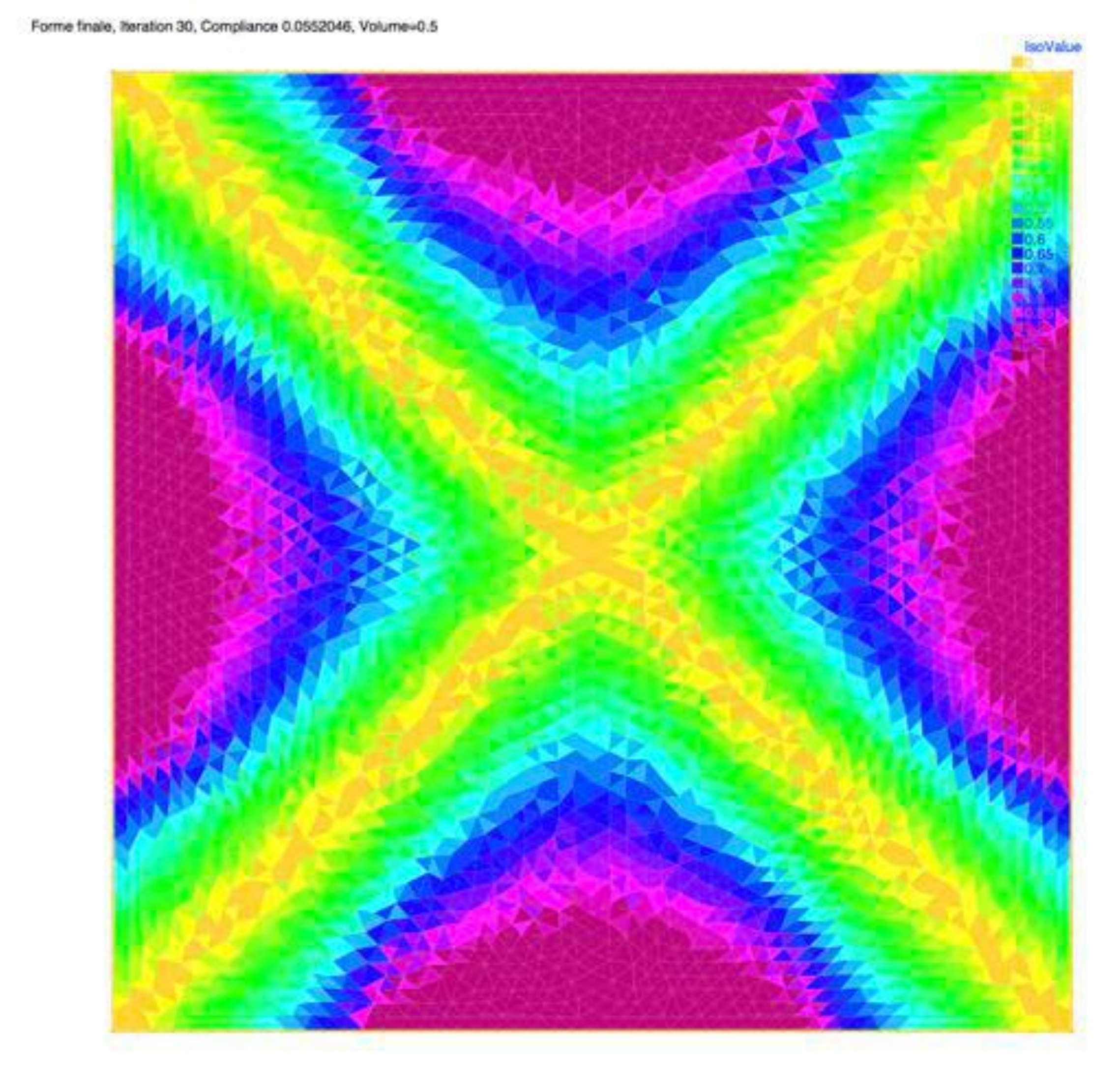}
\end{center}
\end{minipage}
\end{tabular}
\caption{Volume fraction $\theta$ (iteration number 1, 10 and 30 respectively) under the following color convention: red = 1, yellow = 0.} 
\label{fig:volume theta2}
\end{figure}

\begin{remark}
Thanks to the convexity properties of the functionals, the convergence to a global minimum is guaranteed. In practice, it can be checked by numerical experiments with various initializations converging to the same solution.

If one is interested by shape optimization rather than two-phase optimization, 
then, in numerical practice, holes can be mimicked by a very weak phase $\alpha$, such as $10^{-3}\beta$.
Mathematically, when $\alpha\to 0$ we obtain Neumann boundary conditions on the holes boundaries.
\end{remark}

\subsubsection{Penalization} \label{subsec:penalization}
By the algorithm stated in the two examples, we obtain optimal shapes in the wider class of composite shapes. 
Since, in practice, we are rather interested in classical shapes, we choose to use a penalization process to force the density to take values close to $0$ or $1$.

\begin{algorithm}[H]
\caption{Penalization process for \eqref{eq hom op pro}}
Apply either of the following algorithms after convergence to a composite shape by the previous algorithm.

\begin{enumerate}
\item 
We add a penalization term to the objective function
\begin{equation*}
J(\theta,A^{\ast})+c_{\rm pen}\int_{\Omega}\theta(1-\theta)\, dx,
\end{equation*}
where $c_{\rm pen}$ is a constant for penalization.
\item
We continue the previous algorithm with a modified “penalized” density
\begin{equation*}
\displaystyle\theta_{\rm pen}:=\frac{1-\cos(\pi\theta_{\rm opt})}{2},
\end{equation*}
where $\theta_{\rm opt}$ is the optimal density obtained by the previous algorithm.
\end{enumerate}
\end{algorithm}

In the second algorithm, we note that if $0<\theta_{\rm opt}<1/2$, then $\theta_{\rm pen}<\theta_{\rm opt}$, while $1/2<\theta_{\rm opt}<1$, then $\theta_{\rm pen}>\theta_{\rm opt}$.
Hence we see that the density $\theta$ goes to $0$ or $1$ when we apply the algorithm.

\begin{example}
We consider the optimal radiator (Figure~\ref{fig:radiator}-(a))
\begin{equation*}
\left\{
\begin{aligned}
-\dv (A^{\ast}\nabla u)&=0 &&\text{ in } \Omega ,\\
A^{\ast}\nabla u\cdot n&=1 &&\text{ on } \Gamma_N ,\\
A^{\ast}\nabla u\cdot n&=0 &&\text{ on } \Gamma ,\\
u&=0 &&\text{ on } \Gamma_{D}.
\end{aligned}
\right.
\end{equation*}
We minimize the temperature where heating takes place
\begin{equation*}
\displaystyle\min_{(\theta,A^{\ast})\in\mathcal{U}_{\rm ad}^L}\left\{
J(\theta,A^{\ast})=\int_{\Gamma_{N}}u \, ds
\right\}.
\end{equation*}
This is another case of compliance minimization.
Thus, the problem is self-adjoint and $p = -u$.
We solve in the domain $\Omega = (0, 1)^2$ with phases $\alpha = 0.01$ and $\beta = 1$.
We work with a volume constraint $50\%$ of phase $\alpha$.
We initialize with a value $\theta$ as Figure~{\rm \ref{fig:radiator}-(b)}.
Figure~{\rm \ref{fig:radiator}-(c)}, {\rm (d)} and {\rm (e)} show the numerical results of the volume density at some iteration numbers under the above penalization algorithm.
\end{example}

\begin{figure}[h]
\centering
\begin{tabular}{c}
\begin{minipage}{0.3\hsize}
\begin{center}
\includegraphics[width=1.0\linewidth,center]{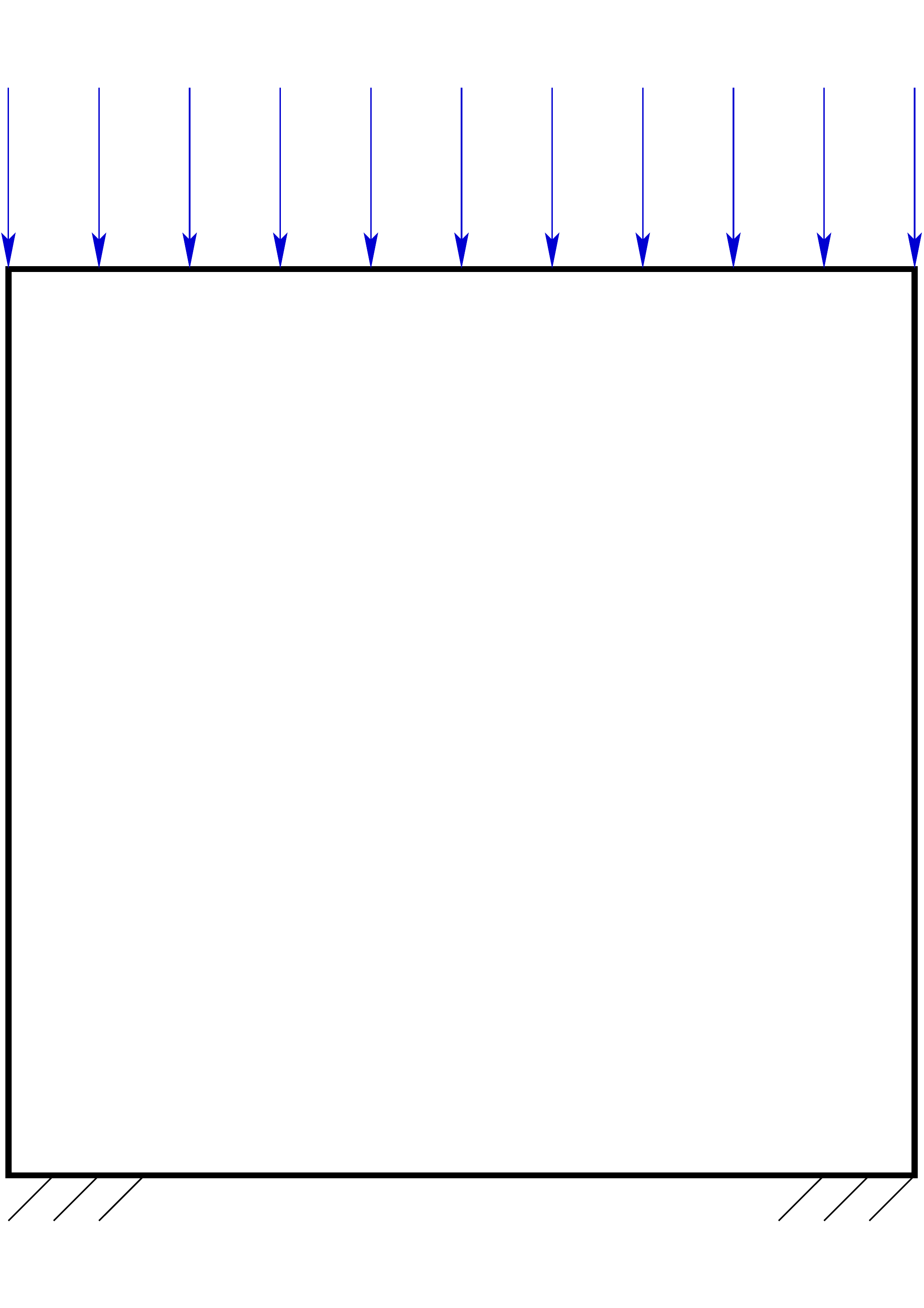}
\hspace{1.6cm} \ref{fig:radiator}-(a)
\end{center}
\end{minipage}
\begin{minipage}{0.4\hsize}
\begin{center}
\includegraphics[width=1.0\linewidth,center]{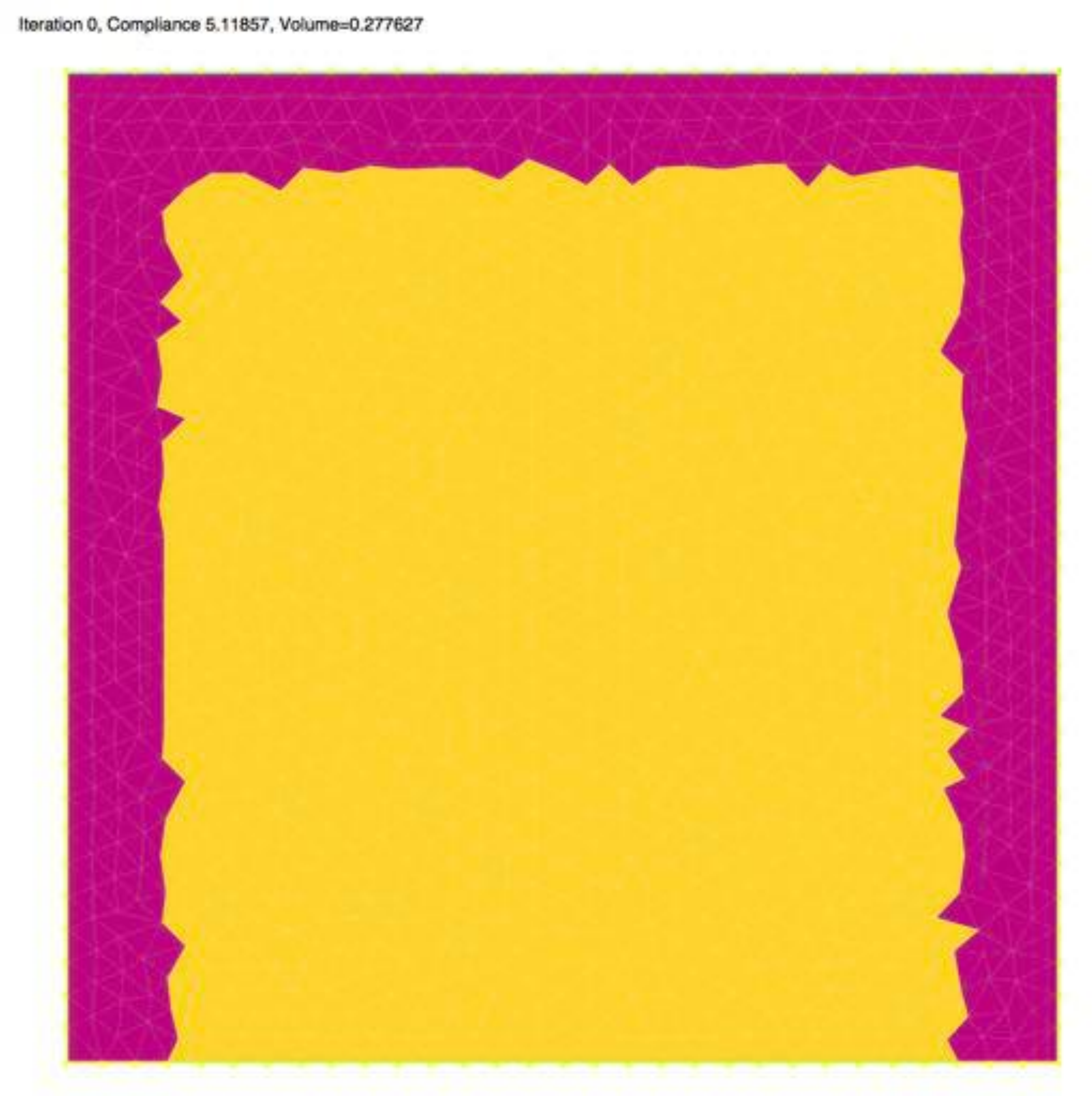}
\hspace{1.6cm} \ref{fig:radiator}-(b)
\end{center}
\end{minipage}\\
\begin{minipage}{0.3\hsize}
\begin{center}
\includegraphics[width=1.0\linewidth,center]{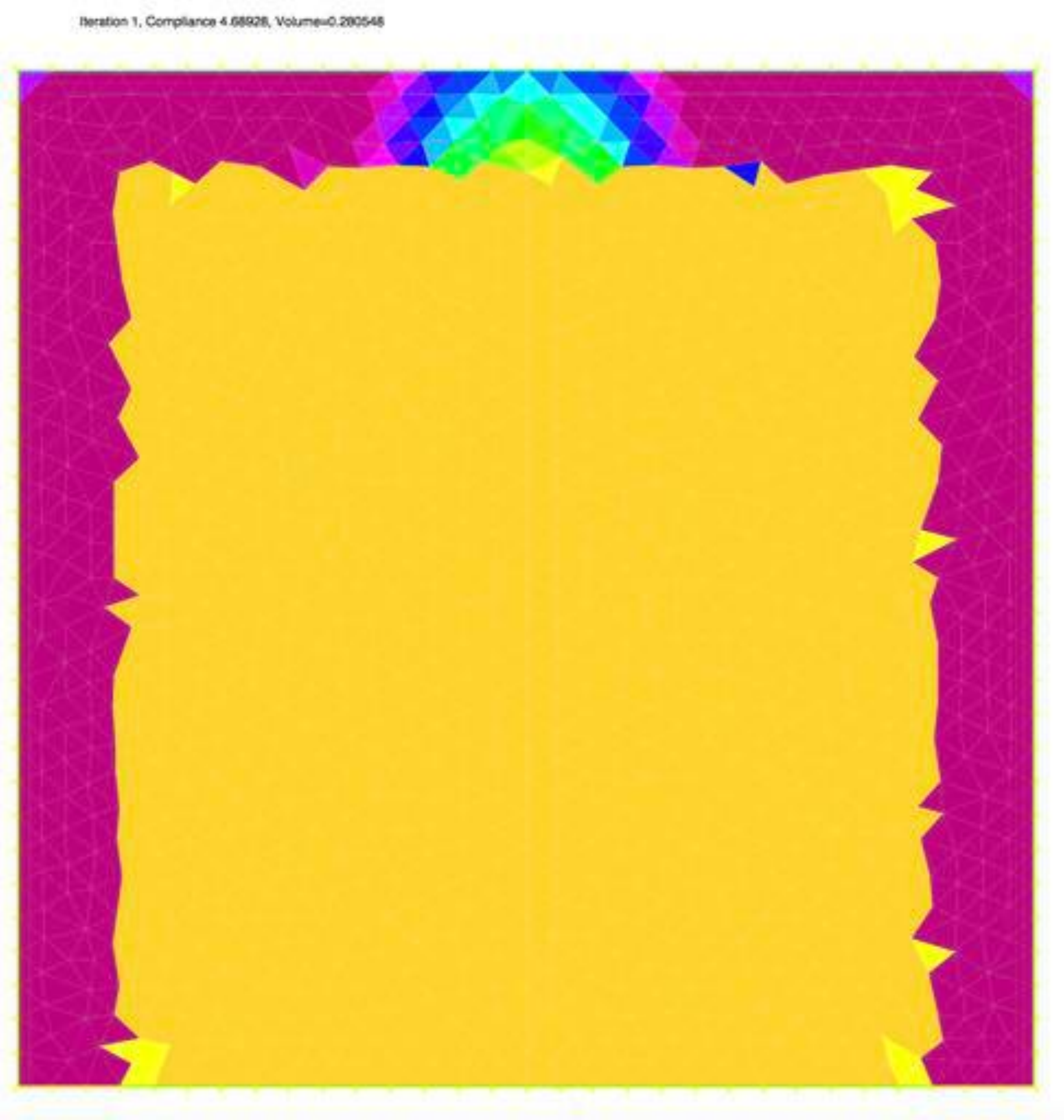}
\hspace{1.6cm} \ref{fig:radiator}-(c)
\end{center}
\end{minipage}
\begin{minipage}{0.3\hsize}
\begin{center}
\includegraphics[width=1.0\linewidth,center]{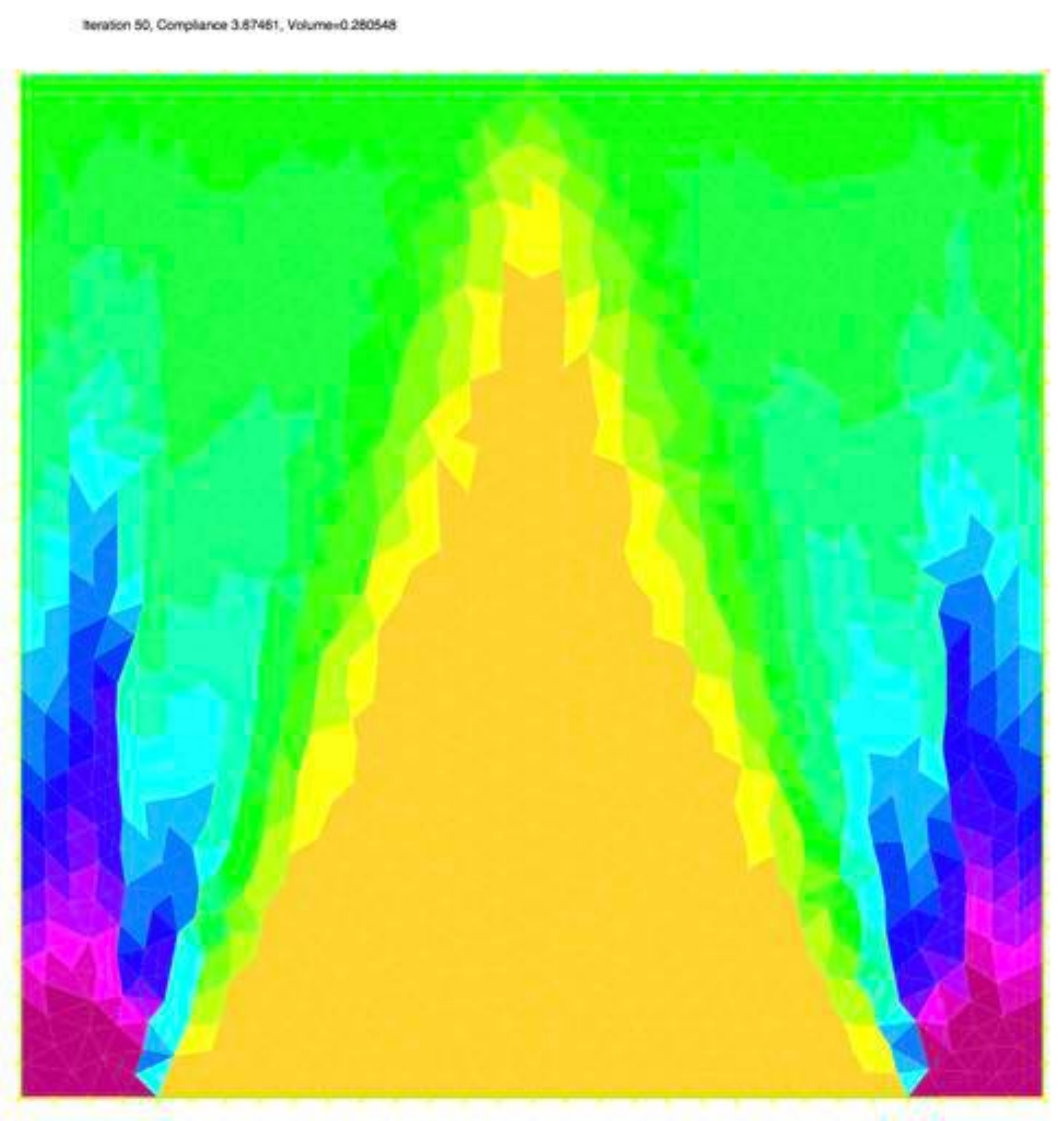}
\hspace{1.6cm} \ref{fig:radiator}-(d)
\end{center}
\end{minipage}
\begin{minipage}{0.3\hsize}
\begin{center}
\includegraphics[width=1.0\linewidth,center]{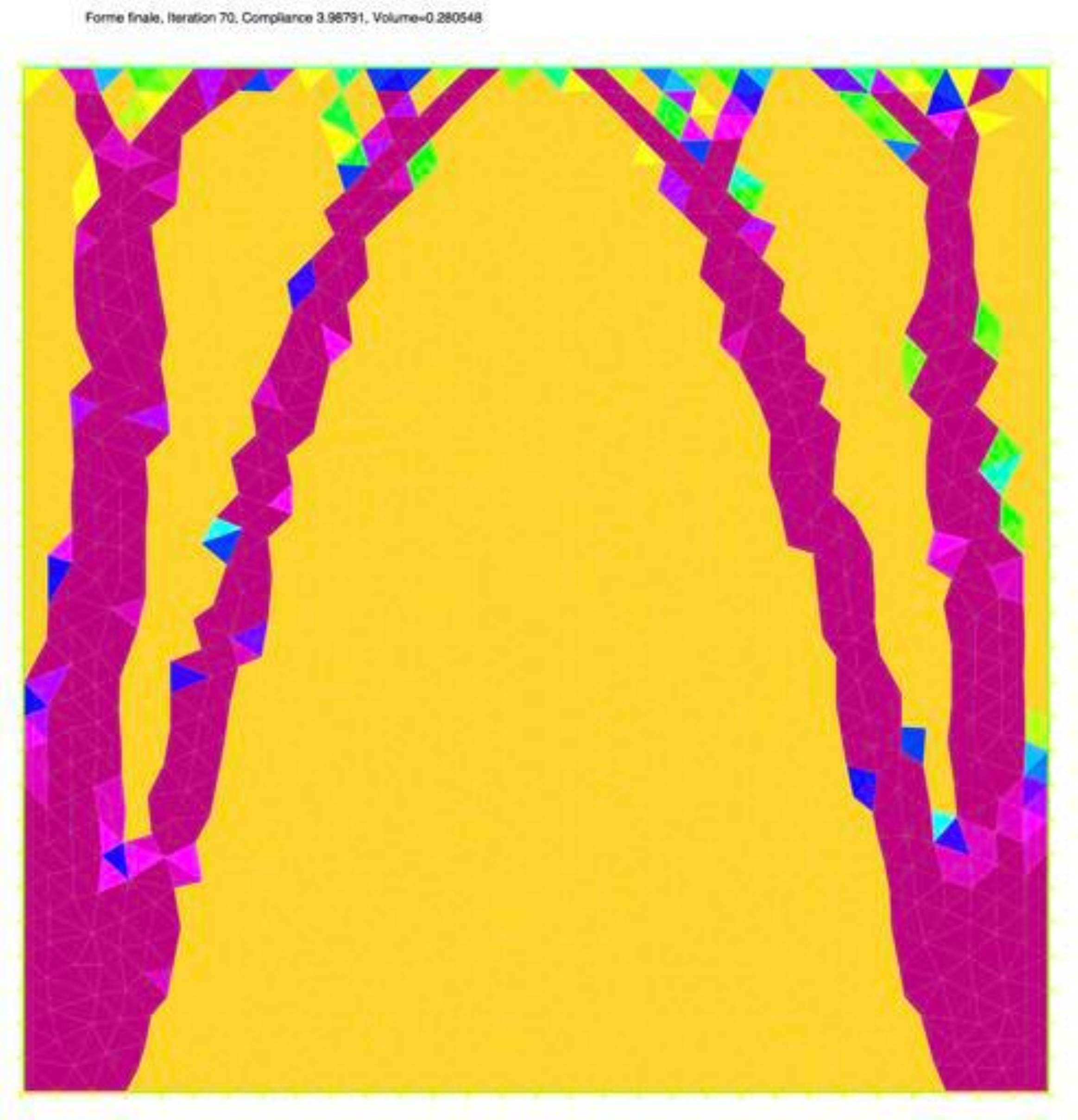}
\hspace{1.6cm} \ref{fig:radiator}-(e)
\end{center}
\end{minipage}
\end{tabular}
\caption{Optimal radiator: (a) setting of the problem, (b) initial condition, (c)-(e) volume fraction $\theta$ (iteration number 1, 50 and 70 respectively). Here we used the following color convention: red = 1, yellow = 0.} 
\label{fig:radiator}
\end{figure}


\section{Homogenization method in the elasticity setting}

In this section, we will apply the homogenization method in the elasticity setting. 
We remark that it is very similar to the conductivity setting but there are some additional hurdles.
We shall review the results without proofs, however, the basic ingredients of the homogenization method which we will consider in this section are the sames$\colon$
\begin{itemize}
	\item
   Introduction of composite designs characterized by $(\theta, A^*)$,
   \item
   Hashin-Shtrikman bounds for composites,
   \item
   Sequential laminates are optimal microstructures for compliance minimization, which we will consider the following.
\end{itemize}
We remark that, unfortunately, the full set of composites $G_\theta$ is unknown as stated in Section~\ref{sec:the elasticity setting}, unlike the case we considered in the Section~\ref{sec:hom con setting}.

\subsection{Introduction of the model of the elasticity and relaxed problem} \label{subsec:intro ela setting}

\begin{figure}[h]
\centering
\includegraphics[width=0.7\linewidth,center]{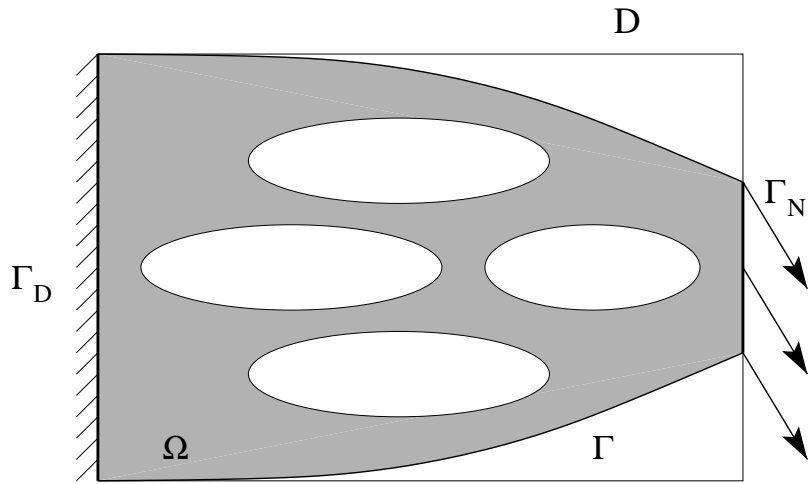}
\caption{Setting of the problem \eqref{eq:original compliance}}
\label{fig:compliance minimization}
\end{figure}

We introduce the model compliance minimization problem (Figure~\ref{fig:compliance minimization}).
Let $N=2$ or $3$ and $D\subset\RR^N$ be a bounded domain.
If the Hooke's law $A$ is isotropic, with positive bulk and shear moduli $\kappa$ and $\mu$, we have
\begin{equation*}
A=\left(\kappa-\dfrac{2\mu}{N}\right)I_2\otimes I_2+2\mu I_4.
\end{equation*}
Let $\Gamma_D\subset \partial D$ be the Dirichlet part and $\Gamma_N\subset \partial D$ be the Neumann part loaded by $g$.
For any domain $\Omega\subset D$ with $\Gamma_D$, $\Gamma_N\subset \partial \Omega$, the displacement vector field $u\colon\Omega\to\RR^N$ is defined as the solution of the problem
\begin{equation} \label{eq:original compliance}
	\left\{
    \begin{aligned}
    	\dv \, \sigma &=0 && \text{ in } \Omega,\\
      \sigma&=2\mu e(u)+\lambda {\rm tr}(e(u)){\rm Id} &&\text{ in } \Omega,\\
      u &=0 && \text{ on } \Gamma_D, \\
      \sigma\cdot n&=g && \text{ on } \Gamma_N, \\
      \sigma\cdot n&=0 && \text{ on }  \Gamma,
    \end{aligned}
	\right.
\end{equation}
where $\Gamma:=\partial\Omega\setminus(\Gamma_D\cup\Gamma_N)$, $e(u):=(\nabla u+(\nabla u)^t)/2$ and $\lambda:=\kappa-2\mu/N$.
We consider the following minimization problem to obtain the optimal shape such that the weight is minimized and the rigidity is maximized:
\begin{equation} \label{eq:ela original prob}
	\inf_{\Omega\subset D}
    \left\{
    	J(\Omega)=\int_{\Gamma_N}g\cdot u\,ds+\ell\int_\Omega\,dx
    \right\},
\end{equation}
where $\ell$ is a Lagrange multiplier and the infimum is taken over the all subset $\Omega$ of $D$ with $\Gamma_D$, $\Gamma_N\subset\partial\Omega$.

The shape optimization problem \eqref{eq:ela original prob} can be approximated by a two-phase optimization problem:
the original material $A$ and the holes of rigidity $B\approx0$.
Then the Hooke's law of the mixture in $D$ is rewritten as
\begin{equation*}
	\chi_{\Omega}(x)A+(1-\chi_\Omega(x))B\approx\chi_\Omega(x)A, \quad x\in D.
\end{equation*}
Hence the admissible set becomes
\begin{equation*}
	\mathcal{U}_{\rm ad}=\left\{\chi\in L^\infty(D; \{0, 1\})\right\}.
\end{equation*}
As in conductivity (membrane) case, we can apply the relaxation approach based on homogenization theory.

We introduce composite structures characterized by a local volume fraction $\theta(x)$ of the phase $A$ and a homogenized tensor $A^*(x)$, corresponding to its microstructure.
The set of admissible homogenized designs is
\begin{equation*}
	\mathcal{U}^*_{\rm ad}=
    \left\{
    	(\theta, A^*)\in L^\infty\left(D; [0, 1]\times\RR^{N^4}\right) \;:\; A^*(x)\in G_{\theta(x)} \ {\rm for \ a.e.} \ x\in D
    \right\},
\end{equation*}
where, for fixed $x$ in $D$, $G_\theta(x)$ denotes the set of all possible two-phase composite materials at fixed volume fraction $\theta(x)$.
In this case, the homogenized state equation is 
\begin{equation} \label{eq:linear ela}
	\left\{
    \begin{aligned}
    	\dv\,\sigma&=0 && \text{ in } D,\\
      \sigma&=A^*e(u) &&\text{ in } D,\\
      u&=0 && \text{ on } \Gamma_D, \\
      \sigma\cdot n&=g && \text{ on } \Gamma_N, \\
      \sigma\cdot n&=0 && \text{ on } \partial D\setminus(\Gamma_D\cup\Gamma_N)
    \end{aligned}
	\right.
\end{equation}
and the homogenized compliance is defined by
\begin{equation*}
	c(\theta, A^*)=\int_{\Gamma_N}g\cdot u\,ds.
\end{equation*}
By the above setting, the relaxed or homogenized optimization problem is derived as follows:
\begin{equation} \label{eq:ela hom prob}
	\min_{(\theta, A^*)\in\mathcal{U}^*_{\rm ad}}
   \left\{
   J(\theta, A^*)=c(\theta, A^*)+\ell\int_D \theta(x)\,dx
   \right\}.
\end{equation}

In the elasticity setting, an explicit characterization of $G_\theta$ is still lacking, it is a major inconvenience of the problem \eqref{eq:ela hom prob}.
Fortunately, for compliance one can replace $G_\theta$ by its explicit subset of laminated composites.
The key argument to avoid the knowledge of $G_\theta$ is that, thanks to the complementary energy minimization, the compliance can be rewritten as
\begin{equation*}
	c(\theta, A^*)=\int_{\Gamma_N}g\cdot u\,ds=\min_{\substack{
    \dv\,\sigma=0 \ {\rm in} \ D, \\
    \sigma\cdot n=g \ {\rm on} \ \Gamma_N,\\
    \sigma\cdot n=0 \ {\rm on} \ \partial D\setminus(\Gamma_N\cup\Gamma_D)
    }}
   \int_D (A^*)^{-1}\sigma\cdot\sigma\,dx.
\end{equation*}
The complementary energy is followed by a similar argument in Example~\ref{ex:dual energy}.
The shape optimization problem \eqref{eq:ela hom prob} thus becomes a double minimization problem
\begin{equation} \label{eq:ela double min}
	\min_{(\theta, A^*)\in \mathcal{U}^*_{\rm ad}}
    \left\{
    	\min_{\substack{
    \dv\,\sigma=0 \ {\rm in} \ D, \\
    \sigma\cdot n=g \ {\rm on} \ \Gamma_N,\\
    \sigma\cdot n=0 \ {\rm on} \ \partial D\setminus(\Gamma_N\cup\Gamma_D)
    }
    }
   \int_D (A^*)^{-1}\sigma\cdot\sigma\,dx
   +\ell\int_D \theta(x)\,dx
    \right\}.
\end{equation}
We will exchange the order of the minimization \eqref{eq:ela double min}.
Since the order of minimization is irrelevant, \eqref{eq:ela double min} can be rewritten as
\begin{equation*}
	\min_{\substack{
    \dv\,\sigma=0 \ {\rm in} \ D, \\
    \sigma\cdot n=g \ {\rm on} \ \Gamma_N,\\
    \sigma\cdot n=0 \ {\rm on} \ \partial D\setminus(\Gamma_N\cup\Gamma_D)
    }}
   \min_{(\theta, A^*)\in \mathcal{U}^*_{\rm ad}}
   \left\{
   \int_D (A^*)^{-1}\sigma\cdot\sigma\,dx
   +\ell\int_D \theta(x)\,dx
	\right\}.
\end{equation*}
The minimization with respect to the design parameters $(\theta, A^*)$ is local.
Hence the above minimization becomes
\begin{equation} \label{eq:ela double min2}
	\min_{\substack{
    \dv\,\sigma=0 \ {\rm in} \ D, \\
    \sigma\cdot n=g \ {\rm on} \ \Gamma_N,\\
    \sigma\cdot n=0 \ {\rm on} \ \partial D\setminus(\Gamma_N\cup\Gamma_D)
    }}
    \int_D \min_{\substack{
    0\le\theta\le1, \\
    A^*\in G_\theta
    }}
    \left(
    (A^*)^{-1}\sigma\cdot\sigma+\ell\theta
    \right)\,dx.
\end{equation}
For a given stress tensor $\sigma$, the minimization of complementary energy
\begin{equation*}
	\min_{A^*\in G_\theta}(A^*)^{-1}\sigma\cdot\sigma
\end{equation*}
is a classical problem in homogenization of finding optimal bounds on the effective properties of composite materials.
It turns out that we can restrict ourselves to sequential laminates which form an explicit subset $L_\theta$ of $G_\theta$ by Proposition~\ref{prop:rank-N laminate}.
Recall that in the conductivity setting, it was enough to consider only the case of rank-$1$ laminates.
On the other hand, Proposition~\ref{prop:rank-N laminate} tells us that, in the elasticity setting, rank-$1$ laminates are not enough and $G_\theta$ has to be replaced by the set of rank-$N$ sequential laminates instead.

As in the case of the conductivity setting in Section~\ref{sec:app hom method}, one can prove that the problem \eqref{eq:ela hom prob} is a relaxation of the original shape optimization in the following sense (for the details of the proof, see \cite[Theorem 4.1.12]{Allaire1}).
\begin{theorem}
	The homogenized formulation \eqref{eq:ela hom prob} is the relaxation of the original problem \eqref{eq:ela original prob} in the sense where
    \begin{itemize}
    	\item[{\rm 1}.]
       there exists, at least, one optimal composite $(\theta, A^*)$ minimizing \eqref{eq:ela hom prob},
       \item[{\rm 2}.]
       any minimizing sequence of classical shapes $\Omega$ for \eqref{eq:ela original prob} converges, in the sense of homogenization, to a minimizer $(\theta, A^*)$ of \eqref{eq:ela hom prob},
       \item[{\rm 3}.]
       the minimal values of the original and homogenized objective functions coincide.
    \end{itemize}
\end{theorem}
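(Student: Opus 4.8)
The argument runs parallel to the conductivity case (Theorem~\ref{thm cla hom}), with the scalar homogenization statement of Theorem~\ref{thm homogenization} replaced by its elasticity counterpart. As in Section~\ref{subsec:intro ela setting} I would work with the two-phase problem in which the holes are filled with a fixed weak ``ersatz'' material $B$, so that all the state systems are uniformly coercive and well posed (the genuine shape problem \eqref{eq:ela original prob} being recovered by letting $B\to 0$ afterwards). Write $A_\chi=\chi A+(1-\chi)B$. Two facts are the backbone of the proof: \textbf{(i)} for any sequence of characteristic functions $(\chi_n)$ there is a subsequence along which $\chi_n\rightharpoonup\theta$ weakly-$*$ in $L^\infty(D;[0,1])$ and $A_{\chi_n}$ $H$-converges to some $A^*$ with $A^*(x)\in G_{\theta(x)}$ for a.e.\ $x$ (the elasticity analogue of Theorem~\ref{thm homogenization}, including the convergence of states and of energies, and valid for mixed boundary conditions); and \textbf{(ii)} conversely, $G_{\theta(x)}$ is \emph{exactly} the set of $H$-limits attainable at volume fraction $\theta(x)$, so in particular every constant $A^*\in G_\theta$ is the $H$-limit of a periodic family of characteristic functions --- and it suffices for our purposes to use the sequential laminates of Proposition~\ref{prop:rank-N laminate}, which are explicit such $H$-limits.

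First I would establish the inequality $\inf_{\Omega} J(\Omega)\ge \inf_{(\theta,A^*)\in\mathcal{U}^*_{\rm ad}} J(\theta,A^*)$ together with claim~2. Take a minimizing sequence $(\chi_n)$ for the two-phase form of \eqref{eq:ela original prob} and pass to the subsequence of \textbf{(i)}. The states $u_n$, solutions of the uniformly coercive systems with tensor $A_{\chi_n}$, converge weakly in $H^1(D)^N$ to the solution $u$ of the homogenized system \eqref{eq:linear ela} with tensor $A^*$, and $A_{\chi_n}e(u_n)\rightharpoonup A^*e(u)$ weakly; this is the defining consequence of $H$-convergence (Tartar's div--curl argument), and it persists for Neumann and mixed data. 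Using the energy identity $c=\int_{\Gamma_N}g\cdot u\,ds=\int_D A^*e(u)\cdot e(u)\,dx$, $H$-convergence gives $c(\chi_n)\to c(\theta,A^*)$, while $\int_D\chi_n\,dx\to\int_D\theta\,dx$ by weak-$*$ convergence; hence $J(\chi_n)\to J(\theta,A^*)$. Since $(\chi_n)$ is minimizing, $\inf_\Omega J=\lim_n J(\chi_n)=J(\theta,A^*)\ge \inf_{(\theta,A^*)}J$. This is precisely the asserted convergence of classical minimizing sequences (claim~2), and in addition it shows the classical infimum is \emph{attained} by an $H$-limit $(\theta,A^*)\in\mathcal{U}^*_{\rm ad}$.

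Next I would prove the reverse inequality $\inf_\Omega J(\Omega)\le \inf_{(\theta,A^*)} J(\theta,A^*)$ by a density argument. Fix $(\theta,A^*)\in\mathcal{U}^*_{\rm ad}$. If $(\theta,A^*)$ is piecewise constant on a finite partition of $D$ into small cubes, apply \textbf{(ii)} on each cube, rescaling the periodic (or sequential-laminate) realization to a period much smaller than the cube, to obtain characteristic functions $\chi_n$ with $\chi_n\rightharpoonup\theta$ and $A_{\chi_n}$ $H$-converging to $A^*$; gluing across cube interfaces does not affect the $H$-limit since $H$-convergence is a local notion. For general measurable $(\theta,A^*)$ one first approximates it in $\mathcal{U}^*_{\rm ad}$ by piecewise-constant data (using that $A^*(x)\in G_{\theta(x)}$ pointwise together with continuity of the $G_\theta$ construction in $\theta$), then diagonalizes, exploiting the metrizability of $H$-convergence on a set of uniformly bounded, uniformly coercive tensors. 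By the continuity proved above, $J(\chi_n)\to J(\theta,A^*)$, so $\inf_\Omega J\le J(\theta,A^*)$; taking the infimum over $(\theta,A^*)$ yields the inequality. Combining the two inequalities gives the equality of infima (claim~3), and since the classical infimum was shown to be attained by some $(\theta,A^*)\in\mathcal{U}^*_{\rm ad}$, that element minimizes the homogenized problem (claim~1).

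The main obstacle is the density step: converting the constant-coefficient realizations of $G_\theta$ into an $x$-dependent family of genuine characteristic functions over all of $D$ while preserving both the weak-$*$ limit $\theta$ and the $H$-limit $A^*$. This requires the locality of $H$-convergence, careful bookkeeping of the separation between the partition scale and the microstructural period, and the approximation of an arbitrary measurable pair $(\theta,A^*)$ by locally constant data; the fact that $G_\theta$ is not explicitly known in elasticity is sidestepped, since we only ever need $H$-limits and sequential laminates (Proposition~\ref{prop:rank-N laminate}) already furnish enough of them for the compliance. For the complete proof with all technical details I would refer to \cite[Theorem~4.1.12]{Allaire1}.
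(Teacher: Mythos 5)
The paper gives no proof of this theorem at all, deferring entirely to \cite[Theorem 4.1.12]{Allaire1}; your sketch correctly reproduces the standard argument from that reference (H-compactness plus the G-closure constraint for one inequality and claim 2, locality and metrizability of H-convergence plus density of piecewise-constant designs for the reverse inequality), and you defer the technical details to the same source. The only step treated more lightly than it deserves is the degenerate limit $B\to 0$ needed to pass from the uniformly coercive two-phase problem back to genuine holes, but the paper glosses over this in exactly the same way, so your proposal matches the intended proof.
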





\subsection{An explicit optimal bound $HS(\sigma)$} \label{subsec:explicit formula HS}

As we stated in Section~\ref{subsec:intro ela setting}, we can restrict the set $G_\theta$ to the sequential laminates $L_\theta$.
In this subsection, we will show the explicit computation of $HS(\sigma)$ for a special case.
We will consider the case $B=0$ for the simplification of algebra.
Note that the case $B=0$ is natural since the weak material is actually degenerate.

In two dimension case, we can obtain an explicit formula for the bound (see \cite[Theorem 2.3.35]{Allaire1}):

\begin{theorem}
    Assume that $N=2$, $B=0$ and $\theta\ne0$.
    Then for any stress tensor $\sigma$, the optimal bound $HS(\sigma)$ is rewritten as 
    \begin{equation} \label{eq:HS explicit}
	HS(\sigma)=A^{-1}\sigma\cdot\sigma+\dfrac{1-\theta}{\theta}g^*(\sigma),
\end{equation}
    where
\begin{equation}\label{g* N=2}
	g^*(\sigma)=\dfrac{\kappa+\mu}{4\mu\kappa}(|\sigma_1|+|\sigma_2|)^2
\end{equation}
and $\sigma_1$, $\sigma_2$ are the eigenvalues of $\sigma$.
Furthermore, an optimal rank-$2$ sequential laminate is given by
\begin{equation*}
	m_1=\dfrac{|\sigma_2|}{|\sigma_1|+|\sigma_2|}, \quad m_2=\dfrac{|\sigma_1|}{|\sigma_1|+|\sigma_2|},
\end{equation*}
where $m_i$ is the parameters appeared in Lemma~\ref{lem seq lam}.
\end{theorem}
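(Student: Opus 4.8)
The plan is to combine the two structural results already at our disposal — Proposition~\ref{prop:rank-N laminate}, which says that $HS(\sigma)$ is realized by a rank-$N$ sequential laminate whose lamination directions are the eigendirections of $\sigma$, and the reiterated lamination formula of Proposition~\ref{prop:lamination formula}, which gives the compliance tensor of such a laminate explicitly in terms of the lamination parameters $(m_i)$. Together these turn the infinite-dimensional problem $HS(\sigma)=\min_{A^{*}\in G_{\theta}}A^{*-1}\sigma\cdot\sigma$ into a concrete minimization over the simplex $\{m_1,m_2\ge 0,\ m_1+m_2=1\}$, which I then carry out by hand.

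First I would normalize the geometry. By Proposition~\ref{prop:rank-N laminate} the optimal composite (for $N=2$) is a rank-$2$ sequential laminate of matrix $A$ and inclusion $B$ whose two lamination directions coincide with the orthonormal eigenvectors $e_1,e_2$ of $\sigma$; since $A$ is isotropic and $f^{c}_{A}$ transforms covariantly under rotations, we may take $e_1,e_2$ to be the canonical basis, so that $\sigma=\sigma_1\,e_1\otimes e_1+\sigma_2\,e_2\otimes e_2$. It then only remains to minimize $A^{*-1}\sigma\cdot\sigma$ over $m_1,m_2\ge 0$ with $m_1+m_2=1$, the volume fraction $\theta$ being fixed.

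Next I would specialize Proposition~\ref{prop:lamination formula} to matrix $A$ and inclusion $B=0$: this case is reached as the limit $B\to 0$, for which $(B^{-1}-A^{-1})^{-1}\to 0$, so the formula reduces to
\begin{equation*}
(1-\theta)\bigl(A^{*-1}-A^{-1}\bigr)^{-1}=\theta\,M(m),\qquad M(m):=m_1 f^{c}_{A}(e_1)+m_2 f^{c}_{A}(e_2),
\end{equation*}
hence $A^{*-1}\sigma\cdot\sigma=A^{-1}\sigma\cdot\sigma+\tfrac{1-\theta}{\theta}\,M(m)^{-1}\sigma\cdot\sigma$. The heart of the argument is then the evaluation of the quadratic forms $f^{c}_{A}(e_i)$ on the three-dimensional space of symmetric $2\times 2$ tensors, using the explicit expression from the lemma on sequential laminates in elasticity together with $A\xi=2\mu\xi+\lambda(\operatorname{tr}\xi)I_2$ and the two-dimensional identity $\kappa=\lambda+\mu$. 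A direct (if slightly tedious) computation shows that the shear contributions cancel and that $f^{c}_{A}(e_1)\xi\cdot\xi=\tfrac{4\mu\kappa}{\mu+\kappa}\xi_{22}^{2}$ and $f^{c}_{A}(e_2)\xi\cdot\xi=\tfrac{4\mu\kappa}{\mu+\kappa}\xi_{11}^{2}$. Thus on the subspace spanned by $e_1\otimes e_1$ and $e_2\otimes e_2$ — which contains $\sigma$ and is orthogonal to the kernel of $M(m)$ — the tensor $M(m)$ is diagonal with entries $\tfrac{4\mu\kappa}{\mu+\kappa}m_2$ and $\tfrac{4\mu\kappa}{\mu+\kappa}m_1$, so that
\begin{equation*}
M(m)^{-1}\sigma\cdot\sigma=\frac{\mu+\kappa}{4\mu\kappa}\left(\frac{\sigma_1^{2}}{m_2}+\frac{\sigma_2^{2}}{m_1}\right).
\end{equation*}
Finally, the Cauchy--Schwarz inequality gives $\tfrac{\sigma_1^{2}}{m_2}+\tfrac{\sigma_2^{2}}{m_1}\ge(|\sigma_1|+|\sigma_2|)^{2}$, with equality exactly when $m_1/|\sigma_2|=m_2/|\sigma_1|$, i.e. $m_1=\tfrac{|\sigma_2|}{|\sigma_1|+|\sigma_2|}$, $m_2=\tfrac{|\sigma_1|}{|\sigma_1|+|\sigma_2|}$; this yields both the claimed formula $g^{*}(\sigma)=\tfrac{\kappa+\mu}{4\mu\kappa}(|\sigma_1|+|\sigma_2|)^{2}$ and the optimal microstructure. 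I expect the main obstacle to be precisely the algebraic reduction of $f^{c}_{A}(e_i)$ to the clean rank-one form above — this is where all the $N=2$–specific simplifications enter — together with the care needed to interpret $M(m)^{-1}\sigma\cdot\sigma$ even though $M(m)$ is degenerate on the shear component (harmless here because $\sigma$ carries no shear in its eigenbasis, so one may either restrict to the diagonal subspace or use the variational identity $M(m)^{-1}\sigma\cdot\sigma=\sup_{\tau}(2\sigma{:}\tau-M(m)\tau{:}\tau)$).
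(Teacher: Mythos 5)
Your proposal is correct. The paper itself gives no proof of this theorem --- it simply cites \cite[Theorem 2.3.35]{Allaire1} --- and your argument is essentially the standard one found there: reduce to rank-$2$ sequential laminates aligned with the eigendirections of $\sigma$ via Proposition~\ref{prop:rank-N laminate}, pass to the limit $B\to 0$ in the reiterated lamination formula of Proposition~\ref{prop:lamination formula} so that $(B^{-1}-A^{-1})^{-1}\to 0$, compute $f^c_A(e_1)\xi\cdot\xi=\tfrac{4\mu\kappa}{\mu+\kappa}\xi_{22}^2$ (and symmetrically for $e_2$) using $\kappa=\lambda+\mu$ in dimension $2$, and optimize over the simplex by Cauchy--Schwarz. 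I checked the algebra: the shear terms do cancel, the coefficient $\tfrac{4\mu(\mu+\lambda)}{2\mu+\lambda}=\tfrac{4\mu\kappa}{\mu+\kappa}$ comes out right, and the equality case $m_1|\sigma_1|=m_2|\sigma_2|$ yields exactly the stated optimal proportions. Your handling of the two delicate points --- the degeneracy of $M(m)$ on the shear component (harmless since $\sigma$ is diagonal in its eigenbasis) and the interpretation of $A^{*-1}$ for the degenerate phase $B=0$ as a limit --- is appropriate; the only cosmetic gap is the boundary case where one eigenvalue $\sigma_i$ vanishes, for which the optimal $m_i$ degenerates to $0$ or $1$ and a rank-$1$ laminate suffices, handled by the convention $0/0=0$ or by continuity.
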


In the three dimension case, we can also obtain the explicit formula for the bound (see \cite[Theorem 2.3.36]{Allaire1}), however, it is more complicated than the two dimension case.
Hence we restrict ourselves to the simple case of zero Poisson ratio, i.e. $\kappa=2\mu/3$.

\begin{theorem}
    Assume that $N=3$, $B=0$ and $\theta\ne0$.
    We also assume that the constants $\kappa$ and $\mu$ satisfies $\kappa=2\mu/3$.
    For any $\sigma$, we label the eigenvalues of $\sigma$ as $|\sigma_1|\le|\sigma_2|\le|\sigma_3|$.
    Then, we obtain \eqref{eq:HS explicit} with
    \begin{equation*}
        g^*(\sigma)=
    \left\{
    \begin{aligned}
    	& \dfrac{1}{4\mu}(|\sigma_1|+|\sigma_2|+|\sigma_3|)^2 && \text{ if } |\sigma_3|\le|\sigma_1|+|\sigma_2|, \\
       & \dfrac{1}{2\mu}\left((|\sigma_1|+|\sigma_2|)^2+|\sigma_3|^2\right) &&\text{ if }  |\sigma_3|>|\sigma_1|+|\sigma_2|.
    \end{aligned}
    \right.
    \end{equation*}
    Furthermore, in the first regime, an optimal rank-$3$ sequential laminate is given by
\begin{equation*}
	m_1=\dfrac{|\sigma_3|+|\sigma_2|-|\sigma_1|}{|\sigma_1|+|\sigma_2|+|\sigma_3|}, \quad m_2=\dfrac{|\sigma_1|-|\sigma_2|+|\sigma_3|}{|\sigma_1|+|\sigma_2|+|\sigma_3|}, \quad m_3=\dfrac{|\sigma_1|+|\sigma_2|-|\sigma_3|}{|\sigma_1|+|\sigma_2|+|\sigma_3|},
\end{equation*}
and in the second regime, an optimal rank-$2$ sequential laminate is 
\begin{equation*}
	m_1=\dfrac{|\sigma_2|}{|\sigma_1|+|\sigma_2|}, \quad m_2=\dfrac{|\sigma_1|}{|\sigma_1|+|\sigma_2|}, \quad m_3=0,
\end{equation*}
where $m_i$ is the parameters appeared in Lemma~\ref{lem seq lam}.
\end{theorem}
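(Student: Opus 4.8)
The plan is to reduce the statement, via the structural results already established for sequential laminates, to a completely explicit three‑variable minimization, which the zero Poisson ratio hypothesis then makes elementary to solve.

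First I would invoke Proposition~\ref{prop:rank-N laminate}: one has $HS(\sigma)=\min_{A^*\in L_\theta}A^{*-1}\sigma\cdot\sigma$, and the minimum is attained by a rank-$N$ sequential laminate whose lamination directions are the eigendirections of $\sigma$. Fixing an orthonormal eigenbasis $(e_1,e_2,e_3)$ of $\sigma$ and writing $\sigma=\diag(\sigma_1,\sigma_2,\sigma_3)$ in that basis, the problem becomes: minimize $A^{*-1}\sigma\cdot\sigma$ over rank-$3$ laminates with matrix $A$, void inclusions $B=0$, volume fraction $\theta$ of $A$, directions $e_i$, and weights $m_i\ge0$ with $m_1+m_2+m_3=1$. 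To express $A^{*-1}$ I would use the reiterated lamination formula, Proposition~\ref{prop:lamination formula}, in the degenerate limit $B\to0$, where $(B^{-1}-A^{-1})^{-1}\to0$, so that it reduces to
\begin{equation*}
(1-\theta)\left(A^{*-1}-A^{-1}\right)^{-1}=\theta\sum_{i=1}^{3}m_i\, f^c_A(e_i),
\end{equation*}
and hence, on the subspace of tensors diagonal in the basis $(e_i)$ --- which contains $\sigma$ and, the $e_i$ being orthonormal, is stable under each $f^c_A(e_i)$ ---
\begin{equation*}
A^{*-1}\sigma\cdot\sigma=A^{-1}\sigma\cdot\sigma+\frac{1-\theta}{\theta}\left(\sum_{i=1}^{3}m_i\, f^c_A(e_i)\right)^{-1}\!\sigma\cdot\sigma .
\end{equation*}
This already produces the announced shape $HS(\sigma)=A^{-1}\sigma\cdot\sigma+\tfrac{1-\theta}{\theta}g^*(\sigma)$ with $g^*(\sigma)=\min_m\big(\sum_i m_i f^c_A(e_i)\big)^{-1}\sigma\cdot\sigma$.

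Next I would specialize to $\kappa=2\mu/3$, i.e. $\lambda=0$ and $A\xi=2\mu\xi$. The formula for $f^c_A$ recalled just above then collapses: for a diagonal tensor $\xi=\diag(\xi_1,\xi_2,\xi_3)$ one gets $f^c_A(e_i)\xi\cdot\xi=2\mu\sum_{j\ne i}\xi_j^2$, so that $\sum_i m_i f^c_A(e_i)$ acts on the diagonal subspace as $2\mu\,\diag(1-m_1,1-m_2,1-m_3)$ (using $\sum_i m_i=1$), and therefore
\begin{equation*}
\left(\sum_{i=1}^{3}m_i\, f^c_A(e_i)\right)^{-1}\!\sigma\cdot\sigma=\frac{1}{2\mu}\left(\frac{\sigma_1^2}{1-m_1}+\frac{\sigma_2^2}{1-m_2}+\frac{\sigma_3^2}{1-m_3}\right).
\end{equation*}
It remains to minimize $\sum_i\sigma_i^2/(1-m_i)$ over $m_i\ge0$, $\sum_i m_i=1$. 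Setting $n_i=1-m_i\in[0,1]$ with $n_1+n_2+n_3=2$, the Cauchy--Schwarz inequality gives $\min\{\sum_i\sigma_i^2/n_i:\ n_i\ge0,\ \sum_i n_i=2\}=\tfrac12(|\sigma_1|+|\sigma_2|+|\sigma_3|)^2$, attained at $n_i=2|\sigma_i|/(|\sigma_1|+|\sigma_2|+|\sigma_3|)$; with $|\sigma_1|\le|\sigma_2|\le|\sigma_3|$ these values obey the upper bounds $n_i\le1$ precisely when $|\sigma_3|\le|\sigma_1|+|\sigma_2|$, giving the first regime, the value $g^*(\sigma)=(|\sigma_1|+|\sigma_2|+|\sigma_3|)^2/(4\mu)$, and $m_i=1-n_i$ in the stated form. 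If instead $|\sigma_3|>|\sigma_1|+|\sigma_2|$, a short Kuhn--Tucker analysis (the problem being convex on the positive orthant) shows the constraint $n_3=1$ is active at the optimum; minimizing the remaining $\sigma_1^2/n_1+\sigma_2^2/n_2$ over $n_1+n_2=1$ gives $(|\sigma_1|+|\sigma_2|)^2$ at $n_1=|\sigma_1|/(|\sigma_1|+|\sigma_2|)$, $n_2=|\sigma_2|/(|\sigma_1|+|\sigma_2|)$, i.e. $m_3=0$ and the stated $m_1,m_2$, and $g^*(\sigma)=((|\sigma_1|+|\sigma_2|)^2+|\sigma_3|^2)/(2\mu)$; the two expressions agree on the interface $|\sigma_3|=|\sigma_1|+|\sigma_2|$. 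Degenerate cases with some $\sigma_i=0$ (so $n_i=0$, $m_i=1$, the laminate effectively dropping in rank) are covered by continuity.

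I expect the only genuine difficulty to be the rigorous handling of the degenerate inclusion $B=0$: Proposition~\ref{prop:lamination formula} is stated with $B^{-1}$, so one must justify both the passage to the limit and the precise sense in which $A^{*-1}\sigma\cdot\sigma$ is determined when $\sum_i m_i f^c_A(e_i)$ is rank-deficient on the full six-dimensional space of symmetric $2$-tensors (it is invertible there only when all $m_i\in(0,1)$, but its restriction to the $\sigma$-containing diagonal subspace always suffices). This is precisely the point at which one leans on the detailed analysis of laminates with void in \cite[Section~2.3]{Allaire1}; everything else is the elementary case distinction above.
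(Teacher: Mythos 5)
Your proposal is correct, but there is nothing in the paper to compare it against: the lecture notes state this theorem without proof and simply cite \cite[Theorem 2.3.36]{Allaire1}, so you have in effect reconstructed the argument that the paper outsources. Your reduction is sound and consistent with the paper's own formulas: Proposition~\ref{prop:rank-N laminate} legitimately restricts the minimization to rank-$3$ laminates aligned with the eigenbasis of $\sigma$, and the degenerate lamination formula $(A^*)^{-1}=A^{-1}+\frac{1-\theta}{\theta}\bigl(\sum_i m_i f^c_A(e_i)\bigr)^{-1}$ is exactly the one the paper writes in the subsection on optimality conditions. The key computation checks out: with $\kappa=2\mu/3$ one has $\lambda=0$, $A\xi=2\mu\xi$, and the paper's formula for $f^c_A$ gives $f^c_A(e_i)\xi\cdot\xi=2\mu|\xi|^2-4\mu|\xi e_i|^2+2\mu\xi_{ii}^2=2\mu\sum_{j\ne i}\xi_j^2$ on diagonal $\xi$, whence $\sum_i m_i f^c_A(e_i)$ acts as $2\mu\,\mathrm{diag}(1-m_i)$ there; the Cauchy--Schwarz/KKT analysis of $\min\sum_i\sigma_i^2/n_i$ under $\sum n_i=2$, $0\le n_i\le1$ then reproduces both regimes, the threshold $|\sigma_3|\le|\sigma_1|+|\sigma_2|$, and the stated $m_i$. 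The one genuine technical point is the one you already flag: for $m_3=0$ the operator $\sum_i m_i f^c_A(e_i)$ annihilates the shear direction $e_1\odot e_2$ (one computes $f^c_A(e_1)$ and $f^c_A(e_2)$ both vanish on it when $\lambda=0$), so the ``inverse'' in the lamination formula must be understood on the invariant diagonal subspace containing $\sigma$ (equivalently, via the complementary-energy definition of $A^{*-1}\sigma\cdot\sigma$ for degenerate laminates); making that precise, together with the $B\to0$ limit, is exactly what \cite[Section 2.3]{Allaire1} supplies, and your appeal to it at that single point is appropriate rather than a gap in the argument.
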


\subsection{Optimality conditions}

We consider the optimality condition for the minimization problem \eqref{eq:ela double min2}.
In this subsection, we assume the same condition as in Section~\ref{subsec:explicit formula HS}, i.e., $B=0$ and $\theta\ne0$.
If $(\theta, A^*, \sigma)$ is a minimizer, then by Proposition~\ref{prop:rank-N laminate}, $A^*$ is a rank-$N$ sequential laminate aligned with $\sigma$.
Moreover, Proposition~\ref{prop:lamination formula} leads the explicit proportions
\begin{equation*}
	(A^*)^{-1}=A^{-1}+\dfrac{1-\theta}{\theta}
    \left(
    \sum^N_{i=1}m_i f^c_A(e_i)
    \right)^{-1}.
\end{equation*}
If we consider the case $N=2$, then, by \eqref{g* N=2}, we can rewrite the minimization appearing in the integrand \eqref{eq:ela double min2} as
\begin{equation*}
A^{-1}\sigma\cdot\sigma+\min_{0<\theta\le1}\left(\dfrac{(\kappa+\mu)(1-\theta)}{4\mu\kappa\theta}(|\sigma_1|+|\sigma_2|)^2+\ell\theta\right).
\end{equation*}
Hence we obtain the explicit optimality formula for $\theta$ as follows:
\begin{equation}\label{theta opt}
	\theta_{\rm opt}=\min\left(
   1, \sqrt{\dfrac{\kappa+\mu}{4\mu\kappa \ell}}(|\sigma_1|+|\sigma_2|)
   \right).
\end{equation}
The explicit formula for $\theta$ in the case $N=3$ follows by a similar argument.





\subsection{Numerical algorithm}

In this subsection, we will introduce a numerical algorithm for the minimization problem \eqref{eq:ela hom prob}.
We use the following double ``alternating'' minimization in $\sigma$ and in $(\theta, A^*)$:

\begin{algorithm}[H]
\caption{Double alternating minimization for \eqref{eq:ela hom prob}}
\begin{itemize}
	\item[1.] Initialization of the shape $(\theta_0, A^*_0)$ by a finite element method.
   \item[2.] Iterations until convergence, for $n\ge1$:
   \begin{itemize}
   		\item[{\rm -}]
        Given a shape $(\theta_{n-1}, A^*_{n-1})$, we compute the stress $\sigma_n$ by solving the linear elasticity problem \eqref{eq:linear ela},
       \item[{\rm -}]
       Given the stress field $\sigma_n$, we update the new design parameters $(\theta_n, A^*_n)$ by the explicit optimality formula \eqref{theta opt} in terms of $\sigma_n$. 
   \end{itemize}
\end{itemize}
\end{algorithm}

Since the problem is self-adjoint, we can exchange the problem which we can consider the explicit optimality formula and the fact allows us to consider the above numerical algorithm.
The algorithm uses a local microstructure $A^*$ and a global density $\theta$.
Such algorithm is called micro-macro method.

\begin{remark}
The objective function always decreases.
Indeed, since $(\theta_{n}, A^*_{n})$ minimizes the compliance under the stress $\sigma_{n}$, we have
\begin{equation*}
\int_D(A^*_{n-1})^{-1}\sigma_n\cdot\sigma_n\,dx+\ell\int_D\theta_{n-1}\,dx\ge\int_D(A^*_{n})^{-1}\sigma_n\cdot\sigma_n\,dx+\ell\int_D\theta_{n}\,dx.
\end{equation*}
On the other hand, $\sigma_{n+1}$ minimizes the elastic complementary energy corresponding to the Hooke's law $A^*_n$, we see that
\begin{equation*}
\int_D(A^*_{n})^{-1}\sigma_n\cdot\sigma_n\,dx\ge\int_D(A^*_{n})^{-1}\sigma_{n+1}\cdot\sigma_{n+1}\,dx.
\end{equation*}
Combining the above inequalities, we obtain the claim
\begin{equation*}
J(\theta_{n-1}, A^*_{n-1})\ge J(\theta_n, A^*_n).
\end{equation*}
\end{remark}

We show the numerical results for the case of the cantilever (Figure~\ref{fig:cantilever}).
The optimal shape of the short cantilever, i.e., the domain size 10$\times$20, is displayed on the left of Figure~\ref{fig:short cantilever}.
Moreover, the left figure of Figure~\ref{fig:short cantilever convergence history} shows the convergence history of the objective function $J$.
Here, the horizontal axis means the iteration number.
The right figure of Figure~\ref{fig:short cantilever convergence history} shows the transition of the quantity
\begin{equation*}
\max\left(\max_{i}|\theta^{k+1}_i-\theta^k_i|, 1-\dfrac{\displaystyle\int_\Omega(A^*_{k+1})^{-1}\sigma^{k}\cdot\sigma^{k}\,dx+l\int_\Omega\theta^{k+1}\,dx}{\displaystyle\int_\Omega(A^*_k)^{-1}\sigma^{k-1}\cdot\sigma^{k-1}\,dx+l\int_\Omega\theta^k\,dx}\right),
\end{equation*}
where the index $i$ refers to the cell number.
We also show the optimal shape of the medium cantilever, i.e., the domain size is 20$\times$10 in the left of Figure~\ref{fig:medium cantilever}.

\begin{figure}[H]
\centering
\includegraphics[width=0.6\linewidth,center]{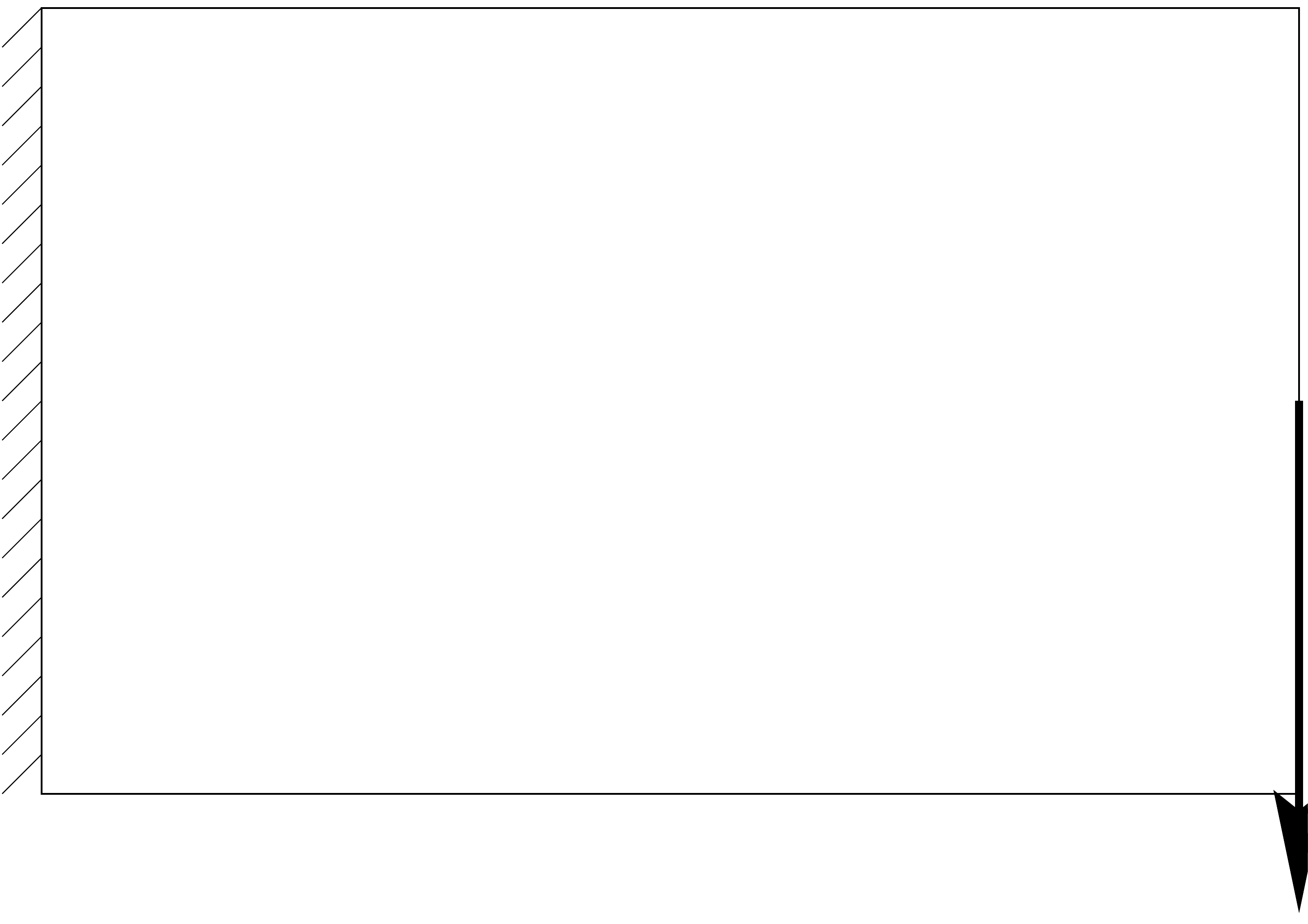}
\caption{Boundary conditions for the cantilever problem} 
\label{fig:cantilever}
\end{figure}

\begin{figure}[H]
\centering
\begin{tabular}{c}
\begin{minipage}{0.3\hsize}
\begin{center}
\includegraphics[width=\linewidth,center]{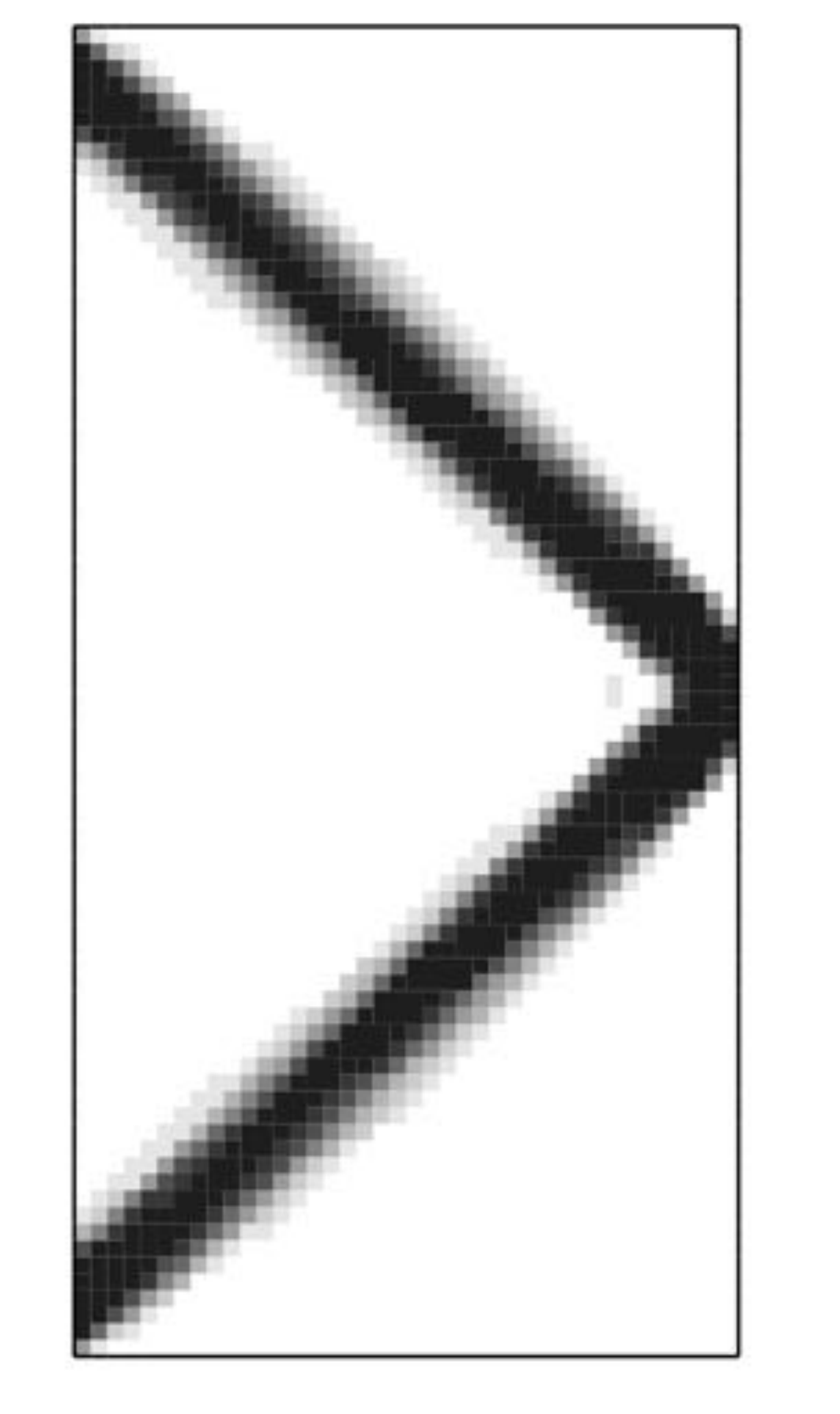}
\end{center}
\end{minipage}
\begin{minipage}{0.3\hsize}
\begin{center}
\includegraphics[width=1.0\linewidth,center]{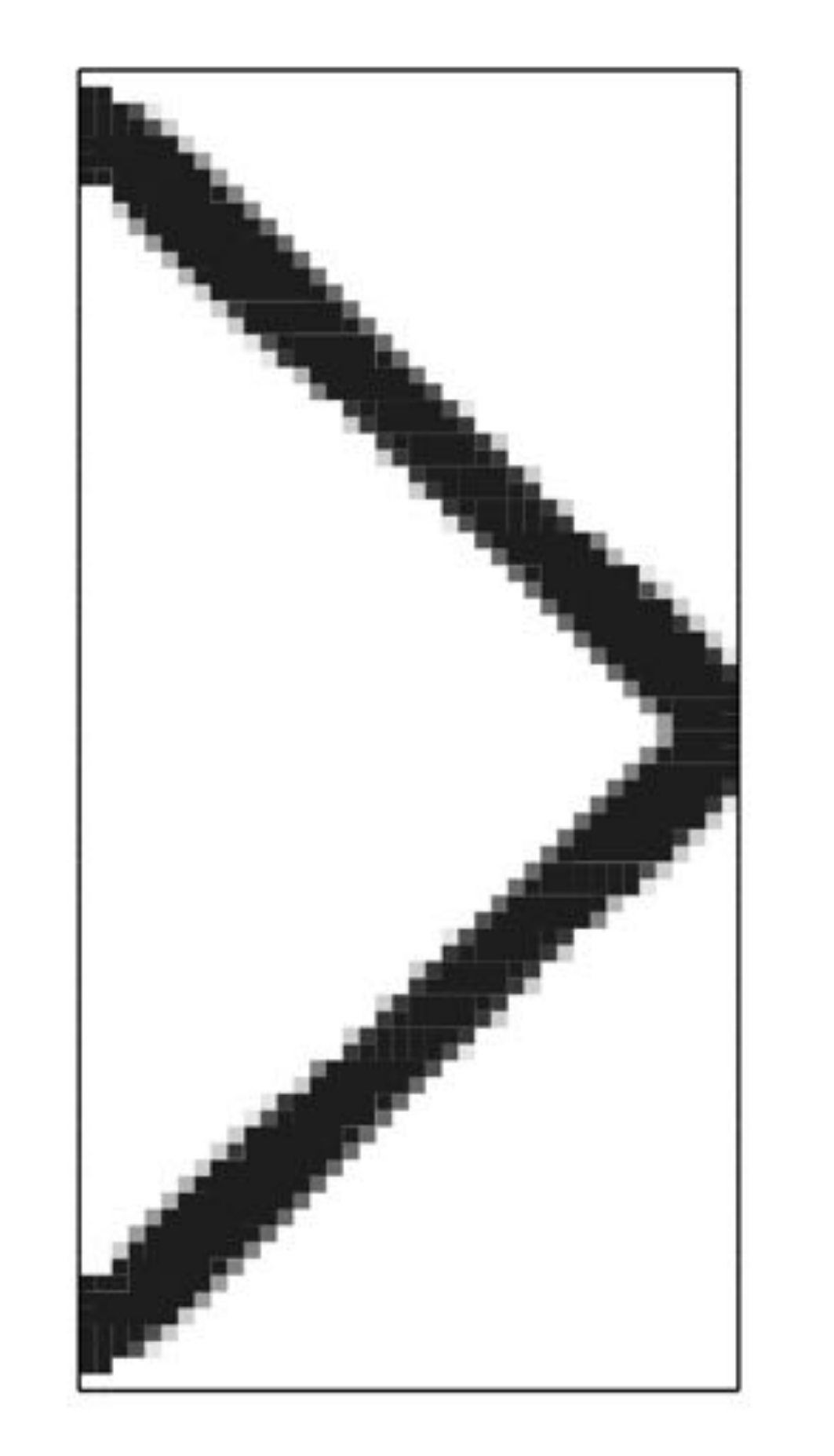}
\end{center}
\end{minipage}
\end{tabular}
\caption{Optimal shape of the short cantilever (left:composite, right:penalized)}
\label{fig:short cantilever}
\end{figure}

\begin{figure}[H]
\centering
\begin{tabular}{c}
\begin{minipage}{0.35\vsize}
\begin{center}
\includegraphics[width=1.0\linewidth,center]{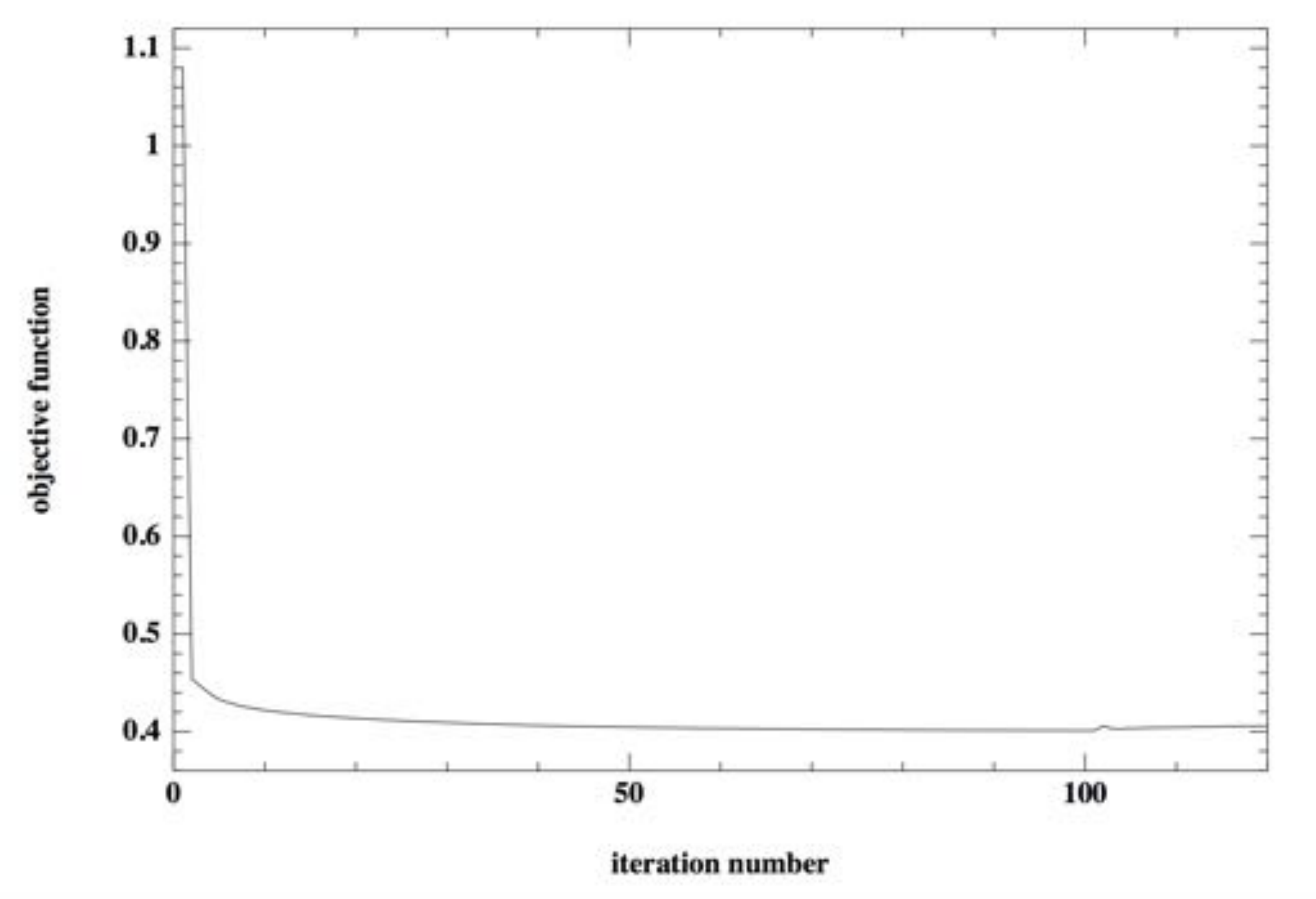}
\end{center}
\end{minipage}
\begin{minipage}{0.35\vsize}
\begin{center}
\includegraphics[width=1.0\linewidth,center]{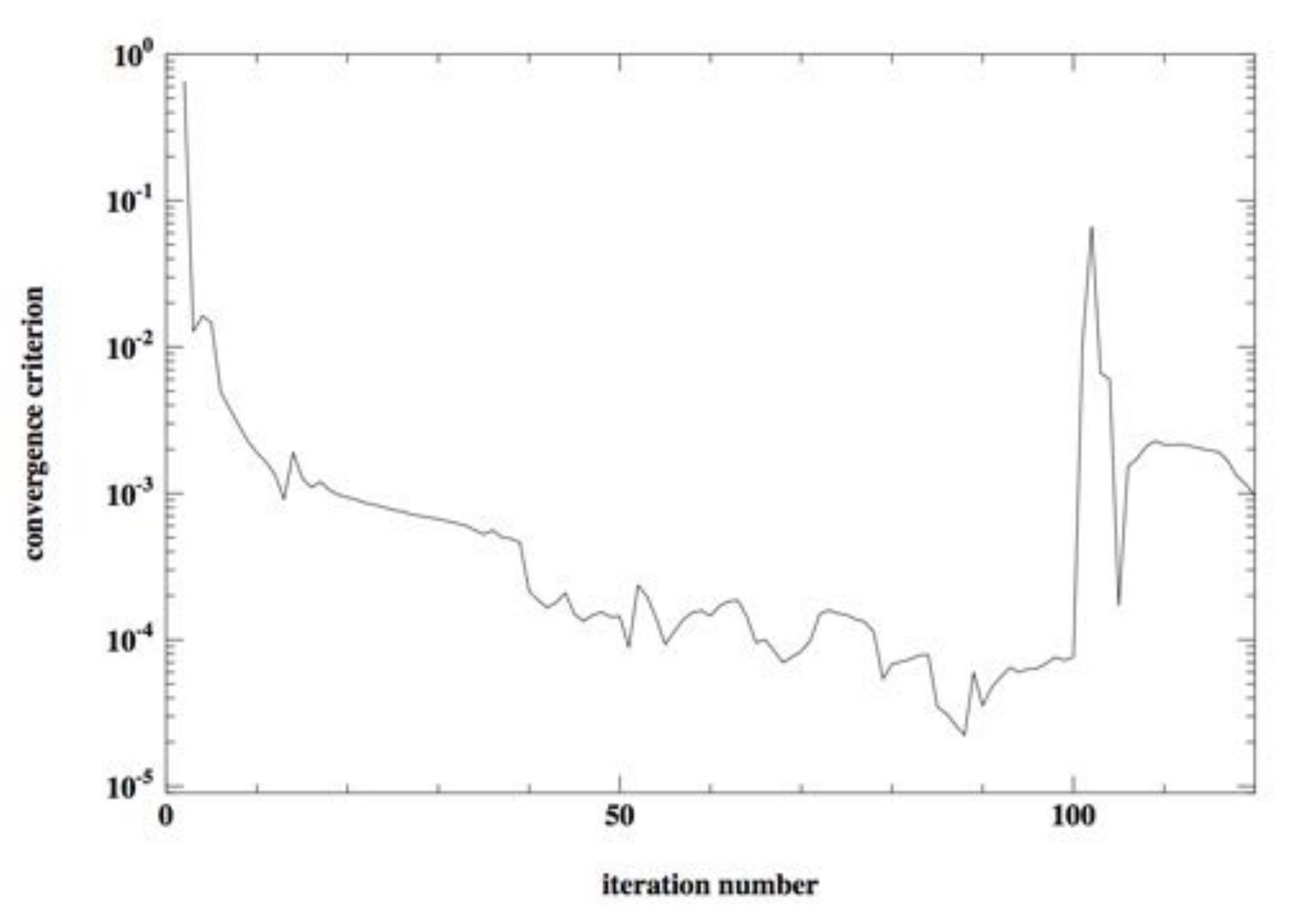}
\end{center}
\end{minipage}
\end{tabular}
\caption{Convergence history of the short cantilever (left:objective function, right:convergence criterion)} 
\label{fig:short cantilever convergence history}
\end{figure}
\begin{figure}[H]
\centering
\begin{tabular}{c}
\begin{minipage}{0.35\vsize}
\begin{center}
\includegraphics[width=1.0\linewidth,center]{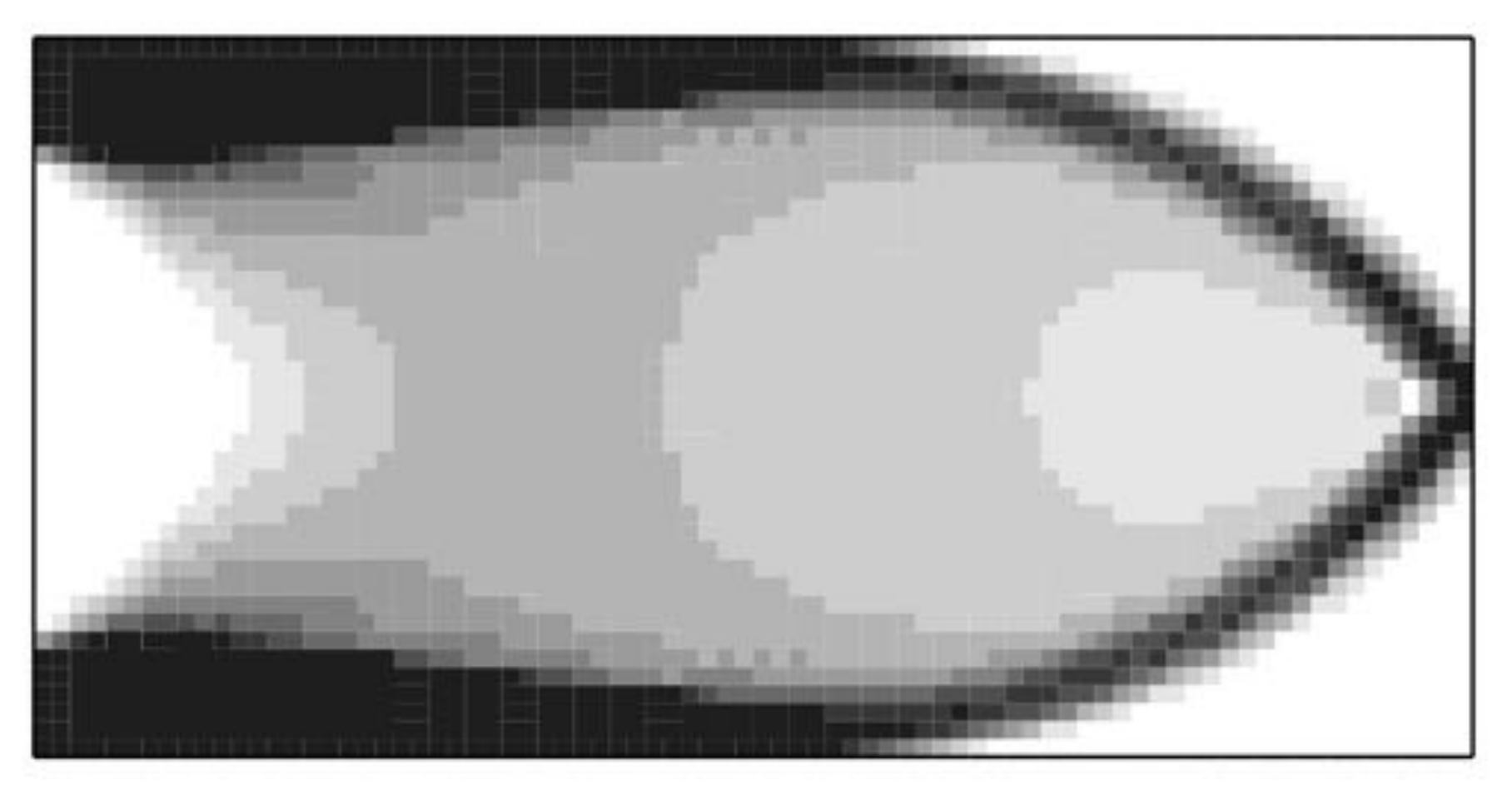}
\end{center}
\end{minipage}
\begin{minipage}{0.35\vsize}
\begin{center}
\includegraphics[width=1.0\linewidth,center]{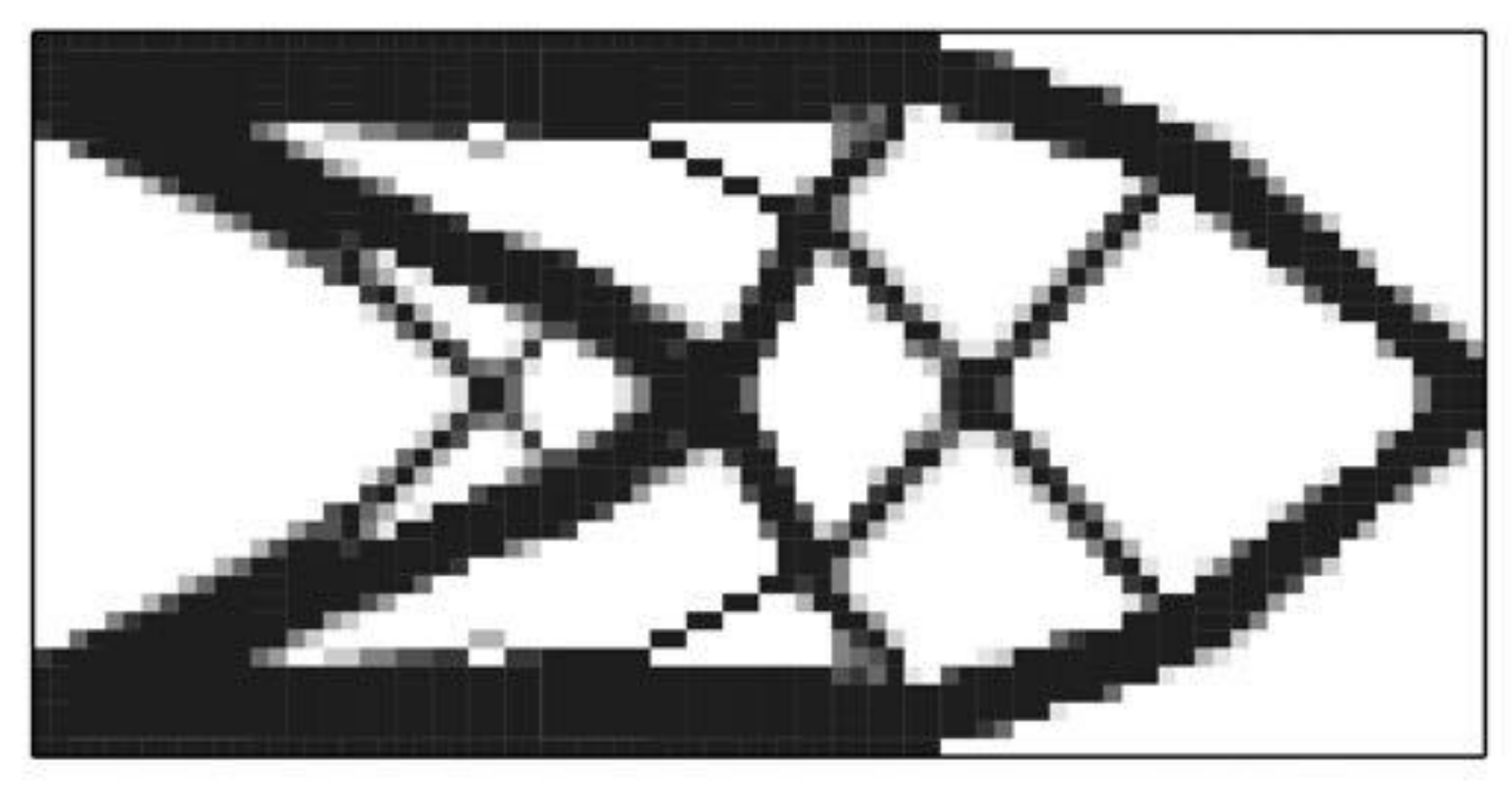}
\end{center}
\end{minipage}
\end{tabular}
\caption{Optimal shape of the medium cantilever (left:composite, right:penalized)} 
\label{fig:medium cantilever}
\end{figure}



\subsubsection{Penalization}

The algorithm, we considered in this subsection, compute composite shapes instead of classical shapes.
We thus use a penalization technique to force the density to take values close to $0$ or $1$ as in Section~\ref{subsec:numerical algorithm conductivity}.
The algorithm is the following$\colon$

\begin{algorithm}[H]
\caption{Penalization process for \eqref{eq:ela hom prob}}
After convergence to a composite shape, we perform a few more iterations with a penalized density
\begin{equation*}
	\theta_{\rm pen}=\dfrac{1-{\rm cos}(\pi\theta_{\rm opt})}{2}.
\end{equation*}
\end{algorithm}

Note that if $0<\theta_{\rm opt}<1/2$, then $\theta_{\rm pen}<\theta_{\rm opt}$, while, if $1/2<\theta_{\rm opt}<1$, then $\theta_{\rm pen}>\theta_{\rm opt}$.
By using this algorithm, we can obtain the penalized resulting shape of the short cantilever and the medium cantilever (see the right figure of Figure~\ref{fig:short cantilever} and \ref{fig:medium cantilever} respectively).
We also show the numerical result for the bridge problem (Figure~\ref{fig:bridge} and Figure~\ref{fig:bridge optimal shapes}).

\begin{figure}[h]
\centering
\includegraphics[width=0.5\linewidth,center]{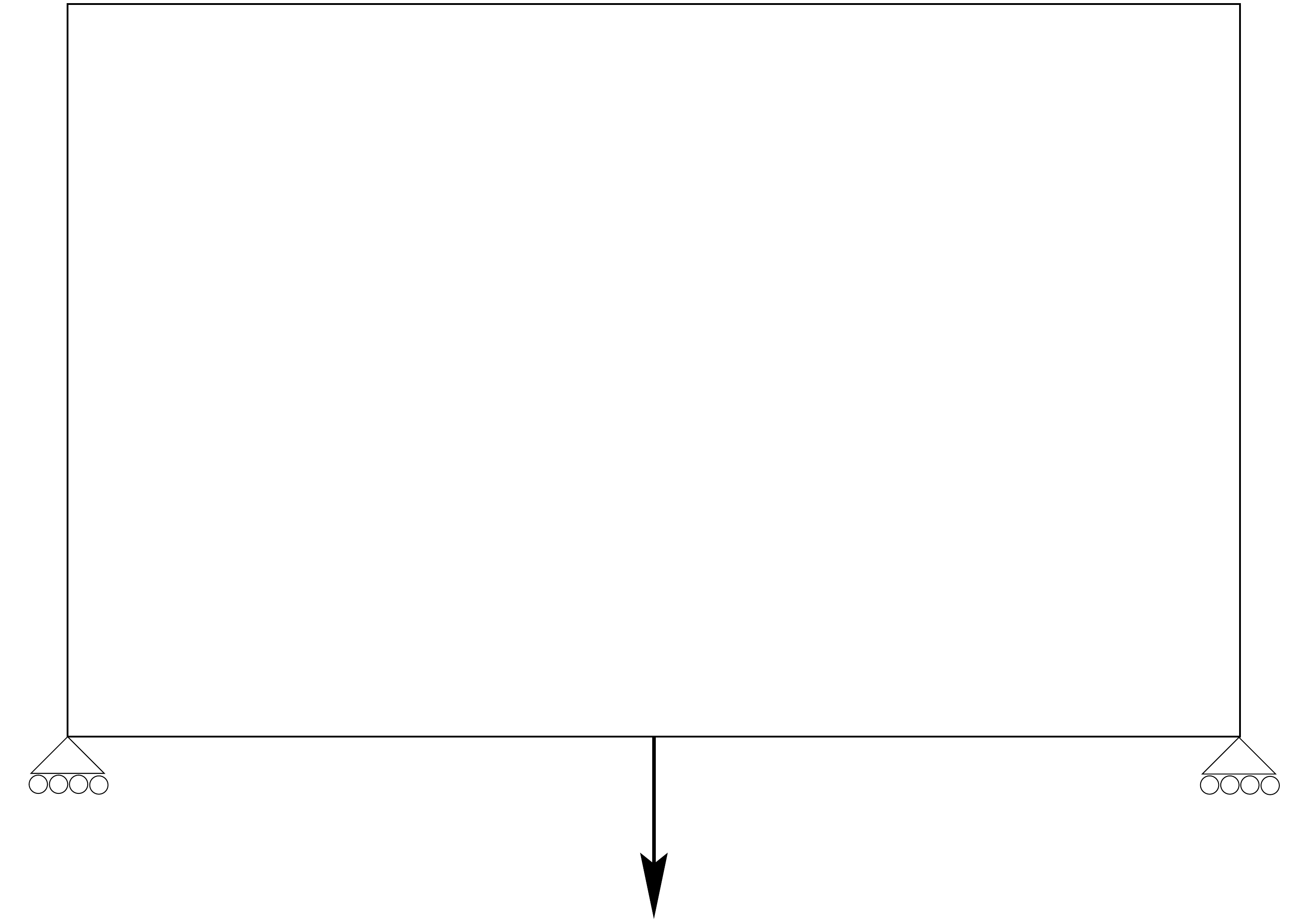}
\caption{Boundary conditions for the bridge problem} 
\label{fig:bridge}
\end{figure}

\begin{figure}[h]
\centering
\begin{tabular}{c}
\begin{minipage}{0.3\vsize}
\begin{center}
\includegraphics[width=1.0\linewidth,center]{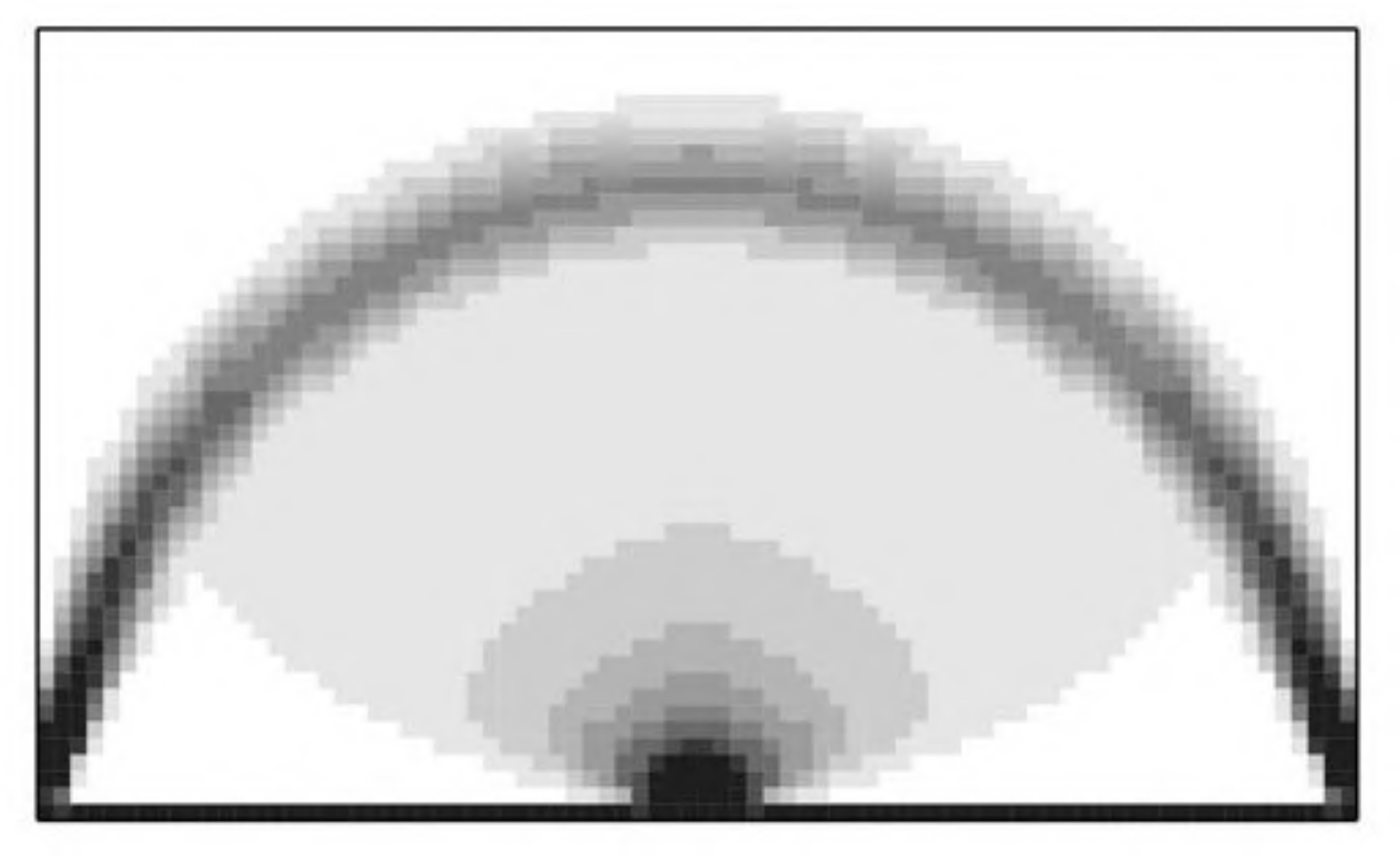}
\end{center}
\end{minipage}
\begin{minipage}{0.3\vsize}
\begin{center}
\includegraphics[width=1.0\linewidth,center]{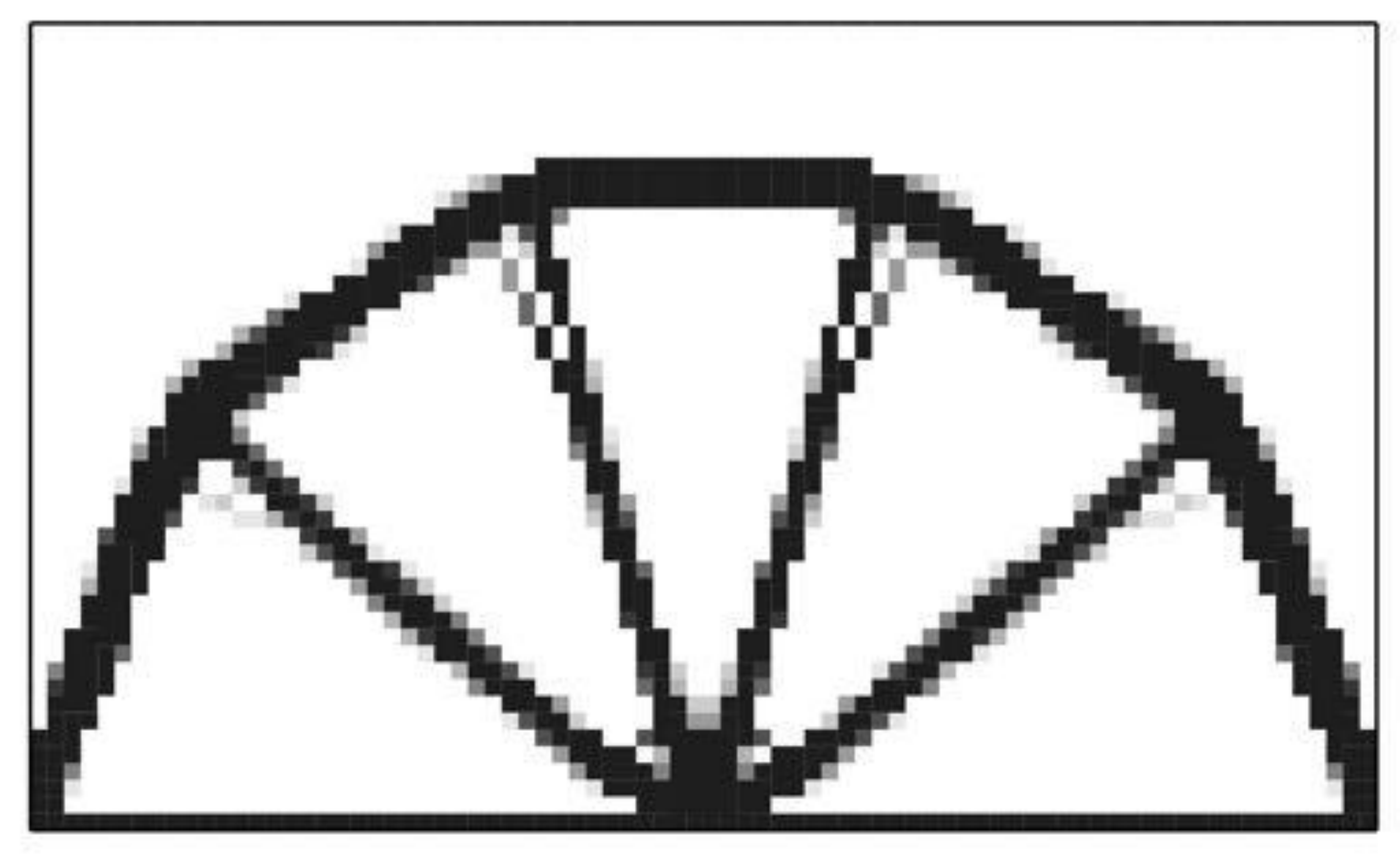}
\end{center}
\end{minipage}
\end{tabular}
\caption{Optimal shape of the bridge (left: composite, right: penalized)} 
\label{fig:bridge optimal shapes}
\end{figure}

\section{Convexification, ``fictitious materials" and SIMP}
\label{sec:SIMP}
In the homogenization method, composite materials are introduced, however, discarded at the end by penalization.
In this section, we will consider whether we can simplify the approach by introducing merely a density $\theta$.
We will use the following "convexification" approach.
A classical shape is parametrized by characteristic functions $\chi(x)$.
If we convexify this admissible set, we obtain $\theta(x)\in[0, 1]$.
Replacing the admissible set by the convexified set, the Hooke's law, which was $\chi(x)A$, becomes $\theta(x)A$.
We will call these $\theta(x)A$ ``fictitious materials", because one can not realize them by a true homogenization process in general.
Combined with a penalization scheme, this method is called SIMP method.

We consider the elasticity setting.
For $\theta\in L^\infty(D; [0, 1])$, the convexified formulation for the problem reads as follows:
\begin{equation*}
	\left\{
    \begin{aligned}
    	\dv\,\sigma &=0 && \text{ in } D,\\
      \sigma&=\theta Ae(u) &&\text{ in } D,\\
       u &=0 && \text{ on } \Gamma_D, \\
      \sigma\cdot n&=g && \text{ on } \Gamma_N, \\
      \sigma\cdot n&=0 && \text{ on } \partial D\setminus(\Gamma_D\cup\Gamma_N),
    \end{aligned}
	\right.
\end{equation*}
where $e(u):=(\nabla u+(\nabla u)^t)/2$.
Moreover, the compliance minimization becomes
\begin{equation} \label{eq:ela convexify min}
	\min_{\theta\in L^\infty(D; [0, 1])}\left\{
    c(\theta)+\ell\int_D \theta(x)\,dx
    \right\}
\end{equation}
with
\begin{equation*}
	c(\theta)=\int_{\Gamma_N}g\cdot u\,ds=\int_D(\theta(x)A)^{-1}\sigma\cdot\sigma\,dx=\min_{\substack{
    -\dv\, \tau=0 \ {\rm in} \ D, \\
    \tau\cdot n=g \ {\rm on} \ \Gamma_N,\\
    \tau\cdot n=0 \ {\rm on} \ \partial D\setminus(\Gamma_N\cup\Gamma_D)
    }}
   \int_D (\theta(x)A)^{-1}\tau\cdot\tau\,dx.
\end{equation*}
There is only one single design parameter, the material density $\theta$. In other words, any information concerning the microstructure $A^*$ has disappeared.

\subsection{Existence of solutions}

In this section, we will show the existence of the minimizer of \eqref{eq:ela convexify min}. 

\begin{theorem}
	The convexified formulation
   \begin{equation*}
   	\min_{\theta\in L^\infty(D; [0, 1])}\min_{\substack{
    -\dv \,\tau=0 \text{ in } D, \\
    \tau\cdot n=g \text{ on } \Gamma_N,\\
    \tau\cdot n=0 \text{ on } \partial D\setminus(\Gamma_N\cup\Gamma_D)
    }}
   \left(
   \int_D (\theta(x)A)^{-1}\tau\cdot\tau\,dx+\ell\int_D \theta(x)\,dx
   \right)
   \end{equation*}
   admits at least one solution.
\end{theorem}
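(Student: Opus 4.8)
The plan is to run the direct method of the calculus of variations, in the spirit of Theorems~\ref{ex minimizer for cont J over closed K} and~\ref{existence of min in infinite dim convex space}, but with two twists: the minimization is joint over the density $\theta\in L^\infty(D;[0,1])$ and the stress $\tau$ ranging over the affine constraint set
\[
\mathcal{T}=\left\{\tau\in L^2(D;\RR^{N\times N})\;:\; -\dv\,\tau=0 \text{ in } D,\ \tau\cdot n=g \text{ on }\Gamma_N,\ \tau\cdot n=0 \text{ on }\partial D\setminus(\Gamma_N\cup\Gamma_D)\right\},
\]
and the integrand $(\theta,\tau)\mapsto(\theta A)^{-1}\tau\cdot\tau=\theta^{-1}A^{-1}\tau\cdot\tau$ is degenerate (it blows up where $\theta$ vanishes). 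To handle the degeneracy cleanly I would regard the energy as an extended-real-valued functional, set to $+\infty$ at points where $\theta=0$ and $\tau\ne0$. Write $F(\theta,\tau)=\int_D(\theta A)^{-1}\tau\cdot\tau\,dx+\ell\int_D\theta\,dx$ and $m=\inf F$ over the closed convex set $K:=\{\theta\in L^\infty(D):0\le\theta\le1 \text{ a.e.}\}\times\mathcal{T}$.

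First I would check that $m<\infty$: clearly $F\ge0$, and $F(1,\tau_0)<\infty$ for any fixed $\tau_0\in\mathcal{T}$, the set $\mathcal{T}$ being a nonempty closed affine subspace of $L^2$ (nonemptiness follows from the solvability of the homogeneous linear elasticity system \eqref{eq:linear ela} with $A^{\ast}\equiv A$ via Lax--Milgram, taking $\tau_0=Ae(u)$). Next, compactness: take a minimizing sequence $(\theta_n,\tau_n)\subset K$ with $F(\theta_n,\tau_n)\to m$, so $F(\theta_n,\tau_n)\le m+1$ eventually. Since $0\le\theta_n\le1$ and $A$ is coercive, $\theta_n^{-1}A^{-1}\tau_n\cdot\tau_n\ge c_A|\tau_n|^2$ pointwise for some $c_A>0$, whence $\|\tau_n\|_{L^2(D)}^2\le c_A^{-1}(m+1)$; thus $\tau_n$ is bounded in $L^2$ and, along a subsequence, $\tau_n\rightharpoonup\tau$ weakly in $L^2$, with $\tau\in\mathcal{T}$ since $\mathcal{T}$ is convex and strongly closed, hence weakly closed. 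Simultaneously $\theta_n$ is bounded in $L^\infty=(L^1)'$, so along a further subsequence $\theta_n\rightharpoonup\theta$ weakly-$*$ in $L^\infty$; testing against nonnegative $L^1$ functions gives $0\le\theta\le1$ a.e., so $(\theta,\tau)\in K$, and testing against the constant function $1\in L^1(D)$ gives $\int_D\theta_n\,dx\to\int_D\theta\,dx$.

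It then remains to establish the lower semicontinuity inequality $\int_D(\theta A)^{-1}\tau\cdot\tau\,dx\le\liminf_n\int_D(\theta_nA)^{-1}\tau_n\cdot\tau_n\,dx$ along the chosen subsequence. The crucial point is that, writing $A^{-1}=L^tL$, one has $\theta^{-1}A^{-1}\tau\cdot\tau=\phi(\theta,L\tau)$ with $\phi(a,\sigma)=a^{-1}|\sigma|^2$, which is convex and lower semicontinuous by Lemma~\ref{simple calculation for phi}; hence $(\theta,\tau)\mapsto(\theta A)^{-1}\tau\cdot\tau$ is a nonnegative, jointly convex, lower semicontinuous integrand. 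Lower semicontinuity of its integral under the product of the weak-$*$ topology on $L^\infty$ and the weak topology on $L^2$ is the classical lower semicontinuity theorem for convex integral functionals (see \cite[Chapter~III]{ET}; this is the same mechanism that underlies Theorem~\ref{existence of min in infinite dim convex space}). Concretely one passes to the limit in the inequalities $\int_D(\theta_nA)^{-1}\tau_n\cdot\tau_n\,dx\ge\int_D\big(b(x)+c(x)\theta_n+E(x)\cdot\tau_n\big)\,dx$ for global affine minorants of the integrand, whose coefficients can be taken in $L^1(D)$, $L^\infty(D)$ and $L^2(D;\RR^{N\times N})$ respectively, then supremizes over a countable family of such minorants together with a localization argument to recover $\int_D(\theta A)^{-1}\tau\cdot\tau\,dx$. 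Adding the already-handled linear term yields $F(\theta,\tau)\le\liminf_nF(\theta_n,\tau_n)=m$, so $(\theta,\tau)$ is a minimizer; its energy is in particular finite, which \emph{a posteriori} forces $\tau=0$ a.e. on $\{\theta=0\}$, so the $+\infty$ convention causes no inconsistency. Finally, from the joint minimizer $(\theta,\tau)\in K$ one reads off that $\theta$ realizes the double minimum in the statement, proving existence.

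\textbf{Main obstacle.} The delicate step is precisely the lower semicontinuity in the presence of the degeneracy at $\theta=0$: one cannot argue by continuity of $A\mapsto(\theta A)^{-1}$, and must instead exploit joint convexity of the integrand together with the extended-real-valued convention. A secondary point requiring mild care is the integrability of the coefficients of the affine minorants used in the limit passage; this is guaranteed once the minorants are anchored at tangency points where $\theta$ is bounded away from $0$ and $\tau\in L^2$. Everything else — finiteness of $m$, extraction of subsequences, and the linear term — is routine, and one may further note that, the problem being convex, any minimizer produced this way is in fact global by Proposition~\ref{prop:global minimizer}.
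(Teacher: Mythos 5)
Your proof is correct and rests on the same key observation as the paper's: the joint convexity of the integrand $(\theta,\tau)\mapsto(\theta A)^{-1}\tau\cdot\tau$, which the paper verifies via the same algebraic identity as in Lemma~\ref{simple calculation for phi} before simply invoking Theorem~\ref{existence of min in infinite dim convex space}. Your explicit unfolding of the direct method — coercivity in $\tau$, weak-$*$/weak compactness, and lower semicontinuity of the convex integrand with the $+\infty$ convention at $\theta=0$ — supplies exactly the details the paper leaves implicit, and is if anything more careful, since $L^\infty$ is not reflexive and the functional is only lower semicontinuous rather than continuous.
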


\begin{proof}
    Let $\mathcal{M}^s_N$ be the set of symmetric squared matrices of order $N$.
	The function
    \begin{equation*}
    	\phi(a, \sigma)=a^{-1}A^{-1}\sigma\cdot\sigma,\quad (a, \sigma)\in\RR_{\ge0}\times\mathcal{M}^s_N
    \end{equation*}
    is convex because
    \begin{equation}
    	\begin{aligned}
    	\phi(a, \sigma)
        &=\phi(a_0, \sigma_0)+D\phi(a_0, \sigma_0)\cdot(a-a_0, \sigma-\sigma_0)+\phi(a, \sigma-aa^{-1}_0\sigma_0)\\
        &\geq\phi(a_0, \sigma_0)+D\phi(a_0, \sigma_0)\cdot(a-a_0, \sigma-\sigma_0),
        \end{aligned}
    \end{equation}
    where the derivative $D\phi$ is given by
    \begin{equation*}
    	D\phi(a_0, \sigma_0)\cdot(b, \tau)=-\dfrac{b}{a_0^2}A^{-1}\sigma_0\cdot\sigma_0+2a^{-1}_0A^{-1}\sigma_0\cdot\tau.
    \end{equation*}
    Then, by Theorem~\ref{existence of min in infinite dim convex space} we can see that there exists a minimizer of \eqref{eq:ela convexify min}.
\end{proof}

\subsection{Optimality condition}
If we exchange the minimizations in $\tau$ and in $\theta$, we can compute the optimal $\theta$ which is
\begin{equation} \label{eq:explicit theta}
	\theta(x)=
    \left\{
    \begin{aligned}
    	& 1 && \quad {\rm if} \ A^{-1}\tau\cdot\tau\ge \ell,\\
      & \sqrt{\ell^{-1}A^{-1}\tau\cdot\tau} && \quad {\rm if} \ A^{-1}\tau\cdot\tau< \ell.
    \end{aligned}
    \right.
\end{equation}
By using this explicit optimality formula, we can use again an ``alternating'' double minimization algorithm which will be shown in the following subsection.

\subsection{Numerical algorithm}

By the use of the explicit optimality formula \eqref{eq:explicit theta} for $\theta$, we can apply the following double minimization algorithm.

\begin{algorithm}[H]
\caption{Double minimization algorithm and penalization for \eqref{eq:ela convexify min}}
\begin{itemize}
	\item[1.]
    Initialization of the shape $\theta_0$,
   \item[2.]
   Iterations $k\ge1$ until convergence
   \begin{itemize}
   		\item[{\rm -}]
      given a shape $\theta_{k-1}$, we compute the stress $\tau_k$ by solving an elasticity problem by a finite element method,
      \item[{\rm -}]
      given a stress field $\tau_k$, we update the new material density $\theta_k$ with the explicit optimality formula in terms of $\tau_k$.
   \end{itemize}
\end{itemize}
As a penalization, we use the penalized density
\begin{equation*}
	\theta_{\rm pen}=\dfrac{1-{\rm cos}(\pi\theta_{\rm opt})}{2}
\quad\text{ or }\quad
	\theta_{\rm pen}=\theta^p,
\end{equation*}
where $p>1$ if we consider the SIMP method.
\end{algorithm}

In practice, it is extremely simple, however, the numerical results are not as good. This can be explained as follows: since the SIMP method uses very little information on the given composites (in particular, it neglects its microstructure all together), we have a lack of a relaxation theorem. In other words, applying the SIMP method changes the problem, and, as a consequence, we have no guarantee that the optimal solution obtained is really an approximation of the optimal solution of the original problem. 
Moreover, we have to be careful, as it could be very delicate to monitor the penalization.
We show the numerical result for the case of the bridge problem (Figure~\ref{fig:bridge}).
In Figure~\ref{fig:bridge convergence history}, we show the convergence history of the objective function.
Here, the horizontal axis means the iteration number.
In Figure~\ref{fig:bridge convexify}, we show the numerical result of the optimal shape of the bridge by the algorithm which is mentioned in this subsection.
\begin{figure}[H]
\centering
\begin{center}
\includegraphics[width=0.5\linewidth,center]{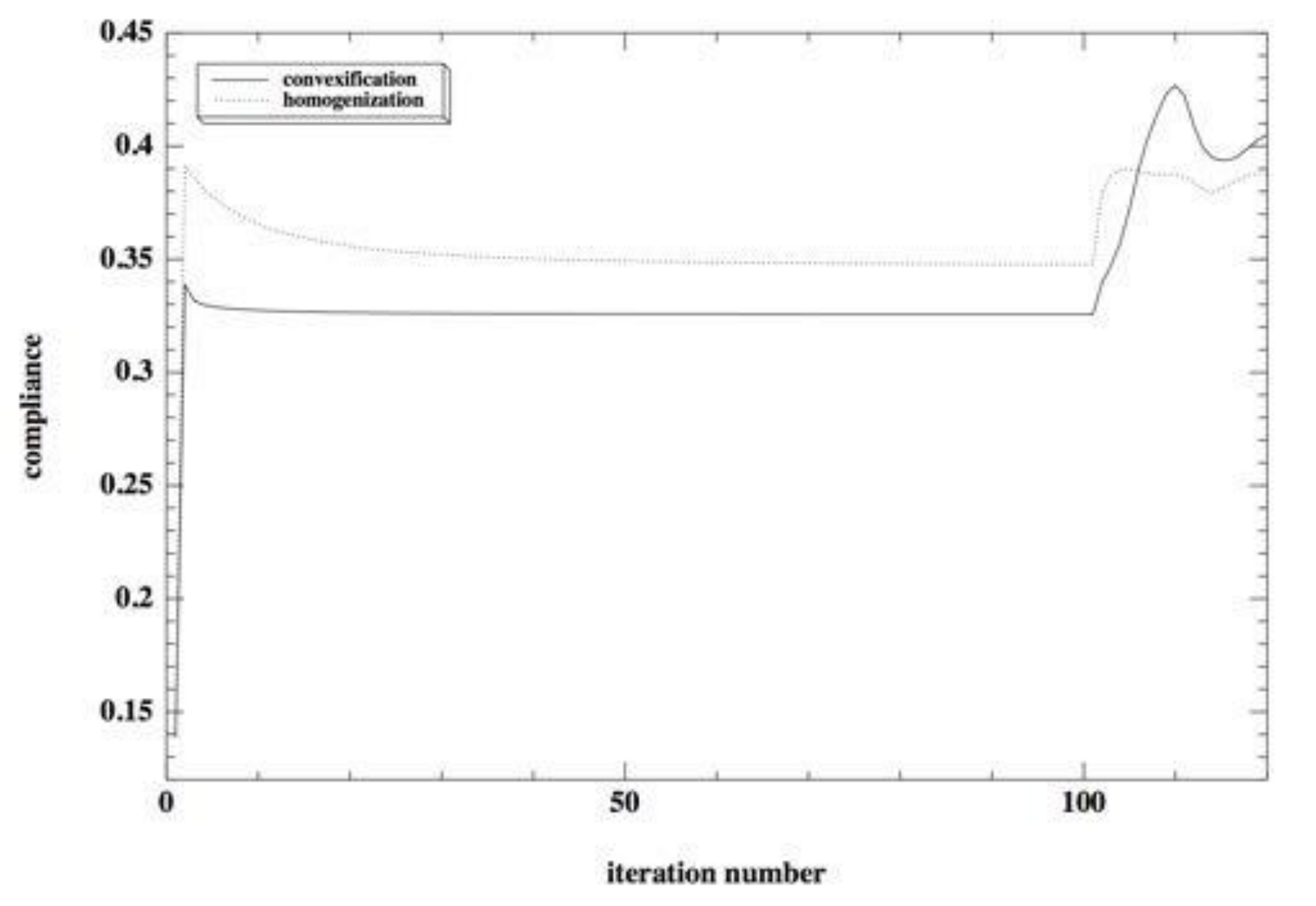}
\end{center}
\caption{Convergence history of the objective function for the bridge problem} 
\label{fig:bridge convergence history}
\end{figure}
\begin{figure}[H]
\centering
\begin{tabular}{c}
\begin{minipage}{0.35\vsize}
\begin{center}
\includegraphics[width=1.0\linewidth,center]{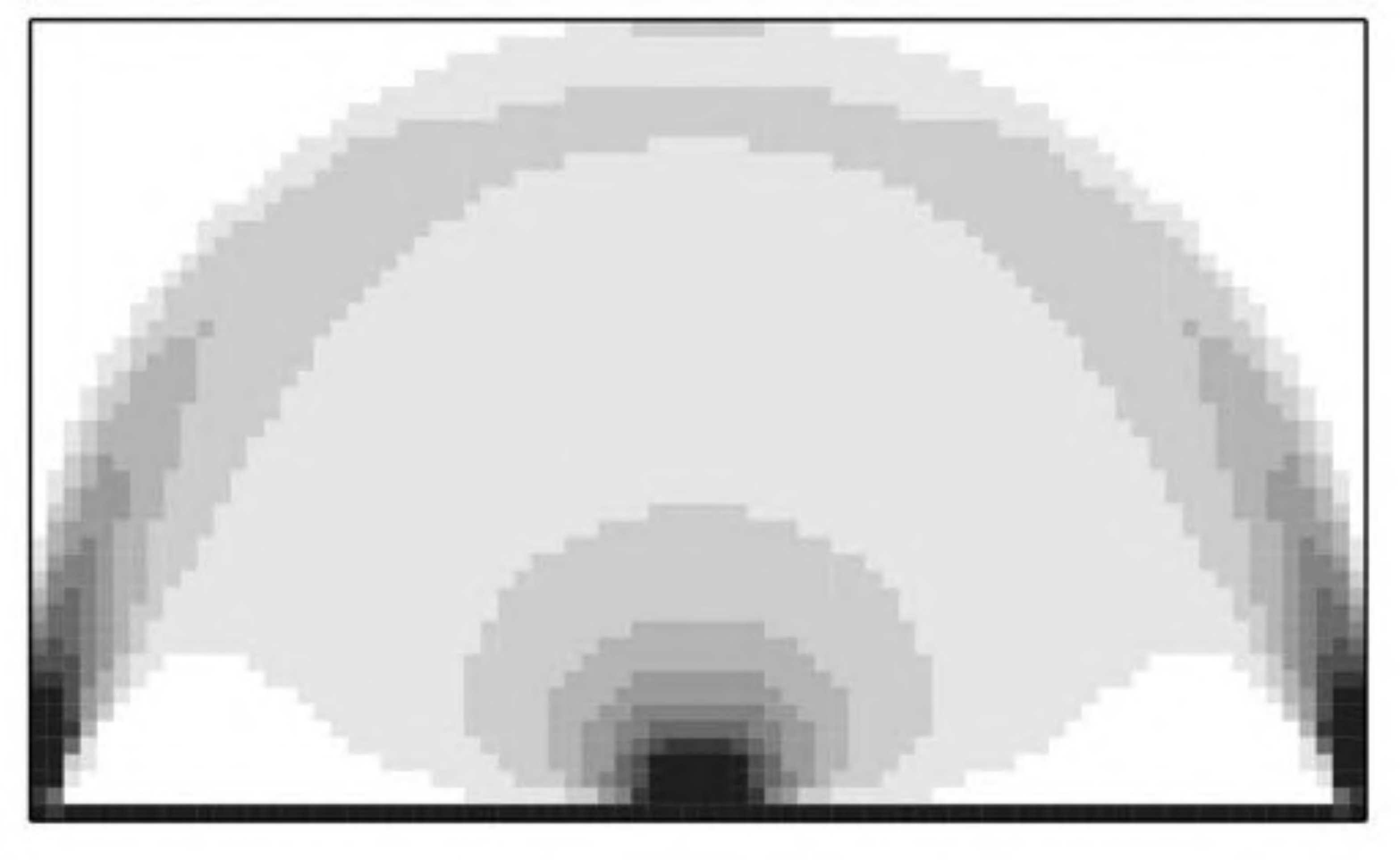}
\end{center}
\end{minipage}
\begin{minipage}{0.35\vsize}
\begin{center}
\includegraphics[width=1.0\linewidth,center]{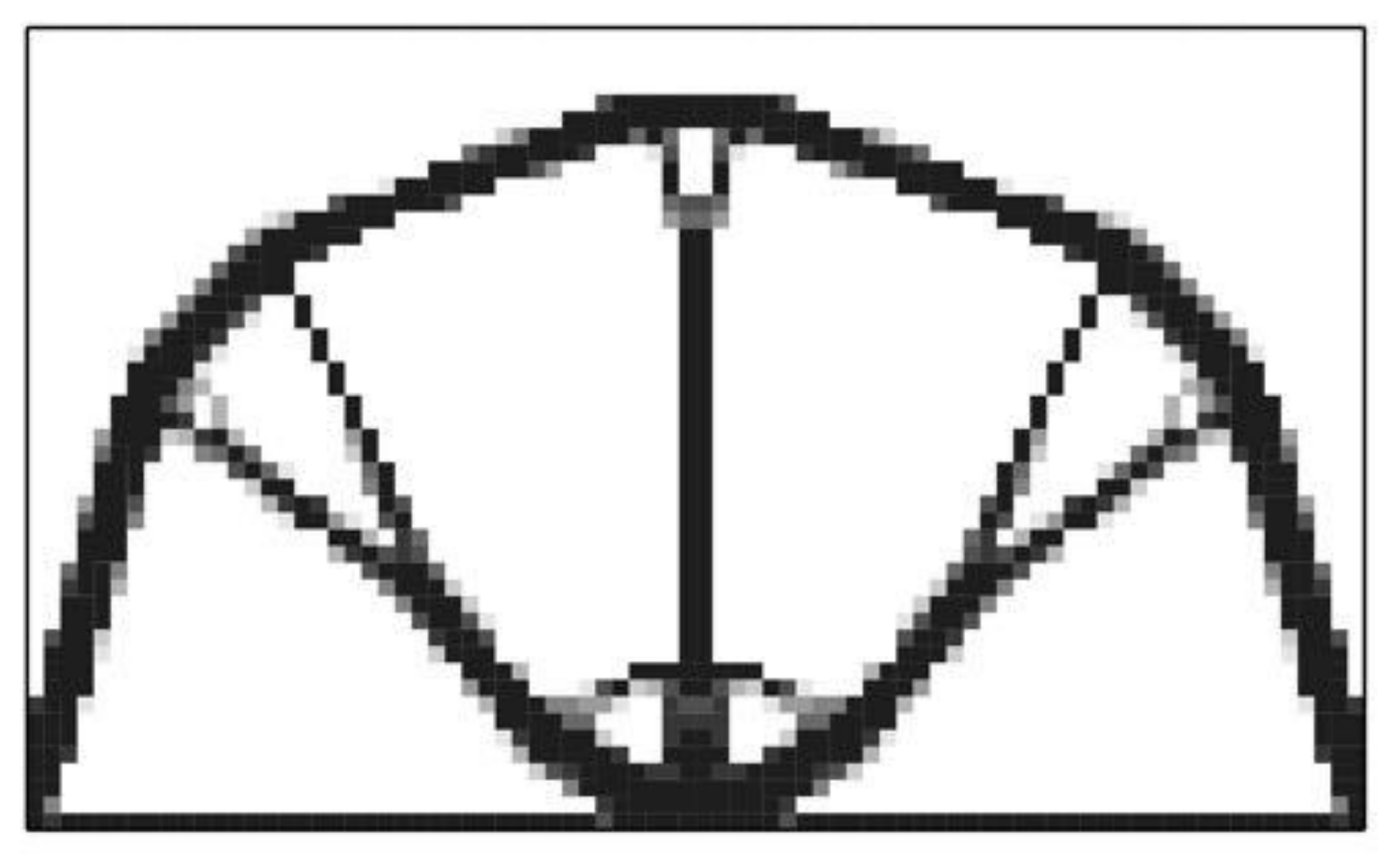}
\end{center}
\end{minipage}
\end{tabular}
\caption{Optimal shape of the bridge (left: convexified, right: penalized)} 
\label{fig:bridge convexify}
\end{figure}

\subsection{Concluding remarks on the SIMP method}

SIMP (or convexification, or ``fictitious materials'') is very simple and very popular.
Actually, many commercial codes are using the method. 
However, since SIMP uses very little information on composites, its simplicity comes at a cost. In particular, contrary to the homogenization method, SIMP is a convexification method and not a relaxation method, i.e., it changes the problem.
Hence there is a gap between the true minimal value of the objective function and that of SIMP.

\section{Generalizations of the homogenization method}
As possible generalizations of the homogenization method, we give the following three examples:
\begin{itemize}
	\item
   Multiple loads,
   \item
   Vibration eigenfrequency,
   \item
   General criterion of the least-square type.
\end{itemize}
The first two cases are self-adjoint and we have a complete understanding and justification of the relaxation process. The third case is, however, not self-adjoint and only a partial relaxation is known.
For the detail of these cases, see \cite{Allaire1}.




\section{Exercises}
\begin{problem}
   Implement the optimal radiator test case (with penalization).
\end{problem}
\begin{problem}
   Implement the SIMP method (start with exponent $p=1$ and increase to $p=3$) for compliance minimization$\colon$ cantilever, bridge, MBB beam, L-beam.
\end{problem}
\begin{problem}
	Implement SIMP method with an adjoint for the objective function
	\begin{equation*}
   		J(\theta)=\int_D \theta|u|^2\,dx.
   \end{equation*}
\end{problem}
\begin{problem}
	Minimize compliance for the cantilever problem with a parametrized cell (rectangular hole in a square cell).
\end{problem}

\chapter{Resurrection of the homogenization method: lattice materials in additive manufacturing}\label{chap of additive manufacturing}

\section{Introduction}

As we mentioned in the previous chapters, the homogenization method uses true composite materials, possibly anisotropic, and this makes it complicated to implement.
Therefore, in practice, the method was replaced by its much simplified version, the so-called SIMP method, which uses only fictitious isotropic materials. 
(For the detail of the SIMP method, see \cite{BS}.)
Since intermediate densities (between full material and void) are penalized in the end, there is indeed no need to have a detailed knowledge and optimization of microstructures.

Nevertheless, the recent progress of additive manufacturing techniques has revived the interest for the use of graded or microstructured materials since they are now manufacturable (see Figure \ref{lattice struc}, page \pageref{lattice struc}).
Since the homogenization method is the right technique to deal with microstructured materials, where anisotropy plays a key role (a feature which is absent from SIMP), we could well see a resurrection of the homogenization method for such applications.
There is however one final hurdle to overcome, once an optimal composite structure has been obtained, that is the projection of the optimal microstructure at a chosen finite lengthscale to get a global and detailed picture of the optimal microstructure. 
This is the most delicate part of this homogenization approach and the one where this chapter is most contributing.

Often (but not always) lattice materials are periodic structures, with macroscopically varying parameters.
Hence we will restrict to periodic homogenization and macroscopically modulated periodic structures, i.e., the material parameters are of the type
\begin{equation*}
A\left(x,\frac{x}{\e}\right),
\end{equation*}
where $y \mapsto A(x,y)$ is $Y$-periodic and $x \mapsto A(x,y)$ describes the macroscopic variations.
(We discuss how to choose the period and the holes in Section~\ref{sec:setting of the problem}.)
The orientation of the microstructures is rarely taken into account and optimized, although it is well-known that their orientation is a crucial and determining parameter in topology optimization (see \cite{Allaire1} and \cite{Pe}).
Actually, even if optimizing the microstructure orientation is not difficult, reconstructing the oriented periodic structure is a challenging issue. 
We propose a method to settle the difficulty by projecting the optimal microstructure on a fine mesh of the overall structure in a smoothly varying way. This idea was first introduced by \cite{PT}.

In this chapter, we consider the post-treatment of 2-d compliance minimization, which is based on \cite{Allaire5}.
We will improve the pioneer work \cite{PT} in several aspects, which we will note below.
See also \cite{GS} for another homogenization method in the spirit of \cite{PT}.
Note that this methods can be extended for the case of 3-d compliance minimization as well \cite{GAP}.
One of the difficulties of this extension is to treat the rotation of the materials.
Indeed, compared to the 2-d case, where rotations are parametrized by a single angle (see Section~\ref{sec:1st step: pre-computing the homogenized properties}), the 3-d case is more involved and requires new ingredients.
\begin{figure}[H]
\centering
   \begin{subfigure}[b]{0.8\textwidth}
   \includegraphics[width=\textwidth]{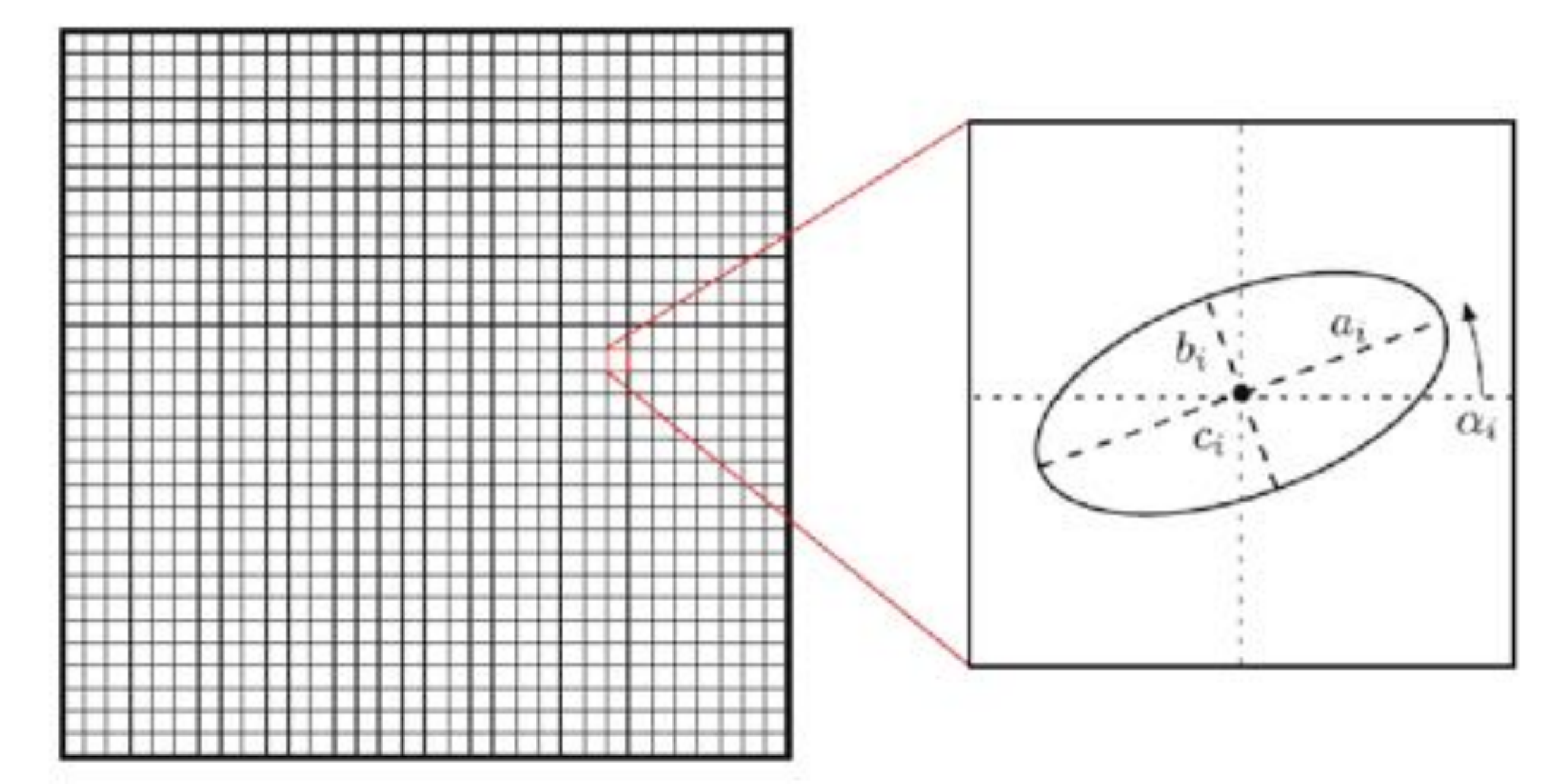}
   \label{fig:Ng1} 
\end{subfigure}
\\

\begin{subfigure}[b]{0.8\textwidth}
   \includegraphics[width=\textwidth]{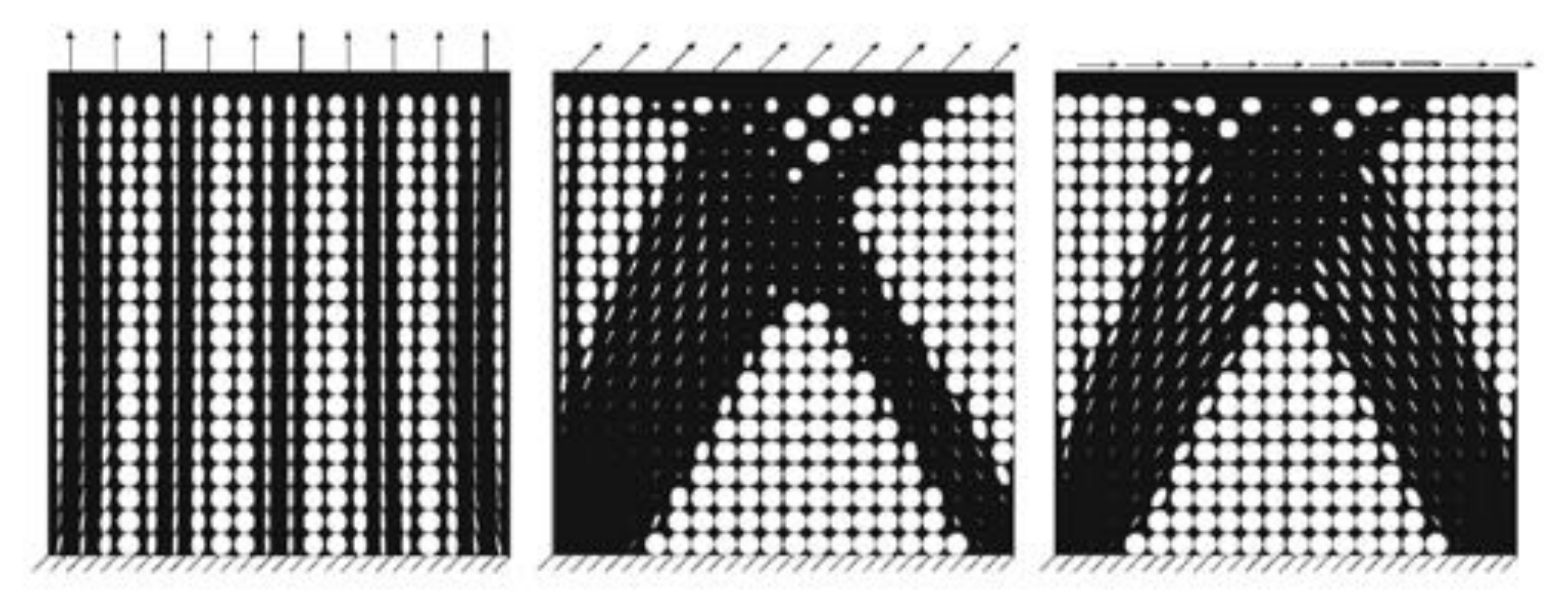}
   \label{fig:Ng2}
\end{subfigure}
\caption{Example of a macroscopically varying microstructure (extracted from \cite{GLRS})}
\end{figure}



\section{Setting of the problem} \label{sec:setting of the problem}
As in the previous chapters, we will consider compliance minimization problems in $N = 2$. Let $D\subset\RR^N$ be a smooth bounded domain and $\Omega \subset D$ be the reference configuration of a homogeneous isotropic linear elastic body whose Hooke's law $A$ is defined by \eqref{eq:Hooke's law}, with Lam\'{e} coefficients $\lambda$ and $\mu$. We assume that $\Omega$ is clamped on $\Gamma_D \subset \pa \Omega$ and subject to surface loads $g$ on $\Gamma_N \subset \pa \Omega$. Also, for simplicity, these parts $\Gamma_D$ and $\Gamma_N$ of the boundary $\partial\Om$ are assumed to be fixed and subsets of $\pa D$. The displacement vector field $u$ and the stress tensor $\sigma$ are given by the following system
\begin{equation*}
	\left\{
    \begin{aligned}
    	\dv \, \sigma &=0 && \text{ in } \Omega,\\
      \sigma&= A e(u) && \text{ in } \Omega,\\
      u &=0 && \text{ on } \Gamma_D, \\
      \sigma \cdot n&=g && \text{ on } \Gamma_N, \\
      \sigma \cdot n&=0 && \text{ on } \Gamma = \pa \Omega \setminus (\Gamma_D\cup\Gamma_N),
    \end{aligned}
	\right.
\end{equation*}
where $e(u):=(\nabla u+(\nabla u)^t)/2$ is the strain tensor. Now, let us consider the following compliance minimization problem:
\begin{equation}\label{origofcomp}
\min_{\substack{|\Omega| \le V, \\ 
\Gamma_D \cup \Gamma_N \subset \pa \Omega}} J(\Omega), 
\end{equation}
where $V>0$ is the maximum admissible volume and the objective function $J$ is the compliance 
\begin{equation*}
J(\Omega) = \int_{\Gamma_N} g \cdot u \, ds. 
\end{equation*}
As we have already seen in Chapter \ref{Topo opti homo method}, the compliance minimization problem \eqref{origofcomp} does not admit a classical solution. This is why we consider the homogenized problem. We introduce composite structures characterized by the local volume density $\theta(x)$ of the material and a homogenized elasticity tensor $A^*(x)$, corresponding to its microstructure. Then, the homogenized or macroscopic displacement $u^{*}$ is the solution of the system 
\begin{equation} \label{eq:ch6 ela hom pro}
	\left\{
    \begin{aligned}
    	\dv\,\sigma&=0 && \text{ in } D,\\
        \sigma&=A^*e(u^{*}) && \text{ in } D,\\
      u^{*}&=0 && \text{ on } \Gamma_D, \\
      \sigma \cdot n&=g && \text{ on } \Gamma_N, \\
      \sigma \cdot n&=0 && \text{ on } \partial D\setminus(\Gamma_D\cup\Gamma_N). 
    \end{aligned}
	\right.
\end{equation}  
By the above setting, the relaxed or homogenized optimization problem is obtained as follows: 
\begin{equation}\label{homogofcomp}
\min_{\substack{\int_D \theta(x)\, dx\leq V,\\ A^{*}(x) \in P_{\theta}(x)}} \left\{J^{*}(\theta, A^{*})=\int_{\Gamma_N}g\cdot u^*\,ds\right\}, 
\end{equation}
where $u^*$ is the solution of \eqref{eq:ch6 ela hom pro} and $P_{\theta}(x)$ is a given subset of effective or homogenized Hooke's laws for some well-chosen microstructures of density $\theta(x)$. 

Our aim in Chapter \ref{chap of additive manufacturing} is to propose a specific subset $P_\theta$ of periodic composites and to construct a minimizing sequence for \eqref{homogofcomp}. A typical example is that of a rectangular hole in a square cell (Figure \ref{rectangular hole}, page \pageref{rectangular hole}), where the cell parameters are the lengths $m_{1}, m_{2} > 0$ and the rotation angle $\alpha$ (acting either on the hole or the whole cell). 
Also, we let the homogenized tensors be denoted by $A^{*}(m_{1},m_{2},\alpha)$. We note that the same ideas in the following are applicable to other geometries as well.

\section{A three steps approach}
To achieve our purpose, we take a three steps approach for the optimization. 
The first step is to pre-compute the homogenized properties $A^{*}(m_{1},m_{2},\alpha)$ for all values of the parameters. 
The second step is to apply a simple parametric optimization process to the homogenized problem.
Compared to Chapter~\ref{Parametric optimal design}, we replace the thickness field $h$ by new parameter fields $m_{1}$, $m_{2}$ and $\alpha$ which vary in space. 
The third step is to choose a length scale $\e$ and reconstruct a periodic domain $A(x,\frac{x}{\e})$ approximating the optimal $A^{*}$. We remark that the third step is the trickiest part of the whole process. 

The most delicate point is the combined problem of the orientation of the microstructure and the reconstruction of a macroscopically varying periodic lattice. 
In order to avoid these difficulties, there are two possible approaches. 
The first one is a ``naive" approach, that is, we assume that the periodic grid is never deformed and the holes are simply rotated. 
The main advantage of the ``naive'' approach is that the reconstruction of the periodic perforated structure is very easy (see e.g. \cite{GLRS}). 
We remark that this approach is naive because the ``skeleton" of the reconstructed structure is fixed and thus it does not follow the supported stresses or forces. 

The second one is a deeper approach initiated by Pantz and Trabelsi \cite{PT}. 
The main advantage of this new approach is that the reconstructed structure adapts its geometry to the supported stresses or forces (in some sense, it looks like Michell trusses \cite{rozvany} in the case of dimension $2$). 

The main difficulty is then to reconstruct such a macroscopically deformed periodic perforated structure. There may be issues on the regularity of the orientation field for stresses or forces. This is the approach that we follow in the sequel.

\begin{figure}
\centering
\begin{tabular}{c}
\begin{minipage}{0.33\linewidth}
\begin{center}
\includegraphics[width=0.9\linewidth,center]{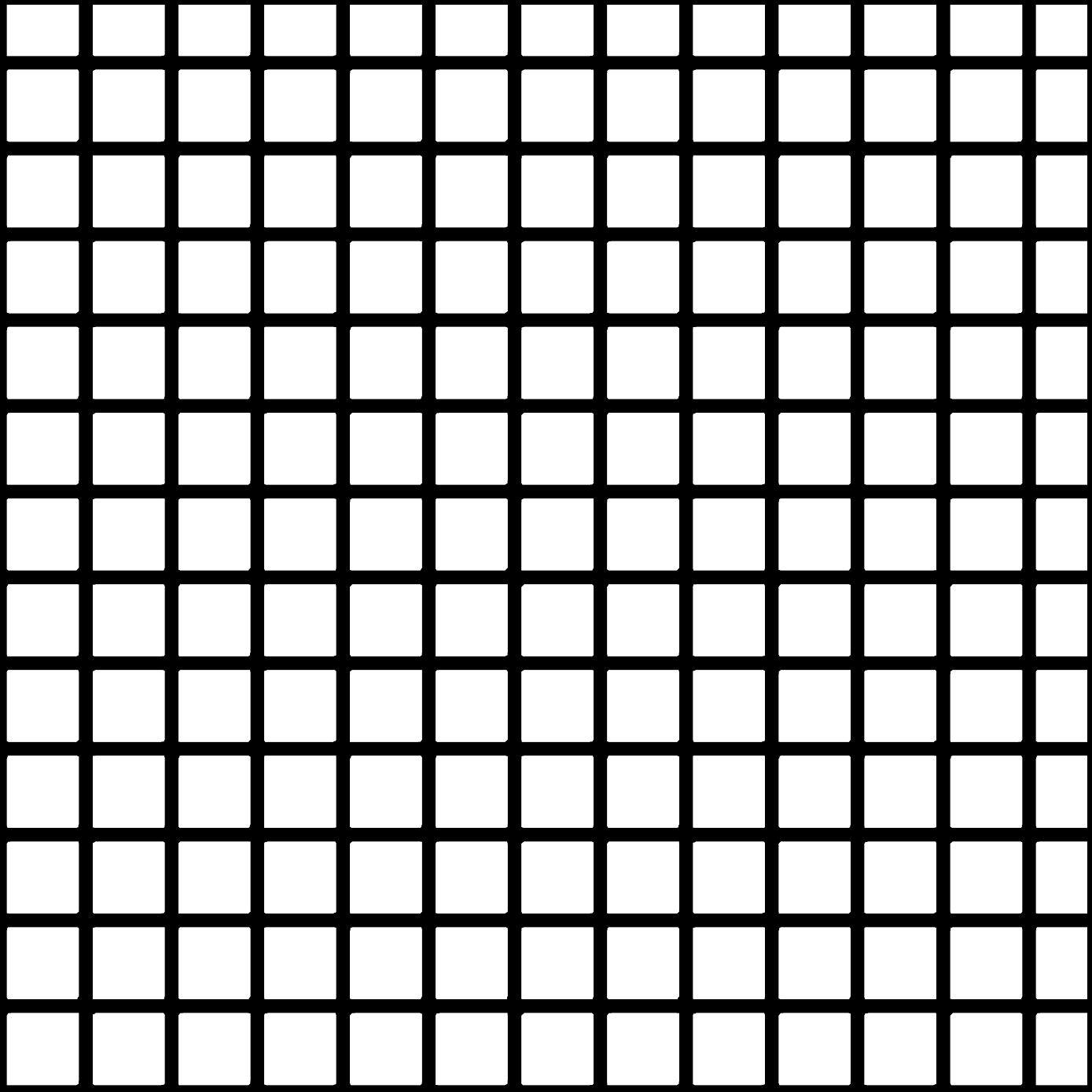}
\end{center}
\end{minipage}
\begin{minipage}{0.33\linewidth}
\begin{center}
\includegraphics[width=0.9\linewidth,center]{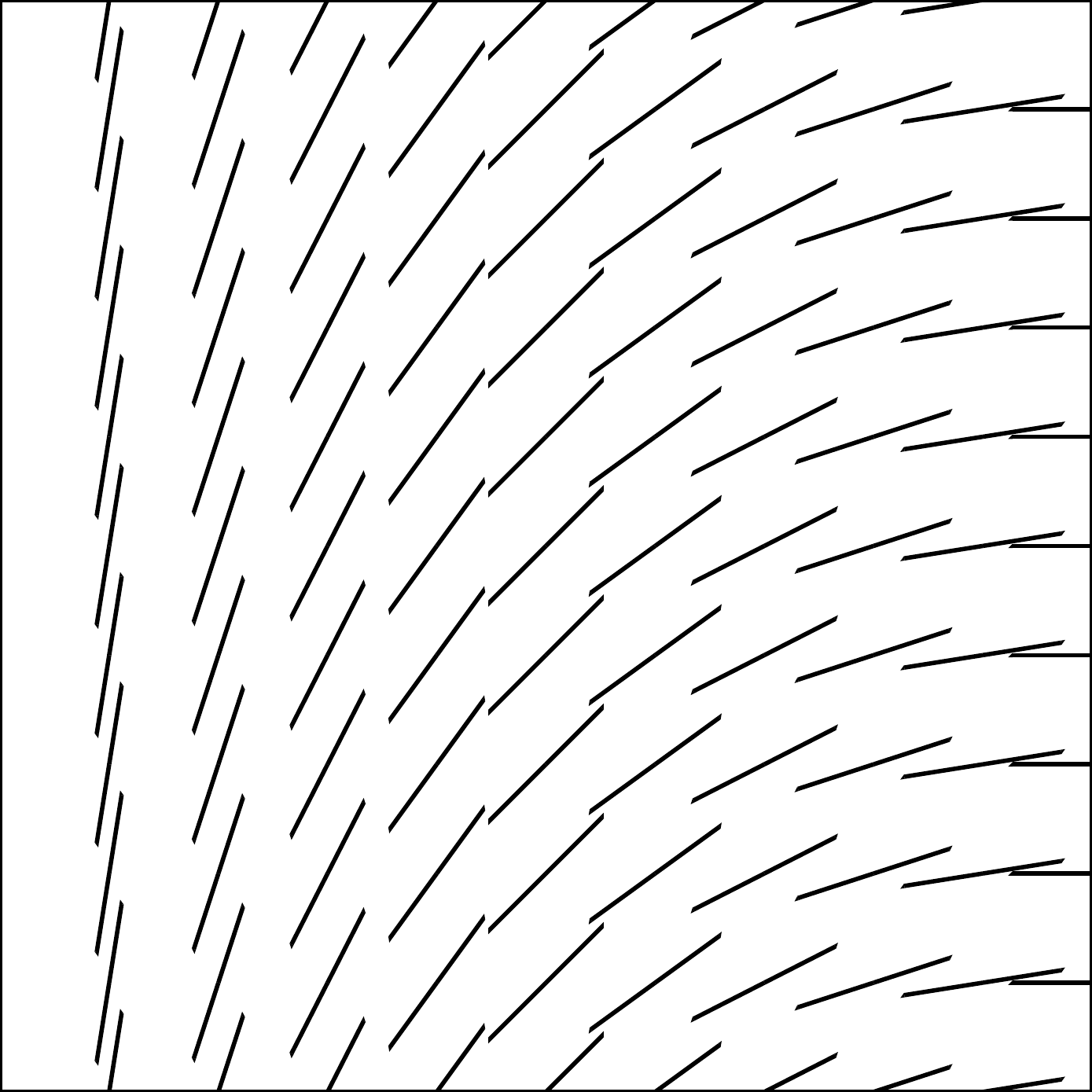}
\end{center}
\end{minipage}
\begin{minipage}{0.33\linewidth}
\begin{center}
\includegraphics[width=.9\linewidth,center]{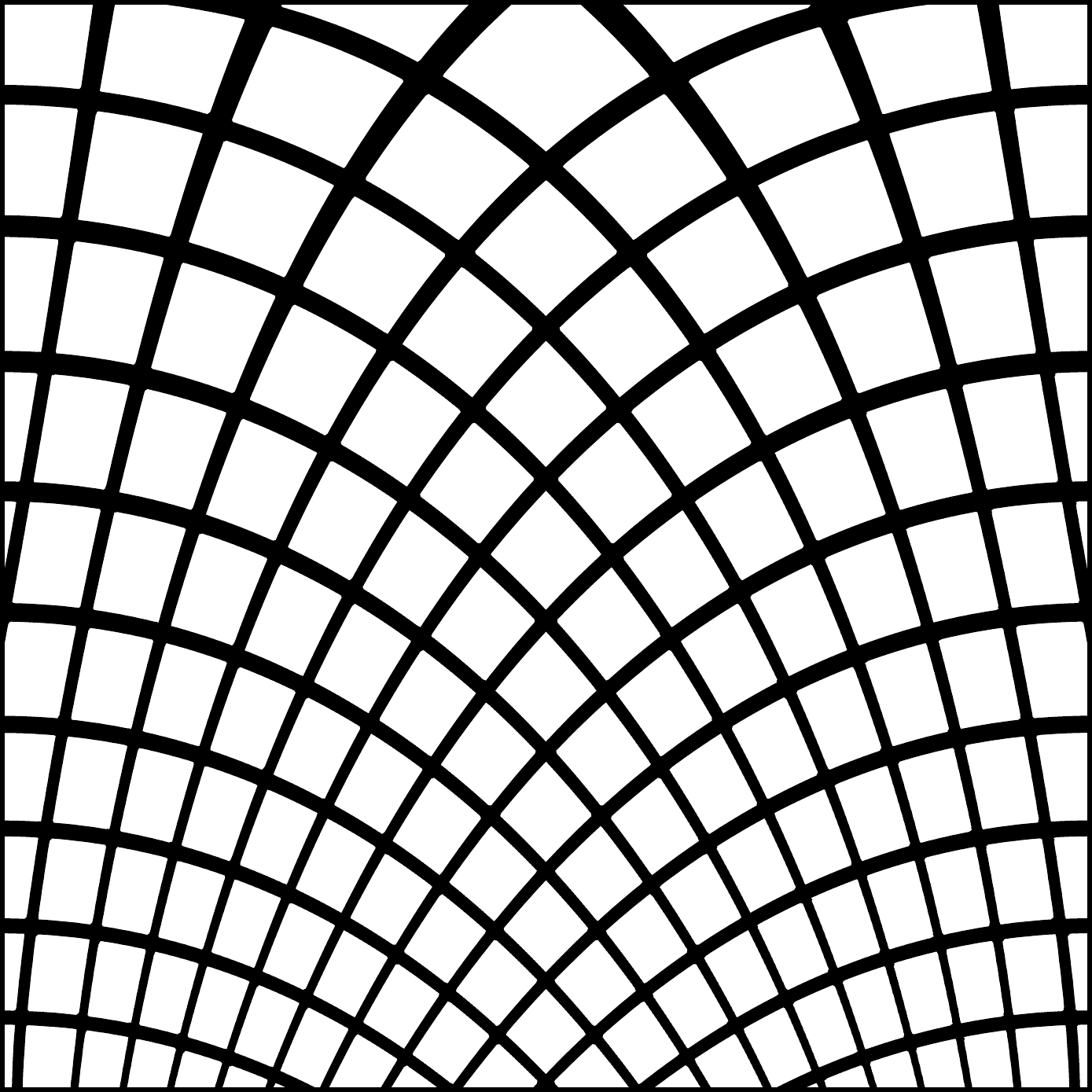}
\end{center}
\end{minipage}
\end{tabular}
\caption{A regular grid (left) is associated to an orientation field (middle), giving the
local orientation of each cell: it yields a distorted grid (right).} 
\end{figure}

\section{1st step: pre-computing the homogenized properties} \label{sec:1st step: pre-computing the homogenized properties}
For the square cell with a rectangular hole, with a fixed orientation, we compute the homogenized properties $A^{*}(m_{1},m_{2})$ for a discrete sampling of $0 \leq m_{1},m_{2} \leq 1$. 
We also compute the derivatives of $A^{*}(m_{1},m_{2})$ with respect to $(m_{1},m_{2})$ by using a shape derivative and an adjoint approach in the same manner in Section~\ref{sec:Computation of a continuous gradient} (see \cite[Section $3.3$ and $3.4$]{Allaire5} for the details and \cite{HP, SK} for an introduction on shape derivatives). We note that we can also compute the derivatives of $A^{*}(m_{1},m_{2})$ with respect to $(m_{1},m_{2})$ numerically by means of a finite difference method. 

In the following, for simplicity, we will only consider the case of dimension $N=2$. 
Assume that the cell is rotated by an angle $\alpha$.
We define $Y_\alpha(m)$ as the periodic cells with hole $m=(m_1, m_2)$, together with the orientation $\alpha$ of the cell.
Then the dependency of the homogenized properties $A^{*}(m_1, m_2, \alpha)$ with respect to the angle $\alpha$ is given by
\begin{equation}\label{rotation!}
A^{*}(m_{1},m_{2},\alpha) = R(\alpha)^{T} A^{*}(m_{1},m_{2},0)R(\alpha), 
\end{equation}
where $R(\alpha)$ is the fourth-order tensor defined by 
\begin{equation*}
\forall \xi \in \mathcal{M}^{s}_{2}, \quad R(\alpha) \xi = Q(\alpha)^{T} \xi Q(\alpha), 
\end{equation*}
and $Q(\alpha)$ is the rotation matrix of angle $\alpha$. 
Indeed, the variational formulation \eqref{eq:variational formulation wij} and the characterization of $A^*$ \eqref{eq:def tensor A*} imply that the quadratic form $A^*(m_1, m_2, \alpha)\xi:\xi$ satisfies
\begin{equation*}
	A^*(m_1, m_2, \alpha)\xi\cdot\xi=\min_{w\in H^1_\#(Y_\alpha(m))^N}\int_{H^1_\#(Y_\alpha(m))^N}A(\xi+e(w))\cdot(\xi+e(w))\,dy
\end{equation*}
for any $\xi\in\mathcal{M}^s_2$.
On the other hand, we see that, for $\xi\in\mathcal{M}^s_2$,
\begin{equation*}
	\begin{aligned}
		&\left[R(\alpha)^{T} A^{*}(m_{1},m_{2},0)R(\alpha)\right] \xi\cdot\xi
		=A^{*}(m_{1},m_{2},0)(R(\alpha)\xi)\cdot(R(\alpha)\xi)\\
		&\qquad=\min_{w\in H^1_\#(Y_0(m))^N}\int_{H^1_\#(Y_0(m))^N}A(R(\alpha)\xi+e(w))\cdot(R(\alpha)\xi+e(w))\,dy\\
		&\qquad=\min_{w\in H^1_\#(Y_0(m))^N}\int_{H^1_\#(Y_0(m))^N}A(\xi+Q(\alpha)e(w)Q(\alpha)^T)\cdot(\xi+Q(\alpha)e(w)Q(\alpha)^T)\,dy\\
		&\qquad=\min_{w\in H^1_\#(Y_\alpha(m))^N}\int_{H^1_\#(Y_\alpha(m))^N}A(\xi+e(w))\cdot(\xi+e(w))\,dy.
	\end{aligned}
\end{equation*}
Thus, the numerical computation of the homogenized properties $A^{*}(m_{1},m_{2},\alpha)$ can be restricted to the case $\alpha = 0$. 
Note that \eqref{rotation!} implies that a rotation of the cell by an angle $\pi$ does not change its Hooke's law as $R(\pi) = -{\rm Id}$. 
Hence the optimal orientation can only be defined modulo $\pi$. 

\begin{figure*}
        \centering
        \begin{subfigure}[b]{0.475\textwidth}
            \centering
            \includegraphics[width=\textwidth]{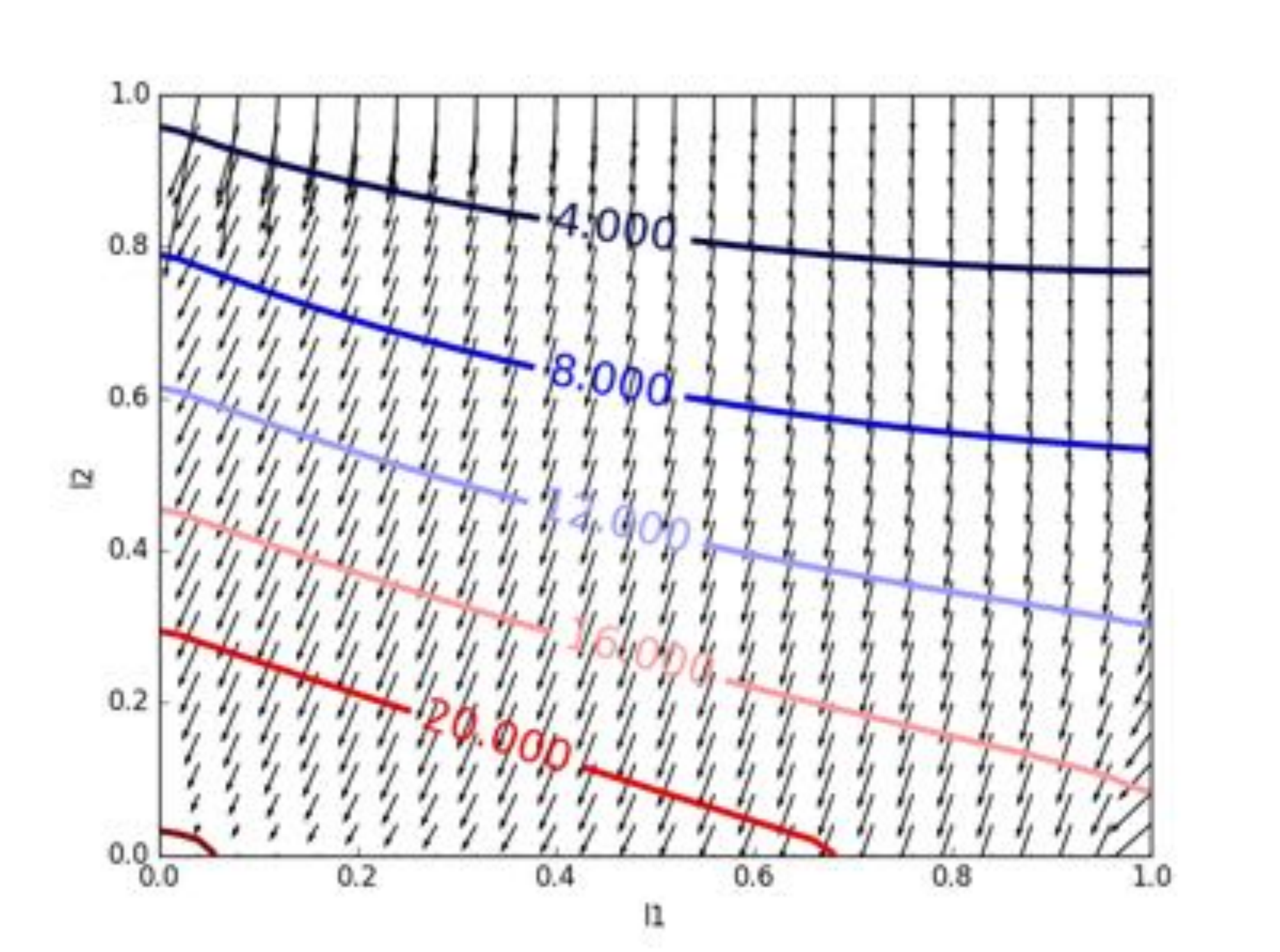}
            \caption[]%
            {{\small $\left(A_0^*(m)\right)_{1111}$}}    
        \end{subfigure}
        \hfill
        \begin{subfigure}[b]{0.475\textwidth}  
            \centering 
            \includegraphics[width=\textwidth]{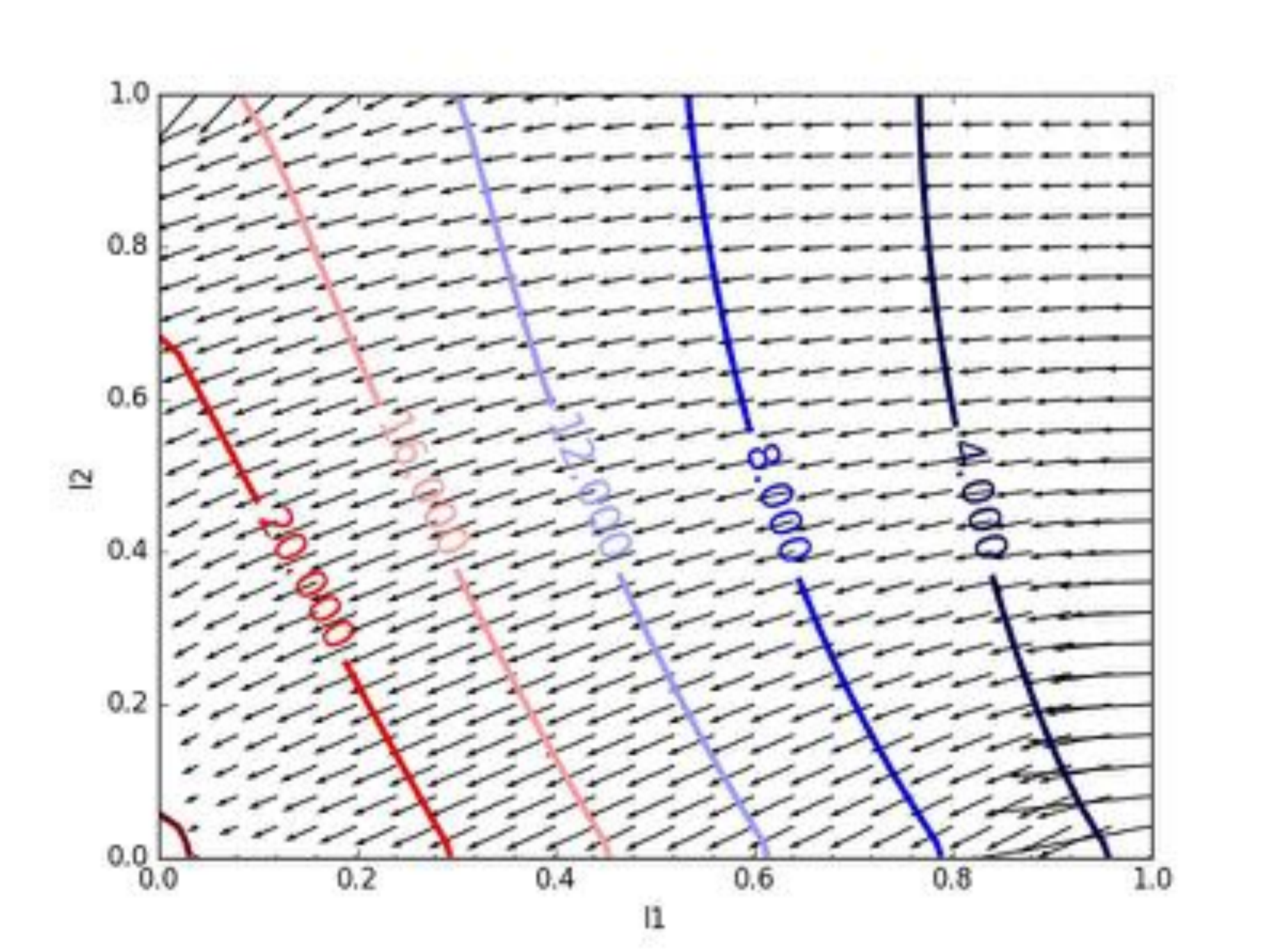}
            \caption[]%
            {{\small $\left(A_0^*(m)\right)_{2222}$}}    
        \end{subfigure}
        \vskip\baselineskip
        \begin{subfigure}[b]{0.475\textwidth}   
            \centering 
            \includegraphics[width=\textwidth]{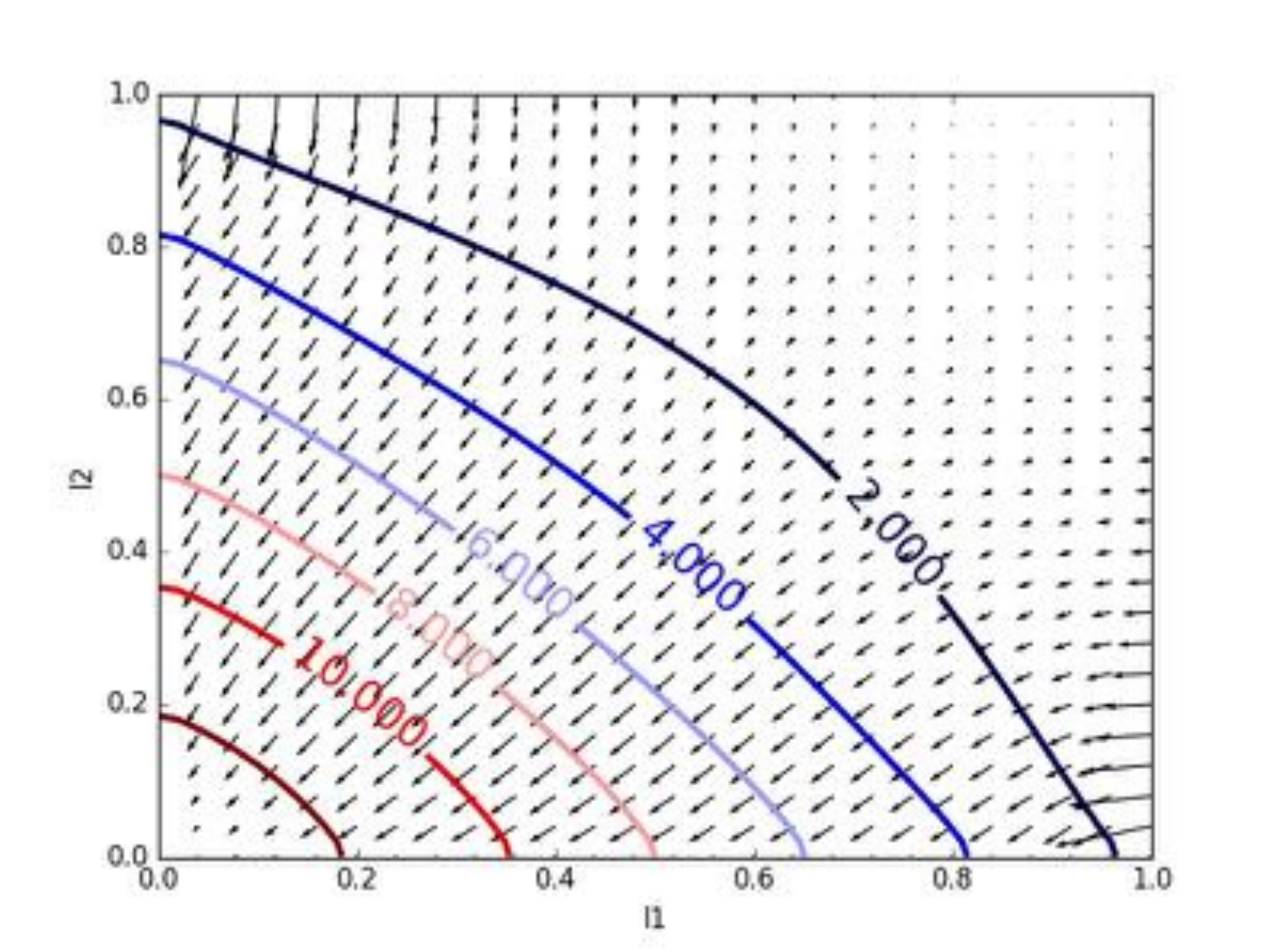}
            \caption[]%
            {{\small $\left(A_0^*(m)\right)_{1122}$}}    
        \end{subfigure}
        \quad
        \begin{subfigure}[b]{0.475\textwidth}   
            \centering 
            \includegraphics[width=\textwidth]{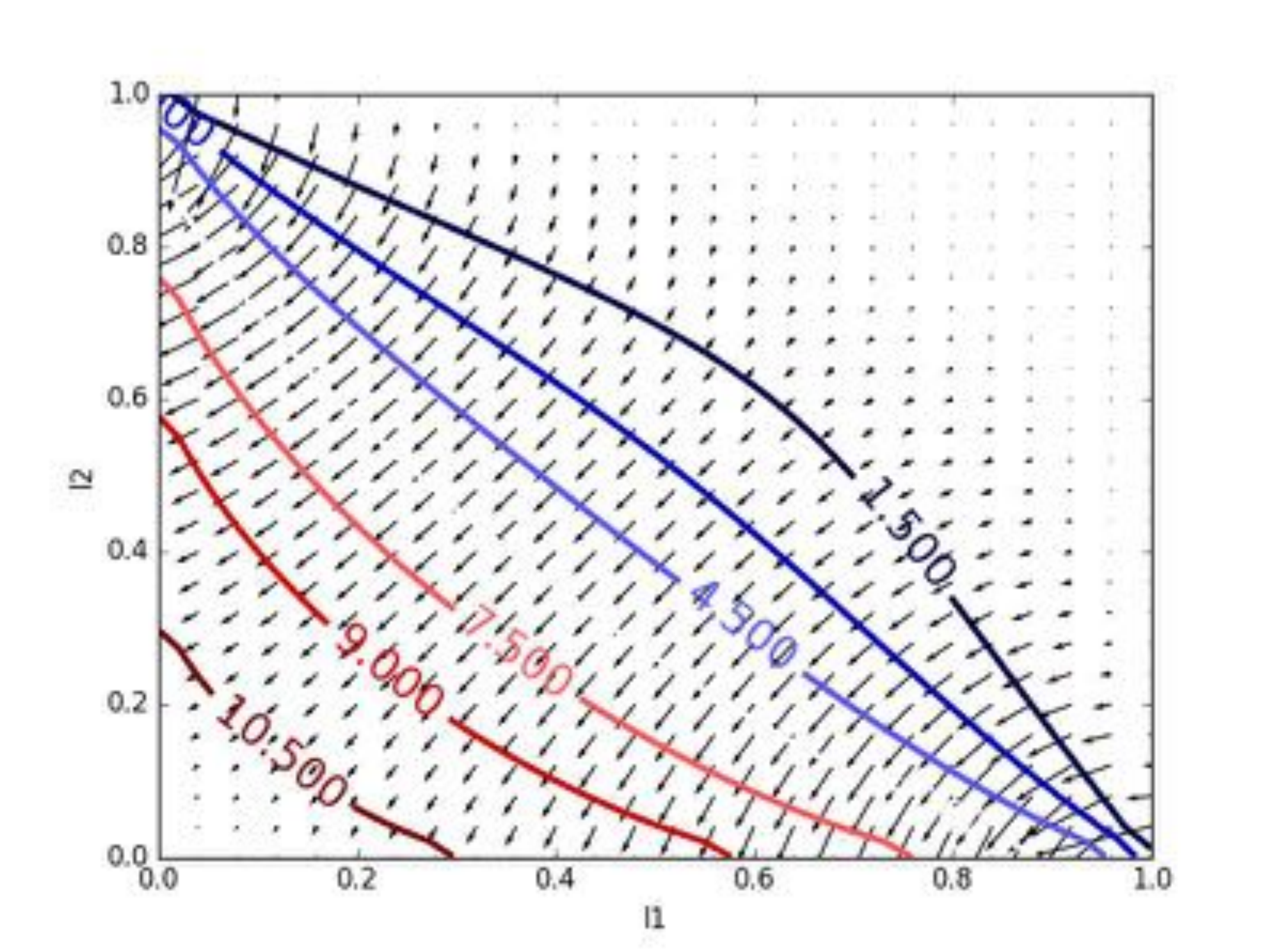}
            \caption[]%
         {{\small $\left(A_0^*(m)\right)_{1212}$}}    
        \end{subfigure}
        \caption[]
        {{\small Isolines of the entries of the homogenized tensor $A^{\ast}(m_1, m_2, 0)$ and their gradient (small arrows) according to the parameters $m_1$ ($x$-axis) and $m_2$ ($y$-axis).}}
      
        \label{c5p16}
    \end{figure*}


We show the numerical results for the entires of the homogenized tensor $A^*(m_1, m_2, 0)$ and their derivatives as functions of $m$ (see Figure~\ref{c5p16}).
When $m=0$, then the homogenized tensor $A^*(m_1, m_2, 0)$ is equal to $A$.
On the other hand, if $m$ is close to $(1, 1)$, then the homogenized tensor is converging to the null tensor.
Moreover, one can see easily check, that the entries of $A^*(m_1, m_2, 0)$ decrease, when $m_1$ is fixed and $m_2$ is increasing (and vice versa).
In other words, the cell is globally weaker when its hole is widening in one direction or the other.
However, the sensitivity of the component $A^*(m_1, m_2, 0)_{1111}$ to the parameter $m_2$ is greater than the one to the parameter $m_1$ (see Figure~\ref{c5p16}).
Figure~\ref{c5p16} shows the numerical results in the case $A_{1111} = A_{2222} = 24.07$, $A_{1122} = 12.96$ and $A_{1212} = 11.11$.
That is explained by the fact that, along the $y_1$ axis, the strength of the cell is mainly ensured by the material in the areas above and below the hole, whose sizes depend on $m_2$.
As one could expect, the homogenized elasticity tensor is quite smooth with respect to the parameter $m$, so it is amenable to a gradient based optimization method.

\section{2nd step: parametric optimization of the homogenized problem}
Let us recall that the homogenized equation in a box $D$ is 
\begin{equation*}
	\left\{
    \begin{aligned}
    	\dv \, \sigma &= 0 && \text{ in } D,\\
      \sigma &= A^{*}e(u) && \text{ in } D,\\
      u &=0 && \text{ on } \Gamma_D, \\
      \sigma\cdot n &= g && \text{ on } \Gamma_N, \\
      \sigma\cdot n &= 0 && \text{ on } \Gamma = \pa D \setminus (\Gamma_{D} \cup \Gamma_{N}) 
    \end{aligned}
	\right.
\end{equation*}
and the compliance minimization problem is 
\begin{equation*}
\min_{m_{1},m_{2},\alpha} \left\{J(A^{*}) = \int_{\Gamma_{N}} g \cdot u \, ds\right\}. 
\end{equation*}
Here, the minimum is taken over all functions $m_1$, $m_2\in L^\infty(D; [0, 1])$ and $\alpha\in L^\infty(D)$.
One can rewrite the compliance as the minimum of complementary energy (see Section \ref{sec:dual energy}) as follows 
\begin{equation*}
J(A^{*}) = \min_{\tau \in H_{0}} \int_{D} (A^{*})^{-1} \, \tau \cdot \tau \, dx, 
\end{equation*}
where 
\begin{equation*}
H_{0} = \left \{
\tau \in L^2(D; \mathcal{M}^{s}_{2}) 
\;:\;
\begin{aligned}
\dv \, \tau &= 0 &&\text{ in } D, \\
\tau \cdot n &= g &&\text{ on }  \Gamma_{N}, \\
\tau \cdot n &= 0 &&\text{ on }  \Gamma 
\end{aligned}
\right \}. 
\end{equation*}
This is interesting for algorithmic purposes because  we can apply the optimality criteria or alternate minimization algorithm of Chapter~\ref{Parametric optimal design}. 
We note that the orientation optimization with respect to $\alpha$ is very simple, due to a result of Pedersen \cite{Pe}. Pedersen proved that the optimal orientation of an orthotropic cell for a given displacement field is the one where the cell is aligned with the principal eigen-direction of the strain tensor. We can easily show a similar result for stress field in the same way. 

We compute the volume fraction of material in a single unit cell as
\begin{equation*}
\theta(x) = 1 - m_{1}(x)m_{2}(x),  
\end{equation*}
also, the total volume of the lattice structure is thus 
\begin{equation*}
\text{Vol} = \int_{D} \left( 1 - m_{1}(x) m_{2}(x) \right) \, dx. 
\end{equation*}
To implement a volume constraint, we rely on a Lagrangian algorithm 
\begin{equation*}
\mathcal{L}(m,\alpha,\sigma,\ell) = \int_{D} (A^{*})^{-1}(m,\alpha) \, \sigma \cdot \sigma \, dx + \ell \left( \int_{D} ( 1 - m_{1}(x) m_{2}(x)) \, dx - V_{0} \right),  
\end{equation*}
where $\ell$ is the Lagrange multiplier associated to the volume constraint $\vol=V_0$. 

Now let us consider a numerical algorithm. 
To minimize with respect to the microstructure $m$, we use the following algorithm of alternate minimization (or optimality criteria).
Moreover, to minimize with respect to the orientation $\alpha$, we could use the same method as for the minimization with respect to the microstructure $m$, but Pedersen's result \cite{Pe} is a better (more efficient) algorithm than the gradient descent method to compute the optimal orientation because it is a global minimization method, proving an optimal orientation at each iteration. However, this method can usually not be generalized to other objective functions. 

\begin{algorithm}[H]
\caption{Algorithm of alternate minimization (or optimality criteria)} 
\begin{enumerate}
\item Initialization of the cell parameters $m^{0}_{1}, m^{0}_{2}, \alpha^{0}$. For example, we take $m_1 = m_2$, constant satisfying the volume constraint, and $\alpha = 0$. 
\item Iterations until convergence, for $n \geq 0$: 
\begin{enumerate}
\item Computation of the stress tensor $\sigma^{n}$, unique solution of the (dual) elasticity equations with $A^*_{\alpha^{n}}(m^{n})$. 
\item Update of the parameters: 
\begin{itemize}
\item perform one iteration of the projected gradient algorithm for hole parameters 
\begin{equation}\label{projection algorithm for $m$}
m^{n+1}_{i} = \mathcal{P} \left( m^{n}_{i} - \mu_m \dfrac{\pa \mathcal{L}}{\pa m_{i}}(m^n, \alpha^n, \sigma^n, \ell^n) \right), 
\end{equation}
where $\mu_m > 0$ is the step size and $\mathcal{P}$ is the projection operator to satisfy the constraints.
\item by Pedersen's result \cite{Pe}, for a given stress tensor $\sigma^{n}$, the optimal orientation angle $\alpha^{n}$ is the one where the cell is aligned with the principal eigen-directions of the strain tensor.
\end{itemize}
\end{enumerate}
\end{enumerate}
\end{algorithm}
Let us focus on the technical details of the algorithm. 
The partial derivative of the Lagrangian $\mathcal{L}$ with respect to the parameter $m_i$ ($i=1,2$), is given by
\begin{equation*}
\frac{\pa \mathcal{L}}{\pa m_{i}} = - \frac{\pa A^{*}}{\pa m_{i}}(m,\alpha) (A^{*})^{-1}(m,\alpha) \, \sigma \, \cdot \, (A^{*})^{-1}(m,\alpha) \sigma - \ell m_{3-i}.
\end{equation*}
We remark that the derivative of $A^*$ with respect to $m_i$ can be obtained by employing the use of shape derivatives.
For the details of the computation, see \cite{Allaire5}.

Finally, $\mathcal{P}$ is the projection operator onto the interval $[0,1]$.
In the process of this projection, we have to update the Lagrange multiplier $\ell$, which is constant in $D$, by a dichotomy process designed to respect the volume constraint. 

\begin{figure}[H]
\centering
\includegraphics[width=.7\linewidth,center]{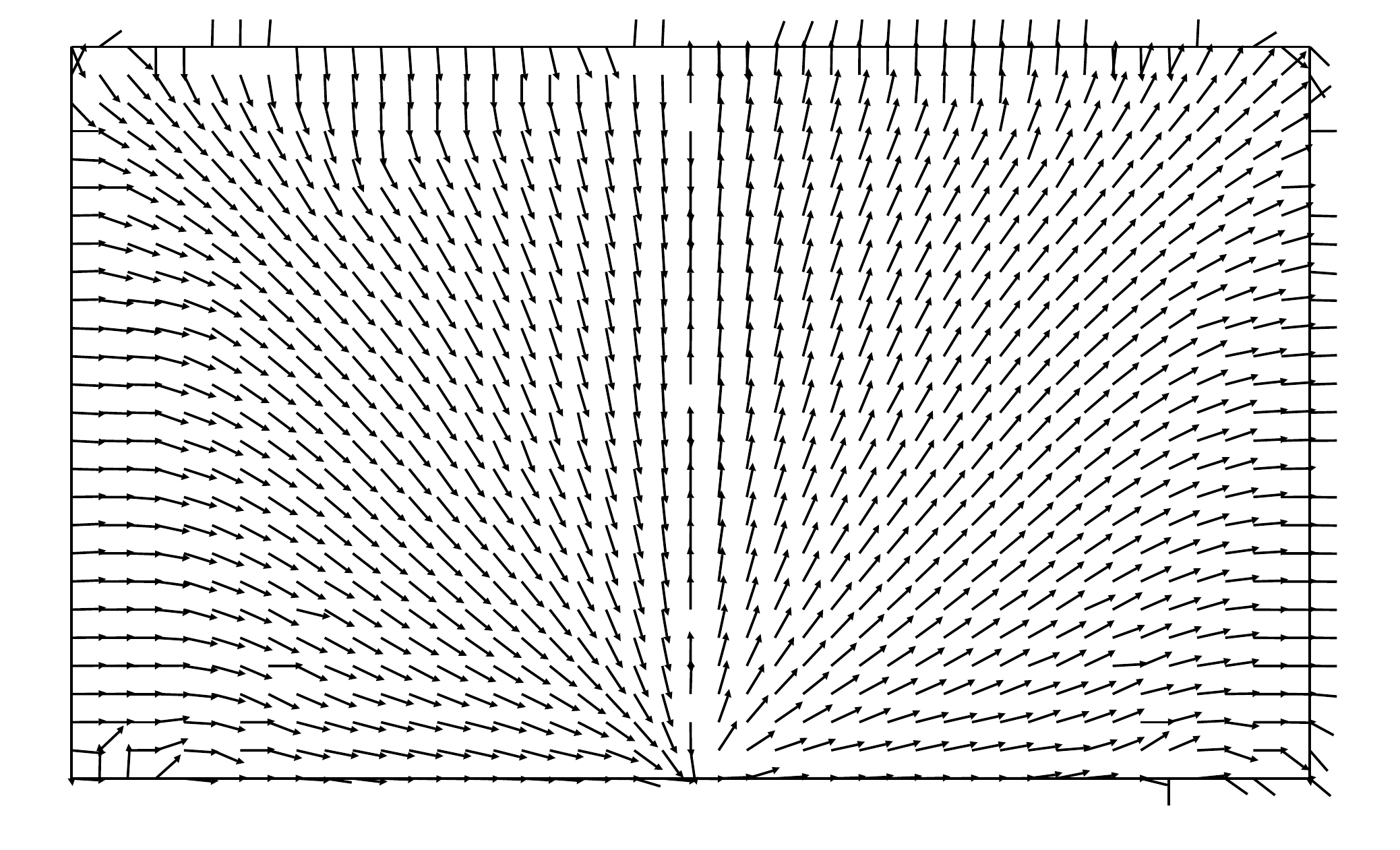}
\caption{Regularity issues for the computed optimal orientation $\alpha$ (bridge case).} 
\label{orientation}
\end{figure}

Note that except when $\sigma$ is proportional to the identity, the optimal orientation angle $\alpha$ is unique up to the addition of a multiple of $\pi$. As shown in Figure~\ref{orientation}, this non-uniqueness creates a regularity issue for $\alpha$. 
For example, 
\begin{itemize}
\item $\alpha$ or $\alpha + \pi$ correspond to the same orientation,
\item where the material density is close to $0$ or $1$, the orientation does not play any role (cf. the corners in Figure~\ref{orientation}), 
\item there are real singularities of the orientation, like a fan (cf. the bottom middle in Figure~\ref{orientation}),
\item if the values of $m_{1}$ and $m_{2}$ are exchanged, then the optimal orientation switches from $\alpha$ to $\alpha + \pi/2$, but it does not seem to appear in our results. 
\end{itemize}
These issues create some numerical difficulties, that we will explain in the next section. 

\section{3rd step: reconstruction of an optimal periodic structure}\label{sec:3rd step}
In the previous sections we computed an optimal homogenized design (with an underlying modulated periodic structure). In this section, let us enter a post-processing step. We choose a length scale $\e$ for this projection step and reconstruct a periodic shape with length scale $\e$, approximating the optimal one. 

\subsection{Projection in the simple case without varying orientation, $\alpha \equiv 0$} 
First, we consider the case where there is no varying orientation, that is, $\alpha \equiv 0$.  The unit cells (rectangular hole in a square) are defined by 
\begin{equation*}
Y(m) = \left\{ y \in [0,1]^{2} \;:\; 
\begin{aligned}
\cos(2\pi y_{1}) &\geq \cos(\pi (1-m_{1})), \\
&\text{or} \\
\cos(2\pi y_{2}) &\geq \cos(\pi (1-m_{2}))
\end{aligned}
\right\}.
\end{equation*}
The domain $D$ is paved with cells $\e Y(m)$. Since the hole size $m(x)$ is varying in $D$, the periodicity cell is macroscopically modulated and we define a projected lattice shape $\Omega_{\e}(m)$ as 
\begin{equation*}
\Omega_{\e}(m) := \left\{ x \in D \;:\; 
\begin{aligned}
\cos \left( \frac{2\pi x_{1}}{\e} \right) &\geq \cos(\pi (1-m_{1}(x))), \\
&\text{or} \\
\cos \left( \frac{2\pi x_{2}}{\e} \right) &\geq \cos(\pi (1-m_{2}(x)))
\end{aligned}
\right\},
\end{equation*}
where $m_{1}(x), m_{2}(x)$ are functions defined on $D$ with values in $[0,1]$. 

\begin{remark}
The values of $m_{1}$ $m_{2}$ are not necessarily constant in each cell of the structure. Hence, the holes in the cellular structure $\Omega_{\e}(m)$ are not exactly rectangles. But, when $\e$ goes to $0$, the sequence of cellular structures converges to the composite with local Hooke's law equal to $A^{*}_{0}(m)$. 
\end{remark}

The cellular structures can be defined using level-sets. We introduce two functions $f^{m}_{\e,i}$, one for each direction 
\begin{equation*}
f^{m}_{\e,i}(x) := - \cos \left( \frac{2\pi x_{i}}{\e} \right) + \cos (\pi (1 - m_{i}(x))), 
\end{equation*}
and the level-set function 
\begin{equation*}
F^{m}_{\e} := \min(f^{m}_{\e,1}, f^{m}_{\e,2}). 
\end{equation*}
The final structure $\Omega_{\e}(m)$ is then defined by 
\begin{equation*}
\Omega_{\e}(m) = \left\{ x \in D \,\, : \,\, F^{m}_{\e}(x) \leq 0 \right\}. 
\end{equation*}
The construction of a minimizing sequence of shapes is immediate: we just have to update the size $\e$ in the previous level-set function. 

\subsection{Projection in the general case with orientation, $\alpha \neq 0$} 
This section is based on the paper of Pantz and Trabelsi \cite{PT}. The main idea of Pantz and Trabelsi is to find a map $\varphi = (\varphi_{1}, \varphi_{2})$ from $D$ into $\mathbb{R}^{2}$ which distorts the regular square grid in order to orientate each square at the optimal angle $\alpha$ (or $\alpha + \pi$). Once $\varphi$ is found, we can proceed as before. 

The final shape, now denoted $\Omega_{\e}(\varphi,m)$, is still defined by a level-set function: 
\begin{equation*}
\Omega_{\e}(\varphi, m) = \left\{ x \in D \;:\; F^{\varphi, m}_{\e}(x) \leq 0 \right\}
\end{equation*}
with $F^{\varphi, m}_{\e} = \min(f^{\varphi,m}_{\e,1}, f^{\varphi,m}_{\e,2})$ and 
\begin{equation*}
f^{\varphi,m}_{\e,i}(x) = - \cos \left( \frac{2\pi \varphi_{i}(x)}{\e} \right) + \cos(\pi(1 - m_{i}(x))).  
\end{equation*}

Geometrically (in $2$-d), the Jacobian $\nabla \varphi$ should be proportional to the rotation matrix defined by 
\begin{equation}\label{rotation matrix}
Q(\alpha) = \begin{pmatrix}
\cos \alpha &-\sin \alpha \\
\sin \alpha  &\cos \alpha
\end{pmatrix}. 
\end{equation}
One possibility (Pantz and Trabelsi) is to find $\varphi$ which (roughly) minimizes 
\begin{equation*}
\int_{D} \abs{\nabla \varphi - Q(\alpha)}^{2} \, dx. 
\end{equation*}
However, it is not straightforward because $Q(\alpha)$ is not smooth (the orientation is not coherent a priori). 
Pantz and Trabelsi proposed a complicated trick to avoid this coherent orientation issue. We also mention that Groen and Sigmund \cite{GS} suggested another trick from image processing to obtain a coherent orientation. 

\subsection{The new approach of Allaire-Geoffroy-Pantz}
In this section, we propose a new approach, based on the paper of Allaire-Geoffroy-Pantz \cite{Allaire5}. 

Let us recall that geometrically (in $2$-d), at every point $x\in D$, the Jacobian $\nabla \varphi$ should be proportional to the rotation matrix $Q(\alpha)$ defined by \eqref{rotation matrix}. 
Moreover, the proportions of the cell have to be preserved in order to converge to a true square and not simply to a rectangle. For this purpose, we impose $|\nabla \varphi_1| = |\nabla \varphi_2| = e^{r}$, where $r \in H^1(D)$ is a scalar dilation field. 
Then the Jacobian $\nabla \varphi$ should be 
\begin{equation}\label{equations for the map varphi}
\nabla \varphi = e^{r} Q(\alpha) \quad \text{in} \,\, D. 
\end{equation}

This equation can be satisfied only if $\alpha$ satisfies the following conformality condition. 
\begin{lemma} \label{lem:ori con}
Let $\alpha$ be a regular orientation field and $D$ be a simply connected domain. There exists a mapping function $\varphi$ and a dilation field $r$ satisfying $\nabla \varphi = e^r Q(\alpha)$ if and only if 
\begin{equation}\label{conformality condition}
\Delta \alpha = 0 \quad \text{in} \,\, D. 
\end{equation}
\end{lemma}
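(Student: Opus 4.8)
The plan is to read the matrix identity \eqref{equations for the map varphi}, $\nabla\varphi=e^{r}Q(\alpha)$, as an overdetermined first order system: it consists of four scalar equations for the three unknown functions $\varphi_{1},\varphi_{2},r$, so solvability must be equivalent to a compatibility (integrability) condition, and the claim is that this condition is exactly $\Delta\alpha=0$. Writing $\varphi=(\varphi_{1},\varphi_{2})$ and using the explicit form of $Q(\alpha)$ in \eqref{rotation matrix}, the system is equivalent to
\[
\partial_{1}\varphi_{1}=e^{r}\cos\alpha,\quad \partial_{2}\varphi_{1}=-e^{r}\sin\alpha,\quad \partial_{1}\varphi_{2}=e^{r}\sin\alpha,\quad \partial_{2}\varphi_{2}=e^{r}\cos\alpha.
\]
(Equivalently, these say that $\varphi_{1}+i\varphi_{2}$ is a conformal map with conformal factor $e^{r}$, which explains the terminology ``conformality condition'' for \eqref{conformality condition}.)

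For the necessity (``only if'') direction, I would start from a solution $(\varphi,r)$ and impose equality of the mixed second derivatives $\partial_{2}\partial_{1}\varphi_{i}=\partial_{1}\partial_{2}\varphi_{i}$ for $i=1,2$. After dividing by $e^{r}>0$, the two resulting identities form a linear system for $(\partial_{1}r,\partial_{2}r)$ whose coefficient matrix is orthogonal (its entries are $\pm\cos\alpha,\pm\sin\alpha$), hence invertible; solving it gives the conjugacy relation $\nabla r=(\partial_{2}\alpha,\,-\partial_{1}\alpha)$. Differentiating this relation once more and invoking the commutation of the mixed partials of $r$ yields $\partial_{2}^{2}\alpha=-\partial_{1}^{2}\alpha$, i.e. $\Delta\alpha=0$.

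For the sufficiency (``if'') direction, assume $\Delta\alpha=0$ on the simply connected domain $D$. Then the field $(\partial_{2}\alpha,-\partial_{1}\alpha)$ is curl-free, since $\partial_{1}(-\partial_{1}\alpha)-\partial_{2}(\partial_{2}\alpha)=-\Delta\alpha=0$, so by simple connectedness it is the gradient of a scalar $r$, determined up to an additive constant; this is the dilation field. Next, I would verify that each of the two prescribed fields $e^{r}(\cos\alpha,-\sin\alpha)$ and $e^{r}(\sin\alpha,\cos\alpha)$ is curl-free: expanding the curl and substituting $\partial_{1}r=\partial_{2}\alpha$, $\partial_{2}r=-\partial_{1}\alpha$ makes all terms cancel. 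Simple connectedness then furnishes potentials $\varphi_{1}$ and $\varphi_{2}$, and $\varphi=(\varphi_{1},\varphi_{2})$ satisfies $\nabla\varphi=e^{r}Q(\alpha)$ by construction.

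I expect the only delicate points to be bookkeeping ones: making the regularity implicit in ``regular orientation field'' precise enough (e.g. $\alpha\in C^{2}$, or $H^{2}$ together with a weak form of the Poincar\'e lemma) so that the commutation of mixed partials and the integration of curl-free fields on a simply connected domain are legitimate, and keeping track of where simple connectedness is used --- twice, first to integrate $\nabla r$ and then to integrate the two rows of $\nabla\varphi$. I would also note that $r$ is determined only up to an additive constant and $\varphi$ only up to a constant translation, which matches the geometric indeterminacy (overall scale and position) of the reconstructed distorted grid.
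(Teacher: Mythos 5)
Your proof is correct and follows essentially the same route as the paper's: two applications of the Poincar\'e lemma on the simply connected domain $D$, first reading the curl-free condition for the rows of $e^{r}Q(\alpha)$ as a linear system that forces $\nabla r$ to be the rotated gradient of $\alpha$, and then reading the curl-free condition for that field as $\Delta\alpha=0$. The only cosmetic difference is a row-versus-column convention for $Q(\alpha)$ (the paper obtains $\nabla r=(-\partial_{2}\alpha,\partial_{1}\alpha)$ while you obtain $(\partial_{2}\alpha,-\partial_{1}\alpha)$), which does not affect the harmonicity conclusion.
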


We recall that, for a vector field $u = (u_{1}, u_{2})$, its curl is defined as $\text{curl} \, u = \nabla \wedge u = \dfrac{\pa u_{2}}{\pa x_{1}} - \dfrac{\pa u_{1}}{\pa x_{2}}$, where $\wedge$ is the $2$-d cross product of vectors. Of course, $\text{curl} \, \nabla \varphi = \left(\text{curl} \, \nabla \varphi_1, \text{curl} \, \nabla \varphi_2 \right)= 0$. 
\begin{proof}[Proof of Lemma~\ref{lem:ori con}]
Since the domain $D\subset \RR^2$ is simply connected, By Poincar\'e's lemma, the map $\varphi$ exists if and only if the right hand side of \eqref{equations for the map varphi} is curl-free. That is, 
\begin{equation*}
{\rm curl} \left( e^r Q(\alpha)  \right)=0.
\end{equation*}
Let $a_1, a_2$ be the columns of $Q(\alpha)$. Then
\begin{equation}\label{curl iff}
\textrm{curl}\left(e^r Q(\alpha)\right)=0 \iff \gr r \land a_i = -\gr \land a_i, \quad i=1,2.
\end{equation}
Since, for fixed $\alpha$, $(a_1,a_2)$ is an orthonormal basis of $\RR^2$, by \eqref{curl iff} we see that the vector $\gr r$ can be decomposed as
\begin{equation*}
\gr r = (-\gr \land a_2) a_1 + (\gr \land a_1)a_2.
\end{equation*}
We compute
\begin{equation*}
\gr\land a_1= \frac{\pa\alpha}{\pa x_1} \cos(\alpha)+ \frac{\pa\al}{\pa x_2} \sin(\al)\quad \textrm{ and }\quad \gr\land a_2 = - \frac{\pa \al }{\pa x_1} \sin(\al) + \frac{\pa \al}{\pa x_2} \cos(\al).
\end{equation*}
It leads to 
\begin{equation*}
\gr r=\left(-\frac{\pa\al}{\pa x_2}, \frac{\pa \al}{\pa x_1}\right)^T.
\end{equation*}
Thus, again by Poincar\'e's lemma, the dilation factor $r$ exists if and only if the right hand side in the above is curl-free, which leads to the harmonic condition on $\al$.
\end{proof}
The following proposition is a very useful property of conformal orientations. 
\begin{proposition}
If there exists a map $\varphi=(\varphi_1,\varphi_2)$ from $D\subset\RR^2$ to $\RR^2$ such that  
\begin{equation*}
\gr \varphi= e^r Q(\al) \quad \text{ in }D,
\end{equation*}
then all angles are preserved by the map $\varphi$. In particular, small square cells are deformed into almost square cells locally.
\end{proposition}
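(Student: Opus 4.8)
The statement is that the map $\varphi$ with $\nabla\varphi = e^r Q(\alpha)$ is conformal, i.e.\ it preserves angles. The plan is to use the standard characterization of conformal maps in $\RR^2$ in terms of the Jacobian: a differentiable map $\varphi$ is conformal at $x$ precisely when its Jacobian $\nabla\varphi(x)$ is a nonzero scalar multiple of a rotation (equivalently, a nonzero similarity matrix), and then to read off the statement directly from the hypothesis $\nabla\varphi = e^r Q(\alpha)$.

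First I would recall the definition: $\varphi$ preserves angles at $x$ if, for any two nonzero tangent vectors $v,w$, the angle between the images $\nabla\varphi(x)v$ and $\nabla\varphi(x)w$ equals the angle between $v$ and $w$. Since the angle between two vectors depends only on the quantity $\dfrac{a\cdot b}{|a||b|}$, it suffices to show that $\nabla\varphi(x)$ acts on inner products by an overall positive scaling, i.e.\ $(\nabla\varphi(x)v)\cdot(\nabla\varphi(x)w) = c(x)\, v\cdot w$ for some $c(x)>0$ independent of $v,w$. Equivalently, $(\nabla\varphi)^T (\nabla\varphi) = c(x)\,\mathrm{Id}$.

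Then I would plug in the hypothesis. By assumption $\nabla\varphi(x) = e^{r(x)} Q(\alpha(x))$ where $Q(\alpha)$ is the rotation matrix \eqref{rotation matrix}, which is orthogonal: $Q(\alpha)^T Q(\alpha) = \mathrm{Id}$. Hence
\begin{equation*}
(\nabla\varphi(x))^T (\nabla\varphi(x)) = e^{r(x)} Q(\alpha(x))^T \, e^{r(x)} Q(\alpha(x)) = e^{2r(x)}\,\mathrm{Id},
\end{equation*}
so $c(x) = e^{2r(x)} > 0$. Therefore for any $v,w$,
\begin{equation*}
\frac{(\nabla\varphi\, v)\cdot(\nabla\varphi\, w)}{|\nabla\varphi\, v|\,|\nabla\varphi\, w|} = \frac{e^{2r}\, v\cdot w}{e^{r}|v|\, e^{r}|w|} = \frac{v\cdot w}{|v|\,|w|},
\end{equation*}
which is exactly the statement that angles are preserved. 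For the final sentence about square cells, I would note that the edges of a small square cell at $x$ are (to first order) spanned by a pair of orthogonal vectors of equal length, and $\nabla\varphi(x)$, being $e^{r(x)}$ times an orthogonal matrix, maps such a pair to another pair of orthogonal vectors of equal length (both scaled by $e^{r(x)}$); hence the image is, up to higher-order terms, again a square.

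\textbf{Main obstacle.} There is no real analytical difficulty here — the content is entirely the linear-algebra fact that a scalar multiple of a rotation is a conformal linear map. The only point requiring a little care is the infinitesimal/first-order phrasing: ``small square cells are deformed into almost square cells locally'' should be understood as a statement about the linearization $\nabla\varphi$, with the error controlled by the modulus of continuity of $\nabla\varphi$ (equivalently, the regularity of $r$ and $\alpha$), so I would state it at the level of the differential and remark that the higher-order remainder is negligible as the cell size $\e\to 0$.
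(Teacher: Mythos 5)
Your proof is correct and rests on the same core fact as the paper's sketch, namely that $\nabla\varphi = e^r Q(\alpha)$ is a positive scalar multiple of an orthogonal matrix, hence a conformal linear map. The paper's version only sketches the square-cell consequence via a first-order Taylor expansion of $\varphi$ on a small square, whereas you additionally spell out the general angle-preservation claim through $(\nabla\varphi)^T\nabla\varphi = e^{2r}\,\mathrm{Id}$; this is a slightly more complete write-up of the same argument, not a different route.
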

\begin{proof}[Sketch of the proof]
Let $x=(x_1,x_2)$ be the origin of a small square $S$ of side length $\e>0$ and edges $\e e_1$, $\e e_2$ (in other words, the vertices of $S$ are $x, x+\e e_1, x+\e e_2$  and $x+\e e_1 + \e e_2$). 
The map $\varphi$ then transforms $S$ into an ``almost'' square $\varphi(S)$ of vertices $\varphi(x), \varphi(x+\e e_1), \varphi(x+\e e_2)$ and $\varphi(x+\e e_1+ \e e_2)$.
By a Taylor expansion, we see that $\varphi(S)$ is, up to terms of order $\e^2$, equal to a parallelogram of origin $\phi(x)$ and edges $\displaystyle\e\frac{\pa \varphi}{\pa x_1}$ and $\displaystyle\e\frac{\pa \varphi}{\pa x_2}$.
Since $\gr \varphi = e^r Q(\alpha)$, the two edges are orthogonal, so $\varphi(S)$ is ``almost'' a square with side length $\e e^r(x)$ (possibly rotated with respect to the initial square $S$).
\begin{figure}[h]
\centering
\includegraphics[width=0.8\linewidth,center]{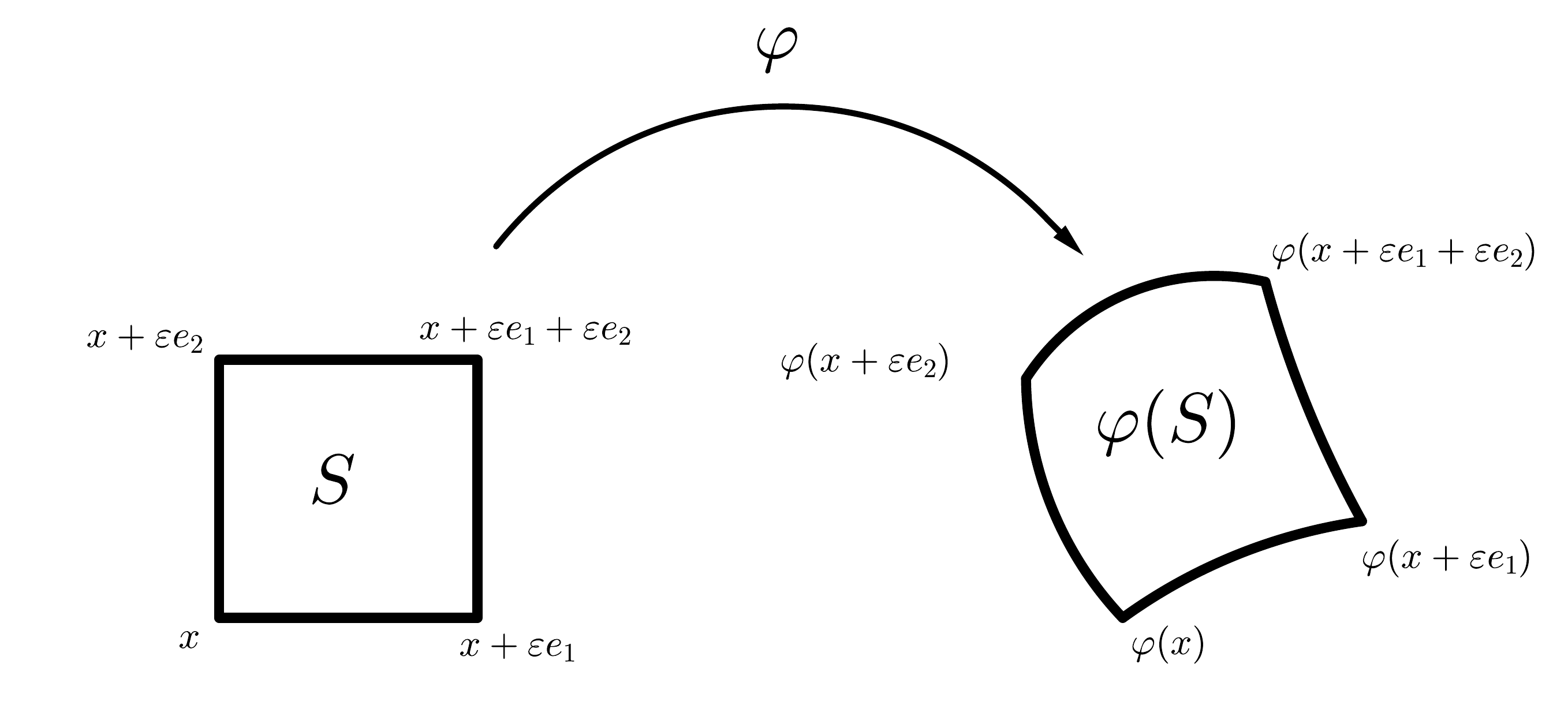}
\caption{A small square $S$ and the distorted square $\varphi(S)$.
} 
\label{distorted square}
\end{figure}
\end{proof}
Nevertheless, in the applications we face a problem. Since $\al$ is a stress eigen-direction, it has no reason to be a harmonic function in general. Even worse, $\alpha$ might not even be smooth at some places (for example, at corners or at the junction point of different boundary conditions, but at other places as well). 
A more profound reason lies in the following observation: both $\alpha$ and $\alpha+\pi$ give rise to the same orientation. By Pedersen's result, we can show that the rotated Hooke's law $A^*(m_1,m_2,\alpha)= R(\alpha)^T A^*(m_1,m_2,0) R(\alpha)$ depends only on the double angle $2\al$. 

From now on, we are going to be working with the double angle $\beta=2\alpha$, thus, removing the indeterminate additive constant $\pi$. 
In what follows, we shall regularize the double angle $\beta=2\alpha$ and make it harmonic.

\subsection{Regularization of the double angle $\beta=2\alpha$}

As we mentioned before, the orientation $\alpha$ given by the optimization does not necessarily satisfy the conformality condition $\Delta \alpha=0$. Thus, at each iteration of the algorithm, instead of minimizing locally (by using Pedersen's result) the following quantity
\begin{equation*}
A^*(m_1,m_2,\beta)^{-1}\sigma\cdot\sigma,
\end{equation*}
let us consider to minimize globally
\begin{equation*}
\int_D \left(A^* (m_1,m_2,\beta)^{-1} \sigma\cdot\sigma +\eta^2 |\gr \beta|^2  \right)\,dx
\end{equation*}
for a small parameter $\eta>0$, under the harmonic constraint
\begin{equation*}
\int_D \gr \beta\cdot\gr q\, dx = 0 \quad \forall q\in H_0^1(D).
\end{equation*}
This is a non-linear (and non-quadratic) constrained optimization problem.
It turns out that working with the angle $\beta$ is not so easy since the Hooke's law $A^*(m_1,m_2,\beta)$ is highly nonlinear in terms of $\beta$. It is however quadratic with respect to the vector $b$, defined by
\begin{equation*}
b=(\cos\beta,\sin\beta) \quad\textrm{ and }\quad A^*(m_1,m_2,\beta)=S(b)^T A(m_1,m_2,0) S(b),
\end{equation*}
where the matrix $S(b)$ is defined by $S(b) = R(\alpha)$. It can be shown that $S(b)$ is affine with respect to $b$ by a careful computation based on Pedersen's result. Therefore, from now on we shall work with $b$ as the main unknown. 
We remark that $\gr\beta=b\land\gr b$. Indeed, 
\begin{equation*}
b\land \gr b = (\cos \beta, \sin \beta)\land \left( - \sin\beta \, \gr \beta, \cos\beta\, \gr \beta \right)= \gr \beta.     
\end{equation*}
Therefore, the objective function becomes
\begin{equation}\label{problem with b}
\int_D \left(A^* (m_1,m_2,b)^{-1} \sigma\cdot\sigma+\eta^2 |b\land \gr b|^2 \right)\, dx,
\end{equation}
which in turn has to be minimized under the harmonic constraint
\begin{equation}\label{constraint of problem with b}
\int_D (b\land \gr b)\cdot \gr q\,dx=0 \quad \forall q\in H_0^1(D).
\end{equation}
We iteratively solve the non-linear problem \eqref{problem with b} under the minimization constraint \eqref{constraint of problem with b} by a Newton-type approximation with an increment $\delta b$ as follows. 

Find a step $\delta b^n\in H^1(D;\RR^2)$ and a Lagrange multiplier $p^{n+1}\in H_0^1(D)$ such that, for any $\delta c \in H^1(D;\RR^2)$ and $q\in H_0^1(D)$,
\begin{equation}\label{linearized minimization problem}
\begin{aligned}
\int_D A^*(m)^{-1} S(b^n+\delta b^n) \sigma\cdot S'(\delta c) \sigma \, dx + \eta^2 \int_D (b^n\land \gr (b^n+\delta b^n))\cdot (b^n\land \gr \delta c)\, dx \\
+ \int_D (b^n\land \gr \delta c)\cdot \gr p^{n+1}\, dx=0
\end{aligned}
\end{equation}
and 
\begin{equation}\label{constraint for harmonic}
\displaystyle \int_D (b^n\land \gr (b^n+\delta b^n))\cdot \gr q \,dx=0,
\end{equation}
where $S'(\delta c)$ is the directional derivative of $S(b)$ in the direction $\delta c$. Notice that, since $S$ is an affine function, then $S'(\delta c)= S(c+\delta c)-S(c)$.
At each iteration, we update the vector field $b$ as follows
\begin{equation}\label{update of the orientation}
b^{n+1}=\frac{b^n+\delta b^n}{|b^n+\delta b^n|}.
\end{equation}
We now apply this regularization process together with the alternate minimization algorithm. 

The above mentioned algorithm is structured as follows: 
\begin{algorithm}[H]
\caption{Regularization algorithm} 
\begin{enumerate}
\item Initialization of the design parameters $m^{0}_{1}, m^{0}_{2}, b$ with the results of the optimization without the harmonic constraint. 
\item Iterations until convergence, for $n \geq 0$: 
\begin{enumerate}
\item Computation of the stress tensor $\sigma^{n}$ through a problem of linear elasticity. 
\item Updating of the hole parameters $m^n$ by using the projected gradient algorithm \eqref{projection algorithm for $m$} with the orientation $b^n$. 
\item Computation of the increment $\delta b^n$ by solving \eqref{linearized minimization problem} and \eqref{constraint for harmonic}. 
\item Updating of the orientation with \eqref{update of the orientation}. 
\end{enumerate}
\end{enumerate}
\end{algorithm}

\begin{remark}
In numerical practice, starting from the optimal (but not necessarily smooth) orientation, a few tens of iterations of this regularization process are enough.
\end{remark}

\begin{remark}
One advantage of this $b$-formulation is that it is insensitive to the $2\pi$-modulo of $\beta$.
\end{remark}

\begin{figure}
\centering
\begin{tabular}{c}
\begin{minipage}{0.4\linewidth}
\begin{center}
\includegraphics[width=1.0\linewidth,center]{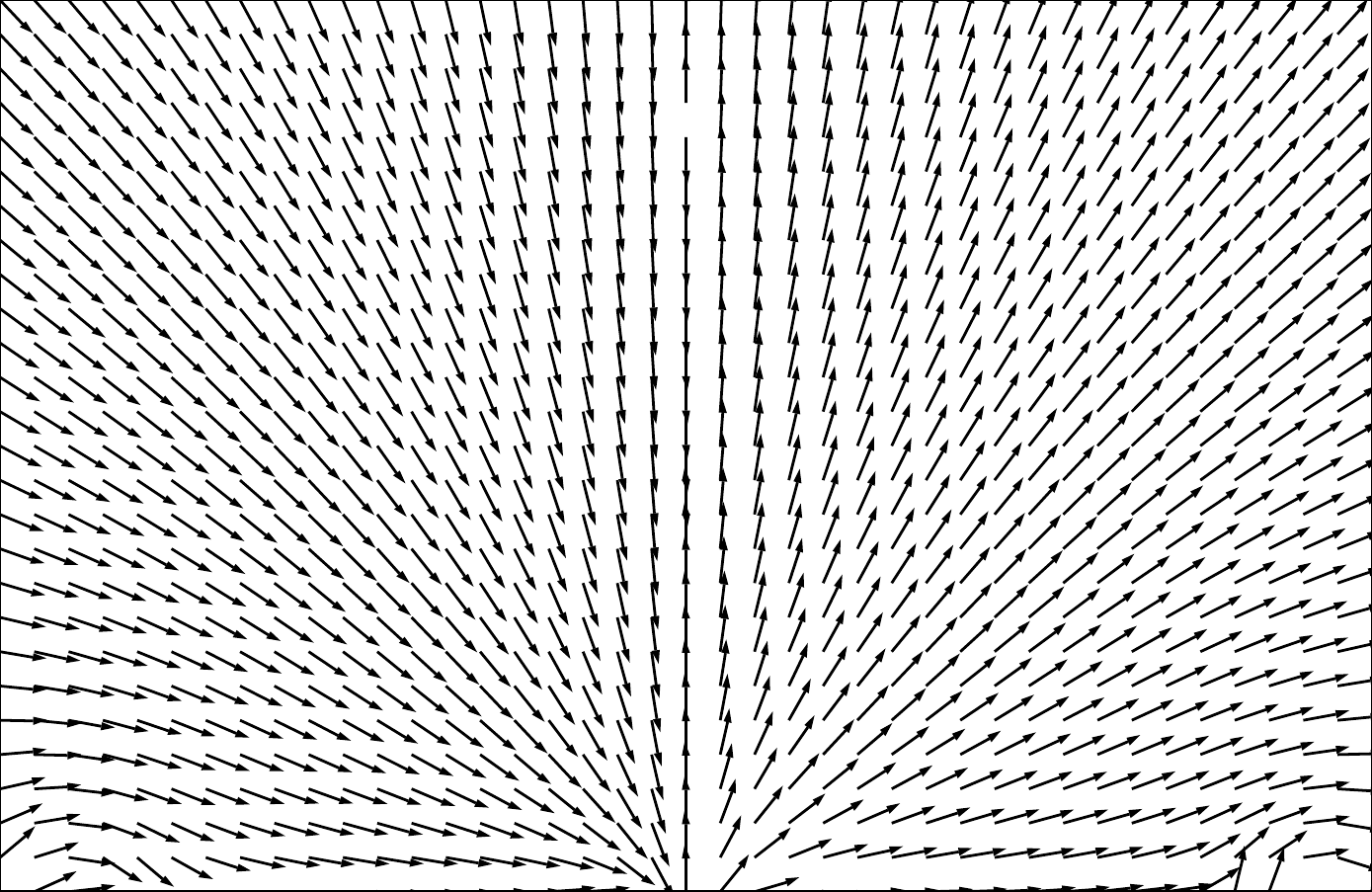}
\end{center}
\end{minipage}
\begin{minipage}{0.5\linewidth}
\begin{center}
\includegraphics[width=1.0\linewidth,center]{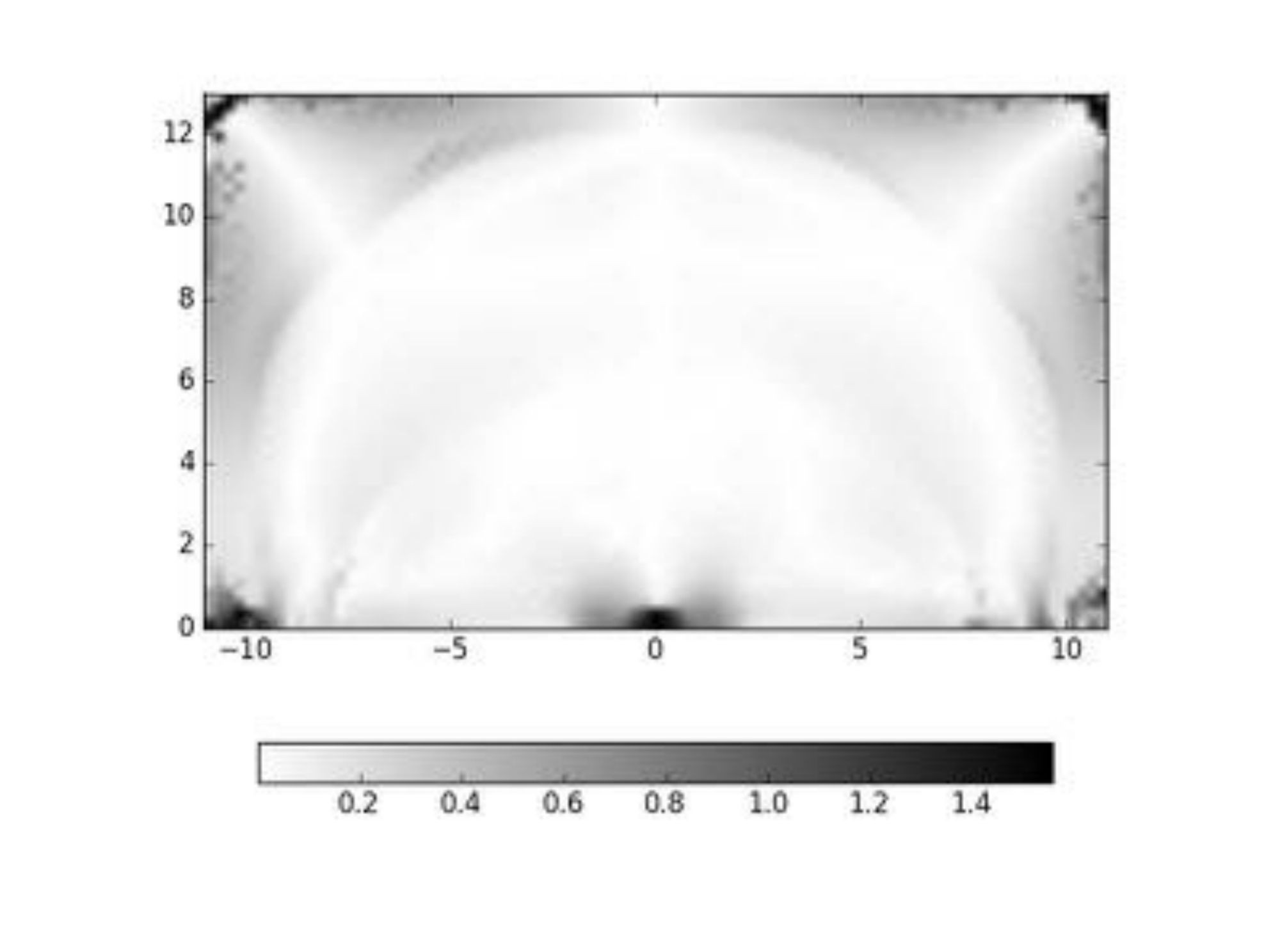}
\end{center}
\end{minipage}
\end{tabular}
\caption{Regularized orientation for the bridge case (left) and angle difference between optimized and regularized orientation.} 
\label{p43-44}
\end{figure}

As we can see by Figure \ref{p43-44}, the regularization occurs mainly in areas where the density is close to $0$ or $1$, i.e. where the homogenized material is almost isotropic and the orientation has no significant impact.

\begin{remark}
Unfortunately this process does not always work in general. In other words, it does not always yield a smooth harmonic angle $\beta$. This is because of ``true'' singularities, i.e. singularities of the orientation that remain and thus do not allow the angle to be harmonic. 
The vector field is not coherently orientable in these cases, as the vector rotates by an angle of $\pm \pi$ along circles which are enclosing the singularities (see Figure \ref{fig:singularity}. 
Such cases might be handled by means of other regularization processes (e.g. minimizing a Ginzburg-Landau energy \cite{geoffroy}).
\end{remark}


\begin{figure}
\centering
\begin{tabular}{c}
\begin{minipage}{0.45\textwidth}
\begin{center}
\includegraphics[width=.7\linewidth,center]{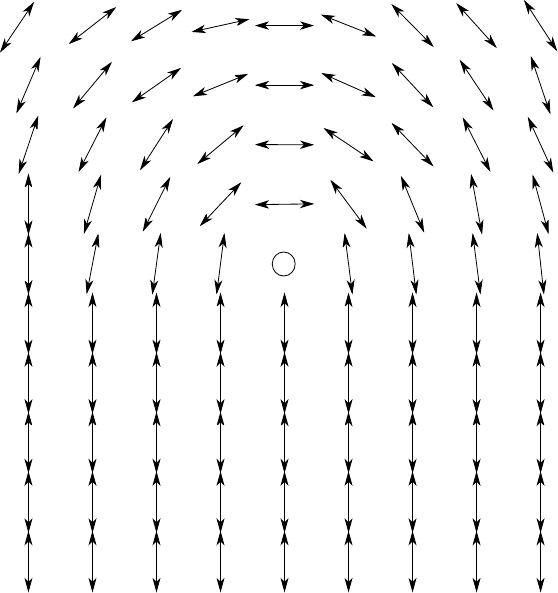}
\end{center}
\end{minipage}
\begin{minipage}{0.45\textwidth}
\begin{center}
\includegraphics[width=.7\linewidth,center]{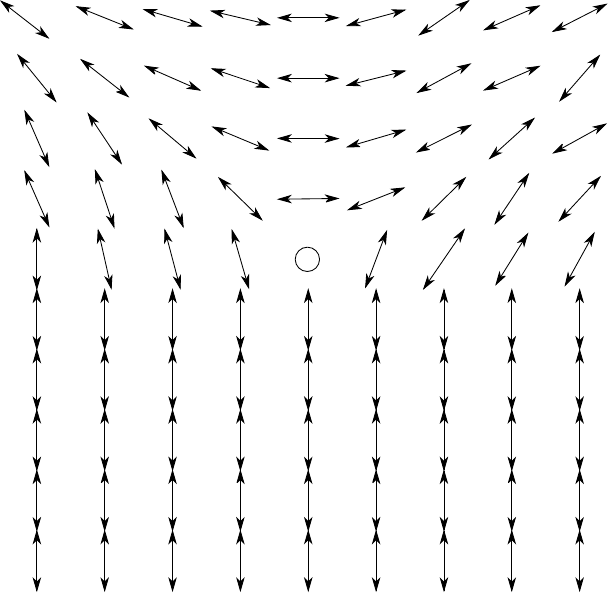}
\end{center}
\end{minipage}
\end{tabular}
\caption{Positive singularity (left) and negative singularity (right).} 
\label{fig:singularity}
\end{figure}

\subsection{Computation of the map $\varphi$}
Once a harmonic angle $\alpha=\beta/2$ has been found, one needs to compute $r$ and $\varphi$ such that
\begin{equation*}
\gr \varphi = e^r Q(\alpha) \quad\textrm{ in }D.
\end{equation*}
Since the dilation field $r$ satisfies 
\begin{equation*}
\gr r=(-\gr\land a_2)a_1+ (\gr\land a_1)a_2 \quad \text{ with } (a_1,a_2)= Q(\alpha),
\end{equation*}
one computes $r$ as the minimizer in $H^1(D)$ of
\begin{equation*}
\int_D |\gr r + (\gr \land a_2)a_1- (\gr \land a_1)a_2   |^2\, dx.
\end{equation*}
Once $r$ has been computed, a naive idea would be to compute $\varphi$ as a minimizer in $H^1(D;\RR^2)$ of 
\begin{equation*}
\int_D |\gr \varphi - e^r Q(\al)|^2\, dx.
\end{equation*}
However, we know that, even if $\beta$ is smooth, $\alpha$ may have jumps of the type $\pm \pi$ and thus $Q(\al)$ may have jumps of its sign (recall that $Q(\al+\pi)=-Q(\al)$).

To compute $\varphi$ there are two possibilities.
\begin{itemize}
	\item[{\rm 1}.]
    Find a coherent orientation of $\alpha$ (i.e. choose between $\alpha$ and $\alpha+\pi$ at every point)$\colon$ this is possible only if there are no singularities (this is the approach of Groen and Sigmund \cite{GS}).
    \item[{\rm 2}.]
    Leave the angle $\alpha$ as it is and extend $\varphi$ to be defined in an abstract manifold.
    This is the approach of Allaire--Geoffroy--Pantz \cite{Allaire5} and it works also in the presence of singularities.
\end{itemize}

\subsection{An abstract manifold setting}
Let us introduce the cover space of $D$. 
\begin{definition}
	Denote by $T$ a rotation matrix field which is a candidate for being $Q(\alpha)$. Then we define 
   \begin{equation*}
   		\mathcal{D}=\{(x, T)\in D\times {\rm SO}(2) \; \, \text{such that} \, \; T^2=Q(\beta)\},
   \end{equation*}
	where ${\rm SO}(2)$ is the set of rotations in $\RR^2$.
\end{definition}
We note that at every point $x\in D$ the rotation satisfies $T(x)^2=Q(\beta)(x)$. If the angle $\alpha$ is globally orientable, then $T(x)=Q(\alpha)(x)$ or $T(x)=-Q(\alpha)(x)$, and that $\mathcal{D}$ is simply the union of two copies of $D$, consisting of the two possible signs of $Q(\alpha)$. We assume the simple case where $\alpha$ could be globally oriented (no singularity) but extend it to the singular cases. 

\begin{figure}
\centering
\begin{tabular}{c}
\begin{minipage}{0.6\linewidth}
\begin{center}
\includegraphics[width=0.9\linewidth,center]{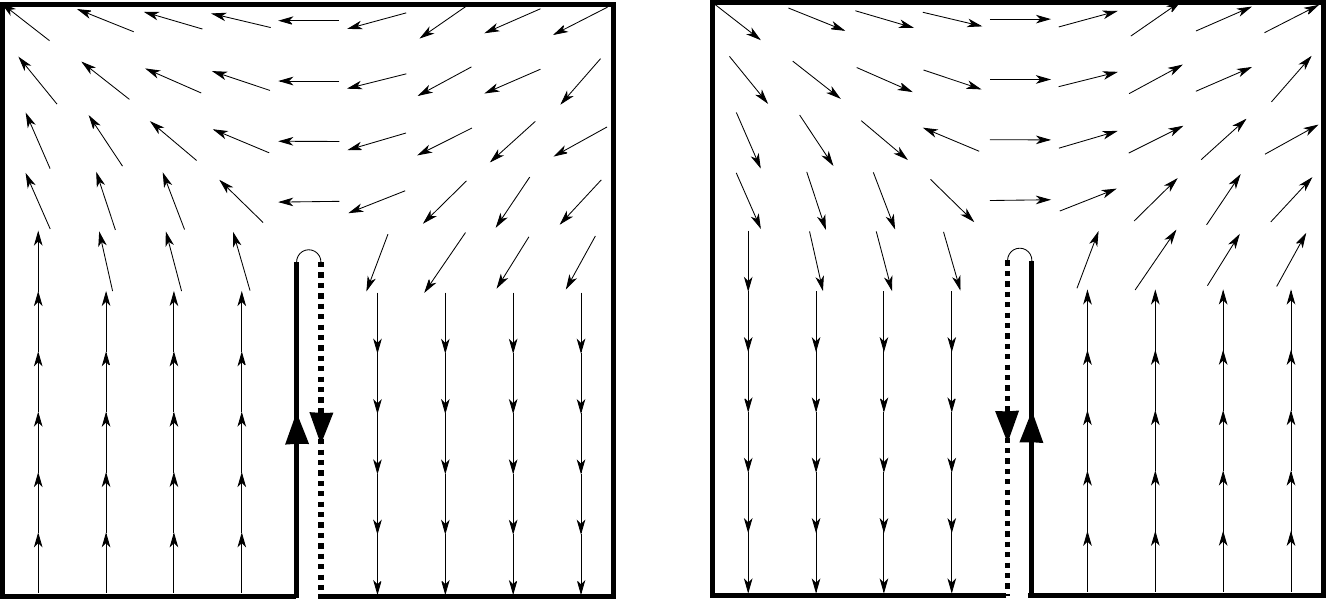}
\end{center}
\end{minipage}
\begin{minipage}{0.4\linewidth}
\begin{center}
\includegraphics[width=.9\linewidth,center]{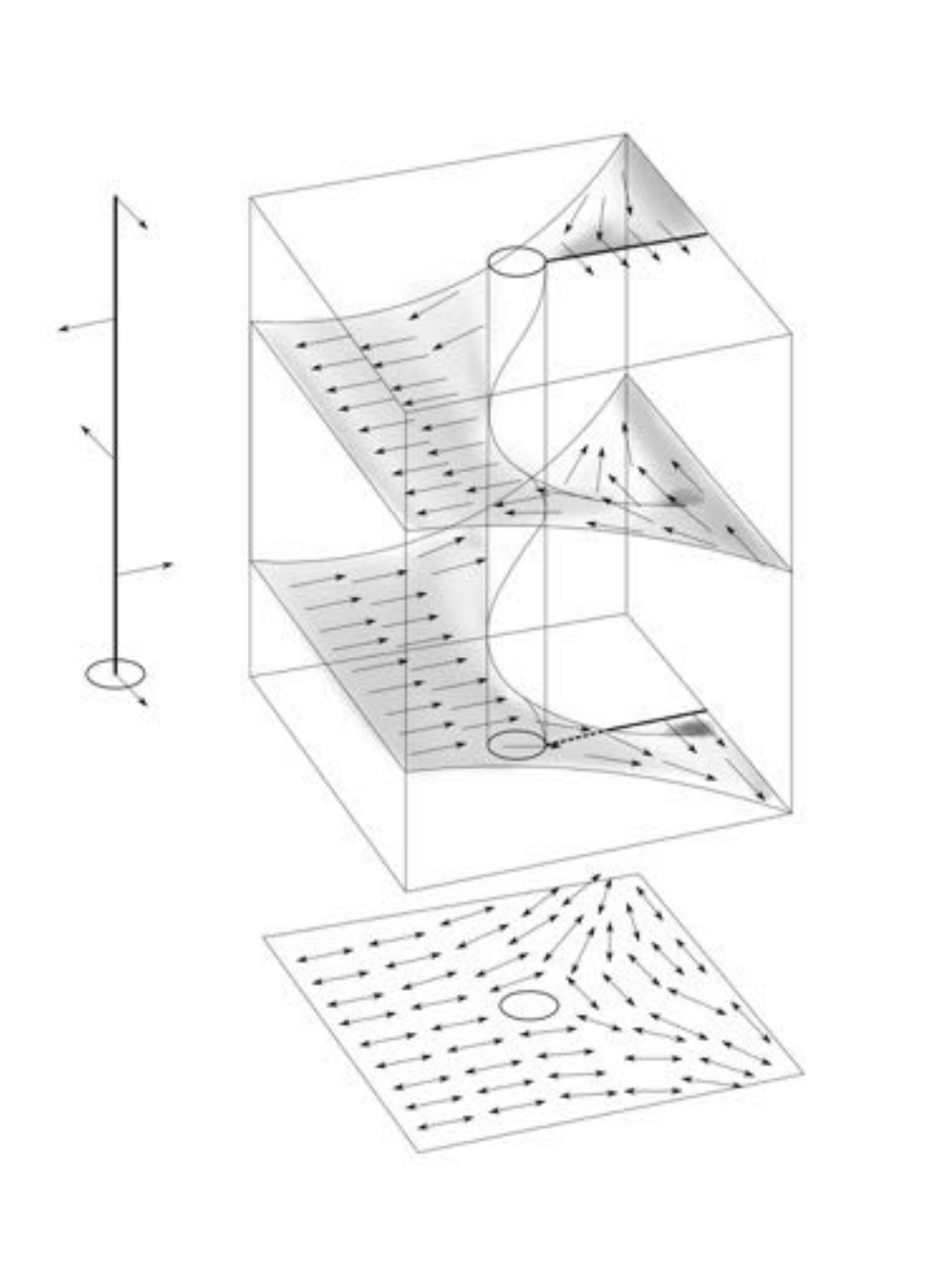}
\end{center}
\end{minipage}
\end{tabular}
\caption{Two possible orientation (left) and the manifold obtained by gluing them together.} 
\label{fig: cut and manifold}
\end{figure}
We change our working space from $D$ to $\mathcal{D}$. The map $\varphi(x, T)$ is now defined on the manifold $\mathcal{D}$ by
\begin{equation}\label{map of manifold}
	\gr \varphi=e^r T,
\end{equation}
and the gradient operator in \eqref{map of manifold} defined by
\begin{equation*}
	\gr\varphi(x, T)=\gr \varphi_{U}(x),
\end{equation*}
where $U$ is an orientable open subset of $D$, 
\begin{equation*}
	\varphi_{U}(x)=\varphi\circ g_{U}(x),
\end{equation*}
and $g_{U}$ is one of the charts
\begin{equation*}
	\begin{aligned}
		g^{\pm}_U \, \colon \, & U\longrightarrow \mathcal{D}\\
      & x\longmapsto(x, \pm T_U(x))
	\end{aligned}
\end{equation*}
with $T_U^2=Q(\beta)$ and $T_U\in C(U, {\rm SO}(2))$.
Moreover, without loss of generality, we can assume the antisymmetric property
\begin{equation*}
	\varphi(x, -T)=-\varphi(x, T).
\end{equation*}
Indeed, if $\varphi$ satisfies $\gr\varphi=e^r T$, then the map $(\varphi(x, T)-\varphi(x, -T))/2$ still satisfies \eqref{map of manifold} and is antisymmetric. Thus, if the orientation $\alpha$ satisfies the conformality condition \eqref{conformality condition}, the map $\varphi$ can be defined as a minimizer of
\begin{equation*}
	\min_{\varphi\in \mathcal{V}}\int_\mathcal{D}|\gr\varphi-e^r T|^2\,dx,
\end{equation*}
over all maps $\varphi$ in
\begin{equation*}
	\mathcal{V}:=\{\varphi\in H^1(D, \RR^2) \;:\;\varphi(x, -T)=-\varphi(x, T) \ {\rm for} \ {\rm all} \ (x, T)\in\mathcal{D}\}.
\end{equation*}
In practice we face the problem of making the actual computations on the abstract manifold $\mathcal{D}$.
In order to solve this problem we use a new idea, namely, non-conformal finite elements on $D$.

\begin{figure}
\centering
\begin{tabular}{c}
\begin{minipage}{0.4\linewidth}
\begin{center}
\includegraphics[width=0.9\linewidth,center]{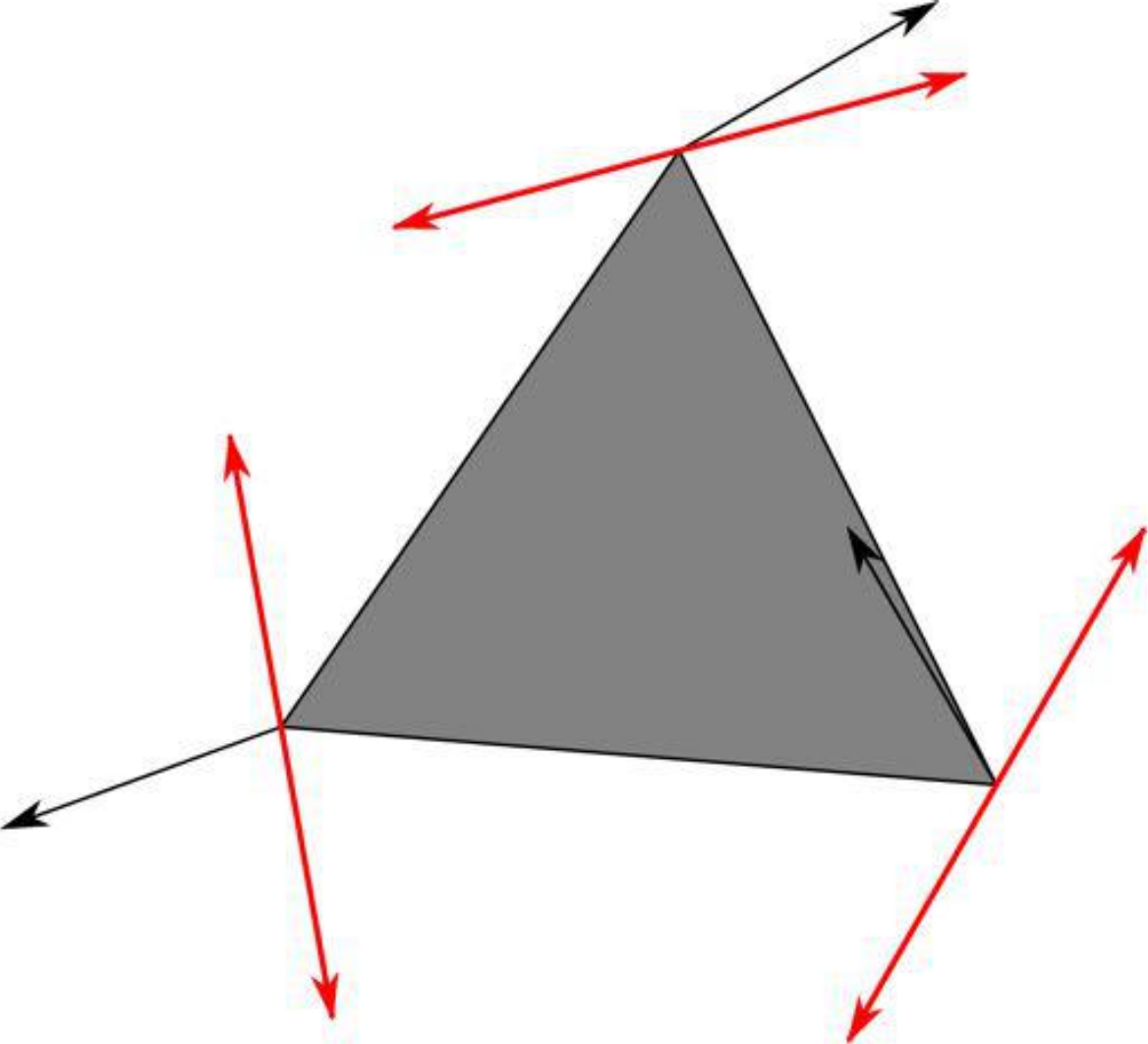}
\end{center}
\end{minipage}
\begin{minipage}{0.6\linewidth}
\begin{center}
\includegraphics[width=.9\linewidth,center]{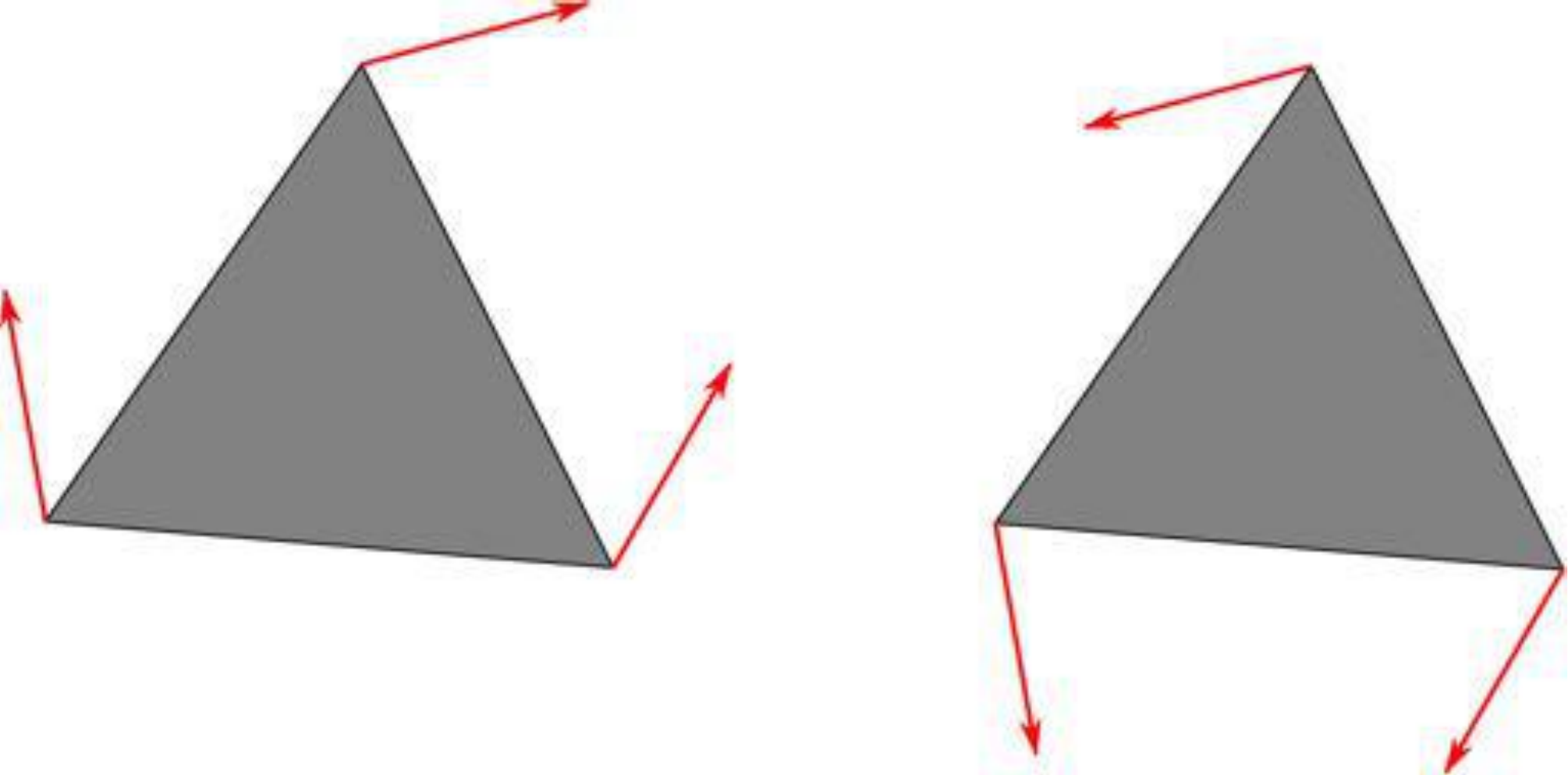}
\end{center}
\end{minipage}
\end{tabular}
\caption{Left: orientation of $\beta$ (black arrows) and of $\alpha$ (red arrows). Right: two
possible coherent orientations of $\alpha$.} 
\label{p55}
\end{figure}

On each triangle $K$ of the mesh we compute one continuous orientation $T_K$ such that $T_K^2=Q(\beta)$.
Then, we glue together (with $P_1$ discontinuous finite elements) these orientations. We compute
\begin{align*}
	\int_\mathcal{D}|\gr\varphi-e^rT|^2\,dx &= \sum_K\int_{g^+_K(K)\cup g^-_K(K)}|\gr \varphi-e^rT|^2\,dx \\
	&= \sum_K \int_{K} \abs{\nabla (\varphi \circ g^+_K) - e^r T_K(x)}^2 \, dx \\
	&\quad + \sum_K \int_{K} \abs{\nabla (\varphi \circ g^-_K) - e^r T_K(x)}^2 \, dx
\end{align*}
with $g^{\pm}_K = \mathrm{Id} \times (\pm T_K)$. By the antisymmetry of $\varphi$, we obtain 
\begin{equation*}
	\int_\mathcal{D}|\gr\varphi-e^rT|^2\,dx=2\sum_K\int_K|\gr (\varphi\circ g^+_K)-e^{r(x)}T_K(x)|^2\,dx. 
\end{equation*}
Then, we minimize with respect to $\varphi$ in the space of $P_1$ discontinuous finite elements.

\begin{figure*}
        \centering
        \begin{subfigure}[b]{0.475\textwidth}
            \centering
            \includegraphics[width=\textwidth]{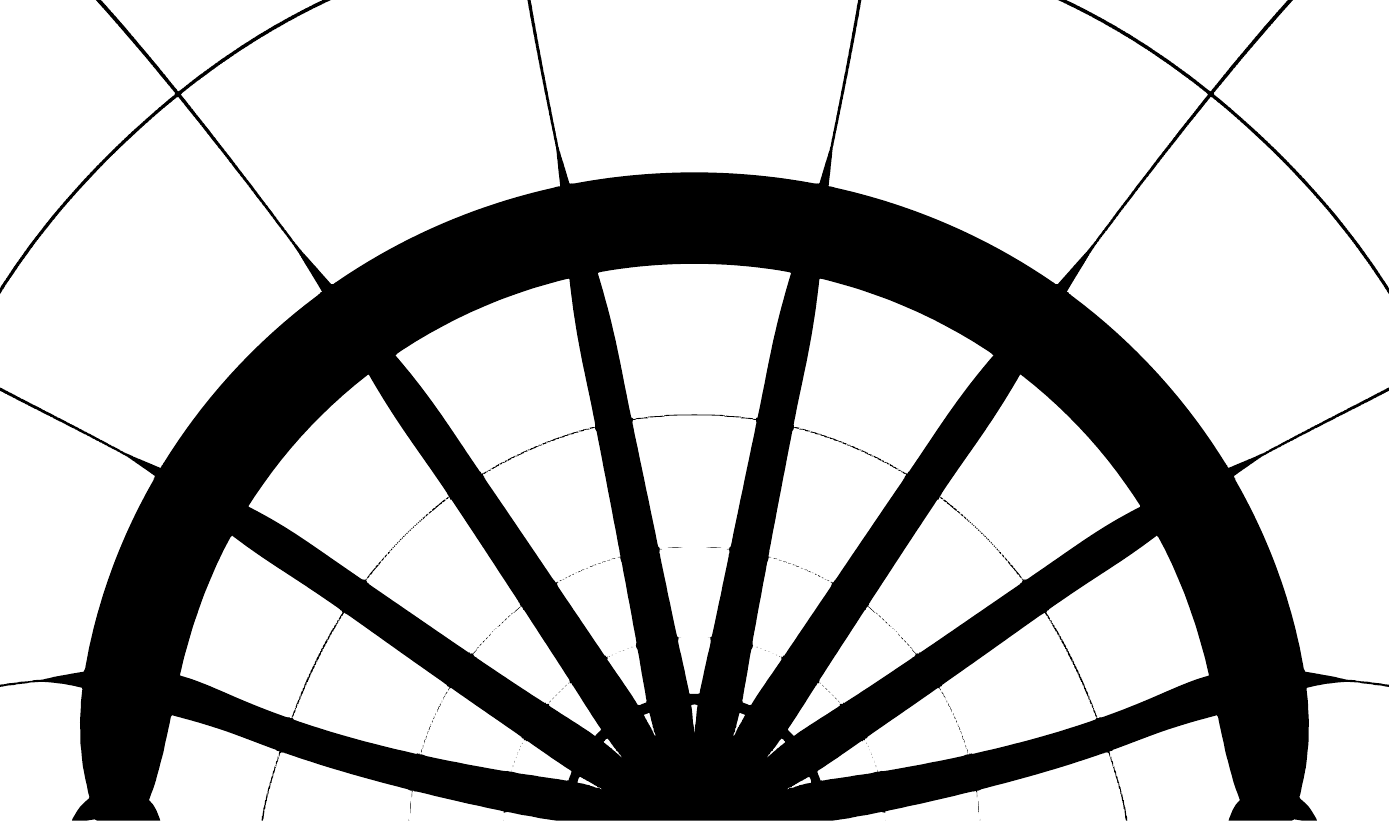}
            \caption[]%
            {{\small $\e=0.4$}}    
        \end{subfigure}
        \hfill
        \begin{subfigure}[b]{0.475\textwidth}  
            \centering 
            \includegraphics[width=\textwidth]{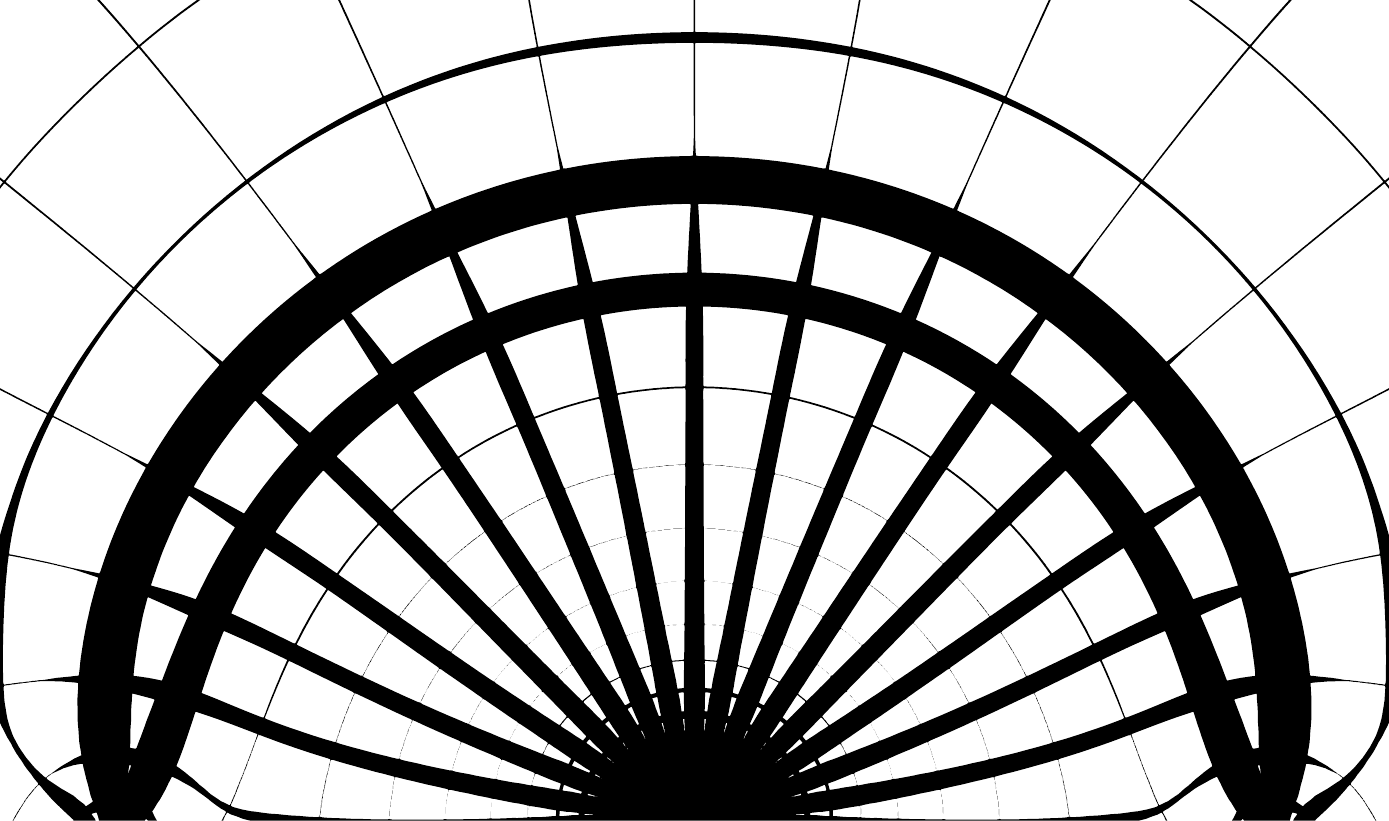}
            \caption[]%
            {{\small $\e=0.2$}}    
        \end{subfigure}
        \vskip\baselineskip
        \begin{subfigure}[b]{0.475\textwidth}   
            \centering 
            \includegraphics[width=\textwidth]{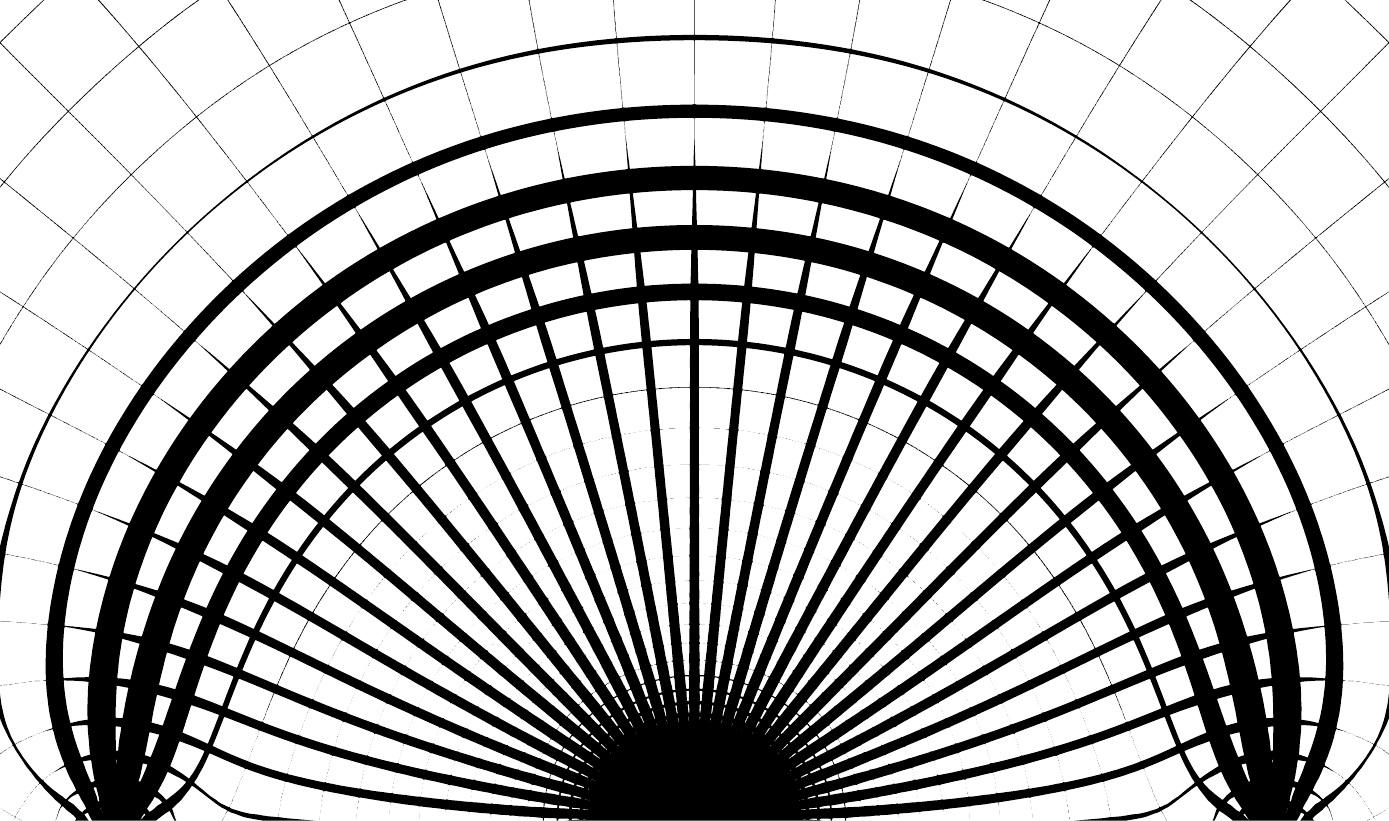}
            \caption[]%
            {{\small $\e=0.1$}}    
        \end{subfigure}
        \quad
        \begin{subfigure}[b]{0.475\textwidth}   
            \centering 
            \includegraphics[width=\textwidth]{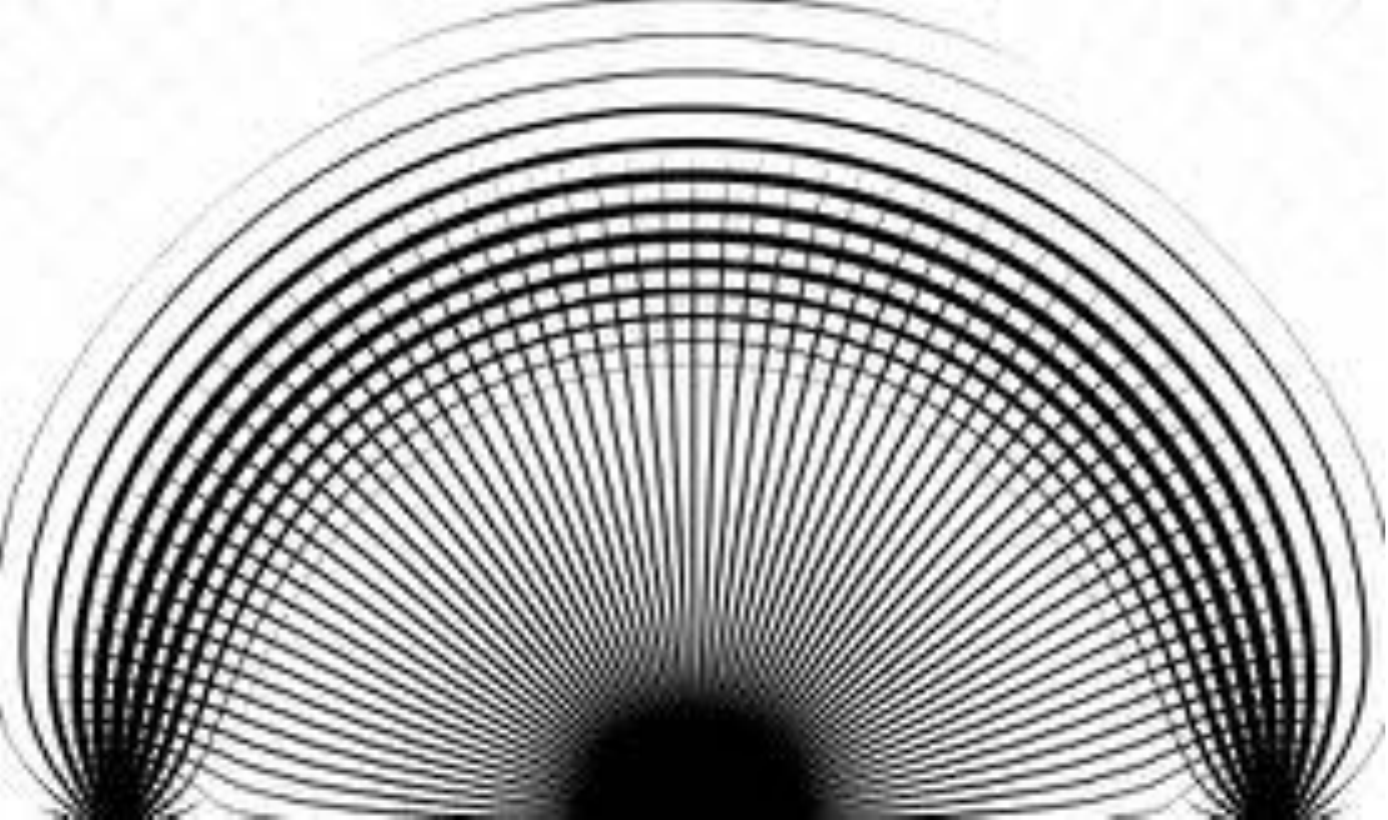}
            \caption[]%
         {{\small $\e=0.05$}}    
        \end{subfigure}
        \caption[]
        {{\small $\Omega_\e (\varphi,m)$ for several $\e$ in the case of the bridge.}}
      
    \end{figure*}

\begin{figure}
\centering
\begin{tabular}{c}
\begin{minipage}{0.33\linewidth}
\begin{center}
\includegraphics[width=\linewidth,center]{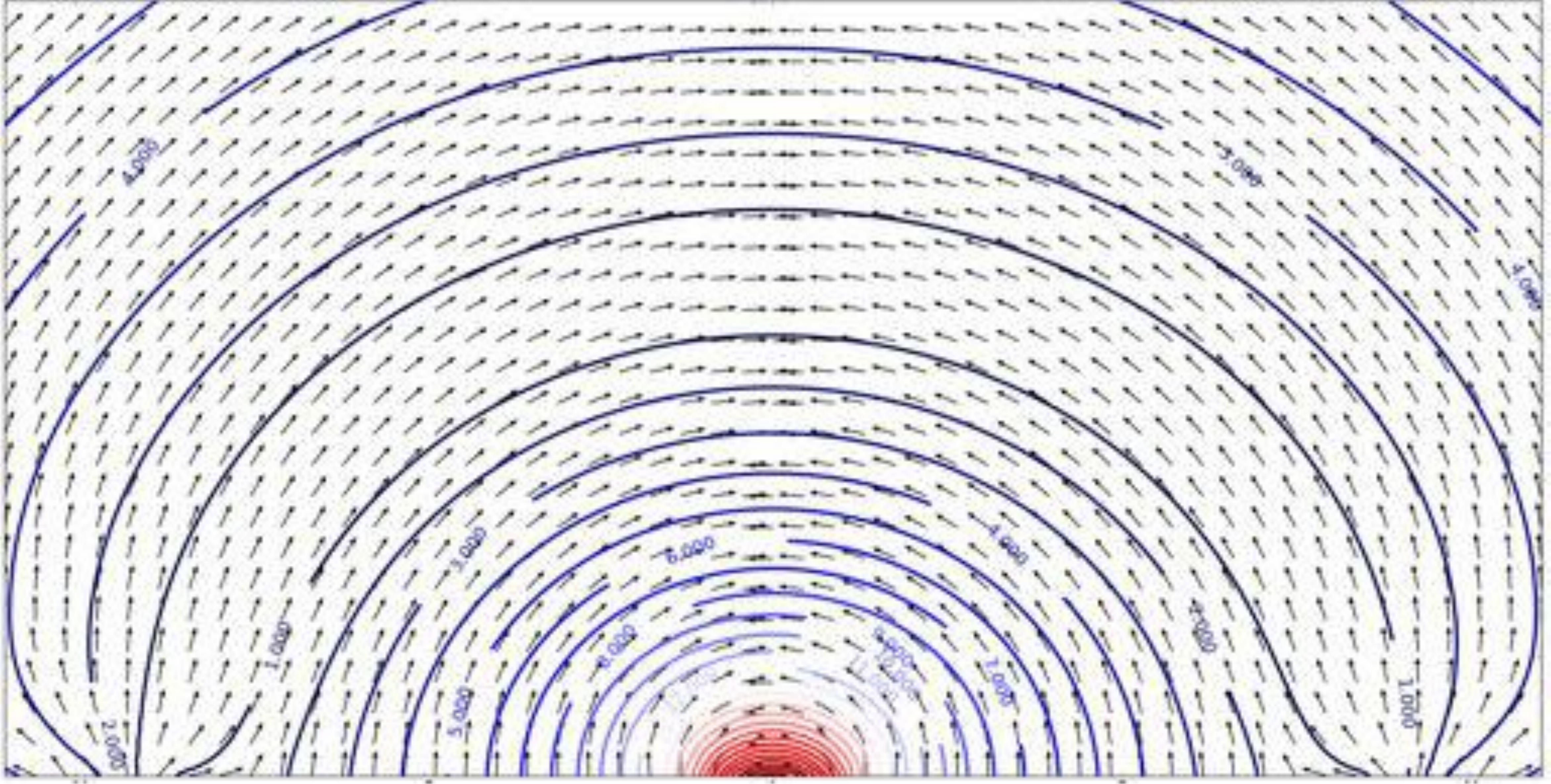}
\end{center}
\end{minipage}
\begin{minipage}{0.33\linewidth}
\begin{center}
\includegraphics[width=\linewidth,center]{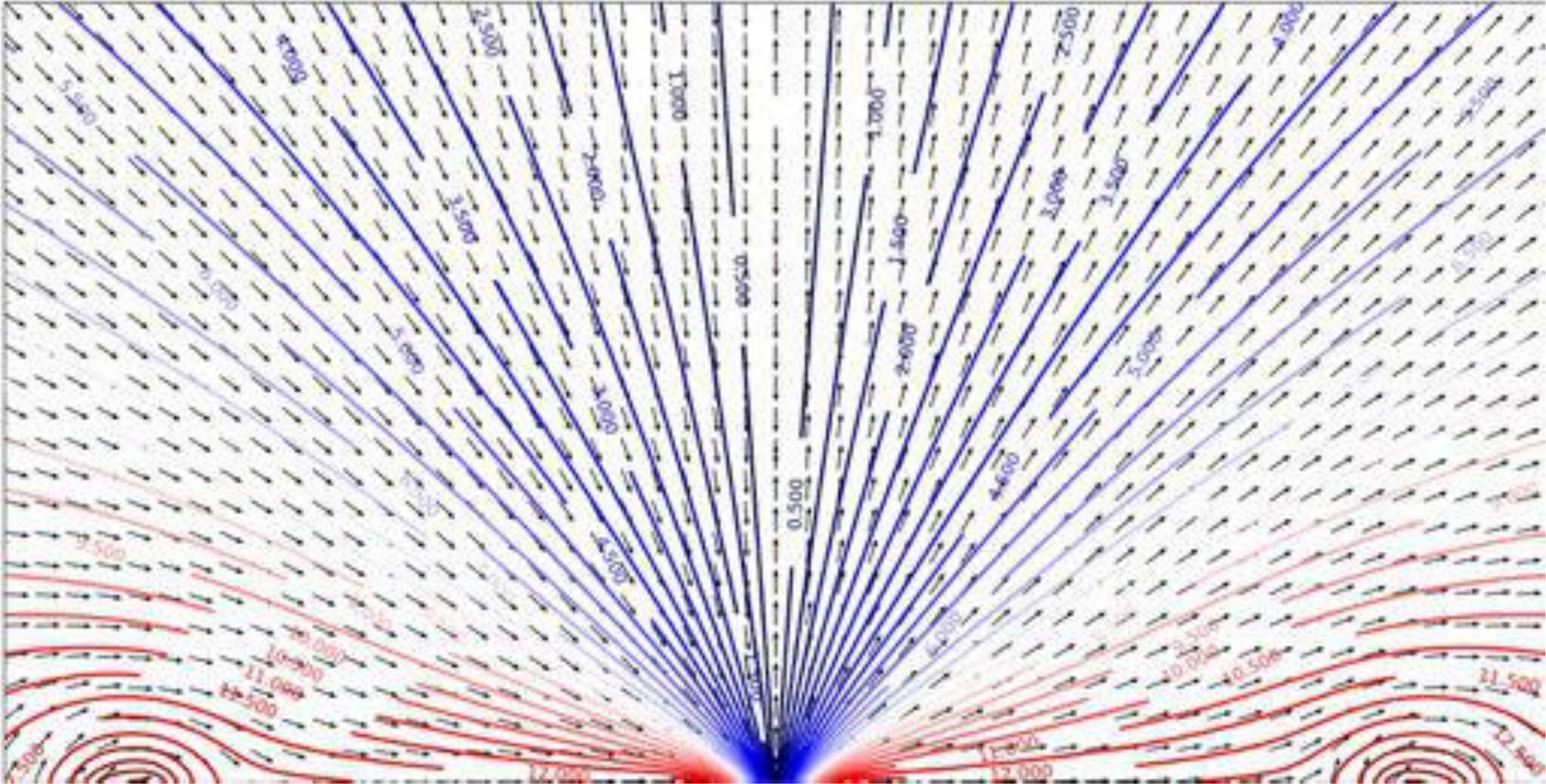}
\end{center}
\end{minipage}
\begin{minipage}{0.33\linewidth}
\begin{center}
\includegraphics[width=\linewidth,center]{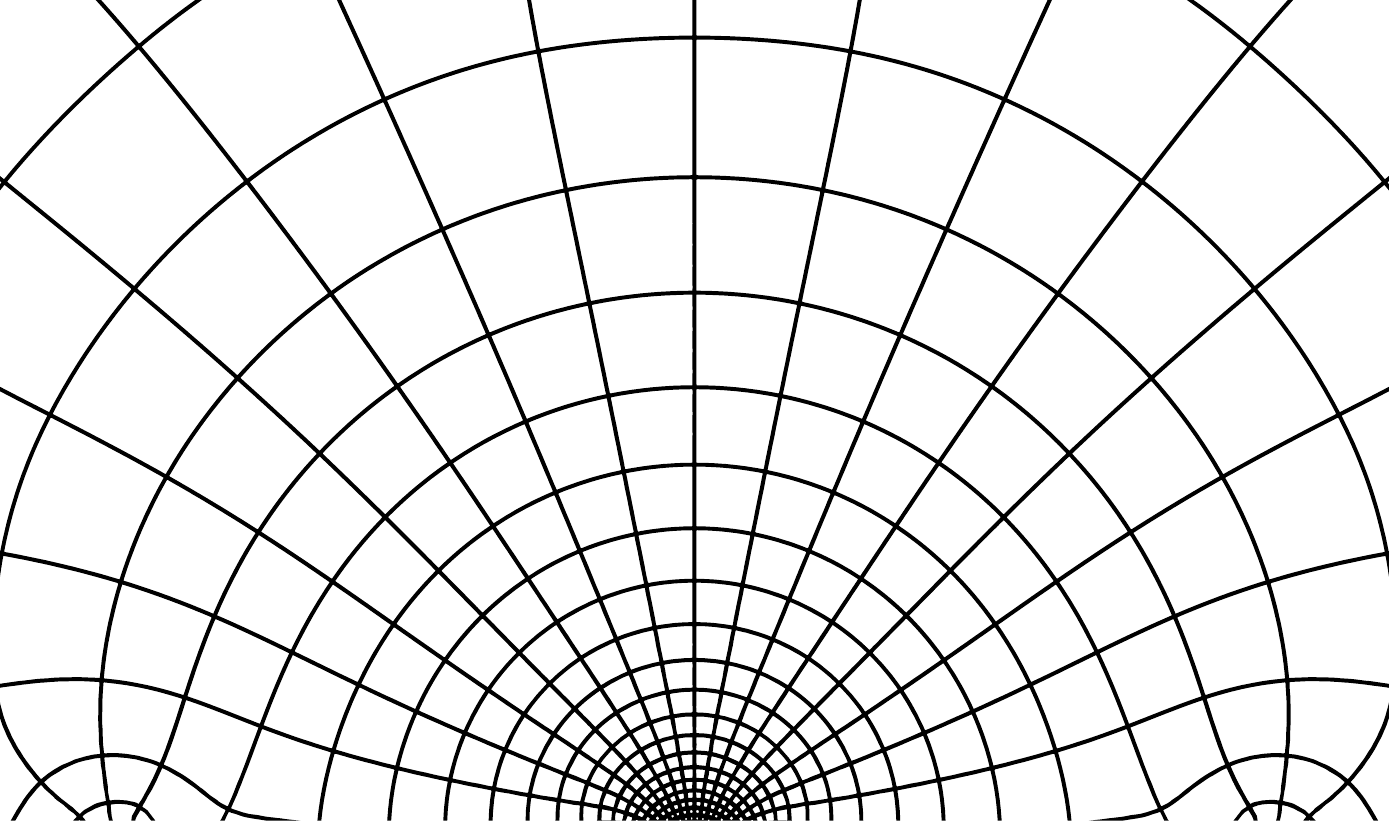}
\end{center}
\end{minipage}
\end{tabular}
\caption{The map $|\varphi_i|$ (isolines) and the vectors $a_i$ (arrows) for $i=1$ (left) and $i=2$ (middle). On the right we have the projection of a regular grid by the map $\varphi$.} 
\label{fig:p57-58}
\end{figure}

\section{A final post-processing/cleaning of the lattice reconstruction}

The shapes we obtained in Section~\ref{sec:3rd step} are not straightforwardly manufacturable.
Indeed, there are disconnected components of the lattice structure and/or too thin members that should be removed.
A final post-processing is made to cure these defects.
Note that there is room for improvement in the process.


Let $h_{\rm min}$ be the minimal manufacturable lengthscale or feature size, meaning the smallest possible width of bars and diameter of holes which can be effectively built.
Recall that $\e$ is our choice of a global size of cells.
After deformation, the cell size is $h_c(x)=\e e^{-r(x)}$.
Hence the local widths of the bars and holes are respectively given by $(1-m_i(x))h_c(x)$ and $m_i(x)h_c(x)$.

In the following, we distinguish two regimes, depending of the local size of the cell $h_c(x)$.
First, if the cell size is too small, a hole and a bar of minimal width cannot coexist and then we have to choose a completely full or void cell.
Hence, if $h_c<2h_{\rm min}$, a thresholding is applied separately to each field $m_i$: it is assigned the value $0$ if $m_i<0.5$ and $1$ otherwise.

Second, when $h_c\ge 2h_{\rm min}$, our post-processing criterion is satisfied if
\begin{equation*}
	\dfrac{h_{\rm min}}{h_c}\le m_i\le1-\dfrac{h_{\rm min}}{h_c}, \quad i=1, 2.
\end{equation*}
Otherwise, we simply threshold the values of $m_1$ and $m_2$, according to Figure~\ref{thresholding}, in order to reach void or full materials.
The thresholded $m$ is then denoted by $\tilde{m}$.

\begin{figure}[htb]
\centering
\includegraphics[width=0.7\linewidth,center]{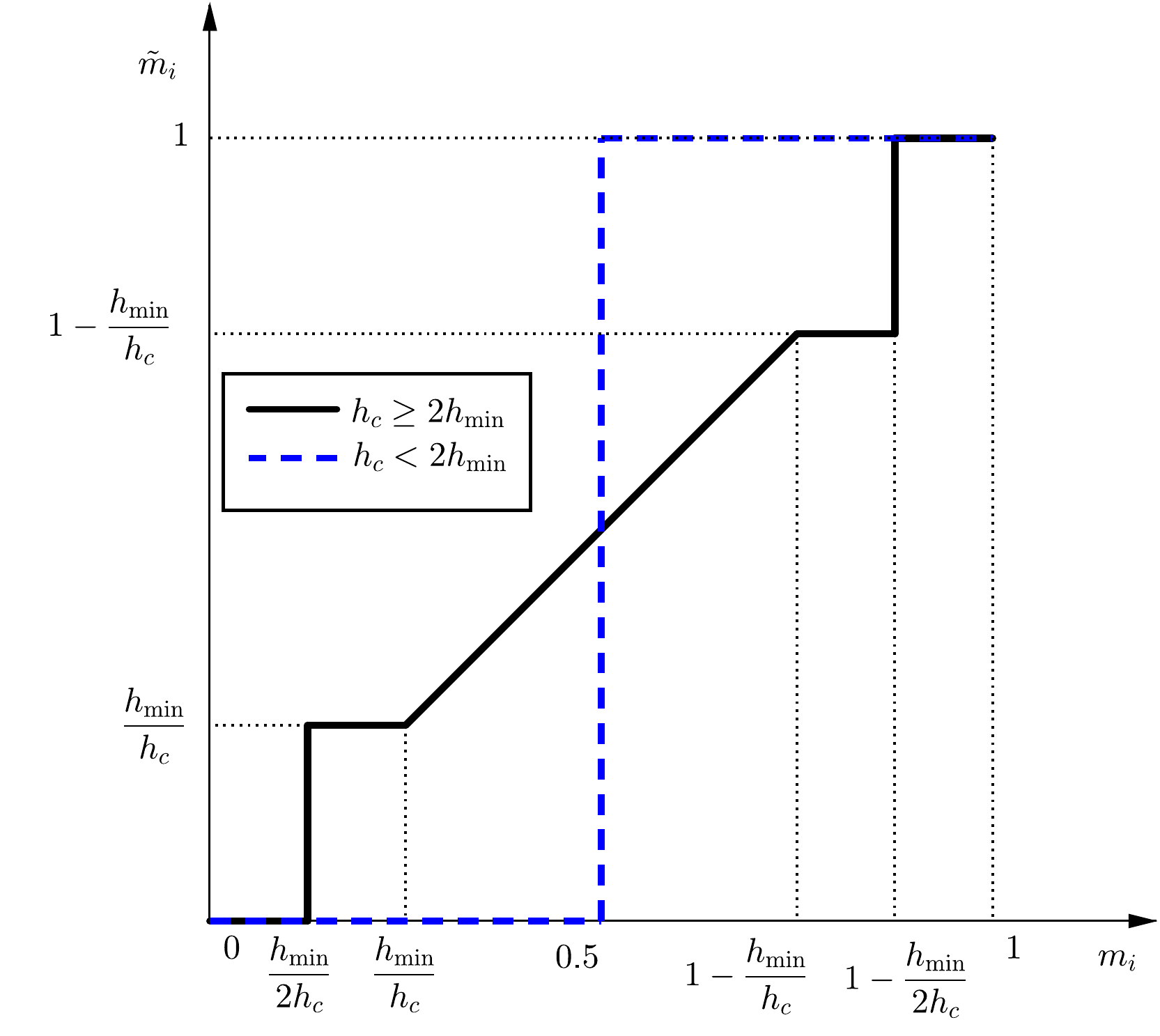}
\caption{Thresholding criteria.} 
\label{thresholding}
\end{figure}

Let $O_\e(\varphi, \tilde{m})$ be the shape obtained from $\Omega_\e(\varphi, \tilde{m})$ by filling its closed holes.
Numerically, the complement of $\Omega_\e(\varphi, \tilde{m})$ is computed step by step, by evaluating the sign of $F^{\varphi, \tilde{m}}_\e$.
If it is positive, the current vertex belongs to the complement $\Omega^c_\e(\varphi, \tilde{m})$ and then its neighbors, which are not already visited, are added to the list of vertices which should be tested.
Otherwise, the current vertex does not belong to $\Omega^c_\e(\varphi, \tilde{m})$.

We will regularize the subset $O_\e(\varphi, \tilde{m})$ in order to remove the disconnected bars or the bars that have one free endpoint.
Numerically, we explore all the vertices of the complement as follows.
For any given vertex, we check each other vertex not further away than a distance $h_{\rm min}$: 
if this vertex belongs to the complement too, all vertices between them are added to the complement.
In this way, we suppress all disconnected bars and all bars of $O_\ve(\varphi, \tilde{m})$ that have one free endpoint, which are not too wide.
This new subset is denoted by $\tilde{O}_\ve(\varphi, \tilde{m})$.

Finally, the post-processed structure is given by the intersection $\tilde{\Omega}_\ve(\varphi, \tilde{m}):=\Omega_\ve(\varphi, \tilde{m})\cap\tilde{O}_\ve(\varphi, \tilde{m})$.
Several post-processed structures $\tilde{\Omega}_\ve(\varphi, \tilde{m})$ for the bridge case are displayed in Figure~\ref{c5p63}.

\begin{figure*}[h]
        \centering
        \begin{subfigure}[b]{0.475\textwidth}
            \centering
            \includegraphics[width=\textwidth]{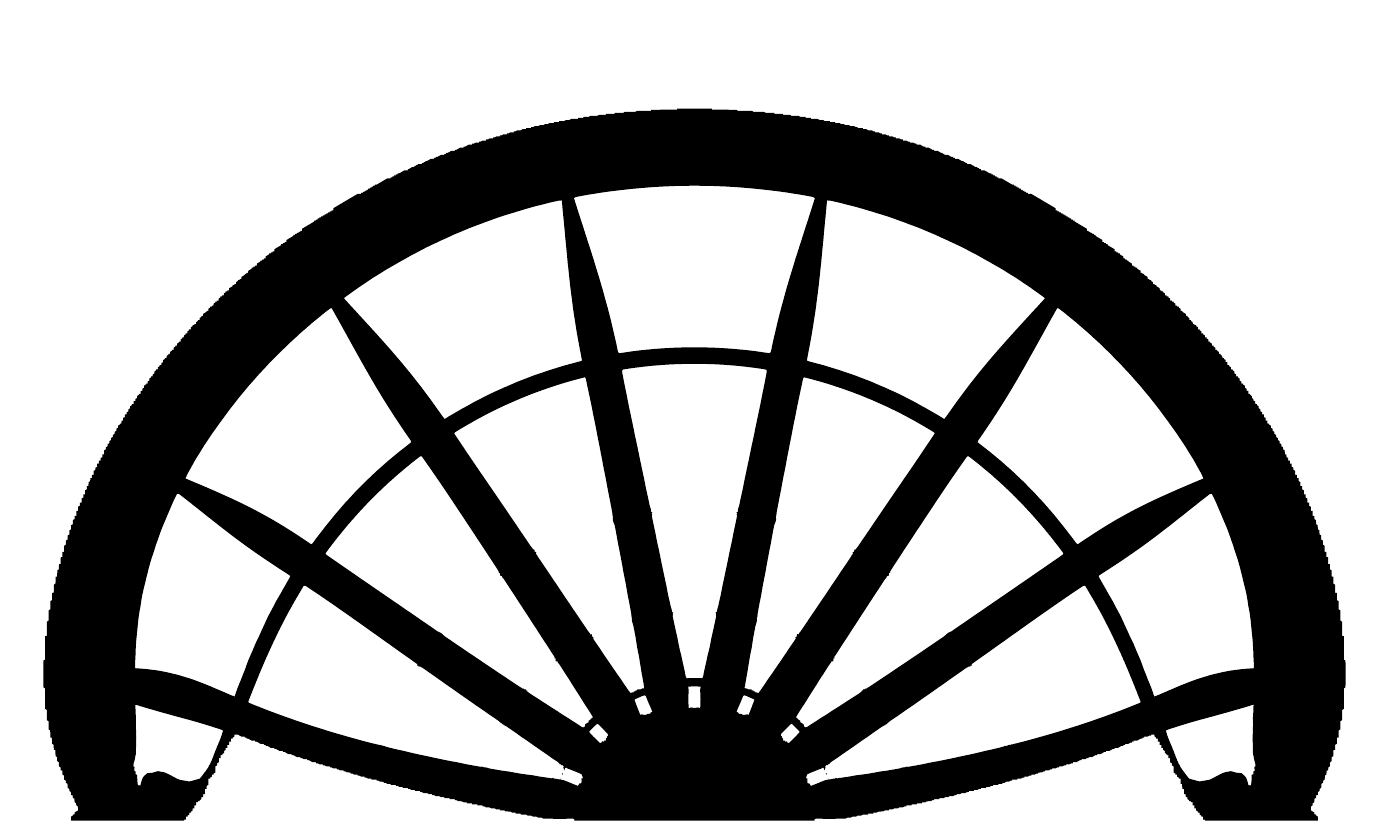}
            \caption[]%
            {{\small $\e=0.4$}}    
        \end{subfigure}
        \hfill
        \begin{subfigure}[b]{0.475\textwidth}  
            \centering 
            \includegraphics[width=\textwidth]{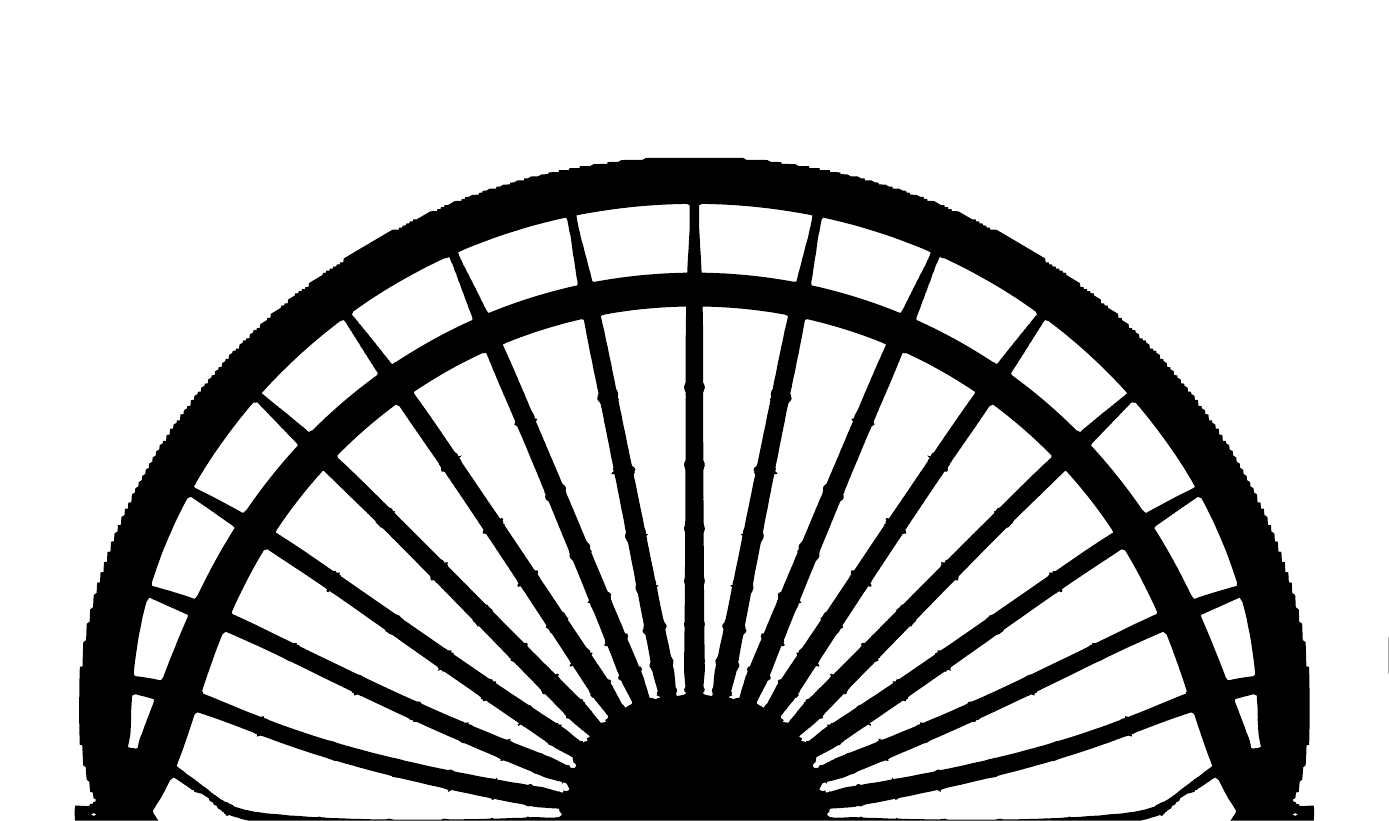}
            \caption[]%
            {{\small $\e=0.2$}}    
        \end{subfigure}
        \vskip\baselineskip
        \begin{subfigure}[b]{0.475\textwidth}   
            \centering 
            \includegraphics[width=\textwidth]{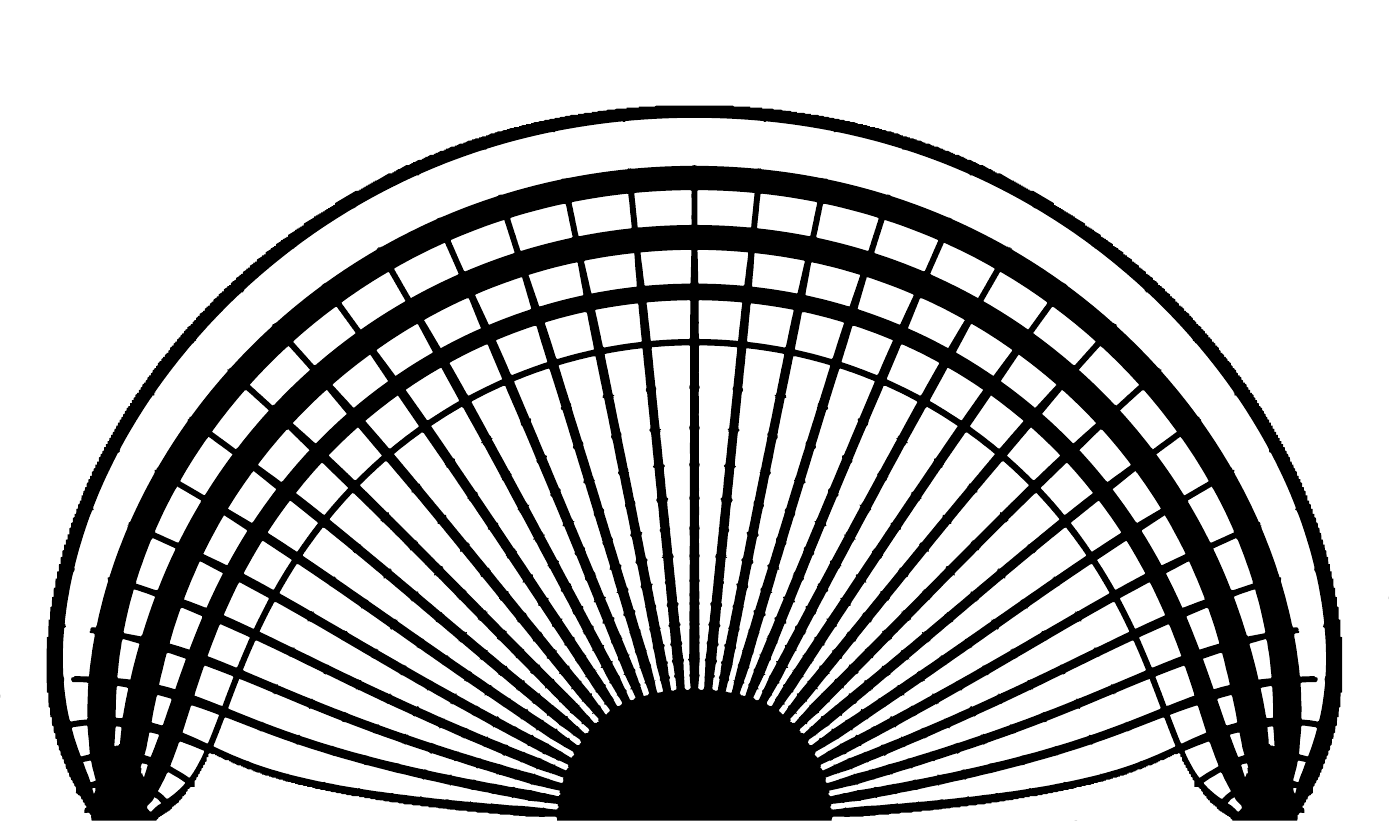}
            \caption[]%
            {{\small $\e=0.1$}}    
        \end{subfigure}
        \quad
        \begin{subfigure}[b]{0.475\textwidth}   
            \centering 
            \includegraphics[width=\textwidth]{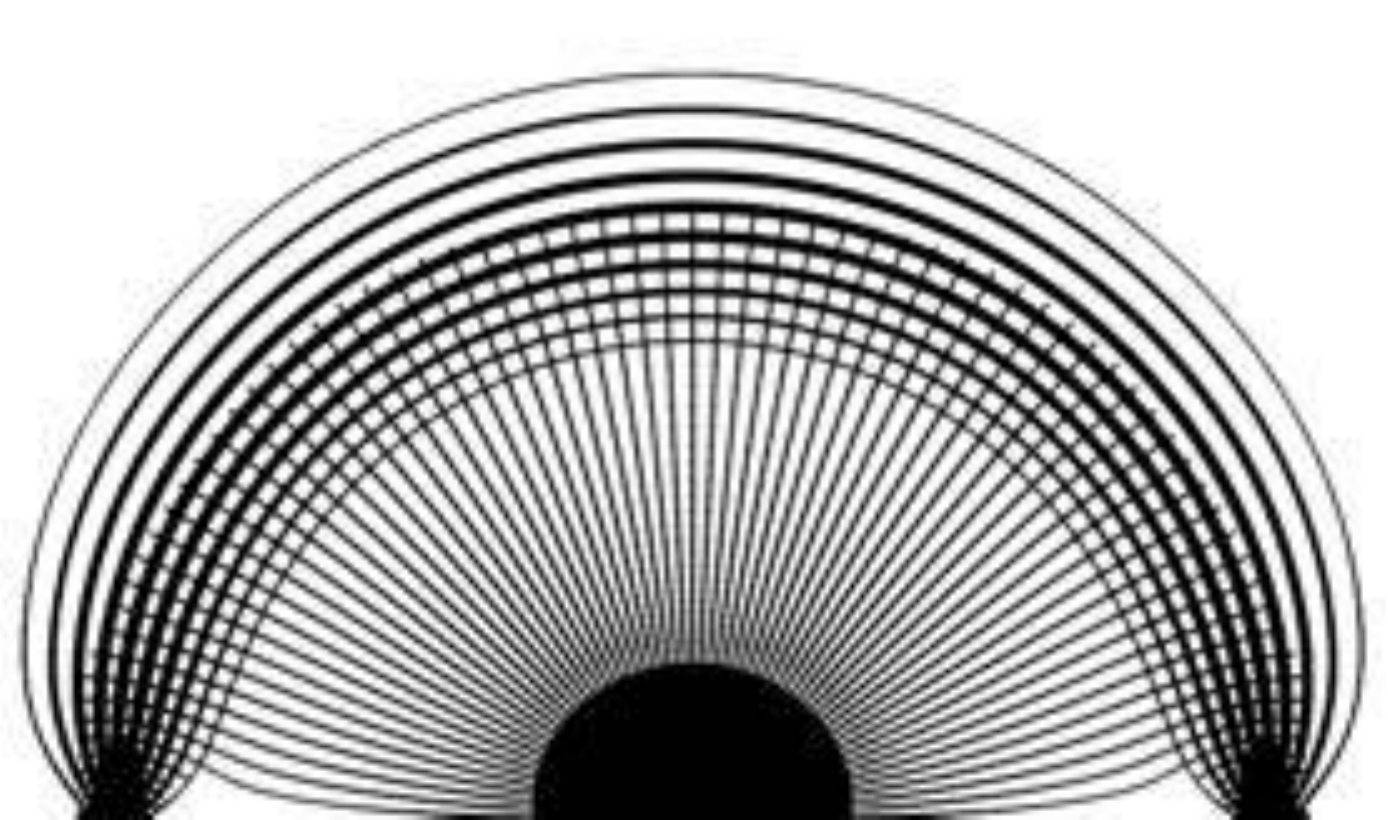}
            \caption[]%
         {{\small $\e=0.05$}}    
        \end{subfigure}
        \caption[]
        {{\small Post-processed $\bar\Omega_\e (\varphi,\bar m)$ for several $\e$ in the case of the bridge.}}
      
        \label{c5p63}
    \end{figure*}


\section{Other numerical examples} \label{sec:other numerical ex}

In this section, we show some numerical examples of the application of the whole method of this chapter.
Here, we mention that the method is not always applicable because some singularities cannot be eliminated, as we will see later.

We will show the numerical results for the optimization of a cantilever (Figure \ref{fig:cantilever case}), an MBB beam (Figure \ref{fig: MBB beam case}) and an L-beam (Figure \ref{fig: L-beam case}).
For each case, we have represented:
\begin{itemize}
    \item[(a)] 
        the optimal orientation of the periodicity cells before regularization,
    \item[(b)]
        the optimal orientation of the periodicity cells after regularization,
    \item[(c)]
        the underlying lattice on which the optimal composite is built, i.e., the projection by $\varphi$,
    \item[(d)-(f)]
        the sequence of shapes after post-processing for the case of $\e=0.2$, $0.1$ and $0.05$ respectively.
\end{itemize}

\begin{figure}[htb]
    \centering 
\begin{subfigure}{0.25\textwidth}
  \includegraphics[width=\linewidth]{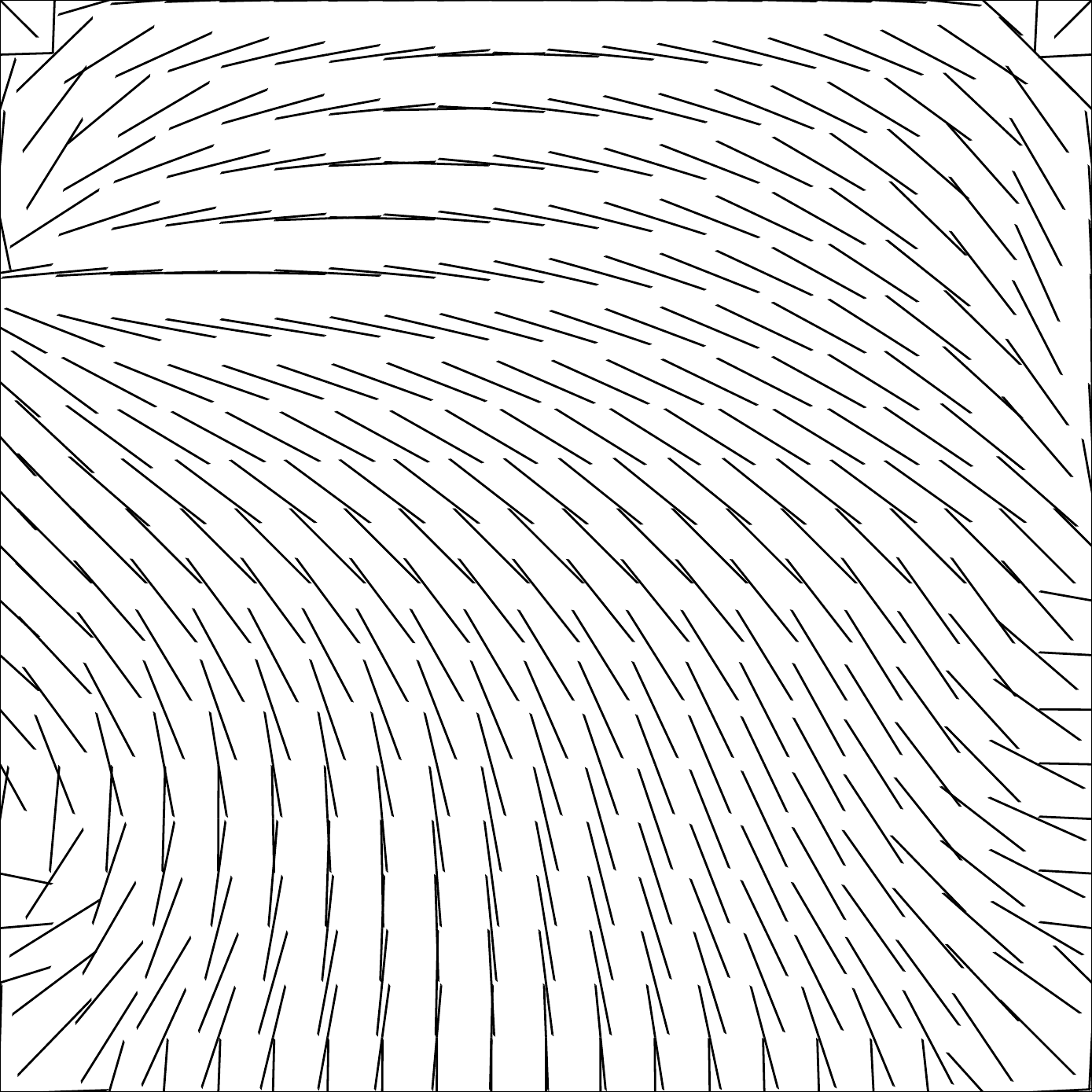}
  \caption{Opt. orientation}
\end{subfigure}\hfil 
\begin{subfigure}{0.25\textwidth}
  \includegraphics[width=\linewidth]{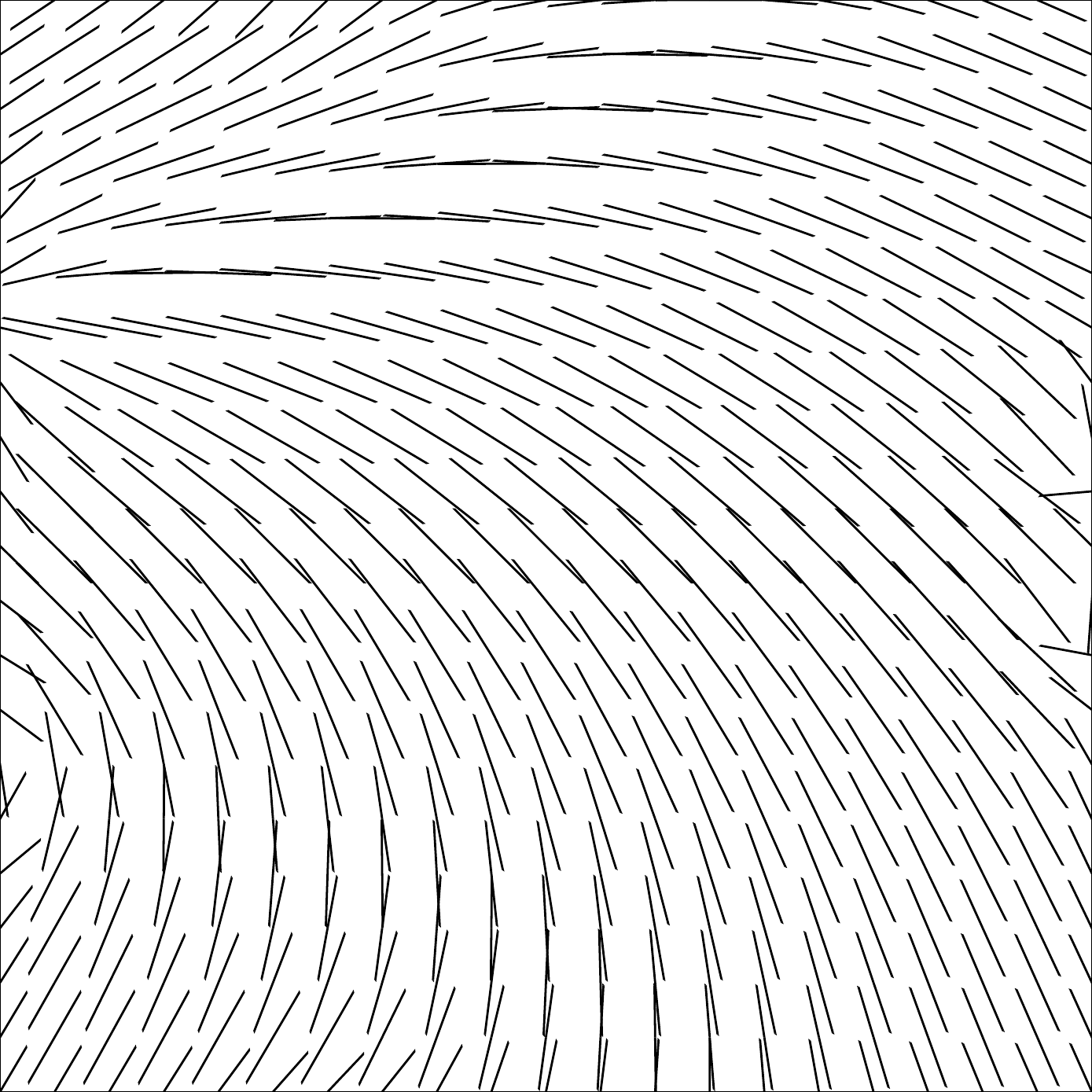}
  \caption{Regul. orientation}
\end{subfigure}\hfil 
\begin{subfigure}{0.25\textwidth}
  \includegraphics[width=\linewidth]{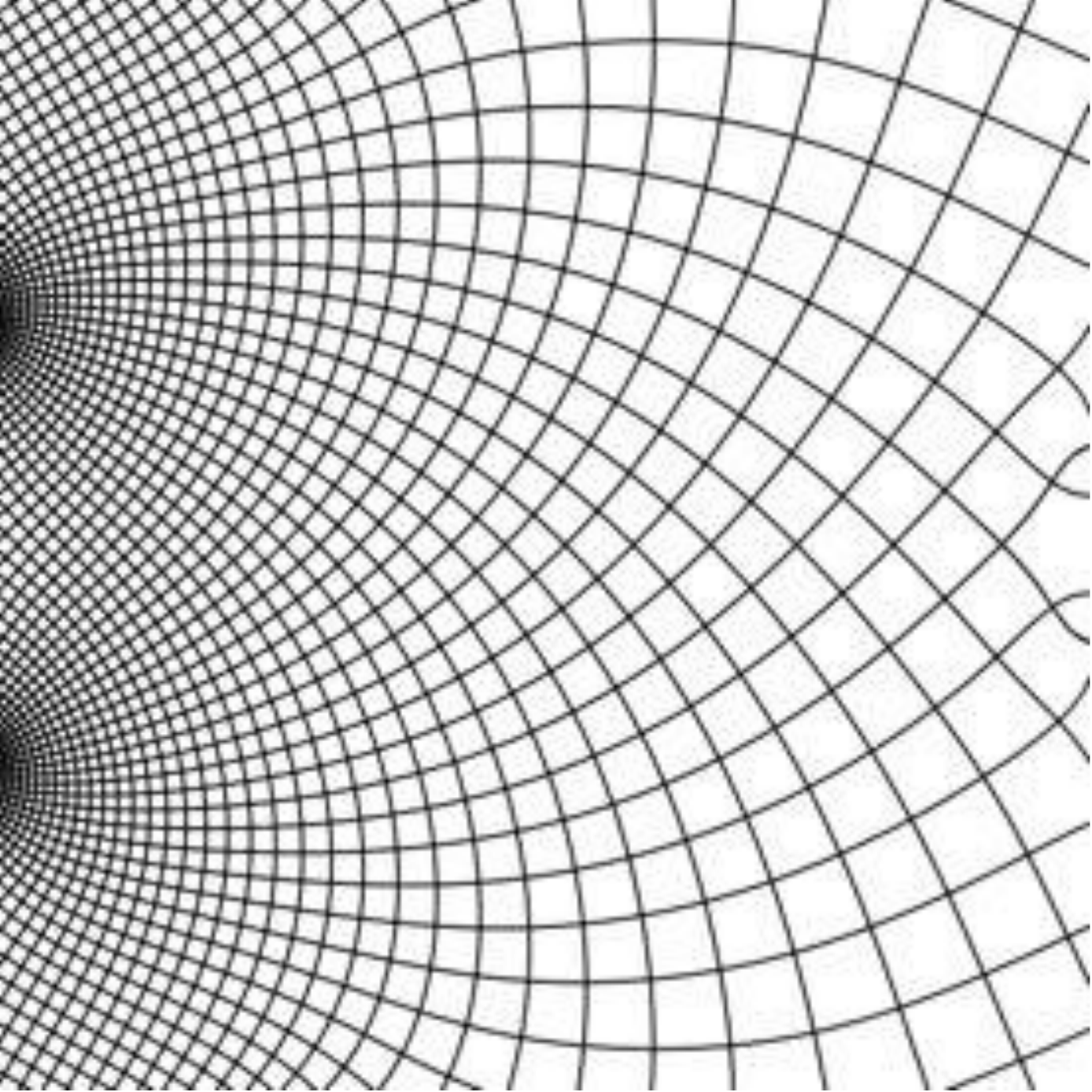}
  \caption{Projection by $\varphi$}
\end{subfigure}

\medskip
\begin{subfigure}{0.25\textwidth}
  \includegraphics[width=\linewidth]{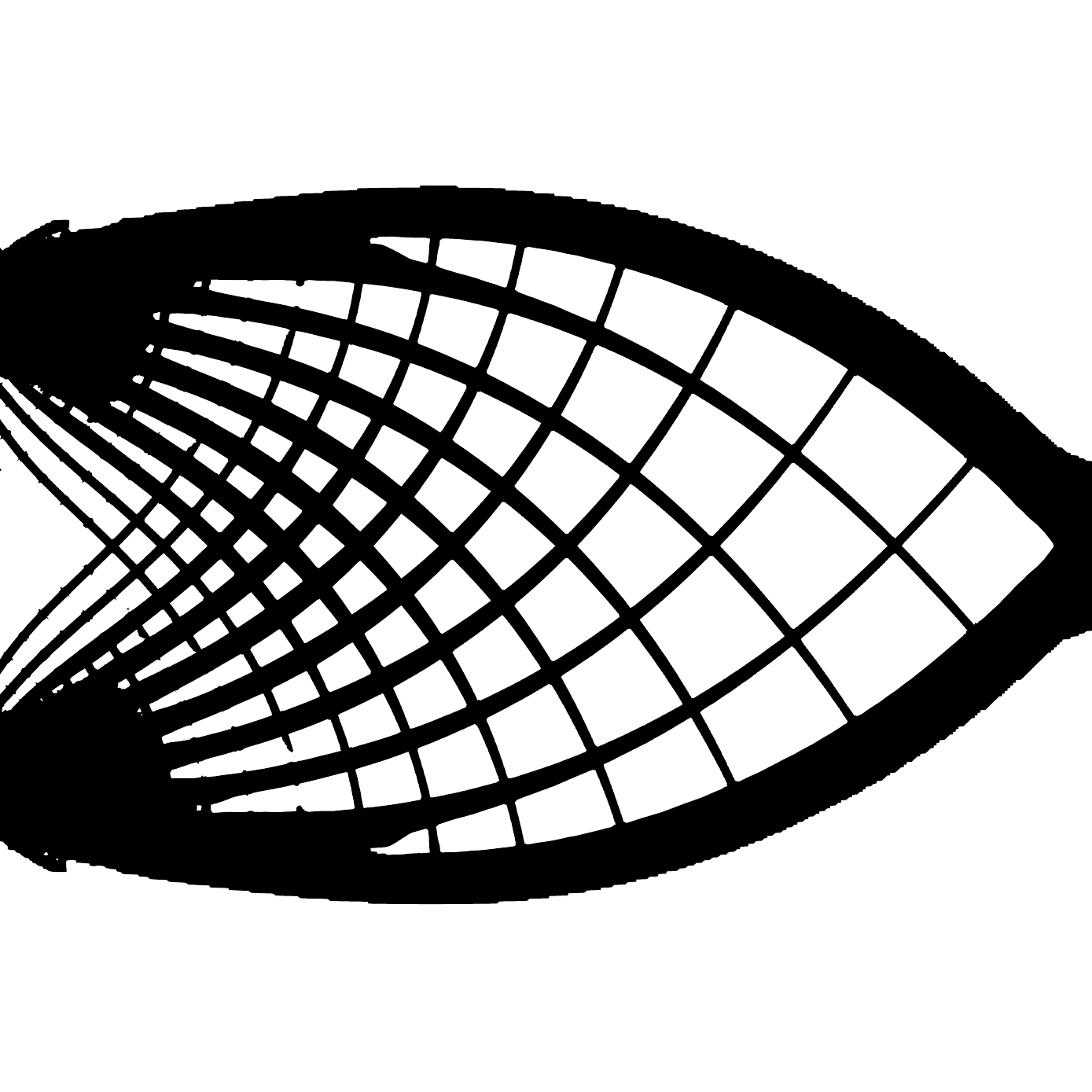}
  \caption{$\bar\Omega_\e (\varphi,\bar m)$, $\e=0.2$}
\end{subfigure}\hfil 
\begin{subfigure}{0.25\textwidth}
  \includegraphics[width=\linewidth]{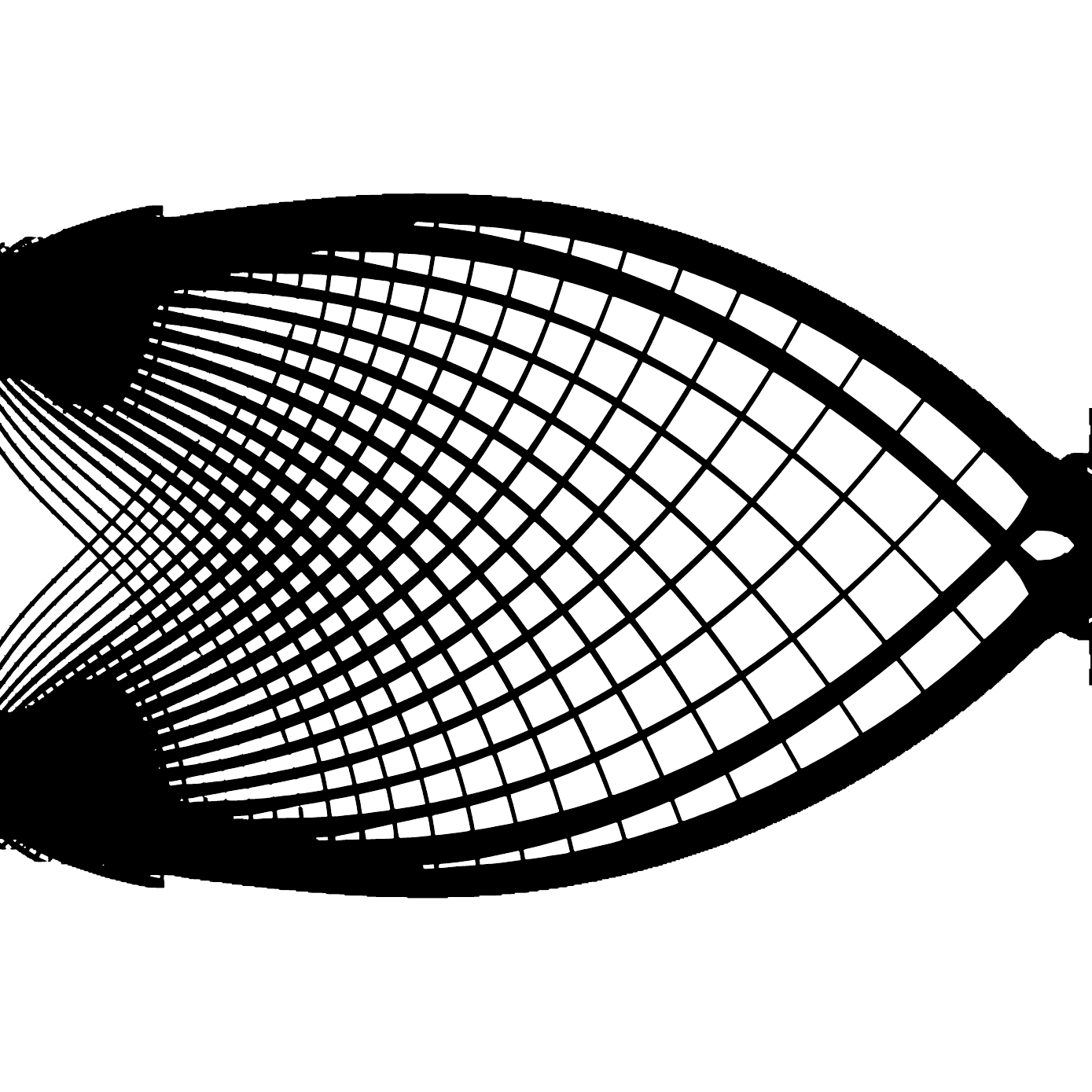}
  \caption{$\bar\Omega_\e (\varphi,\bar m)$, $\e=0.1$}
\end{subfigure}\hfil 
\begin{subfigure}{0.25\textwidth}
  \includegraphics[width=\linewidth]{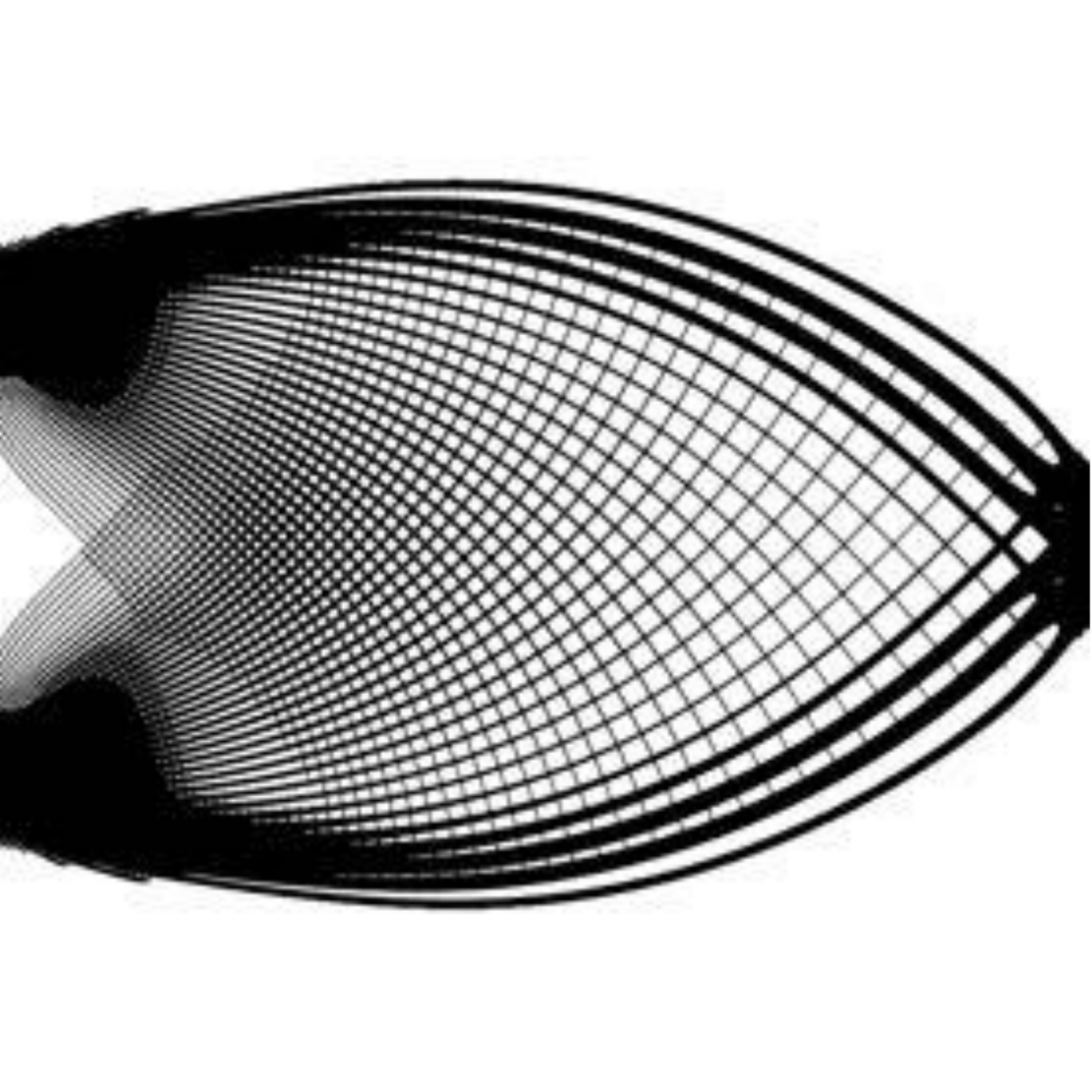}
  \caption{$\bar\Omega_\e (\varphi,\bar m)$, $\e=0.05$}
\end{subfigure}
\caption{Cantilever case.}
\label{fig:cantilever case}
\end{figure}

\begin{figure}[htb]
    \centering 
\begin{subfigure}{0.25\textwidth}
  \includegraphics[width=\linewidth]{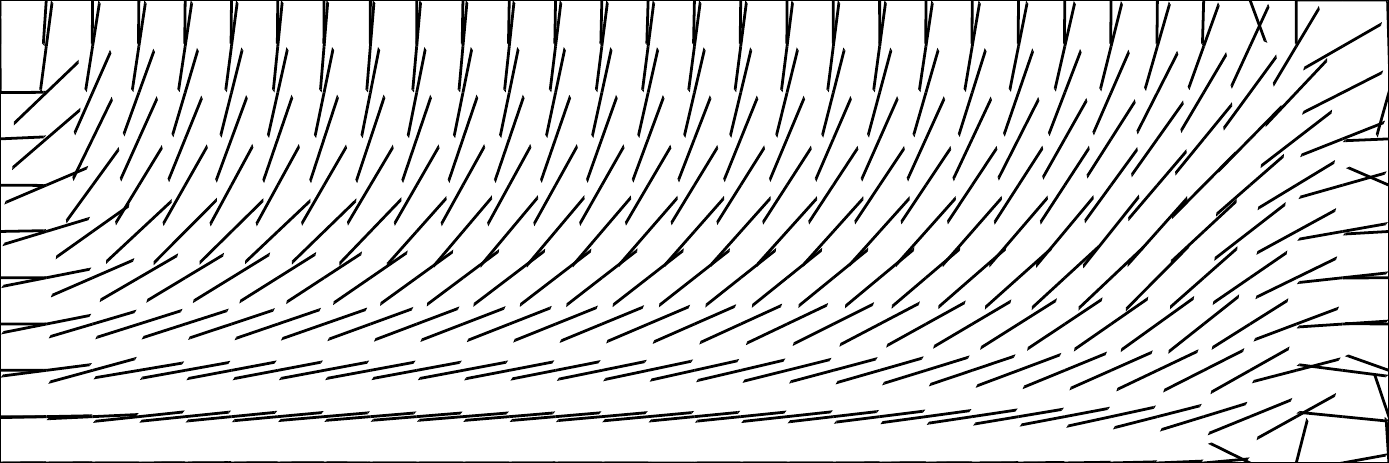}
  \caption{Opt. orientation}
\end{subfigure}\hfil 
\begin{subfigure}{0.25\textwidth}
  \includegraphics[width=\linewidth]{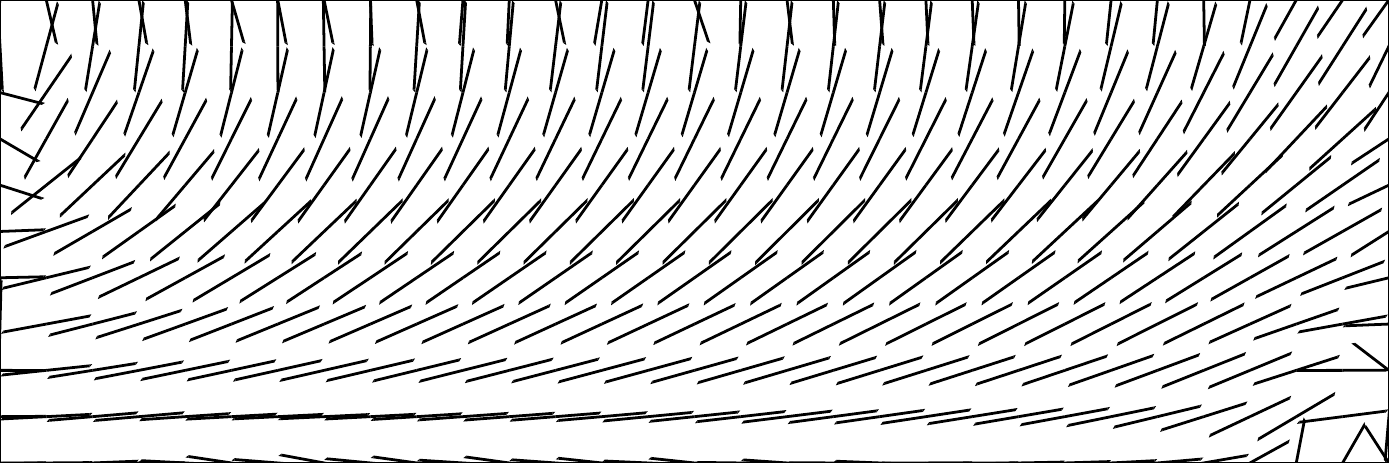}
  \caption{Regul. orientation}
\end{subfigure}\hfil 
\begin{subfigure}{0.25\textwidth}
  \includegraphics[width=\linewidth]{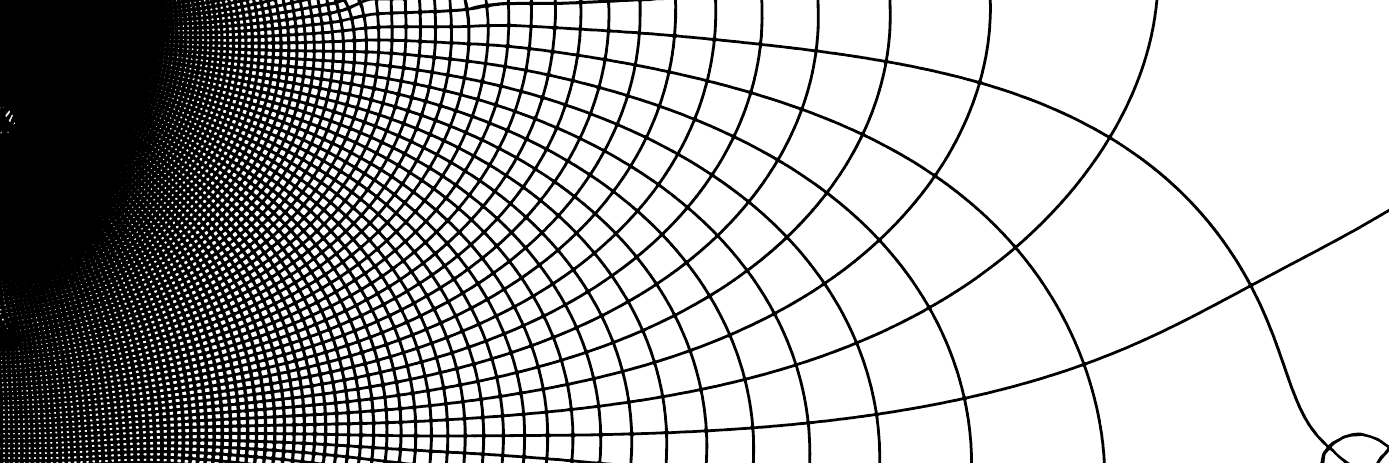}
  \caption{Projection by $\varphi$}
\end{subfigure}

\medskip
\begin{subfigure}{0.25\textwidth}
  \includegraphics[width=\linewidth]{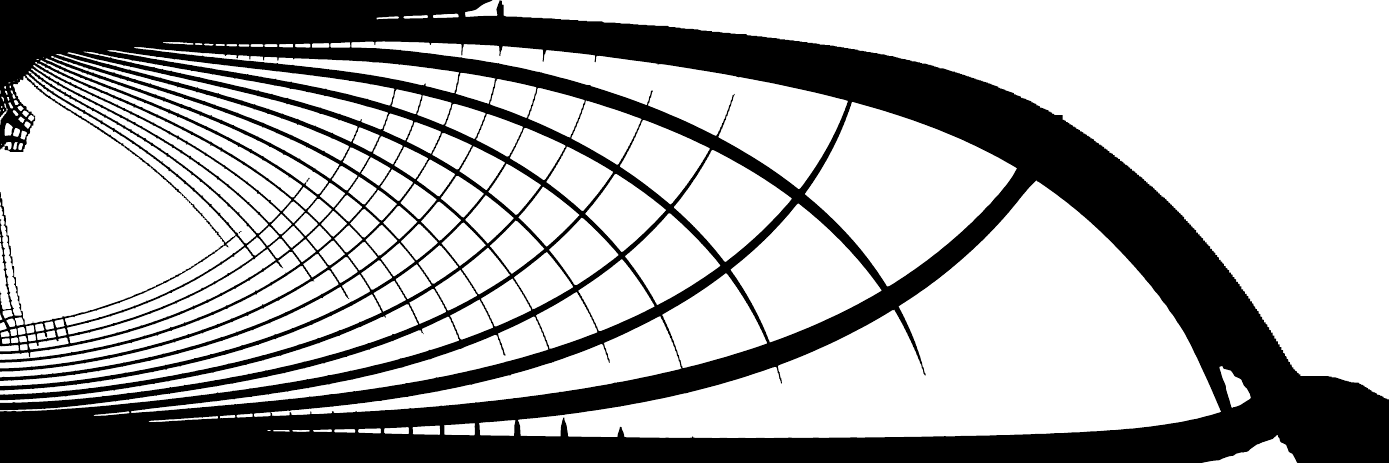}
  \caption{$\bar\Omega_\e (\varphi,\bar m)$, $\e=0.2$}
\end{subfigure}\hfil 
\begin{subfigure}{0.25\textwidth}
  \includegraphics[width=\linewidth]{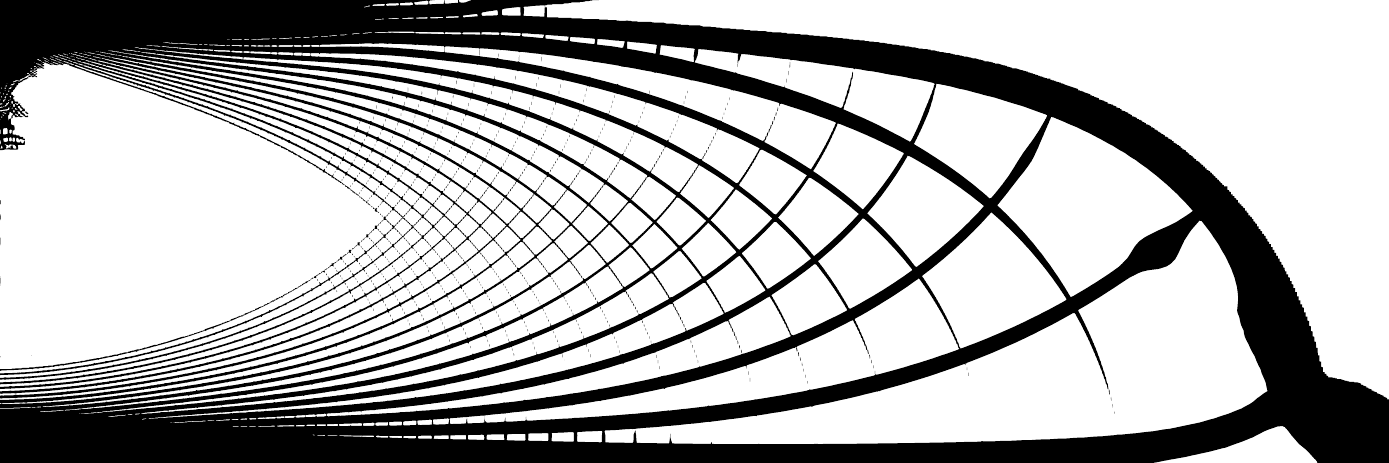}
  \caption{$\bar\Omega_\e (\varphi,\bar m)$, $\e=0.1$}
\end{subfigure}\hfil 
\begin{subfigure}{0.25\textwidth}
  \includegraphics[width=\linewidth]{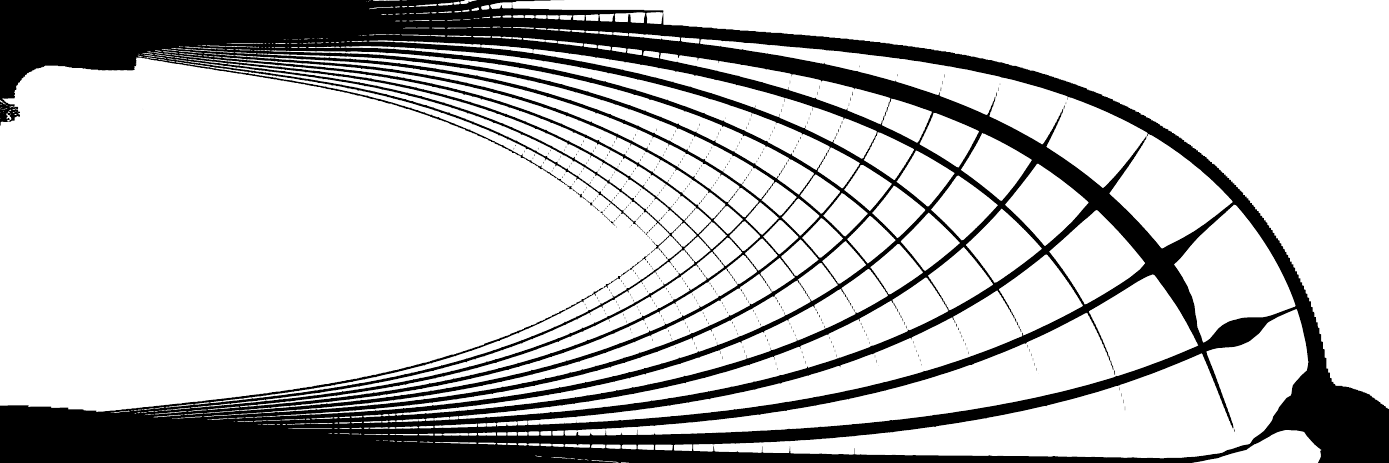}
  \caption{$\bar\Omega_\e (\varphi,\bar m)$, $\e=0.05$}
\end{subfigure}
\caption{MBB beam case.}
\label{fig: MBB beam case}
\end{figure}

\begin{figure}[htb]
    \centering 
\begin{subfigure}{0.25\textwidth}
  \includegraphics[width=\linewidth]{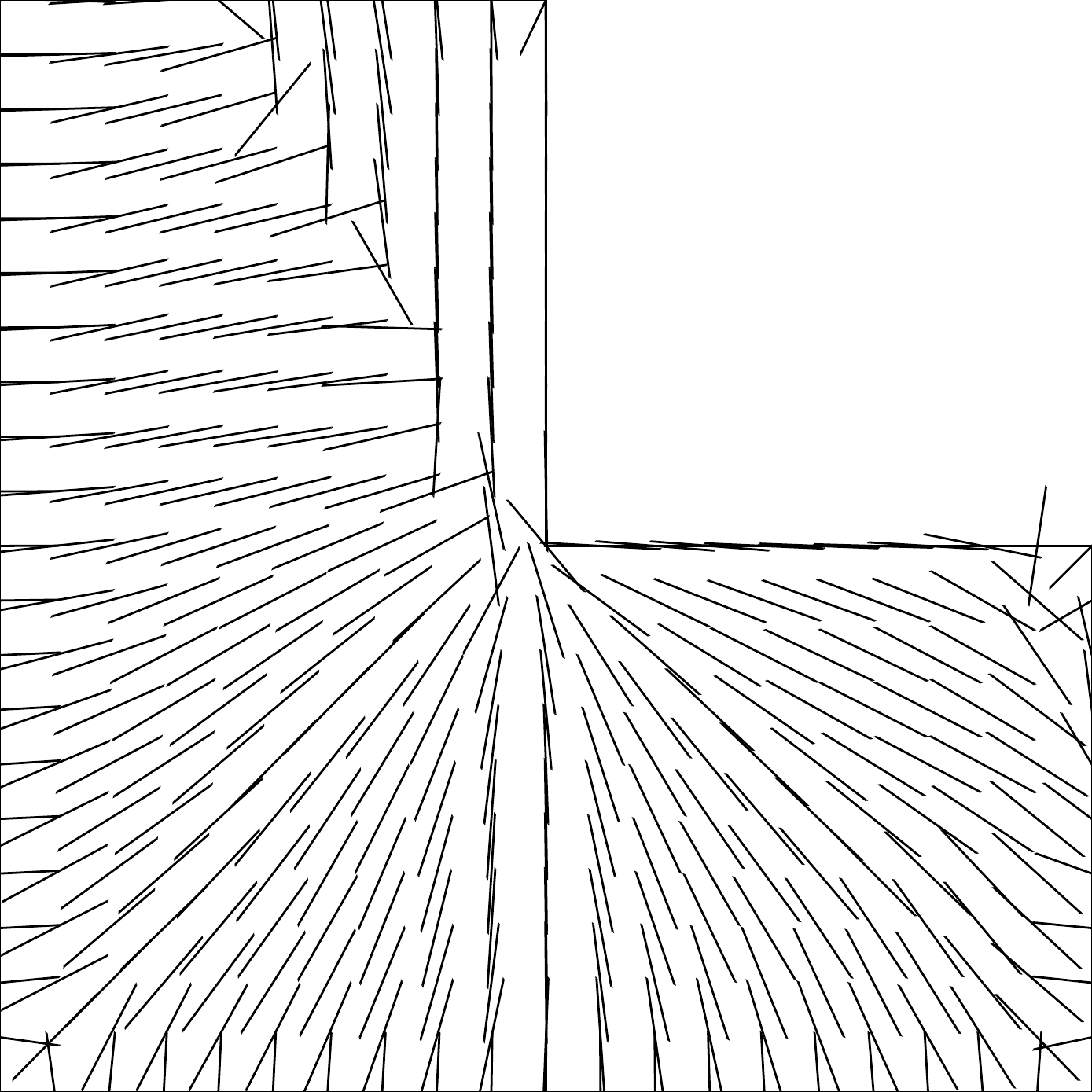}
  \caption{Opt. orientation}
\end{subfigure}\hfil 
\begin{subfigure}{0.25\textwidth}
  \includegraphics[width=\linewidth]{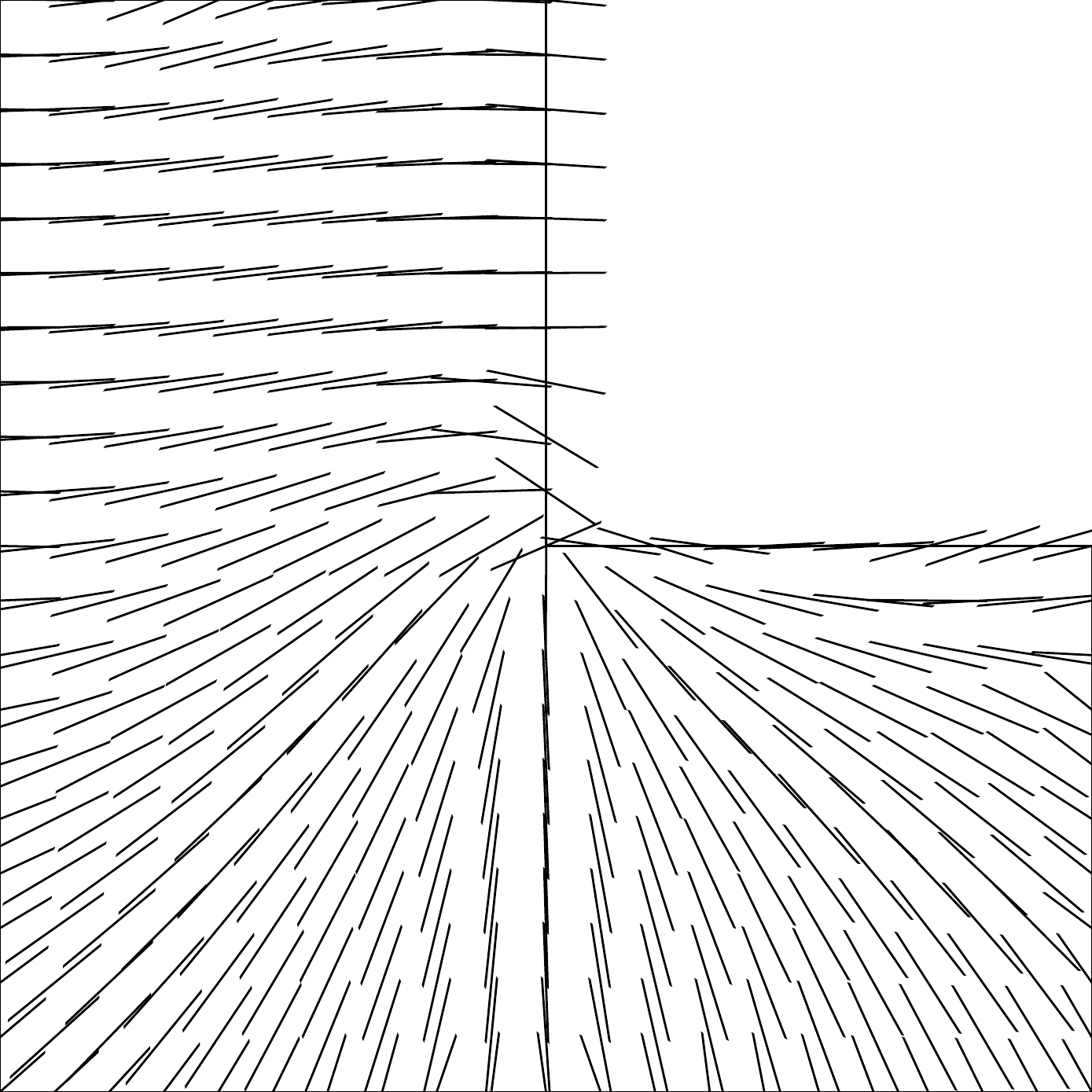}
  \caption{Regul. orientation}
\end{subfigure}\hfil 
\begin{subfigure}{0.25\textwidth}
  \includegraphics[width=\linewidth]{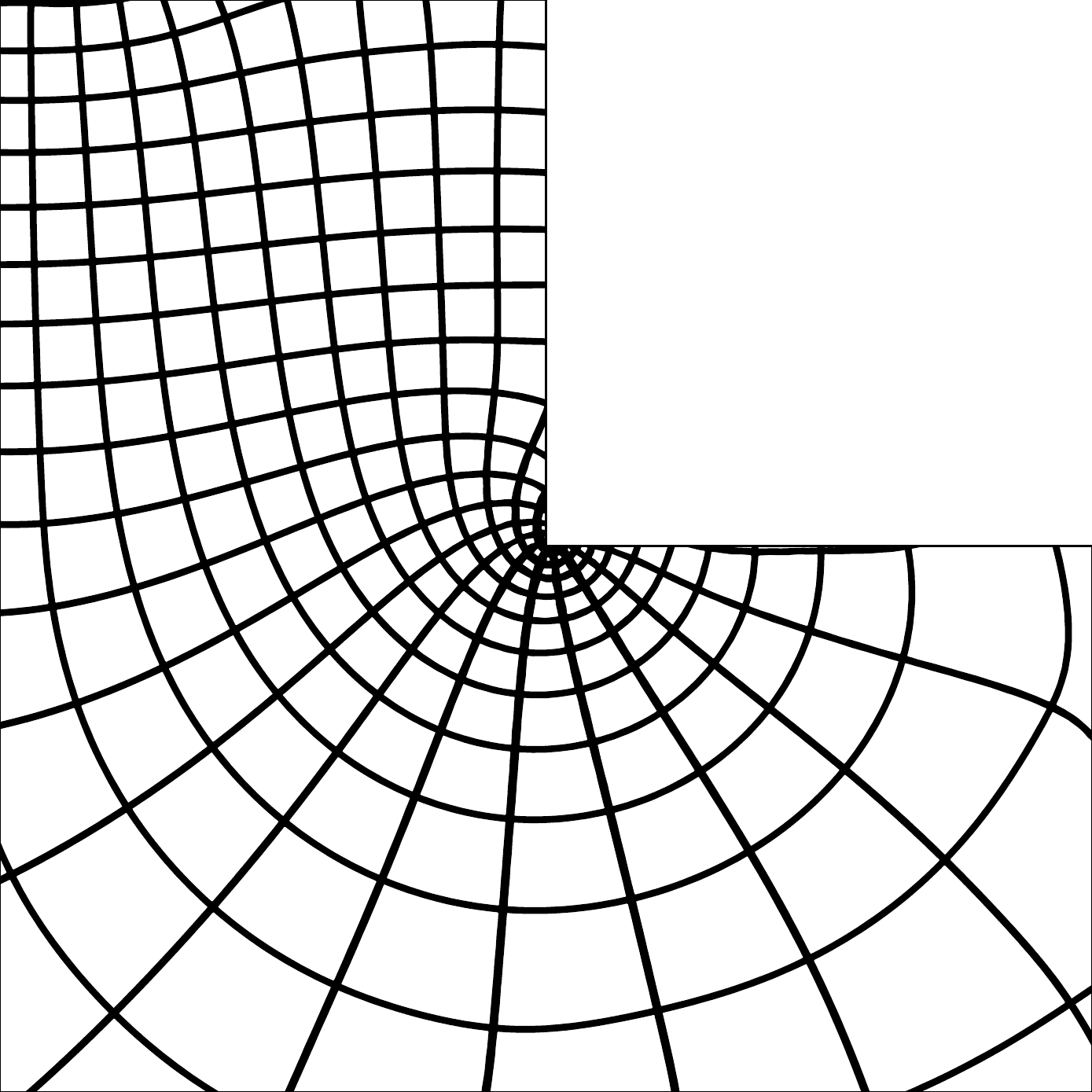}
  \caption{Projection by $\varphi$}
\end{subfigure}

\medskip
\begin{subfigure}{0.25\textwidth}
  \includegraphics[width=\linewidth]{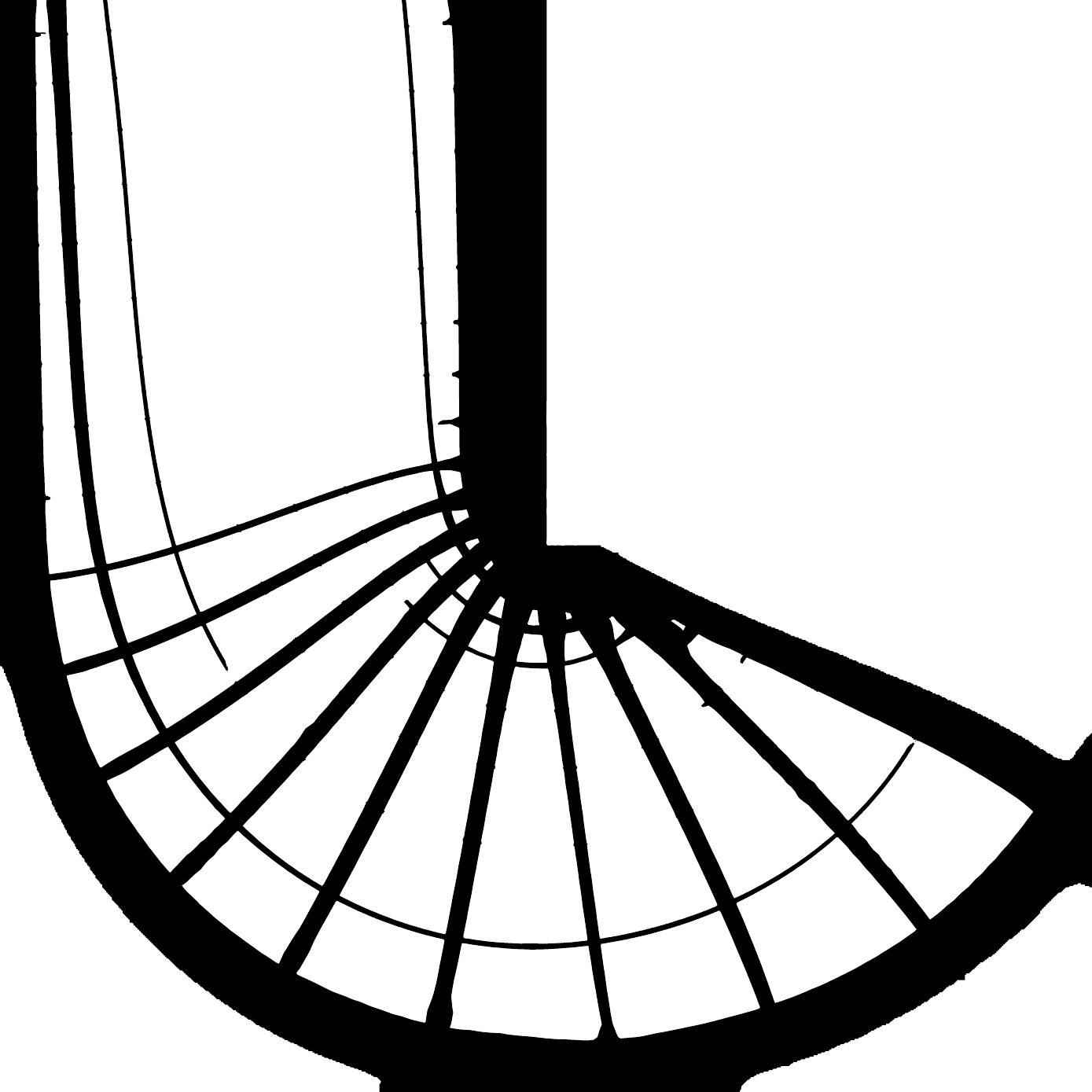}
  \caption{$\bar\Omega_\e (\varphi,\bar m)$, $\e=0.2$}
\end{subfigure}\hfil 
\begin{subfigure}{0.25\textwidth}
  \includegraphics[width=\linewidth]{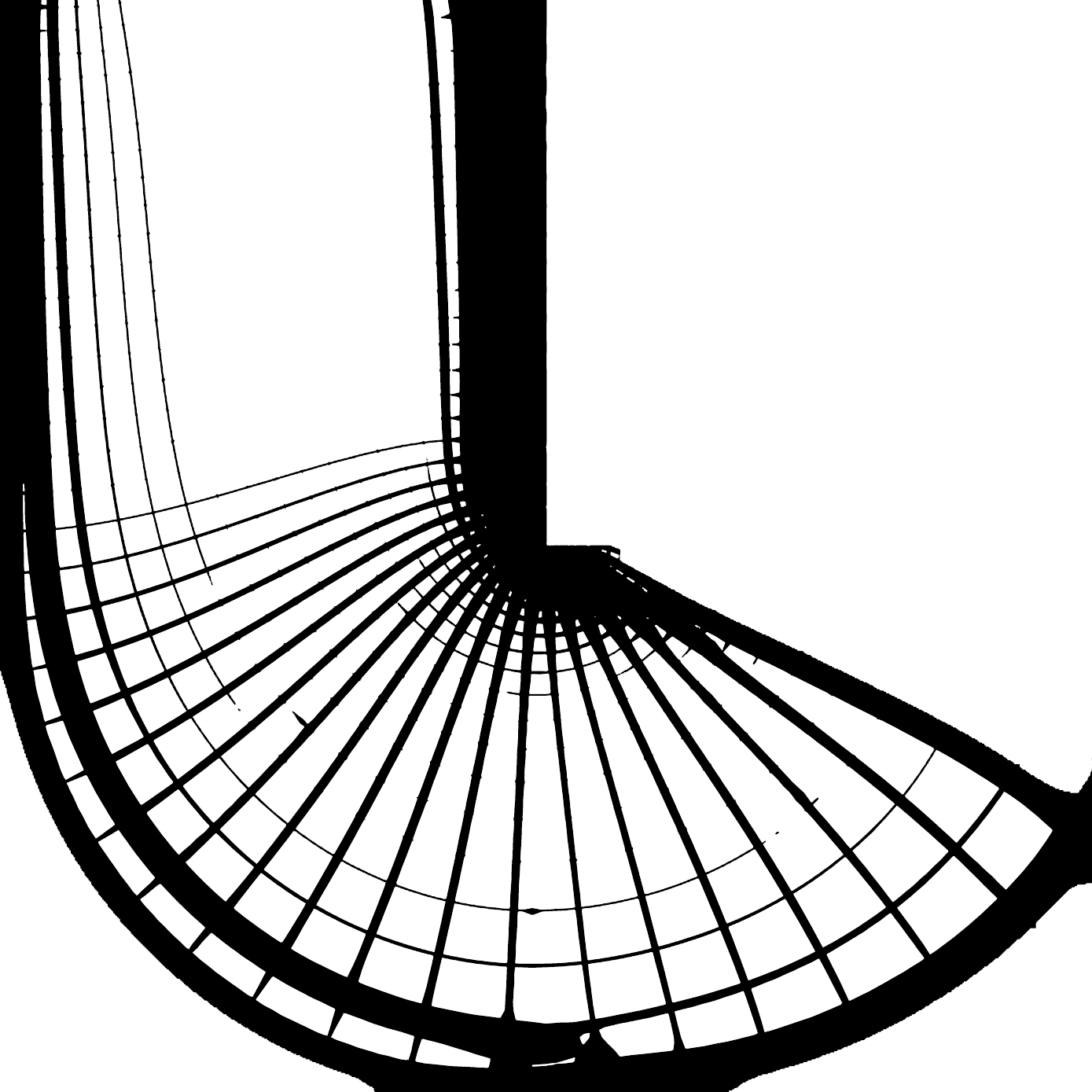}
  \caption{$\bar\Omega_\e (\varphi,\bar m)$, $\e=0.1$}
\end{subfigure}\hfil 
\begin{subfigure}{0.25\textwidth}
  \includegraphics[width=\linewidth]{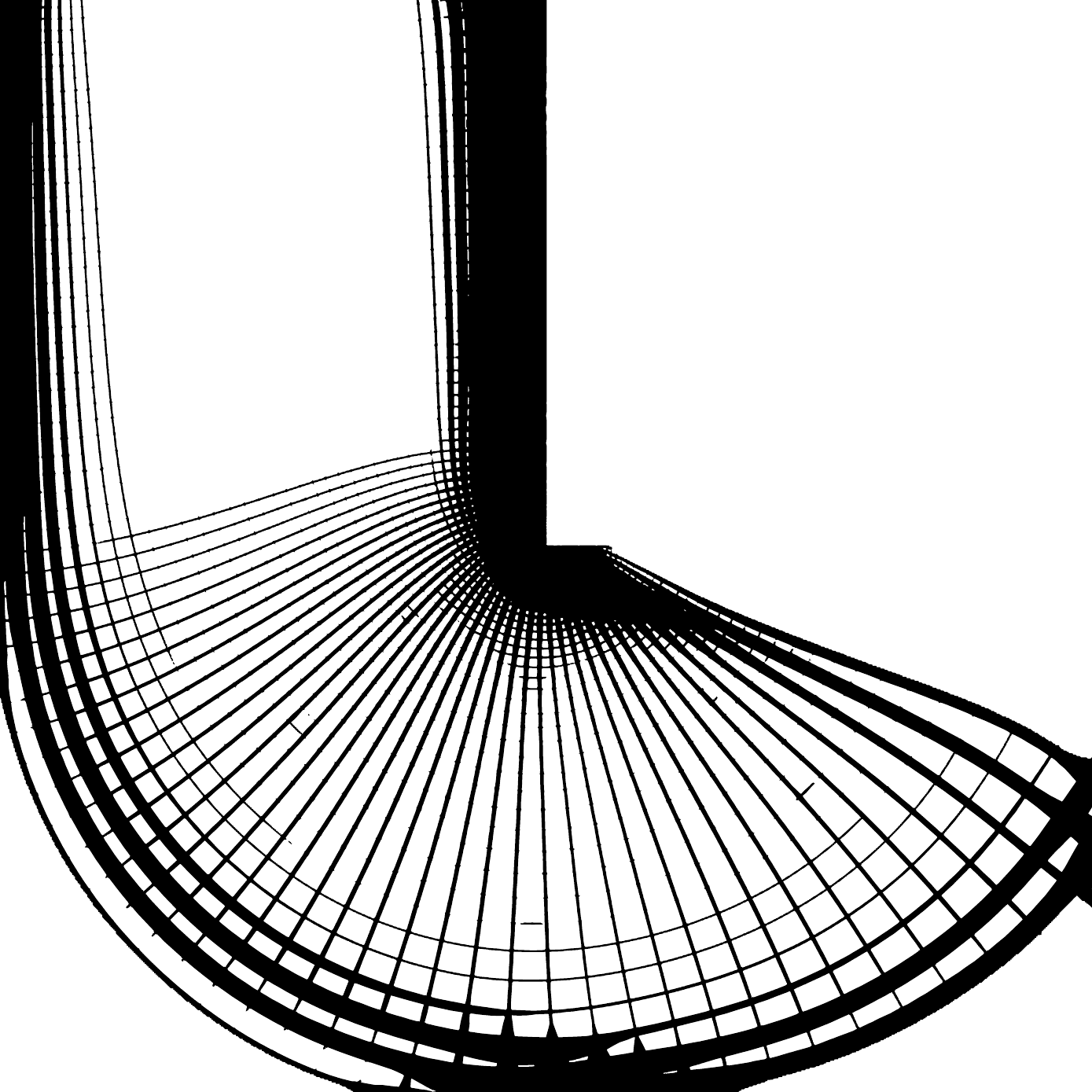}
  \caption{$\bar\Omega_\e (\varphi,\bar m)$, $\e=0.05$}
\end{subfigure}
\caption{L-beam case.}
\label{fig: L-beam case}
\end{figure}

As shown in the case of L-beam (Figure~\ref{fig: L-beam case}), the singularities which appear in Figure~\ref{fig: L-beam case}-(a) are removed during the regularization step (Figure~\ref{fig: L-beam case}-(b)).
This is a necessary condition in order to apply our method.
Indeed, as we mentioned, there is a case where the singularity cannot be removed, the so-called electrical mast (see Figure~\ref{fig:elec mast}).
Figure \ref{fig:elec mast}-(a) shows that two negative singularities, located inside the domain, cannot neither be removed nor pushed toward the boundary during the regularization step.

\begin{figure}[tbp]
    \centering 
\begin{subfigure}{0.3\textwidth}
  \includegraphics[width=\linewidth]{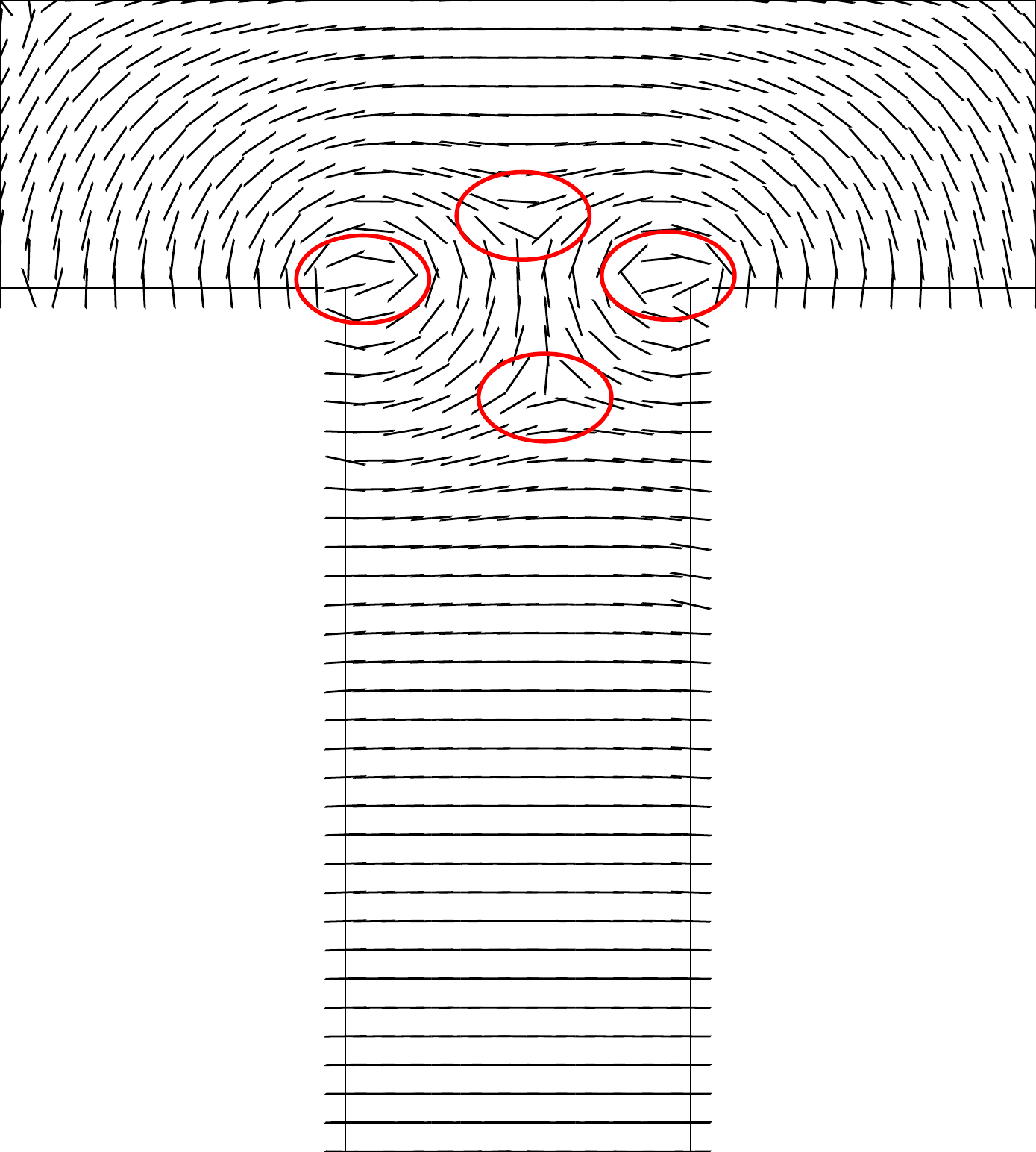}
  \caption{Singularities appear}
\end{subfigure}\hfil 
\begin{subfigure}{0.3\textwidth}
  \includegraphics[width=\linewidth]{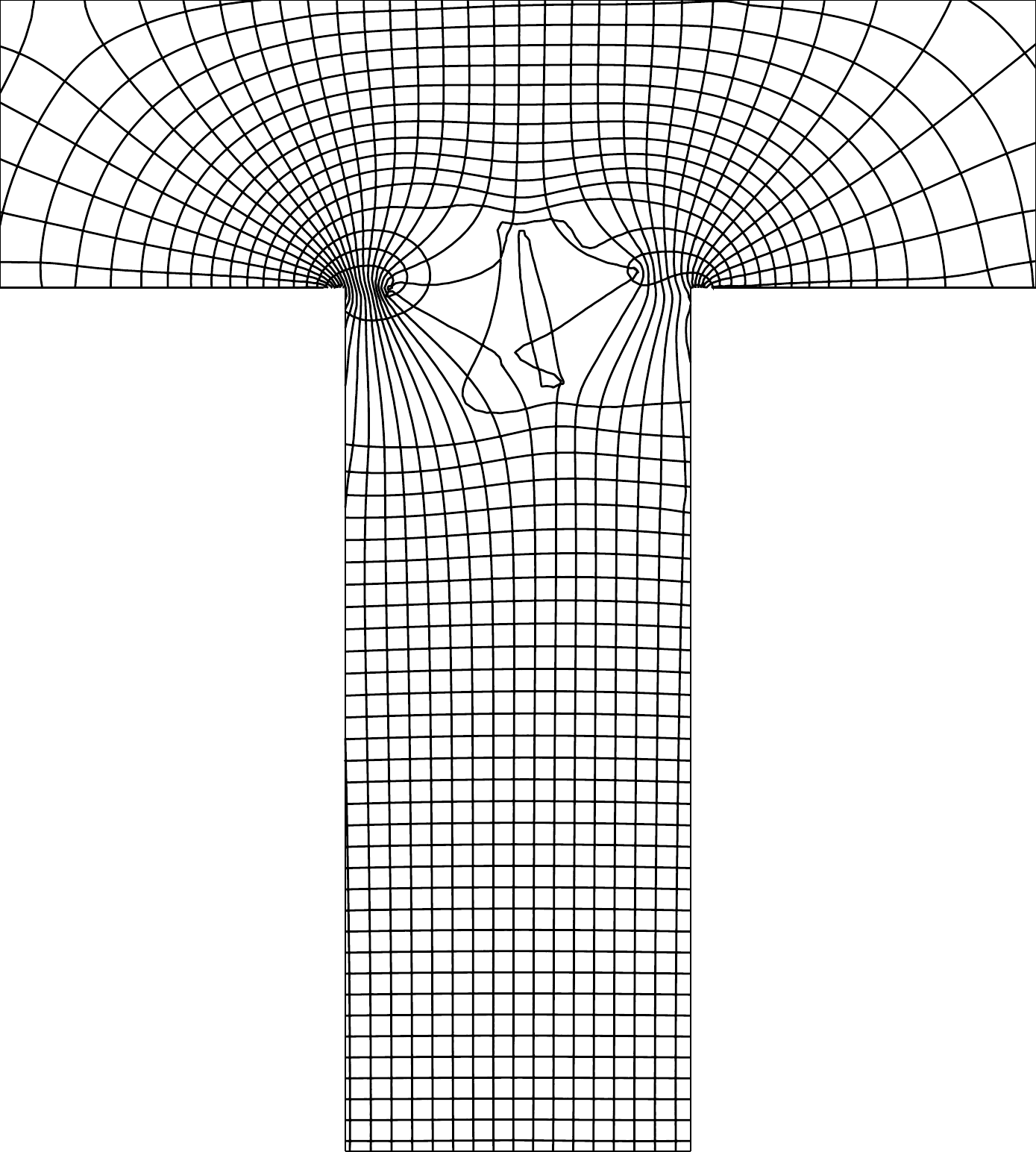}
  \caption{Projection by $\varphi$}
\end{subfigure}\hfill

\medskip

\begin{subfigure}{0.3\textwidth}
  \includegraphics[width=\linewidth]{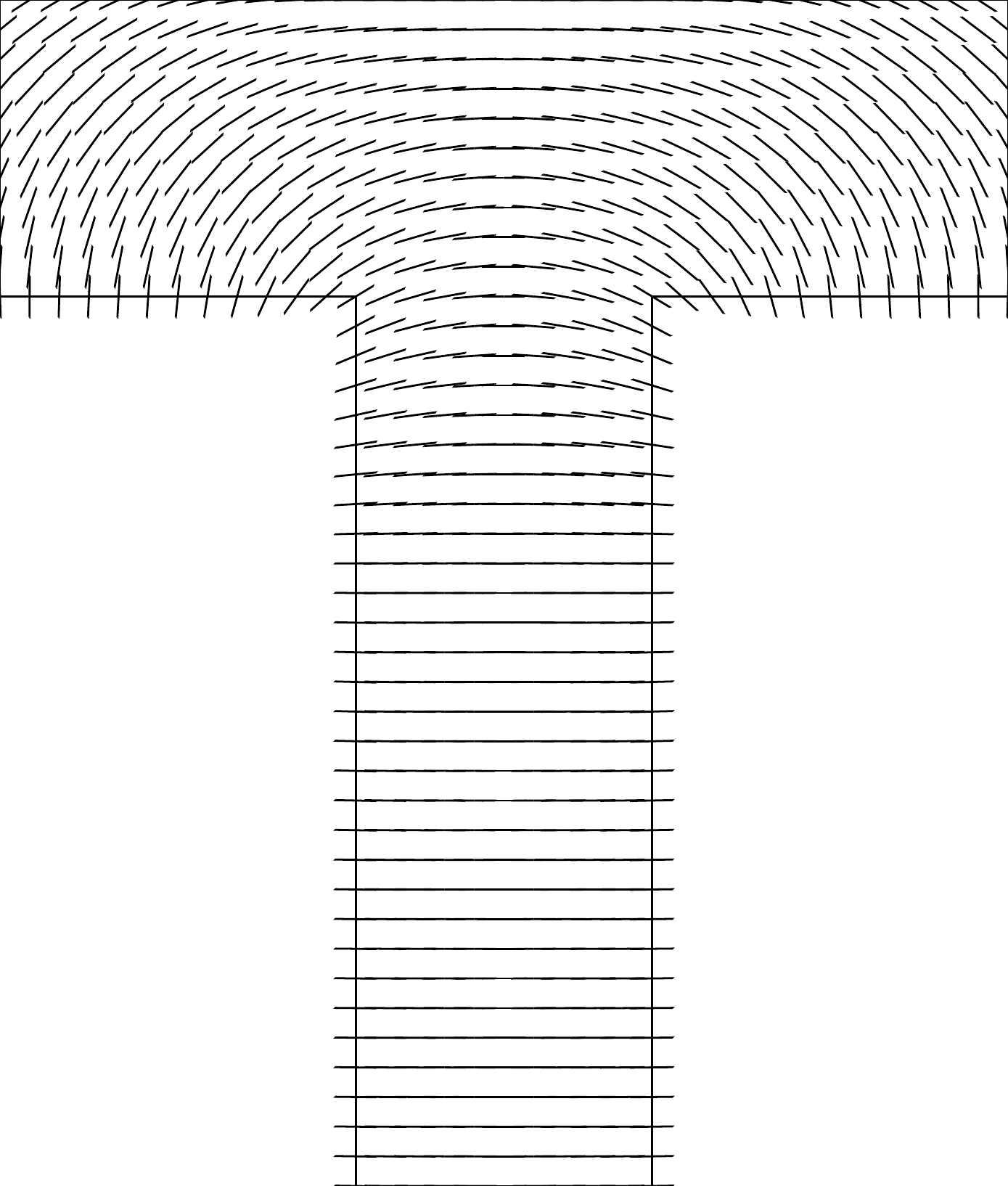}
  \caption{Improved reg. orientation}
  \label{Improved reg. orientation without singularities}
\end{subfigure}\hfil 
\begin{subfigure}{0.3\textwidth}
  \includegraphics[width=\linewidth]{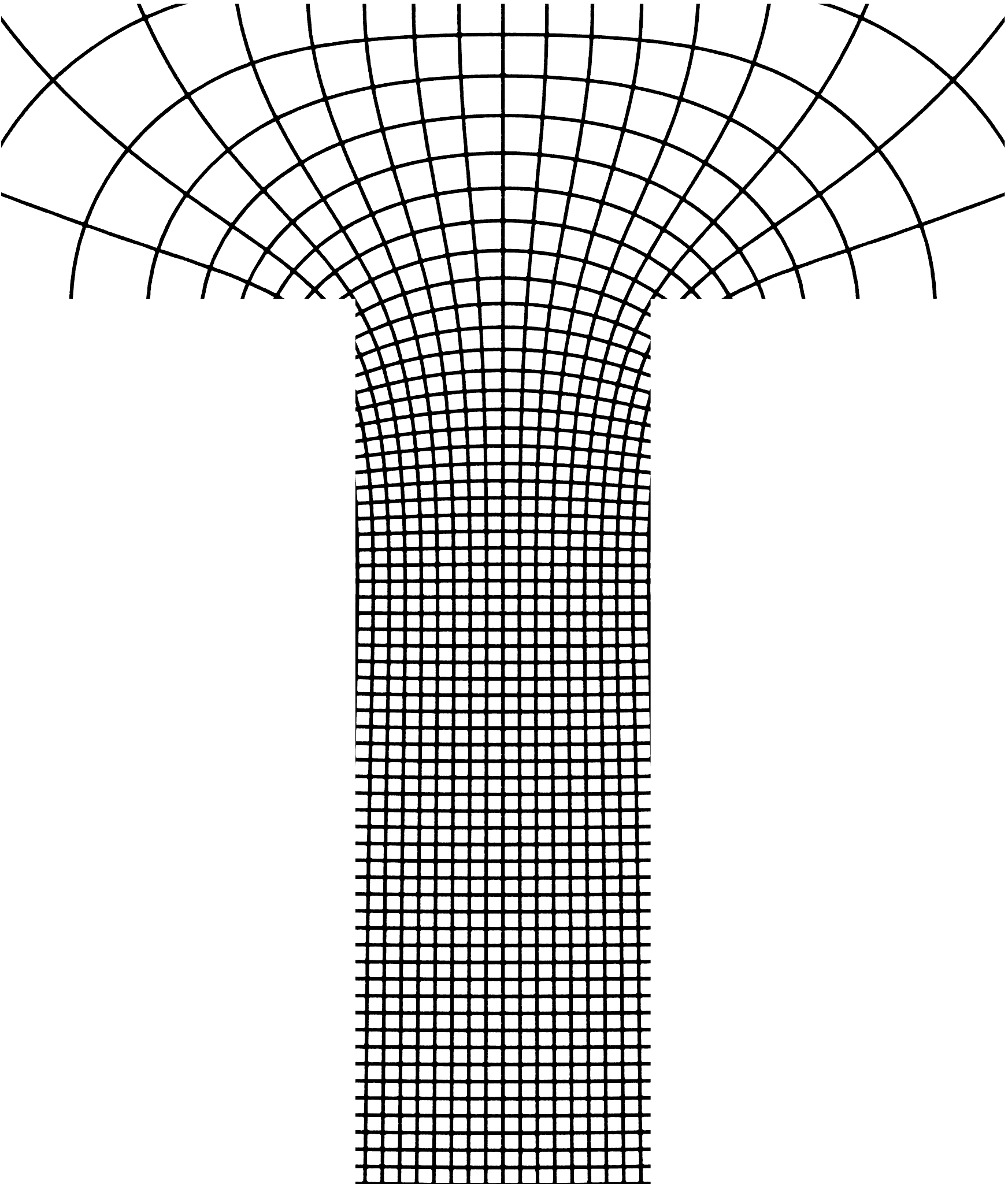}
  \caption{Reconstructed grid}
\end{subfigure}\hfill

\caption{Improved regularized orientation in the case of the electrical mast.}
\label{fig:elec mast}
\end{figure}

Hence, the computed grid shown in Figure~\ref{fig:elec mast}-(b) is clearly not correctly aligned with the optimal orientation of the cells in the vicinity of the singularities.
To overcome this problem, at least two different strategies can be considered.
One consists in modifying the regularization process in a way that forces more effectively the singularities to be eliminated (see Figure~\ref{fig:elec mast}-(c) and (d)).
Another approach is to adapt the projection step so that it is able to take singularities into account, and will be discussed in the next section.
\section{Further issues}


As we mentioned in Section~\ref{sec:3rd step}, the problem of removing the singularity might persist. 
To overcome this problem, some preliminary remedies are proposed in the PhD thesis of Perle Geoffroy \cite{geoffroy} by either trying to eliminate them by a Ginzburg-Landau approach or compute a map $\varphi$ with the previous approach and an enriched discontinuous finite element space.
In \cite{geoffroy}, one may find others extensions to cases, such as different objective functions, multiple loads and 3-d problems.





\section{Exercises}

\begin{problem}
Consider a compliance minimization problem (for one test case like cantilever, bridge, MBB beam, etc.) for the homogenized formulation. 
Choose an homogenized tensor corresponding to a non-isotropic microstructure (for example a square cell with a rectangular hole with fixed size). Then minimize the compliance with respect to the sole orientation of the microstructure (using Pedersen result).
\end{problem}

\begin{problem}
For the same compliance minimization problem as in the previous exercise, fix now the orientation and let the parameters of the microstructure (for example, the lengths $m_1$ and $m_2$ of the rectangular hole, see Figure \ref{rectangular hole}) become the optimization variables. Then minimize the compliance with respect to these parameters (with fixed orientation). Compare the optimal designs and the attained minimal compliances with the previous exercise. In particular, notice that the absence of orientation optimization yields a self-penalizing effect, namely the obtained 
designs feature almost no intermediate densities (somehow similarly to the SIMP method). 
\end{problem}

\begin{problem}
Combine the two previous optimization (with respect to the orientation and the size parameters) and minimize the compliance for the test case of the previous exercises.
\end{problem}

\end{document}